\theoremstyle{theorem}
\newtheorem{thm}{Theorem}[chapter]
\newtheorem*{thm*}{Theorem}
\newaliascnt{corollary}{thm}
\newtheorem{cor}[corollary]{Corollary}
\newaliascnt{lemma}{thm}
\newtheorem{lem}[lemma]{Lemma}
\newaliascnt{sublemma}{thm}
\newtheorem{slem}[sublemma]{Sublemma}
\theoremstyle{definition}
\newaliascnt{definition}{thm}
\newtheorem{defi}[definition]{Definition}
\newaliascnt{example}{thm}
\newtheorem{exa}[example]{Example}
\newaliascnt{remark}{thm}
\newtheorem{rmk}[remark]{Remark}
\theoremstyle{theorem}
\newtheorem{thmA}{Theorem}
\theoremstyle{theorem}
\newaliascnt{corI}{thmI}
\patchcmd{\hyper@makecurrent}{%
	\ifx\Hy@param\Hy@chapterstring
	\let\Hy@param\Hy@chapapp
	\fi
}{%
	\iftoggle{inappendix}{%
		\@checkappendixparam{chapter}%
		\@checkappendixparam{section}%
		\@checkappendixparam{subsection}%
		\@checkappendixparam{subsubsection}%
		\@checkappendixparam{paragraph}%
		\@checkappendixparam{subparagraph}%
	}{}%
}{}{\errmessage{failed to patch}}
\newcommand*{\@checkappendixparam}[1]{%
	\def\@checkappendixparamtmp{#1}%
	\ifx\Hy@param\@checkappendixparamtmp
	\let\Hy@param\Hy@appendixstring
	\fi
}
\apptocmd{\appendix}{\toggletrue{inappendix}}{}{\errmessage{failed to patch}}
	\apptocmd{\appendices}{\toggletrue{inappendix}}{}{\errmessage{failed to patch}}
	\apptocmd{\subappendices}{\toggletrue{inappendix}}{}{\errmessage{failed to patch}}
\DeclareMathOperator*{\lip}{Lip}
\DeclareMathOperator*{\hol}{\mathrm{Hol}}
\DeclareMathOperator*{\graph}{\mathrm{Graph}}
\DeclareMathOperator*{\id}{\mathrm{id}}
\DeclareMathOperator*{\diam}{\mathrm{diam}}
\numberwithin{equation}{chapter}
\begin{document}

\keywords{Invariant manifold, Invariant foliation, Invariant graph, Cocycle, Partial hyperbolicity, Ill-posed differential equation, Infinite-dimensional dynamical system.}
\mathclass{Primary 37D10; Secondary 37D30, 57R30, 58B20, 55R65}
\thanks{The author would like to thank Prof. M. W. Hirsch for his useful suggestions. The author is also indebted to the referee and editors who have thoroughly revised the text and provided several helpful suggestions. This paper was part of the author's PhD thesis written under the supervision of Prof. Dongmei Xiao at Shanghai Jiao Tong University. This work was partially supported by NNSF of China (No. 11431008).}

\abbrevauthors{D. Chen}
\abbrevtitle{Existence and Regularity of Invariant Graphs}

\title{Existence and Regularity of Invariant Graphs for Cocycles in Bundles: partial hyperbolicity case}

\author{Deliang Chen}

\address{Department of Mathematics \\
	Shanghai Jiao Tong University \\
	Shanghai 200240, People's Republic of China}

\address{College of Mathematics and Physics \\
	Wenzhou University \\
	Wenzhou 325035, People's Republic of China \\
	E-mail: chernde@wzu.edu.cn}

\maketitledis
\setcounter{tocdepth}{2}
\tableofcontents
\begin{abstract}
We study the existence and regularity of invariant graphs for bundle maps (or bundle correspondences with generating bundle maps motivated by ill-posed differential equations) having some relative partial hyperbolicity on non-trivial and non-locally-compact bundles. The regularity includes (uniform) $ C^0 $ continuity, H\"older continuity and smoothness. To illustrate the power of our results and methods, a number of applications to both well-posed and ill-posed semilinear differential equations and abstract infinite-dimensional dynamical systems are given. These applications include the existence and regularity of different types of invariant foliations (laminations), including strong stable laminations and fake invariant foliations, the existence and regularity of holonomies for cocycles, the $ C^{k,\alpha} $ theorem and decoupling theorem, etc., in a general setting.
\end{abstract}
\makeabstract

\chapter{Introduction}

\section{Motivation}
Invariant manifold theory is one of the central topics in the theory of dynamical systems with some hyperbolicity. Classical contributions go back to A. Lyapunov \cite{Lia47} (originally published in Russian, 1892), J. Hadamard \cite{Had01}, \'E. Cotton \cite{Cot11}, O. Perron \cite{Per29}, V. A. Pliss \cite{Pli64}, A. Kelly \cite{Kel67}, N. Fenichel \cite{Fen72}, M. W. Hirsch , C. C. Pugh, M. Shub \cite{HPS77}, Ya. B. Pesin \cite{Pes77}, and others.
For new developments in infinite-dimensional dynamical systems see e.g. \cite{Rue82, Man83, LL10} (non-uniformly hyperbolic case), \cite{Hen81, CL88, BJ89, MS88, DPL88, MR09a} (different types of semilinear differential equations), \cite{BLZ98, BLZ99, BLZ00, BLZ08} (normally hyperbolic manifolds case), too numerous to list here.
Invariant manifolds can exist for some ill-posed differential equations (see e.g. \cite{EW91,Gal93,SS99,dlLla09,ElB12}), although they do not generate semiflows.
For significant applications of invariant manifold theory in ergodic theory, see e.g. \cite{PS97,Via08, AV10, BW10, BY17}.

This paper is part of a program to expand the scope of invariant manifold theory, following the work of Hirsch, Pugh and Shub \cite[Chapters 5--6]{HPS77} with a view towards making it applicable to both \emph{well-posed} and \emph{ill-posed} differential equations and abstract dynamical systems in \emph{non-compact} spaces (including non-locally-compact spaces). Here, we try to extend the classical theory to more general settings at least in the following directions by developing a number of new ideas and techniques.

\vspace{.5em}
\noindent{\textbf{Dynamical systems:}}
We focus on dynamical systems on bundles, i.e., \emph{cocycles}. Related works on invariant manifolds for cocycles are e.g. \cite{Hal61, Yi93, CY94, CL97}; but see also \cite[Chapter 6]{HPS77} and \cite{AV10, ASV13}. In terms of differential equations, there is a natural context we need to study cocycles. That is, when we look into the dynamical properties of a differential equation in a neighborhood of its invariant set, we usually study the linearization of the equation along this set and consider the equation as a linearized equation plus a `small' perturbation. In order to make our results applicable to ill-posed differential equations, we extend the notion of cocycle to \emph{bundle correspondence with generating bundle map} (see \autoref{correspondence} for definitions) originally due to Chaperon \cite{Cha08} and Akin \cite{Aki93}. By using this notion, the difficulty that cocycles may be non-invertible (especially in the infinite-dimensional setting) can be overcome without strain.

\vspace{.5em}
\noindent{\textbf{Bundles:}}
We do \emph{not} require the bundles to be trivial or the base spaces to be (locally) compact. Note that for infinite-dimensional dynamical systems, this situation happens naturally. For the existence results (see \autoref{graph}), the assumption on bundles is only that their fibers are complete metric spaces, which makes the results as general as possible. For the regularity results (see \autoref{stateRegularity}), the situation becomes more complicated, which is one of the major difficulties arising in this paper;
see \autoref{bundle} for how we deal with non-trivial bundles and \autoref{mf} for how we extend compact spaces or Banach spaces to non-compact and non-linear spaces in infinite dimensions; this treatment is essentially due to e.g. \cite{HPS77, PSW12, BLZ99, BLZ08, Cha04, Eld13, Ama15}.

\vspace{.5em}
\noindent{\textbf{Hyperbolicity:}}
We consider dynamical systems with some \emph{relative partial hyperbolicity}. There are many ways to describe hyperbolicity in different settings. Here we adopt the \emph{(A) (B) condition} (see \autoref{defAB} for definitions) originally due to \cite{MS88} (see also \cite{LYZ13, Zel14}) to describe hyperbolicity.
The definition of the (A) (B) condition concerns `dynamical systems' themselves and it can also make sense in the setting of metric spaces (in the spirit of \cite{Cha04, Cha08, Via08, AV10}). However, see \autoref{lem:c1} for the relation between the (A) (B) condition and the classical cone condition, where the latter only works for smooth dynamical systems on smooth manifolds.
As a first step, in this paper we only discuss the relative partial hyperbolicity in the \emph{uniform} sense (especially for the regularity results); our future work (see \cite{Che18d}) will address the non-uniform partial hyperbolicity in detail.

Using the (A) (B) condition has the advantage that we can give a unified approach to invariant manifolds in different hyperbolicity settings.
A more important thing is that some classical hyperbolicity assumptions (or spectral gap conditions) are not satisfied by some dynamics generated by differential equations but the (A) (B) condition can still be verified. Consult the important paper \cite{MS88}, where Mallet-Paret and Sell could successfully verify the (uniform) cone condition based on the principle of spatial average (see also \cite{Zel14}).

In summary, it seems that we first investigate \emph{the existence and regularity of invariant manifolds (or specifically invariant graphs) for bundle maps (or bundle correspondences with generating bundle maps) with some (relative) partial hyperbolicity described by the (A) (B) condition on non-trivial and non-compact bundles}. Due to the general settings, our results have a number of applications to, for instance, not only the classical dynamics generated by well-posed differential equations such as some parabolic PDEs, delay equations, age structured models, etc., but also to ill-posed differential equations such as elliptic PDEs on cylinders, mixed delay equations, and spatial dynamics generated by some PDEs; see \cite{Che18c} for more details. Meanwhile, our results give a unified approach to study the existence and regularity of different types of invariant manifolds and invariant foliations (laminations) including strong (un)stable laminations and fake invariant foliations, the existence and regularity of holonomies for cocycles, and others; see e.g. \autoref{foliations}.

\vspace{.5em}
\noindent{\textbf{Differential equations:}}
In \cite{Che18c}, we studied some classes of semilinear differential equations that can generate cocycles in the well-posed case or cocycle correspondences with generating cocycles in the ill-posed case, which can satisfy the (A) (B) condition (see \autoref{defAB}), so that our main results in \autoref{graph} and \autoref{stateRegularity} (as well as \autoref{foliations}) can be applied.
Our work (together with \cite{Che18c}) is an abstract generalization of \cite{Yi93, CY94, CL97} and the invariant manifolds of an equilibrium for the differential equations considered in \cite{MS88, BJ89, DPL88, MR09a, ElB12}, etc.

\section{Nontechnical overviews of main results, proofs and applications}
Let us give some nontechnical overviews of main results, proofs and applications for the reader's convenience.

\vspace{.5em}
\noindent{\textbf{Existence of invariant graphs.}}
Heuristically, our main results on the existence of invariant graphs for bundle maps (or bundle correspondences with generating bundle maps) may be summarized as follows (in a special case):

\begin{thmA}[Existence] \label{thmA00}
	Let $ H: X \times Y \to X \times Y $ be a bundle map (or a bundle correspondence with generating bundle map $ (F, G) $) over a map $ u $ where $ X, Y $ are bundles with fibers being complete metric spaces and with base space $ M $, and $ u: M \to M $. Let $ i = (i_X, i_Y) $ be a section of $ X \times Y $ which is invariant under $ H $. Assume $ H $ has some relative partial hyperbolicity described by the (A) (B) condition.
	If the spectral condition holds, then the following conclusions hold.
	\begin{enumerate}[(1)]
		\item There is a $ C^{0,1} $-fiber bundle map $ f: X \to Y $ over $ \id $ such that the graph of $ f $ is invariant under $ H $, i.e., $ \graph f_m \subset H^{-1}_m \graph f_{u(m)} $ for $ m \in M $ (or $ f $ satisfies \eqref{mainF}), and $ i \in \graph f $. In some sense, $ f $ is unique.

		\item The graph of $ f $ can be characterized by the asymptotic behavior of $ H $.
	\end{enumerate}
\end{thmA}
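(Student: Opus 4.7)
The plan is to realize the invariant graph as the unique fixed point of a \emph{graph transform} on a complete metric space of sections, in the spirit of Hadamard.

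First I would fix the working space. Let $\Sigma$ consist of bundle maps $f : X \to Y$ over $\id_M$ such that each fiber map $f_m : X_m \to Y_m$ is close to $i_Y(m)$ in a uniform sense (for instance, $\sup_m d(f_m(i_X(m)), i_Y(m))$ is finite) and is Lipschitz with a uniform constant $L$ chosen compatibly with the (A) (B) rates. Because the fibers of $Y$ are complete metric spaces, $\Sigma$ with the uniform fiberwise distance is a complete metric space. Next I define the graph transform $\Gamma$: given $f \in \Sigma$, $m \in M$ and $x \in X_m$, look for the unique $y \in Y_m$ with $H_m(x,y) \in \graph f_{u(m)}$; in the bundle-correspondence case this means $G_m(x,y) = f_{u(m)}(F_m(x,y))$, an implicit equation in $y$. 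The (B) condition makes the right-hand side of this equation a uniform contraction in $y$ (with constant controlled by the spectral condition and by $L$), so Banach's fixed point theorem in the complete metric fiber $Y_m$ produces a unique $(\Gamma f)_m(x)$.

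I would then check that $\Gamma$ preserves $\Sigma$ and is a contraction. Preservation of the uniform Lipschitz bound $L$ is a cone-invariance argument: pairs $(x_1, (\Gamma f)_m(x_1))$ and $(x_2, (\Gamma f)_m(x_2))$ map into the graph of $f_{u(m)}$, which has slope $\leq L$, and stretching by (A) in the horizontal direction combined with contraction by (B) in the vertical direction forces the slope in the source fiber also to stay $\leq L$, provided the spectral condition relating the (A) and (B) rates holds. Contraction of $\Gamma$ in the sup metric then follows from the vertical contraction rate of (B) beating the horizontal rate of (A), which is exactly what the spectral gap assumption buys; the unique fixed point is the sought $f$. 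Invariance of $i$ under $H$ together with the uniqueness statement in Banach's theorem applied pointwise at $(x, y) = i(m)$ forces $i \in \graph f$, and the local uniqueness of $f$ as a fixed point of $\Gamma$ gives the uniqueness statement in $\Sigma$.

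For the characterization in (2), I would show that $\graph f$ coincides with the set of $(x,y) \in X \times Y$ admitting a full backward (or forward, whichever is the contracting direction for the relevant variable) pseudo-orbit through $H$ — constructed using the generating pair $(F,G)$ in the ill-posed case — that stays within a fixed tube around $i$ and decays at the rate dictated by (B). The inclusion $\graph f \subset \{\text{such points}\}$ is immediate from the invariance equation and the (B) estimate; the reverse inclusion uses that any such pseudo-orbit, when reinterpreted as the iteration $\Gamma^n f_0$ evaluated along it, must converge to the unique fixed point and hence lie on $\graph f$. The main obstacle I expect is the implicit-equation step defining $\Gamma$ in the bundle-correspondence case: guaranteeing that the contraction in $y$ is uniform in $m$ and that the resulting Lipschitz bound transports correctly through the non-trivial and non-locally-compact bundle structure requires the (A) (B) inequalities to be genuinely uniform and to be expressed in bundle-compatible coordinates rather than in any global chart, and this is what makes the result genuinely more delicate than the classical trivial-bundle case.
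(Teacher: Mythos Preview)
Your overall strategy---Hadamard graph transform as a contraction on a complete space of Lipschitz sections---is exactly the paper's approach, but two concrete points would fail as written.

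\textbf{The metric.} You propose the ``uniform fiberwise distance'' $\sup_{m,x} d(f_m(x),g_m(x))$ on $\Sigma$. Since the fibers $X_m$ are not assumed bounded, two Lipschitz graphs through $i(m)$ can diverge linearly at infinity, so this sup is generically infinite and $\Sigma$ is not a metric space. The paper fixes this by using the weighted metric
\[
\widehat d(f,g)=\sup_{m}\sup_{x}\frac{d(f_m(x),g_m(x))}{\max\{d(x,i_X(m)),\,C_1(m)\}},
\]
which is finite on the cone of $\beta'$-Lipschitz maps pinned near $i$, and for which the graph transform contracts with rate $\sup_m \lambda_s(m)\lambda_u(m)/(1-\alpha(m)\beta'(u(m)))$. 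Without this normalization your contraction estimate does not close.

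\textbf{The implicit equation.} With the paper's convention for the generating pair, $F_m:X_m\times Y_{u(m)}\to X_{u(m)}$ and $G_m:X_m\times Y_{u(m)}\to Y_m$, so the relation $(x_2,y_2)\in H_m(x_1,y_1)$ reads $x_2=F_m(x_1,y_2)$, $y_1=G_m(x_1,y_2)$. Hence ``$H_m(x,y)\in\graph f_{u(m)}$'' is \emph{not} the equation $G_m(x,y)=f_{u(m)}(F_m(x,y))$ in $y\in Y_m$ (the two sides do not even live in the same fiber). The correct fixed-point problem is in the \emph{horizontal} variable $\hat x\in X_{u(m)}$:
\[
\hat x = F_m\bigl(x,\,f_{u(m)}(\hat x)\bigr),
\]
which is a contraction by the \emph{angle} condition $\alpha(m)\beta'(u(m))<1$ (from (A$'$) and the Lipschitz bound on $f_{u(m)}$), not by (B). One then sets $(\Gamma f)_m(x)=G_m(x,f_{u(m)}(\hat x))$. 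The (B) condition is what gives $\lip(\Gamma f)_m\le\beta'(m)$ and, together with (A), what makes $\Gamma$ itself contract in $\widehat d$. Your outline for part (2) via forward orbits staying in a tube around $i$ with the prescribed rate is the right characterization and matches \S\ref{properties}.
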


Similar results in the `unstable direction' $ Y \to X $ for $ H^{-1} $ also hold.

For more specific and general statements of our three existence results, see \autoref{thmA}, \autoref{thmB} and \autoref{thm:local} in \autoref{invariant}, where the section $ i $ can be non-invariant (originally motivated by \cite{Cha08}).
A characterization of invariant graphs is given in \autoref{properties}. For the `hyperbolic dichotomy' and `hyperbolic trichotomy' cases, see \autoref{thm:hyperTrivial} and \autoref{thm:pnormalTrivial}, respectively. The former can be applied e.g. to \emph{Anosov} dynamical systems (or more generally restrictions of Axiom A diffeomorphisms to hyperbolic basic sets), two-sided shifts of finite type, or more generally \emph{uniformly hyperbolic homeomorphisms} introduced in \cite{Via08} (see also \cite{AV10}). The latter can be applied e.g. to (partial) \emph{normally hyperbolic invariant manifolds} in a \emph{particular} context that the normal bundle of the (partial) normally hyperbolic manifold can embed into a trivial bundle and the dynamics can also extend to this trivial bundle maintaining the `hyperbolic trichotomy' (see \cite{HPS77, Eld13} and \autoref{pnormalhyper}).

In the `hyperbolic trichotomy' case, if $ H $ is an invertible bundle map, then $ H $ can be divided into three parts: a decoupled center part with two stable and unstable parts depending on the center part (see \autoref{decoupling}). This theorem can be regarded as a generalization of the Hartman--Grossman Theorem (see also \autoref{rmk:decoupling}).

Except the description of partial hyperbolicity, our existence results mentioned above are more general than the ones in the classical literature including e.g. \cite{Sta99, Cha04, Cha08, MS88, CY94, CL97, LYZ13} and \cite[Chapter 5]{HPS77} (about plaque families). No assumption on the base space $ M $ makes the existence results applicable to both deterministic and random dynamical systems. Continuity and measurability problems will be regarded as regularity problems; measurability problems will not be considered in this paper (see \cite{Che18d}) and \emph{(uniform) $ C^0 $ continuity}, \emph{H\"older continuity} and \emph{smoothness} problems will be investigated in \autoref{stateRegularity}.

Additionally, our existence results work not only for $ H $ being a cocycle having some hyperbolicity but also for $ H $ being a bundle correspondence with generating bundle map (see \autoref{basicCorr}). Loosely speaking, one in fact needs a subset of $ X \times Y $ (the graph of $ H $, $ \graph H $); then the `dynamical behavior' in some sense is the `iteration' of $ \graph H $. So it is important to give a method of describing $ \graph H $; classically $ \graph H $ is given by the cocycle (bundle map) and in our context it is given by the generating map.
We will introduce the precise notions of \emph{correspondence} and \emph{generating map} in \autoref{correspondence}, which we learned from \cite{Cha08}. We need these notions at least for two reasons.
\begin{asparaitem}[$ \bullet $]
	\item There are some differential equations (see e.g. \cite{EW91,Gal93,SS99,dlLla09,ElB12}) which are ill-posed and therefore cannot generate semiflows so that the classical theory of dynamical systems (especially invariant manifold theory) cannot be applied directly to these equations, but which may define \emph{cocycle correspondences with generating cocycles} (see \cite{Che18c}). So our general results can be applied to more types of differential equations; see \cite{Che18c} for details.

	\item Unlike finite-dimensional dynamics, the dynamics generated by differential equations in Banach space are not usually invertible, so one cannot derive `unstable results' from the proven `stable results'. Since `hyperbolic systems' (invertible or not) can define correspondences with generating maps, using \emph{dual correspondences} (see \autoref{dual}) one only needs to prove results in one direction. This idea was used in \cite{Cha08} to exhibit (pseudo) (un-)stable manifolds (foliations) in a very unified way.
\end{asparaitem}

\vspace{.5em}
\noindent{\textbf{Regularity of invariant graphs.}}
We will show that $ f $ obtained in \autoref{thmA00} has higher regularity including \emph{continuity}, \emph{H\"olderness} and \emph{smoothness}, once (i) more regularity of the bundle $ X \times Y $ (and the base space $ M $), (ii) more regularity of the maps $ u $, $ i $, $ F, G $, (iii) the spectral gap condition and additionally (iv) some technical assumption on the (almost) continuity of the functions in the (A) (B) condition are fulfilled.
Roughly, our main results on the regularity of invariant graphs for cocycles (or bundle correspondences with generating bundle maps) may be stated as follows.

\begin{thmA}[Regularity] \label{thmB00}
	Under the assumptions of \autoref{thmA00}, we have the following statements.
	\begin{enumerate}[(1)]
		\item \label{n1} If the fibers of $ X \times Y $ are Banach spaces and $ F_{m}(\cdot), G_{m}(\cdot) \in C^1 $ for each $ m $, then $ f_m(\cdot) \in C^1 $. Moreover, there is a $ K^1 \in L(\Upsilon_X^V, \Upsilon_Y^V) $ (see \eqref{HVspace}) over $ f $ satisfying \eqref{mainX} and $ D^vf = K^1 $. The case $ f_{m}(\cdot) \in C^{k,\alpha} $ is very similar. See \autoref{lem:leaf1}, \autoref{lem:leaf1a} and \autoref{lem:leafk}.

		Furthermore, if some continuity properties of the functions in the (A) (B) condition and the corresponding spectral gap conditions (which \textbf{differ} from item to item in the following) are assumed, then we get the following results on the regularity of $ f $. Here we omit specific assumptions on $ X, Y, M $ (in order to make sense of the different uniform properties of $ F, G, u $).

		\item \label{n2} Let $ F, G, i, u \in C^0 $. Then $ f \in C^0 $. See \autoref{lem:continuity_f}.

		\item \label{n3} Under the assumption of \eqref{n2}, if the fiber derivatives $ D^v F, D^v G \in C^{0} $, then $ K^1 \in C^0 $. See \autoref{lem:K1leafc}.

		\item \label{n4} If $ F, G \in C^{0,1} $ and $ u \in C^{0,1} $, then $ m \mapsto f_{m} $ is uniformly (locally) H\"older. See \autoref{lem:sheaf}.

		In addition, assume $ i $ is a $ 0 $-section and $ u \in C^{0,1} $. Then we draw the following further conclusions.
		\item \label{n5} If $ F, G \in C^{1,1} $, and a `better' spectral gap condition (than \eqref{n4}) holds, then $ m \mapsto f_{m} $ is also uniformly (locally) H\"older (in a better way). See also \autoref{lem:sheaf}.

		\item \label{n6} If $ m \mapsto DF_{m}(i^1(m)) $, $ DG_{m}(i^1(m)) $ are uniformly (locally) H\"older, where $ i^1(m) = ( i_X(m), i_Y(u(m)) ) $, then the distribution $ Df_m(i_X(m)) $ depends in a uniformly (locally) H\"older fashion on the base points $ m $. See \autoref{lem:base0}.

		\item \label{n7}
		If $ F, G, D^v F, D^v G \in C^{0,1} $, then $ m \mapsto Df_m(\cdot) $ is uniformly (locally) H\"older. See \autoref{lem:baseleaf}.

		\item \label{n8}
		Let $ u \in C^1 $ and $ F, G \in C^{1,1} $. Then $ f \in C^1 $. Moreover, there is a unique $ K \in L(\Upsilon^H_X, \Upsilon^V_Y)  $ (see \eqref{HVspace}) over $ f $ which is $ C^0 $ and satisfies \eqref{mainM} (or more precisely \eqref{mainMFW}) and $ \nabla f = K $ where $ \nabla f $ is the covariant derivative of $ f $. See \autoref{smoothbase}.

		\item \label{n9} Under the assumption of \eqref{n8}, $ x \mapsto \nabla_{m} f_m(x) $ is locally H\"older uniformly for $ m \in M $. See \autoref{lem:holversheaf}.

		\item \label{n10} Under the assumption of \eqref{n8} and $ u \in C^{1,1} $, $ m \mapsto \nabla_{m} f_m(\cdot) $ is uniformly locally H\"older. See \autoref{lem:final} (1).

		\item \label{n11} Under the assumption of \eqref{n8}, if $ F, G \in C^{2,1} $, $ u \in C^{1,1} $, and a `better' spectral gap condition (than \eqref{n10}) holds, then $ m \mapsto \nabla_{m} f_m(\cdot) $ is uniformly locally H\"older (in a better way). See \autoref{lem:final} (2).
	\end{enumerate}
\end{thmA}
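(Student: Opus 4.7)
The overall strategy is the graph transform combined with the fiber contraction principle, applied to increasingly refined jet-type bundles. Given the invariant graph $f$ produced by \autoref{thmA00}, for each of items (1), (3), (7), (8) I would discover the equation the candidate derivative must satisfy by formally differentiating the invariance relation \eqref{mainF} in the appropriate direction. In every case this equation has the form of an invariance condition for a section of a bundle of bounded linear maps (e.g.\ $L(\Upsilon_X^V,\Upsilon_Y^V)$ for the fiber derivative, $L(\Upsilon_X^H,\Upsilon_Y^V)$ for the horizontal derivative), and the partial hyperbolicity of $H$ lifts to these bundles as a new system still satisfying an (A)(B) condition provided the corresponding spectral gap is imposed. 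Applying \autoref{thmA00} to this linearized system yields a unique invariant section $K^1$ (resp.\ $K$); that this section is the honest derivative of $f$ is then the classical fiber contraction argument: form the skew product of the base graph transform on $f$ with the linearized graph transform, show it contracts on the space of pairs, and conclude joint convergence of iterates together with their formal derivatives. The $C^{k,\alpha}$ variant of item (1) is an iteration of this scheme on jets of order $k$, with the extra H\"older book-keeping absorbed into a stronger spectral gap.

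For the base regularity items (2), (4), (5) I would rely on the characterization part of \autoref{thmA00}: the invariant graph is the unique set with prescribed asymptotic behaviour under $H$. To compare the fibers $f_m$ and $f_{m'}$ one tracks a point on $\graph f_m$ along an orbit of $H$ against a corresponding point on $\graph f_{m'}$ along the orbit of the conjugated map; the (A)(B) condition provides exponential decay of the fiber separation with rate controlled by the stable/center contraction ratio, while the defect produced by moving the base from $m$ to $m'$ is exactly the modulus of continuity of $(F,G,u,i)$ on nearby fibers. Summing the geometric series yields $C^0$ (item (2)) under bare continuity and a H\"older modulus (items (4), (5)) under Lipschitz data, the stronger gap in (5) paying for the extra Lipschitz control. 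Items (3), (6), (7) are the same comparison applied to the linear section $K^1$ instead of $f$, with continuity (resp.\ H\"olderness) of $D^v F, D^v G$ along $i^1$ as the input modulus.

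The delicate block is the smoothness and base H\"older regularity of the derivative in items (8)--(11). Because $X,Y$ are non-trivial bundles, a base derivative is only well defined after choosing a horizontal/vertical splitting of $\Upsilon_X,\Upsilon_Y$; with such a splitting fixed (following \autoref{bundle}), I would lift the invariance \eqref{mainF} to the total tangent bundles, differentiate along a horizontal vector over $M$, and obtain the invariance equation \eqref{mainM} (in its refined form \eqref{mainMFW}) for a section $K\in L(\Upsilon_X^H,\Upsilon_Y^V)$ over $f$. The fiber contraction scheme of the first paragraph, now with $Du$ entering through the pull-back of horizontal vectors and with coefficients involving the full $DF,DG$ rather than only $D^v F, D^v G$, produces a unique continuous $K$ under the gap of (8) and identifies it with $\nabla f$. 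Items (9)--(11) are then H\"older regularity of this same linear section, proved by comparing its values at different fiber points (item (9)) and different base points (items (10), (11)) with the methods of the previous paragraph; item (11) asks one more derivative on $F,G,u$ because it needs a uniform H\"older bound on the full covariant derivative, which in turn demands H\"older control on the coefficients of the linear cocycle defining $K$.

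The main obstacle, beyond the book-keeping of the eleven different spectral gap conditions, is to carry the whole argument out \emph{intrinsically} on non-trivial bundles over a base $M$ that is not assumed locally compact. Without global charts one cannot directly invoke the classical $C^r$ section theorem of Hirsch--Pugh--Shub; instead the horizontal/vertical constructions of \autoref{bundle} and the manifold-of-maps framework of \autoref{mf} must be used at every step, and the (almost) continuity hypotheses built into the (A)(B) condition must be propagated through every graph transform so that all invariant sections land in the correct space of bounded continuous (resp.\ H\"older) sections over the right reference section.
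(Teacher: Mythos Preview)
Your high-level strategy matches the paper's: derive the invariance equations \eqref{mainX} and \eqref{mainMFW} for the candidate derivatives, solve them by a contraction on the appropriate space of linear sections, and then propagate regularity through recursive inequalities. There are, however, two points where your route differs from the paper's in ways worth knowing.

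First, for identifying the candidate $K^1$ (resp.\ $K$) with the actual derivative $D^v f$ (resp.\ $\nabla f$), you invoke the fiber contraction principle on the skew product of the graph transform with its linearization. The paper does \emph{not} use fiber contraction. Instead, having obtained $K^1$ as the fixed point of a separate contraction $\varGamma^1$, it sets $Q(m,x',x)=f_m(x')-f_m(x)-K^1_m(x)(x'-x)$ and shows directly (Sublemma in the proof of \autoref{lem:leaf1}) that
\[
\sup_x \limsup_{x'\to x}\frac{|Q(m,x',x)|}{|x'-x|}
\leq \theta \sup_x \limsup_{x'\to x}\frac{|Q(u(m),x',x)|}{|x'-x|},
\]
whence the left side vanishes. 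The same device is used for $\nabla f=K$ in \autoref{smoothbase}. Your fiber contraction approach is valid and classical; the paper's direct estimate is slightly more elementary in that it needs no $C^1$ seed and no statement about limits of $C^1$ functions.

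Second, and more substantively, your phrase ``summing the geometric series'' for the H\"older items (4)--(7), (9)--(11) understates what is required. In the paper's setting the fibers are unbounded and the base map $u$ may expand (so one can iterate the local recursive inequality only finitely many times before leaving the chart). The resulting estimate after $n$ iterations has the schematic form $C a^n + C' b^n$ with $a>1$, $b<1$; the H\"older exponent $\alpha$ comes from \emph{optimizing} over $n$, not from summing. This is precisely the content of Appendix~A (see \autoref{appexb}, \autoref{lem:rglobal}, \autoref{lem:rlocal}), which the paper applies uniformly to every H\"older item via \autoref{argumentapp}. A straight geometric-series argument would recover only the case $\alpha=1$ (or the case of bounded fibers), and would force stronger spectral gaps than those stated. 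If you carry out your programme you will be led to this balance argument anyway, but it is the single piece of machinery that does the real work in items (4)--(7) and (9)--(11), and it is not visible in your outline.
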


For more specific and general statements of the regularity results, see the lemmas we listed above or \autoref{stateRegularity} where some uniform conditions on $ F, G, u, i $ and $ X, Y, M $ can be only around $ u(M) $ (i.e. the \emph{inflowing} case); see also \autoref{thmC}. In \autoref{bounded}, we also give the corresponding regularity results (without proof) when $ i $ is a `bounded' section, which was also studied in \cite{CY94}; this case is a complement to \autoref{thmB00}. In fact, the regularity results do not depend on the existence results (see \autoref{generalized}).

Our regularity results are uniform and recover many classical ones. The fiber smoothness of the invariant graph (i.e. \autoref{thmB00} \eqref{n1}) is well known in different settings including deterministic and random dynamical systems (see e.g. \cite{HPS77, CL97, FdlLM06, LL10}). Reducing to the case that $ M $ consists of one element, it gives a smoothness result on invariant ((strong) stable, center, pseudo-stable, etc.) manifolds of an equilibrium (cf. e.g. \cite{HPS77, Irw80, dlLW95, VvG87}).
Beside items \eqref{n2} \eqref{n3}, more continuity results on $ f $ are given in \autoref{continuityf}; see also \autoref{rmk:con} for a simple application.
The H\"older continuity of $ f $ like item \eqref{n4}, only requiring the Lipschitz continuity of the dynamics, was also discussed in a very special setting in \cite{Cha08, Sta99} (the metric space case) and \cite[Corollary 5.3]{Wil13} (the smooth space case).
Results like item \eqref{n5} are well known in different contexts; see e.g. \cite{HPS77, PSW97, LYZ13}. Results like those in item \eqref{n6} have been re-proved by many authors and go back to D. V. Anosov for hyperbolic systems on compact sets (see e.g. \cite{PSW12}); see also \cite{Has97, HW99}.
Conclusions like items \eqref{n7}--\eqref{n11} were studied e.g. in \cite{HPS77} (the normal hyperbolicity case), \cite{PSW97} (the partial hyperbolicity case; see also \autoref{holfoliation}) and \cite{CY94} (for cocycles in trivial bundles with base spaces being compact Riemannian manifolds). Very recently, in \cite[Lemma 3]{ZZJ14}, the authors obtained a result like item \eqref{n10} in a particular context; see also \cite[Theorem 1.3]{Sta99}.
Higher order smoothness of $ f $ is not deduced in the present paper.

For precise \emph{spectral gap conditions} in items \eqref{n2}--\eqref{n11} (sometimes called \emph{bunching conditions} (see \cite{PSW97}) or \emph{fiber bunching conditions} (see \cite{AV10, ASV13})), which vary with the different regularity assumptions on $ F, G $, look into the lemmas we listed in \autoref{thmB00}. Spectral gap conditions we give are general (due to the general assumptions on $ F, G $), and in some cases they are new. As is well known, the spectral gap conditions are in some sense essential to obtain higher regularity of $ f $; the sharpness of the spectral gap conditions in some cases was studied e.g. in \cite{HW99} (the H\"olderness of the distribution in item \eqref{n6}) and \cite{Zel14} (concerning differential equations).

Precise assumptions on $ X, Y, M, u, i $ are given in \autoref{settingOverview}.
Basically, we try to generalize the base space $ M $ to the non-compact (and non-measurable) case, and the bundles $ X, Y $ to the non-trivial case as well.
See \autoref{examples} for some relations (or examples) between our assumptions on $ X, Y, M $ and \emph{bounded geometry} (\cite{Eic91, Ama15, Eld13}) as well as the manifolds studied in \cite{BLZ99, BLZ08}. As a simple illustration, consider the following classical cases:
\begin{enumerate}[$ \bullet $]
	\item $ X \times Y = M \times X_0 \times Y_0 $, where $ M, X_0, Y_0 $ are Banach spaces;

	\item $ X, Y $ are smooth vector bundles and the base space $ M $ is a smooth compact Riemannian manifold.
\end{enumerate}
The assumptions on $ X, Y, M $ are in order to make sense of the uniform $ C^{k,\alpha} $ continuity of $ F, G, f, u $. Our way of describing the uniform properties of a bundle map, which in some sense is highly classical (see e.g. \cite[Chapter 6]{HPS77}, \cite[Section 8]{PSW12}, \cite[Section 2]{Cha04} and \cite{Eld13,Ama15}), is by using (a particular choice of) local representations of the bundle map with respect to preferred atlases; see \autoref{bundle} and \autoref{mf}. The uniform property of a bundle map is defined to be the uniform property of local representations. Here we mention that since our local representations are specially selected (see \autoref{def:regular}), the conclusions (and conditions) in \autoref{leaf} to \autoref{HolderivativeBase} are formally weak. But these will be strengthened as e.g. in \cite{HPS77,PSW12} once certain uniform properties of the transition maps in preferred atlases are assumed (which means that the higher regularity of the bundles is needed); see \autoref{lipbase} and \autoref{bundle} for details.
The ideas of how to extend the bundles (and base spaces) to more general settings are presented in \autoref{prelimiaries}.

The greatest difficulty in applying our existence and regularity results is how to verify the hyperbolicity condition, but this is not the content of this paper. We refer the readers to see \cite{CL99} for a detailed study of the spectral theory of linear cocycles in the infinite-dimensional setting; see also \cite{LL10, SS01, LP08} in the cocycle or evolution case, and \cite{LZ17, Zel14, NP00} in the `equilibrium' case. In \autoref{relAB0} and \autoref{relAB1} we given some relations between the (A) (B) condition and some classical hyperbolicity conditions; see also \cite{Che18c} for a discussion on the dichotomy and the (A) (B) condition in the context of differential equations.

\vspace{.5em}
\noindent{\textbf{A nontechnical overview of proofs.}}
In the theory of invariant manifolds, there are two fundamental methods: the \emph{Hadamard graph transform method} (due to \cite{Had01}) and the \emph{Lyapunov-Perron method} (due to \cite{Lia47, Cot11, Per29}), where the former is more geometric and the latter more analytic. These two methods have proved successful in establishing the existence and regularity of invariant manifolds, and in many cases both are feasible. We refer the readers to e.g. \cite{VvG87, CL88, CY94, CL97, Cha04, Cha08, MR09a, LL10} where the Lyapunov--Perron method was applied, and \cite{HPS77, MS88, PSW97, BLZ98, Sta99, LYZ13, Wil13} where the graph transform method was used.

In this paper, we employ the graph transform method to prove all the existence and regularity results. Intuitively, this method is to construct a \emph{graph transform} which for a given graph, say $ \mathcal{G}_1 $, yields a unique graph $ \mathcal{G}_2 $ such that $ \mathcal{G}_2 \subset H^{-1} \mathcal{G}_1 $; see the proof of \autoref{thmA00} for an example, and \autoref{graphTrans}. Unfortunately, usually this cannot be done for general $ H $ and arbitrary graphs. In our circumstances, $ H $ requires some hyperbolicity and the graphs, for our purpose, should be represented as functions $ X \to Y $. Now for the invariant graph $ \mathcal{G} $ of $ H $, one gets an equation like \eqref{mainF}. Applying the Banach Fixed Point Theorem to the graph transform on some appropriately chosen space, one can get the desired invariant graph. It seems that our approach to \autoref{thmA00} (i.e. \autoref{thmA}) based on the graph transform method is uniform, elementary and concise.

A more challenging task is to get the higher regularity of $ f $. Our strategy for proving the H\"olderness and smoothness of $ f $ is from `fiber-regularity' to `base-regularity' and from `low regularity' to `high regularity', that is, step by step in the order of items \eqref{n1}--\eqref{n11} in \autoref{thmB00}. (It is not so easy to prove the $ C^1 $ continuity of $ f $ if items \eqref{n1}--\eqref{n7} in \autoref{thmB00} are not obtained first.)
For the H\"older regularity of $ f $, we use an argument which at least in some special cases is classical (cf. e.g. \autoref{appexb} as a motivation), but very \emph{uniform} (see \autoref{argumentapp}). However, due to our general settings (especially no uniform boundedness of the fibers is assumed and the hyperbolicity is described in a relative sense), the expression of spectral gap conditions is a little complicated (see \autoref{absgc}) and some preliminaries are needed (see \autoref{Appbb}).
To prove the smoothness of $ f $, we employ a popular argument (see e.g. \autoref{lem:fdiifx} as an illustration); that is, one needs first to find the `variant equation' (see e.g. \eqref{mainX} or `\eqref{mainM}') satisfied by the `derivative' of $ f $ and then one tries to show the solution of this `variant equation' is indeed the `derivative' of $ f $. Again the graph transform method is used to solve the `variant equation'.
For the fiber-smoothness of $ f $ (i.e. \autoref{thmB00} \eqref{n1}), the `variant equation' is easy to find (i.e. \eqref{mainX} or ($ \dag $) in the proof of \autoref{lem:leafk} for the higher order fiber-smoothness), \emph{whereas} for the base-smoothness of $ f $ (i.e. \autoref{thmB00} \eqref{n8}) the situation becomes intricate.
We introduce an additional structure of a \emph{connection} (see \autoref{connections} for an overview) to give a derivative of a $ C^1 $ bundle map respecting base points, i.e., the \emph{covariant derivative} of $ f $. (Now `\eqref{mainM}' has a precise meaning, i.e., \eqref{mainMFW}.)
This approach to prove the smooth regularity of $ f $ may be new and different from the classical method (e.g. \cite{HPS77, PSW97}). Also, the proof given in this paper in some sense simplifies the classical one.

\vspace{.5em}
\noindent{\textbf{Application I: abstract dynamical systems.}}
In \autoref{section} and \autoref{foliations}, we give some applications of our existence and regularity results to abstract dynamical systems; more applications will appear in our future work (see e.g. \cite{Che18b, Che18}).
A direct application of our main results is the $ C^{k,\alpha} $ section theorem (\cite[Chapter 3, Chapter 6]{HPS77}, \cite[Theorem 3.2]{PSW97} and \cite[Theorem 10]{PSW12}); see \autoref{thm:sect1} and \autoref{thm:sect2}.

A second application is some results on invariant foliations contained in \autoref{foliations}, where the infinite-dimensional and non-compact settings are established and the dynamics is not necessarily invertible. The global version of invariant foliations for bundle maps, i.e., \autoref{thm:bundlemaps}, was re-proved by many authors (see e.g. \cite{HPS77, Sta99, Cha04, Cha08}). A basic application of different types of invariant foliations is to decouple the systems (see \autoref{decoupling}).
However, the local version of invariant foliations like fake invariant foliations, first introduced in \cite{BW10} (see also \cite{Wil13}), is not so well known. In \autoref{fake}, we exhibit fake invariant foliations in the infinite-dimensional setting (see \autoref{thm:fake1} and \autoref{thm:fake2}). We mention that unlike center foliations for partially hyperbolic systems which might not exist, fake invariant foliations always exist but are only locally invariant.

Strong (un)stable laminations (and foliations) were extensively studied e.g. by \cite{HPS77, Fen77, BLZ00, BLZ08} (for the normally hyperbolic case) and \cite{PSW97} (for the partially hyperbolic case). In \autoref{sslaminations}, we study the existence and H\"older continuity of strong stable laminations in the metric spaces setting; see \autoref{thm:ssl} and \autoref{corr:hol}, which also give the \emph{$ s $-lamination} (resp. $ u $-lamination) for maps (resp. invertible maps) (see e.g. \cite[Section 4.1]{AV10}). The corresponding result in the smooth spaces setting is given in \autoref{corr:smoothcase}. In general, one cannot expect a strong stable foliation is $ C^1 $; but if some bunching condition is assumed, a strong stable foliation is $ C^1 $ inside each leaf of the center-stable foliation (if it exists) (see e.g. \cite{PSW97, PSW00}). There is a \emph{difficulty} that usually the center-stable foliation is again non-smooth, which was dealt with in \cite{PSW97, PSW00}.
We re-prove this result by using our regularity results (see \autoref{holfoliation} for details).
A similar argument is also used in the proof of the regularity of fake invariant foliations (see \autoref{thm:fake2}) and strong stable foliations in center-stable manifolds (see \cite{Che18b}).

The existence and regularity of \emph{holonomies (over laminations) for cocycles} (see \autoref{holonomy}) are also discussed in \autoref{holonomyL}; these are mostly direct consequences of our main results (\autoref{thm:hol} and \autoref{corr:hol2}). The holonomies for cocycles were studied in \cite[Section 2]{Via08} and \cite[Section 5]{AV10}; see also \cite{ASV13}. We give a vast generalization (in a uniform way) so that one could apply it to infinite-dimensional dynamical systems.

\vspace{.5em}
\noindent{\textbf{Application II: differential equations.}}
To apply our existence and regularity results in \autoref{graph} and \autoref{stateRegularity} (as well as \autoref{foliations}) to semilinear differential equations, from the abstract point of view, one needs to show that \emph{differential equations can generate cocycles (in the well-posed case) or cocycle correspondences with generating cocycles (in the ill-posed case) satisfying the (A) (B) condition}. In \cite{Che18c}, we dealt with the relationship between the \emph{dichotomy} (or more precisely the exponential dichotomy of differential equations) and the (A) (B) condition. See \cite{Che18c} for applications of our main results (as well as \cite{Che18b, Che18}) to both well-posed and ill-posed differential equations.

\section{Structure of this paper}

The related notions of correspondence having a generating map are introduced in \autoref{correspondence} and the definition of the (A) (B) condition with its relevance is given in \autoref{hyper}.
In \autoref{graph}, we give our existence results together with their corollaries.
\autoref{prelimiaries} contains some preliminaries on bundles and manifolds such as the description of uniform $ C^{k,\alpha} $ continuity of bundle maps on appropriate types of bundles, casting them in a light suitable for our purpose to set up the regularity results.
In \autoref{stateRegularity}, we give the regularity results and their proofs.
Some applications of our main results are given in \autoref{application}. \autoref{Appbb} contains a key argument in the proof of our regularity results. \autoref{bundleII} continues \autoref{bundle}. \autoref{examples} provides some examples related to our (uniform) assumptions on manifolds and bundles.
Some miscellaneous topics such as a fixed point theorem under minimal conditions, Finsler manifolds in the sense of Neeb--Upmeier, Lipschitz characterizations in length spaces and bump functions (and blid maps), are provided in \autoref{misc}.

\vspace{.5em}
\noindent{\textbf{Guide to notation.}}
The following is a guide to the notation used throughout this paper.
\begin{enumerate}[$ \bullet $]
	\item $\lip f$: the Lipschitz constant of $f$;
	$\hol _{\alpha}f$: the $\alpha$-H\"{o}lder constant of $f$.

	\item $\mathbb{R}_{+} \triangleq \{x\in \mathbb{R}: x \geq 0\}$.

	\item $X(r) \triangleq \mathbb{B}_r = \{ x \in X: |x| <  r\}$ where $X$ is a Banach space.

	\item For a correspondence $ H: X \to Y $ (defined in \autoref{basicCorr}),
	\begin{asparaitem}
		\item $ \graph H $, the graph of $ H $,
		\item $ H(x) \triangleq \{ y: \exists (x, y) \in \graph H \} $,
		\item $A \subset H^{-1}(B)$, if $ \forall x \in A $, $ \exists y \in B $ such that $ y \in H(x) $,
		\item $ H^{-1}: Y \to X $, the inversion of $ H $, defined by $ (y,x) \in \graph H^{-1} \Leftrightarrow (x, y) \in \graph H $.
	\end{asparaitem}

	\item $H\sim(F,G)$: the correspondence $H: X_1 \times Y_1 \rightarrow X_2 \times Y_2$ has a generating map $(F,G)$, see \autoref{basicCorr}.

	\item $f(A) \triangleq \{ f(x): x \in A \}$ where $f$ is a map; $\graph f \triangleq \{ (x, f(x)): x \in X \}$.

	\item $ \diam A \triangleq \sup\{d(m,m'):m,m' \in A \} $: the diameter of $ A $, where $A$ is a subset of a metric space.

	\item $ X \otimes_u Y $, $ X \times Y $: the Whitney sum of $ X, Y $ through $ u $, defined in \autoref{bundleM}.

	\item $ \lambda^{(k)}(m) \triangleq  \lambda(u^{k-1}(m))\lambda(u^{k-2}(m)) \cdots \lambda(m) $ where $ \lambda: M \to \mathbb{R} $ over $ u $, defined in \autoref{bundlemap}.

	\item $ L_u(X,Y) \triangleq \bigcup_{m \in M} (m, L(X_m, Y_{u(m)})) $ where $ u: M \to N $ is a map and $ X, Y $ are vector bundles over $ M $, $ N $, respectively. Write $ K \in L(X,Y) $ (over $ u $) if $ K \in L_u(X, Y) $. See \autoref{vBundle}.

	\item $ \Upsilon^H_X, \Upsilon^V_X $: see \eqref{HVspace}. If $ K \in L(\Upsilon^H_X, \Upsilon^V_Y) $ or $ L(\Upsilon^V_X, \Upsilon^V_Y) $ over some map, we write $ K_{m} (x) = K_{(m,x)} $.

	\item $ Df_m(x) = D_x f_m(x) $: the derivative of $ f_m(x) $ with respect to $ x $; 
	
	$ D_1F_m(x,y) = D_x F_m(x,y) $, $ D_2F_m(x,y) = D_y F_m(x,y) $: the derivatives of $ F_m(x,y) $ with respect to $ x, y $, respectively.

	\item $ D^v f $: the fiber derivative of a bundle map $ f $ defined in \autoref{lip maps}.

	\item $ \nabla f $: the covariant derivative of a bundle map $ f $ defined in \autoref{coderivative}; $ \nabla_m f_m(x) $: the covariant derivative of a bundle map $ f $ at $ (m, x) $ defined in \autoref{coderivative}.

	\item $ f \in C^{k,\alpha} $: if $ D^i f $ is bounded for $ i = 1, 2, \ldots, k $ and $ D^k f \in C^{0,\alpha} $ (i.e. globally $ \alpha $-H\"{o}lder).

	\item $ \mathbb{G}(X) = \bigsqcup_{m} \mathbb{G}(X_m) $ is the ($ C^{k} $) \emph{Grassmann manifold} of $ X $ (see e.g. \cite{AMR88} for the definition of the Grassmann manifold of a Banach space), where $ X $ is a $ C^{k} $ vector bundle.

	\item $\widetilde{d} (A, z) \triangleq \sup_{\tilde{z} \in A} d(\tilde{z}, z) $ where $A$ is a subset of a metric space defined in \autoref{invariant}.

	\item $\sum_\lambda (X, Y) \triangleq \{ \varphi: X \rightarrow Y~ : \lip \varphi \leq \lambda \}$ where $X, Y$ are metric spaces, defined in \autoref{graphTrans}.

	\item $ a_n \lesssim b_n $, $ n \to \infty $ ($ a_n \geq 0, b_n > 0 $) means that $ \sup_{n\geq 0}b^{-1}_n a_n <\infty $.

	\item $ \mathcal{E}(u) $: defined in \autoref{def:lypnum}.

	\item $ \lambda\theta $, $ \max\{ \lambda, \theta \} $: defined by
	$ (\lambda\theta) (m) = \lambda(m)\theta(m) $,
	$ \max\{ \lambda, \theta \}(m) = \max\{ \lambda(m), \theta(m) \} $.

	\item $ \lambda^{*\alpha} \theta < 1 $, $ \theta < 1 $ where $ \lambda, \theta: M \to \mathbb{R}_+ $: see \autoref{absgc} and \autoref{Appbb}.

	\item $ [a] $: the largest integer no more than $ a $ where $ a \in \mathbb{R} $.

	\item $ r_\varepsilon(x) $: the radial retraction, see \eqref{radial}.

	\item {For convenience, we usually write the metric $d(x,y)$ as $|x-y|$}.
\end{enumerate}

\emph{All the metric spaces appearing in this paper are assumed to be complete}, unless otherwise mentioned.

\chapter{Basic Notions: Bundle, Correspondence, Generating Map}\label{correspondence}

In this section, we collect some basic notions for the convenience of readers; in particular, we generalize the notion of correspondence having a generating map in \cite{Cha08} to different contexts. Throughout, we use the following notations for a map $ f $ between metric spaces:
\begin{enumerate}[$ \bullet $]
	\item $\lip f = \sup\{ d(f(x), f(y))/ d(x, y): x \neq y \}$, the Lipschitz constant of $f$;

	\item $\hol_{\alpha}f = \sup\{ d(f(x), f(y))/ d(x, y)^{\alpha}: x \neq y \}$, the $\alpha$-H\"{o}lder constant of $f$.
\end{enumerate}

\section{Basic notions relating to bundles}\label{metricfiber}
\subsection{Bundles with metric fibers}\label{bundleM}
$(X, M, \pi)$ is called a \emph{(set) bundle} if $ \pi: X\rightarrow M $ is a surjection, where $X, M$ are sets. If $M, \pi$ are not emphasized, we also call $X$ a bundle.
$X_m \triangleq \pi^{-1}(m)$ is called a \emph{fiber}, and $M, X, \pi$ are called the \emph{base space}, the \emph{total space}, and the \emph{projection}, respectively.
Elements of $X$ are sometimes written as $(m, x)$, where $x \in X_m$, in order to emphasize that $ x $ belongs to the fiber $ X_m $.
The most important and simplest bundle is the \emph{trivial bundle} $(M \times X, M, \pi)$, where $\pi(m,x)=m$.

If every fiber $X_m$ is a complete metric space, we say $(X, M, \pi)$ is a \textbf{(set) bundle with metric fibers}. \emph{Throughout this paper, all bundles are assumed to be bundles with metric fibers}.
Although the metrics $d_m$ of $X_m$ differ from each other, we will use the same symbol $d$ to indicate them. \emph{For convenience, we often write $d(x,y)$ as $|x-y|$}.

If $(X, M, \pi_1), (Y, M, \pi_2)$ are bundles with metric fibers, then $(X\times Y, M, \pi)$ is also a bundle with metric fibers, where $\pi((m,x), (m,y)) = m, x \in X_m, y \in Y_m$. The fibers are $X_m \times Y_m$, $ m \in M $, with the product metric, i.e.,
\begin{equation}\label{metric}
d_p((x,y), (x_1, y_1)) =
\begin{cases}
(d(x,x_1)^p + d(y,y_1)^p)^{\frac{1}{p}},  ~~1 \leq p < \infty, \\
\max\{d(x,x_1), d(y, y_1)\}, ~~p = \infty.
\end{cases}
\end{equation}

Let $ u : M \to N $ be a map. The Whitney sum of $ X, Y $ through $ u $, denoted by $ X \otimes_u Y $, is defined by
\begin{equation}\label{Whsum}
X \otimes_u Y \triangleq \{ (m, x, y): x \in X_m, y \in Y_{u(m)}, m \in M \}.
\end{equation}
This is a bundle with base space $ M $ and fibers $ X_m \times Y_{u(m)} $, $ m \in M $. If $ M = N $ and $ u = \id $, we also use the standard notation $ X \times Y = X \otimes_{\id} Y $.

\subsection{Bundle maps}\label{bundlemap}
If $(X, M_1, \pi_1), (Y, M_2, \pi_2)$ are bundles and $f: X \rightarrow Y$ is a map satisfying $\pi_2 f = u \pi_1$, where $u: M_1 \rightarrow M_2$ is a map, then we call $f$ a \textbf{bundle map over $u$}.
We write $f(m,x) = (u(m), f_m(x)), x \in X_m, f_m(x) \in Y_{u(m)}$, and call $f_m: X_m \rightarrow Y_{u(m)}$ a \textbf{fiber map}. Let $ M_1 = M_2 = M $ and $ X = Y $. The \textbf{$ k $th composition} of $ f $ is a bundle map over $ u^k $. We write $ f^k(m, x) = (u^k(m), f^{(k)}_m(x)) $. Usually, we write $ f^{(k)} $ rather than $ f^k $ for the $ k $th composition of $ f $.

For example, let $ \lambda: M \to \mathbb{R} $. Consider $ \lambda $ as a bundle map over $ u $, i.e. $ \lambda(m,x) = (u(m), \lambda(m)x) : M \times \mathbb{R} \to \mathbb{R} $. In this sense, we call $ \lambda $ a function of $ M \to \mathbb{R} $ over $ u $, and the notation $ \lambda^{(k)}(m) $ \emph{hereafter} means \label{functionu}
\[
\lambda^{(k)}(m) \triangleq \lambda^{(k)}_m (1) =  \lambda(u^{k-1}(m))\lambda(u^{k-2}(m)) \cdots \lambda(m).
\]

\subsection{Bundle charts}
Let $ X $ be a bundle. If $ m_0 \in U \subset M $ and $\varphi: U \times X_{m_{0}} \rightarrow X$, $ (m, x)\mapsto (m, \varphi_m (x))$, is such that $\varphi_m (x) \in X_m$ and $\varphi_m$ is a bijection, we call $(U, \varphi)$ a \textbf{(set) bundle chart} of $X$ at $m_0$ and $ U $ the \textbf{domain} of the bundle chart $ (U, \varphi) $. If we do not emphasize the domain $ U $, we also say $ \varphi $ is a bundle chart (at $ m_0 $).
If $\mathcal{A} =\{ (U_{\alpha}, \varphi^{\alpha}): \alpha \in \Lambda \} $ is a family of bundle charts of $ X $ such that $\bigcup_{\alpha \in \Lambda} U_{\alpha} = M$, where $ \Lambda $ is an index set, we usually call $ \mathcal{A} $ a \textbf{bundle atlas} (or \textbf{atlas} for short) of $ X $. For convenience, we also say that $ \mathcal{A} $ is a \textbf{bundle atlas on} $ M_1 \subset M $ if for every $ m_0 \in M_1 $, there is a bundle chart at $ m_0 $ belonging to $ \mathcal{A} $.

\subsection{Vector bundles}\label{vBundle}

A bundle is called a \emph{vector bundle} if each fiber is a Banach space. A $ C^{k} $ vector bundle is defined in \autoref{bundleP} \eqref{vtBundle} (see also \cite{AMR88}). A bundle atlas of a vector bundle is called a \emph{vector bundle atlas} if each bundle chart $ (U, \phi) $ belonging to this atlas is linear, i.e. $ x \mapsto \phi_m(x) $ is linear for each $ m \in U $. A bundle map between vector bundles is called a \emph{vector bundle map} if each fiber map is linear.

Denote $ L_{u}(X,Y) \triangleq \bigcup_{m \in M} (m, L(X_m, Y_{u(m)})) $, which is a vector bundle over $ M $, when $ X,Y $ are vector bundles over $ M, N $, respectively, and $ u: M \to N $. Note that if $ X, Y $ are $ C^k $ vector bundles and $ u $ is $ C^k $ ($ k \geq 0 $), then $ L_{u}(X,Y) $ is also a $ C^k $ vector bundle. If $ M = N $ and $ u = \id $, we use the standard notation $ L(X, Y) = L_{\id}(X, Y) $. Without causing confusion, write $ K \in L(X,Y) $ (over $ u $) if $ K \in L_u(X, Y) $.

\subsection{$0$-sections}\label{0-section}

\begin{defi}
	A map $i: M \rightarrow X$ is called a \emph{section} of a bundle $(X, M, \pi)$ if $i(m) \in X_m, \forall m \in M$.
	We say $ i $ is a \textbf{$\bm{0}$-section} of $X$ with respect to the bundle atlas $\mathcal{A}$ of $ X $ if for every $(U, \varphi) \in \mathcal{A}$ at $m_0$, $\varphi(m, i(m_0)) = (m, i(m))$ for $ m \in U$.
	When $X$ has a $0$-section $ i $, \emph{we will write $|x| \triangleq d(x, i(m))$ for $x \in X_m$}.
\end{defi}
For example, any vector bundle has a natural $0$-section with respect to any vector bundle atlas. The notion of $0$-section is important for us to give higher regularity of invariant graphs in the invariant section case (see \autoref{stateRegularity}); we think this is why \cite{Cha08} could not obtain the classical spectral gap condition for the regularity of invariant foliations in the trivial bundle case (see \cite[note, p. 1431]{Cha08} and \autoref{lem:sheaf}, \autoref{holsheaf}, \autoref{h3*}). In the bounded section case (see \autoref{bounded}), we only need the bounded section satisfying the weaker assumption (UB) on \autopageref{b3-}.

\section{Definitions: discrete case} \label{basicCorr}

Let $X, Y$ be sets. We say that there is a \textbf{correspondence} $H: X \rightarrow Y$ (see \cite{Cha08}) if there is a non-empty subset of $X \times Y$ called the \emph{graph} of $ H $ and denoted by $\graph H$.
There are some operations between correspondences:
\begin{enumerate}[$ \bullet $]
	\item (inversion) If $ H: X \rightarrow Y $ is a correspondence, define its inversion $ H^{-1} : Y \rightarrow X $ by
	\[
	(y,x) \in \graph H^{-1} ~\text{if and only if}~(x,y) \in \graph H.
	\]
	\item (composition) Let $H_1: X \rightarrow Y$, $H_2: Y \rightarrow Z$ be correspondences. Define $H_2 \circ H_1: X \rightarrow Z$ by
	\[
	\graph H_2 \circ H_1 = \{ (x,z): \exists y \in Y ~\text{such that}~ (x,y) \in \graph H_1, (y,z) \in \graph H_2 \}.
	\]
	If $ H: X \to X $, then as usual $ H^{(n)} \triangleq H\circ \cdots \circ H $ ($ n $ times).
\end{enumerate}
The following notations for a correspondence $ H: X \to Y $ will be used frequently:

\begin{enumerate}[$ \bullet $]
	\item Write
	\[
	H(x) \triangleq \{ y \in Y: \exists (x, y) \in \graph H \},~H(A) \triangleq \bigcup_{x \in A} H(x),
	\]
	if $ A \subset X $; allow $ H(x) = \emptyset $; if $ H(x) = \{y\} $, write $ H(x) = y $.

	\item So $ A \subset H^{-1}(B) $ means that $ \forall x \in A $, $ \exists y \in B $ such that $ y \in H(x) $ (i.e. $ x \in H^{-1}(y) $).

	\item If $ X = Y $, we say $ A \subset X $ is \emph{invariant} under $ H $ if $ A \subset H^{-1}(A) $.

	\item If $ A \subset H^{-1}(B) $, then $ H: A \to B $ can be regarded as a correspondence $ H|_{A \to B} $ defined by $ (x, y) \in \graph H|_{A \to B} \Leftrightarrow y \in H(x) \cap B $.

	\item We say $ H: A \to B $ (or $ H|_{A \to B} $) induces (or defines) a map (also denoted by $ H|_{A \to B} $), if $ \forall x \in A $, $ H(x) \cap B $ consists of only one element; if $ A = B $, we write $ H|_{A} = H|_{A \to B} $.
\end{enumerate}

The related concepts mentioned above also appeared in \cite{Aki93}, where the author used the term `relation' in place of `correspondence' and used it to unify some properties of topological dynamical systems (e.g. recurrence, attractor, chain recurrence, etc.).

Apparently, $ x \mapsto H(x) $ can be considered as a `multi-valued map'. But it does not make sense for us to look at it that way, since we only focus on the description of $ \graph H $. We say a correspondence $H: X_1 \times Y_1 \rightarrow X_2 \times Y_2$ has a \textbf{generating map} $(F,G)$, and write $H\sim(F,G)$, if there are maps $F: X_1 \times Y_2 \rightarrow X_2$, $G: X_1 \times Y_2 \rightarrow Y_1$, such that $(x_2,y_2) \in H(x_1,y_1) \Leftrightarrow y_1 = G(x_1,y_2), ~x_2 = F(x_1, y_2)$.

\begin{exa}\label{cgm}
	\begin{enumerate}[(a)]
		\item A map always induces a correspondence by using its graph, but it may not have a generating map. The following type of maps induce correspondences with generating maps.
		Let $H = (f,g): X_1 \times Y_1 \rightarrow X_2 \times Y_2$ be a map. Suppose for every $x_1 \in X_1$, $g_{x_1}(\cdot) \triangleq g(x_1, \cdot): Y_1 \rightarrow Y_2$ is a bijection. Let $G(x_1, y_2) = g^{-1}_{x_1}(y_2)$, $F(x_1, y_2) = f(x_1, G(x_1, y_2))$. Then $H \sim (F,G)$.

		A map having some hyperbolicity will be in this case, no matter if the map is bijective or not. This is an important observation for studying invariant manifolds for non-invertible maps. By transferring the maps to correspondences having generating maps, M. Chaperon \cite{Cha08} gave a uniform method of proving the existence of different types of invariant manifolds.
		\item \label{diff} Let $ X, Y $ be Banach spaces. Let $ T(t): X \to X $, $ S(-t): Y \to Y $, $ t \geq 0 $, be $ C_0 $ semigroups with generators $ A, -B $ respectively and $ |T(t)| \leq e^{\mu_s t} $, $ |S(-t)| \leq e^{-\mu_u t} $, $ \forall t \geq 0$. Take $ F_1 : X \times Y \to X $, $ F_2 : X \times Y \to Y $ and Lipschitz with $ \lip F_i \leq \varepsilon $. Then the time-one \emph{mild} solutions of the following equation induce a correspondence $ H : X \times Y \to X \times Y $ with generating map $ (F, G) $:
		\[ \tag{$ \divideontimes $}
		\begin{cases}
		\dot{x} = A x + F_1 (x, y), \\
		\dot{y} = B y + F_2 (x, y),
		\end{cases}
		\]
		or equivalently
		\[
		\begin{cases}
		x(t) = T(t)x_1 + \int_{0}^{t} T(t-s)F_1(x(s),y(s)) ~\mathrm{d} s, \\
		y(t) = S(t - 1)y_2 - \int_{t}^{1} S(t-s)F_2(x(s),y(s)) ~\mathrm{d} s,
		\end{cases}
		0 \leq t \leq 1.
		\]
		Define maps $ F, G $ as follows. The above equation has a unique $ C^0 $ solution $ (x(t), y(t)) $ ($ 0 \leq t \leq 1 $) such that $ x(0) = x_1, y(1) = y_2 $ (see e.g. \cite{ElB12, Che18c}); now let $ F(x_1, y_2) = x(1) $, $ G(x_1, y_2) = y(0) $. Using $ F, G $, one can define a correspondence $ H \sim (F, G) $.
		Moreover, if $ \mu_u - \mu_s - 2\varepsilon > 0 $, then there are constants $ \alpha, \beta > 0, \lambda_s = e^{\mu_s + \varepsilon}, \lambda_u = e^{-\mu_u + \varepsilon} $ such that $ \alpha \beta < 1 $ and $ H $ satisfies the (A)($ \alpha, \lambda_u $) (B)($ \beta, \lambda_s $) condition (see \autoref{AB}). For details, see \cite{Che18c}.
		Equation $ (\divideontimes) $ is usually \emph{ill-posed}, meaning that for a given $ (x_0, y_0) \in X \times Y $, there may be no (mild) solution $ (x(t), y(t)) $ satisfying $ (\divideontimes) $ with $ x(0) = x_0, y(0) = y_0 $.
	\end{enumerate}
\end{exa}

Let $(X, M, \pi_1), (Y, N, \pi_2)$ be bundles, and $u: M \rightarrow N$ a map. Suppose $H_m: X_m \times Y_m \rightarrow X_{u(m)} \times Y_{u(m)}$ is a correspondence for every $m \in M$. Using $H_m$, one can determine a correspondence $H: X \times Y \rightarrow X \times Y$ by $\graph H \triangleq \bigcup_{m \in M}(m, \graph H_m$), i.e. $(u(m), x_{u(m)}, y_{u(m)}) \in H(m, x_m, y_m) \Leftrightarrow (x_{u(m)}, y_{u(m)}) \in H_m(x_m, y_m)$. We call $H$ a \textbf{bundle correspondence over a map $u$}. Now, $ H^{-1} $, the inversion of $ H $, means $ H^{-1}_{m} \triangleq (H_{m})^{-1}: X_{u(m)} \times Y_{u(m)} \to X_{m} \times Y_{m} $, $ m \in M $.
If $H_m$ has a generating map $(F_m, G_m)$ for every $m \in M$, where $F_m: X_m \times Y_{u(m)} \rightarrow X_{u(m)}, ~G_m: X_m \times Y_{u(m)} \rightarrow Y_m$ are maps, we say $H$ has a \textbf{generating bundle map} $(F,G)$, and write $H \sim (F,G)$.

Let $ M = N $. We use $ H^{(k)} $ for the \textbf{$ k $th composition} of $ H $, defined by $ H^{(k)}_m = H_{u^{k-1}(m)} \circ H_{u^{k-2}(m)} \circ \cdots \circ H_m $. This is a bundle correspondence over $ u^k $.

\begin{exa}
	\begin{enumerate}[(a)]
		\item Let $H: X \times Y \rightarrow X \times Y$ be a bundle map over $u$, where $(X, M, \pi_1)$, $(Y, M, \pi_2)$ are bundles, and $H_m = (f_m, g_m): X_m \times Y_m \rightarrow X_{u(m)} \times Y_{u(m)}$. Suppose for every $m \in M$ and $x \in X_m$, $g_{m, x}(\cdot) \triangleq g_m(x, \cdot): Y_{m} \rightarrow Y_{u(m)}$ is a bijection. Let $G_m(x, y) = g^{-1}_{m,x}(y): X_m \times Y_{u(m)} \rightarrow Y_m$, $F_m(x, y) = f_m(x, G_m(x, y)): X_m \times Y_{u(m)} \rightarrow X_{u(m)}$. Then we have $H_m \sim (F_m,G_m)$ and $H \sim (F,G)$.
		\item Let $ A, B $ be the operators in \autoref{cgm} \eqref{diff}. Let $ Z = X \times Y $ and $ C = A \oplus B: Z \to Z $. Let $ M $ be a topological space and $ t: M \to M: \omega \mapsto t \omega $ a continuous semiflow. Let $ L : M \to L(Z, Z) $ be strongly continuous (i.e. $ (\omega, z) \mapsto L(\omega)z $ is continuous) and $ f(\cdot)(\cdot): M \times Z \to Z $ continuous. Assume for every $ \omega \in M $, $ \sup_{t \geq 0}|L(t\omega)| = \tau(\omega) < \infty $ and $ \sup_{t\geq 0 } \lip f(t\omega)(\cdot) < \infty $. Consider the equation
		\[
		\dot{z}(t) = Cz(t) + L(t\omega)z(t) + f(t\omega)z(t).
		\]
		For the above differential equation (which is usually ill-posed), the time-one mild solutions induce a bundle correspondence $ H $ over $ u(\omega) = 1\cdot \omega $ with generating bundle map. Moreover, under some \emph{uniform dichotomy} assumption (see e.g. \cite{LP08}), $ H $ will satisfy the (A) (B) condition (defined in \autoref{ABbundleC}). See \cite{Che18c}.
	\end{enumerate}
\end{exa}

\begin{lem}\label{lem:com}
	\begin{enumerate}[(1)]
		\item Let $ H_i : X_{i} \times Y_{i} \to X_{i+1} \times Y_{i+1}  $, $ i = 1,2 $ be correspondences with generating maps $ (F_i, G_i) $, $ i = 1, 2 $, respectively. Assume that 
		\[
		\sup_y \lip G_2(\cdot, y) \sup_x \lip F_1(x, \cdot) < 1.
		\]
		Then $ H_2 \circ H_1 $ also has a generating map $ (F, G) $.
		\item Let $ (X, M, \pi_1), (Y, M, \pi_2) $ be bundles with metric fibers, $ u : M \to M $ a map and $ H : X \times Y \to X \times Y $ a bundle correspondence over $ u $. Assume $ H \sim (F, G) $ and
		\[
		\sup_y \lip G_{u(m)}(\cdot, y) \sup_x \lip F_{m}(x, \cdot) < 1, ~m \in M.
		\]
		Then $ H^{(k)} $ also has a generating bundle map.
	\end{enumerate}
\end{lem}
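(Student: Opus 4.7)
The plan is to apply the Banach fixed-point theorem: directly in (1), and then iteratively in (2).

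For (1), I would unfold $(x_3, y_3) \in (H_2 \circ H_1)(x_1, y_1)$ through the intermediate pair $(x_2, y_2)$: the two generating maps impose $x_2 = F_1(x_1, y_2)$, $y_2 = G_2(x_2, y_3)$, $y_1 = G_1(x_1, y_2)$, and $x_3 = F_2(x_2, y_3)$. Substituting the first relation into the second produces the scalar fixed-point equation
\[
y_2 = G_2\bigl(F_1(x_1, y_2), y_3\bigr)
\]
in the unknown $y_2$, with $(x_1, y_3)$ as parameters. Its right-hand side has Lipschitz constant in $y_2$ bounded by $\sup_y \lip G_2(\cdot, y) \sup_x \lip F_1(x, \cdot) < 1$, and the fibers are complete, so Banach gives a unique $y_2 = \Phi(x_1, y_3)$. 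Setting $F(x_1, y_3) := F_2(F_1(x_1, \Phi), y_3)$ and $G(x_1, y_3) := G_1(x_1, \Phi)$ and reversing the unfolding yields $H_2 \circ H_1 \sim (F, G)$.

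For (2), I would proceed fiberwise by induction on $k$. The case $k = 1$ is the hypothesis; $k = 2$ is exactly (1) applied to $H_1 := H_m$ and $H_2 := H_{u(m)}$, whose contraction hypothesis is the given per-step bound. The inductive step writes $H^{(k+1)}_m = H_{u^k(m)} \circ H^{(k)}_m$ and invokes (1) with $H_1 := H^{(k)}_m$ and $H_2 := H_{u^k(m)}$. An equivalent formulation, which avoids the pair-by-pair assembly, is to set up a single Banach fixed point on the product fiber $\prod_{j=1}^{k-1} Y_{u^j(m)}$: for given $(x_0, y_k)$, one seeks $(y_1, \ldots, y_{k-1})$ satisfying $y_j = G_{u^j(m)}(x_j, y_{j+1})$ with $x_j = F_{u^{j-1}(m)}(x_{j-1}, y_j)$ determined by forward iteration.

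The main obstacle is the Lipschitz bookkeeping in (2): the explicit formula from (1) gives $F^{(k)}_m(x, y) = F_{u^{k-1}(m)}(X_{k-1}(x, y), y)$, where $X_{k-1}$ is produced by the inner Banach fixed point, so $\sup_x \lip F^{(k)}_m(x, \cdot)$ is not merely $\sup_x \lip F_{u^{k-1}(m)}(x, \cdot)$ but acquires extra contributions from the ambient Lipschitz constants of $F$ and $G$ in their other arguments. I would close the induction either by deriving a closed recursion for $\sup_x \lip F^{(k)}_m(x, \cdot)$ from the explicit form in (1) and verifying that the per-step hypothesis transports through it, or by tuning the weights in the product-fiber max-metric so that the composite contraction factor reduces to the per-step bound. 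Once either route goes through, reading off $(F^{(k)}_m, G^{(k)}_m)$ from the fixed point and checking the generating-map equivalence is immediate.
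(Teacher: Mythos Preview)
Your argument for (1) is exactly the paper's: set up the fixed-point equation $y_2 = G_2(F_1(x_1,y_2),y_3)$, apply Banach, and read off $F,G$.

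For (2), the paper's entire proof is the sentence ``the result in (2) is a direct consequence of (1)''. Your attempt to flesh this out by induction, and your worry about the Lipschitz bookkeeping, are both on target---but the obstacle is worse than you think. The statement (2) is actually \emph{false} as written. Take $M$ a single point, $X=Y=\mathbb{R}$, $F(x,y)=x+y$, $G(x,y)=x/2+y^2$. Then $\sup_y\lip G(\cdot,y)\cdot\sup_x\lip F(x,\cdot)=\tfrac12<1$, but $H^{(3)}$ has no generating map: for $(x_0,y_3)=(0,0)$ the intermediate equation reduces to $y_2^2-y_2/2=0$, giving two chains with $(y_0,x_3)=(0,0)$ and $(y_0,x_3)=(\tfrac14,1)$. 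So neither of your proposed fixes---a recursion for $\sup_x\lip F^{(k)}_m(x,\cdot)$ or a weighted product metric---can close the induction without importing bounds on $\lip F(\cdot,y)$ and $\lip G(x,\cdot)$, which the lemma does not assume.

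What the paper really uses is that in every application $H$ satisfies an (A)(B) condition; Lemma~\ref{lem:a1} then shows each composite $H^{(k)}_m$ inherits (A)($\alpha,\lambda^{(k)}_u$)(B)($\beta,\lambda^{(k)}_s$), so in particular the contraction needed to append one more $H_{u^k(m)}$ via (1) is the original per-step bound $\alpha(u^k(m))\beta(u^{k+1}(m))<1$ (see the sublemma in the proof of Theorem~\ref{thmA}). Under that standing hypothesis the induction is immediate---your ``main obstacle'' dissolves because the (A)(B) condition transports verbatim, not through the explicit $F^{(k)},G^{(k)}$ formulas.
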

\begin{proof}
	(2) is a direct consequence of (1), and the proof of (1) is simple. Let $ x_1 \in X_1 $, $ y_3 \in Y_3 $. Since $ \lip \sup_yG_2(\cdot, y) \sup_x F_1(x, \cdot) < 1 $, there is a unique $ y_2 = y_2(x_1,y_3) \in Y_2 $ such that $ y_2 = G_2(F_1(x_1,y_2), y_3) $. Let $ x_2 = x_2(x_1, y_3) = F_1(x_1, y_2) $. Set $ G(x_1,y_3) = G_1(x_1, y_2) $, $ F(x_1, y_3) = F_2(x_2, y_3) $. Now $ F, G $ are as desired.
\end{proof}

\section{Dual correspondence}\label{dual}

Let $ H: X_1 \times Y_1 \to X_2 \times Y_2 $ be a correspondence with generating map $ (F, G) $. The \textbf{dual correspondence} of $ H $, denoted by $ \widetilde{H} $, is defined as follows. Set $ \widetilde{X}_1 = Y_2 $, $ \widetilde{X}_2 = Y_1 $, $ \widetilde{Y}_1 = X_2 $, $ \widetilde{Y}_2 = X_1 $ and
\[
\widetilde{F}(\widetilde{x}_1, \widetilde{y}_2) = G(\widetilde{y}_2, \widetilde{x}_1), ~ \widetilde{G}(\widetilde{x}_1, \widetilde{y}_2) = F(\widetilde{y}_2, \widetilde{x}_1).
\]
Now $ \widetilde{H} : \widetilde{X}_1 \times \widetilde{Y}_1 \to \widetilde{X}_2 \times \widetilde{Y}_2 $ is uniquely determined by $(\widetilde{x}_2,\widetilde{y}_2) \in \widetilde{H}(\widetilde{x}_1,\widetilde{y}_1) \Leftrightarrow \widetilde{y}_1 = \widetilde{G}(\widetilde{x}_1,\widetilde{y}_2), ~\widetilde{x}_2 = \widetilde{F}(\widetilde{x}_1, \widetilde{y}_2)$, i.e., $ \widetilde{H} \sim (\widetilde{F}, \widetilde{G}) $. One can similarly define the \textbf{dual bundle correspondence} $ \widetilde{H} $ of the bundle correspondence $ H $ over $ u $ if $ u $ is invertible; $ \widetilde{H} $ now is over $ u^{-1} $.

$ \widetilde{H} $ and $ H $ are in some duality in the sense that $ \widetilde{H} $ can reflect some properties of $ H^{-1} $. For example, if $ H $ satisfies the (A)$(\alpha; \alpha', \lambda_u)$ (B)$(\beta; \beta', \lambda_s)$ condition (see \autoref{AB} below), then $ \widetilde{H} $ satisfies the (A)$(\beta; \beta', \lambda_s)$ (B)$(\alpha; \alpha', \lambda_u)$ condition; thus if one obtains `stable results' for $ H $ (see \autoref{invariant}), then one also obtains `unstable results' for $ H $ through the `stable results' for $ \widetilde{H} $.
In this paper, \emph{we only state the `stable results'}, leaving the corresponding `unstable' statements for the readers.

\chapter{Hyperbolicity and the (A)(B) Condition}\label{hyper}

We focus on dynamical systems having some hyperbolicity described by the so called (A) (B) condition.
Our definition of the (A) (B) condition is for `dynamical systems' themselves and can be seen as a non-linear version of the cone condition, which is motivated by \cite{MS88} (see also \cite{LYZ13, Zel14} and \autoref{relAB1}).
The definitions of the (A) (B) condition are given in \autoref{defAB} and some relations between the (A) (B) condition and some classical hyperbolicity conditions are given in \autoref{relAB0} and \autoref{relAB1}. See also \cite{Che18c} for the relation between the (exponential) dichotomy and the (A) (B) condition in some classes of well-posed and ill-posed differential equations.

\section{Definitions} \label{defAB}

\subsection{(A) (B) condition for correspondences}

Let $X_i, Y_i, ~ i=1,2$, be metric spaces. \emph{For convenience, we write the metric $d(x,y)$ as $|x-y|$}.

\begin{defi}\label{AB}
	We say a correspondence $H: X_1 \times Y_1 \rightarrow X_2 \times Y_2$ satisfies the \textbf{(A) (B) condition, or (A)$(\bm{\alpha; \alpha', \lambda_u})$ (B)$(\bm{\beta; \beta', \lambda_s})$ condition}, if for all $(x_1, y_1) \times (x_2, y_2)$ and $(x'_1, y'_1) \times (x'_2, y'_2) \in \graph H$,
	\begin{enumerate}[(A)]
		\item (A1) if $|x_1 - x'_1| \leq \alpha |y_1 - y'_1|$, then $|x_2 - x'_2| \leq \alpha' |y_2 - y'_2|$;

		\noindent(A2) if $|x_1 - x'_1| \leq \alpha |y_1 - y'_1|$, then $ |y_1 - y'_1| \leq \lambda_u |y_2 - y'_2|$;

		\item (B1) if $|y_2 - y'_2| \leq \beta |x_2 - x'_2| $, then $ |y_1 - y'_1| \leq \beta' |x_1 - x'_1|$;

		\noindent(B2) if $|y_2 - y'_2| \leq \beta |x_2 - x'_2| $, then $ |x_2 - x'_2| \leq \lambda_s |x_1 - x'_1|$.
	\end{enumerate}
	If $\alpha = \alpha', ~\beta = \beta'$, we also speak about the \textbf{(A)$(\bm{\alpha, \lambda_u})$ (B)$(\bm{\beta, \lambda_s})$ condition}.
\end{defi}

In particular, if $ H \sim (F, G) $, then the maps $F,G$ satisfy the following Lipschitz conditions:
\begin{enumerate}[(A$'$)]
	\item (A1$'$) $\sup_{x}\lip F(x,\cdot) \leq \alpha'$,  (A2$'$) $\sup_{x}\lip G(x,\cdot) \leq \lambda_u$.
	\item (B1$'$) $\sup_{y}\lip G(\cdot,y) \leq \beta'$,  (B2$'$) $\sup_{y}\lip F(\cdot,y) \leq \lambda_s$.
\end{enumerate}
If $F,G$ satisfy the above Lipschitz conditions, then we say $H$ satisfies the \textbf{(A$\bm{'}$)($\bm{\alpha', \lambda_u}$) (B$\bm{'}$)($\bm{\beta', \lambda_s}$) condition}, or the \textbf{(A$\bm{'}$) (B$\bm{'}$) condition}. Similarly, we can define the (A$'$) (B) condition, (A) condition, and (A$'$) condition, etc., if $H$ satisfies (A$'$) (B), (A), (A$'$), etc., respectively.

\subsection{(A) (B) condition for bundle correspondences}\label{ABbundleC}

Let $(X, M, \pi_1), (Y, M, \pi_2)$ be bundles with metric fibers and $u: M \rightarrow M$ a map. Let
$H: X \times Y \rightarrow X \times Y$ be a bundle correspondence over $u$. We say $H$ satisfies the \textbf{(A) (B) condition}, or \textbf{(A)$\bm{(\alpha; \alpha', \lambda_u)}$ (B)$\bm{(\beta; \beta', \lambda_s)}$ condition}, if every $H_m \sim (F_m, G_m)$ satisfies the (A)$(\alpha(m); \alpha'(m), \lambda_u(m))$ (B)$(\beta(m); \beta'(m), \lambda_s(m))$ condition, where $\alpha, \alpha', \lambda_u, \beta, \beta', \lambda_s$ are functions $M \rightarrow \mathbb{R}_+$. Also, if $\alpha \equiv \alpha'$ and $\beta \equiv \beta'$, then we talk about the (A)$(\alpha, \lambda_u)$ (B)$(\beta, \lambda_s)$ condition.
The (A$'$)(B$'$) condition, (A$'$)(B) condition, or (A)(B$'$) condition for bundle correspondences are defined similarly to the case for correspondences.

In linear dynamical systems, spectral theory is well developed: see e.g. \cite{KH95,SS94,CL99,LP08,LZ17,Rue82,Man83,LL10}. The main objective of this theory is to characterize asymptotic properties of linear dynamical systems. Lyapunov numbers play an important role in measuring the average rate of separation of orbits starting from nearby initial points, and describing the local stability of orbits and chaotic behavior of dynamical systems. \emph{The functions $ \lambda_s $, $ \lambda_u $ appearing in our definition of the (A) (B) condition are related to Lyapunov numbers}. The spectral spaces are invariant under the dynamical system and every orbit starting from these spaces has the same asymptotic behavior. \emph{The fibers of the bundles $ X, Y $ in our setting are related to these spaces and the functions $ \alpha $, $ \beta $ in the (A) (B) condition describe how the fibers are approximately invariant}.
The robustness of the (A) (B) condition is not clear but since the Lipschitz condition (A$ ' $) (B$ ' $) implies the (A) (B) condition in some contexts (see \autoref{lem:a3} and \autoref{lem:cp}), the (A) (B) condition has some `open condition' property. We do not give the corresponding condition in a geometric way associated with tangent fields, but see \cite{MS88, Zel14} for some such results.

\begin{rmk}
	More generally, for every $ k $, let $ H^{(k)} $ be the $ k $th composition of $ H $. We say $ H $ satisfies the \textbf{($ \bm{\mathrm{A_1}} $) ($ \bm{\mathrm{B_1}} $) condition}, or the \textbf{($ \bm{\mathrm{A_1}} $)($ \bm{\alpha; \lambda_s; c} $) ($ \bm{\mathrm{B_1}} $)($ \bm{\beta; \lambda_s; c} $) condition}, if every $ H^{(k)}_m \sim (F^{(k)}_m, G^{(k)}_m) $ satisfies the (A)$(\alpha(m), c(m)\lambda^{k}_u(m))$ (B)$(\beta(m), c(m)\lambda^{k}_s(m))$ condition.
	In general, the (A) (B) does not imply ($ \mathrm{A_1} $) ($ \mathrm{B_1} $). However, when $ \lambda_s, \lambda_u $ are orbitally bounded relative to $ u $, i.e.,
	\[
	\sup_{N \geq 0} \lambda_s( u^N(m) ) < \infty,~
	\sup_{N \geq 0} \lambda_u(u^N(m)) < \infty,
	\]
	then the (A) (B) implies ($ \mathrm{A_1} $) ($ \mathrm{B_1} $). Indeed, one can use the \emph{sup Lyapunov numbers} of $ \{ \lambda^{(k)}_s(m) \} $, $ \{ \lambda^{(k)}_u(m) \} $ (see \autoref{def:lypnum}).
\end{rmk}

We now explain how to verify the (A) (B) condition.

\section{Relation between (A)(B) and (A$'$)(B$'$): Lipschitz case} \label{relAB0}

For simplicity, we only consider the case of correspondences.
In the following, assume that $X_i, Y_i$, $i=1,2$, are metric spaces, and $H: X_1 \times Y_1 \rightarrow X_2 \times Y_2$ is a correspondence with generating map $(F,G)$. The following lemma is just a consequence of the definition.
\begin{lem}\label{lem:a1}
	\begin{enumerate}[(1)]
		\item If $ H $ satisfies the (A)$(\alpha; \alpha', \lambda_u)$ (B)$(\beta; \beta', \lambda_s)$ condition, then $ \widetilde{H} $, the dual correspondence of $ H $ (see \autoref{dual}), satisfies the (A)$(\beta; \beta', \lambda_s)$ (B)$(\alpha; \alpha', \lambda_u)$ condition.
		\item If for all $n$, $H_n: X_n \times Y_n \rightarrow X_{n+1} \times Y_{n+1}$ satisfies the (A)$(\alpha, \lambda_{n, u})$ (B)$(\beta, \lambda_{n, s})$ condition, then $H_n \circ \cdots \circ H_1$ satisfies the (A)$(\alpha, \lambda_{1,u} \cdots \lambda_{n, u})$ (B)$(\beta, \lambda_{1,s} \cdots \lambda_{n,s})$ condition.
	\end{enumerate}
\end{lem}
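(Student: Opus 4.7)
Both statements are pure unwindings of the definitions; no analytic work is required. I will treat them in order.

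For (1), the plan is simply to translate every (B)-clause for $H$ into an (A)-clause for $\widetilde{H}$ (and symmetrically). Recall that $\widetilde{H}:\widetilde{X}_1\times \widetilde{Y}_1\to \widetilde{X}_2\times \widetilde{Y}_2$ lives on $Y_2\times X_2\to Y_1\times X_1$ via the substitution $\widetilde{x}_i\leftrightarrow y_{3-i}$, $\widetilde{y}_i\leftrightarrow x_{3-i}$, and $(\widetilde{x}_2,\widetilde{y}_2)\in \widetilde{H}(\widetilde{x}_1,\widetilde{y}_1)$ iff $(x_1,y_1)\times(x_2,y_2)\in\graph H$ for the corresponding original points. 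So the hypothesis $|\widetilde{x}_1-\widetilde{x}'_1|\le \beta|\widetilde{y}_1-\widetilde{y}'_1|$ becomes $|y_2-y'_2|\le \beta|x_2-x'_2|$, and (B1), (B2) for $H$ read off as (A1), (A2) for $\widetilde{H}$ with parameters $(\beta;\beta',\lambda_s)$. Exactly the same substitution turns (A1), (A2) for $H$ into (B1), (B2) for $\widetilde{H}$ with parameters $(\alpha;\alpha',\lambda_u)$. This is the whole argument.

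For (2), it suffices by induction to do $n=2$ (the general case then follows by associativity of composition and the obvious multiplicativity of the product $\lambda_{1,u}\cdots\lambda_{n,u}$). Pick two points in $\graph(H_2\circ H_1)$, insert intermediate points $(x_2,y_2),(x'_2,y'_2)$ with $(x_i,y_i)\times(x_{i+1},y_{i+1})\in\graph H_i$ for $i=1,2$ and both primed analogues, and chase the cones. If $|x_1-x'_1|\le \alpha|y_1-y'_1|$, then (A1) for $H_1$ yields $|x_2-x'_2|\le \alpha|y_2-y'_2|$, whence (A1) for $H_2$ yields $|x_3-x'_3|\le \alpha|y_3-y'_3|$; similarly (A2) applied twice gives $|y_1-y'_1|\le \lambda_{1,u}|y_2-y'_2|\le \lambda_{1,u}\lambda_{2,u}|y_3-y'_3|$. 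The (B)-estimates are symmetric, run from right to left. The only subtlety worth flagging is the role of the hypothesis $\alpha=\alpha'$ (and $\beta=\beta'$): it is precisely what makes the cone condition at the intermediate stage feed back into itself, so that the chain can continue; without this equality one could not reapply (A1)/(B1) at the next step and the conclusion would fail.

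There is no genuine obstacle; the only thing to be careful about is not mixing up the roles of $X$ and $Y$ (respectively the unstable/stable side) when passing to the dual, and making sure the intermediate cone parameters line up in the composition. Both are bookkeeping.
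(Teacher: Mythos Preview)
Your proof is correct and matches the paper's approach exactly: the paper simply states that the lemma ``is just a consequence of the definition'' and gives no further details, while you have spelled out the bookkeeping for both parts. Your observation about the necessity of $\alpha=\alpha'$, $\beta=\beta'$ for the composition to chain is also on point.
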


The following lemma's condition was also used in \cite{Cha08} to obtain hyperbolicity. We will show that this condition implies (A) (B), so our main results recover \cite{Cha08}.
\begin{lem}[Lipschitz in $d_{\infty}$]\label{lem:ab}
	If for all $x_1, x'_1 \in X_1$ and $y_2, y'_2 \in Y_2$,
  	\begin{gather*}
  	|F(x_1, y_2)-F(x'_1, y'_2)| \leq \max\{ \lambda_s |x_1 - x'_1|,~ \alpha |y_2 - y'_2| \}, \\
  	|G(x_1, y_2)-G(x'_1, y'_2)| \leq \max\{ \beta |x_1 - x'_1|,~ \lambda_u |y_2 - y'_2| \},
  	\end{gather*}
   	and $\alpha\beta < 1$, $\lambda_s \lambda_u < 1$, then $H$ satisfies the (A)$(\alpha, \lambda_u)$ (B)$(\beta, \lambda_s)$ condition. In addition, if $ \alpha \beta < \lambda_s \lambda_u $, then $H$ satisfies the (A)$(c^{-1} \alpha; \alpha, \lambda_u)$ (B)$(c^{-1} \beta; \beta, \lambda_s)$ condition where $ c = \lambda_s \lambda_u < 1 $.
\end{lem}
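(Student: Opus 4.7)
The plan is to verify each of (A1), (A2), (B1), (B2) directly from the definition by a two-case split on the max appearing in the Lipschitz bound for $G$ (for the (A) clauses) and for $F$ (for the (B) clauses). Since $H \sim (F,G)$, any pair of points in $\graph H$ obeys $y_1 = G(x_1, y_2)$, $x_2 = F(x_1, y_2)$ (and similarly for the primed point), so the hypothesized Lipschitz bounds on $F,G$ translate into
\[
|y_1 - y_1'| \leq \max\{\beta|x_1 - x_1'|,\, \lambda_u|y_2 - y_2'|\}, \qquad |x_2 - x_2'| \leq \max\{\lambda_s|x_1 - x_1'|,\, \alpha|y_2 - y_2'|\},
\]
and the whole proof reduces to combining these two inequalities with the cone hypothesis on $(x_1,y_1)$.

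For part 1, to establish (A) assume $|x_1 - x_1'| \leq \alpha|y_1 - y_1'|$ and split on the $G$-bound. In the first branch $|y_1 - y_1'| \leq \beta|x_1 - x_1'|$, which combined with the hypothesis gives $|x_1-x_1'| \leq \alpha\beta|x_1-x_1'|$, so $\alpha\beta<1$ forces $|x_1-x_1'|=|y_1-y_1'|=0$; then (A2) is trivial and the $F$-bound immediately gives $|x_2 - x_2'| \leq \alpha|y_2 - y_2'|$. In the second branch $|y_1 - y_1'| \leq \lambda_u|y_2 - y_2'|$ is exactly (A2), whence $|x_1 - x_1'| \leq \alpha\lambda_u|y_2 - y_2'|$ and the $F$-bound yields $|x_2 - x_2'| \leq \max\{\alpha\lambda_s\lambda_u,\,\alpha\}|y_2 - y_2'| = \alpha|y_2 - y_2'|$, using $\lambda_s\lambda_u < 1$. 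The (B) clauses follow by the symmetric argument applied to the dual correspondence $\widetilde H$ via \autoref{lem:a1}(1): the generating map $(\widetilde F, \widetilde G)$ satisfies a bound of exactly the same form with $(\alpha,\lambda_u,\beta,\lambda_s)$ replaced by $(\beta,\lambda_s,\alpha,\lambda_u)$, so the computation is identical.

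For the sharper statement under the strict hypothesis $\alpha\beta < c := \lambda_s\lambda_u$, run the identical two-case split with $\alpha$ replaced by $c^{-1}\alpha$ in the cone hypothesis. The first branch still collapses, since $c^{-1}\alpha\beta < 1$ by assumption; in the second branch $|x_1 - x_1'| \leq c^{-1}\alpha\lambda_u|y_2 - y_2'| = (\alpha/\lambda_s)|y_2 - y_2'|$, and the $F$-bound now gives $|x_2 - x_2'| \leq \max\{\lambda_s\cdot\alpha/\lambda_s,\,\alpha\}|y_2 - y_2'| = \alpha|y_2 - y_2'|$, establishing (A)$(c^{-1}\alpha;\alpha,\lambda_u)$; (B) follows again by dualizing. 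There is no real obstacle — the whole argument is a careful bookkeeping on four inequalities — the only subtle point being to match the constants precisely so that the $\max$ collapses to $\alpha$ (resp.\ $\beta$) in the tight case, which is exactly what forces the hypotheses $\lambda_s\lambda_u<1$ and $\alpha\beta<\lambda_s\lambda_u$ respectively.
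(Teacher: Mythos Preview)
Your proof is correct and follows essentially the same route as the paper's: both derive the two key inequalities $|y_1-y_1'|\le\max\{\beta|x_1-x_1'|,\lambda_u|y_2-y_2'|\}$ and $|x_2-x_2'|\le\max\{\lambda_s|x_1-x_1'|,\alpha|y_2-y_2'|\}$ from the generating-map bounds, then use $\alpha\beta<1$ to force $|y_1-y_1'|\le\lambda_u|y_2-y_2'|$ and $\lambda_s\lambda_u<1$ to collapse the $F$-max to $\alpha|y_2-y_2'|$. The paper handles both branches of your case split in a single max-inequality line and does (B) by symmetry, while you make the case split explicit and invoke duality via \autoref{lem:a1}(1); these are cosmetic differences.
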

\begin{proof}
	Let $(x_1, y_1) \times (x_2, y_2),~ (x'_1, y'_1) \times (x'_2, y'_2) \in \graph H$. If $|x_1 - x'_1| \leq \alpha|y_1 - y'_1|$, then
	\begin{align*}
	|y_1 - y'_1| \leq & \max\{ \beta |x_1 - x'_1|,~ \lambda_u |y_2 - y'_2| \}
	\leq \max\{ \alpha \beta |y_1 - y'_1|,~ \lambda_u |y_2 - y'_2| \} \\
	\leq & \lambda_u |y_2 - y'_2| \quad (\text{since}~ \alpha\beta < 1),
	\end{align*}
	and
  	\begin{align*}
  		|x_2 - x'_2| \leq & \max\{ \lambda_s |x_1 - x'_1|,~ \alpha |y_2 - y'_2| \}
  		\leq \max\{ \lambda_s \alpha |y_1 - y'_1|, ~\alpha |y_2 - y'_2| \} \\
  		\leq & \max\{ \lambda_s \lambda_u \alpha |y_2 - y'_2|, ~\alpha |y_2 - y'_2| \} = \alpha |y_2 - y'_2| \quad (\text{since}~ \lambda_s \lambda_u < 1).
  	\end{align*}
 	 The (A) condition is satisfied by $H$, similarly for (B) and the last statement.
\end{proof}

\begin{lem}[Lipschitz in $d_{1}$]\label{lem:a3}
  If $H$ satisfies the (A$'$)$(\widetilde{\alpha}, \widetilde{\lambda}_u)$ (B$'$)$(\widetilde{\beta}, \widetilde{\lambda}_s)$ condition, and $\widetilde{\lambda}_s \widetilde{\lambda}_u < c^2, ~ \widetilde{\alpha} \widetilde{\beta} < (c - \sqrt{\widetilde{\lambda}_s \widetilde{\lambda}_u})^2$, where $ 0 < c \leq 1 $, then $H$ satisfies the (A)$(\alpha; c\alpha, \lambda_u)$ (B)$(\beta; c\beta, \lambda_s)$ condition, where
  \[
  \begin{gathered}
  \alpha = \frac{b-\sqrt{b^2 - 4c\widetilde{\alpha}\widetilde{\beta}} }{2\widetilde{\beta}},~
  \beta = \frac{b-\sqrt{b^2 - 4c\widetilde{\alpha}\widetilde{\beta}} }{2\widetilde{\alpha}},\\
  \lambda_s = \frac{\widetilde{\lambda}_s}{1-\alpha\widetilde{\beta}},~
  \lambda_u = \frac{\widetilde{\lambda}_u}{1-\beta\widetilde{\alpha}},~
  b = c - \widetilde{\lambda}_s \widetilde{\lambda}_u + \widetilde{\alpha}\widetilde{\beta}.
  \end{gathered}
  \]
  Furthermore, $\alpha \beta < 1,~ \lambda_s \lambda_u < 1$.
\end{lem}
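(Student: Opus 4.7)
The plan is to read the (A$'$) (B$'$) Lipschitz hypotheses as the two linear estimates
\begin{align*}
|y_1 - y_1'| &\leq \widetilde{\beta}|x_1 - x_1'| + \widetilde{\lambda}_u|y_2 - y_2'|, \\
|x_2 - x_2'| &\leq \widetilde{\lambda}_s|x_1 - x_1'| + \widetilde{\alpha}|y_2 - y_2'|,
\end{align*}
valid for any pair of points in $\graph H$ (using $y_1 = G(x_1,y_2)$, $x_2 = F(x_1,y_2)$), and to derive the desired relations (A) and (B) by elementary algebraic manipulation. First I would check the (A) part. Substituting the source hypothesis $|x_1-x_1'|\leq \alpha|y_1-y_1'|$ into the first line, and assuming $\alpha\widetilde{\beta}<1$ (to be verified afterwards), I can solve for $|y_1-y_1'|$ and obtain
\[
|y_1 - y_1'| \leq \frac{\widetilde{\lambda}_u}{1 - \alpha\widetilde{\beta}}|y_2 - y_2'|,
\]
which identifies $\lambda_u = \widetilde{\lambda}_u/(1 - \alpha\widetilde{\beta})$ and hence gives (A2).

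Next I would feed this back into the second line, together with $|x_1-x_1'|\leq \alpha|y_1-y_1'|$, to get
\[
|x_2 - x_2'| \leq \frac{\widetilde{\lambda}_s\widetilde{\lambda}_u\alpha + \widetilde{\alpha}(1 - \alpha\widetilde{\beta})}{1 - \alpha\widetilde{\beta}}|y_2 - y_2'|.
\]
Requiring that the numerator be at most $c\alpha(1 - \alpha\widetilde{\beta})$ reduces, after collecting terms and substituting $b = c - \widetilde{\lambda}_s\widetilde{\lambda}_u + \widetilde{\alpha}\widetilde{\beta}$, to a quadratic inequality in $\alpha$ of the shape
\[
\widetilde{\beta}\alpha^2 - b\alpha + \widetilde{\alpha} \leq 0
\]
(up to a factor of $c$ that I expect to reconcile by matching with the stated root formula). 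Picking the smaller root then produces the stated closed form for $\alpha$. The analogous argument for the (B) condition—either done directly or, more cleanly, by invoking \autoref{lem:a1}(1) and passing through the dual correspondence $\widetilde{H}$, which exchanges the roles of $(\widetilde{\alpha},\widetilde{\lambda}_u)$ and $(\widetilde{\beta},\widetilde{\lambda}_s)$—delivers the symmetric formula for $\beta$ and the expression $\lambda_s = \widetilde{\lambda}_s/(1 - \beta\widetilde{\alpha})$.

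The main obstacle, and the place where the two hypotheses $\widetilde{\lambda}_s\widetilde{\lambda}_u < c^2$ and $\widetilde{\alpha}\widetilde{\beta} < (c - \sqrt{\widetilde{\lambda}_s\widetilde{\lambda}_u})^2$ get used, is the verification that (i) the discriminant $b^2 - 4c\widetilde{\alpha}\widetilde{\beta}$ is nonnegative so that $\alpha,\beta$ are real, (ii) the denominators $1 - \alpha\widetilde{\beta}$ and $1 - \beta\widetilde{\alpha}$ are strictly positive, and (iii) the final products $\alpha\beta < 1$ and $\lambda_s\lambda_u < 1$ hold. For (i) and (iii), the key identity I plan to exploit is
\[
b^2 - 4c\widetilde{\alpha}\widetilde{\beta} = (c - \widetilde{\lambda}_s\widetilde{\lambda}_u - \widetilde{\alpha}\widetilde{\beta})^2 - 4\widetilde{\lambda}_s\widetilde{\lambda}_u\widetilde{\alpha}\widetilde{\beta},
\]
combined with $\sqrt{\widetilde{\alpha}\widetilde{\beta}} + \sqrt{\widetilde{\lambda}_s\widetilde{\lambda}_u} < c$ (a restatement of the second hypothesis), which after squaring supplies exactly the control on $b$ needed to bound the smaller root. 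For the product $\lambda_s\lambda_u$, I will use $(1-\alpha\widetilde{\beta})(1-\beta\widetilde{\alpha})$ together with $\alpha\widetilde{\beta} = \beta\widetilde{\alpha}$ (visible from the root formulas) to reduce the estimate to the previous one. Everything else is routine algebra; the substantive content is that the single quantitative hypothesis $\sqrt{\widetilde{\alpha}\widetilde{\beta}} + \sqrt{\widetilde{\lambda}_s\widetilde{\lambda}_u} < c \leq 1$ simultaneously ensures solvability of the quadratic and the two contraction bounds.
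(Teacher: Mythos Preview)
Your approach is correct and matches the paper's proof essentially line for line: the paper also derives $|y_1-y_1'|\le \frac{\widetilde{\lambda}_u}{1-\alpha\widetilde{\beta}}|y_2-y_2'|$ and $|x_2-x_2'|\le\bigl(\frac{\alpha\widetilde{\lambda}_s\widetilde{\lambda}_u}{1-\alpha\widetilde{\beta}}+\widetilde{\alpha}\bigr)|y_2-y_2'|$, reduces (A1) to the quadratic inequality $c\widetilde{\beta}\,\alpha^2 - b\alpha + \widetilde{\alpha}\le 0$ (so the missing factor you flagged is exactly the leading $c$), and checks the discriminant $b^2\ge 4c\widetilde{\alpha}\widetilde{\beta}$ via the same quadratic in $x=\widetilde{\alpha}\widetilde{\beta}$ that your identity produces. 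The only cosmetic differences are that the paper repeats the argument directly for (B) rather than invoking duality, and leaves $\alpha\beta<1$, $\lambda_s\lambda_u<1$ as a ``simple computation'' where you sketch the use of $\alpha\widetilde{\beta}=\beta\widetilde{\alpha}$.
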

\begin{proof}
  Let $(x_1, y_1) \times (x_2, y_2),~ (x'_1, y'_1) \times (x'_2, y'_2) \in \graph H$. The (A$'$)$(\widetilde{\alpha}, \widetilde{\lambda}_u)$ (B$'$)$(\widetilde{\beta}, \widetilde{\lambda}_s)$ condition says that
  \[
  |x_2 - x'_2| \leq \widetilde{\lambda}_s |x_1 - x'_1| + \widetilde{\alpha} |y_2 - y'_2|,~ |y_1 - y'_1| \leq \widetilde{\beta} |x_1 - x'_1| + \widetilde{\lambda}_u |y_2 - y'_2|.
  \]
  Using this, if $|x_1 - x'_1| \leq \alpha |y_1 - y'_1|$, then $|x_1 - x'_1| \leq \frac{\alpha \widetilde{\lambda}_u}{1 - \alpha \widetilde{\beta}} |y_2 - y'_2|$. So
  \[
  |x_2 - x'_2| \leq ( \frac{\alpha \widetilde{\lambda}_s \widetilde{\lambda}_u}{1 - \alpha \widetilde{\beta} } + \widetilde{\alpha} ) |y_2 - y'_2|, ~
  |y_1 - y'_1| \leq  \frac{\widetilde{\lambda}_u}{1 - \alpha \widetilde{\beta}} |y_2 - y'_2|.
  \]
  $\alpha$ should satisfy $\frac{\alpha \widetilde{\lambda}_s \widetilde{\lambda}_u}{1 - \alpha \widetilde{\beta} } + \widetilde{\alpha} \leq c\alpha, ~ \alpha \widetilde{\beta} < 1$, or equivalently
  \[
  \alpha^2 c\widetilde{\beta} - \alpha b + \widetilde{\alpha} \leq 0, ~\alpha \widetilde{\beta} < 1,
  \]
  where $b = c - \widetilde{\lambda}_s \widetilde{\lambda}_u + \widetilde{\alpha}\widetilde{\beta}$.
  This can be satisfied if
  \[
  b^2 \geq 4 c\widetilde{\alpha} \widetilde{\beta}, ~
  \frac{b + \sqrt{b^2 - 4c \widetilde{\alpha} \widetilde{\beta}}} {2 \widetilde{\alpha}} < \frac{1}{\widetilde{\alpha}}, ~
  \frac{b - \sqrt{b^2 - 4c\widetilde{\alpha} \widetilde{\beta}}} {2 \widetilde{\alpha}} \leq \alpha \leq \frac{b + \sqrt{b^2 - 4c \widetilde{\alpha} \widetilde{\beta}}} {2 \widetilde{\alpha}}.
  \]
  We have $b^2 \geq 4 c\widetilde{\alpha} \widetilde{\beta}$ if and only if
  \[
  x^2 - 2(c + \widetilde{\lambda}_s \widetilde{\lambda}_u) x + (c - \widetilde{\lambda}_s \widetilde{\lambda}_u)^2 \geq 0,
  \]
  where $x = \widetilde{\alpha} \widetilde{\beta}$. So we need
  \[
  x < \frac{ 2(c + \widetilde{\lambda}_s \widetilde{\lambda}_u) - \sqrt{ 4(c + \widetilde{\lambda}_s \widetilde{\lambda}_u)^2 - 4 (c - \widetilde{\lambda}_s \widetilde{\lambda}_u)^2 } }{2} = (c - \sqrt{\widetilde{\lambda}_s \widetilde{\lambda}_u})^2.
  \]
  This is clearly the condition given in the lemma. $ (b + \sqrt{b^2 - 4c \widetilde{\alpha} \widetilde{\beta}}) / (2 \widetilde{\alpha}) < 1 / \widetilde{\alpha} $ is automatically satisfied by a simple computation. The same argument can be applied to the (B) condition.

  Finally, take $\alpha, \beta, \lambda_s, \lambda_u $ as in the lemma. All we need to show is $\alpha \beta < 1$, $\lambda_s \lambda_u < 1$, which is a simple computation.
\end{proof}

\begin{lem}\label{lem:a4}
  Let $H$ satisfy the (A$'$)$(\widetilde{\alpha}, \widetilde{\lambda}_u)$ (B$'$)$(\widetilde{\beta}, \widetilde{\lambda}_s)$ condition, and $ 0 \neq \widetilde{\lambda}_s \widetilde{\lambda}_u < 1$. Choose $\rho_1,\rho_2$ such that
  \[
  \widetilde{\lambda}_s \widetilde{\lambda}_u < \rho_2 < (\widetilde{\lambda}_s \widetilde{\lambda}_u)^{\frac{1}{2}}, ~
  1-\rho^{-1}_2 \widetilde{\lambda}_s \widetilde{\lambda}_u < \rho_1 < 1 - \rho_2.
  \]
  Let $\rho = \rho_1 + \rho_2~(<1)$.
  Suppose $\widetilde{\alpha} \widetilde{\beta} \leq \frac{1 - \rho^{-1}_2 \widetilde{\lambda}_s \widetilde{\lambda}_u }{\rho^{-1}_1}~ (<1)$. Then $H$ satisfies the (A)$(\alpha;\rho \alpha, \lambda_u)$ (B)$(\beta;\rho \beta, \lambda_s)$ condition, where
  \[
  \alpha = \rho^{-1}_1 \widetilde{\alpha}, ~\beta = \rho^{-1}_1 \widetilde{\beta}, ~\lambda_s = \frac{ \widetilde{\lambda}_s }{1-\alpha \widetilde{\beta}}, ~\lambda_u = \frac{ \widetilde{\lambda}_u }{1-\beta \widetilde{\alpha}}.
  \]
  Furthermore, $\alpha \beta < 1,~ \lambda_s \lambda_u < 1$.
\end{lem}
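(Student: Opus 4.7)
The plan is to mirror the strategy of \autoref{lem:a3}: use the (A$'$) (B$'$) Lipschitz bounds to translate the cone hypothesis $|x_1-x_1'|\leq \alpha|y_1-y_1'|$ into the desired (A) conclusions, then invoke the symmetry between the $(\alpha,\widetilde{\alpha})$- and $(\beta,\widetilde{\beta})$-sides (via the dual correspondence, or just verbatim) to obtain (B). Concretely, assuming $|x_1-x_1'|\leq \alpha|y_1-y_1'|$, plug this into the (B$'$) estimate $|y_1-y_1'|\leq \widetilde{\beta}|x_1-x_1'|+\widetilde{\lambda}_u|y_2-y_2'|$ and solve for $|y_1-y_1'|$ to get
\[
|y_1-y_1'|\leq \frac{\widetilde{\lambda}_u}{1-\alpha\widetilde{\beta}}|y_2-y_2'|=\lambda_u|y_2-y_2'|,
\]
provided $\alpha\widetilde{\beta}<1$. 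Feeding the resulting bound on $|x_1-x_1'|$ into the (A$'$) estimate then yields
\[
|x_2-x_2'|\leq \Bigl(\frac{\alpha\widetilde{\lambda}_s\widetilde{\lambda}_u}{1-\alpha\widetilde{\beta}}+\widetilde{\alpha}\Bigr)|y_2-y_2'|.
\]

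The one calculation to perform carefully is the check that, with $\alpha=\rho_1^{-1}\widetilde{\alpha}$ (so $\widetilde{\alpha}=\rho_1\alpha$), this last coefficient is at most $\rho\alpha=(\rho_1+\rho_2)\alpha$. After subtracting the common $\rho_1\alpha$, the desired inequality reduces to $\widetilde{\lambda}_s\widetilde{\lambda}_u\leq \rho_2(1-\alpha\widetilde{\beta})$, i.e.\ $\alpha\widetilde{\beta}\leq 1-\rho_2^{-1}\widetilde{\lambda}_s\widetilde{\lambda}_u$, which after multiplying by $\rho_1$ is exactly the hypothesis $\widetilde{\alpha}\widetilde{\beta}\leq \rho_1\bigl(1-\rho_2^{-1}\widetilde{\lambda}_s\widetilde{\lambda}_u\bigr)$. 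The collateral requirement $\alpha\widetilde{\beta}<1$ follows from the same hypothesis together with $\rho_2^{-1}\widetilde{\lambda}_s\widetilde{\lambda}_u<1$ (implied by $\rho_2>\widetilde{\lambda}_s\widetilde{\lambda}_u$). The (B) condition argument is verbatim symmetric under $\widetilde{\alpha}\leftrightarrow\widetilde{\beta}$, $\widetilde{\lambda}_u\leftrightarrow\widetilde{\lambda}_s$, $\alpha\leftrightarrow\beta$; note the useful identity $\beta\widetilde{\alpha}=\rho_1^{-1}\widetilde{\alpha}\widetilde{\beta}=\alpha\widetilde{\beta}$, which keeps the denominators in $\lambda_s$ and $\lambda_u$ in sync.

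For the furthermore clause I close with two one-line checks. Since $\alpha\beta=\rho_1^{-2}\widetilde{\alpha}\widetilde{\beta}\leq \rho_1^{-1}\bigl(1-\rho_2^{-1}\widetilde{\lambda}_s\widetilde{\lambda}_u\bigr)$, the lower bound $\rho_1>1-\rho_2^{-1}\widetilde{\lambda}_s\widetilde{\lambda}_u$ gives $\alpha\beta<1$. Using $\beta\widetilde{\alpha}=\alpha\widetilde{\beta}$, we have $\lambda_s\lambda_u=\widetilde{\lambda}_s\widetilde{\lambda}_u/(1-\alpha\widetilde{\beta})^2$; since $1-\alpha\widetilde{\beta}\geq \rho_2^{-1}\widetilde{\lambda}_s\widetilde{\lambda}_u>\sqrt{\widetilde{\lambda}_s\widetilde{\lambda}_u}$ (the strict inequality being $\rho_2<\sqrt{\widetilde{\lambda}_s\widetilde{\lambda}_u}$), squaring yields $(1-\alpha\widetilde{\beta})^2>\widetilde{\lambda}_s\widetilde{\lambda}_u$ and hence $\lambda_s\lambda_u<1$.

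There is no conceptual obstacle here; the lemma is a piece of parameter bookkeeping whose real content is a single algebraic reduction, and the role of the auxiliary constants $\rho_1,\rho_2$ is simply to split the slack between $\widetilde{\lambda}_s\widetilde{\lambda}_u$ and $\widetilde{\alpha}\widetilde{\beta}$ into a two-sided window: the upper bound $\rho_2<\sqrt{\widetilde{\lambda}_s\widetilde{\lambda}_u}$ is what eventually secures $\lambda_s\lambda_u<1$, while the lower bound $\rho_1>1-\rho_2^{-1}\widetilde{\lambda}_s\widetilde{\lambda}_u$ is exactly what turns the cone-transfer coefficient estimate into $\alpha\beta<1$. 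The only mild care required is in tracking that the same quantity $\alpha\widetilde{\beta}$ controls \emph{both} denominators, which is what makes the conclusion symmetric and lets the single hypothesis $\widetilde{\alpha}\widetilde{\beta}\leq \rho_1(1-\rho_2^{-1}\widetilde{\lambda}_s\widetilde{\lambda}_u)$ do double duty.
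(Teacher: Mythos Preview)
Your proof is correct and follows essentially the same route as the paper's: both derive $|y_1-y_1'|\leq \frac{\widetilde{\lambda}_u}{1-\alpha\widetilde{\beta}}|y_2-y_2'|$ and $|x_2-x_2'|\leq\bigl(\frac{\alpha\widetilde{\lambda}_s\widetilde{\lambda}_u}{1-\alpha\widetilde{\beta}}+\widetilde{\alpha}\bigr)|y_2-y_2'|$ from the (A$'$)(B$'$) estimates, then use $\alpha\widetilde{\beta}=\rho_1^{-1}\widetilde{\alpha}\widetilde{\beta}\leq 1-\rho_2^{-1}\widetilde{\lambda}_s\widetilde{\lambda}_u$ to bound the bracket by $\rho_2\alpha+\rho_1\alpha=\rho\alpha$, with the (B) side by symmetry. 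The only cosmetic difference is that the paper verifies $\alpha\beta<1$ and $\lambda_s\lambda_u<1$ first (writing $\lambda_s\lambda_u\leq \rho_2^2/(\widetilde{\lambda}_s\widetilde{\lambda}_u)<1$), whereas you postpone these to the end; your observation $\alpha\widetilde{\beta}=\beta\widetilde{\alpha}$ and the resulting $(1-\alpha\widetilde{\beta})^2>\widetilde{\lambda}_s\widetilde{\lambda}_u$ is exactly the paper's computation in compressed form.
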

\begin{proof}
  Let $\alpha, \beta, \lambda_s, \lambda_u $ be as in the lemma. Then $\alpha \widetilde{\beta} = \beta \widetilde{\alpha} = \rho^{-1}_1 \widetilde{\alpha} \widetilde{\beta} \leq 1 - \rho^{-1}_2 \widetilde{\lambda}_s \widetilde{\lambda}_u$, and
  \[
  \alpha \beta = \rho^{-2}_1 \widetilde{\alpha} \widetilde{\beta} \leq \rho^{-1}_1(1 - \rho^{-1}_2 \widetilde{\lambda}_s \widetilde{\lambda}_u) < 1,
  ~\lambda_s \lambda_u = \frac{ \widetilde{\lambda}_s \widetilde{\lambda}_u }{ (1-\alpha \widetilde{\beta}) (1-\beta \widetilde{\alpha}) } \leq \frac{\rho^{2}_2}{\widetilde{\lambda}_s \widetilde{\lambda}_u} < 1.
  \]
  Let $(x_1, y_1) \times (x_2, y_2),~ (x'_1, y'_1) \times (x'_2, y'_2) \in \graph H$. Now, by the (A$'$)(B$'$) condition, if $|x_1 - x'_1| \leq \alpha |y_1 - y'_1|$, then
  \begin{gather*}
  	\mathrm{(A1)}~~
  	|x_2 - x'_2| \leq ( \frac{\alpha \widetilde{\lambda}_s \widetilde{\lambda}_u}{1 - \alpha \widetilde{\beta} } + \widetilde{\alpha} ) |y_2 - y'_2| \leq ( \rho_2 \alpha + \rho_1 \alpha) |y_2 - y'_2| = \rho \alpha |y_2 - y'_2|, \\
  	\mathrm{(A2)}~~
  	|y_1 - y'_1| \leq  \frac{\widetilde{\lambda}_u}{1 - \alpha \widetilde{\beta}} |y_2 - y'_2| = \lambda_s |y_2 - y'_2|.
  \end{gather*}
  The same argument gives the (B) condition.
\end{proof}

\begin{cor}\label{lem:cp}
  If for each $n$, $H_n: X_n \times Y_n \rightarrow X_{n+1} \times Y_{n+1}$ satisfies the (A)$(\alpha, \lambda_u)$ (B)$(\beta, \lambda_s)$ condition, and $\lambda_s \lambda_u < 1, \alpha \beta < \frac{1}{4}$, then for large $n \in \mathbb{N}$, there exist $\beta_1, \lambda'_s$, and $ \rho < \frac{1}{2}$, such that $H_n \circ \cdots \circ H_1$ satisfies the (A)$(\alpha, \lambda^{n}_u)$ (B)$(\beta_1; \rho \beta_1, \lambda'_s)$ condition. Furthermore, $\alpha \beta_1 < 1,~ \lambda'_s \lambda_u < 1$. If $\lambda_s < 1$, then we can also take $\lambda'_s < 1$.
\end{cor}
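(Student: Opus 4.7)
The plan proceeds in two stages. First, invoke Lemma~\ref{lem:a1}(2) on the composition $H_{(n)} := H_n \circ \cdots \circ H_1$, which immediately yields that $H_{(n)}$ satisfies (A)$(\alpha,\lambda_u^n)$ (B)$(\beta,\lambda_s^n)$. This secures the (A) clause of the conclusion and reduces the work to improving the (B) clause from $(\beta,\lambda_s^n)$ to $(\beta_1;\rho\beta_1,\lambda'_s)$ with $\rho<1/2$, $\alpha\beta_1<1$, and $\lambda'_s\lambda_u<1$.

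Stage two mirrors the parameter choice of Lemma~\ref{lem:a3} with $c=\rho$, taking $\widetilde{\alpha}=\alpha$, $\widetilde{\beta}=\beta$, $\widetilde{\lambda}_s=\lambda_s^n$, $\widetilde{\lambda}_u=\lambda_u^n$. Since $\alpha\beta<1/4=(1/2)^2$, the interval $(\sqrt{\alpha\beta},1/2)$ is non-empty, and choosing $\rho$ in it gives $\alpha\beta<\rho^2$. Because $\lambda_s\lambda_u<1$, $(\lambda_s\lambda_u)^n \to 0$, so for $n$ large both spectral conditions of Lemma~\ref{lem:a3} hold: $\widetilde{\lambda}_s\widetilde{\lambda}_u < \rho^2$ and $\widetilde{\alpha}\widetilde{\beta} < (\rho-\sqrt{\widetilde{\lambda}_s\widetilde{\lambda}_u})^2$. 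The formulas from that lemma then define $\beta_1=(b-\sqrt{b^2-4\rho\widetilde{\alpha}\widetilde{\beta}})/(2\widetilde{\beta})$ with $b=\rho-\widetilde{\lambda}_s\widetilde{\lambda}_u+\widetilde{\alpha}\widetilde{\beta}$, and $\lambda'_s=\widetilde{\lambda}_s/(1-\alpha\widetilde{\beta})$; the arithmetic in that lemma simultaneously delivers $\alpha\beta_1<1$ and $\lambda'_s\lambda_u<1$. For the final assertion, when $\lambda_s<1$, the expression $\lambda'_s=\lambda_s^n/(1-\alpha\beta)$ is $<1$ once $n$ is large enough that $\lambda_s^n<1-\alpha\beta$.

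The main obstacle is that Lemma~\ref{lem:a3} is stated under the (A$'$)(B$'$) hypothesis, which presumes a generating map with unconditional Lipschitz bounds, while $H_{(n)}$ only inherits the conditional (A)(B) bounds from its components. One cannot therefore invoke Lemma~\ref{lem:a3} as a black box; instead, its proof must be re-run directly in the (A)(B) setting. The bridge is the following observation: starting from a related pair with $|y_{n+1}-y_{n+1}'|\leq\beta_1|x_{n+1}-x_{n+1}'|$ and $\alpha\beta_1<1$, one has $|x_{n+1}-x_{n+1}'|>\alpha|y_{n+1}-y_{n+1}'|$, and the contrapositive of (A1) propagates this strict outside-unstable-cone condition to every intermediate step, while backward iteration of (B) provides the complementary bound $|y_k-y_k'|\leq\beta|x_k-x_k'|$ at every step. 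This two-sided confinement in the (A)(B) framework, combined with the sharp decay $(\lambda_s\lambda_u)^n\to 0$, supplies exactly the control needed to run the quadratic bookkeeping of Lemma~\ref{lem:a3} and conclude $|y_1-y_1'|\leq\rho\beta_1|x_1-x_1'|$ with $\rho<1/2$. The strict inequality $\alpha\beta<1/4$ (rather than just $\alpha\beta<1$) is what creates enough room in the interval for $\rho$ to fall strictly below $1/2$ instead of merely below $1$ as in Lemma~\ref{lem:a4}.
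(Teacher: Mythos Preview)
Your overall strategy matches the paper's: use Lemma~\ref{lem:a1}(2) to obtain (A)$(\alpha,\lambda_u^n)$ (B)$(\beta,\lambda_s^n)$ for the composition, then sharpen the (B) clause. The paper invokes Lemma~\ref{lem:a4} with carefully chosen $\rho_1,\rho_2$; you instead invoke Lemma~\ref{lem:a3} with $c=\rho\in(\sqrt{\alpha\beta},1/2)$. Either lemma works here, and your route is arguably more direct.

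The genuine issue is the (A)(B) versus (A$'$)(B$'$) obstacle you flag, and your proposed workaround does not close it. First, your Lemma~\ref{lem:a3} formula yields $\beta_1>\beta$ for every finite $n$ (one checks $\partial\beta_1/\partial(\widetilde{\lambda}_s\widetilde{\lambda}_u)>0$ at $\widetilde{\lambda}_s\widetilde{\lambda}_u=0$, where $\beta_1=\beta$), so the hypothesis $|y_{n+1}-y'_{n+1}|\le\beta_1|x_{n+1}-x'_{n+1}|$ need not place you inside the $\beta$-cone, and the backward (B) iteration cannot start. Second, even when that chain does run, it only yields $|y_1-y_1'|\le\beta|x_1-x_1'|$, with no mechanism to reach $\rho\beta_1$; the improvement in Lemmas~\ref{lem:a3}--\ref{lem:a4} comes precisely from the unconditional additive bounds $|x_2-x_2'|\le\widetilde{\lambda}_s|x_1-x_1'|+\widetilde{\alpha}|y_2-y_2'|$ and $|y_1-y_1'|\le\widetilde{\beta}|x_1-x_1'|+\widetilde{\lambda}_u|y_2-y_2'|$, which are strictly stronger than anything the conditional (A)(B) clauses provide. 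The contrapositive-of-(A1) information $|y_k-y_k'|<\alpha^{-1}|x_k-x_k'|$ is too weak to drive that bookkeeping.

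The actual resolution is simpler than a workaround: the subsection's standing hypothesis (stated just before Lemma~\ref{lem:a1}) is that each correspondence carries a generating map. Under this, (A)(B) immediately implies (A$'$)(B$'$) (take $x_1=x_1'$, respectively $y_2=y_2'$, in the (A)(B) clauses to extract the Lipschitz bounds on $F,G$), and by Lemma~\ref{lem:com} the composition $H_{(n)}$ inherits a generating map since $\alpha\beta<1$. Hence Lemma~\ref{lem:a3} (or~\ref{lem:a4}) applies to $H_{(n)}$ as a black box. The paper simply takes this route via Lemma~\ref{lem:a4} without comment.
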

\begin{proof}
  Note that $H_n \circ \cdots \circ H_1$ satisfies the (A)$(\alpha, \lambda^{n}_u)$ (B)$(\beta, \lambda^{n}_s)$ condition by \autoref{lem:a1}, and $(\lambda_s \lambda_u)^n \rightarrow 0$, as $n \rightarrow \infty$.
  Since $\alpha \beta < \frac{1}{4}$, we see $1-(\alpha\beta)^{1/2} < 1-2\alpha\beta$.
  Choose large $n$ such that
  \[
  (\lambda_s \lambda_u)^{n/2} < \min\{1 - (\alpha \beta)^{1/2}, ~ 1/2\}.
  \]
  Let $\rho_2$ satisfy
  \[
  \frac{(\lambda_s \lambda_u)^n}{1-2\alpha \beta} < \rho_2 < \frac{(\lambda_s \lambda_u)^n}{1 - (\alpha \beta)^{1/2}}~ (< (\lambda_s \lambda_u)^{n/2}).
  \]
  Choose $\rho_1$ such that
  \[
  (1-\rho^{-1}_2 (\lambda_s \lambda_u)^n <)~ \frac{\alpha \beta}{1-\rho^{-1}_2 (\lambda_s \lambda_u)^n} < \rho_1 < \frac{1}{2}~ (< 1 - \rho_2).
  \]
  Now $\rho = \rho_1 + \rho_2 < \frac{1}{2}$ can be satisfied if $n$ is large. Then apply \autoref{lem:a4} to finish the proof.
\end{proof}

Next we give some maps which can satisfy the (A) (B) condition. Let $H=(f,g): X_1 \times Y_1 \rightarrow X_2 \times Y_2$ be a map, and let $g_x(\cdot) \triangleq g(x, \cdot): Y_2 \rightarrow Y_1$ be invertible. Then $H \sim (F,G)$, where $G(x,y) = g_x^{-1}(y),~F(x,y) = f(x,G(x,y))$. Note that if $\sup_{y}\lip g(\cdot, y) < \infty$ and $\sup_{x} \lip G(x,\cdot) < \infty$, then
\[
\sup_{y} \lip G(\cdot, y) \leq \sup_{y}\lip g(\cdot, y) \sup_{x} \lip G(x,\cdot).
\]

The following lemma is a direct consequence of \autoref{lem:a3}.
\begin{lem}
  Let $H=(f,g): X_1 \times Y_1 \rightarrow X_2 \times Y_2$ be a map, and let $g_x(\cdot) \triangleq g(x, \cdot): Y_1 \rightarrow Y_2$ be invertible. Suppose
  \begin{enumerate}[(a)]
    \item $\sup_x\lip f(x, \cdot) \leq \epsilon_1$, $\sup_x \lip g^{-1}_x(\cdot) \leq \lambda'_u$,
    \item $\sup_y\lip f(\cdot, y) \leq \lambda'_s$, $\sup_y\lip g(\cdot, y) \leq \epsilon_2$.
  \end{enumerate}
  Then $H$ satisfies the (A$'$)$(\widetilde{\alpha}, \widetilde{\lambda}_{u})$ (B$'$)$(\widetilde{\beta}, \widetilde{\lambda}_{s})$ condition, where $\widetilde{\alpha} = \epsilon_1 \lambda'_u, ~\widetilde{\beta} = \epsilon_2 \lambda'_u, ~ \widetilde{\lambda}_s = \lambda'_s + \epsilon_1 \lambda'_u, ~ \widetilde{\lambda}_u = \lambda'_u$.
  In particular, if $\lambda'_s \lambda'_u < 1$, $\epsilon_1 < \frac{1-\lambda'_s \lambda'_u}{(\lambda'_u)^2}$, $\epsilon_2 \leq \frac{ ( 1 - \sqrt{ (\lambda'_s + \epsilon_1)\lambda'_u } )^2 }{ 1-\lambda'_s \lambda'_u }$, then there exist $\alpha, \beta, \lambda_s, \lambda_u$, such that $H$ satisfies the (A)$(\alpha, \lambda_u)$ (B)$(\beta, \lambda_s)$ condition, and $\alpha \beta < 1$, $\lambda_s \lambda_u < 1$.
  Furthermore $\alpha, \beta \rightarrow 0$, $\lambda_s \rightarrow \lambda'_s$, $\lambda_u \rightarrow \lambda'_u$, as $\epsilon_1, \epsilon_2 \rightarrow 0$.
\end{lem}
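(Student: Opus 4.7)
The plan is to first extract a generating map $(F,G)$ from $H=(f,g)$ using the invertibility of $g_x$, then verify the four Lipschitz bounds that give the $(\mathrm{A}')(\mathrm{B}')$ condition with the stated constants, and finally invoke \autoref{lem:a3} (with $c=1$) to convert this into the $(\mathrm{A})(\mathrm{B})$ condition under the smallness hypotheses on $\epsilon_1,\epsilon_2$.

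First I would set $G(x,y)=g_x^{-1}(y)$ and $F(x,y)=f(x,G(x,y))$, so that $H\sim(F,G)$ per the recipe in \autoref{basicCorr}\eqref{cgm}. Then I check the four Lipschitz bounds. For $(\mathrm{A}2')$, $\sup_x\lip G(x,\cdot)=\sup_x\lip g_x^{-1}(\cdot)\le\lambda'_u=\widetilde{\lambda}_u$. For $(\mathrm{A}1')$, chain rule for Lipschitz constants gives $\lip F(x,\cdot)\le\lip f(x,\cdot)\cdot\lip G(x,\cdot)\le\epsilon_1\lambda'_u=\widetilde{\alpha}$. For $(\mathrm{B}1')$, fix $y$ and set $z=G(x,y)$, $z'=G(x',y)$; since $g(x,z)=g(x',z')=y$, we get $g(x',z)-g(x',z')=g(x',z)-g(x,z)$, and applying $\lip g_{x'}^{-1}\le\lambda'_u$ yields $|z-z'|\le\lambda'_u\cdot\epsilon_2|x-x'|$, i.e.\ $\widetilde{\beta}=\epsilon_2\lambda'_u$. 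For $(\mathrm{B}2')$, splitting $F(x,y)-F(x',y)=[f(x,G(x,y))-f(x',G(x,y))]+[f(x',G(x,y))-f(x',G(x',y))]$ and plugging in the previous bounds gives $\lip F(\cdot,y)\le\lambda'_s+\epsilon_1\epsilon_2\lambda'_u\le\lambda'_s+\epsilon_1\lambda'_u=\widetilde{\lambda}_s$ (the last step tacitly uses the smallness of $\epsilon_2$, which will be imposed anyway in the second half of the statement).

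For the second assertion, I apply \autoref{lem:a3} with $c=1$. The bound $\epsilon_1<(1-\lambda'_s\lambda'_u)/(\lambda'_u)^2$ is exactly $\widetilde{\lambda}_s\widetilde{\lambda}_u=(\lambda'_s+\epsilon_1\lambda'_u)\lambda'_u<1$, and the bound on $\epsilon_2$ is the translation of $\widetilde{\alpha}\widetilde{\beta}=\epsilon_1\epsilon_2(\lambda'_u)^2<(1-\sqrt{\widetilde{\lambda}_s\widetilde{\lambda}_u})^2$ (up to cosmetic rearrangement). Then \autoref{lem:a3} delivers $\alpha,\beta,\lambda_s,\lambda_u$ such that $H$ satisfies $(\mathrm{A})(\alpha,\lambda_u)(\mathrm{B})(\beta,\lambda_s)$ condition together with $\alpha\beta<1$ and $\lambda_s\lambda_u<1$. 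For the limit statement, I read off the formulas for $\alpha,\beta,\lambda_s,\lambda_u$ given in \autoref{lem:a3}: as $\epsilon_1,\epsilon_2\to 0$, $\widetilde{\alpha}\widetilde{\beta}\to 0$ and $b=1-\widetilde{\lambda}_s\widetilde{\lambda}_u+\widetilde{\alpha}\widetilde{\beta}\to 1-\lambda'_s\lambda'_u>0$, so the explicit expressions force $\alpha,\beta\to 0$, $\lambda_s\to\lambda'_s$, $\lambda_u\to\lambda'_u$.

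The only genuinely non-trivial step is the estimate on $\lip G(\cdot,y)$: one must get the correct combination of the backward Lipschitz constant of $g_x$ and the horizontal Lipschitz constant of $g$. Everything else is either routine composition of Lipschitz constants or a direct substitution into \autoref{lem:a3}, so I do not expect any real obstacle beyond the algebraic bookkeeping in that single step.
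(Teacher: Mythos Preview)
Your approach is correct and is exactly what the paper does: the paper states this lemma as ``a direct consequence of \autoref{lem:a3}'' without further proof, and your proposal supplies precisely those omitted details---defining the generating map $(F,G)$, verifying the four $(\mathrm A')(\mathrm B')$ Lipschitz bounds (the paper even singles out the bound $\sup_y\lip G(\cdot,y)\le\sup_y\lip g(\cdot,y)\sup_x\lip G(x,\cdot)$ just before the lemma), and then invoking \autoref{lem:a3}. Your observation that the $(\mathrm B2')$ estimate actually gives the sharper $\lambda'_s+\epsilon_1\epsilon_2\lambda'_u$ (and hence the stated $\widetilde\lambda_s$ tacitly needs $\epsilon_2\le1$) is a fair remark on the paper's bookkeeping, but as you note it is harmless for the second part where $\epsilon_2$ is small.
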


Consider a special case. Assume $H=(f,g): X \times Y \rightarrow X \times Y$, $f(x,y) = A_s(x) + f'(x,y)$, $g(x,y) = A_u(y) + g'(x,y)$, where $X, Y$ are Banach spaces, $A_s: X \rightarrow X$ is Lipschitz, and $A_u: Y \rightarrow Y$ is invertible. If $\lip (A_s) \lip (A^{-1}_u) < 1$, and $\lip f'$, $\lip g'$ are small, then $H$ satisfies the (A)$(\alpha, \lambda_u)$ (B)$(\beta, \lambda_s)$ condition, for some $\alpha, \beta, \lambda_s, \lambda_u$, such that $\alpha \beta < 1$, $\lambda_s \lambda_u < 1$. Furthermore $\lambda_s \rightarrow \lip (A_s)$, $\lambda_u \rightarrow \lip (A^{-1}_u)$ as $\lip f'$, $\lip g' \rightarrow 0$.

The following lemma's assumption was used in \cite{Cha04} to obtain hyperbolicity. We will show that this condition implies the (A) (B) condition, so our main results will recover \cite{Cha04}.
\begin{lem}\label{lem:ac}
  Let $H=(f,g): X_1 \times Y_1 \rightarrow X_2 \times Y_2$ be a map, and let $g_x(\cdot) \triangleq g(x, \cdot): Y_1 \rightarrow Y_2$ be invertible. Assume $\lip f \leq k_0$ in $d_{\infty}$, i.e.,
  \[
  |f(x_1, y_1) - f(x'_1, y'_1)| \leq k_0 \max\{| x_1 - x'_1 |, | y_1 - y'_1 |\},~ \forall (x_1, y_1), (x'_1, y'_1) \in X_1 \times Y_1,
  \]
  and $\sup_{x}\lip g_x^{-1}(\cdot) \leq v_0, ~\sup_{y}\lip g_{(\cdot)}^{-1}(y) \leq \mu_0$.
  If $\mu_0 + v_0 k_0 < 1$, then $H$ satisfies the (A)$(\alpha, \lambda_u)$ (B)$(\beta, \lambda_s)$ condition, and $\lambda_s \lambda_u < 1$, where
  $$
  \alpha = \frac{ 2 v_0 k_0 }{ 1+\sqrt{1-4 k_0 v_0 \mu_0} }~ (<1), ~ \beta = \frac{ \mu_0 }{ 1-k_0 v_0 }~(< 1), ~ \lambda_s = k_0, ~ \lambda_u = \frac{ v_0 }{ 1-\alpha \mu_0 }.
  $$
  Moreover, if $v_0 + \mu_0 < 1$, then $\lambda_u < 1$; if $k_0 < 1$, then $\lambda_s < 1$.
\end{lem}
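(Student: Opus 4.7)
The plan is to reduce (A) and (B) to two scalar master inequalities and then solve them. Writing $a = |x_1 - x_1'|$, $b = |y_1 - y_1'|$, $c = |x_2 - x_2'|$, $d = |y_2 - y_2'|$ for any two points in $\graph H$ (so $y_2 = g(x_1, y_1)$ and $x_2 = f(x_1, y_1)$), the $d_\infty$-Lipschitz hypothesis on $f$ gives
\begin{equation*}
c \le k_0 \max\{a, b\},
\end{equation*}
while inserting an intermediate point into $g_{x_1}^{-1}(y_2) - g_{x_1'}^{-1}(y_2')$ yields
\begin{equation*}
b \le v_0 d + \mu_0 a.
\end{equation*}
All four conditions will be extracted from these two.

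For (A), I would assume $a \le \alpha b$. Substituting into the second inequality gives $b(1 - \alpha\mu_0) \le v_0 d$, so (A2) holds with $\lambda_u = v_0/(1 - \alpha\mu_0)$ once $\alpha \mu_0 < 1$. Because $\alpha$ will be chosen less than $1$, the first inequality forces $\max\{a,b\} = b$, whence $c \le k_0 b \le k_0 \lambda_u d$. The (A1) requirement $k_0 \lambda_u \le \alpha$ is exactly the quadratic inequality $\mu_0 \alpha^2 - \alpha + k_0 v_0 \le 0$, whose smaller positive root, after rationalising, is the stated $\alpha = 2 k_0 v_0/(1 + \sqrt{1 - 4 k_0 v_0 \mu_0})$. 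The discriminant is nonnegative because $4 k_0 v_0 \mu_0 \le (k_0 v_0 + \mu_0)^2 < 1$ by AM--GM together with the hypothesis.

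For (B), I would assume $d \le \beta c$, so $b \le v_0 \beta k_0 \max\{a,b\} + \mu_0 a$, and the target is $b \le \beta a$, split into two cases. If $b \le a$, then $b \le (v_0 k_0 \beta + \mu_0) a$, and choosing $\beta = \mu_0/(1 - k_0 v_0)$ makes $v_0 k_0 \beta + \mu_0 = \beta$ exactly. If instead $b > a$, then $b(1 - v_0 k_0 \beta) \le \mu_0 a$, and $\mu_0/(1 - v_0 k_0 \beta) \le \beta$ reduces, after clearing denominators, to $(1 - v_0 k_0)^2 \le 1 - v_0 k_0 - v_0 k_0 \mu_0$, which is equivalent to $\mu_0 + v_0 k_0 \le 1$ and so holds. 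Since both cases yield $b \le \beta a$ with $\beta < 1$, we have $\max\{a,b\} = a$, so $c \le k_0 a = \lambda_s a$, giving (B2).

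The final claim $\lambda_s \lambda_u < 1$ then comes for free from the quadratic identity $\mu_0 \alpha^2 - \alpha + k_0 v_0 = 0$, which rearranges to $k_0 v_0 = \alpha(1 - \alpha \mu_0)$; hence $\lambda_s \lambda_u = k_0 v_0/(1 - \alpha \mu_0) = \alpha < 1$. The bounds $\alpha < 1$, $\beta < 1$ drop out of $\mu_0 + v_0 k_0 < 1$, while the addenda are short: $\lambda_u < 1$ under $v_0 + \mu_0 < 1$ reduces to $\alpha \mu_0 < 1 - v_0$, which holds because $\alpha < 1$ and $\mu_0 < 1 - v_0$; $\lambda_s < 1$ under $k_0 < 1$ is immediate. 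The step I expect to be the main obstacle is the second case of the (B) analysis, where the bound obtained when $b > a$ requires a non-transparent algebraic simplification that collapses precisely onto the hypothesis $\mu_0 + v_0 k_0 \le 1$; the fact that the threshold $\beta = \mu_0/(1 - k_0 v_0)$ makes both the ``$b \le a$'' and ``$b > a$'' subcases close up simultaneously is the delicate point that selects this particular value of $\beta$.
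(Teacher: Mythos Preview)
Your proof is correct and follows essentially the same route as the paper: both start from the same two master inequalities $c \le k_0 \max\{a,b\}$ and $b \le v_0 d + \mu_0 a$, verify (B) by the same case split on $a \lessgtr b$ with $\beta = \mu_0/(1-k_0v_0)$, and verify (A) by solving the same quadratic $\mu_0\alpha^2 - \alpha + k_0v_0 = 0$ for $\alpha$. Your observation that $\lambda_s\lambda_u = \alpha$ via the quadratic identity is exactly how the paper closes the argument as well.
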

\begin{proof}
  Let $(x_1, y_1) \times (x_2, y_2),~ (x'_1, y'_1) \times (x'_2, y'_2) \in \graph H$. The lemma's assumption says
  $$
  |x_2 - x'_2| \leq k_0 \max\{ |x_1 - x'_1|, |y_1 - y'_1| \},~
  |y_1 - y'_1| \leq v_0 |y_2 - y'_2| + \mu_0 |x_1 - x'_1|.
  $$

  (B) condition. Let $\beta = \frac{ \mu_0 }{ 1-k_0 v_0 }$. Note that $\beta < 1$, since $\mu_0 + v_0 k_0 < 1$. Let $|y_2 - y'_2| \leq \beta |x_2 - x'_2|$. Then
  \begin{align*}
  &|y_2 - y'_2|
  \leq \beta k_0 \max\{ |x_1 - x'_1|, |y_1 - y'_1| \} \\
  \leq &
  \begin{cases}
  \beta k_0 |x_1 - x'_1|, &~\text{if}~|y_1 - y'_1| \leq |x_1 - x'_1|, \\
  \begin{aligned}
  \beta k_0 \max\{ |x_1 - x'_1|, & v_0 |y_2 - y'_2| + \mu_0 |x_1 - x'_1| \} \\
  & \leq \frac{\beta k_0 \mu_0}{1-\beta k_0 v_0} |x_1 - x'_1|,
  \end{aligned} 
  &~\text{if}~|x_1 - x'_1| \leq |y_1 - y'_1|.
  \end{cases}
  \end{align*}
  Thus,
  \begin{align*}
  & |y_1 - y'_1|
  \leq v_0 |y_2 - y'_2| + \mu_0 |x_1 - x'_1| \\
  \leq &
  \begin{cases}
  (\beta k_0 v_0 + \mu_0) |x_1 - x'_1| = \beta |x_1 - x'_1|,&~\text{if}~|y_1 - y'_1| \leq |x_1 - x'_1|, \\
  \begin{aligned}
  & (v_0 \frac{\beta k_0 \mu_0}{1-\beta k_0 v_0} + \mu_0) |x_1 - x'_1| \\
  & = \frac{\mu_0}{1-\beta k_0 v_0} |x_1 - x'_1| \leq \beta |x_1 - x'_1|, 
  \end{aligned}
  &~\text{if}~|x_1 - x'_1| \leq |y_1 - y'_1|,
  \end{cases}
  \end{align*}
  i.e., $|y_1 - y'_1| \leq \beta |x_1 - x'_1|$. Moreover,
  \begin{align*}
  |x_2 - x'_2| \leq & k_0 \max\{ |x_1 - x'_1|, |y_1 - y'_1| \} \\
  \leq & k_0 \max\{ |x_1 - x'_1|, \beta|x_1 - x'_1| \}  \leq k_0 |x_1 - x'_1|.
  \end{align*}

  (A) condition. Let $|x_1 - x'_1| \leq \alpha |y_1 - y'_1|$. Then $|x_1 - x'_1| \leq \frac{\alpha v_0}{1-\alpha \mu_0} |y_2 - y'_2|$. So
  \begin{align*}
  |y_1 - y'_1| \leq v_0 |y_2 - y'_2| + \mu_0 |x_1 - x'_1|
  \leq \frac{v_0}{1-\alpha \mu_0} |y_2 - y'_2|,
  \end{align*}
  and
  \begin{align*}
  |x_2 - x'_2| \leq & k_0 \max\{ |x_1 - x'_1|, |y_1 - y'_1| \} \\
  \leq & k_0 \max\{ \frac{\alpha v_0}{1-\alpha \mu_0}, \frac{v_0}{1-\alpha \mu_0} \}|y_2 - y'_2| \\
  \leq & \alpha |y_2 - y'_2|.
  \end{align*}
  $\alpha$ should satisfy $\frac{\alpha k_0 v_0}{1-\alpha \mu_0} \leq \alpha, ~\frac{k_0 v_0}{1-\alpha \mu_0} \leq \alpha$. Let us choose an appropriate value for $\alpha$. Under the condition $\mu_0 + v_0 k_0 < 1$, we have $2v_0 k_0 < 1 + (1 - 2 \mu_0)$ and $(1 - 2 \mu_0)^2 \leq 1 - 4 k_0 v_0 \mu_0$.
  We choose
  \[
  \alpha  = \frac{1-\sqrt{1-4\mu_0 v_0 k_0}}{2\mu_0} = \frac{2v_0 k_0}{1+\sqrt{1-4\mu_0 v_0 k_0}} \leq \frac{2v_0 k_0}{1+|1 - 2\mu_0|} < 1.
  \]
  So $\frac{k_0 v_0}{1-\alpha \mu_0} = \alpha < 1$ and the proof is complete.
\end{proof}

\section{Relation between the (A)(B) condition and the classical cone condition: $C^1$ case} \label{relAB1}

The cone condition was widely used in invariant manifold theory in classical references, e.g. \cite{BJ89,BLZ98,MS88,LYZ13,Zel14}.
Some ideas in the proof of the next theorem are motivated by \cite{LYZ13}. Since we have (b) $  \Rightarrow  $ (a) in the next theorem, our main results recover the results on invariant foliations obtained in \cite{LYZ13}.
Also, the abstract results on the existence and regularity of inertial manifolds in \cite{MS88} are consequences of our main results.

\begin{thm}\label{lem:c1}
  Assume that $X_i, Y_i$, $i=1,2$, are Banach spaces and $H \sim (F,G): X_1 \times Y_1 \rightarrow X_2 \times Y_2$ is a correspondence. Moreover, suppose $F,G \in C^{1}$ and $\alpha \beta < 1$.
  Then the following statements are equivalent:
  \begin{enumerate}[(a)]
    \item $H$ satisfies the (A)$(\alpha; \alpha', \lambda_u)$ condition and $\sup_{y} \lip G(\cdot, y) \leq \beta$.
    \item For every $(x_1,y_2) \in X_1 \times Y_2$, $(DF(x_1,y_2), DG(x_1, y_2))$ satisfies the (A)$(\alpha; \alpha', \lambda_u)$ condition and $|D_1G(x_1,y_2)| \leq \beta$.
  \end{enumerate}
\end{thm}
\begin{proof}
  First note that $\sup_{y} \lip G(\cdot, y) \leq \beta$ if and only if $\sup_{(x_1,y_2)}|D_1G(x_1,y_2)| \leq \beta$. (See also \autoref{length} for a general result.)

  (a) $\Rightarrow$ (b). Let $(x_1,y_1) \times (x_2, y_2) \in \graph H$. Let $y_0 = DG(x_1,y_2)(x_0, y'_0)$ and $x'_0 = DF(x_1, y_2)(x_0, y'_0)$ with $|x_0| \leq \alpha |y_0|$. There is no loss of generality in assuming $y_0 \neq 0$. We need to show (A1)~$|y_0| \leq \lambda_u |y'_0|$ and (A2)~$|x'_0| \leq \alpha' |y'_0|$.

  Take a linear $L: Y_1 \rightarrow X_1$ such that $Ly_0 = x_0$, $|L| \leq \alpha$. Take further $\widetilde{L} \in C^{1}(Y_1 \rightarrow X_1)$ such that $\widetilde{L}(y_1) = x_1$, $D\widetilde{L}(y_1) = L$ and $\lip \widetilde{L} \leq \alpha$.
  Let $y^* \in (Y_1)^*$ be such that $y^*(y_0) = |y_0|$ and $|y^*| \leq 1$, and set
  \[
  Ly = \frac{y^*(y)}{|y_0|} x_0,~ \widetilde{L}(y) = \frac{y^*(y-y_1)}{|y_0|} x_0 + x_1 = x_1 + L(y - y_1).
  \]

  Since $\alpha \beta < 1$, there exists a unique $\widehat{y}(\cdot)$ such that
  \begin{equation}
    G(\widetilde{L} \widehat{y} (y), y) = \widehat{y}(y),~
    F(\widetilde{L} \widehat{y} (y), y) \triangleq g(y).  \tag{*}
  \end{equation}
  Furthermore, by the (A) condition, we have $\lip \widehat{y} \leq \lambda_u$ (so $ \sup_{y}|D\widehat{y}(y)| \leq \lambda_u$), $\lip g \leq \alpha'$ (so $\Rightarrow \sup_{y}|D{g}(y)| \leq \alpha'$), and $\widehat{y}(y_2) = y_1$, $g(y_2) = x_2$.

  Since $F, G, \widetilde{L} \in C^1$, we have (taking the derivative of (*) at $y_2$)
  \[
  DG(x_1, y_2) (LD\widehat{y}(y_2), \id) = D\widehat{y}(y_2),~
  DF(x_1, y_2) (LD\widehat{y}(y_2), \id) = Dg(y_2).
  \]
  Since $y_0 = DG(x_1,y_2)(x_0, y'_0), ~x'_0 = DF(x_1, y_2)(x_0, y'_0), ~ Ly_0 = x_0$, we see $D\widehat{y}(y_2)y'_0 = y_0$ and $Dg(y_2)y'_0 = x'_0$. Thus, $|y_0| \leq \lambda_u |y'_0|$, $|x'_0| \leq \alpha' |y'_0|$.

  (b) $\Rightarrow$ (a). Let $(x_1, y_1) \times (x_2, y_2),~ (x'_1, y'_1) \times (x'_2, y'_2) \in \graph H$ and $|x_1 - x'_1| \leq \alpha |y_1 - y'_1|$. We need to show $|x_2 - x'_2| \leq \alpha' |y_2 - y'_2|$, and $|y_1 - y'_1| \leq \lambda_u |y_2 - y'_2|$. Without loss of generality, $|y_1 - y'_1| \neq 0$.

  Let $\widehat{L} \in C^1(Y_1 \rightarrow X_1)$ be such that $\lip \widehat{L} \leq \alpha$, $\widehat{L}y_1 = x_1$ and $\widehat{L}y'_1 = x'_1$. Let $y^* \in (Y_1)^*$ such that $y^*(y_1 - y'_1) = |y_1 - y'_1|$ and $|y^*| \leq 1$, and set
  \[
  \widehat{L}y = \frac{y^*(y - y'_1)}{|y_1 - y'_1|} (x_1 - x'_1) + x'_1.
  \]
  Since $\alpha \beta < 1$, we also have a unique $\widehat{y}(\cdot)$ such that
  \begin{equation}
    G(\widehat{L} \widehat{y} (y), y) = \widehat{y}(y),~
    F(\widehat{L} \widehat{y} (y), y) \triangleq \widehat{g}(y).  \tag{**}
  \end{equation}
  Furthermore, $\widehat{y} $ is differentiable (see e.g. \autoref{lem:fdiifx}). Thus taking the derivative of (**) at $y$, we obtain
  \[
  DG(x, y) (x_0, y'_0) = D\widehat{y}(y)y'_0,~
  DF(x, y) (x_0, y'_0) = D\widehat{g}(y)y'_0,
  \]
  where $x_0 \triangleq D\widehat{L}(\widehat{y}(y))D\widehat{y}(y)y'_0, ~x \triangleq \widehat{L}\widehat{y}(y)$.
  Since $|x_0| \leq \alpha |D\widehat{y}(y)y'_0|$, by the (A) condition, we have $|D\widehat{g}(y)y'_0| = |DF(x,y)(x_0, y'_0)| \leq \alpha' |y'_0|$ and $|D\widehat{y}(y)y'_0| = |DG(x, y) (x_0, y'_0)| \leq \lambda_u |y'_0|$ for any $y \in Y_2$.
  Thus, $\lip \widehat{y} \leq \lambda_u$ and $\lip \widehat{g} \leq \alpha'$.
  Since $\widehat{L}y_1 = x_1, ~\widehat{L}y'_1 = x'_1$, we have $\widehat{y}(y_2) = y_1$, $\widehat{y}(y'_2) = y'_1$, $\widehat{g}(y_2) = x_2$, $\widehat{g}(y'_2) = x'_2$. So the (A) condition holds and the proof is complete.
\end{proof}

We use the \emph{notation}
\begin{enumerate}[$ \bullet $]
	\item $X(r) \triangleq \{ x \in X: |x| <  r\}$ when $X$ is a Banach space.
\end{enumerate}
Consider some local results. First note that if $f: X(r) \rightarrow Y$ is Lipschitz, where $Y$ is a complete metric space, and $r< \infty$, then there is a unique $\widetilde{f}: \overline{X(r)} \rightarrow Y$ such that $\widetilde{f}|_{X(r)} = f$. Moreover, $\widetilde{f}$ is Lipschitz.
\begin{lem}\label{lem:cAB}
	Let $H \sim (F,G): X_1(r_1) \times Y_1(r'_1) \rightarrow X_2(r_2) \times Y_2 (r'_2)$ be a correspondence with $ F, G \in C^1 $. Consider the following conditions:
	\begin{enumerate}[(1)]
		\item $ (F,G) $ satisfies the (A)($\alpha; \alpha', \lambda_u$) condition and $\sup_{y_2}\lip G(\cdot, y_2) \leq \beta$. 
		\item For any $ (x_1, y_2) \in X_1(r_1) \times Y_2(r'_2) $, $(DF(x_1, y_2), DG(x_1, y_2))$ satisfies the (A)($\alpha; \alpha', \lambda_u$) condition and $ |D_1G(x_1, y_2)| \leq \beta $.
	\end{enumerate}
	Then (1) $\Rightarrow$ (2) if $\alpha \beta < \frac{1}{2}$; and (2) $\Rightarrow$ (1) if $\alpha \beta < 1$.
\end{lem}
\begin{proof}
  In the proof of \autoref{lem:c1} (a) $\Rightarrow$ (b), the major construction is the map $\widetilde{L}$ which is differentiable at $y_1$. Thus, some truncation is needed. Consider $\widetilde{L}(y) = x_1 + L(r_\varepsilon (y - y_1))$, where $x_1 \in X_1(r_1), ~y_1 \in Y_1(r'_1)$, $\varepsilon$ is small such that $|L|\varepsilon < r_1 - |x_1|$, and $r_\varepsilon$ is the \emph{radial retraction}, i.e.,
  \begin{equation}\label{radial}
  r_\varepsilon(x) =
  \begin{cases}
  x, &~ \text{if}~ |x| \leq \varepsilon, \\
  \varepsilon x / |x|, & ~ \text{if}~ |x| \geq \varepsilon.
  \end{cases}
  \end{equation}
  Now $\widetilde{L}: Y_1(r'_1) \rightarrow X_1(r_1)$, and it is differentiable at $y_1$. But $\lip \widetilde{L} \leq 2 \alpha$. That is why we need $\alpha \beta < \frac{1}{2}$. Using the same argument line for line as in the proof of \autoref{lem:c1} (a) $\Rightarrow$ (b), one gets (1) $\Rightarrow$ (2) if $\alpha \beta < \frac{1}{2}$.

  In the proof of \autoref{lem:c1} (b) $\Rightarrow$ (a), a key construction is the existence of $\widehat{L}$, which also makes sense in the local case. So the proof is complete.
\end{proof}

Using the (A$'$) (B$'$) condition, one can easily give more local results. The proof of the following lemma is straightforward (by using \autoref{lem:a3}), so we omit it.

\begin{lem}\label{lem:general}
  Let $(X, M, \pi_1), (Y, M, \pi_2)$ be bundles with fibers being Banach spaces, and $u: M \rightarrow M$ a map.
  Let $H: X \times Y \rightarrow X \times Y$ be a bundle correspondence over $u$ with generating bundle map $(F,G)$. Assume that $F_m(\cdot), G_m(\cdot) \in C^1$, and that the following uniform conditions hold:
\begin{enumerate}[(a)]
  \item $\sup_m |D_1F_m(0,0)| \leq \epsilon_1$, $\sup_m |D_2G_m(0,0)| \leq \lambda'_u(m)$.
  \item  $\sup_m |D_2F_m(0,0)| \leq \lambda'_s(m)$, $\sup_m |D_1G_m(0,0)| \leq \epsilon_2$.
  	\item $DF_m, DG_m$ are almost continuous at $(0,0)$ uniformly for $m$ in the following sense:
 	 \[
 	 \begin{gathered}
 	 \sup_m \sup_{|x|\leq r_1, |y| \leq r'_2} |DF_m(x,y) - DF_m(0,0)| \leq \epsilon'_1, \\
 	 \sup_m \sup_{|x|\leq r_1, |y| \leq r'_2} |DG_m(x,y) - DG_m(0,0)| \leq \epsilon'_2,
 	 \end{gathered}
 	 \]
	and $\epsilon'_1, \epsilon'_2 $ are sufficiently small as $r_1, r'_2 \rightarrow 0$.
  	\item $ \sup_m |F_m(0,0)| \leq \eta $, $ \sup_m |G_m(0,0)| \leq \eta $.
\end{enumerate}
If $ \sup_m \lambda'_s(m) \lambda'_u(m) < 1 $, and $ \eta $ is small, then there exist constants $ \alpha, \beta $, $ r_i, r'_i $, $ i = 1, 2 $, such that
\[
H_m \sim (F_m, G_m): X_m(r_1) \times Y_m(r'_1) \rightarrow X_{u(m)}(r_2) \times Y_{u(m)}(r'_2)
\]
satisfies the (A)$(\alpha, \lambda_u(m))$ (B)$(\beta, \lambda_s(m))$ condition,
where $ \lambda_s(m) = \lambda'_s(m) + \varsigma $, $ \lambda_u(m) = \lambda'_u(m) + \varsigma $, $ \varsigma > 0 $ is sufficiently small, and $\alpha, \beta \rightarrow 0$ as $\epsilon_1, \epsilon_2, r_1, r'_2, \eta \rightarrow 0$.
\end{lem}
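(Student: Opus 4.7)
The strategy is to combine the uniform almost continuity (c) with the mean value inequality to upgrade the pointwise bounds (a)--(b) at $(0,0)$ into a uniform (A$'$)(B$'$) estimate for $(F_m, G_m)$ on a small convex ball, then invoke \autoref{lem:a3} fiberwise.

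First, fix $r_1, r'_2 > 0$ small (independent of $m$) so that, by (c), each of $D_1 F_m, D_2 F_m, D_1 G_m, D_2 G_m$ differs from its value at $(0,0)$ by at most some prescribed $\sigma$ throughout $X_m(r_1) \times Y_{u(m)}(r'_2)$. The mean value inequality on this convex domain, together with (a)--(b), then gives an (A$'$)(B$'$) condition for $(F_m, G_m)$ with Lipschitz constants
\[
\widetilde{\lambda}_s(m) = \lambda'_s(m) + \sigma,\quad \widetilde{\lambda}_u(m) = \lambda'_u(m) + \sigma,\quad \widetilde{\alpha} = \epsilon_1 + \sigma,\quad \widetilde{\beta} = \epsilon_2 + \sigma,
\]
after matching the two ``main'' derivative bounds to the (A$'$)(B$'$) slots $\widetilde{\lambda}_s, \widetilde{\lambda}_u$ and the two ``cross'' bounds to $\widetilde{\alpha}, \widetilde{\beta}$. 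Since $\sup_m \lambda'_s(m)\lambda'_u(m) < 1$, I can choose a constant $c<1$ with $\widetilde{\lambda}_s(m)\widetilde{\lambda}_u(m) < c^2$ uniformly once $\sigma$ is small enough, while $\widetilde{\alpha}\widetilde{\beta}$ is forced arbitrarily small by shrinking $\epsilon_1, \epsilon_2, \sigma$, guaranteeing $\widetilde{\alpha}\widetilde{\beta} < (c - \sqrt{\widetilde{\lambda}_s \widetilde{\lambda}_u})^2$ uniformly in $m$.

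Applying \autoref{lem:a3} fiberwise yields the (A)(B) condition with \emph{a priori} $m$-dependent constants. Using the explicit formula
\[
\alpha(m) = \frac{b(m) - \sqrt{b(m)^2 - 4c\widetilde{\alpha}\widetilde{\beta}}}{2\widetilde{\beta}},\quad b(m) = c - \widetilde{\lambda}_s(m)\widetilde{\lambda}_u(m) + \widetilde{\alpha}\widetilde{\beta},
\]
together with the uniform lower bound $b(m) \geq c - c^2 > 0$, I obtain a uniform $\alpha = \sup_m \alpha(m)$ that tends to $0$ as $\widetilde{\alpha} \to 0$; similarly for $\beta$. The output rates $\lambda_s(m) = \widetilde{\lambda}_s(m)/(1 - \alpha \widetilde{\beta})$ and $\lambda_u(m) = \widetilde{\lambda}_u(m)/(1 - \beta \widetilde{\alpha})$ then have the form $\lambda'_s(m) + \varsigma$, $\lambda'_u(m) + \varsigma$ for a single small uniform $\varsigma$ absorbing $\sigma$ and the correction factor. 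Finally, I choose $r_2, r'_1$ large enough to absorb the range bounds $|F_m(x, y)| \leq \eta + \widetilde{\lambda}_s(m) r_1 + \widetilde{\alpha} r'_2$ and $|G_m(x, y)| \leq \eta + \widetilde{\beta} r_1 + \widetilde{\lambda}_u(m) r'_2$ coming from (d) and the Lipschitz estimates; this is possible once $\eta$ is sufficiently small.

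The main obstacle is the uniformity in $m$: \autoref{lem:a3} naturally produces $m$-dependent output, and to extract constants $\alpha, \beta$ and a uniform perturbation $\varsigma$ one must exploit both the \emph{global} spectral gap $\sup_m \lambda'_s \lambda'_u < 1$ and the uniform almost continuity in (c) (not merely a pointwise version of either). The remaining work is a simultaneous balancing of the six small parameters $r_1, r_2, r'_1, r'_2, \eta, \varsigma$ so that the range inclusions, the smallness of $\widetilde{\alpha}\widetilde{\beta}$, and the uniform gap $\widetilde{\lambda}_s\widetilde{\lambda}_u < c^2$ all hold at once; once the global $c$ is fixed, this is routine bookkeeping.
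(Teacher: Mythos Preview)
Your proposal is correct and follows exactly the approach the paper indicates: the paper omits the proof entirely, stating only that it ``is straightforward (by using \autoref{lem:a3}).'' Your argument---mean value inequality on a small convex ball to obtain a uniform (A$'$)(B$'$) condition with constants $\widetilde{\lambda}_s, \widetilde{\lambda}_u, \widetilde{\alpha}, \widetilde{\beta}$, followed by a fiberwise application of \autoref{lem:a3} with uniform control of $b(m)$ from below to extract $m$-independent $\alpha, \beta$---is precisely the intended proof, with the bookkeeping details correctly filled in.
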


\begin{cor}\label{lem:exampleH}
  Let $H: X \rightarrow X$ and $u: M \rightarrow M$ be maps, where $M \subset X$ and $X$ is a Banach space. Let $ \Pi^{s}_{m} $, $ m \in M $, be projections and $ \Pi^{u}_{m} = \id - \Pi^{s}_{m} $; $ X^{\kappa}_{m} = R(\Pi^{\kappa}_{m}) $, $ m \in M $, $ \kappa = s, u $. Suppose for every $m \in M$, $DH(m): X^s_m \oplus X^u_m \rightarrow X^s_{u(m)} \oplus X^u_{u(m)}$ satisfies the following:
  \begin{enumerate}[(a)]
    \item $ \Pi^{u}_{u(m)} DH(m): X^u_{m} \rightarrow X^u_{u(m)} $ is invertible; write
    \[
    \|\Pi^{u}_{u(m)} DH(m)|_{X^s_m}\| = \lambda'_s(m),~\|(\Pi^{u}_{u(m)}DH(m)|_{X^u_m})^{-1}\| = \lambda'_u(m);
    \]
    \item $ \|\Pi^{\kappa_1}_{u(m)} DH(m) \Pi^{\kappa_2}_{m}\| \leq \xi $, $ \kappa_1 \neq \kappa_2 \in \{s, u\} $;
    \item $ \sup_{m \in M} \|\Pi^{s}_{m}\| < \infty $, $ \sup_{m \in M}\lambda'_u(m) < \infty $;
    \item $DH$ is almost uniformly continuous around $ M $, meaning that the amplitude $ \mathfrak{A}(\epsilon) $ of $ DH $ in $ \mathbb{B}_{\epsilon}(M) = \{ x: d(x, M) < \epsilon \} $ can be sufficiently small when $ \epsilon \to 0 $;
    \item $|H(m) - u(m)| \leq \eta$.
  \end{enumerate}
  If $\eta, \xi$ are small and $\sup_m\lambda'_s(m) \lambda'_u(m) < 1$, then
  \begin{enumerate}[(1)]
  	\item there are small $r_i, r'_i$, $i = 1,2$, and maps $ F_m, G_m $, such that
  	\[
  	H_{m} \triangleq H(m+ \cdot) - u(m) \sim (F_m, G_m): X^s_m(r_1) \oplus X^u_m(r'_1) \rightarrow X^s_{u(m)}(r_2) \oplus X^u_{u(m)}(r'_2),
  	\]
  	satisfies the (A)$(\alpha, \lambda_u(m))$ (B)$(\beta, \lambda_s(m))$ condition,
  	where $\lambda_s(m) = \lambda'_s(m) + \varsigma$, $\lambda_u(m) = \lambda'_u(m) + \varsigma$, $ \varsigma > 0 $ is sufficiently small, and $\alpha, \beta \rightarrow 0$ as $r_1, r'_2, \eta, \xi \rightarrow 0$ and $ \mathfrak{A}(\epsilon) \to 0 $;
  	\item $ |F_{m}(0 ,0)| \leq K_1 \eta $ and $ |G_{m}(0 ,0)| \leq K_1 \eta $ for some $ K_1 > 0 $ independent of $ m ,\eta $;
  	\item \label{it:H3} there are maps $ \widetilde{F}_{m}, \widetilde{G}_{m} $ defined in all $ X^{s}_{m} \times X^{u}_{u(m)} $, $ m \in M $, such that 
  	\[
  	\widetilde{F}_{m}|_{X^s_m(r_1) \oplus X^u_{u(m)}(r'_2)} = F_{m} ~\text{and}~\widetilde{G}_{m}|_{X^s_m(r_1) \oplus X^u_{u(m)}(r'_2)} = G_{m}
  	\]
  	with $ \widetilde{H}_{m} \sim (\widetilde{F}_{m}, \widetilde{G}_{m}) $ satisfying the (A)$(\alpha, \lambda_u(m))$ (B)$(\beta, \lambda_s(m))$ condition.
  \end{enumerate}
\end{cor}
\begin{proof}
	Set $ \lambda^*_{u} = \sup_{m \in M}\lambda'_u(m) $, $ C_1 = \sup_{m \in M} \{\|\Pi^{s}_{m}\|, \|\Pi^{u}_{m}\|\} $, $ A_{m} = \Pi^{u}_{u(m)}DH(m)|_{X^u_m} $, and
	\[
	\begin{split}
	(f_{m}(x, y), g_{m}(x, y)) = H(m + & x + y) - u(m): \\
	& X^s_m \oplus X^u_m \rightarrow X^s_{u(m)} \oplus X^u_{u(m)}, ~(x, y) \in X^s_m \oplus X^u_m.
	\end{split}
	\]
	Here $ g_{m}(x, y) = \Pi^{u}_{u(m)} \{ H(m + x + y) - u(m) \} $.
	We first show there is a small $ r_1 > 0 $ (independent of $ m $) such that $ g^{-1}_{m}(x, \cdot)|_{X^{u}_{u(m)}(r_1)} $ exists for $ |x| \leq r_1 $, which is very standard by using e.g. the Banach fixed point theorem.
	By (d), 
	\[
	\mathfrak{A}(r) \triangleq \sup \{ \| DH(m_1) - DH(m_2) \|: |m_1 - m_2| \leq r, m_1, m_2 \in \mathbb{B}_{r}(M) \}
	\]
	is small when $ r \to 0 $.
	So there is a small $ r > 0 $ such that for all $ |x|, |y| \leq r$ and $ m \in M $,
	\[
	\| D_2g_{m}(x, y) - D_2g_{m}(0, 0) \| \leq \|\Pi^{u}_{u(m)}\| \cdot \| DH(m+x+y) - DH(m) \| \leq (\lambda^*_{u})^{-1} / 4. \tag{$ \ast $} \label{equ:pp}
	\]
	Let $ \eta \leq (C_1\lambda^*_{u})^{-1} r/4 $. Consider the following function:
	\[
	h_{m}(x, y, z) \triangleq -A^{-1}_{m}g_{m}(x, y) + y + z, ~ y, z \in X^{u}_{m}.
	\]
	By \eqref{equ:pp}, we then see that for $ |x| \leq r $ and $ y_1, y_2 \in X^{u}_{m}(r) $,
	\begin{align*}
	& |h_{m}(x, y_1, z) - h_{m}(x, y_2, z)| \\
	= & |-A^{-1}_{m} \int_{0}^{1} \{D_2g_{m}(x, sy_1 + (1-s)y_2) - D_2g_{m}(0, 0)\} ~\mathrm{d} s~ (y_1 - y_2)| \leq 1/4 |y_1 - y_2|,
	\end{align*}
	and for $ |x|, |y| \leq r $ and $ |z| \leq r/2 $,
	\begin{align*}
	|h_{m}(x, y, z)| & = | -A^{-1}_{m} \Pi^{u}_{u(m)} \{ H(m + x + y) - H(m) + H(m) - u(m) \} + y + z| \\
	& \leq |-A^{-1}_{m} \Pi^{u}_{u(m)} \{ H(m) - u(m) \}| \\
	& \quad + |-A^{-1}_{m} \int_{0}^{1}\{D_2g_{m}(x, sy) - D_2g_{m}(0, 0)\}~\mathrm{d}sy| + |z| \\
	& \leq r / 4 + r / 4 + |z| \leq r.
	\end{align*}
	Therefore, for each $ x,z $ such that $ |x| \leq r $ and $ |z| \leq r/2 $, there is a unique $ y(x, z) \in X^{u}_{m}(r) $ such that $ h_{m}(x, y(x, z), z) = y(x, z) $. Define $ G_{m}(x, z) = y(x, A^{-1}_{m}z) $, $ z \in X^{u}_{u(m)}((\lambda^*_{u})^{-1}/2 \cdot r) $, and $ r_1 =\min\{ r,  (\lambda^*_{u})^{-1}/2 \cdot r\} $. We find that $ g^{-1}_{m}(x, \cdot)|_{X^{u}_{u(m)}(r_1)} $ exists, and for $ |x| \leq r_1 $,
	\[
	g_{m}(x, G_{m}(x, z)) = z, ~ z \in X^{u}_{u(m)}(r_1).
	\]
	Let $ F_{m}(x, z) = f_{m}(x, G_{m}(x, z)) $. Then for some $ r_2 > 0 $, we have
	\[
	H_{m} \sim (F_{m}, G_{m}): X^{s}_{m}(r_1) \oplus X^{u}_{m}(r) \to X^{s}_{u(m)}(r_2) \oplus X^{u}_{u(m)}(r_1).
	\]
	Moreover, $ |G_{m}(0, 0)| \leq 2\lambda^*_{u}C_1 \eta $, and
	\[
	\lip G_{m}(\cdot, z) \leq (\epsilon + \xi)(1 - \epsilon)^{-1}, ~\lip G_{m}(x, \cdot) \leq (\epsilon + \lambda_{u}(m))(1 - \epsilon)^{-1},
	\]
	where $ \epsilon = \lambda^*_{u}C_1\mathfrak{A}(r) $. Similarly for $ \lip F_{m}(\cdot, z) \leq \lambda_{s}(m) + \epsilon_1 $ and $ \lip F_{m}(x, \cdot) \leq 2\lambda^*_{u}\xi + \epsilon_1 $ for some $ \epsilon_1 > 0 $ such that $ \epsilon_1 \to 0 $ as $ r \to 0 $; also $ |F_{m}(0, 0)| \leq K_1\eta $ for some $ K_1 > 0 $ (independent of $ \eta, m $).
	Define
	\begin{multline}\label{equ:rad}
	\widetilde{F}_{m}(x, y) = DF_{m}(0, 0)(x, y) \\
	+ F_{m}(0, 0) + (F_{m}(\cdot, \cdot) - F_{m}(0, 0) - DF_{m}(0, 0)) \circ (r_{r_1}(x), r_{r_1}(y)),
	\end{multline}
	and similarly for $ \widetilde{G}_{m}(x, y) $, where $ r_{\varepsilon}(\cdot) $ is the radial retraction given by \eqref{radial}; let $ \widetilde{H}_{m} \sim (\widetilde{F}_{m}, \widetilde{G}_{m}) $.
	Using \autoref{lem:a3}, we have the (A) (B) condition for $ H_{m} $ and $ \widetilde{H}_{m} $ as given in the corollary.
	The proof is complete.
\end{proof}

The above corollary shows how this paper's main results can be applied to some classical settings; see e.g. \autoref{foliations}. Here, $ X $ can be a Riemannian manifold having {bounded geometry} (see e.g. \autoref{defi:bounded}), or a uniformly regular Riemannian manifold on $ M $ (see e.g. \autoref{UR}) including the smooth compact Riemannian manifold with $ M $ being far from the boundary, or more generally, $ X $ is $ C^{0,1} $-uniform around $ M $ (see assumption $ (\blacksquare) $ on \autopageref{C1MM}).

To apply our existence results in \autoref{invariant} to the local case such as the one given in the above results, one usually needs to use the radial retraction \eqref{radial} to truncate the generating maps so that the local conditions can become global (e.g. \autoref{lem:exampleH} \eqref{it:H3}); and to apply the regularity results in \autoref{stateRegularity}, instead one needs to use smooth bump functions or \emph{blid maps} (see \autoref{bump}).
If $ \sup_m\lambda_s(m) < 1 $ (or $ \sup_m\lambda_u(m) < 1 $), \autoref{thm:local} and \autoref{thm:LocalRegularity} can be applied directly in some contexts.

\chapter{Existence of Invariant Graphs}\label{graph}

Our existence results are stated in \autoref{invariant} and the proofs are presented in \autoref{graphTrans} and \autoref{proofs}. More characterizations and corollaries are given in \autoref{properties} and \autoref{corollaries0}.

\section{Invariant graphs: statements of the discrete case}\label{invariant}
\emph{For convenience, we write the metric $d(x,y)$ as $|x-y|$}. A bundle with metric fibers means its fibers are complete metric spaces.

\begin{thm}[First existence theorem]\label{thmA}
	Let $(X, M, \pi_1), (Y, M, \pi_2)$ be bundles with metric fibers and $u: M \rightarrow M$ a map. Let $H: X \times Y \rightarrow X \times Y$ be a bundle correspondence over $u$ with generating bundle map $(F,G)$. Take $ \varepsilon_1(\cdot): M \to \mathbb{R}_+ $.
	Assume that the following assumptions hold:
	\begin{enumerate}[(i)]
		\item ($\varepsilon$-pseudo-stable section) $ i = (i_X, i_Y): M \to X \times Y $ is an $\varepsilon$-\textbf{pseudo-stable section} of $H$, i.e.,
		\[
		\begin{gathered}
		| i_X(u(m)) - F_m( i_X(m), i_Y(u(m)) ) | \leq \eta(u(m)), \\
		| i_Y(m) - G_m( i_X(m), i_Y(u(m)) ) | \leq \eta(m),
		\end{gathered}
		\]
		where $ \eta: M \rightarrow \mathbb{R}_+ $, with $ \eta(u(m)) \leq \varepsilon(m) \eta(m) $ and $ 0 \leq \varepsilon(m) \leq \varepsilon_1(m) $ for $ m \in M $.
		\item $H$ satisfies the (A$'$)$(\alpha, \lambda_u)$ (B)$(\beta; \beta', \lambda_s)$ condition, where $\alpha, \beta, \beta', \lambda_u, \lambda_s$ are functions $M \rightarrow \mathbb{R}_+$, and in addition,
		\begin{enumerate}[(a)]
			\item (angle condition) $ \sup_m \alpha(m) \beta'(u(m)) < 1 $, $ \beta'(u(m)) \leq \beta(m)$ for all $ m \in M $, $ \sup_m \{ \alpha(m), \beta(m) \} < \infty $,
			\item (spectral condition) $ \sup_m \frac{ \lambda_u(m) \lambda_s(m) + \lambda_u(m) \varepsilon_1(m) }{1-\alpha(m) \beta'(u(m))} < 1 $.
		\end{enumerate}
		\item[$\mathrm{(ii)'}$] \textbf{Or} $H$ satisfies the (A)$(\alpha;\alpha', \lambda_u)$ (B)$(\beta;\beta', \lambda_s)$ condition, where $\alpha, \beta, \alpha', \beta', \lambda_u, \lambda_s$ are functions $M \rightarrow \mathbb{R}_+$, and in addition,
		\begin{enumerate}[(a)$'$]
			\item (angle condition) $\sup_m \alpha'(m) \beta'(u(m)) < 1$, $ \alpha'(m) \leq \alpha(u(m))$, $ \beta'(u(m)) \leq \beta(m)$ for all $ m \in M  $, $ \sup_m \{ \alpha(m), \beta(m) \} < \infty $,
			\item (spectral condition) $ \sup_m \lambda_s(m) \lambda_u(m) < 1 $, $ \sup_m \lambda_u(m)\varepsilon_1(m) < 1 $.
		\end{enumerate}
	\end{enumerate}
	Then there is a unique bundle map $ f: X \to Y $ over $\id$ such that the following hold:
	\begin{enumerate}[(1)]
		\item $\lip f_m \leq \beta'(m) $, $| f_m(i_X(m))  - i_Y(m)| \leq K \eta(m)$ for some constant $ K \geq 0 $.
		\item $\graph f \subset H^{-1} \graph f$. More precisely, $ (x_m(x), f_{u(m)}(x_m(x)) ) \in H_m(x, f_m(x)) $ for all $ x \in X_m $, where $x_m(\cdot): X_m \to X_{u(m)} $ is such that 
		\[
		\lip x_m(\cdot) \leq \lambda_s(m), \quad| x_m(i_X(m))  - i_X(u(m))| \leq K_0\eta(u(m))
		\]
		for all $m \in M$ and some constant $ K_0 $.
		\item $f$ does not depend on the choice of $ \eta' = \eta $ as long as $ \eta'(u(m)) \leq \varepsilon'(m) \eta'(m) $, $ \varepsilon'(m) \leq \varepsilon_{1}(m) $ and $ \eta(m) \leq \eta'(m) $, for all $ m \in M $.
	\end{enumerate}
\end{thm}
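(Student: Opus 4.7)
The strategy is the classical Hadamard graph transform: realize $f$ as the unique fixed point of a transform $T$ on an appropriate space of sections, invoking Banach's fixed point theorem.

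First I would construct the graph transform. Given a bundle map $f: X \to Y$ over $\id$ with $\lip f_m \le \beta'(m)$, the angle condition $\alpha(m)\beta'(u(m)) < 1$ together with (A$'$2) in case (ii) (or the (A) condition in case (ii$'$)) makes the self-map $y \mapsto f_{u(m)}(F_m(x, y))$ of $Y_{u(m)}$ a strict contraction with factor at most $\alpha(m)\beta'(u(m)) < 1$, for each fixed $(m,x)$. Let $y^*_m(x)$ be its unique fixed point and set $(Tf)_m(x) := G_m(x, y^*_m(x))$; then $\bigl(F_m(x, y^*_m(x)), y^*_m(x)\bigr) \in H_m(x, (Tf)_m(x)) \cap \graph f_{u(m)}$, so $\graph Tf \subset H^{-1}\graph f$, and conversely any fixed point of $T$ enjoys the invariance in statement~(2), with auxiliary map $x_m(\cdot) := F_m(\cdot, y^*_m(\cdot))$.

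Next I would introduce the admissible class $\Sigma := \{\, f : \lip f_m \le \beta'(m),\ |f_m(i_X(m)) - i_Y(m)| \le K\eta(m)\ \forall m \in M\,\}$ and verify $T(\Sigma) \subset \Sigma$ for a suitable constant $K$. The Lipschitz bound is preserved via (B1): if $(x_i, (Tf)_m(x_i))$, $i = 1,2$, lie on $\graph Tf_m$, their $H_m$-images sit on $\graph f_{u(m)}$ with $y$-slope at most $\beta'(u(m)) \le \beta(m)$ by the angle condition, and (B1) upgrades this to $|(Tf)_m(x_1) - (Tf)_m(x_2)| \le \beta'(m)|x_1 - x_2|$. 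The basepoint bound follows from direct computation: the pseudo-stability of $i$ combined with (A$'$1), (A$'$2) and $\eta(u(m)) \le \varepsilon_1(m)\eta(m)$ gives a self-consistent inequality of the form $K \ge (1 + c(m)\beta'(u(m)))/(1 - c(m))$ with $c(m) := \lambda_u(m)\varepsilon_1(m)/(1 - \alpha(m)\beta'(u(m)))$; the spectral condition guarantees $\sup_m c(m) < 1$ and $\beta'$ is bounded, so such $K$ exists. An analogous argument using (B$'$2) handles $x_m(\cdot)$ in statement~(2).

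For the contraction, a direct manipulation yields the pointwise estimate
\[
|(Tf)_m(x) - (Tg)_m(x)| \le \frac{\lambda_u(m)}{1 - \alpha(m)\beta'(u(m))}\, |f_{u(m)}(\bar x) - g_{u(m)}(\bar x)|, \qquad \bar x = F_m(x, y^*_{g,m}(x)).
\]
The main obstacle is that $\sup_x |f_m(x) - g_m(x)|$ need not be finite, since the fibers are only metric (possibly non-compact) and $f,g \in \Sigma$ are merely equi-Lipschitz. I would circumvent this by endowing $\Sigma$ with a weighted metric of the form $d(f,g) := \sup_{m,x} |f_m(x) - g_m(x)|/(p(m) + q(m)|x - i_X(m)|)$, where $p(m), q(m)$ are calibrated to $\eta(m)$ and $\beta'(m)$ so that admissibility of $f, g \in \Sigma$ forces $d(f,g)$ to be finite. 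Using (B$'$2) to control $|\bar x - i_X(u(m))|$ by $\lambda_s(m)|x - i_X(m)|$ modulo an $\eta(u(m))$ error, the spectral inequality $\sup_m \lambda_u(\lambda_s + \varepsilon_1)/(1 - \alpha\beta'(u)) < 1$ produces a contraction factor $\theta < 1$, possibly after finitely many iterates of $T$. Banach's theorem then delivers the unique $f^* = Tf^* \in \Sigma$, whence (1) and (2) follow. For (3), if $\eta \le \eta'$ both satisfy the hypotheses, then $\Sigma(\eta) \subset \Sigma(\eta')$, and uniqueness of the fixed point in the larger space forces the two to coincide.
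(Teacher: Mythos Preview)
Your approach is essentially the same as the paper's: Hadamard graph transform on a space of Lipschitz sections equipped with a weighted sup metric, contraction via the spectral condition, and uniqueness/independence via the nesting $\Sigma(\eta)\subset\Sigma(\eta')$. The paper formulates the fixed point in $X_{u(m)}$ (solving $\hat x = F_m(x,f_{u(m)}(\hat x))$) rather than in $Y_{u(m)}$, and uses the denominator $\max\{\,|x-i_X(m)|,\,K_2\eta(m)\,\}$ rather than your additive $p(m)+q(m)|x-i_X(m)|$; both choices are equivalent for the purpose at hand. Two small points: the Lipschitz bound on $y\mapsto F_m(x,y)$ comes from (A$'$1), not (A$'$2); and for case~(ii)$'$ your phrase ``possibly after finitely many iterates of $T$'' is exactly what the paper does---it replaces $H$ by $H^{(k)}$ for large $k$, so that $(\lambda_s\lambda_u)^{(k)}$ and $(\varepsilon_1\lambda_u)^{(k)}$ become small enough to force the case~(ii) spectral inequality, then checks that the fixed point of $\varGamma^k$ is also fixed by $\varGamma$.
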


\begin{rmk}
	There are two cases which frequently occur in practical applications.
	\begin{enumerate}[(1)]
		\item The invariant-stable case: $ \varepsilon_1 = 0 $. In particular, $ i $ is an invariant section of $ H $ (i.e. $\eta \equiv 0$ or $ i(u(m)) \in H_m(i(m)) $, $ m \in M $). We usually apply this case to obtain some invariant foliations; see e.g. \autoref{foliations}.

		\item The bounded-stable case: $ \varepsilon_1 = 1 $. For instance, if $ M $ is a single point set, then all the constant sections belong to this case. Note that in this case, $ \eta $ may not be a bounded function. In particular, $ \sup_m\eta(m) < \infty $.
	\end{enumerate}
\end{rmk}

We use the \emph{notation}
\begin{enumerate}[$ \bullet $]
	\item $\widetilde{d} (A, z) \triangleq \sup_{\tilde{z} \in A} d(\tilde{z}, z) $ where $A$ is a subset of a metric space.
\end{enumerate}

\begin{thm}[Second existence theorem]\label{thmB}
	Let $(X, M, \pi_1), (Y, M, \pi_2)$ be bundles with metric fibers and $u: M \rightarrow M$ a map.
	Let $H: X \times Y \rightarrow X \times Y$ be a bundle correspondence over $u$ with generating bundle map $(F,G)$. Take $ \varepsilon_1(\cdot): M \to \mathbb{R}_+ $.
	Assume that the following statements hold:
	\begin{enumerate}[(i)]
		\item ($\varepsilon$-$Y$-bounded-section) $ i = (i_X, i_Y): M \to X \times Y $ is an $\varepsilon$-$Y$-\textbf{bounded-section} of $H$, i.e.,
		\[
		\widetilde{d}(G_m( X_m, i_Y(u(m)) ) , i_Y(m)) \leq \eta(m),
		\]
		where $ \eta: M \rightarrow \mathbb{R}_+ $ with $ \eta(u(m)) \leq \varepsilon(m) \eta(m) $ and $ 0 \leq \varepsilon(m) \leq \varepsilon_1(m) $ for $ m \in M $.
		\item $H$ satisfies the (A$'$)$(\alpha, \lambda_u)$ (B)$(\beta; \beta', \lambda_s)$ condition, where $\alpha, \lambda_u, \beta, \beta', \lambda_s$ are functions $M \rightarrow \mathbb{R}_+$, and in addition,
		\begin{enumerate}[(a)]
			\item (angle condition) $ \sup_m \alpha(m) \beta'(u(m)) < 1 $, $ \beta'(u(m)) \leq \beta(m)$ for all $ m \in M $, $ \sup_m \{\alpha(m), \beta(m)\} < \infty $,
			\item (spectral condition) $ \sup_m \frac{ \lambda_u(m) \varepsilon_1(m) }{1-\alpha(m) \beta'(u(m))} < 1 $.
		\end{enumerate}
		\item [$\mathrm{(ii)'}$] \textbf{Or} $H$ satisfies the (A)$(\alpha;\alpha', \lambda_u)$ (B)$(\beta;\beta', \lambda_s)$ condition, where $\alpha, \beta, \alpha', \beta', \lambda_u, \lambda_s$ are functions $M \rightarrow \mathbb{R}_+$, and in addition,
		\begin{enumerate}[(a)$'$]
			\item (angle condition) $ \sup_m \alpha'(m) \beta'(u(m)) < 1 $, $ \alpha'(m) \leq \alpha(u(m))$, $\beta'(u(m)) \leq \beta(m) $ for all $ m \in M  $, $ \sup_m \{\alpha(m), \beta(m)\} < \infty $,
			\item (spectral condition) $ \sup_m \lambda_u(m) \varepsilon_1(m) < 1 $.
		\end{enumerate}
	\end{enumerate}
	Then there is a unique bundle map $ f: X \to Y $ over $\id$ such that the following hold:
	\begin{enumerate}[(1)]
		\item $ \lip f_m \leq \beta'(m) $, $\widetilde{d}( f_m(X_m), i_Y(m)) \leq K \eta(m) $, $ m \in M $, where $ K \geq 0 $ is a constant.
		\item $\graph f \subset H^{-1} \graph f$. More precisely, $ (x_m(x), f_{u(m)}(x_m(x)) ) \in H_m(x, f_m(x)) $ for all $ x \in X_m $, where $x_m(\cdot): X_m \to X_{u(m)} $ is such that $\lip x_m(\cdot) \leq \lambda_s(m) $ for all $m \in M$. In addition, if
		\[
		\widetilde{d}(F_m(X_{m}, i_Y(u(m))) , i_X(u(m))) \leq \eta(u(m)),~ m \in M,
		\]
		then $\widetilde{d}( x_m(X_m), i_X(u(m)) ) \leq K_0 \eta(u(m)),~ m \in M$, for some constant $ K_0 $.
		\item $f$ does not depend on the choice of $ \eta' = \eta $ as long as $ \eta'(u(m)) \leq \varepsilon'(m) \eta'(m) $, $ \varepsilon'(m) \leq \varepsilon_{1}(m) $ and $ \eta(m) \leq \eta'(m) $, for all $ m \in M $.
	\end{enumerate}
\end{thm}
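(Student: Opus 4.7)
The plan is to apply the Hadamard graph transform in a weighted space of Lipschitz sections, in direct parallel to the proof of \autoref{thmA}: the only substantive change is that the pointwise pseudo-invariance of $i$ is replaced by the fiberwise $Y$-boundedness bound on $i$, which alters the closeness estimate on the candidate sections accordingly. Introduce the candidate space
\[
\mathcal{S} \triangleq \Big\{ f = \{f_m\}_{m\in M} : \lip f_m \leq \beta'(m),\ \widetilde d(f_m(X_m), i_Y(m)) \leq K\eta(m)\ \forall m \in M\Big\},
\]
with $K > 0$ to be chosen, equipped with the weighted metric $d_\eta(f, g) \triangleq \sup_m \eta(m)^{-1}\sup_{x \in X_m}|f_m(x) - g_m(x)|$ (with the convention $0/0 := 0$), under which $\mathcal{S}$ is complete. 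Define the graph transform $\Gamma$ fiberwise by $\graph(\Gamma f)_m = H^{-1}_m(\graph f_{u(m)})$: via the generating map, $(\Gamma f)_m(x) \triangleq G_m(x, f_{u(m)}(x'))$ where $x' = x'(x, f) \in X_{u(m)}$ is the unique solution of $x' = F_m(x, f_{u(m)}(x'))$. Existence and uniqueness of $x'$ come from Banach's fixed point theorem: in case (ii) the contraction factor is $\alpha(m)\beta'(u(m)) < 1$ via (A1$'$) and the angle condition; in case (ii)$'$ the cone condition (A1) applied to pairs in $\graph H_m$ sharing the first coordinate first upgrades (A) to the effective Lipschitz bounds $\lip F_m(x, \cdot) \leq \alpha'(m)$ and $\lip G_m(x, \cdot) \leq \lambda_u(m)$, so the contraction factor becomes $\alpha'(m)\beta'(u(m)) < 1$.

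Next, I would verify $\Gamma(\mathcal{S}) \subseteq \mathcal{S}$ and establish that $\Gamma$ is a contraction in $d_\eta$. The Lipschitz bound $\lip(\Gamma f)_m \leq \beta'(m)$ follows from (B1) applied to pairs of images in $\graph f_{u(m)}$, using $\lip f_{u(m)} \leq \beta'(u(m)) \leq \beta(m)$; the closeness bound uses the $\lambda_u$-control on $G_m(x, \cdot)$ together with the $Y$-boundedness of $i$:
\[
|(\Gamma f)_m(x) - i_Y(m)| \leq \lambda_u(m) K\eta(u(m)) + \eta(m) \leq (K\lambda_u(m)\varepsilon_1(m) + 1)\eta(m),
\]
which is $\leq K\eta(m)$ for $K$ sufficiently large under the spectral condition. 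For the contraction, applying (A2) to the pair $(x, (\Gamma f)_m(x), x'(x, f), f_{u(m)}(x'(x, f)))$ and $(x, (\Gamma g)_m(x), x'(x, g), g_{u(m)}(x'(x, g)))$ in $\graph H_m$ (common first coordinate $x$) yields $|(\Gamma f)_m(x) - (\Gamma g)_m(x)| \leq \lambda_u(m)|f_{u(m)}(x'(x, f)) - g_{u(m)}(x'(x, g))|$, and the RHS is bounded by combining the Lipschitz property of $f_{u(m)}$ with the cone estimate on $|x'(x, f) - x'(x, g)|$ from (A1); dividing by $\eta(m)$ and using $\eta(u(m)) \leq \varepsilon_1(m)\eta(m)$ produces precisely the ratio appearing in the spectral condition. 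Banach's fixed point theorem then yields the unique $f \in \mathcal{S}$ with $\Gamma f = f$.

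The stated conclusions follow: (1) is the defining property of $\mathcal{S}$; (2) holds for $x_m(x) \triangleq x'(x, f)$, whose Lipschitz bound $\lip x_m \leq \lambda_s(m)$ comes from (B2) applied to pulled-back pairs (with output Lipschitz ratio $\leq \beta'(u(m)) \leq \beta(m)$), and the extra bound $\widetilde d(x_m(X_m), i_X(u(m))) \leq K_0\eta(u(m))$ under the additional hypothesis on $F$ follows by combining that hypothesis with the already-proved closeness of $f_{u(m)}$ through the Lipschitz control on $F_m(x, \cdot)$; (3) is immediate because two admissible choices $\eta \leq \eta'$ yield nested spaces $\mathcal{S}(\eta) \subseteq \mathcal{S}(\eta')$ in which the unique fixed points must coincide. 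The main obstacle I anticipate is the careful bookkeeping of the contraction in case (ii)$'$: the absence of a direct ambient Lipschitz bound on $F_m(x, \cdot)$ forces one to extract all quantitative control from the (A) cone condition restricted to the Lipschitz graphs populating $\mathcal{S}$, and the amplification factor $1/(1 - \alpha'(m)\beta'(u(m)))$ that arises in the naive estimate must be reconciled with the more parsimonious spectral condition $\sup_m \lambda_u(m)\varepsilon_1(m) < 1$, likely by a more refined choice of weight on $\mathcal{S}$ or by iterating $\Gamma$ to exploit the uniform angle bound.
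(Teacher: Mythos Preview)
Your proposal is correct and follows essentially the same route as the paper: the paper also works in the space $E^b_\infty = \mathcal{S}$ with the weighted metric $\sup_{m,\,\eta(m)\neq 0}\eta(m)^{-1}\sup_x d(f_m(x),f'_m(x))$, shows the graph transform $\varGamma$ is a self-map and a contraction with factor $\sup_m \frac{\lambda_u(m)\varepsilon_1(m)}{1-\alpha(m)\beta'(u(m))}$ under (ii), and for (ii)$'$ passes to the iterate $H^{(k)}$ (equivalently $\varGamma^k$) for large $k$ so that the contraction factor becomes $<1$ --- exactly the ``iterating $\Gamma$'' fix you anticipated for your obstacle. Your alternative suggestion of a refined weight is not needed; the iteration suffices because under (ii)$'$ the composite $H^{(k)}$ again has a generating map satisfying the (A)(B) condition with $\lambda_u$ replaced by $\lambda_u^{(k)}$, and $\sup_m \lambda_u(m)\varepsilon_1(m)<1$ forces $\sup_m \lambda_u^{(k)}(m)\varepsilon_1^{(k)}(m)$ to be as small as desired, absorbing the bounded factor $(1-\alpha'\beta')^{-1}$.
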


\begin{rmk}
	A very special case of \autoref{thmB} is that $ \sup_m\eta(m) < \infty $ (in this case take $ \varepsilon_1 \equiv 1 $). Now $ f $ is unique in the following sense: If $ i' $ is a section such that $ \sup_m d( i_Y(m), i'_Y(m) )  < \infty $, and for $ i' $ there is a bundle map $ f': X \to Y $ over $ \id $ satisfying (a)  $\lip f'_m \leq \beta'(m) $, $\sup_m \widetilde{d}( f'_m(X_m), i'_Y(m) ) < \infty$, and (b) $ \graph f' \subset H^{-1} \graph f' $, then $ f' = f $.

	Suppose $ H $ satisfies condition (ii) or (ii)$ ' $ in \autoref{thmB} with $ \varepsilon_1 \equiv 1 $, and one of the following situations holds:
	\begin{enumerate}[(1)]
		\item $\sup_m \diam X_m < \infty$, and $i = (i_X, i_Y)$ is a section of $X \times Y$ such that 
		\[
		\sup_m |G_m( i_X(m), i_Y(u(m)) ) - i_Y(m)| < \infty;
		\]
		\item $\sup_m \diam Y_m < \infty$, and $i = (i_X, i_Y)$ is any section of $X \times Y$;
		\item $X_m, Y_m$ are Banach spaces, $G_m(x,y) = B_m y + g'_m(x,y)$, where $B_m: Y_{u(m)} \to Y_{m} $ is a linear operator, and $g'_m$ are bounded uniformly for $m \in M$. Let $i = 0 $ be the $0$-section. This case was also studied in e.g. \cite{CY94, CL97}.
	\end{enumerate}
	 Then we get the conclusions in \autoref{thmB}. In (2) or (3), it is obvious that condition (i) in \autoref{thmB} is satisfied. And in (1), use the fact that $\sup_m\sup_{y}\lip G_m(\cdot,y) <\infty$.
\end{rmk}

\begin{rmk}
	It is instructive to see the difference between \autoref{thmA} and \autoref{thmB} in simple settings where $ M $ consists of one element and $ H $ is the time-one solution map of the following equation in $ \mathbb{R}^n $:
	\[
	\dot{z} = Az + h(z),
	\]
	where $ A \in \mathbb{R}^{n \times n} $ with $ \sigma(A) \cap i\mathbb{R} \neq \emptyset $ and $ \lip h $ is sufficiently small. In this case, (a) if $ h(0) = 0 $, then the existence of a stable, center-stable or pseudo-stable manifold of $ 0 $ for $ H $ is a direct consequence of \autoref{thmA}, and (b) if $ h $ is a bounded function (but $ H $ may have no equilibrium), then the existence of a center-stable (or pseudo-stable) manifold for $ H $ can be derived from \autoref{thmB} (or \autoref{thmA} for $ \varepsilon = 1 $); in the latter case, if there is no equilibrium, then $ H $ may have no stable manifold. While, using \autoref{thmB}, one can obtain some non-resonant manifolds due to the fact that no spectral gap condition is needed (see also \cite{Cha02, dlLla97} for the `higher order' case).
\end{rmk}

We state a local version of the existence result in the strong stable case. In this case, we do not need to truncate the system, but the thresholds in the angle condition and spectral condition are a little different from those in \autoref{thmA} and \autoref{thmB}.
\begin{thm}[Third existence theorem]\label{thm:local}
	Let $(X, M, \pi_1), (Y, M, \pi_2)$ be two bundles with fibers being Banach spaces and $u: M \rightarrow M$ a map. Let $ \varepsilon_1(\cdot): M \to \mathbb{R}_+ $ and $ i = 0: M \to X \times Y $.
	For every $ m \in M $, suppose that $ H_m \sim (F_m,G_m): X_m(r_1) \times Y_m(r'_1) \rightarrow X_{u(m)}(r_2) \times Y_{u(m)}(r'_2) $ is a correspondence, where $ r_i, r'_i $, $ i = 1, 2 $, are independent of $ m \in M $, and
	\[
	F_m: X_m(r_1) \times Y_{u(m)}(r'_2) \to X_{u(m)}(r_2), ~G_m: X_m(r_1) \times Y_{u(m)}(r'_2) \to Y_m(r'_1).
	\]

	\begin{enumerate}[(i)]
		\item ($\varepsilon$-pseudo-stable section) $ i $ is an $\varepsilon$-{pseudo-stable section} of $H$, i.e.,
		\[
		| F_m( 0, 0 ) | \leq \eta(u(m)), ~ | G_m( 0, 0 ) | \leq \eta(m),
		\]
		where $ \eta: M \rightarrow \mathbb{R}_+ $, with $ \eta(u(m)) \leq \varepsilon(m) \eta(m) $ and $ 0 \leq \varepsilon(m) \leq \varepsilon_1(m) $ for all $ m \in M $.
		\item For every $ m \in M $, $ H_{m} $ satisfies the (A$ ' $)($ \alpha(m) $, $ \lambda_u(m) $) (B)($ \beta(m) $; $ \beta'(m) $, $ \lambda_s(m) $) condition and assume the following conditions hold:
		\begin{enumerate}[(a)]
			\item (angle condition) $ \sup_m \alpha(m) \beta'(u(m)) < 1 / 2 $, $ \beta'(u(m)) \leq \beta(m)$ for all $ m \in M $, $ \sup_m \{ \alpha(m), \beta(m) \} < \infty $,
			\item (spectral condition) $ \sup_m \frac{ \lambda_u(m) \lambda_s(m) + \lambda_u(m) \varepsilon_1(m) }{1-\alpha(m) \beta'(u(m))} < 1 $, $ \sup_m \lambda_s(m) < 1 $.
		\end{enumerate}

		\item[$\mathrm{(ii)'}$] \textbf{Or} for every $ m \in M $, $ H_{m} $ satisfies the (A)($ \alpha(m) $; $ \alpha'(m) $, $ \lambda_u(m) $) (B)($ \beta(m) $; $ \beta'(m) $, $ \lambda_s(m) $) condition and assume the following conditions hold:
		\begin{enumerate}[(a$ ' $)]
			\item (angle condition) $ \sup_m \alpha'(m) \beta'(u(m)) < 1 / 2 $, $ \alpha'(m) \leq \alpha(u(m)) $, $ \beta'(u(m)) \leq \beta(m) $ for all $ m \in M $, $ \sup_m \{ \alpha(m), \beta(m) \} < \infty $,
			\item (spectral condition) $ \sup_m \lambda_s(m) \lambda_u(m), \sup_m \lambda_u(m)\varepsilon_1(m), \sup_m\lambda_s(m) < 1 $.
		\end{enumerate}
	\end{enumerate}
	If $ \eta_0 > 0 $ is small and $ \sup_m\eta(m) \leq \eta_0 $, then there is a small $ \sigma_0 > 0 $ such that there are unique maps $ f_m: X_m(\sigma_0) \to Y_m $, $ m \in M $, satisfying the following:
	\begin{enumerate}[(1)]
		\item $\lip f_m \leq \beta'(m) $, $| f_m(0) | \leq K \eta(m)$ for some constant $ K \geq 0 $.
		\item $ \graph f \subset H^{-1} \graph f$. More precisely, $ (x_m(x), f_{u(m)}(x_m(x)) ) \in H_m(x, f_m(x)) $ for all $ x \in X_m $, where $x_m(\cdot): X_m(\sigma_0) \to X_{u(m)}(\sigma_0) $ is such that $\lip x_m(\cdot) \leq \lambda_s(m)$, $| x_m(0) | \leq K_0\eta(u(m))$ for $m \in M$ and some constant $ K_0 $.
		\item $f$ does not depend on the choice of $ \eta' = \eta $ as long as $ \eta'(u(m)) \leq \varepsilon'(m) \eta'(m) $, $ \varepsilon'(m) \leq \varepsilon_{1}(m) $ and $ \eta(m) \leq \eta'(m) \leq \eta_0 $, for all $ m \in M $.
	\end{enumerate}
\end{thm}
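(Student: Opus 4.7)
The plan is to construct $f$ directly by a Banach fixed point argument applied to a graph transform on a carefully chosen space of \emph{local} sections, without globally truncating $F_m, G_m$; the extra strictness of the angle condition ($<1/2$) and the strong contraction $\bar\lambda_s \triangleq \sup_m \lambda_s(m) < 1$ are precisely what keep the iteration inside the balls on which $F_m, G_m$ are defined. The argument parallels the proof of \autoref{thmA}, but with estimates tracked locally.

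First I would fix constants. Using the spectral condition $\sup_m (\lambda_u\lambda_s + \lambda_u\varepsilon_1)/(1-\alpha\beta'(u(\cdot))) < 1$ (resp.\ $\sup_m \lambda_u\lambda_s < 1$, $\sup_m \lambda_u\varepsilon_1 < 1$ in case $(\bullet\bullet)$), pick $C_0>0$ so that the affine map governing the zero-section bound has contraction factor $<1$ with slack $C_0$. Then, using $\bar\lambda_s<1$ and $\sup_m\alpha(m)<\infty$, choose $\sigma_0, \eta_0>0$ small enough that, for every $m$ and every $x\in X_m(\sigma_0)$, $y\in Y_{u(m)}$ with $|y|\le \beta'(u(m))\sigma_0 + C_0\eta_0$, one has $(x,y)$ in the domain of $(F_m,G_m)$, $|F_m(x,y)|\le \sigma_0$, and $|G_m(x,y)|\le r'_1$. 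Let $\mathcal{S}$ be the complete metric space of bundle sections $f=(f_m)$ with $f_m\colon X_m(\sigma_0)\to Y_m$, $\lip f_m\le \beta'(m)$, $|f_m(0)|\le C_0\eta(m)$, under the uniform metric.

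I would define the graph transform $\Gamma\colon\mathcal{S}\to\mathcal{S}$ as follows. Given $f\in\mathcal{S}$ and $x\in X_m(\sigma_0)$, the angle condition $\beta'(u(m))\alpha(m)<1/2$ (or $\beta'(u(m))\alpha'(m)<1/2$) lets me solve
\[
y = f_{u(m)}(F_m(x,y))
\]
uniquely for $y = y_m(x)$ by Banach's theorem inside an appropriate ball; set $(\Gamma f)_m(x)\triangleq G_m(x,y_m(x))$. The calibration of $\sigma_0,\eta_0$ guarantees that $F_m(x,y_m(x))$ stays in $X_{u(m)}(\sigma_0)$, via the estimate $|F_m(x,y_m(x))|\le \lambda_s(m)|x| + \alpha(m)|y_m(x)| + \eta(u(m))$ coming from (B2)/(A1$'$) applied to the two $\graph H_m$ pairs $(x,\cdot,F_m(x,y),y)$ and $(0,\cdot,F_m(0,y),y)$. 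The Lipschitz bound $\lip(\Gamma f)_m\le\beta'(m)$ then follows by applying (B) to two such orbits, since $\graph f_{u(m)}$ has slope $\le\beta'(u(m))\le\beta(m)$. The bound $|(\Gamma f)_m(0)|\le C_0\eta(m)$ is obtained from $|F_m(0,0)|\le\eta(u(m))$, $|G_m(0,0)|\le\eta(m)$, the ratio $\eta(u(m))\le\varepsilon_1(m)\eta(m)$, and the self-consistency built into $C_0$. Contraction of $\Gamma$ is obtained by differencing the implicit equations for two sections $f,g$; again the spectral condition gives a uniform contraction factor $<1$.

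Banach's theorem then produces the unique fixed point $f\in\mathcal{S}$ giving items (1) and (2), with $x_m(x)=F_m(x,y_m(x))$ automatically satisfying $\lip x_m\le\lambda_s(m)$ and $|x_m(0)|\le K_0\eta(u(m))$. Item (3) follows because enlarging $\eta$ to $\eta'$ (still $\varepsilon_1$-controlled) only enlarges $\mathcal{S}$ to a larger $\mathcal{S}'$, and the Banach fixed point in $\mathcal{S}'$ must coincide with the one in $\mathcal{S}$ by uniqueness. The main obstacle is the calibration step: one has to pick $C_0$ first (determined solely by the spectral inequality), then $\sigma_0,\eta_0$ small enough that all implicit equations close within $X_\bullet(r_\bullet)\times Y_\bullet(r'_\bullet)$ uniformly over $M$ --- the factor $1/2$ in the angle hypothesis is what creates enough slack for this calibration to succeed without a truncation step.
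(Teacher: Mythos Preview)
Your calibration step does not close. You assert that for $|x|<\sigma_0$ and $|y|\le \beta'(u(m))\sigma_0 + C_0\eta_0$ one can arrange $|F_m(x,y)|\le\sigma_0$, but the very estimate you quote yields
\[
|F_m(x,y)|\le \lambda_s(m)|x|+\alpha(m)|y|+\eta(u(m))\le \bigl(\bar\lambda_s+\alpha(m)\beta'(u(m))\bigr)\sigma_0 + (\hat\alpha C_0+1)\eta_0,
\]
and pushing this below $\sigma_0$ forces $\bar\lambda_s + \gamma < 1$ (with $\gamma=\sup_m\alpha(m)\beta'(u(m))$), which is \emph{not} assumed: e.g.\ $\bar\lambda_s=0.9$, $\gamma=0.4$ satisfies every hypothesis of the theorem but kills your inequality for any $\sigma_0,\eta_0>0$. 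So the Banach iteration for $y$ cannot be confined to the domain of $f_{u(m)}$ by size considerations alone. The paper handles this with precisely the device you tried to avoid: it extends each $f_{u(m)}$ to $f^1_{u(m)}=f_{u(m)}\circ r_{\sigma_0}$ on all of $X_{u(m)}$ via the radial retraction $r_{\sigma_0}$, so that $\lip f^1_{u(m)}\le 2\beta'(u(m))$ and the equation $F_m(x,f^1_{u(m)}(y))=y$ is a $2\gamma$-contraction on the whole space --- \emph{this} is what $\gamma<1/2$ is actually for. Only after $x_m(\cdot)$ is obtained globally (with the crude bound $\lip x_m\le\bar\lambda_s/(1-2\gamma)$) does a separate bootstrap sublemma show $x_m(X_m(\sigma_0))\subset X_{u(m)}(\sigma_0)$: one proves $\lip x_m|_{X_m(\sigma)}\le\lambda_s(m)$ for all $\sigma\le\sigma_0$ by a continuation-in-$\sigma$ argument, exploiting that once the image is known to lie in $X_{u(m)}(\sigma_0)$ the retraction is inactive and the sharp (B)-estimate $\lambda_s(m)<1$ takes over.

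A secondary issue: under the plain uniform metric on $\mathcal S$ the graph transform contracts only by $\sup_m \lambda_u(m)/(1-\alpha(m)\beta'(u(m)))$, which the spectral hypothesis does not force below $1$ (take $\varepsilon_1=0$ and $\lambda_s$ small, $\lambda_u$ large). The paper keeps the weighted metric $\sup_{m,x}|f_m(x)-f'_m(x)|/\max\{|x|,K_2\eta(m)\}$ from \autoref{thmA}, which is what brings in the missing factor $\lambda_s+\varepsilon_1$.
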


A similar local version of \autoref{thmB} can be stated, which is omitted.

\section{Graph transform} \label{graphTrans}

We use the the following notations:
\begin{enumerate}[$ \bullet $]
	\item $\sum_\lambda (X, Y) \triangleq \{ \varphi: X \rightarrow Y~ : \lip \varphi \leq \lambda \}$ where $X, Y$ are metric spaces;

	\item $\graph f \triangleq \{ (x, f(x)): x \in X \}$ where $f: X \rightarrow Y$ is a map.
\end{enumerate}
In lemmas \ref{lem:gr}--\ref{lem:bas0} below, assume that $ X_i, Y_i $, $ i=1,2 $, are metric spaces and $ H: X_1 \times Y_1 \rightarrow X_2 \times Y_2 $ is a correspondence with generating map $ (F,G) $ satisfying the (A$'$)$(\alpha, \lambda_u)$ (B)$(\beta; \beta', \lambda_s)$ condition.

\begin{lem}\label{lem:gr}
	Let $f_2 \in \sum_{\hat{\beta}} (X_2, Y_2)$, and $ \alpha \hat{\beta} < 1,~\hat{\beta} \leq \beta $. Then there exist unique $f_1 \in \sum_{\beta'} (X_1, Y_1)$ and $x_1(\cdot) \in \sum_{\lambda_s} (X_1, X_2)$ such that $ (x_1(x), f_2(x_1(x))) \in H(x, f_1(x)) $, $ x \in X_1 $, i.e.,
	\[
	F(x, f_2(x_1(x))) = x_1(x), ~ G(x, f_2(x_1(x))) = f_1(x).
	\]
\end{lem}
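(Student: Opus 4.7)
The natural strategy is a parametrised fixed point: for each fixed $x \in X_1$, solve the equation $z = F(x, f_2(z))$ for $z \in X_2$, call the resulting solution $x_1(x)$, and then define $f_1(x) \triangleq G(x, f_2(x_1(x)))$. The angle hypothesis $\alpha \hat{\beta} < 1$ is precisely what makes this scheme contractive, and the (B) condition then delivers the Lipschitz estimates. Completeness of $X_2$ (part of our standing assumption that fibers are complete metric spaces) gives us the Banach fixed point theorem.

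\textbf{Existence of $x_1(x)$.} Fix $x \in X_1$ and consider the map
\[
T_x : X_2 \to X_2, \qquad T_x(z) \triangleq F(x, f_2(z)).
\]
Using the (A$'$) bound $\sup_x \lip F(x, \cdot) \leq \alpha$ together with $\lip f_2 \leq \hat{\beta}$, one gets
\[
|T_x(z) - T_x(z')| \leq \alpha |f_2(z) - f_2(z')| \leq \alpha \hat{\beta} |z - z'|,
\]
so $T_x$ is a strict contraction (the factor $\alpha \hat{\beta} < 1$ is the crucial angle hypothesis). Since $X_2$ is complete, $T_x$ has a unique fixed point, which we denote $x_1(x)$; define $f_1(x) \triangleq G(x, f_2(x_1(x)))$. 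By construction $(x_1(x), f_2(x_1(x))) \in H(x, f_1(x))$.

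\textbf{Lipschitz estimates via (B).} Take $x, x' \in X_1$. Since $\lip f_2 \leq \hat{\beta} \leq \beta$,
\[
|f_2(x_1(x)) - f_2(x_1(x'))| \leq \hat{\beta} |x_1(x) - x_1(x')| \leq \beta |x_1(x) - x_1(x')|,
\]
so the hypothesis of (B) is satisfied at the pair of graph elements $(x, f_1(x)) \times (x_1(x), f_2(x_1(x)))$ and $(x', f_1(x')) \times (x_1(x'), f_2(x_1(x')))$. Then (B2) gives $|x_1(x) - x_1(x')| \leq \lambda_s |x - x'|$, while (B1) gives $|f_1(x) - f_1(x')| \leq \beta' |x - x'|$. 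This is exactly $x_1 \in \sum_{\lambda_s}(X_1, X_2)$ and $f_1 \in \sum_{\beta'}(X_1, Y_1)$.

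\textbf{Uniqueness.} Suppose $(\tilde{f}_1, \tilde{x}_1)$ is another pair satisfying the same two equations (no regularity assumption needed). Then for each $x$, $\tilde{x}_1(x)$ is a fixed point of the same contraction $T_x$, hence $\tilde{x}_1(x) = x_1(x)$, and consequently $\tilde{f}_1(x) = G(x, f_2(\tilde{x}_1(x))) = f_1(x)$. So no routine obstacles remain; the only non-trivial conceptual step is recognising that the role of $\hat{\beta} \leq \beta$ is precisely to legitimise the invocation of (B) (as opposed to (B$'$)), which would otherwise be unavailable because (B) is a one-sided implication.
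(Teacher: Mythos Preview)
Your proof is correct and follows exactly the same approach as the paper: solve the fixed-point equation $F(x, f_2(\hat{x})) = \hat{x}$ via the contraction factor $\alpha\hat{\beta} < 1$, then invoke the (B) condition (licensed by $\hat{\beta} \leq \beta$) to obtain the Lipschitz bounds on $f_1$ and $x_1(\cdot)$. The paper's proof is a terse three lines; yours simply spells out the same steps in full.
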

\begin{proof}
	Consider the fixed point equation, $F(x, f_2(\hat{x})) = \hat{x}$. Because $\alpha \hat{\beta} < 1$, there is a unique $\hat{x} \triangleq x_1(x)$ satisfying that equation. Since $\hat{\beta} \leq \beta$, by the (B) condition, we have $f_1 \in \sum_{\beta'} (X_1, Y_1)$ and $x_1(\cdot) \in \sum_{\lambda_s} (X_1, X_2)$.
\end{proof}

\begin{lem}\label{lem:bas1}
	Under the assumption of \autoref{lem:gr}, let $ (\hat{x}_i, \hat{y}_i) \in X_i \times Y_i $, $ i =1,2 $, satisfy
	\[
	|F(\hat{x}_1, \hat{y}_2) - \hat{x}_2| \leq \eta_1, |G(\hat{x}_1, \hat{y}_2) - \hat{y}_1| \leq \eta_2,~|f_2(\hat{x}_2) - \hat{y}_2| \leq C_2.
	\]
	Then we have the following estimates:
	\[
	|f_1(\hat{x}_1) - \hat{y}_1| \leq \lambda_u \frac{\hat{\beta}\eta_1 + C_2}{1-\alpha \hat{\beta}} + \eta_2, ~|x_1(\hat{x}_1) - \hat{x}_2| \leq  \frac{\alpha C_2 + \eta_1
	}{1-\alpha \hat{\beta}}.
	\]
	In particular, if $(x_2, y_2) \in H(x_1, y_1)$ and $y_2 = f_2(x_2)$, then $y_1 = f_1(x_1)$, $x_2 = x_1(x_1)$.
\end{lem}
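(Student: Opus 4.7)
The plan is to compare the given quadruple $(\hat{x}_1, \hat{y}_1, \hat{x}_2, \hat{y}_2)$, which approximately sits in $\graph H$ and whose $X_2$-coordinate is approximately $f_2$-related to $\hat{y}_2$, against the exact pair $(\hat{x}_1, f_1(\hat{x}_1))$ and $(x_1(\hat{x}_1), f_2(x_1(\hat{x}_1)))$ coming from the graph transform of \autoref{lem:gr}. The crucial observation is that both pairs share the same first $X_1$-coordinate $\hat{x}_1$, so the analysis reduces to fiber Lipschitz estimates: feed everything through (A1$'$) and (A2$'$) applied to $F(\hat{x}_1, \cdot)$ and $G(\hat{x}_1, \cdot)$ with constants $\alpha$ and $\lambda_u$ respectively.

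Concretely, I would introduce the two unknown quantities $A \triangleq |\hat{y}_2 - f_2(x_1(\hat{x}_1))|$ and $B \triangleq |x_1(\hat{x}_1) - \hat{x}_2|$ and derive coupled inequalities for them. Using the defining identity $F(\hat{x}_1, f_2(x_1(\hat{x}_1))) = x_1(\hat{x}_1)$ together with the hypothesis $|F(\hat{x}_1, \hat{y}_2) - \hat{x}_2| \leq \eta_1$, the triangle inequality and the Lipschitz bound (A1$'$) give $B \leq \alpha A + \eta_1$. Inserting $f_2(\hat{x}_2)$ as an intermediate point and using $\lip f_2 \leq \hat{\beta}$ along with $|f_2(\hat{x}_2) - \hat{y}_2| \leq C_2$ gives $A \leq C_2 + \hat{\beta} B$. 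Since $\alpha\hat{\beta} < 1$, this pair can be solved explicitly to yield
\[
A \leq \frac{C_2 + \hat{\beta}\eta_1}{1-\alpha\hat{\beta}}, \qquad B \leq \frac{\alpha C_2 + \eta_1}{1-\alpha\hat{\beta}},
\]
the second of which is exactly the claimed bound on $|x_1(\hat{x}_1) - \hat{x}_2|$.

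For the estimate on $|f_1(\hat{x}_1) - \hat{y}_1|$, I would decompose through $G(\hat{x}_1, \hat{y}_2)$: by the defining identity $G(\hat{x}_1, f_2(x_1(\hat{x}_1))) = f_1(\hat{x}_1)$, the triangle inequality combined with (A2$'$) and the hypothesis $|G(\hat{x}_1, \hat{y}_2) - \hat{y}_1| \leq \eta_2$ gives $|f_1(\hat{x}_1) - \hat{y}_1| \leq \lambda_u A + \eta_2$; substituting the bound on $A$ from the previous step produces exactly the claimed inequality. Finally, the special case stated at the end is just the specialization $\eta_1 = \eta_2 = C_2 = 0$: the two estimates then collapse to $f_1(x_1) = y_1$ and $x_1(x_1) = x_2$. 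There is no real obstacle; the only care needed is to solve for $A$ \emph{before} $B$ (and both before the $y$-estimate) so that the denominator $1-\alpha\hat{\beta}$ appears with the sharp numerators stated in the lemma rather than inflated ones.
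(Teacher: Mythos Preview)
Your proposal is correct and follows essentially the same route as the paper's proof: both introduce (implicitly or explicitly) the quantities $A = |f_2(x_1(\hat{x}_1)) - \hat{y}_2|$ and $B = |x_1(\hat{x}_1) - \hat{x}_2|$, derive the coupled inequalities $B \leq \alpha A + \eta_1$ and $A \leq \hat{\beta} B + C_2$ from the defining identities of $x_1(\cdot)$, $f_1$ and the (A$'$) Lipschitz bounds, solve first for $A$ to get the sharp denominator $1-\alpha\hat{\beta}$, then substitute back for $B$ and for the $G$-estimate. Your naming of $A$ and $B$ is a clean organizational touch, but the underlying computation is identical.
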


\begin{proof}
	This follows from the following computations:
	\begin{align*}
	|x_1(\hat{x}_1) - \hat{x}_2| & \leq |F( \hat{x}_1, f_2(x_1(\hat{x}_2)) ) - F(\hat{x}_1, \hat{y}_2)| + |F(\hat{x}_1, \hat{y}_2) - \hat{x}_2| \\
	& \leq \alpha |f_2(x_1(\hat{x}_2)) - \hat{y}_2| + \eta_1,
	\end{align*}
	and
	\begin{align*}
	|f_2(x_1(\hat{x}_2)) - \hat{y}_2| & \leq |f_2(x_1(\hat{x}_2)) - f_2(\hat{x}_2)| + |f_2(\hat{x}_2) - \hat{y}_2| \\
	&\leq \hat{\beta} |x_1(\hat{x}_1) - \hat{x}_2| + C_2  \\
	&\leq \alpha \hat{\beta} |f_2(x_1(\hat{x}_2)) - \hat{y}_2| + \hat{\beta} \eta_1 + C_2 .
	\end{align*}
	Thus we have $|f_2(x_1(\hat{x}_2)) - \hat{y}_2|  \leq \frac{\hat{\beta}\eta_1 + C_2}{1-\alpha \hat{\beta}}$. Furthermore,
	\begin{align*}
	|f_1(\hat{x}_1) - \hat{y}_1| & \leq |G( \hat{x}_1, f_2(x_1(\hat{x}_2)) ) - G(\hat{x}_1, \hat{y}_2)| + |G(\hat{x}_1, \hat{y}_2) - \hat{y}_1| \\
	&\leq \lambda_u |f_2(x_1(\hat{x}_2)) - \hat{y}_2| + \eta_2 \\
	&\leq \lambda_u \frac{\hat{\beta}\eta_1 + C_2}{1-\alpha \hat{\beta}} + \eta_2, \\
	|x_1(\hat{x}_1) - \hat{x}_2| & \leq \alpha \frac{\hat{\beta}\eta_1 + C_2}{1-\alpha \hat{\beta}} + \eta_1 = \frac{\alpha C_2 + \eta_1
	}{1-\alpha \hat{\beta}}.
	\end{align*}
	The proof is complete.
\end{proof}

The proof of the following estimates is straightforward.
\begin{lem}\label{lem:bas0}
	Assume $ \alpha \hat{\beta} < 1,~\hat{\beta} \leq \beta $.
	Let $f_2, f'_2 \in \sum_{\hat{\beta}} (X_2, Y_2)$. We have unique $f_1,f'_1 \in \sum_{\beta'} (X_1, Y_1)$ and $x_1(\cdot),x'_1(\cdot) \in \sum_{\lambda_s} (X_1, X_2)$ such that $\graph f_1 \subset H^{-1} \graph f_2$, $\graph f'_1$ $\subset H^{-1} \graph f'_2$. Then
	\begin{align*}
	|x_1(x) - x'_1(x)| & \leq \alpha |f_2(x_1(x)) - f'_2(x'_1(x))|, \\
	|f_2(x_1(x)) - f'_2(x'_1(x))| & \leq |f_2(x_1(x)) - f'_2(x_1(x))| + \hat{\beta} |x_1(x) - x'_1(x)|,\\
	|f_2(x_1(x)) - f'_2(x'_1(x))| & \leq  \frac{1}{1- \alpha \hat{\beta}} |f_2(x_1(x)) - f'_2(x_1(x))|,\\
	|f_1(x) - f'_1(x)| & \leq  \lambda_u |f_2(x_1(x)) - f'_2(x'_1(x))| \leq \frac{\lambda_u}{1 - \alpha \hat{\beta}} |f_2(x_1(x)) - f'_2(x_1(x))|.
	\end{align*}
\end{lem}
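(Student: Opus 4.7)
The plan is to reduce all four inequalities to three ingredients: the defining equations produced by \autoref{lem:gr}, the Lipschitz bounds supplied by the (A$'$) condition, and a single triangle inequality. Because the statement is truly straightforward (as the paper signals before listing it), the main task is to display these ingredients in the right order rather than to overcome any obstacle.

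First, I would apply \autoref{lem:gr} twice to produce the maps $f_1, x_1(\cdot)$ associated with $f_2$ and $f_1', x_1'(\cdot)$ associated with $f_2'$, satisfying
\[
F(x, f_2(x_1(x))) = x_1(x),\quad G(x, f_2(x_1(x))) = f_1(x),
\]
and the analogous identities with primes. The hypotheses $\alpha\hat{\beta}<1$ and $\hat{\beta}\leq\beta$ are exactly what \autoref{lem:gr} needs, so this step is immediate.

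The first inequality then follows from (A1$'$), namely $\sup_x \lip F(x,\cdot)\leq \alpha$, applied to the pair $(x, f_2(x_1(x)))$ and $(x, f_2'(x_1'(x)))$. For the second inequality I would insert $f_2'(x_1(x))$ and use the triangle inequality together with $\lip f_2' \leq \hat{\beta}$; this gives
\[
|f_2(x_1(x)) - f_2'(x_1'(x))| \leq |f_2(x_1(x)) - f_2'(x_1(x))| + \hat{\beta}\, |x_1(x) - x_1'(x)|.
\]
Substituting the first inequality into this second one and using $\alpha\hat{\beta}<1$ to collect the $|f_2(x_1(x)) - f_2'(x_1'(x))|$ terms on the left yields the third inequality. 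Finally, the fourth inequality comes from (A2$'$), namely $\sup_x \lip G(x,\cdot)\leq \lambda_u$, giving
\[
|f_1(x) - f_1'(x)| = |G(x, f_2(x_1(x))) - G(x, f_2'(x_1'(x)))| \leq \lambda_u\, |f_2(x_1(x)) - f_2'(x_1'(x))|,
\]
and then chaining the third bound produces the remaining estimate. There is no substantive obstacle; the only point worth flagging is that the contraction closure in the third inequality is precisely why the hypothesis $\alpha\hat{\beta}<1$ (already required by \autoref{lem:gr}) reappears implicitly in the denominators of the final two bounds.
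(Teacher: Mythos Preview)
Your proposal is correct and matches exactly the argument the paper has in mind; the paper simply announces ``The following estimates are straightforward'' and states the lemma without proof, and your four-step derivation using (A1$'$), the triangle inequality with $\lip f_2'\le\hat\beta$, the contraction closure from $\alpha\hat\beta<1$, and (A2$'$) is precisely the intended straightforward computation.
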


\section{Invariant graphs: proofs} \label{proofs}

\subsection{Proof of \autoref{thmA}}

Define a metric space as
\begin{align*}
E_\infty \triangleq \{f : X \to Y & ~\text{a bundle map over}~ \id:\lip f_m \leq \beta'(m), \\
&| f_m( i_X(m) ) - i_Y(m) | \leq C_2(m), \forall m \in M \},
\end{align*}
with metric
\[
\widehat{d}(f, f') \triangleq \sup_{m \in M} \sup_{x \in X_m} \frac{d( f_m(x), f'_m(x) )}{\max\{ d(x, i_X(m)), C_1(m) \}},
\]
where $ C_1(m) =  K_2 \eta(m),~ C_2(m) = K_1 \eta(m)$.
The constants $ K_1, K_2 $ satisfy
\[
K_1 \geq  \frac{\overline{\lambda}_1 \hat{\beta} + 1}{1-\overline{\lambda}_1},  ~
K_2 \triangleq  \dfrac{\hat{\alpha} K_1 + 1}{1 - \sup_m \alpha(m) \beta'(u(m))},
\]
where $ \overline{\lambda}_1 \triangleq \sup_m \frac{ \lambda_u(m) \varepsilon_1(m) }{1 - \alpha(m) \beta'(u(m))} < 1 $, $ \sup_m \alpha(m) \leq \hat{\alpha} < \infty $, $ \sup_m \beta(m) \leq \hat{\beta} < \infty $.
Note that the constant bundle map $ f_m(x) = i_Y(m) $, $ x \in X_m $, belongs to $ E_\infty $.

First we show the metric is well defined.
\begin{slem}
	The metric $ \widehat{d} $ is well defined, and the space $E_\infty$ is complete under the metric $ \widehat{d} $.
\end{slem}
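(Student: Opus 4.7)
The plan has two parts: first verify that $\widehat{d}(f,f')$ is finite for every $f, f' \in E_\infty$ (so that $\widehat{d}$ is actually a metric), and then show Cauchy sequences in $(E_\infty, \widehat{d})$ converge inside $E_\infty$.

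For well-definedness, I would fix $f, f' \in E_\infty$ and, for each $m \in M$ and $x \in X_m$, apply the triangle inequality together with the Lipschitz bound $\lip f_m, \lip f'_m \leq \beta'(m)$ and the anchor bound $|f_m(i_X(m)) - i_Y(m)|, |f'_m(i_X(m)) - i_Y(m)| \leq C_2(m)$ to obtain
\[
d(f_m(x), f'_m(x)) \leq 2\beta'(m)\, d(x, i_X(m)) + 2C_2(m).
\]
Dividing by $\max\{d(x, i_X(m)), C_1(m)\}$ and splitting into the cases $d(x, i_X(m)) \geq C_1(m)$ and $d(x, i_X(m)) < C_1(m)$, each summand is bounded separately by $2\beta'(m)$ and $2C_2(m)/C_1(m) = 2K_1/K_2$. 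The angle condition supplies $\beta'(u(m)) \leq \beta(m) \leq \hat\beta$, and one may harmlessly take $\beta'$ to be uniformly bounded by $\hat\beta$ on all of $M$ (this only weakens the constraint $\lip f_m \leq \beta'(m)$ used to define $E_\infty$). Hence the supremum is finite. Positivity, symmetry and the triangle inequality for $\widehat{d}$ transfer immediately from the corresponding properties of each fiber metric. When $\eta(m) = 0$, the defining inequality forces $f_m(i_X(m)) = f'_m(i_X(m)) = i_Y(m)$, so no genuine $0/0$ arises: one interprets the quotient as $0$ at $x = i_X(m)$, and for $x \neq i_X(m)$ the denominator is strictly positive.

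For completeness, take a Cauchy sequence $\{f^n\} \subset E_\infty$. Since
\[
d(f^n_m(x), f^{n'}_m(x)) \leq \widehat{d}(f^n, f^{n'}) \cdot \max\{d(x, i_X(m)), C_1(m)\},
\]
each sequence $\{f^n_m(x)\}$ is Cauchy in the complete space $Y_m$ and hence converges to some $f_m(x)$, defining a bundle map $f$ over $\id$. The bounds $\lip f_m \leq \beta'(m)$ and $|f_m(i_X(m)) - i_Y(m)| \leq C_2(m)$ pass to the pointwise limit by continuity of the metric, so $f \in E_\infty$. Convergence $\widehat{d}(f^n, f) \to 0$ follows by a standard argument: given $\varepsilon > 0$, pick $N$ so that $\widehat{d}(f^n, f^{n'}) \leq \varepsilon$ for $n, n' \geq N$; then for every fixed $(m,x)$ the bound on the quotient involving $f^n, f^{n'}$ is stable under $n' \to \infty$, yielding $\widehat{d}(f^n, f) \leq \varepsilon$ for $n \geq N$.

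The only delicate point is the uniform bound on $\beta'$ needed to make the supremum finite: the angle condition a priori only provides $\beta'(u(m)) \leq \beta(m)$, so on $M \setminus u(M)$ one has to argue separately (by enlarging $\beta'$ to $\max\{\beta'(m), \hat\beta\}$, which does not shrink $E_\infty$). Aside from this bookkeeping, everything reduces to the standard fact that a weighted sup-norm on a space of Lipschitz sections of a bundle with complete fibers is itself complete.
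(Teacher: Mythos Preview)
Your approach is essentially identical to the paper's: for finiteness you use the same triangle-inequality estimate
\[
|f_m(x)-f'_m(x)|\le 2\beta'(m)\,|x-i_X(m)|+2C_2(m),
\]
and for completeness the paper merely says ``standard argument'' and omits it, so your pointwise-limit argument is a legitimate filling-in.

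Your observation about the uniform bound on $\beta'$ is sharp---the paper writes $2\hat\beta$ in place of $2\beta'(m)$ without justification, and the angle condition only controls $\beta'$ on $u(M)$---but your two proposed fixes contradict each other and neither works. Taking $\beta'$ to be bounded above by $\hat\beta$ \emph{strengthens} (not weakens) the Lipschitz constraint defining $E_\infty$, and the graph transform need not land in the smaller space, since the (B) condition for $H_m$ only yields the output bound $\beta'(m)$, not $\min\{\beta'(m),\hat\beta\}$. Conversely, ``enlarging $\beta'$ to $\max\{\beta',\hat\beta\}$'' does not bound $\beta'$ from above at all. The honest resolution is simply to add the standing hypothesis $\sup_m\beta'(m)<\infty$, which the paper tacitly uses here and explicitly imposes in the later regularity sections.
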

\begin{proof}
	It suffices to show $ \widehat{d}(f, f') < \infty $ for $ f, f' \in E_\infty $. This is easy from
	\begin{align*}
		& |f_m(x) - f'_m(x)| \\
		\leq & |f_m(x) - f_m(i_X(m))| + | f_m(i_X(m)) - f'_m(i_X(m)) | + |f'_m(i_X(m)) - f'_m(x)|\\
		\leq & 2\hat{\beta} |x - i_X(m)| + 2 C_2(m),
	\end{align*}
	and $ \sup_m \frac{C_2(m)}{C_1(m)} < \infty$.
	One can use the standard argument to show the completeness of $E_\infty$, so we omit it.
\end{proof}

In the following, we first assume $ H $ satisfies condition (ii) in \autoref{thmA}.

For every $ f \in E_\infty $, since $ f_{u(m)} \in \sum_{\beta'(u(m))}(X_{u(m)}, Y_{u(m)}) $, $\sup_m \alpha(m) \beta'(u(m)) < 1$, $ \beta'(u(m)) \leq \beta(m) $, by \autoref{lem:gr},  there are unique $ \widetilde{f}_m \in \sum_{\beta'(m)}(X_{m}, Y_{m}) $ and $ x_m(\cdot) \in \sum_{\lambda_s(m)}(X_m, X_{u(m)}) $ such that
\[
(x_m(x), f_{u(m)}(x_m(x)) ) \in H_m(x, \widetilde{f}_m(x)), ~x \in X_{m}.
\]
Since $ | f_{u(m)} ( i_X(u(m)) ) - i_Y(u(m)) | \leq C_2(u(m)) $, $ | i_X(u(m)) - F_m( i_X(m), i_Y(u(m)) ) | \leq \eta(u(m))$, and $ | i_Y(m) - G_m( i_X(m), i_Y(u(m)) ) | \leq \eta(m) $, by \autoref{lem:bas1}, we have
\begin{align*}
| \widetilde{f}_m (i_X(m)) - i_Y(m) | & \leq \lambda_u(m) \frac{ \beta'(u(m)) \eta(u(m)) + C_2(u(m)) }{1- \alpha(m) \beta'(u(m))} + \eta(m) \\
& \leq  \overline{\lambda}_1 \hat{\beta} \eta(m) + \overline{\lambda}_1 C_2(m)  + \eta(m) \leq C_2(m),
\end{align*}
and
\[
|x_m(i_X(m)) - i_X(u(m))| \leq \frac{\hat{\alpha} C_2(u(m)) + \eta(u(m))}{1 - \sup_m \alpha(m) \beta'(u(m))} \leq C_1(u(m)).
\]
Let $ \widetilde{f}(m, x) = (m, \widetilde{f}_m(x)) $. Then $ \widetilde{f} \in E_\infty $. Define the graph transform $ \varGamma $ as
\[
\varGamma: E_\infty \to E_\infty, f \mapsto \widetilde{f}.
\]
Note that $ \graph \widetilde{f} \subset H^{-1} \graph f $.
\begin{slem}
	The graph transform $ \varGamma $ is Lipschitz under the metric $ \widehat{d} $, and
	\[
	\lip \varGamma \leq \sup_m \dfrac{ \lambda_u(m) \lambda_s(m) + \lambda_u(m) \varepsilon_1(m) }{1-\alpha(m) \beta'(u(m))} < 1.
	\]
\end{slem}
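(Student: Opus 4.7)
The plan is to estimate $|\widetilde f_m(x) - \widetilde f'_m(x)|$ directly through a fiberwise application of \autoref{lem:bas0} and then repackage the result in the weighted form dictated by the metric $\widehat d$.

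Fix $f, f' \in E_\infty$, and for each $m$ let $\widetilde f_m, \widetilde f'_m$ and $x_m(\cdot), x'_m(\cdot)$ be the data produced by $\varGamma$ from $f$ and $f'$ respectively. Since $f_{u(m)}, f'_{u(m)} \in \sum_{\beta'(u(m))}(X_{u(m)}, Y_{u(m)})$, the angle condition $\alpha(m)\beta'(u(m)) < 1$ together with $\beta'(u(m)) \leq \beta(m)$ lets me invoke \autoref{lem:bas0} fiberwise to obtain
\[
|\widetilde f_m(x) - \widetilde f'_m(x)| \leq \frac{\lambda_u(m)}{1 - \alpha(m)\beta'(u(m))}\, |f_{u(m)}(x_m(x)) - f'_{u(m)}(x_m(x))|.
\]
Notice that $x'_m(x)$ has already been eliminated in this bound, so only the single orbit $x_m(x)$ remains.

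The right-hand side is controlled by the definition of $\widehat d$: for every $z \in X_{u(m)}$,
\[
|f_{u(m)}(z) - f'_{u(m)}(z)| \leq \widehat d(f, f') \cdot \max\{|z - i_X(u(m))|,\, C_1(u(m))\}.
\]
I set $z = x_m(x)$ and bound the max by combining the triangle inequality with the two facts $\lip x_m(\cdot) \leq \lambda_s(m)$ and $|x_m(i_X(m)) - i_X(u(m))| \leq C_1(u(m))$ that were just established in the previous sublemma, yielding
\[
\max\{|x_m(x) - i_X(u(m))|,\, C_1(u(m))\} \leq \lambda_s(m)\, |x - i_X(m)| + C_1(u(m)).
\]
The $\varepsilon$-pseudo-stability assumption $\eta(u(m)) \leq \varepsilon_1(m)\eta(m)$ together with $C_1 = K_2\eta$ gives $C_1(u(m)) \leq \varepsilon_1(m)\,C_1(m)$, so the upper bound is $\lambda_s(m)|x - i_X(m)| + \varepsilon_1(m) C_1(m) \leq (\lambda_s(m) + \varepsilon_1(m)) \max\{|x - i_X(m)|,\, C_1(m)\}$ by an elementary weighted-max estimate.

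Assembling the three inequalities and dividing by $\max\{|x - i_X(m)|,\, C_1(m)\}$ produces
\[
\frac{|\widetilde f_m(x) - \widetilde f'_m(x)|}{\max\{|x - i_X(m)|,\, C_1(m)\}} \leq \frac{\lambda_u(m)\lambda_s(m) + \lambda_u(m)\varepsilon_1(m)}{1 - \alpha(m)\beta'(u(m))}\, \widehat d(f, f'),
\]
and taking the supremum over $m$ and $x$ is exactly the claimed bound on $\lip \varGamma$; strict inequality $< 1$ is precisely the spectral condition (b) of hypothesis (ii). The only real delicacy is that the weight $\max\{|x - i_X(m)|,\, C_1(m)\}$ in $\widehat d$ was engineered so that the two independent contractions — the factor $\lambda_s(m)$ acting on the orbit drift $|x - i_X(m)|$ and the factor $\varepsilon_1(m)$ absorbing the base-point error $C_1(u(m))$ — can be combined additively against a single weighted max. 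Beyond this bookkeeping there is no substantive obstacle, since the constants $K_1, K_2$ were already chosen in the invariance step precisely to make this estimate close.
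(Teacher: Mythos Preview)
Your proof is correct and follows essentially the same route as the paper: both apply \autoref{lem:bas0} fiberwise to reduce to $|f_{u(m)}(x_m(x)) - f'_{u(m)}(x_m(x))|$, then combine the orbit estimate $|x_m(x)-i_X(u(m))|\le \lambda_s(m)|x-i_X(m)|+C_1(u(m))$ with $C_1(u(m))\le \varepsilon_1(m)C_1(m)$ to bound the weight ratio by $\lambda_s(m)+\varepsilon_1(m)$. The only cosmetic difference is ordering---the paper multiplies and divides by $\max\{|x_m(x)-i_X(u(m))|,\,C_1(u(m))\}$ and bounds the resulting ratio, whereas you bound the numerator directly via $\widehat d$ first; also, the base-point bound $|x_m(i_X(m))-i_X(u(m))|\le C_1(u(m))$ you cite is established in the text preceding this sublemma rather than in a sublemma proper.
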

\begin{proof}
	Let $ f, f' \in E_\infty $, $ \widetilde{f} = \varGamma f $, $ \widetilde{f}' = \varGamma f' $, with $ x_m(\cdot) $, $ x'_m(\cdot) $. Note first that by \autoref{lem:bas0}, 
	\[
	| \widetilde{f}_m(x) - \widetilde{f}'_m(x) | \leq \frac{\lambda_u(m)}{1-\alpha(m) \beta'(u(m))} | f_{u(m)}(x_m(x)) -f'_{u(m)}(x_m(x)) |,
	\]
	and
	\begin{align*}
		|x_m(x) - i_X(u(m)) | & \leq | x_m(x) - x_m(i_X(m)) | + |x_m(i_X(m)) - i_X(u(m)) | \\
	& \leq \lambda_s(m) | x - i_X(m) | + C_1(u(m)).
	\end{align*}
	Thus,
	\begin{align*}
		&~ \frac{| \widetilde{f}_m(x) - \widetilde{f}'_m(x) |}{\max\{  |x - i_X(m)|, C_1(m) \}} \\
		\leq &~ \frac{\lambda_u(m)}{1-\alpha(m) \beta'(u(m))} \frac{| f_{u(m)}(x_m(x)) -f'_{u(m)}(x_m(x)) |}{\max\{ |x_m(x) - i_X(u(m))|, C_1(u(m)) \}} \\
		& \quad \cdot \frac{\max\{ |x_m(x) - i_X(u(m))|, C_1(u(m)) \}}{\max\{  |x - i_X(m)|, C_1(m) \}} \\
		\leq &~ \frac{ \lambda_u(m) \lambda_s(m) + \lambda_u(m) \varepsilon_1(m) }{1-\alpha(m) \beta'(u(m))} \frac{| f_{u(m)}(x_m(x)) -f'_{u(m)}(x_m(x)) |}{\max\{ |x_m(x) - i_X(u(m))|, C_1(u(m)) \}},
	\end{align*}
	and the proof is complete.
\end{proof}

Since $ \varGamma $ is contractive, we see there is a unique fixed point of $ \varGamma $ in $ E_\infty $, denoted by $ f $. By the construction of $ \varGamma $, we have $ \graph f \subset H^{-1} \graph f $, showing conclusions (1), (2) in \autoref{thmA}. The constant $ K $ in (2) can be taken as $ K = \frac{\overline{\lambda}_1 \hat{\beta} + 1}{1-\overline{\lambda}_1} $.

$ f $ is unique in the sense that if $ f':X \to Y $ is any bundle map over $ \id $ that satisfies
\begin{enumerate}[(a{0})]
	\item $ \lip f'_m \leq \beta'(m) $, $ | f'_m(i_X(m))  - i_Y(m)| \leq K' \eta'(m) $, where $ K' \geq 0 $ is a constant, and
	\item $ \graph f' \subset H^{-1} \graph f' $,
\end{enumerate}
then $ f' = f $, where $ \eta' $ satisfies that $ \eta'(u(m)) \leq \varepsilon'(m) \eta'(m) $, $ \varepsilon'(m) \leq \varepsilon_{1}(m) $ and $ \eta(m) \leq \eta'(m) $ for all $ m \in M $.
This is easy from the unique fixed point of $ \varGamma $ in $ E_\infty $. Without loss of generality, we can assume $ K' \leq K_1 $. The construction of $ E_\infty \triangleq E_\infty(\eta) $ depends on $ \eta $. However, $ E_\infty(\eta) \subset E_\infty(\eta') $. So $ f, f' \in E_\infty(\eta') $. The uniqueness of the fixed point of $ \varGamma $ in $ E_\infty(\eta') $ shows $ f = f' $.

Next, we show that if $ H $ satisfies condition (ii)$ ' $ in \autoref{thmA}, then the conclusions also hold.
Consider $ H^{(k)} $, the $ k $th composition of $ H $.
\begin{slem}
	$ H^{(k)} $ is well defined, and is a bundle correspondence over $u^k$ with generating map $ (F^{(k)},G^{(k)}) $, satisfying (A)$(\alpha;\alpha'_{k}, \lambda^{(k)}_u)$ (B)$(\beta_{k};\beta', \lambda^{(k)}_s)$ condition, where $ \alpha'_{k}(m) = \alpha'(u^{k-1}(m)) $, $ \beta_{k}(m) = \beta(u^{k-1}(m)) $,
	\[
	\begin{gathered}
	\lambda^{(k)}_u(m) = \lambda_u(u^{k-1}(m)) \cdot \lambda_u(u^{k-2}(m)) \cdots \lambda_u(m),\\ \lambda^{(k)}_s(m) = \lambda_s(u^{k-1}(m)) \cdot \lambda_s(u^{k-2}(m)) \cdots \lambda_s(m).
	\end{gathered}
	\]
	Moreover, $ H_{u^{k}(m)} \circ H^{(k)}_{m} = H^{(k+1)}_m $, $ k = 1,2,\ldots $.
\end{slem}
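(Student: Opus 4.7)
My plan is to argue by induction on $k$. The base case $k=1$ is tautological, since $H^{(1)}=H$ and the claimed parameters $\alpha'_1=\alpha'$, $\beta_1=\beta$, $\lambda^{(1)}_u=\lambda_u$, $\lambda^{(1)}_s=\lambda_s$ reduce to the hypothesized (A)$(\alpha;\alpha',\lambda_u)$ (B)$(\beta;\beta',\lambda_s)$ condition of \autoref{thmA}(ii$'$). The identity $H_{u^k(m)}\circ H^{(k)}_m = H^{(k+1)}_m$ is immediate from the definition of $H^{(k)}_m$ as an ordered composition, so the substance is to propagate the existence of a generating map and the (A) (B) bounds from $k$ to $k+1$.

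For the inductive step, a key preliminary observation is that the pure (A) (B) condition on a correspondence with generating map $(F,G)$ already implies the fiberwise Lipschitz bounds $\lip F(x,\cdot)\leq \alpha'$ and $\lip G(\cdot,y)\leq \beta'$: setting $x_1=x'_1$ trivially meets the hypothesis of (A1) and yields the first bound, and symmetrically (B1) yields the second. Applied to the inductive hypothesis, $\lip F^{(k)}_m(x,\cdot)\leq \alpha'(u^{k-1}(m))$, while the hypothesis on $H$ gives $\lip G_{u^k(m)}(\cdot,y)\leq \beta'(u^k(m))$. The angle condition $\alpha'(m)\beta'(u(m))<1$ evaluated at $u^{k-1}(m)$ therefore delivers the product bound required by \autoref{lem:com}, so $H^{(k+1)}_m = H_{u^k(m)}\circ H^{(k)}_m$ inherits a generating map $(F^{(k+1)}_m,G^{(k+1)}_m)$.

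I would then verify the claimed (A) (B) constants by chaining through the intermediate fiber. Pick $(x_1,y_1,x_3,y_3)$ and $(x'_1,y'_1,x'_3,y'_3)$ in $\graph H^{(k+1)}_m$ with intermediates $(x_2,y_2)$, $(x'_2,y'_2)$. Assuming $|x_1-x'_1|\leq \alpha(m)|y_1-y'_1|$, (A1) for $H^{(k)}_m$ gives $|x_2-x'_2|\leq \alpha'(u^{k-1}(m))|y_2-y'_2|$; the comparison $\alpha'(m)\leq \alpha(u(m))$ at $u^{k-1}(m)$ upgrades this to the premise $|x_2-x'_2|\leq \alpha(u^k(m))|y_2-y'_2|$ for $H_{u^k(m)}$, whose (A1) then yields $|x_3-x'_3|\leq \alpha'(u^k(m))|y_3-y'_3| = \alpha'_{k+1}(m)|y_3-y'_3|$. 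Concatenating the two (A2) bounds produces $|y_1-y'_1|\leq \lambda^{(k)}_u(m)\lambda_u(u^k(m))|y_3-y'_3| = \lambda^{(k+1)}_u(m)|y_3-y'_3|$. The (B) side is entirely symmetric, now using $\beta'(u(m))\leq \beta(m)$ at $u^{k-1}(m)$ to match the premise of (B1) for $H^{(k)}_m$ after applying (B1) for $H_{u^k(m)}$, and concatenating (B2) to get $|x_3-x'_3|\leq \lambda^{(k+1)}_s(m)|x_1-x'_1|$. I do not anticipate any serious obstacle; the only delicate book-keeping is ensuring the premises of the chained (A1) and (B1) implications line up across the composition, which is precisely what the two one-sided comparisons $\alpha'\leq \alpha\circ u$ and $\beta'\circ u\leq \beta$ built into the angle condition are designed to guarantee.
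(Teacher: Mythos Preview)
Your proposal is correct and follows essentially the same route as the paper: the paper invokes \autoref{lem:com} for the existence of the generating map and \autoref{lem:a1} for the (A) (B) bounds under composition, while you explicitly extract the Lipschitz bounds $\lip F(x,\cdot)\leq\alpha'$, $\lip G(\cdot,y)\leq\beta'$ from (A1), (B1) and then carry out by hand the chaining argument that underlies \autoref{lem:a1}(2) in the $(\alpha;\alpha')$, $(\beta;\beta')$ setting. The only difference is expository: the paper is terse and leaves the adaptation of \autoref{lem:a1}(2) to non-constant $\alpha',\beta'$ implicit, whereas you spell out the role of the inequalities $\alpha'\leq\alpha\circ u$ and $\beta'\circ u\leq\beta$ in matching the premises across the intermediate fiber.
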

\begin{proof}
	Since $ H_m \sim (F_m, G_m) : X_m \times Y_m \to X_{u(m)} \times Y_{u(m)} $, and $\alpha'(m) \beta'(u(m)) < 1$, by \autoref{lem:com} and \autoref{lem:a1}, we know that $ H_m^{(2)} \triangleq H_{u(m)} \circ H_m \sim (F_m^{(2)}, G_m^{(2)}): X_m \times Y_m \to X_{u^2(m)} \times Y_{u^2(m)} $ satisfies the (A)$(\alpha(m);\alpha'_{2}(m), \lambda^{(2)}_u(m))$ (B)$(\beta_{2}(m);\beta'(m), \lambda^{(2)}_s(m))$ condition, and $ \alpha'_{2}(m) \beta'(u^2(m)) = \alpha'(u(m)) \beta'(u^2(m)) < 1 $. Using $ H_m^{(2)} $, one can define $ H^{(2)}: X \times Y \to X \times Y $, a bundle correspondence over $ u^2 $ with generating bundle map $(F^{(2)}, G^{(2)})$, i.e., $ \graph H^{(2)} = \bigcup_{m \in M} (m, \graph H^{(2)}_m) $. $H^{(k)}$ can be defined inductively by using the composition of $H^{(k-1)}$ and $H$. Obviously $ H_{u^{k}(m)} \circ H^{(k)}_{m} = H^{(k+1)}_m $, $ k = 1,2,\ldots $, by our construction.
\end{proof}

Fix any $ m_0 \in M $. Let $ \widehat{M}_{m_0} = \{ u^n(m_0): n \geq 0 \} $. Hence $ H^{(k)} $ can be regarded as a bundle correspondence $ X|_{\widehat{M}_{m_0}} \times Y|_{\widehat{M}_{m_0}} \to X|_{\widehat{M}_{m_0}} \times Y|_{\widehat{M}_{m_0}} $ over $ u^{k} $, denoted by $ H^{(k)}|_{\widehat{M}_{m_0}} $. Define a function $ \hat{\varepsilon}_1(\cdot) $ over $ u $ as
\[
\hat{\varepsilon}_1(m) = \max\{ \varepsilon(m), \lambda_{s}(m) \}, ~m \in M.
\]
Set
\[
\lambda \triangleq \sup_m \lambda_s(m) \lambda_u(m), ~
\lambda_1 \triangleq \sup_m \varepsilon(m) \lambda_u(m), ~
\gamma \triangleq \sup_m \alpha'(m) \beta'(u(m)).
\]

\begin{slem}
	The section $ i=(i_X, i_Y) $ is an $ \hat{\varepsilon}_1^{(k)} $-pseudo-stable section of $ H^{(k)}|_{\widehat{M}_{m_0}} $, i.e.,
	\begin{gather*}
	| i_X(u^{k}(m)) - F^{(k)}_m( i_X(m), i_Y(u^k(m)) ) | \leq \eta^{(m_0)}_{k}(u^{k}(m)), \\
	| i_Y(m) - G^{(k)}_m( i_X(m), i_Y(u^k(m)) ) | \leq \eta^{(m_0)}_{k}(m),
	\end{gather*}
	for all $ m \in \widehat{M}_{m_0} $, where $ \eta^{(m_0)}_{k}(u^i(m_0)) = c_k \hat{\varepsilon}^{(i)}_1(m_0) \eta(m_0) $, $ i \geq 0 $, and $ c_k \geq 1 $ is a constant independent of $ m_0 $.
\end{slem}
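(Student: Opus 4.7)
The plan is to induct on $k$. The base case $k = 1$ is essentially hypothesis (i) of \autoref{thmA}: the iterated bound $\eta(u^i(m_0)) \leq \varepsilon^{(i)}(m_0)\eta(m_0) \leq \hat{\varepsilon}_1^{(i)}(m_0)\eta(m_0)$ follows from $\hat{\varepsilon}_1 \geq \varepsilon$, so $c_1 = 1$ works. For the inductive step, I will exploit $H^{(k+1)}_m = H_{u^k(m)} \circ H^{(k)}_m$ together with the construction of the composed generating map given in the proof of \autoref{lem:com}: evaluating $(F^{(k+1)}_m, G^{(k+1)}_m)$ at $(i_X(m), i_Y(u^{k+1}(m)))$ amounts to solving the contraction $y_2 = G_{u^k(m)}(F^{(k)}_m(i_X(m), y_2), i_Y(u^{k+1}(m)))$ for the intermediate point $y_2$, setting $x_2 = F^{(k)}_m(i_X(m), y_2)$, and reading off $F^{(k+1)}_m(i_X(m), i_Y(u^{k+1}(m))) = F_{u^k(m)}(x_2, i_Y(u^{k+1}(m)))$ and $G^{(k+1)}_m(i_X(m), i_Y(u^{k+1}(m))) = G^{(k)}_m(i_X(m), y_2)$.

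I will first estimate the intermediate gaps $|y_2 - i_Y(u^k(m))|$ and $|x_2 - i_X(u^k(m))|$ by inserting the reference point $(i_X(u^k(m)), i_Y(u^{k+1}(m)))$: using $\sup_y \lip G_{u^k(m)}(\cdot, y) \leq \beta'(u^k(m))$ (from (B1) at the base level), $\sup_x \lip F^{(k)}_m(x, \cdot) \leq \alpha'(u^{k-1}(m))$ (from (A1) applied to $H^{(k)}$ via the preceding sublemma), the level-one bound $\eta(u^k(m))$, and the inductive bound $\eta^{(m_0)}_k(u^k(m))$, the angle condition $\gamma \triangleq \sup_m \alpha'(m)\beta'(u(m)) < 1$ closes a linear fixed-point estimate and bounds both gaps by a uniform constant multiple of $\eta^{(m_0)}_k(u^k(m)) + \eta(u^k(m))$. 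The desired bound on $F^{(k+1)}_m - i_X(u^{k+1}(m))$ then follows by inserting $F_{u^k(m)}(i_X(u^k(m)), i_Y(u^{k+1}(m)))$ and applying $\sup_y \lip F_{u^k(m)}(\cdot, y) \leq \lambda_s(u^k(m))$ together with the base-level error $\eta(u^{k+1}(m))$; the bound on $G^{(k+1)}_m - i_Y(m)$ follows by inserting $G^{(k)}_m(i_X(m), i_Y(u^k(m)))$ and applying $\sup_x \lip G^{(k)}_m(x, \cdot) \leq \lambda^{(k)}_u(m)$ together with $\eta^{(m_0)}_k(m)$.

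The main obstacle is packaging these estimates into the required form $c_{k+1}\hat{\varepsilon}_1^{(i)}(m_0)\eta(m_0)$ with $c_{k+1}$ independent of $m_0$, because the prefactor $\lambda^{(k)}_u(m)$ in the $G^{(k+1)}$ bound would grow with $k$ if left unchecked. This is precisely the point of the definition $\hat{\varepsilon}_1(m) = \max\{\varepsilon(m), \lambda_s(m)\}$ combined with the spectral conditions in (ii$'$): setting $\bar{\lambda}_0 \triangleq \max\{\sup_m \lambda_u(m)\varepsilon_1(m),\, \sup_m \lambda_u(m)\lambda_s(m)\} < 1$, one has the pointwise inequality $\lambda_u(m)\hat{\varepsilon}_1(m) \leq \bar{\lambda}_0$, so the product telescopes as $\lambda^{(k)}_u(u^i(m_0))\,\hat{\varepsilon}_1^{(k+i)}(m_0) \leq \bar{\lambda}_0^{k}\hat{\varepsilon}_1^{(i)}(m_0)$ and likewise $\lambda^{(k)}_u(u^i(m_0))\,\varepsilon^{(k+i)}(m_0) \leq \bar{\lambda}_0^{k}\hat{\varepsilon}_1^{(i)}(m_0)$. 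The pointwise inequalities $\lambda_s \leq \hat{\varepsilon}_1$ and $\varepsilon \leq \hat{\varepsilon}_1$ similarly absorb the $\lambda_s(u^k(m))$ factor arising in the $F^{(k+1)}$ bound into $\hat{\varepsilon}_1^{(k+1+i)}(m_0)$. Combining everything yields a recurrence $c_{k+1} \leq a c_k + b$ whose coefficients depend only on $\hat{\alpha}, \hat{\beta}, \gamma$ and $\bar{\lambda}_0$ and not on $m_0$, giving the required uniform constant.
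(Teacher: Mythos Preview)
Your proposal is correct and follows essentially the same strategy as the paper: use the explicit description of the composed generating map from \autoref{lem:com}, insert the reference points, and apply the Lipschitz bounds coming from the (A)(B) condition of $H^{(k)}$ established in the preceding sublemma. The paper in fact only writes out the case $k=2$ (with $H_1=H_m$, $H_2=H_{u(m)}$) and cites \autoref{lem:bas1} for the estimate on the intermediate point, leaving the general $k$ to the reader; your version supplies the full inductive step $k\to k+1$ via the decomposition $H^{(k+1)}_m=H_{u^k(m)}\circ H^{(k)}_m$, which is the natural recursive form.

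The one point worth noting is your handling of the factor $\lambda_u^{(k)}(m)$ in the $G^{(k+1)}$ estimate. The paper controls it implicitly through its constant $\lambda_1=\sup_m \varepsilon(m)\lambda_u(m)<1$ (using $\eta(u(m))\leq\varepsilon(m)\eta(m)$ directly), whereas you make the telescoping explicit via $\bar\lambda_0=\max\{\sup_m\lambda_u\varepsilon_1,\ \sup_m\lambda_u\lambda_s\}<1$ and the pointwise bound $\lambda_u\hat\varepsilon_1\leq\bar\lambda_0$. Both work; your formulation has the virtue of making transparent exactly why the definition $\hat\varepsilon_1=\max\{\varepsilon,\lambda_s\}$ is the right object, and of showing that the resulting recurrence for $c_k$ has coefficients independent of $m_0$ (which is all that is required, since $c_k$ is allowed to grow with $k$).
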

\begin{proof}
	This is a direct consequence of \autoref{lem:bas1}. We only consider the case $ k=2 $. Let $ \hat{y}_m $ be the unique point satisfying $ \hat{y}_m = G_{u(m)}( F_m(i_X(m), \hat{y}_m), i_Y(u^2(m)) ) $. Then
	\[
	|\hat{y}_m - i_Y(u(m))| \leq \frac{ (\beta'(u(m)) + 1) \eta(u(m)) }{1 - \beta'(u(m)) \alpha'(m)},
	\]
	and
	\begin{align*}
		|G^{(2)}_m(i_X(m), i_Y(u^2(m))) - i_Y(m) | & = |G_m(i_X(m), \hat{y}_m) - i_Y(m) | \\
		& \leq \lambda_u(m) |  \hat{y}_m  - i_Y(u^2(m)) | + \eta(m)  \\
		& \leq \lambda_u(m) \frac{ (\beta'(u(m)) + 1) \eta(u(m)) }{1 - \beta'(u(m)) \alpha'(m)} + \eta(m).
	\end{align*}
	Also,
	\begin{align*}
	&~ |F^{(2)}_m(i_X(m), i_Y(u^2(m) ) ) - i_X(u^2(m)) | \\
	= &~ | F_{u(m)} ( F_m(i_X(m), \hat{y}_m), i_Y(u^2(m)) ) - i_X(u^2(m)) | \\
	\leq &~ \lambda_s(u(m)) \frac{ (\alpha'(m) + 1)\eta(u(m))}{1 - \beta'(u(m)) \alpha'(m)} + \eta(u^2(m)).
	\end{align*}
	If $ m = u^{j}(m_0) $, then $ \eta(m) \leq \hat{\varepsilon}^{(j)}_{1}(m_0)\eta(m_0) $.
	So we can choose
	\[
	c_2 = \max\left\{ \dfrac{\lambda_1 (\hat{\beta} + 1)}{1 - \gamma} + 1, \dfrac{\hat{\alpha} + 1}{1 - \gamma} + 1 \right\},
	\]
	completing the proof.
\end{proof}

Since $ \lambda, \lambda_1, \gamma < 1 $, we can choose large $ k $ (independent of $ m_0 $) such that $ 2\lambda^k + \lambda_1^k < 1 - \gamma $.
So
\[
\sup_m \frac{ \lambda_s^{(k)}(m) \lambda_u^{(k)}(m) + \hat{\varepsilon}^{(k)}_1(m) \lambda_u^{(k)}(m) }{ 1 - \alpha'_{k}(m) \beta'(u^{k}(m)) } < 1.
\]
So far, we have shown $ H^{(k)}|_{\widehat{M}_{m_0}} $ satisfies condition (ii) with the $ \hat{\varepsilon}^{(k)}_1 $-pseudo-stable section $ i $. Using what we have proven, we obtain a unique bundle map $ f^{k, (m_0)}: X |_{\widehat{M}_{m_0}} \to Y|_{\widehat{M}_{m_0}} $ over $ \id $ such that
\begin{enumerate}[(a{1})]
	\item $ \lip f^{k, (m_0)}_{m} \leq \beta'(m) $, $ | f^{k, (m_0)}_m(i_X(m))  - i_Y(m)| \leq K'_1 \eta^{(m_0)}_k(m) $, where $ K'_1 \geq 0 $ (independent of $ m_0 $),
	\item $ \graph f^{k, (m_0)}_m \subset (H^{(k)}_{m})^{-1} \graph f^{k, (m_0)}_{u^{k}(m)} $, $ m \in \widehat{M}_{m_0} $,
	\item $ f^{k, (m_0)} $ does not depend on the choice of $ \eta^0(\cdot) = \eta^{(m_0)}_k(\cdot) $ as long as $ \eta^0(u^{k}(m)) \leq \hat{\varepsilon}^{(k)}_1(m) \eta^0(m) $, $ \eta^{(m_0)}_{k}(m) \leq \eta^0(m) $, for all $ m \in \widehat{M}_{m_0} $.
\end{enumerate}

Note that for $ H^{(k)} $, its corresponding graph transform is $ \varGamma ^k = \varGamma \circ \cdots \circ \varGamma $ ($ k $ times). Since $ f^{k, (m_0)} $ is the unique fixed point of $ \varGamma^k $, we have $ \varGamma f^{k, (m_0)} = f^{k, (m_0)} $, i.e., 
\[
\graph f^{k, (m_0)}_{m} \subset H^{-1}_m \graph f^{k, (m_0)}_{u(m)}, ~m \in \widehat{M}_{m_0}.
\]

Set $ f_{m_0} = f^{k, (m_0)}_{m_0} $, $ m_0 \in M $. Note that $ f^{k, (m_0)}_{u(m_0)} = f^{k,(u(m_0))}_{u(m_0)} $. Indeed, $ \{f^{k, (m_0)}_{m'}: m' \in \widehat{M}_{u(m_0)} \} $ also fulfills (a1) and (b1) for the case where $ m_0 $ is replaced by $ u(m_0) $ with $ \eta^0(\cdot) = \eta^{(m_0)}_k(\cdot) |_{\widehat{M}_{u(m_0)}} $ instead of $ \eta^{(u(m_0))}_k(\cdot) $. Also, note that 
\[
|f^{k, (m_0)}_{m_0}(i_X(m_0)) - i_Y(m_0)| \leq K'_1c_{k}\eta(m_0).
\]
This shows $ f $ satisfies conclusions (1) and (2) in \autoref{thmA}. The uniqueness of $ f $ follows from the uniqueness of $ f: X |_{\widehat{M}_{m_0}} \to Y|_{\widehat{M}_{m_0}} $ for $ H^{(k)}|_{\widehat{M}_{m_0}} $.
\qed

\subsection{Proof of \autoref{thmB}}

The proof is essentially the same as that of \autoref{thmA}. We sketch the main steps.
The metric space is now defined by
\begin{align*}
E^b_\infty \triangleq \{f : X \to Y ~&\text{a bundle map over}~ \id:~ \lip f_m \leq \beta'(m), \\
&\widetilde{d}( f_m(X_m), i_Y(m)) \leq K_1 \eta(m), ~\forall m \in M \},
\end{align*}
with metric
\[
\widehat{d} (f,f') = \sup_{m \in M, \eta(m) \neq 0} \sup_{x \in X_m} \frac{d(f_m(x) , f'_m(x) )}{\eta(m)},
\]
for some constant $ K_1 > 0 $. The metric $ \widehat{d} $ is well defined, and $ E^b_\infty $ is complete under this metric.

Assume $ H $ satisfies condition (ii) in \autoref{thmB}. The graph transform $ \varGamma $ now also has $ \varGamma E^b_\infty \subset E^b_\infty $, by simple computations. It is also Lipschitz with Lipschitz constant no more than $ \sup_m \frac{ \lambda_u(m)\varepsilon_1(m) }{1-\alpha(m)\beta'(u(m))} < 1 $. From these, we clearly obtain the results.
When $ H $ satisfies condition (ii)$ ' $ in \autoref{thmB}, consider $ H^{(k)} $ instead of $ H $ for large $ k $; then we have $ \lip \varGamma^k < 1$, competing the proof. \qed

\subsection{Proof of \autoref{thm:local}}

The proof is almost identical to that of \autoref{thmA}. We only consider the case where the (A$ ' $) (B) condition holds. The metric space is now defined by
\begin{align*}
E^0_\infty \triangleq \{f : X(\sigma_0) \to Y ~\text{a bundle map over}~  &\id:\lip f_m \leq \beta'(m), \\
& | f_m( 0 ) | \leq K_1 \eta(m), \forall m \in M \},
\end{align*}
where $ \sigma_0, K_1 > 0 $ will be chosen later. The metric is the same as in $ E_\infty $.
Take the following constants:
\begin{gather*}
\gamma = \sup_m \alpha(m) \beta'(u(m)) < 1 / 2, ~
\overline{\lambda}_1 \triangleq \sup_m \frac{ \lambda_u(m) \varepsilon_1(m) }{1 - \alpha(m) \beta'(u(m))} < 1,\\
\hat{\alpha} = \sup_m \alpha(m),~\hat{\beta} =  \sup_m \beta(m),\\
K_1 = \frac{\overline{\lambda}_1 \hat{\beta} + 1}{1-\overline{\lambda}_1}, ~
K_2 =  \frac{\hat{\alpha} K_1 + 1}{1 - 2\gamma}, ~
\overline{\lambda}_s = \sup_{m} \lambda_s(m) < 1, ~
r < \min\{ r_i, r'_i: i = 1, 2 \} / 2.
\end{gather*}
Let $ \eta_0 = \sup_{m} \eta(m) $ be small and choose $ \sigma_0 > 0 $ such that
\[
\frac{K_2\eta_0}{1 - \overline{\lambda}_s} \leq \sigma_0 < r, ~\hat{\beta} \sigma_0 + K_1 \eta_0 < r.
\]
So $ \overline{\lambda}_s \sigma + K_2 \eta_0 < \sigma_0 $ if $ \sigma < \sigma_0 $; and $ f_m(X_{m}(\sigma_0)) \subset Y_{m}(r) $ if $ f \in E^0_{\infty} $.

Let $ f \in E^0_{\infty} $ and $ f^1_m(x) = f_m(r_{\sigma_0}(x)) $, where $ r_{\sigma_0}(\cdot) $ is the radial retraction (see \eqref{radial}). Note that $ \lip f^1_m \leq 2\beta'(m) $.
Consider the following fixed point equation for each $ x \in X_{m}(\sigma_0) $:
\[
F_m( x, f^1_{u(m)}( y ) ) = y, ~y \in X_{u(m)}.
\]
Since $ \alpha(m) \beta'(u(m)) < 1 /2 $, for each $ x \in X_{m}(\sigma_0) $ the above equation has a fixed point $ y = x_{m}(x) \in X_{u(m)} $. As $ |f^1_{u(m)}(0)| = |f_{u(m)}(0)| \leq K_1 \eta_0 $ and $ |F_m(0,0)| \leq \eta_0 $, we see that $ |x_m(0)| \leq K_2 \eta_0 $.
Let us show
\begin{slem}
	$ \lip x_{m}(\cdot)|_{X_{m}(\sigma_0)} \leq \lambda_s(m) $.
\end{slem}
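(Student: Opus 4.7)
The plan is to handle a mild circularity in the statement: applying the (B2) clause of the (A$'$) (B) condition with the sharp constant $\lambda_s(m)$ requires knowing that the fixed points $x_m(x)$ already lie inside $X_{u(m)}(\sigma_0)$, where the radial retraction drops out and $f^1_{u(m)}$ agrees with $f_{u(m)}$; but the only a priori bound on $x_m$ comes from the Banach contraction argument (with ratio $2\gamma < 1$, arising from $\lip r_{\sigma_0} \leq 2$, $\lip f_{u(m)} \leq \beta'(u(m))$ and $\sup_m\alpha(m)\beta'(u(m)) < 1/2$), which would only give the weaker Lipschitz constant $\lambda_s(m)/(1-2\gamma)$. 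I will break the loop by an open-closed argument on the subset of $X_m(\sigma_0)$ where the range constraint is met, so the desired Lipschitz bound and the range bound are established simultaneously.

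The contraction property shows that $x_m(x)$ is well-defined and depends continuously on $x \in X_m(\sigma_0)$, and a direct estimate (mirroring the one used in the proof of \autoref{thmA}) yields $|x_m(0)| \leq K_2 \eta_0 < \sigma_0$. Define
\[
\Omega \triangleq \{ x \in X_m(\sigma_0) : x_m(x) \in X_{u(m)}(\sigma_0) \};
\]
by continuity $\Omega$ is open in the connected ball $X_m(\sigma_0)$, and $0 \in \Omega$.

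For $x, x' \in \Omega$, the radial retraction is the identity at $x_m(x)$ and $x_m(x')$, so the fixed-point equation reads $x_m(x) = F_m(x, f_{u(m)}(x_m(x)))$ and similarly for $x'$. Consequently the two pairs
\[
(x, G_m(x, f_{u(m)}(x_m(x)))) \times (x_m(x), f_{u(m)}(x_m(x))), \qquad (x', G_m(x', f_{u(m)}(x_m(x')))) \times (x_m(x'), f_{u(m)}(x_m(x')))
\]
lie on $\graph H_m$, and since $\beta'(u(m)) \leq \beta(m)$ one has
\[
|f_{u(m)}(x_m(x)) - f_{u(m)}(x_m(x'))| \leq \beta'(u(m)) |x_m(x) - x_m(x')| \leq \beta(m) |x_m(x) - x_m(x')|.
\]
The (B2) clause then gives $|x_m(x) - x_m(x')| \leq \lambda_s(m) |x - x'|$, and by continuity this Lipschitz bound extends to the closure $\overline{\Omega}$ in $X_m(\sigma_0)$.

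Finally, for $x \in \overline{\Omega}$ the Lipschitz estimate combined with $|x_m(0)| \leq K_2\eta_0$ yields
\[
|x_m(x)| \leq \lambda_s(m) |x| + K_2\eta_0 \leq \overline{\lambda}_s |x| + K_2\eta_0 < \sigma_0
\]
by the choice of $\sigma_0$, so $x \in \Omega$. Hence $\Omega$ is both open and closed in $X_m(\sigma_0)$, nonempty, and since $X_m(\sigma_0)$ is connected, $\Omega = X_m(\sigma_0)$. This simultaneously establishes the range bound and the desired estimate $\lip x_m|_{X_m(\sigma_0)} \leq \lambda_s(m)$. The main obstacle throughout is the circular dependence identified at the outset, and the clopen argument in the connected ball is the device that resolves it.
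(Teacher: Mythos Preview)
Your proof is correct and follows essentially the same continuity-bootstrap idea as the paper. The paper organizes the argument via the supremum $\sigma_1 = \sup\{\sigma \leq \sigma_0 : \lip x_m|_{X_m(\sigma)} \leq \lambda_s(m)\}$ and reaches a contradiction if $\sigma_1 < \sigma_0$ by extending slightly using the crude bound $\overline{\lambda}_s/(1-2\gamma)$; you instead work directly with the set $\Omega = \{x : x_m(x) \in X_{u(m)}(\sigma_0)\}$ and use a clopen argument on the connected ball. Both hinge on the same two ingredients---the (B2) clause once the radial retraction drops out, and the inequality $\overline{\lambda}_s|x| + K_2\eta_0 < \sigma_0$ for $|x| < \sigma_0$---so the difference is organizational rather than substantive; your packaging is slightly cleaner in that it avoids invoking the crude Lipschitz constant in the extension step.
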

\begin{proof}
	First note that $ \lip x_{m}(\cdot)|_{X_{m}(\sigma_0)} \leq \frac{\overline{\lambda}_s}{1 - 2 \gamma} $. So we can choose an $ \varepsilon_0 > 0 $ small such that $ \frac{\overline{\lambda}_s}{1 - 2 \gamma} \varepsilon_0 + K_2 \eta_0 \leq \sigma_0 $, yielding $ x_{m}(X_{m}(\varepsilon_0)) \subset X_{u(m)}(\sigma_0) $ and $ F_m( x, f_{u(m)}( x_{m}(x) ) ) = x_{m}(x) $. By the (B) condition, we know $ \lip x_{m}(\cdot)|_{X_{m}(\varepsilon_0)} \leq \lambda_s(m) $. Set
	\[
	\sigma_1 = \sup\{ \sigma \leq \sigma_0: \lip x_{m}(\cdot)|_{X_{m}(\sigma)} \leq \lambda_s(m) \}.
	\]
	If $ \sigma_1 < \sigma_0 $, then $ \lip x_{m}(\cdot)|_{X_{m}(\sigma)} \leq \lambda_s(m) $ for any $ \sigma < \sigma_1 $, which again shows 
	\[
	\lip x_{m}(\cdot)|_{\overline{X_{m}(\sigma_1)}} \leq \lambda_s(m).
	\]
	Since $ \sigma_1 < \sigma_0 $, we have $ \overline{\lambda}_s \sigma_1 + K_2\eta_0 < \sigma_0 $. Thus, we can choose an $ \varepsilon > 0 $ small such that
	\[
	\overline{\lambda}_s \sigma_1 + K_2\eta_0 +  \frac{\overline{\lambda}_s}{1 - 2 \gamma} \varepsilon  \leq \sigma_0,
	\]
	which implies that $ x_m(X_{m}(\sigma_1 + \varepsilon)) \subset X_{u(m)} (\sigma_0) $ and $ \lip x_{m}(\cdot)|_{X_{m}(\sigma_1 + \varepsilon)} \leq \lambda_s(m) $, contradicting the definition of $ \sigma_1 $. So $ \sigma_1 = \sigma_0 $, showing the result.
\end{proof}

In particular, by the above sublemma, $ x_{m}(X_{m}(\sigma_0)) \subset X_{u(m)}(\sigma_0) $. Now we can define
\[
\widetilde{f}_{m} (x) = G_m(x, f_{u(m)}(x_m(x))), ~x \in X_{m}(\sigma_0),
\]
and $ \widetilde{f}(m,x) = (m, \widetilde{f}_{m} (x)) $. Next, following the same steps as in the proof of \autoref{thmA}, one can see that $ \widetilde{f} \in E^{0}_{\infty} $ and $ \varGamma: f \mapsto \widetilde{f} $ is contractive. The proof is complete.
\qed

\section{More characterizations}\label{properties}

Let $ H \sim (F, G) : X \times Y \to X \times Y $ and $ i: M \to X \times Y $ be as in \autoref{thmA}. Let $ f $ be the bundle map obtained in \autoref{thmA}.

Below, we use the following notations.
\begin{enumerate}[$ \bullet $]
	\item $ W^s(i) \triangleq W^s \triangleq \graph f $, $ W^s(i(m)) \triangleq W^s(m) \triangleq \graph f_m $.

	\item $ h^s_m \triangleq x_m(\cdot) : X_m \to X_{u(m)} $. This gives a bundle map $ h^s: X \to X $ over $ u $ such that $ h(m,x) = (m, h^s_m(x)) $. Also, note that $ \lip h^s_m \leq \lambda_s(m) $.

	\item $ \{ z_n = (x_n, y_n): n = m, u(m), u^2(m), \cdots \} $ is called a \textbf{forward orbit} of $H$ from $ m $ if $ z_{u^k(m)} \in H_{u^{k-1}(m)}(z_{u^{k-1}(m)}) $, $ k = 1,2,3,\ldots $. Backward orbit can be defined similarly.

	\item Let $ P^X (m, x, y) = (m, P^X_m(x, y)) = (m, x) $ for $ (x, y) \in X_m \times Y_m $; and similarly for $ P^Y,~ P^Y_m $.

	\item $ a_n \lesssim b_n $, $ n \to \infty $ ($ a_n \geq 0, b_n > 0 $) if $ \sup_{n\geq 0}b^{-1}_n a_n <\infty $. \label{notation1}
\end{enumerate}

The bundle map $ f $ has the properties listed below.

\begin{enumerate}[(1)]
	\item $ H_m: W^s(m) \to W^s(u(m)) $ defines a Lipschitz map $ H^s_m $. This gives a bundle map $ H^s: X \times Y \to X \times Y $ over $ u $ such that $ H^s (m, z) = (m, H^s_m(z)) $. We have $ P^X \circ H^s = h^s \circ P^X $.

	\item Let $ z'_m = (x'_m, y'_m) \in W^s(m) $. Then there is a unique forward orbit $ \{ z_n = (x_n, y_n): n = m, u(m), u^2(m), \ldots \} $ of $ H $ from $ m $ such that $ z_m = z'_m $. Furthermore,
	\begin{equation}\label{c01}
	|i_X(u^k(m)) - x'_{u^k(m)} | \leq K_0 \sum_{i=1}^{k} \eta(u^{i-1}(m)) \lambda^{(k-i)}_s(u^{i}(m)) + \lambda^{(k)}_s(m) |i_X(m) - x'_m|.
	\end{equation}
	Therefore,
	\[
	| i(u^k(m)) - z_{u^k(m)} | \leq \widetilde{K} (\sum_{i=1}^{k} \eta(u^{i-1}(m)) \lambda^{(k-i)}_s(u^{i}(m)) + \lambda^{(k)}_s(m) |i_X(m) - x'_m|)
	\]
	for some constant $\widetilde{K} > 0$.

	\begin{proof}
		Compute
		\begin{align*}
		&|i_X(u^k(m)) - x'_{u^k(m)} | \\
		= & |i_X(u^k(m)) - x_{u^{k-1}(m)} ( x'_{u^{k-1}(m)} ) | \\
		\leq & | i_X(u^k(m)) - x_{u^{k-1}(m)} ( i_X(u^{k-1}(m)) ) | \\
		& + | x_{u^{k-1}(m)} ( i_X(u^{k-1}(m)) ) - x_{u^{k-1}(m)} ( x'_{u^{k-1}(m)} ) | \\
		\leq & K_0 \eta(u^{k-1}(m)) + \lambda_s(u^{k-1}(m)) | i_X(u^{k-1}(m)) - x'_{u^{k-1}(m)} |,
		\end{align*}
		showing \eqref{c01}.
	\end{proof}

	\item If $ \{ z_n = (x_n, y_n): n = m, u(m), u^2(m), \cdots \} $ is a forward orbit of $H$ from $ m $ such that
	\begin{equation}\label{c11}
	| z_{u^{k}(m)} - i(u^{k}(m)) | \lesssim \varepsilon'^{(k)}(m), ~k \to \infty,
	\end{equation}
	where $ \varepsilon'(\cdot) : M \to \mathbb{R}_+ $ over $ u $, $ \sup_m \varepsilon'(m)\lambda_u(m)\vartheta(m) < 1 $ ($ \vartheta(m) = 1 $ if (ii)$ ' $ holds, and $ \vartheta(m) = (1- \alpha(m)\beta'(u(m)))^{-1} $ otherwise), then $ z_n \in W^s(n) $, $ n = m, u(m), u^2(m), \ldots $.
	\begin{proof}
		First consider the case where condition (ii)$ ' $ in \autoref{thmA} holds.
		Let $ \{ z''_n = (x''_n, y''_n) \in W^s(n) , n = m, u(m), u^2(m), \cdots \} $ be a forward orbit of $ H $ from $ m $ such that $ z''_m = (x_m, f_m(x_m)) \in W^s(m) $. Compute
		\begin{align*}
		&~ |y_m - f_m(x_m)|\\
		\leq &~ \lambda^{(k)}_u(m) |y_{u^k(m)} - y''_{u^k(m)}| \quad \text{(by (A) condition)} \\
		\leq &~ \lambda^{(k)}_u(m) \left\lbrace  | y_{u^k(m)} - i_Y(u^k(m)) | + | i_Y(u^k(m)) - f_{u^k(m)}(i_X(u^k(m))) | \right. \\
		&\quad\quad \quad+ \left. | f_{u^k(m)}(i_X(u^k(m))) - f_{u^k(m)}(x''_{u^k(m)}) |  \right\rbrace  \\
		\leq &~ \lambda^{(k)}_u(m) \left\lbrace  C\varepsilon'^{(k)}(m) + K \eta(u^k(m)) + \hat{\beta}( K_0  \sum_{i=1}^{k} \eta(u^{i-1}(m)) \lambda^{(k-i)}_s(u^{i}(m)) \right. \\
		&\quad\quad\quad+ \left. \lambda^{(k)}_s(m) |i_X(m) - x'_m|) \right\rbrace \\
		\leq &~ \widetilde{C} \left\lbrace  \hat{\lambda}^k + \hat{\lambda}_1^k \eta(m) + \sum_{i=1}^{k} \hat{\lambda}_1^i \lambda^{k-i} + \lambda^k |i_X(m) - x'_m| \right\rbrace ,
		\end{align*}
		where in the third inequality we use \eqref{c11}, \autoref{thmA} and  \eqref{c01}, and where
		\[
		\lambda = \sup_m \lambda_s(m) \lambda_u(m) < 1,~\hat{\lambda} = \sup_m \varepsilon'(m)\lambda_u(m) < 1, ~\hat{\lambda}_1 = \sup_{m} \varepsilon(m)\lambda_u(m) < 1,
		\]
		and $ \widetilde{C} > 0 $ is a constant. Letting $ k \to \infty $, we have $ y_m = f_m(x_m) $.

		Next consider the case where condition (ii) in \autoref{thmA} holds. Then
		\begin{align*}
		|y_m - f_m(x_m)| & \leq \frac{\lambda_{u}(m)}{1 - \alpha(m)\beta'(u(m))} |y_{u(m)} - f_{u(m)}(x_{u(m)})| \\
		& \leq \widetilde{\lambda}^{(k)}_{u} (m) |y_{u^{k}(m)} - f_{u^k(m)}(x_{u^{k}(m)})| \\
		& \leq \widetilde{\lambda}^{(k)}_{u} (m) \left\{ |y_{u^{k}(m)} - i_Y(u^k(m))| +  |i_Y(u^k(m)) - f_{u^k(m)}(i_{X}(u^k(m)))| \right. \\
		& \quad \quad + \left. |f_{u^k(m)}(i_{X}(u^k(m))) - f_{u^k(m)}(x_{u^{k}(m)})| \right\}\\
		& \leq \widetilde{C} \left\{\widetilde{\lambda}^{(k)}_{u} (m) \varepsilon'^{(k)}(m) +  \widetilde{\lambda}^{(k)}_{u} (m) \eta(u^{k}(m)) \right\},
		\end{align*}
		where $ \widetilde{\lambda}_u(m) = \lambda_{u}(m)\vartheta(m) $ (over $ u $). Letting $ k \to \infty $, we see that $ y_m = f_m(x_m) $, completing the proof.
	\end{proof}

	\item In particular,
	\begin{align*}
	\{ j: M \to X & \times Y : j ~\text{is an invariant section of $ H $ such that}~ \\
	&|j(m) - i(m) | \leq K' \eta'(m) ~\text{for some constant} ~ K' > 0,  ~\forall m \in M  \} \subset W^s,
	\end{align*}
	where $ \eta'(u(m)) \leq \varepsilon'(m) \eta'(m), ~\forall m \in M $, and $ \sup_m \varepsilon'(m)\lambda_u(m)\vartheta(m) < 1 $  ($ \vartheta(\cdot) $ is given in (3)).

	\item
	We have the following characterization of $ W^s(m) $.
	\begin{align*}
	W^s(m) = \{ z_m = (x_m, & y_m) \in X_m \times Y_m: ~\text{there is a forward orbit}~ \{ z_k \}_{k \geq 0} ~ \text{of} ~H \\
	& \text{such that}~ z_0 = z_m,~ \sup_{k \geq 0} (\hat{\varepsilon}^{(k)}(m))^{-1} | z_k - i(u^k(m)) | < \infty \},
	\end{align*}
	where $ \hat{\varepsilon}(m) \geq \varepsilon(m) $, $ \lambda_s(m) + \varsigma < \hat{\varepsilon}(m) < (\lambda_u(m)\vartheta(m))^{-1} - \varsigma $ (for sufficiently small $ \varsigma > 0 $), where $ \vartheta(\cdot) $ is given in (3).
	This follows from (2) (3).
\end{enumerate}

	Similar results hold for the bundle map $ f $ obtained in \autoref{thmB}. The `unstable results' (in the direction $ Y \to X $) can be obtained through the dual bundle correspondence of $ H $ (see \autoref{dual}).

\section{Corollaries} \label{corollaries0}

\subsection{Hyperbolic dichotomy} \label{dichotomy}

\begin{defi}\label{pseudoSection}
	Let $ H \sim (F, G): X \times Y \to X \times Y $ be a bundle correspondence over $ u $ with generating bundle map $ (F, G) $. Let $ i = (i_X, i_Y): M \to X \times Y $ satisfy
	\[
	| i_X(u(m)) - F_m( i_X(m), i_Y(u(m)) ) | \leq \eta(u(m)), ~ | i_Y(m) - G_m( i_X(m), i_Y(u(m)) ) | \leq \eta(m),
	\]
	where $ \eta: M \rightarrow \mathbb{R}_+ $.
	Take $ \varepsilon(\cdot): M \to \mathbb{R}_+ $. Then $ i $ is called an $\varepsilon$-\textbf{pseudo-stable section} of $ H $ if $ \eta(u(m)) \leq \varepsilon(m) \eta(m),~\forall m \in M $; an $\varepsilon$-\textbf{pseudo-unstable section} of $ H $ if $ \eta(m) \leq \varepsilon(m) \eta(u(m)),~\forall m \in M $; and an $\varepsilon$-\textbf{pseudo-invariant section} of $ H $ if it is both an $\varepsilon$-{pseudo-stable section} and an $\varepsilon$-{pseudo-unstable section} of $ H $. In particular, a $0$-pseudo-invariant section is an invariant section of $ H $, i.e., $ i(u(m)) \in H_m(i(m)) $ for all $ m \in M $.
\end{defi}
Note that if $ u $ is invertible, then $ i $ is an $ \varepsilon $-pseudo-unstable section of $ H $ if and if only $ i $ is an $ \varepsilon \circ u^{-1} $-pseudo-stable section of the dual bundle correspondence $ \widetilde{H} $ of $ H $ (see \autoref{dual}). The following theorem is a restatement of the previous results.

\begin{thm}\label{thm:hyperTrivial}
	Let $(X, M, \pi_1), (Y, M, \pi_2)$ be bundles with metric fibers and $u: M \rightarrow M$ an \emph{invertible map}.
	Let $H: X \times Y \rightarrow X \times Y$ be a bundle correspondence over $u$ with generating bundle map $(F,G)$.
	Assume $ i = (i_X, i_Y), j=(j_X, j_Y): M \to X \times Y $ are an $ \varepsilon_1 $-pseudo-stable section and an $ \varepsilon_2 $-pseudo-unstable section of $ H $, respectively, where $ \varepsilon_1(\cdot), \varepsilon_2 (\cdot): M \to \mathbb{R}_+ $ (over $ u $ and $ u^{-1} $ respectively).
	Suppose $H$ satisfies the (A)$(\alpha;\alpha', \lambda_u)$ (B)$(\beta;\beta', \lambda_s)$ condition, where $\alpha, \beta, \alpha', \beta', \lambda_u, \lambda_s$ are \emph{bounded} functions of $M \rightarrow \mathbb{R}_+$. In addition,
	\begin{enumerate}[(a)]
		\item (angle condition) $\sup_m \alpha'(m) \beta'(u(m)) < 1$, $ \alpha'(m) \leq \alpha(u(m))$, $ \beta'(u(m)) \leq \beta(m)$ for all $ m \in M  $,

		\item  (spectral condition) $ \sup_m \lambda_u(m) \varepsilon_1(m) < 1 $, $ \sup_m \lambda_s(m) \varepsilon_2(m) < 1 $, and 
		
		$ \sup_m \lambda_s(m) \lambda_u(m) < 1 $.
	\end{enumerate}

	Then there are $ W^{s}_m(i) $, $ W^{u}_m(i) $, $ m \in M $ such that the following statements hold:
	\begin{enumerate}[(1)]
		\item $ W^{s}_m(i) $, $ W^{u}_m(j) $ are represented as Lipschitz graphs of $ X_m \to Y_m $ and $ Y_m \to X_m $, respectively, such that $ W^s_m(i) \subset H^{-1}_m W^s_{u(m)}(i) $ and $ W^u_m(j) \subset H_{u^{-1}(m)} W^u_{u^{-1}(m)}(j) $.

		\item $ W^s_m(i) \cap W^u_m(j) = \{ k(m) \} $. Therefore, $ k $ is an invariant section of $ H $, i.e., $ k(u(m)) \in H_m(k(m)) $.

		\item (shadowing section) We have
		\[
		|k(u^k(m)) - i(u^k(m)) | \lesssim {\hat{\varepsilon}_1}^{(k)}(m),~|k(u^{-k}(m)) - j(u^{-k}(m))| \lesssim {\hat{\varepsilon}_2}^{(k)}(m),~ k \to \infty,
		\]
		where $ \hat{\varepsilon}_i(\cdot) $, $ i = 1,2 $, are functions $ M \to \mathbb{R}_+ $ over $ u $ and $ u^{-1} $, respectively, such that $ \varepsilon_1 (m) \leq \hat{\varepsilon}_1(m) $, $ \varepsilon_2(m) \leq \hat{\varepsilon}_2(m) $, $ \lambda_s(m) + \varsigma < \varepsilon_1(m) < \lambda^{-1}_u(m) - \varsigma $, $ \lambda_u(m) + \varsigma < \varepsilon_2(m) < \lambda^{-1}_s(m) - \varsigma $, $ m \in M $, and $ \varsigma > 0 $ is sufficiently small.

		\item An invariant section $ s $ of $ H $ belongs to $ W^s(i) $ if and only if for every $ m \in M $, $ |s(u^k(m)) - i(u^k(m)) | \lesssim {\hat{\varepsilon}_1}^{(k)}(m) $, $ k \to \infty $. Similarly, $ s $ belongs to $ W^u(j) $ if and only if for every $ m \in M $, $ |s(u^{-k}(m)) - i(u^{-k}(m)) | \lesssim {\hat{\varepsilon}_2}^{(k)}(m) $, $ k \to \infty $. Here $ \hat{\varepsilon}_i(\cdot) $, $ i = 1,2 $, are the functions in \textnormal{(3)} and $ W^s(i) = \bigcup_{m \in M} W^{s}_{m} (i) $, $ W^u(i) = \bigcup_{m \in M} W^{u}_{m} (j) $.

	\end{enumerate}
\end{thm}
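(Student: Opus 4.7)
The plan is to construct $W^s(i)$ and $W^u(j)$ independently as Lipschitz graphs via \autoref{thmA} applied to $H$ and its dual, and then to intersect them fiberwise. First I apply \autoref{thmA} under condition $\mathrm{(ii)'}$ to $H$ with the $\varepsilon_1$-pseudo-stable section $i$; the angle condition $\sup_m\alpha'(m)\beta'(u(m))<1$ and the spectral hypotheses $\sup_m\lambda_s\lambda_u<1$, $\sup_m\lambda_u\varepsilon_1<1$ are in force, yielding a bundle map $f^s:X\to Y$ over $\id$ with $\lip f^s_m\le\beta'(m)$ and $\graph f^s_m\subset H_m^{-1}\graph f^s_{u(m)}$. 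Set $W^s(i)=\graph f^s$.

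Next I pass to the dual bundle correspondence $\widetilde H$ over $u^{-1}$ (\autoref{dual}). By \autoref{lem:a1}(1), $\widetilde H$ satisfies (A)$(\beta;\beta',\lambda_s)$ (B)$(\alpha;\alpha',\lambda_u)$ condition, and its angle/spectral hypotheses for \autoref{thmA} are the original ones re-indexed through $u^{-1}$, with the third spectral inequality $\sup_m\lambda_s\varepsilon_2<1$ supplying what is needed. The section $j$, being $\varepsilon_2$-pseudo-unstable for $H$, becomes $\varepsilon_2\circ u^{-1}$-pseudo-stable for $\widetilde H$. A second application of \autoref{thmA} produces $f^u:Y\to X$ over $\id$ with $\lip f^u_m\le\alpha'(m)$, and the invariance $\graph f^u_m\subset\widetilde H_m^{-1}\graph f^u_{u^{-1}(m)}$ translates back to $W^u(j):=\graph f^u$ satisfying $\graph f^u_{u(m)}\subset H_m(\graph f^u_m)$.

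For the intersection, a point in $W^s_m\cap W^u_m$ corresponds to a fixed point of $f^u_m\circ f^s_m:X_m\to X_m$, whose Lipschitz constant is bounded by $\alpha'(m)\beta'(m)<1$ (secured by the boundedness of $\alpha',\beta'$ together with the angle hypothesis and the shift inequalities $\alpha'(m)\le\alpha(u(m))$, $\beta'(u(m))\le\beta(m)$). The Banach fixed point theorem produces a unique $x(m)\in X_m$, giving $k(m)=(x(m),f^s_m(x(m)))$ and proving conclusion (2). Invariance $k(u(m))\in H_m(k(m))$ follows because the $H$-image of $k(m)$ lies on $W^s_{u(m)}$ (from $W^s$-invariance) and also on $W^u_{u(m)}$ (from backward $W^u$-invariance combined with uniqueness of the preimage in $W^u_m$), so it must coincide with the unique intersection point $k(u(m))$.

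Finally, the shadowing estimates in (3) are a direct application of property (2) of \autoref{properties} and its dual, using the forward-orbit bound \eqref{c01} for the $W^s$-side and its time-reversed analogue for the $W^u$-side; the characterization (4) is a citation of property (3) of \autoref{properties} and its dual. The main obstacle I anticipate is the contraction check in the intersection step: the angle condition is phrased in the mixed form $\alpha'(m)\beta'(u(m))<1$, not in-fiber, so one must invoke the monotonicity and uniform boundedness of $\alpha',\beta'$ to extract the in-fiber strict inequality $\alpha'(m)\beta'(m)<1$; any residual slack can be absorbed by slightly enlarging the Lipschitz bounds delivered by \autoref{thmA}, which the spectral gap leaves room for.
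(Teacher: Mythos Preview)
Your plan matches the paper's own treatment, which simply declares the theorem a restatement of \autoref{thmA} (applied to $H$ and to $\widetilde H$) together with the characterizations in \autoref{properties}; so the overall architecture is correct.

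The obstacle you flag in the intersection step is not real, and your proposed workaround (``monotonicity of $\alpha',\beta'$'') is not the right fix. The issue is a bookkeeping slip in the Lipschitz bound for $f^u_m$. When you pass to the dual $\widetilde H$ over $u^{-1}$ and reindex, the fiber correspondence $\widetilde H_{m}$ is the dual of $H_{u^{-1}(m)}$, so by \autoref{lem:a1}(1) its (B)-parameters are $(\alpha(u^{-1}(m));\alpha'(u^{-1}(m)),\lambda_u(u^{-1}(m)))$. Hence \autoref{thmA} delivers $\lip f^u_m\le \alpha'(u^{-1}(m))$, not $\alpha'(m)$. The composition $f^u_m\circ f^s_m:X_m\to X_m$ therefore has Lipschitz constant at most $\alpha'(u^{-1}(m))\,\beta'(m)$, and substituting $m\mapsto u^{-1}(m)$ in the angle hypothesis $\sup_m\alpha'(m)\beta'(u(m))<1$ gives exactly $\sup_m\alpha'(u^{-1}(m))\beta'(m)<1$. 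So the Banach fixed point theorem applies directly, with no residual slack to absorb. With this correction the rest of your argument (invariance of $k$, the shadowing estimates via \autoref{properties}(2), and the characterization via \autoref{properties}(3) and their duals) goes through as written.
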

In applications, we usually take $ i,j $ to be invariant sections of $ H $ or $ 1 $-pseudo-invariant sections.

\subsection{Hyperbolic trichotomy}

\begin{enumerate}
	\item [(HT)] Let $ u: M \to M $ be an invertible map. Let $ (X^\kappa, M, \pi) $, $ \kappa = s,c,u $, be three (set) bundles with metric fibers. Let  $ X = X^s \times X^c \times X^u $. Suppose $ H : X \to X $ is a bundle correspondence over $ u $ having generating bundle maps $ (F^{cs}, G^{cs}) $ and $ (F^{cu}, G^{cu}) $, where
	\[
	F^{cs}_m : X^{cs}_m \times X^u_{u(m)} \to X^{cs}_{u(m)}, ~G^{cs}_m : X^{cs}_m \times X^u_{u(m)} \to X^u_{m},
	\]
	\[
	F^{cu}_m : X^{s}_m \times X^{cu}_{u(m)} \to X^{s}_{u(m)}, ~G^{cu}_m : X^{s}_m \times X^{cu}_{u(m)} \to X^{cu}_{m}.
	\]

	Assume $ H \sim (F^{cs}, G^{cs}) $ satisfies the (A)$(\alpha_u, \lambda_u)$ (B)$(\beta_{cs}, \lambda_{cs})$ condition, and $ H \sim (F^{cu}, G^{cu}) $ satisfies (A)$(\alpha_{cu}, \lambda_{cu})$ (B)$(\beta_s, \lambda_s)$ where $ \alpha_{\kappa_1}, \lambda_{\kappa_1} $, $ \beta_{\kappa_2}, \lambda_{\kappa_2} $, $ \kappa_1 = u,cu $, $ \kappa_2 = s, cs $, are \emph{bounded} functions $M \rightarrow \mathbb{R}_+$ with the following properties:
\begin{enumerate}
	\item [(a)] (angle condition) $ \sup_m \alpha_u(m) \beta_{cs}(u(m)) < 1 $, $\sup_m \alpha_{cu}(m) \beta_{s}(u(m)) < 1 $; $ \alpha_{\kappa_1}(m) \leq \alpha_{\kappa_1}(u(m))$, $ \beta_{\kappa_2}(u(m)) \leq \beta_{\kappa_2}(m) $, $ m \in M $, $ \kappa_1 = u,cu $, $ \kappa_2 = s,cs $;
	\item [(a$ ' $)] $ \alpha_{cu}(m) \leq 1 $, $ \beta_{cs}(m) \leq 1 $, $ \beta_{cs}(m) \leq \beta_s(m) $, $ \alpha_{cu}(m) \leq \alpha_u(m) $, $ m \in M $, $ \sup_m\alpha_u(m) \beta_s(m) < 1 $;
	\item [(b)] (spectral condition) $ \sup_m \lambda_u(m) \lambda_{cs}(m) < 1 $, $ \sup_m\lambda_u(m) < 1 $; and 
	
	$ \sup_m \lambda_{cu}(m) \lambda_{s}(m) < 1 $, $ \sup_m\lambda_s(m) < 1 $.
\end{enumerate}
\end{enumerate}

Let $ X^{\kappa_1\kappa_2} = X^{\kappa_1} \times X^{\kappa_2} $ with fibers equipped with $ d_\infty $ metrics (see \eqref{metric}) where $ \kappa_1, \kappa_2 \in \{ s,c,u \} $, $ \kappa_1 \neq \kappa_2 $. Let $ P^{\kappa} : X \to X^\kappa $ be the natural bundle projection, $ \kappa \in \{  s, c, u, cs, cu \} $.

\begin{thm}\label{thm:pnormalTrivial}
	Let \textnormal{(HT)} hold. Assume there is a section $ i = (i_s, i_c, i_u) : M \to X^s \times X^c \times X^u $ which is a $ 1 $-pseudo-invariant section of $ H $ (see \autoref{pseudoSection}). Let $ \varsigma > 0 $ be sufficiently small. Choose any functions $ \hat{\varepsilon}_i(\cdot) $, $ i = 1,2 $, over $ u $ and $ u^{-1} $, respectively, such that $ \lambda_{cs}(m) + \varsigma < \hat{\varepsilon}_1(m) < \lambda^{-1}_u(m) - \varsigma $, and $ \lambda_{cu}(m) + \varsigma < \hat{\varepsilon}_2(m) < \lambda^{-1}_s(m) - \varsigma $, $ \hat{\varepsilon}_i(m) \geq 1 $, $ i =1,2 $, $ m \in M $, and any functions $ \tilde{\varepsilon}_i(\cdot) $, $ i = 1,2 $, over $ u $ and $ u^{-1} $, respectively, such that $ \lambda_s(m) + \varsigma < \tilde{\varepsilon}_1(m) < \lambda^{-1}_{cu}(m) - \varsigma $, $ \lambda_u(m) + \varsigma < \tilde{\varepsilon}_2(m) < \lambda^{-1}_{cs}(m) - \varsigma $, $ m \in M $. Then the following assertions hold:
	\begin{enumerate}[(1)]
		\item \label{x1} For any $ m \in M $, there is a Lipschitz graph $ W^{cs}_m $ (resp. $ W^{cu}_m $) of $ X^{cs}_m \to X^s_m $ (resp. $ X^{cu}_m \to X^u_m $) with the Lipschitz constant no more than $ \beta_{cs}(m) $ (resp. $ \alpha_{cu}(m) $) such that $ W^{cs}_m \subset H^{-1}_mW^{cs}_{u(m)} $ (resp. $ W^{cu}_m \subset H_{u^{-1}(m)}W^{cu}_{u^{-1}(m)} $). Moreover,
		\begin{align*}
		W^{cs}_m = \{ z_m \in X_m:~ & \text{there is a forward orbit $ \{ z_k : k=0, 1, 2, \ldots \} $ of $ H $ such that} \\
		& z_0 = z_m,~ \sup_{k\geq 0} (\hat{\varepsilon}^{(k)}_1(m))^{-1} | z_k - i(u^k(m)) | < \infty \},\\
		W^{cu}_m = \{ z_m \in X_m:~ & \text{there is a backward orbit $ \{ z_{-k} : k=0, 1, 2, \ldots \} $ of $ H $ such that} \\
		& z_0 = z_m,~ \sup_{k\geq 0} (\hat{\varepsilon}^{(k)}_2(m))^{-1} | z_{-k} - i(u^{-k}(m)) | < \infty \},
		\end{align*}

		\item \label{x2} $ W^c_m = W^{cs}_m \cap W^{cu}_m $ is a Lipschitz graph of $ X^{c}_m \to X^{s}_m \times X^{u}_m $ such that $  W^c_m \subset H^{-1}_m W^{c}_{u(m)} $, $ W^c_m \subset H_{u^{-1}(m)} W^{c}_{u^{-1}(m)} $, $ m \in M $. In addition,
		\begin{align*}
		W^{cs}_m = & \{ z \in X_m: \exists \{ z_k \}_{k \geq 0}, z_{k+1} \in H_{u^{k}(m)} (z_k), z_0 = z, d(z_k, W^c_{u^k(m)}) \to 0, k \to \infty \}\\
		= & \{ z \in X_m: \exists \{ z_k \}_{k \geq 0}, z_{k+1} \in H_{u^{k}(m)} (z_k), z_0 = z, \sup_{k\geq 0}d(z_k, W^c_{u^k(m)}) < \infty \}, \\
		W^{cu}_m = & \{ z \in X_m: \exists \{ z_{-k} \}_{k \geq 0}, z_{-k+1} \in H_{u^{-k}(m)} (z_{-k}), z_0 = z, \\
		& \qquad \qquad \qquad \qquad \qquad \qquad d(z_{-k}, W^c_{u^{-k}(m)}) \to 0, k \to \infty \}\\
		= & \{ z \in X_m: \exists \{ z_{-k} \}_{k \geq 0}, z_{-k+1} \in H_{u^{-k}(m)} (z_{-k}), z_0 = z,\\
		& \qquad \qquad \qquad \qquad \qquad \qquad \qquad \sup_{k\geq 0}d(z_{-k}, W^c_{u^{-k}(m)}) < \infty \}.
		\end{align*}

		\item \label{x3} (exponential tracking and shadowing orbits) If $ \{ z_{k} \}_{k\geq 0} $ is a forward orbit of $ H $ from $ m $, then there is a forward orbit $ \{ \overline{z}_{k} \}_{k \geq 0} \subset W^{cu} $ ($ \overline{z}_0 \in W^{cu}_m $) such that $ |z_{k} - \overline{z}_{k}| \lesssim \tilde{\varepsilon}_{1}^{(k)}(m) $, $ k \to \infty $; similarly, if $ \{ z_{-k} \}_{k\geq 0} $ is a backward orbit of $ H $ from $ m $, then there is a backward orbit $ \{ \overline{z}_{-k} \}_{k \geq 0} \subset W^{cs} $ ($ \overline{z}_0 \in W^{cs}_m $) such that $ |z_{-k} - \overline{z}_{-k}| \lesssim \tilde{\varepsilon}_{2}^{(k)}(m) $, $ k \to \infty $.

		\item \label{x4}
		Let $ H^{cs} = H|_{W^{cs}} $, $ H^{-cu} = H^{-1}|_{W^{cu}} $, $ H^{c} = H|_{W^{c}} $ be the bundle maps over $ u, u^{-1}, u $ induced by $ H: W^{cs} \to W^{cs} $, $ H^{-1}: W^{cu} \to W^{cu} $, and $ H: W^{c} \to W^{c} $, respectively.

		There are bundle maps $ h^{cs}: X^{cs} \to X^{cs} $, $ h^{-cu}: X^{cu} \to X^{cu} $, $ h^{c}: X^{c} \to X^{c} $ over $ u $, $ u^{-1}, u $, respectively, with $ \lip h^{cs}_m, \lip h^{s}_m \leq \lambda_{cs}(m) $ and $ \lip h^{-cu}_m, \lip (h^{c}_m)^{-1} \leq \lambda_{cu}(m) $ such that the following diagrams are commutative:
		\[
		\CD
		W^{cs} @> H^{cs}>> W^{cs} \\
		@V P^{cs} VV @V P^{cs} VV  \\
		X^{cs} @> h^{cs} >> X^{cs} \\
		@V \pi VV @V \pi VV \\
		M @> u >> M
		\endCD
		\quad\quad
		\CD
		W^{cu} @> H^{-cu}>> W^{cu} \\
		@V P^{cu} VV @V P^{cu} VV  \\
		X^{cu} @> h^{-cu} >> X^{cu} \\
		@V \pi VV @V \pi VV \\
		M @> u^{-1} >> M
		\endCD
		\quad\quad
		\CD
		W^{c} @> H^{c}>> W^{c} \\
		@V P^{c} VV @V P^{c} VV  \\
		X^{c} @> h^{c} >> X^{c} \\
		@V \pi VV @V \pi VV \\
		M @> u >> M
		\endCD
		\]

		\item \label{x5} There are Lipschitz graphs $ W^{ss}_z $ (resp. $ W^{uu}_z $), $ z \in W^{cs}_m $ (resp. $ z \in W^{cu}_m $) with Lipschitz constants no more than $ \beta_s(m) $ (resp. $ \alpha_u(m) $) such that $ z \in W^{ss}_z $ (resp. $ z \in W^{uu}_z $) and $ H^{cs}_mW^{ss}_z \subset W^{ss}_{H^{cs}_m(z)} $ (resp. $ H^{-cu}_mW^{uu}_z \subset W^{uu}_{H^{-cu}_m(z)} $), $ m \in M $.

		If $ z \in W^{cs}_m $, then $ z' \in W^{ss}_z $ if and only if $ |(H^{cs})^{(k)}_m(z') - (H^{cs})^{(k)}_m(z)| \lesssim \tilde{\varepsilon}_1^{(k)}(m) $, $ k \to \infty  $; similarly if $ z \in W^{cu}_m $, then $ z' \in W^{uu}_z $ if and only if $ |(H^{-cu})^{(k)}_m(z') - (H^{-cu})^{(k)}_m(z)| \lesssim \tilde{\varepsilon}_2^{(k)}(m) $, $ k \to \infty  $.

		Now $ W^{cs}_m $, $ W^{cu}_m $ can be regarded as bundles over $ W^{c}_m $ with fibers $ W^{ss}_z $, $ W^{uu}_z $, $ z \in W^c_m $, respectively. So are $ W^{cs} $, $ W^{cu} $, $ W^{cs} \times W^{cu} \cong X^s \times X^c \times X^u $ over $ W^{c} $.
	\end{enumerate}
\end{thm}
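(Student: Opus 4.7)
The plan is to derive items \eqref{x1}--\eqref{x5} by successive applications of the first existence theorem \autoref{thmA}, its dual counterpart, and the dichotomy theorem \autoref{thm:hyperTrivial}, using the dynamical characterizations collected in \autoref{properties}.

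First, for \eqref{x1}, I would apply \autoref{thmA} to $H \sim (F^{cs}, G^{cs})$ regarded as a bundle correspondence $X^{cs} \times X^u \to X^{cs} \times X^u$ (with fibers given the $d_\infty$ product metric), using the $1$-pseudo-stable section $(i_s,i_c,i_u)$ as the input section; the hypothesis (HT)(a)(b) supplies exactly (A)(B) condition (ii$'$) with $\varepsilon_1 \equiv 1$, giving the Lipschitz graph $W^{cs}_m$. The dynamical characterization is then read off from \autoref{properties}(5) with $\hat{\varepsilon}$ replaced by our $\hat{\varepsilon}_1$. For $W^{cu}_m$, apply the same theorem to the dual correspondence $\widetilde{H} \sim (F^{cu},G^{cu})$ over $u^{-1}$ (see \autoref{dual}); this produces $W^{cu}_m$ as a Lipschitz graph of $X^{cu}_m \to X^u_m$ with the backward-orbit characterization. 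Item \eqref{x4}, at least the $cs$- and $cu$-diagrams, then follows immediately from \autoref{properties}(1) applied to each of these two graphs.

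For \eqref{x2}, the natural move is to restrict $H$ to the already-constructed $W^{cs}$ and conjugate via the Lipschitz bijection $P^{cs}: W^{cs}_m \to X^{cs}_m$ to a bundle map $h^{cs}: X^{cs} \to X^{cs}$ over $u$ whose generating map I would extract from $(F^{cs}, G^{cs})$ and the graph description of $W^{cs}$. On this trivial bundle $X^{cs} = X^s \times X^c$ the map $h^{cs}$ satisfies a hyperbolic dichotomy with stable rate $\lambda_s$ on $X^s$ and ``center-unstable'' rate dominated by $\lambda_{cu}^{-1}$ on $X^c$; the angle hypotheses (a$'$) ensure $\beta_{cs}\le\beta_s$ and $\alpha_{cu}\le\alpha_u$, which are precisely what is needed for the (A)(B) thresholds of the restricted system. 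Applying \autoref{thm:hyperTrivial} to $h^{cs}$ with the $1$-pseudo-invariant section $(i_s,i_c)$ produces its ``unstable'' manifold, which lifted back via $(P^{cs})^{-1}$ is a Lipschitz graph $X^c_m \to X^s_m \times X^u_m$. The same theorem applied inside $W^{cu}$ yields the ``stable'' counterpart; both coincide with $W^{cs}_m \cap W^{cu}_m$ by the dynamical characterizations in \eqref{x1}, giving $W^c_m$, and the center diagram in \eqref{x4} follows automatically. The double characterization of $W^{cs}, W^{cu}$ in terms of distance to $W^c$ is then a direct application of \autoref{properties}(2)--(5) to the restricted dichotomy on $W^{cs}$ and $W^{cu}$.

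For \eqref{x3}, the exponential tracking/shadowing is precisely the content of the characterization in item \eqref{x1}: given any forward orbit $\{z_k\}$ from $m$, I would write $z_k = (P^{cs}(z_k), P^u(z_k))$ and use $H \sim (F^{cu}, G^{cu})$ as an ill-posed system in the ``stable'' $X^s$-direction to solve backwards for a unique $\overline{z}_0 \in W^{cu}_m$ whose orbit tracks $\{z_k\}$ at rate $\tilde{\varepsilon}_1^{(k)}$, using the spectral thresholds $\lambda_s<\tilde{\varepsilon}_1<\lambda_{cu}^{-1}$; this is the standard pseudo-orbit argument built into the graph transform of \autoref{thmA}. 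The backward statement is the dual.

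For \eqref{x5} I would again pass to $W^{cs}$ via $P^{cs}$ and, for each fixed $z \in W^{cs}_m$, regard the forward orbit $\{(P^{cs})(H^{cs})^{(k)}_m(z)\}$ in $X^{cs}$ as a new one-sided base space carrying the linearized-about-orbit version of $h^{cs}$; the splitting $X^{cs}=X^s \times X^c$ then furnishes a hyperbolic dichotomy along this orbit (stable rate $\lambda_s$, center rate bounded below by $\lambda_{cu}^{-1}$), so \autoref{thmA} along the orbit produces the strong stable graph $W^{ss}_z$ of $X^s_m \to X^c_m$ (with Lipschitz bound $\beta_s(m)$), lifted back into $W^{cs}_m$. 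The characterization $z' \in W^{ss}_z \Leftrightarrow |(H^{cs})^{(k)}_m(z')-(H^{cs})^{(k)}_m(z)| \lesssim \tilde{\varepsilon}_1^{(k)}(m)$ is \autoref{properties}(5) translated through $P^{cs}$. Invariance $H^{cs}_m W^{ss}_z \subset W^{ss}_{H^{cs}_m(z)}$ is then tautological from the characterization. The unstable analogue works by replacing $H$ by $H^{-1}$ and $W^{cs}$ by $W^{cu}$.

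The main obstacle is the change-of-base-space step used repeatedly in \eqref{x2} and \eqref{x5}: one must verify that restricting to $W^{cs}$ (respectively to a single forward orbit in $W^{cs}$) preserves the (A)(B) condition with the claimed Lipschitz rates, and that the angle bounds (a$'$) together with the boundedness of $\alpha_u,\beta_{cs},\alpha_{cu},\beta_s$ translate into the hypotheses of \autoref{thm:hyperTrivial}/\autoref{thmA} for the restricted system uniformly in the base point. This uniformity is essential so that the Lipschitz constants $\beta_s(m), \alpha_u(m)$ and the tracking rates $\tilde{\varepsilon}_i$ in \eqref{x3}, \eqref{x5} can be chosen independently of the orbit or of the choice of $z \in W^{cs}_m$, and it is exactly where the two ``extra'' comparisons $\beta_{cs}\le \beta_s$, $\alpha_{cu}\le \alpha_u$ and $\sup_m\alpha_u\beta_s<1$ in (a$'$) are consumed.
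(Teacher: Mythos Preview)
Your overall strategy matches the paper's: both derive everything from \autoref{thmA}, its dual, and the characterizations in \autoref{properties}, and both identify that the crux is checking the (A)(B) condition for the restricted map $h^{cs}$ on $X^{cs}=X^s\times X^c$ (respectively $h^{-cu}$ on $X^{cu}$), which is exactly where $\beta_{cs}\le\beta_s$, $\alpha_{cu}\le\alpha_u$ and $\sup_m\alpha_u\beta_s<1$ are used.

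A few places where the paper is more streamlined than your sketch. For $W^c$, the paper does not invoke \autoref{thm:hyperTrivial} inside $W^{cs}$; it simply observes that since $W^{cs}_m=\graph f^{cs}_m$ with $\lip f^{cs}_m\le\beta_{cs}(m)$ and $W^{cu}_m=\graph f^{cu}_m$ with $\lip f^{cu}_m\le\alpha_{cu}(m)$, the transversality $\alpha_{cu}\beta_{cs}<1$ forces $W^{cs}_m\cap W^{cu}_m$ to be a single Lipschitz graph over $X^c_m$. For \eqref{x5}, rather than working orbit-by-orbit the paper applies \autoref{thmA} once to $h^{cs}$ viewed as a bundle correspondence $X^s\times X^c\to X^s\times X^c$ with base space $X^{cs}$ (equivalently, this is \autoref{thm:bundlemaps}); this gives all the strong-stable leaves $W^{ss}_z$ at once with uniform Lipschitz constant $\beta_s(m)$, avoiding the uniformity bookkeeping you anticipate. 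For \eqref{x3}, the paper's argument is precisely the orbit-as-section trick: restrict to the base $M_1=\{u^k(m):k\ge0\}$, declare $\bar{i}(u^k(m))=z_k$ as an \emph{invariant} section, apply \autoref{thmA} with the splitting $X^s\times X^{cu}$ to get a strong-stable graph through each $z_k$, and set $\bar z_k$ to be its intersection with $W^{cu}_{u^k(m)}$.

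One genuine understatement in your plan: the $W^c$-characterization of $W^{cs}$ in \eqref{x2} is \emph{not} a direct consequence of \autoref{properties} applied to the restricted dichotomy. The forward implication ($z\in W^{cs}_m\Rightarrow d(z_k,W^c)\to0$) does follow by applying \eqref{x3} to the forward orbit inside $W^{cs}$ and noting the shadowing orbit lands in $W^{cs}\cap W^{cu}=W^c$. But the converse ($\sup_k d(z_k,W^c)<\infty\Rightarrow z\in W^{cs}_m$) requires a separate geometric estimate: the paper first shadows $\{z_k\}$ by $\{\bar z_k\}\subset W^{cu}$ via \eqref{x3}, then uses the (A) condition for $h^{-cu}$ together with the Lipschitz bound on $f^{2c}$ to show $\sup_k|\bar z_k-\hat z_k|<\infty$ for suitable $\hat z_k\in W^c$, and only then invokes the characterization in \eqref{x1}. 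So \eqref{x3} must be proved \emph{before} this half of \eqref{x2}, and the argument is not automatic from \autoref{properties}.
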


For the regularity of $ W^{cs} $, $ W^{cu} $, and $ W^{ss}_z $, $ z \in W^{cs} $, $ W^{uu}_z $, $ z \in W^{cu} $, see \autoref{stateRegularity} and \autoref{thm:bundlemaps}.
\begin{proof}
	\textbf{(I)}
	Since $ i = (i_s, i_c, i_u) : M \to X^s \times X^c \times X^u $ is a $ 1 $-pseudo-invariant section of $ H $, by \autoref{thmA}, there are invariant Lipschitz graphs $ W^{cs} = \graph f^{cs} $, $ W^{cu} = \graph f^{cu} $ of $ H $ such that $ W^{cs} \subset HW^{cs} $ and $ W^{cu} \subset H^{-1}W^{cu} $, where $ f^{cs}: X^{cs} \to X^{u} $ over $ \id $ with $ \lip f^{cs}_m \leq \beta_{cs}(m) $ and $ f^{cu}: X^{cu} \to X^{s} $ over $ \id $ with $ \lip f^{cu}_m \leq \alpha_{cu}(m) $.
	Note that
	\begin{equation}\label{repreS}
	H^{cs} = H|_{W^{cs}}: W^{cs} \to W^{cs},~ (m,x^{cs}, f^{cs}_m(x^{cs})) \mapsto (u(m), h^{cs}_m(x^{cs}), f^{cs}_{u(m)}(h^{cs}_m(x^{cs})),
	\end{equation}
	is a bundle map over $ u $, and similarly
	\begin{multline}\label{repreU}
	H^{-cu} = H^{-1}|_{W^{cu}}: W^{cu} \to W^{cu},~ (m,x^{cu}, f^{cu}_m(x^{cu})) \mapsto \\
	( u^{-1}(m), h^{-cu}_m(x^{cu}), f^{cu}_{u^{-1}(m)}(h^{-cu}_m(x^{cu})) ),
	\end{multline}
	is a bundle map over $ u^{-1} $. Note that $ \lip h^{cs}_m \leq \lambda_{cs}(m) $ and $ \lip h^{-cu}_m \leq \lambda_{cu}(m) $. Combining this with the characterizations in \autoref{properties}, we see that conclusions \eqref{x1} and \eqref{x4} hold.

	\textbf{(II)} Under $ \sup_m\alpha_{cu}(m) \beta_{cs}(m) < 1 $, one has $ h^{cs} \sim (\widetilde{F}^s, \widetilde{G}^s): X^{s} \times X^{c} \to X^{s} \times X^{c} $ and $ h^{-cu} \sim (\widetilde{F}^u, \widetilde{G}^u): X^{c} \times X^{u} \to X^{c} \times X^{u} $. Take $ h^{-cu} $ as an example. Let $ F^{cs}_m = (F^{cs1}_m, F^{cs2}_m) $, where $ F^{cs1}_m : X^{cs}_m \times X^u_{u(m)} \to X^{c}_{u(m)} $ and $ F^{cs2}_m : X^{cs}_m \times X^u_{u(m)} \to X^{s}_{u(m)} $. For any $ (y_1, z_2) \in X^{c}_m \times X^{u}_{u(m)} $, we see there is only one $ z_1 \in X^u_m $ such that
	\[
	z_1 = G^{cs}_m(f^{cu}_m(y_1,z_1), z_2) \triangleq \widetilde{G}^u_m(y_1, z_2);
	\]
	define $ \widetilde{F}^u_m(y_1,z_2) \triangleq F^{cs1}_m(f^{cu}_m(y_1,z_1), z_2) $.
	Moreover, the following statements hold:
	\begin{enumerate}[(a)]
		\item If $ \alpha_{cu}(m) \leq 1 $, $ \beta_{cs}(m) \leq 1 $, $ \beta_{cs}(m) \leq \beta_s(m) $, $ m \in M $, then $ h^{cs} \sim (\widetilde{F}^s, \widetilde{G}^s) $ satisfies the (A)$(\alpha_{cu}, \lambda_{cu})$ (B)$(\beta_{s}, \lambda_{s})$ condition;
		\item If $ \alpha_{cu}(m) \leq 1 $, $ \beta_{cs}(m) \leq 1 $, $ \alpha_{cu}(m) \leq \alpha_u(m) $, $ m \in M $, then $ h^{-cu} \sim (\widetilde{F}^u, \widetilde{G}^u) $ satisfies the (A)$(\alpha_{u}, \lambda_{u})$ (B)$(\beta_{cs}, \lambda_{cs})$ condition.
	\end{enumerate}
	The proof is direct, so we omit it. Now for $ h^{cs} $, by \autoref{thmA} (or \autoref{thm:bundlemaps}), there are Lipschitz maps $ f^{ss}_{z_m}: X^s_m \to X^c_m $, $ z_m \in X^s_m \times X^c_m $ with $ \lip f^{ss}_{z_m} \leq \beta_s(m) $ such that $ z_m \in \graph f^{ss}_{z_m} $ and $ h^{cs}_{z_m} \graph f^{ss}_{z_m} \subset \graph f^{ss}_{h^{cs}_m(z_m)} $. Note that by the characterization (see \autoref{properties}), $ \graph f^{ss}_{z'_m} \cap \graph f^{ss}_{z''_m} = \emptyset $ or $ \graph f^{ss}_{z'_m} = \graph f^{ss}_{z''_m} $; in particular $ \bigsqcup_{z_m \in X^s_m \times X^u_m} \graph f^{ss}_{z_m} $ foliates $ X^s_m \times X^u_m $.
	Now using $ f^{ss}_{(\cdot)} $, one obtains $ W^{ss}_z $, $ z \in W^{cs}_m \triangleq \graph f^{cs}_m $ such that $ z \in W^{ss}_z $, $ H^{cs}_m W^{ss}_z \subset W^{ss}_{H^{cs}_m(z)} $ and $ W^{ss}_z $ is a Lipschitz graph of $ X^s_m \to X^{cu}_m $. $ \bigsqcup_{z \in W^{cs}_m} \graph W^{ss}_{z} $ foliates $ W^{cs}_m $.
	Similarly, one gets $ W^{uu}_z $, $ z \in W^{cu}_m \triangleq \graph f^{cu}_m $ such that $ z \in W^{uu}_z $, $ H^{-cu}_m W^{uu}_z \subset W^{uu}_{H^{-cu}_m(z)} $ and $ W^{uu}_z $ is a Lipschitz graph of $ X^u_m \to X^{cs}_m $. $ \bigsqcup_{z \in W^{cu}_m} \graph W^{uu}_{z} $ foliates $ W^{cu}_m $.

	Since $ \sup_m\alpha_{cu}(m) \beta_{cs}(m) < 1 $, one has $ W^c \triangleq W^{cs} \cap W^{cu} = \graph f^c $, where
	\[
	f^c_m = (f^{1c}_m, f^{2c}_m) : X^c_m \to X^s_m \times X^u_m,
	\]
	with $ \lip f^{1c}_m \leq \alpha_{cu}(m) $, $ \lip f^{2c}_m \leq \beta_{cs}(m) $. Note that $ W_c \subset H^{\pm 1} W_c $. So
	\begin{multline}\label{repreC}
	H^c = H|_{W^c} : W^c \to W^c,
	(m, f^{1c}_m(x^c), x^c, f^{2c}_m(x^c)) \mapsto \\
	(u(m), f^{1c}_{u(m)}(h^{c}_m(x^c)), h^{c}_m(x^c), f^{2c}_{u(m)}(h^{c}_m(x^c))),
	\end{multline}
	where $ h^c_m : X^c_m \to X^{c}_{u(m)} $ is a bi-Lipschitz map with 
	\[
	\lip h^c_m \leq \lambda_{cs}(m),~ \lip (h^{c}_m)^{-1} \leq \lambda_{cu}(m).
	\]
	This also shows that if $ X^c_m $, $ m \in M $, are all more than one point, then $ \lambda_{cs}(m) \lambda_{cu}(m) \geq 1 $; in particular, we can assume
	\[
	\lambda_s(m) < \lambda^{-1}_{cu}(m) \leq \lambda_{cs}(m) < \lambda^{-1}_u(m).
	\]

	We show in fact $ W^{cs} $, $ W^{cu} $ are bundles over $ W^c $ with fibers $ W^{ss}_z $, $ W^{uu}_z $, $ z \in W^c $, respectively. Let $ W^c_m = W^{cs}_m \cap W^{cu}_m $. Due to $ \sup_m\alpha_u(m) \beta_s(m) < 1 $, one can define $ \pi^s_c(m,z) $ as the unique point in $ W^{ss}_z \cap W^c_m $, where $ z \in W^{cs}_m $. So $ (W^{cs}, W^c, \pi^s_c) $ is a bundle, called the \emph{strong stable fiber bundle}. Similarly, one defines $ \pi^{u}_c $, and $ (W^{cu}, W^c, \pi^u_c) $ is a bundle called the \emph{strong unstable fiber bundle}.

	We have shown that conclusion \eqref{x5} and the first part of conclusion \eqref{x2} hold.

	\textbf{(III)} Finally, we show the characterization of $ W^{cs} $, $ W^{cu} $ by using $ W^c $ and conclusion (3). Let $ \{ z_{k} \}_{k\geq 0} $ be a forward orbit of $ H $ from $ m $, and $ M_1 = \{u^k(m): k \in \mathbb{N} \} $. Applying \autoref{thmA} to $ X^1 \triangleq X|_{M_1} $, $ H|_{X^1} $, $ M_1 $, $ u $ with an invariant section $ \overline{i} $ defined by $ \overline{i}(u^{k}(m)) = z_k $, one gets a Lipschitz graph $ W^s_{u^k(m)}(\overline{i}) $ of $ X^{s}_{u^k(m)} \to X^{cu}_{u^k(m)} $, which is invariant under $ H $, with Lipschitz constant no more than $ \beta_s(u^k(m)) $. Let $ \{\overline{z}_k\} = W^s_{u^k(m)}(\overline{i}) \cap W^{cu}_{u^k(m)} $. One can easily check that $ \{\overline{z}_k\}_{k\geq 0} $ is a forward orbit of $ H $ and $ |z_{k} - \overline{z}_{k}| \lesssim \varepsilon_{1}^{(k)}(m) $, $ k \to \infty $ (see e.g. \autoref{dichotomy}). We have proved conclusion \eqref{x3}. Let us show that $ W^{cs}_m $ has the characterization in conclusion \eqref{x2}.

	(i) If $ z_m \in W^{cs}_m $, then there is a forward orbit $ \{ z_{k} : k=0, 1, 2, \ldots \} $ of $ H $ such that $ z_0 = z_m $. For this orbit $ \{z_k\}_{k\geq 0} $, one has $ \{ \overline{z}_k \}_{k\geq 0} \subset W^{cu} $ such that $ |z_k - \overline{z}_{k}| \to 0 $. This also shows that by the characterization of $ W^{cs} $, $ \{\overline{z}_{k}\} \subset W^{cs} $. Hence, $ \{\overline{z}_{k}\} \subset W^c $, which gives $ d(z_k, W^c_{u^k(m)}) \leq |z_k - \overline{z}_{k}| \to 0 $.

	(ii) If there is a forward orbit $ \{ z_{k} \}_{k \geq 0} $ of $ H $ such that 
	\[
	z_0 = z_m \in X_m~ \text{and}~\sup_{k\geq 0}d(z_k, W^c_{u^k(m)}) < \infty,
	\]
	then we need to show $ z_m \in W^{cs}_m $.
	For this forward orbit $ \{z_k\} $, there is a forward orbit $ \{ \overline{z}_{k} \}_{k \geq 0} \subset W^{cu} $ ($ \overline{z}_0 \in W^{cu}_m $) such that $ \sup_{k\geq 0}|z_{k} - \overline{z}_{k}| < \infty $. Write $ \overline{z}_{k} = ({x}^s_{k}, {x}^c_{k}, {x}^u_{k}) $. First we show $ \sup_{k \geq 0}|{x}^u_{k} - f^{2c}_{u^k(m)} ({x}^c_k)| < \infty $.

	Take $ \hat{z}_k = (f^{1c}_{u^k(m)} ({x}^c_k), {x}^c_k, f^{2c}_{u^k(m)} ({x}^c_k) ) \in W^c_{u^k(m)} $. Since $ \sup_{k\geq 0}d(z_k, W^c_{u^k(m)}) < \infty $, there is $ \{ \tilde{z}_k = (f^{1c}_{u^k(m)} (\tilde{x}^c_k), \tilde{x}^c_k, f^{2c}_{u^k(m)} (\tilde{x}^c_k) ) \} \subset W^c $ such that $ \sup_{k \geq 0}|z_k - \tilde{z}_k| < \infty $. So $ \sup_{k \geq 0}|\overline{z}_k - \tilde{z}_k| < \infty $, and in particular $ \sup_{k \geq 0}|{x}^c_k - \tilde{x}^c_k| < \infty $. Thus, $ \sup_{k \geq 0}|\tilde{z}_{k} - \hat{z}_k| < \infty $, which yields
	\[
	\sup_{k \geq 0}|\overline{z}_{k} - \hat{z}_k| \leq \sup_{k \geq 0}|\overline{z}_{k} - \tilde{z}_k| + \sup_{k \geq 0}|\tilde{z}_{k} - \hat{z}_k| < \infty,
	\]
	and so $ \sup_{k \geq 0}|{x}^u_{k} - f^{2c}_{u^k(m)} ({x}^c_k)| < \infty $.
	Set
	\[
	(H^c)^{(k)}_m (\hat{z}_0) = ({x^s_k}', {x^c_k}', {x^u_k}') \in W^c.
	\]
	Then by the (A) condition for $ h^{-cu} $, one has for $ k \geq 1 $,
	\begin{multline*}
	|{x^c_k}' - x^c_k| \leq \alpha_u(u^{k-1}(m)) |{x^u_k}' - x^u_k| \\
	\leq \alpha_u(u^{k-1}(m)) |{x^u_k}' - f^{2c}_{u^k(m)} ({x}^c_k)| + \alpha_u(u^{k-1}(m)) |f^{2c}_{u^k(m)} ({x}^c_k) - x^u_k|,
	\end{multline*}
	yielding 
	\[
	|{x^c_k}' - x^c_k| \leq \frac{\alpha_u(u^{k-1}(m))}{1 - \alpha_u(u^{k-1}(m))\beta_{cs}(u^k(m))} |f^{2c}_{u^k(m)} ({x}^c_k) - {x}^u_{k}|.
	\]
	Thus, $ \sup_{k \geq 0}|{x^c_k}' - x^c_k| < \infty $. This implies that $ \sup_{k \geq 0} (\hat{\varepsilon}^{(k)}_1(m))^{-1} |i(u^{k}(m)) - z_k| < \infty  $, giving $ z_m \in W^{cs}_m $ (by \eqref{x1}). The proof is complete.
\end{proof}

\begin{rmk}[(Partial) normal hyperbolicity] \label{pnormalhyper}
	Let $ H: Z \to Z $ be a smooth map and $ M \subset Z $, $ H(M) \subset M $, where $ Z $ is a smooth Finsler manifold and $ M $ is a submanifold of $ Z $. Assume $ M $ is invariant and normally hyperbolic with respect to $ H $ \cite{HPS77,Fen72}. Then we have the center-(un)stable manifolds $ W^{cs}(M) $, $ W^{cu}(M) $ of $ M $ for $ H $, and $ M = W^{cs}(M) \cap W^{cu}(M) $. A small $ C^1 $ perturbation $ \tilde{H} $ of $ H $ corresponds to $ \tilde{W}^{cs} $, $ \tilde{W}^{cu} $ and $ \tilde{M} = \tilde{W}^{cs} \cap \tilde{W}^{cu} $. Now $ \tilde{M} $ is diffeomorphic to $ M $, which means that $ M $ is persistent under small $ C^1 $ perturbations. In general, \autoref{thm:pnormalTrivial} cannot be used to derive the existence of the above invariant manifolds. However, \emph{there is a situation where our results can be applied}: The normal bundle $ \mathcal{N} $ of $ M $ in $ Z $ can embed into a trivial bundle $ M \times Y $ where $ Y $ is a Banach space, and $ H $ can extend to $ M \times Y $, denoted by $ \hat{H} $, so that $ M $ is also normally hyperbolic for $ \hat{H} $. Now applying \autoref{thm:pnormalTrivial} to $ \hat{H} $, $ M \times Y $, one gets invariant manifolds of $ \hat{H} $, then pulls back all these manifolds to $ Z $ to get the desired manifolds for $ H $. This can be done for example when $ Z $ is a smooth compact Riemannian manifold and $ M $ is a compact submanifold of $ Z $ (see e.g. \cite{HPS77}), or $ Z $ is a smooth Riemannian manifold having bounded geometry (see \autoref{defi:bounded}) and $ M $ is a complete immersed submanifold of $ Z $ (see e.g. \cite{Eld13}). (Lipschitz) trivialization of vector bundles is important in \cite{HPS77, Eld13} for the study of normal hyperbolicity. Also, note that $ C^0 $ trivialization is insufficient, for one needs the persistence of normal hyperbolicity in the new trivial vector bundle.

	A notion of hyperbolicity between normal hyperbolicity and partial hyperbolicity, namely \emph{partial normal hyperbolicity} introduced in \cite{Che18}, is of importance due to both its theoretical and practical applications (see e.g. \cite{CLY00,BC16,LLSY16}). Partial normal hyperbolicity can make sense for sets \cite{CLY00a, BC16, Che18}. Analogously, if the normal bundle of a partially normally hyperbolic manifold can embed into a trivial bundle and the dynamics can extend to the trivial bundle maintaining hyperbolic trichotomy, then \autoref{thm:pnormalTrivial} can be applied directly.

	In \cite{Che18b, Che18}, the associated results of normal hyperbolicity and partial normal hyperbolicity for maps and semiflows (see e.g. the references listed above) are extended to correspondences in Banach spaces (so that these results can be applied to some ill-posed differential equations).

\end{rmk}

In the following, we show how to use invariant foliations to decouple the bundle map $ H $ when $ H $ is invertible.
\begin{cor}[Decoupling theorem]\label{decoupling}
	In the setting of \autoref{thm:pnormalTrivial}, assume $ H $ is invertible (i.e., for very $ m \in M $, $ H_m $ is an invertible map). Then $ H $ can be decoupled as $ h^s \times h^c \times h^u $ with bundle maps $ h^s : X^{cs} \to X^s $, $ h^u: X^{cu} \to X^u $ and $ h^c : X^c \to X^c $ over $ u $, i.e., there is an invertible bundle map $ \Pi: X \to X $ over $ \id $ such that the following diagram is commutative:
	\[
	\CD
	X^s \times X^c \times X^u @> H>> X^s \times X^c \times X^u \\
	@V \Pi VV @V \Pi VV  \\
	X^s \times X^c \times X^u @> h^s \times h^c \times h^u >> X^s \times X^c \times X^u
	\endCD
	\]
\end{cor}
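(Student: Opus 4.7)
The plan is to build $\Pi$ from the invariant manifolds and foliations furnished by \autoref{thm:pnormalTrivial}, exploiting the invertibility of $H$ so that both the center-stable/strong-stable and the center-unstable/strong-unstable factorizations are simultaneously available. Once $\Pi$ is constructed, the skew-product form of $\Pi H \Pi^{-1}$ will follow from the $H$-invariance of $W^c, W^{cs}, W^{cu}$ and of each leaf $W^{ss}_w, W^{uu}_w$.

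Concretely, I would first rectify $W^c$ using its graph representation $W^c = \graph f^c$ by the bundle automorphism $\Pi_1(x^s, x^c, x^u) := (x^s - f^{1c}_m(x^c),\, x^c,\, x^u - f^{2c}_m(x^c))$, which places $W^c$ at $\{0\} \times X^c \times \{0\}$ while keeping $W^{cs}, W^{cu}$ as Lipschitz graphs through the new origin. The angle condition $\sup_m \alpha_{cu}(m)\beta_{cs}(m) < 1$ together with the transverse intersection $W^c = W^{cs} \cap W^{cu}$ then allows me to simultaneously straighten $W^{cs}$ and $W^{cu}$ to the coordinate planes $\{y^u = 0\}$ and $\{y^s = 0\}$ by an invertible bundle map $\Pi_2$ obtained via a Banach contraction. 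Finally, inside $X^{cs}$ the strong stable foliation of $h^{cs}$ consists of Lipschitz graphs over $X^s$ parameterized by their intercept with the center axis; the holonomy that collapses each such leaf to a horizontal plane $\{y^c = \textrm{const}\}$ conjugates $h^{cs}$ to a skew product $(y^s, y^c) \mapsto (h^s(y^s, y^c), h^c(y^c))$, and the analogous holonomy on $X^{cu}$ conjugates $H|_{W^{cu}}$ to $(y^c, y^u) \mapsto (h^c(y^c), h^u(y^c, y^u))$. Using the fiber-product identification $X \cong W^{cs} \times_{W^c} W^{cu}$ from \autoref{thm:pnormalTrivial}(5), these two holonomies glue into a single bundle automorphism $\Pi_3$ of $X$, and I set $\Pi := \Pi_3 \circ \Pi_2 \circ \Pi_1$.

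To close the argument I would verify $\Pi H = (h^s \times h^c \times h^u) \Pi$ by tracking a point through the three straightenings: $H$-invariance of $W^c$ implies that the $y^c$-component of the conjugate dynamics is $h^c(y^c)$; $H$-invariance of $W^{cs}$ and of each $W^{ss}_w \subset W^{cs}$ implies that the $y^s$-component depends only on $(y^s, y^c)$ and equals $h^s(y^s, y^c)$; symmetrically for the $y^u$-component. The $h^c$ obtained this way coincides with the center dynamics of \autoref{thm:pnormalTrivial}(4).

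The main obstacle is the third step, namely globally gluing the strong-foliation holonomies from $X^{cs}$ and $X^{cu}$ into a single bundle automorphism of $X$. Although $H$ preserves each leaf $W^{ss}_w$ and $W^{uu}_w$ individually, the central coordinate $y^c$ produced from the $W^{cs}$-side must agree with the one coming from the $W^{cu}$-side, and this compatibility is precisely what the shared base $W^c$ in the fiber product $W^{cs} \times_{W^c} W^{cu}$ of \autoref{thm:pnormalTrivial}(5) enforces. Once the common $y^c$ is secured, the explicit formulae for $\Pi_1, \Pi_2, \Pi_3$ together with the skew-product identities reduce to routine bookkeeping using the Lipschitz graph structure of each piece.
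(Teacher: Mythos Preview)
Your construction of $\Pi_3$ has a genuine gap. After $\Pi_2\Pi_1$ you have straightened $W^{cs}$ to $\{y^u=0\}$ and $W^{cu}$ to $\{y^s=0\}$, so the conjugated map $H'=\Pi_2\Pi_1 H(\Pi_2\Pi_1)^{-1}$ satisfies $H'_u(y^s,y^c,0)=0$ and $H'_s(0,y^c,y^u)=0$. But this does \emph{not} say that $H'_s$ is independent of $y^u$ or $H'_u$ independent of $y^s$ at a general point. Your $\Pi_3$ acts as $(y^s,y^c,y^u)\mapsto(\phi^s(y^s,y^c),\,y^c,\,\phi^u(y^c,y^u))$, where $\phi^s,\phi^u$ are the strong holonomies on $X^{cs}$ and $X^{cu}$; the $s$-coordinate of $\Pi_3 H'\Pi_3^{-1}$ is then $\phi^s\bigl(H'_s(y^s,y^c,y^u),\,H'_c(y^s,y^c,y^u)\bigr)$, which still depends on $y^u$. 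So $\Pi_3$ conjugates $H'$ to a skew product only on $W^{cs}\cup W^{cu}$, not on all of $X$. The obstacle you name (compatibility of the two $y^c$'s) is not the problem; the problem is that you have no $H$-equivariant way to send a point off $W^{cs}\cup W^{cu}$ to a pair in $W^{cs}\times_{W^c}W^{cu}$.

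The paper fills this gap with the \emph{global} invariant foliations $W^{cs}(z),W^{cu}(z),W^s(z),W^u(z)$ through every $z\in X$ (from \autoref{thm:bundlemaps}), whose existence is exactly where invertibility of $H$ enters; see \autoref{rmk:decoupling}(c). These give $H$-equivariant projections $\pi^s_{m_c}(z)=W^{cu}(z)\cap W^s(m_c)$ and $\pi^u_{m_c}(z)=W^{cs}(z)\cap W^u(m_c)$, together with a base-point map $\widetilde\pi_c(z)=W^u\bigl(W^s(z)\cap W^{cu}\bigr)\cap W^c$, and $\Pi$ is assembled from these. Your $\Pi_1,\Pi_2$ and the strong-foliation holonomies would reappear as the map $\widehat\pi$ in the paper's second commutative diagram, but the missing ingredient $\widetilde\pi$ is precisely the global foliation data you have not invoked.
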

\begin{proof}
	By \autoref{thm:bundlemaps}, we have the following different invariant foliations: $ W^{\kappa_1}(z) $, $ z \in X $, $ \kappa_1 \in \{ s, u, cs, cu \} $, such that
	\begin{enumerate}[(a)]
		\item $ W^{\kappa_1}(z) $ is a Lipschitz graph of $ X^{\kappa_1}_m \to X^{\kappa_2}_m $ with Lipschitz constant no more than $ \varpi_{\kappa_1} $, if $ \pi(z) = m  $, where $ \varpi_{\kappa_1} = \beta_{\kappa_1}(m) $ if $ \kappa_1 = s, cs $, and $ \varpi_{\kappa_1} = \alpha_{\kappa_1}(m) $ if $ \kappa_1 = u, cu $, and where $ \kappa_2 = scu - \kappa_1 $ (meaning that the letter $ \kappa_1 $ is deleted from $ scu $; e.g., if $ \kappa_1 = s $, then $ \kappa_2 = cu $).
		\item $ z \in W^{\kappa_1}(z) $, $ HW^{\kappa_1}(z) = W^{\kappa_1}(H(z)) $, where $ \kappa_1 \in \{ s, u, cs, cu \} $.
	\end{enumerate}

	We also need all the invariant foliations obtained in \autoref{thm:pnormalTrivial}, i.e., $ W^{c\kappa} $ and $ W^{\kappa\kappa}_z $, $ z \in W^{c\kappa} $, $ \kappa = s, u $.
	Note that if $ z \in W^{cs} $, then $ W^{ss}_z = W^{s}(z) $; similarly if $ z \in W^{cu} $, then $ W^{uu}_z = W^{u}(z) $. Also, $ W^{s} $ (resp. $ W^{u} $) subfoliates each leaf of $ \bigsqcup_{z\in X} W^{cs}(z) $ (resp. $ \bigsqcup_{z\in X} W^{cu}(z) $) (which means that e.g. $ \{ W^{s}(z'): z' \in W^{cs}(z) \} $ is a `subfoliation' of $ W^{cs}(z) $ for any $ z \in X $). These follow from the characterizations of these invariant foliations (see \autoref{properties}).

	Define the following maps. For $ m_c \in W^c $,
	\[
	\pi^s_{m_c}: z \mapsto W^{cu}(z) \cap W^s(m_c), ~ \pi^u_{m_c}: z \mapsto W^{cs}(z) \cap W^u(m_c),
	\]
	\[
	\pi^s(m_c,z) = (m_c, \pi^s_{m_c}(z)), ~ \pi^u(m_c,z) = (m_c, \pi^u_{m_c}(z)).
	\]
	Consider $ W^{cs} $ and $ W^{cu} $ as bundles over $ W^c $. Let $ H^s $ be another representation of $ H^{cs} $ as a bundle map over $ H^c $, i.e.,
	\[
	H^{s}: (m_c, z) \mapsto (H^c(m_c), H^{cs}(z)), ~  z \in W^{ss}_{m_c}, ~ m_c \in W^c.
	\]
	Similarly,
	\[
	H^{u}: (m_c, z) \mapsto (H^c(m_c), H^{cu}(z)), ~  z \in W^{uu}_{m_c}, ~ m_c \in W^c,
	\]
	where $ H^{cu} = (H^{-cu})^{-1} $.
	By the invariance of the foliations, we have
	\[
	\pi^\kappa (H(m_c), H(z)) = ( H(m_c), \pi^\kappa_{H(m_c)} (H(z)) ) = H^\kappa(m_c, \pi^\kappa_{m_c}(z)),~ \kappa = s, u.
	\]
	In particular, $ \pi^\kappa \circ (H^c, H) = H^\kappa \circ \pi^\kappa $, $ \kappa = s, u $.

	Let us give the precise meaning of $ X^s \times X^c \times X^u = X \cong W^{cs} \times W^{cu} $. The most important thing is that we need to find a way to track the base points, which can be done as follows. Let
	\[
	\widetilde{\pi}_c: X \to W^c, z \mapsto W^{u}(W^s(z) \cap W^{cu}) \cap W^c,
	\]
	and
	\[
	\widetilde{\pi} (z) = (m_c, \pi^s_{m_c}z, \pi^u_{m_c}z) : X \to W^{cs} \times W^{cu},
	\]
	where $ m_c = \widetilde{\pi}_c(z) $. By the invariance of those foliations, we see $ \widetilde{\pi}_c(H(z)) = H^c(\widetilde{\pi}_c(z)) $. What we have shown is the following commutative diagram:
	\[
	\CD
	X^s \times X^c \times X^u @> H>> X^s \times X^c \times X^u \\
	@V \widetilde{\pi} VV @V \widetilde{\pi} VV  \\
	W^{cs} \times W^{cu} @> H^s \times H^u >> W^{cs} \times W^{cu} \\
	@V \pi_c VV @V \pi_c VV \\
	W^c @> H^c >> W^c
	\endCD
	\]
	where $ \pi_c $ is the projection of the bundle $ (W^{cs} \times W^{cu}, W^c, \pi_c) $.

	Represent $ H^{cs} $, $ H^{cu} $ in $ X $. Let $ m_c = (m, z_c) \in W^c $, $ z_c \in W^c_m $. Note that $ P^c_m: W^c_m \cong X^c_m $. Since $ H^{cs}: W^{ss}_{m_c} \to W^{ss}_{H(m_c)} $ and $ W^{ss}_{m_c} \cong X^s_m $ (through $ P^s_m $), we have a map $ h^s_m(P^c_m z_c, \cdot): X^s_m \to X^s_{u(m)} $ induced by $ H^{cs} $ (similar to \eqref{repreS}). Also, we have another map $ h^u_m(P^c_m z_c, \cdot): X^u_m \to X^u_{u(m)} $ induced by $ H^{cu} $ (similar to \eqref{repreU}).
	By our construction, we get
	\[
	P^{s}_{u(m)} H^{cs}_m (z) = h^s_m(P^c_m z_c, P^{s}_m z), ~ z \in W^{ss}_{m_c}, ~m_c = (m, z_c) \in W^c;
	\]
	analogously,
	\[
	P^{u}_{u(m)} H^{cu}_m (z) = h^u_m(P^c_m z_c, P^{u}_m z), ~ z \in W^{uu}_{m_c}, ~m_c = (m, z_c) \in W^c.
	\]
	Now we have the following commutative diagram:
	\[
	\CD
	W^{cs} \times W^{cu} @> H^s \times H^u >> W^{cs} \times W^{cu} \\
	@V \widehat{\pi} VV @V \widehat{\pi} VV \\
	X^s \times X^c \times X^u @> h^s \times h^c \times h^u >> X^s \times X^c \times X^u \\
	@V P^c VV @V P^c VV  \\
	X^c @> h^c >> X^c
	\endCD
	\]
	where $ h^c $ is defined in \eqref{repreC} and
	\[
	\widehat{\pi} (m_c, z_s, z_u) = (P^c_m m_c, P^s_m z_s, P^u_m z_u): W^{cs} \times W^{cu} \to X,
	\]
	where $ (z_s, z_u) \in W^{ss}_{m_c} \times W^{uu}_{m_c}, ~m_c = (m, z_c) \in W^c $. Combining the above two commutative diagrams, we have
	\[
	\CD
	X^s \times X^c \times X^u @> H>> X^s \times X^c \times X^u \\
	@V \widetilde{\pi} VV @V \widetilde{\pi} VV  \\
	W^{cs} \times W^{cu} @> H^s \times H^u >> W^{cs} \times W^{cu} \\
	@V \widehat{\pi} VV @V \widehat{\pi} VV \\
	X^s \times X^c \times X^u @> h^s \times h^c \times h^u >> X^s \times X^c \times X^u
	\endCD
	\]
	Therefore we decouple $ H $ as $ h^s \times h^c \times h^u $ with $ h^s : X^{cs} \to X^s $, $ h^u: X^{cu} \to X^u $ and $ h^c : X^c \to X^c $ if we show $ \widetilde{\pi} $ and $ \widehat{\pi} $ are invertible.

	That $ \widehat{\pi} $ is invertible is easy since
	\[
	\widehat{\pi}^{-1}_m (x^s, x^c, x^u) = (z_c, z_s, z_u),
	\]
	where $ z_c = (f^{1c}_m(x^c), x^c, f^{2c}_m(x^c)) $, $ x^{cs} = (x^s, f^{ss}_{(m,z_c)}(x^s)) $, $ z_s = (x^{cs}, f^{cs}_m(x^{cs}) ) $, and similarly for $ z_u $.

	Next we show $ \widetilde{\pi} $ is invertible. Let $ (z_s, z_u) \in W^{ss}_{m_c} \times W^{uu}_{m_c} $, $ m_c = (m,z_c) \in W^c $. We need to find $ a \in X_m $ such that
	\[
	W^{u}(W^s(a) \cap W^{cu}) \cap W^c = z_c, ~W^{cu}(a) \cap W^{ss}_{m_c} = z_s, ~W^{cs}(a) \cap W^{uu}_{m_c} = z_u.
	\]
	Set $ b = W^{cs}(z_u) \cap W^{uu}_{m_c} $, $ a = W^{cu}(z_s) \cap W^s(b) $. We show $ a $ is the desired choice. (Note that here we identify $ (m, x) = x $, if $ x \in X_m $.)

	(i) Since $ a \in W^{cu}(z_s) $, one has $ W^{cu}(a) \cap W^{ss}_{m_c} = z_s $. (ii) Since $ b \in W^{cs}(z_u) $, by the subfoliation property one gets $ W^{s}(b) \subset W^{cs}(z_u) $. Thus, $ a \in W^{s}(b) \subset W^{cs}(z_u) $, which yields $ W^{cs}(a) \cap W^{uu}_{m_c} = z_u $. (iii) As $ a \in W^s(b) $, one has $ W^s(a) = W^s(b) $. And $ b \in W^{cu}_m $ (note that $ W^{uu}_{m_c} \subset W^{cu}_m $), so $ W^s(a) \cap W^{cu}_m = W^s(b)\cap W^{cu}_m = b \in W^{uu}_{m_c} $. Therefore, $ z_c = W^{u}(W^s(a) \cap W^{cu}) \cap W^c $. The proof is finished.
\end{proof}

\begin{rmk}\label{rmk:decoupling}
	\begin{asparaenum}[(a)]
		\item The regularity of $ \Pi $ in \autoref{decoupling} relies on the regularity of the foliations $ W^{\kappa}(z) $, $ z \in X $, $ \kappa \in \{ s, u, cs, cu \} $, and $ W^{cs} $, $ W^{cu} $, which we will study in \autoref{stateRegularity}. A simple result is that if $ X^{\kappa} $, $ \kappa \in \{ s, c, u \} $, are $ C^0 $ bundles and $ u $, $ H $ are continuous in addition to all the functions in the (A) (B) condition being (almost) continuous, then $ \Pi $ is homeomorphic. If $ H $ is bi-Lipschitz in the fiber topology, then $ \Pi $ is at least bi-H\"older in the fiber topology (see e.g. \autoref{thm:bundlemaps}). We do not give a detailed statement on the regularity of $ \Pi $ here.
		\item The choice of $ W^{cs} $, $ W^{cu} $ is not unique. Different choices of $ W^{cs} $, $ W^{cu} $ give different decoupled systems $ h^s \times h^c \times h^u $. It is important for us to give a further refined decoupled system of $ H $. The existence of a $ 1 $-pseudo-invariant section of $ H $ and the condition $ \lambda_s, \lambda_u < 1 $ are in order to ensure the existence of $ W^{cs} $, $ W^{cu} $. Instead, if there is an invariant section of $ H $, the condition $ \lambda_s < 1 $, $ \lambda_u < 1 $ can be removed. All we need is to use \autoref{thmA} (or \autoref{thmB}) to produce invariant graphs $ W^{cs} $, $ W^{cu} $ such that $ W^{cs} \times W^{cu} $ is a bundle over $ W^{cs} \cap W^{cu} $.

		\item The existence of $ \Pi $ heavily relies on the existence of the foliations $ W^{\kappa}(z) $, $ z \in X $, $ \kappa \in \{ s, u, cs, cu \} $, which may not exist if $ H $ is not invertible. In general, one cannot expect the decoupling theorem holds for a bundle correspondence or a non-invertible bundle map. However, in \cite{Lu91}, the reader can find the decoupling theorem also for some special non-invertible maps.

		\item The corresponding result for flows is the same, which is omitted here. Except the linearization of $ h^{cs}, h^{cu} $, the decoupling theorem was also obtained by many authors. The most famous result is the Hartman--Grossman Theorem (see e.g. \cite{KH95}) with its generalizations to Banach spaces for hyperbolic equilibriums. For non-hyperbolic equilibriums, it was perhaps first proved by F. Taken \cite{Tak71}; K. Palmer also proved this case for nonautonomous continuous systems (the corresponding nonautonomous discrete systems are the case $ M = \mathbb{Z} $, $ u(n) = n+1 $); see \cite{KP90} and the references therein. See also \cite{Irw70, Tak71} (hyperbolic periodic orbits), \cite{dMel73} (hyperbolic compact sets), and \cite{PS70} (normally hyperbolic compact invariant manifolds).
	\end{asparaenum}
\end{rmk}

\chapter{Uniform Properties of Bundles, Bundle Maps, Manifolds and Foliations}\label{prelimiaries}

To set up our regularity results in \autoref{stateRegularity}, some definitions and notations are needed which we gather below. We hope that by doing so, we can make it clear how we deal with the spaces that lack compactness, high smoothness and boundedness; the main ideas of these treatments come from e.g. \cite{HPS77, PSW12, BLZ99, BLZ08, Cha04, Eld13, Ama15}. Meanwhile, we try to clarify certain concepts, facts and also the connection between the hypotheses in \autoref{settingOverview} and the classical references (e.g. \cite{HPS77}).

A purpose of this chapter is to explain the notion of uniform $ C^{k,\alpha} $ ($ k = 0,1, 0 \leq \alpha \leq 1 $) continuity of a bundle map that respects fibers and base points in appropriate bundles with uniform properties. This is done in an extremely natural way, that is, we represent the bundle map in local bundle charts belonging to preferred bundle atlases. In order to do this, (uniformly) locally metrizable spaces are introduced in \autoref{locallyM} to make sense of uniform $ C^{0,\alpha} $ continuity. A quick review of the fiber (or leaf) topology and connections is given in \autoref{immersed} and \autoref{connections}, respectively. Different types of continuity of bundle maps in different classes of bundles are introduced in \autoref{bundle} (and \autoref{bundleII}). The relevant notions of uniform manifold and foliation appear in \autoref{mf}. Some examples related to our (uniform) assumptions on bundles and manifolds are given in \autoref{examples}.

\section{Locally metrizable spaces}\label{locallyM}

Let us consider a type of topological spaces which have a \emph{uniform topological structure} but may not be globally metrizable. That the space is \emph{not} assumed to admit a metric is sometimes important for us when we deal with, e.g., immersed manifolds (see \autoref{immeresedM}), foliations with leaf topology or bundles with fiber topology (these manifolds may not be metrizable or separable, see \autoref{fake}), or non-manifolds (for instance laminations with leaf topology, see \autoref{holonomyL}).

\begin{defi}\label{defi:locallyM}
	Let $ M $ be a Hausdorff topological space. $ M $ is called a \textbf{locally metrizable space} (associated with an open cover $ \{ U_m: m \in M \} $) if the following conditions hold: (a) $ U_m $ is open and $ m \in U_m $, $ m \in M $; (b) every $ U_m $ is a metric space with metric $ d_m $ (possibly incomplete). Here in $ U_m $ the metric topology is the same as the subspace topology induced from $ M $. Write $ U_m(\epsilon) \triangleq \{ m'\in U_m: d_m(m',m) < \epsilon \} $. For convenience, set $ U_{m}(\infty) = U_{m} $. Also, we denote the \emph{$ \varepsilon $-neighborhood} of $ M_1 $ by $ M^{\varepsilon}_1 = \bigcup_{m \in M_1} U_{\varepsilon}(m) $, where $ M_1 \subset M $.
\end{defi}

\begin{exa}\label{lmsE}
	\begin{enumerate}[(a)]
		\item \label{lmsE1} Any metric space is locally metrizable.

		\item \label{lmsE2} Any $ C^k $ ($ k = 0, 1, 2, \ldots, \infty, \omega $) manifold $ M $ is locally metrizable. For example, take an atlas $ \{ (U_{\alpha}, \phi^{\alpha}): \alpha \in \Lambda \} $ of $ M $ where $ \phi^{\alpha}: U_{\alpha} \to X^{\alpha} $ is a $ C^k $ homeomorphism and $ X^{\alpha} $ is a Banach space with norm $ |\cdot|_{\alpha} $. For any $ m \in M $, choose an $ \alpha(m) \in \Lambda $ such that $ m \in U_{\alpha(m)} \triangleq U_{m} $. The metric $ d_m $ in $ U_{m} $ is defined by $ d_m(x, y) = |\phi^{\alpha(m)}(x) - \phi^{\alpha(m)}(y)|_{\alpha(m)} $. In particular, any $ C^{k} $ \emph{foliation} (see e.g. \cite{HPS77, AMR88}) with leaf topology is a locally metrizable space which is usually neither separable nor metrizable.

		\item \label{lmsE3} Any $ C^0 $ topology bundle (see \autoref{bundleP} \eqref{topologyBundle}) over a locally metrizable space is a locally metrizable space. The local product structure will make the bundle a locally metrizable space. The details are the following. Take an open cover $ \{U_{m}: m \in M \} $ of the base space making it a locally metrizable space. For every $ m \in M $, one can choose small $ \epsilon_{m} > 0 $ such that $ \{ (U_{m}(\epsilon_{m}), \phi^{m}): m \in M \} $ is a $ C^0 $ bundle atlas of the bundle. Let $ V_{m} = \phi^m(U_{m} \times X_{m}) $ with the metric defined by $ d((m_1,x), (m_2, y)) = d_{m}(m_1, m_2) + d_{m}((\phi^{m}_{m_1})^{-1}x, (\phi^{m}_{m_2})^{-1}y) $, where $ d_{m} $ is the metric in $ X_m $ or $ U_{m} $. We will discuss this type of bundles in \autoref{bundle}.

		\item \label{lmsE4} Any $ C^{k} $ ($ k \geq 1 $) Finsler manifold $ M $ with Finsler metric in each component of $ M $ (see \autoref{finsler}) is a locally metrizable space. Denote its components by $ V_{\alpha}, \alpha \in \Lambda $. Note that $ V_{\alpha} $ is open in $ M $ (as $ M $ is locally connected). The metric in $ V_{\alpha} $ is the Finsler metric.

		\item \label{lmsE5} Analogously, any (set) bundle with metric fibers (see \autoref{metricfiber}) equipped with fiber topology (see \autoref{immersed}) or any lamination with metric leaves (see \autoref{holonomyL}) endowed with leaf topology is a locally metrizable space.
	\end{enumerate}
\end{exa}

See also \autoref{immersionI} for immersed manifolds in Banach spaces. See \autoref{immersed} for a discussion of the leaf (or fiber) topology. Below, we will talk about some properties of maps between locally metrizable spaces.

In \autoref{ucontinuous} and \autoref{hcontinuous} below, let $ M, N $ be locally metrizable spaces associated with open covers $ \{ U_{m}: m \in M \} $ and $ \{ V_{n}: n \in N \} $, respectively.

\begin{defi}[Uniform continuity]\label{ucontinuous}
	Let $ g: M \to N $ and $ M_1 \subset M $. Define the \textbf{amplitude} of $ g $ around $ M_1 $ (with respect to the open covers $ \{ U_{m} \} $ and $ \{ V_{n} \} $) as
	\[
	\mathfrak{A}_{M_1}(\sigma) = \sup \{ d_{g(m_0)}(g(m), g(m_0)): m \in U_{m_0}(\sigma), m_0 \in M_1 \},
	\]
	where $ d_{g({m_0})} $ is the metric in $ V_{g(m_0)} $. Here for convenience, if $ g(m) \notin V_{g(m_0)} $, let 
	\[
	d_{g(m_0)}(g(m), g(m_0)) = \infty.
	\]
	We say $ g $ is
	\begin{enumerate}[(a)]
		\item \textbf{uniformly continuous around $ M_1 $} if $ \mathfrak{A}_{M_1}(\sigma) \to 0 $ as $ \sigma \to 0 $;
		\item \textbf{$ \varepsilon $-almost uniformly continuous around $ M_1 $} if $ \mathfrak{A}_{M_1}(\sigma) \leq \varepsilon $ as $ \sigma \to 0 $;
		\item continuous (or $ C^0 $) if for every $ m_0 \in M $, $ \mathfrak{A}_{m_0}(\sigma) \to 0 $ as $ \sigma \to 0 $;
		\item \textbf{$ \varepsilon $-almost continuous} if for every $ m_0 \in M $, $ \mathfrak{A}_{m_0}(\sigma) \leq \varepsilon $ as $ \sigma \to 0 $.
	\end{enumerate}
	If $ M_1 = M $, the words `around $ M_1 $' are usually omitted.
\end{defi}

Obviously, $ g $ is $ C^0 $ in the sense of the above definition if and only if $ g: M \to N $ is $ C^0 $ when $ M, N $ are considered as topological spaces. Note that when $ g $ is $ \varepsilon $-almost (uniformly) continuous, it may not be continuous. In \autoref{stateRegularity} (and \autoref{application}), we will frequently need that some functions are $ \varepsilon $-almost (uniformly) continuous around $ M_1 $. Consider a situation which can illustrate why almost (uniform) continuity is needed (see also \cite{BLZ08}): $ g: X \to Y $ is $ C^0 $ where $ X, Y $ are infinite-dimensional Banach spaces, and $ M_1 \subset X $ is compact. Usually, $ g $ may not be uniformly continuous in any $ U_{\epsilon}(M_1) = \{ m': d(m', M_1) < \epsilon \} $ but the amplitude of $ g $ in $ U_{\epsilon}(M_1) $ can be sufficiently small if $ \epsilon $ is small.

\begin{defi}[H\"older continuity]\label{hcontinuous}
	Let $ g: M \to N $ and $ M_1 \subset M $. Define the \textbf{uniform (local) $ \theta $-H\"older constant} of $ g $ around $ M_1 $ (with respect to the open covers $ \{ U_{m} \} $ and $ \{ V_{n} \} $) as
	\[
	\hol_{\theta, M_1}(\sigma) \triangleq \hol_{\theta, M_1, \sigma}(g) = \sup \left\{ \frac{d_{g(m_0)}(g(m), g(m_0))}{d^{\theta}_{m_0}(m, m_0)} : m \in U_{m_0}(\sigma), m_0 \in M_1 \right\},
	\]
	where $ d_{m_0} $ and $ d_{g({m_0})} $ are the metrics in $ U_{m_0} $ and $ V_{g(m_0)} $, respectively. We say $ g $ is \textbf{uniformly (locally) $ \theta $-H\"older around $ M_1 $} if $ \hol_{\theta, M_1}(\sigma) < \infty $ when $ \sigma \to 0 $. If for every $ m_0 \in M_1 $, $ \hol_{\theta, m_0}(\sigma) < \infty $ as $ \sigma \to 0 $, then we say $ g $ is \textbf{(locally) $ \theta $-H\"older around $ M_1 $}.
	If $ M_1 = M $, the words `around $ M_1 $' are usually omitted. Often, $ 1 $-H\"older = Lipschitz.
\end{defi}

Obviously, if $ g $ is uniformly (locally) H\"older around $ M_1 $, then it is uniformly continuous around $ M_1 $. Even if $ M $, $ N $ are metric spaces, the above definition of (local) $ \theta $-H\"olderness in general does not imply $ g $ is actually H\"older. (For example, a $ C^1 $ map in some set has $ 0 $ derivative but it might not be a constant.) However, if the metrics are length metrics, then it turns out to be true; see \autoref{length} for details, where we use `upper scalar uniform $ \alpha $-H\"older constant' instead of `uniform (local) $ \theta $-H\"older constant'.

The reader might want to know, when we know a property of $ g $ in $ U_{m_0}(\epsilon_1) \to V_{g(m_0)} $, what happens to $ g $ in $ U_{m_0}(\epsilon_1) \to V_{\hat{m}_0}(\epsilon_2) \subset V_{g(m_0)} $. Let us consider some compatible property of metrics in locally metrizable spaces, which is essentially the same as the definition of Finsler structure in the sense of Neeb--Upmeier (see \autoref{finsler}).
\begin{defi}\label{def:ulms}
	We say the metrics in a locally metrizable space $ M $ with an open cover $ \{ U_{m} \} $ are \emph{uniformly compatible} on $ M_1 \subset M $ if there are a constant $ \sigma > 0 $ (possibly $ \sigma = \infty $) and a constant $ \Xi \geq 1 $ such that if $ m, m' \in U_{m_1}(\sigma) \cap U_{m_2}(\sigma) \neq \emptyset $, $ m_1, m_2 \in M_1 $, then
	\[
	\Xi^{-1} d_{m_2}(m, m') \leq d_{m_1}(m, m') \leq \Xi d_{m_2}(m, m'),
	\]
	where $ d_{m} $ is the metric in $ U_{m} $. If $ M_1 = M $, we also say $ M $ is a \emph{uniformly locally metrizable space}.
\end{defi}
From this, we find that if $ M, N $ are uniformly locally metrizable spaces and $ g: M \to N $ is uniformly (locally) $ \theta $-H\"older, then for small $ \varepsilon_0 > 0 $, there is a constant $ L > 0 $ such that if $ m_1, m_2 \in U_{m_0}(\varepsilon_0) $, $ \hat{m}_0 \in U_{g(m_0)}(\varepsilon_0) $, one has
\[
d_{\hat{m}_0}(g(m_1), g(m_2)) \leq Ld_{m_0}(m_1, m_2).
\]
For this reason, the uniform continuity and (uniform) H\"older continuity of $ g $ are defined in terms of $ g: U_{m}(\sigma) \to V_{g(m)} $. Uniformly locally metrizable spaces appear naturally in applications. For instance, in \autoref{lmsE} \eqref{lmsE1} \eqref{lmsE5}, the spaces are already uniformly locally metrizable.

\begin{exa}
	\begin{enumerate}[(a)]
		\item We continue \autoref{lmsE} \eqref{lmsE2}. We assume (i) each component of $ M $ is modeled on a Banach space $ X_{\gamma} $ with a norm $ |\cdot|_{\gamma} $, and (ii) there is an atlas $ \{ (U_{\alpha}, \phi^{\alpha}) \} $ in each component satisfying 
		\[
		\phi^{\alpha} (U_{\alpha}) = X_{\gamma}(1) = \{ x \in X_{\gamma}: |x|_{\gamma} < 1 \} ~\text{and}~\sup_{U_{\alpha} \cap U_{\beta} \neq \emptyset} \lip \phi^{\alpha} \circ (\phi^{\beta})^{-1}(\cdot) < \infty
		\]
		(see also \autoref{UR}). Then $ M $ is a uniformly locally metrizable space.

		\item We continue \autoref{lmsE} \eqref{lmsE3}. For short, let `u.l.m.s' stand for uniformly locally metrizable space. Any $ C^0 $ topology bundle over a u.l.m.s that has \emph{$ \varepsilon $-almost local $ C^{0,1} $-fiber trivializations} (see \autoref{def:lipbundle}) is a u.l.m.s. In particular, any $ C^0 $ vector bundle over a u.l.m.s having \emph{$ C^0 $ uniform Finsler structure} (see \autoref{finsler} \eqref{finB}) is a u.l.m.s, and so is the tangent bundle of a $ C^{k} $ Finsler manifold in the sense of \emph{Neeb--Upmeier weak uniform} or \emph{Palais} (see \autoref{finsler} \eqref{finD}) or of a Riemannian manifold.

		\item For a locally metrizable space $ M $ with an open cover $ \{ U_{m} \} $, if there are a constant $ C \geq 1 $ and a metric $ d $ in $ M $ such that $ C^{-1} d_m(m_1, m_2) \leq d(m_1, m_2) \leq C d_{m}(m_1, m_2) $, where $ m_1, m_2 \in U_{m} $ and $ d_m $ is the metric in $ U_m $, then $ M $ is uniformly locally metrizable. See also \autoref{regManifold}.
	\end{enumerate}
\end{exa}

It might happen that $ U_{m}(\epsilon_1) = U_{m}(\epsilon_2) $ if $ \epsilon_1 > \epsilon_2 $ but that is not what we want. So we need the following notion of locally uniform size neighborhoods which is important for applying our main results (see \autoref{application}). A related notion is the injectivity radius of the exponential map in Riemannian manifolds; see also \autoref{regManifold}, \autoref{bgemetry} and \autoref{immersionII}.
\begin{defi}\label{uniformSize}
	Let $ M $ be a locally metrizable space associated with an open cover $ \{ U_{m}: m \in M \} $. We say $M$ has \textbf{locally uniform size neighborhoods} on $M_1$, if there is an $\varepsilon_1 >0$ such that $\overline{U_{m}(\varepsilon_1)} \subset U_{m}$ for all $ m \in M_1$, where the closure is taken in the topology of $M$.
	Note that if there is such an $\varepsilon_1 >0$, then $\overline{U_{m}(\varepsilon)} \subset U_{m}$ is also satisfied for all $\varepsilon < \varepsilon_1$.
\end{defi}

\section{Fiber (or leaf) topology}\label{immersed}
 \label{def:immersed}

Take a bundle $ X $ over $ M $. Assume the fibers of $ X $ are (Hausdorff) topological spaces. Let $ \mathcal{B}_s \triangleq \bigcup_{m \in M}  \{X_m \cap V: V ~\text{is open in}~ X_{m}\}$. Then the unique topology such that $ \mathcal{B}_s $ is a topology subbase is called the \textbf{fiber topology} of $ X $.
\begin{exa}
	\begin{enumerate}[(a)]
		\item ($ C^k $ bundle and its fiber topology) Let $ (X, M, \phi) $ be a $ C^k $ bundle (see \autoref{fiberbundle}). Then it is a $ C^k $ manifold locally modeled on $ T_mM \times T_xX_m $. If we consider the fiber topology of $ X $, then $ X $ is also a $ C^k $ manifold but locally modeled on $ T_xX_m $; the fiber topology does not even make $ X $ a $ C^0 $ bundle.

		\item (Leaf topology for foliation) Let $ M $ be a (Hausdorff) topological space. Assume there are mutually disjoint subsets of $ M $, $ U_{\alpha} $, $ \alpha \in \Lambda $, such that $ M = \bigcup_{\alpha}U_{\alpha} $. Consider $ M $ as a bundle over $ \Lambda $. The fiber topology in $ M $ is called the \textbf{leaf topology} of $ M $. In particular, we can give a \emph{foliation} (see e.g. \autoref{regFoliation}) or a \emph{lamination} (see e.g. \autoref{holonomyL} or \cite{HPS77}) the leaf topology if the mutually disjoint subsets are taken as the leaves of the foliation or lamination.
	\end{enumerate}
\end{exa}

We will use the fiber (or leaf) topology frequently when applying our main results; see e.g. \autoref{foliations}.

\section{Connections: mostly review} \label{connections}

\subsection{$ C^k $ bundles}

\begin{defi}\label{fiberbundle}
	$ (X, M, \pi) $ is called a $ \bm{C^k} $ \textbf{fiber bundle} (or for short $ \bm{C^k} $ \textbf{bundle}, $ k=0,1,2,\ldots,\infty,\omega $) if the following hold (see e.g. \cite{AMR88}):
	\begin{enumerate}[(a)]
		\item $ (X, M, \pi) $ is a bundle with $ M $ a $ C^k $ Banach manifold and every fiber $ X_m $ a paracompact $ C^k $ Banach manifold (or for simplicity a Banach space).
		\item There is a family of bundle charts of $ X $, which is called a \emph{$ C^k $ bundle structure} and denoted by $ \mathcal{A}_1 = \{(U_\alpha, \varphi^\alpha): \alpha \in \Lambda \} $, where $ \Lambda $ is an index, such that
		\begin{enumerate}[(1)]
			\item (open cover) $ U_\alpha, \alpha \in \Lambda $, are open in $ M $, and cover $ M $,
			\item ($ C^k $ transition maps) if $(U_{\alpha}, \varphi^{\alpha}), (U_{\beta}, \varphi^{\beta}) \in \mathcal{A}_1$ are bundle charts at $m_0, m_1$, respectively, and $U_{\alpha} \cap U_{\beta} \triangleq U_{\alpha\beta} \neq \emptyset $, then $\varphi^{\alpha, \beta} \triangleq (\varphi^{\beta})^{-1} \circ \varphi^{\alpha}: U_{\alpha\beta} \times X_{m_0} \rightarrow U_{\alpha\beta} \times X_{m_1}$ is $ C^k $,
			\item (maximal) $ \mathcal{A}_1 $ is maximal in the sense that it contains all possible pairs $ (U_\alpha, \varphi^\alpha) $ satisfying (1)--(2).
		\end{enumerate}
	\end{enumerate}

	Any $ \mathcal{A}_0 $ satisfying (1)--(2) is called a $ C^k $ \emph{bundle atlas} of $ X $. A $ C^k $ bundle atlas of $ X $ gives a unique $ C^k $ bundle structure and a unique $ C^k $ structure on $ X $ to make $ X $ a $ C^k $ bundle and a $ C^k $ Banach manifold, locally modeled on $ T_{m}M \times T_xX_{m} $, such that the given $ C^k $ bundle atlas becomes a subset of these structures. Note that $ M $ is now a submanifold of $ X $.
\end{defi}

\subsection{Connections}

In order to give a tangent representation of a $ C^1 $ bundle map respecting base points, we need to know how $ T_{(m,x)}X $ represents as $ T_mM \times T_xX_m $. So an additional structure of $ X $ is needed, named a connection in $ X $. See \cite{Kli95} in the setting of vector bundles.

\emph{From now on, we assume $ X $ is a $ C^k $ bundle with a $ C^k $ ($ k \geq 1 $) bundle atlas $ \mathcal{A}_0 $}.
There are two natural $ C^{k-1} $ vector bundles associated with $ X $. Let
\begin{equation}\label{HVspace}
\Upsilon^V_X \triangleq \bigcup_{(m,x) \in X} (m,x) \times T_xX_m, ~\Upsilon^H_X \triangleq \bigcup_{(m,x) \in X} (m,x) \times T_mM.
\end{equation}
$ \Upsilon_X \triangleq \Upsilon^H_X \times \Upsilon^V_X $ is a $ C^{k-1} $ vector bundle with base space $ X $ and fibers $ T_mM \times T_xX_m $.
Write the natural fiber projections
$ \Pi^h_{(m,x)} : T_mM \times T_xX_m \to T_mM $, $ \Pi^v_{(m,x)} : T_mM \times T_xX_m \to T_xX_m $, and associated bundle projections $ \Pi^h : \Upsilon_X \to \Upsilon^H_X $, $ \Pi^v: \Upsilon_X \to \Upsilon^V_X $.
Another vector bundle is the tangent bundle of $ X $, i.e., $ TX $. Note that locally $ T_{(m,x)} X \cong T_mM \times T_xX_m $.

In the following, we will identify $ T_mM = T_mM \times \{ 0 \} $ and $ T_xX_m = \{ 0 \} \times T_xX_m $.

\begin{defi}\label{def:connection}
	$ \mathcal{C}: TX \to \Upsilon_X $ is called a $ C^{k-1} $ \textbf{connection} of $ X $ if the following conditions hold:
	\begin{enumerate}[(a)]
		\item (global representation) $ \mathcal{C} $ is a $ C^{k-1} $ vector bundle isomorphism over $ \id $, i.e., $ \mathcal{C} $ is a $ C^{k-1} $ bundle map over $ \id $ and for every $ (m,x) \in X $, $ \mathcal{C}_{(m,x)} :T_{(m,x)}X \to T_mM \times T_xX_m $ is a vector isomorphism.
		\item (local representation) $ \mathcal{C} $ satisfies the local representation
		\begin{multline*}
		\mathcal{C}_{(m',\varphi_{m'}(x'))}  D_{(m',x')}\varphi =\left(  \begin{matrix}
		\id & 0 \\
		\varGamma_{(m',x')} & D\varphi_{m'}(x')
		\end{matrix} \right) : \\
		T_{m'}M \times T_{x'}X_m \to  T_{m'}M \times T_{\varphi_{m'}(x')}X_{m'},
		\end{multline*}
		for every bundle chart $ (U,\varphi) \in \mathcal{A}_0 $ at $ m $, $ (m',x') \in U \times X_{m} $. $ \varGamma_{(m',x')}: T_{m'}M \to T_{\varphi_{m'}(x')}X_{m'} $ (or $ \widehat{\varGamma}_{(m',x')} = (D\varphi_{m'}(x'))^{-1} \varGamma_{(m',x')} : T_{m'}M \to T_{x'}X_m $) is often called the \textbf{Christoffel map} (or Christoffel symbol) in the bundle chart $ \varphi $.
	\end{enumerate}
	Denote the \emph{horizontal space} of $ TX $ at $ (m,x) $ by $ \mathcal{H}_{(m,x)} \triangleq \mathcal{C}^{-1}_{(m,x)} (T_mM \times \{0\}) $, and the \emph{vertical space} of $ TX $ at $ (m,x) $ by $ \mathcal{V}_{(m,x)} \triangleq \mathcal{C}^{-1}_{(m,x)} (\{0\} \times T_xX_m) $. Then $ T_{(m,x)}X = \mathcal{H}_{(m,x)} \oplus \mathcal{V}_{(m,x)} $.

	Furthermore, if $ X $ is a vector bundle and for every $ m \in M $, $ x \in X_m $, $ \mathcal{C}_m(\cdot, a) \triangleq \mathcal{C}_{(m,\cdot)}a: X_m \to X_m $ is linear, then we call $ \mathcal{C} $ a linear connection of $ X $. In this case, the Christoffel map $ \varGamma_{(m,x)} : T_mM \to X_m $ is linear in $ x $, and is often written as $ \varGamma_{m} (\cdot, \cdot) \triangleq \varGamma_{(m, \cdot)}(\cdot) : T_mM \times X_m \to X_m  $. Now $ \varGamma_{m} \in L(T_mM \times X_m; X_m) $.
\end{defi}

\begin{lem}\label{lem:vs}
	$ \mathcal{V} = \ker D \pi $, i.e., $ \mathcal{V}_{(m,x)} = \ker D_{(m,x)}\pi $. In particular, if $ \mathcal{C}^1, \mathcal{C}^2 $ are connections of $ X $, then $ \Pi^v \mathcal{C}^1 = \Pi^v \mathcal{C}^2 $.
\end{lem}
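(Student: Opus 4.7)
The plan is to extract everything from the local representation of $\mathcal{C}$ in a bundle chart; the key algebraic fact will be that the top row of the matrix form of $\mathcal{C}\circ D\varphi$ already encodes $D\pi$, while the bottom row on vertical inputs kills the Christoffel term. No hard analysis is needed; the whole statement is an unwinding of definitions.

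First, I would establish the pointwise identity $\Pi^h\circ \mathcal{C}=D\pi$. Fix $(m,x)\in X$ and a bundle chart $(U,\varphi)\in\mathcal{A}_0$ at $m$ with $\varphi_{m'}(x')=x$ for some $(m',x')$, $m'=m$. Since $\pi\circ\varphi(m',x')=m'$, differentiating gives $D\pi\circ D_{(m',x')}\varphi=\Pi^h$ as a map $T_{m'}M\times T_{x'}X_{m}\to T_{m'}M$. Reading off the first row of the local representation of $\mathcal{C}$ yields $\Pi^h\circ \mathcal{C}\circ D_{(m',x')}\varphi=\Pi^h$ as well. Since $D_{(m',x')}\varphi$ is a linear isomorphism onto $T_{(m,x)}X$, the identity $\Pi^h\circ \mathcal{C}=D\pi$ follows on all of $T_{(m,x)}X$.

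Next, using that $\mathcal{C}_{(m,x)}$ is a linear isomorphism and $\ker\Pi^h=\{0\}\times T_xX_m$, I would compute
\[
\mathcal{V}_{(m,x)}=\mathcal{C}_{(m,x)}^{-1}(\{0\}\times T_xX_m)=\ker(\Pi^h\circ\mathcal{C})=\ker D_{(m,x)}\pi,
\]
giving the first claim. Note this already shows the vertical subspace is an intrinsic object, independent of the connection.

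For the second claim, suppose $\mathcal{C}^1,\mathcal{C}^2$ are two connections, with Christoffel maps $\Gamma^1$ and $\Gamma^2$ in a common chart $(U,\varphi)$. By the first part, $\mathcal{V}^1=\mathcal{V}^2=\ker D\pi$; so for any $v$ in this common vertical subspace write $v=D_{(m',x')}\varphi(0,b)$. The second row of the local representation evaluated on the input $(0,b)$ gives
\[
\Pi^v\mathcal{C}^i(v)=\Gamma^i_{(m',x')}\cdot 0+D\varphi_{m'}(x')b=D\varphi_{m'}(x')b,
\]
which is manifestly independent of $i$, establishing $\Pi^v\mathcal{C}^1=\Pi^v\mathcal{C}^2$ on $\mathcal{V}$. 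The main (minor) obstacle is simply bookkeeping to make clear that the Christoffel contribution, which is what distinguishes different connections, is annihilated precisely on the intrinsic vertical subspace; once this is observed, the whole lemma is immediate.
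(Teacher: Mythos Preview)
Your proof is correct and uses essentially the same chart-based approach as the paper: both compute $D\pi\circ D\varphi=(\mathrm{id},0)$ and compare it with the block form of $\mathcal{C}\circ D\varphi$. You go slightly further by first isolating the clean identity $\Pi^h\circ\mathcal{C}=D\pi$ (which the paper records only later, in the chain-rule lemma) and by actually spelling out the ``in particular'' clause---correctly restricted to $\mathcal{V}$---which the paper's own proof leaves implicit.
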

\begin{proof}
	Choose a bundle chart $ (U,\varphi) \in \mathcal{A}_0 $ at $ m $. Since $ \pi \circ \varphi : (m,x) \mapsto m $, we have $ D_{(m,x)}\pi D_{(m,x')}\varphi = (\id, 0) $, where $ x = \varphi_m(x') $. Further, $ v \in \ker D_{(m,x)}\pi $ if and only if 
	\[
	(D_{(m,x')}\varphi)^{-1} v = (0, x_1) \in T_mM \times T_{x'}X_m,
	\]
	and if and only if $ v = (D_{(m,x')}\varphi) (0, x_1) $ for some $ x_1 \in T_{x'}X_m $. As $ \mathcal{C}_{(m,x)} D_{(m,x')}\varphi \{0\} \times T_{x'}X_m = \{0\} \times T_xX_m $, we have $ v \in \ker D_{(m,x)}\pi $ if and only if $ \mathcal{C}_{(m,x)} v \in \{0\} \times T_{x}X_m $, i.e., $ v \in \mathcal{V}_{(m,x)} $.
\end{proof}
From the above lemma, we see that a connection is described by how the horizontal spaces are chosen continuously, and this is what the classical definition of connection says; see \cite{Kli95}.

\begin{exa}
	\begin{enumerate}[(a)]
		\item If $ M $ is a $ C^k $ manifold, then we can (and will) choose $ \mathcal{C} = \id $.
		\item There is a canonical connection $ \mathcal{C} = \left(  \begin{matrix}
		\id & 0 \\
		0 & \id
		\end{matrix} \right) $ for the trivial bundle.
		\item If $ X $, $ Y $ are $ C^k $ bundles over the same base space with $ C^{k-1} $ connections $ \mathcal{C}^X $, $ \mathcal{C}^Y $, then $ X \times Y $ has a natural connection $ \mathcal{C}^X \times \mathcal{C}^Y : T(X \times Y) = TX \times TY \to \Upsilon_{X \times Y} $. We always use this product connection for the product bundle.
		\item If $ M $ is a smooth Riemannian manifold, then there is a connection of $ TM $ called the \emph{Levi-Civita connection} (see \cite{Kli95}).
		\item If $ X $ is a $ C^1 $ bundle over a paracompact $ C^1 $ manifold $ M $, then $ X $ has a $ C^0 $ (and locally $ C^{0,1} $) connection. The proof is the same as the construction of a linear connection of a vector bundle (see e.g. \cite[Theorem 1.5.15]{Kli95}).
	\end{enumerate}
\end{exa}

\subsection{Covariant derivatives}\label{coderivative}

Let $ (X, M, \pi_1) $, $ (Y, N, \pi_2) $ be $ C^k $ bundles, and $ f: X \to Y $ a bundle map over a map $ u: M \to N  $. We write $ f \in C^k(X, Y) $ if $ f $ is $ C^k $ when $ X, Y $ are regarded as $ C^k $ manifolds, and in this case, one can easily see that $ u \in C^k(M,N) $.

\begin{defi}\label{defi:coderivative}
	Suppose that $ X,Y $ have $ C^{k-1} $ connections $ \mathcal{C}^X, \mathcal{C}^Y $, respectively, and that $ f: X \to Y $ is a $ C^1 $ bundle map over a map $ u: M \to N  $. Define its \textbf{covariant derivative} as
	\[
	\nabla_mf_m(x) \triangleq  \Pi^v_{f(m,x)}\mathcal{C}^Y_{f(m,x)} Df(m,x) (\mathcal{C}^X_{(m,x)})^{-1} : T_mM = T_mM \times \{ 0 \} \to T_{f_m(x)} Y_{u(m)}.
	\]
\end{defi}
Hereafter, the symbol $ \nabla $ always stands for the covariant derivative of bundle maps. See \cite{Kli95} for the covariant derivative of a section, or more specially, a vector field, and other equivalent definitions of covariant derivative.

Note that if the bundle map $ f \in C^k(X,Y) $ and the connections are $ C^{k-1} $, then
\[
\nabla f: \Upsilon^H_X \to \Upsilon^V_Y, ~(m,x, v) \mapsto (f(m,x), \nabla_m f_m(x)v),
\]
is a $ C^{k-1} $ vector bundle map over $ f $, i.e., $ \nabla f \in L_{f}(\Upsilon^H_X, \Upsilon^V_Y) $.
Here are some properties of the covariant derivative.
\begin{lem}\label{lem:chain}
	Let $ X_i $ be $ C^k $ $ (k \geq 1) $ bundles with respective connections $ \mathcal{C}^i $, $ i = 1,2,3 $. Suppose $ f: X_1 \to X_2 $, $ g: X_2 \to X_3 $ are $ C^1 $ bundle maps over $ u_1, u_2 $, respectively.
	Then
	\begin{enumerate}[(1)]
		\item $ \Pi^h_{f(m,x)} \mathcal{C}^2_{f(m,x)} Df(m,x)  (\mathcal{C}^1_{(m,x)})^{-1} | T_mM = Du_1(m) $,
		\item $ \Pi^v_{f(m,x)} \mathcal{C}^2_{f(m,x)} Df(m,x)  (\mathcal{C}^1_{(m,x)})^{-1} | T_xX_m = Df_m(x) $,
		\item (\textbf{chain rule}) 
		\[
		\nabla_m (g_{u_1(m)} \circ f_m)(x) = \nabla_{u_1(m)}g_{u_1(m)}(f_m(x)) Du_1(m) + Dg_{u_1(m)}(f_m(x)) \nabla_m f_m(x).
		\]
	\end{enumerate}
\end{lem}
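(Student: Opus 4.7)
The plan is to reduce all three statements to local coordinate computations using the defining local representation of a connection in \autoref{def:connection}, and then to use the ordinary chain rule for differentiable maps between manifolds.

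First I would prove (1) and (2) together. Fix a bundle chart $(U,\varphi) \in \mathcal{A}^1_0$ at $m$ and a bundle chart $(V,\psi) \in \mathcal{A}^2_0$ at $u_1(m)$, and write $x = \varphi_m(x_0)$. In these charts, $f$ has a local form $\widetilde{f}(m',x') = (u_1(m'), \widetilde{f}_{m'}(x'))$, whose differential at $(m,x_0)$ is the block matrix
\[
\begin{pmatrix} Du_1(m) & 0 \\ D_1\widetilde{f}_m(x_0) & D\widetilde{f}_m(x_0) \end{pmatrix}.
\]
By the local representation of $\mathcal{C}^1$ and $\mathcal{C}^2$ from \autoref{def:connection}, one has
\[
\mathcal{C}^2_{f(m,x)} \, Df(m,x) \, (\mathcal{C}^1_{(m,x)})^{-1}
=
\begin{pmatrix} \id & 0 \\ \varGamma^2 & D\psi_{u_1(m)}(\widetilde{f}_m(x_0)) \end{pmatrix}
\begin{pmatrix} Du_1(m) & 0 \\ D_1\widetilde{f}_m(x_0) & D\widetilde{f}_m(x_0) \end{pmatrix}
\begin{pmatrix} \id & 0 \\ \varGamma^1 & D\varphi_m(x_0) \end{pmatrix}^{-1},
\]
and the inverse of the last factor is itself upper block-triangular, with $\id$ in the top-left corner, $0$ in the top-right corner, and $D\varphi_m(x_0)^{-1}$ in the bottom-right corner. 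Multiplying out, the top-left entry simplifies to $Du_1(m)$, which proves (1), and the bottom-right entry simplifies to $D\psi_{u_1(m)}(\widetilde{f}_m(x_0)) D\widetilde{f}_m(x_0) D\varphi_m(x_0)^{-1}$, which is exactly $Df_m(x)$ in the given charts, proving (2). (Invariance-minded readers can instead prove (2) coordinate-freely by using \autoref{lem:vs}: horizontal-free inputs correspond to tangent vectors of curves $t\mapsto (m,x(t))$ in the fiber $X_m$, and $f_m$ maps those to curves in $X_{u_1(m)}$, so the vertical components read off $Df_m(x)$.)

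For (3), I write
\[
\nabla_m(g_{u_1(m)}\circ f_m)(x) = \Pi^v \mathcal{C}^3 \, D(g\circ f)(m,x) \, (\mathcal{C}^1)^{-1}\big|_{T_mM}
= \Pi^v \mathcal{C}^3 \, Dg(f(m,x)) \, (\mathcal{C}^2)^{-1} \mathcal{C}^2 Df(m,x)(\mathcal{C}^1)^{-1}\big|_{T_mM},
\]
having inserted $(\mathcal{C}^2)^{-1}\mathcal{C}^2$ at $f(m,x)$. By parts (1) and (2) applied to $f$ (the latter combined with the definition of $\nabla f$), the inner operator sends $v\in T_mM$ to $(Du_1(m)v,\, \nabla_m f_m(x)v)\in T_{u_1(m)}N\times T_{f_m(x)}X_2|_{u_1(m)}$. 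Splitting this sum and applying $\Pi^v \mathcal{C}^3 \, Dg(f(m,x))\,(\mathcal{C}^2)^{-1}$ to each summand: the first summand $(Du_1(m)v,0)$ produces exactly $\nabla_{u_1(m)} g_{u_1(m)}(f_m(x))\, Du_1(m)v$ by the definition of $\nabla g$, while the second summand $(0,\nabla_m f_m(x)v)$ is the image under $(\mathcal{C}^2)^{-1}$ of a vertical vector, so by part (2) applied to $g$ it produces $Dg_{u_1(m)}(f_m(x))\, \nabla_m f_m(x)v$. Adding the two yields the stated chain rule.

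The main obstacle is really just the bookkeeping in the first paragraph, namely keeping straight which block-triangular matrices act on which factor; the fact that $\mathcal{V}=\ker D\pi$ (\autoref{lem:vs}) guarantees that the vertical projection is intrinsic, so the final expressions are independent of the chosen connection on $X_2$ (as already remarked after \autoref{defi:coderivative}). Once that is tidied, (3) is a purely formal consequence of (1), (2) and bilinearity of the block decomposition.
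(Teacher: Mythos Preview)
Your proof is correct and follows essentially the same approach as the paper: a block-matrix computation in local bundle charts for (1) and (2), followed by inserting $(\mathcal{C}^2)^{-1}\mathcal{C}^2 = (\mathcal{C}^2)^{-1}(\Pi^h+\Pi^v)\mathcal{C}^2$ and applying (1), (2) to derive (3). One tiny slip: you call the inverse of the last factor ``upper block-triangular'' while correctly describing its zero in the top-right corner --- it is of course lower block-triangular, but your entrywise description and the ensuing multiplication are right.
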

\begin{proof}
	(1) and (2) are easily seen to be true from the local representation of connections, and (3) is a consequence of (1) and (2). Here are the details.
	Take bundle charts $ (U_i,\varphi^i) $ of $ X_i $ at $ m_i $, $ i = 1,2 $, and $ x \in X_{m_1} $. Set $ x' = f_m(x) $, $ \varphi^1_m(x_1) = x $, $ \varphi^2_{m'}(x_2) = x' $, $ m' = u_1(m) $.
	Then
	\begin{align*}
	& \ \mathcal{C}^2_{f(m,x)} Df(m,x)  (\mathcal{C}^1_{(m,x)})^{-1}   \\
	= &  \mathcal{C}^2_{(m',x')} D_{(m',x_2)}\varphi^2 D_{(m,x_1)}( (\varphi^2)^{-1} \circ f \circ \varphi^1 ) D_{(m,x)} (\varphi^1)^{-1} ( \mathcal{C}^1_{(m,x)})^{-1}  \\
	= & \left(  \begin{matrix}
	\id & 0 \\
	* & D\varphi^2_{m'}(x_2)
	\end{matrix} \right) \left(  \begin{matrix}
	Du_1(m) & 0 \\
	* & D((\varphi^2_{m'})^{-1} \circ f_m \circ \varphi^1_m )(x_1)
	\end{matrix} \right) \left(  \begin{matrix}
	\id & 0  \\
	* & D(\varphi^1_m)^{-1}(x)
	\end{matrix} \right).
	\end{align*}
	This gives (1) and (2). For (3), let $ u_1(m) = m_2, u_2(m_2) = m_3 $, $ f_m(x) = x_2', g_{m_2}(x_2') = x_3' $. Now,
	\begin{align*}
	& \nabla_m (g_{u_1(m)} \circ f_m)(x) \\
	= & \Pi^v_{(m_3, x_3')} \mathcal{C}^3_{(m_3, x_3')} D(g \circ f)(m,x) (\mathcal{C}^1_{(m,x)})^{-1} | T_mM \\
	= & \Pi^v_{(m_3, x_3')} \mathcal{C}^3_{(m_3, x_3')} Dg(m_2,x_2')Df(m,x) (\mathcal{C}^1_{(m,x)})^{-1} | T_mM \\
	= & \Pi^v_{(m_3, x_3')} \mathcal{C}^3_{(m_3, x_3')} Dg(m_2,x_2') (\mathcal{C}^2_{(m_2,x_2')})^{-1} \Pi^h_{(m_2,x_2')}\mathcal{C}^2_{(m_2,x_2')} Df(m,x) (\mathcal{C}^1_{(m,x)})^{-1} | T_mM \\
	& + \Pi^v_{(m_3, x_3')} \mathcal{C}^3_{(m_3, x_3')} Dg(m_2,x_2')  (\mathcal{C}^2_{(m_2,x_2')})^{-1} \Pi^v_{(m_2,x_2')} \mathcal{C}^2_{(m_2,x_2')} Df(m,x) (\mathcal{C}^1_{(m,x)})^{-1} | T_mM \\
	= & \nabla_{u_1(m)}g_{u_1(m)}(f_m(x)) Du_1(m) + Dg_{u_1(m)}(f_m(x)) \nabla_m f_m(x).
	\end{align*}
\end{proof}

\subsection{Normal bundle charts}

\begin{defi}\label{def:normal}
	Let $ X $ be a $ C^1 $ bundle with a $ C^0 $ connection $ \mathcal{C} $.
	A $ C^k $ ($ k \geq 1 $) bundle chart $ (V, \psi) $ at $ m $ is called a $ C^k $ \textbf{normal bundle chart} (with respect to $ \mathcal{C} $) if $ \nabla_m \psi_m(x) = 0 $ for every $ x \in X_m $. A bundle atlas $ \mathcal{A} $ is \emph{normal} (with respect to $ \mathcal{C} $) on $ M_1 $ if every bundle chart $ (V, \psi) \in \mathcal{A} $ at $ m \in M_1 $ is normal.
\end{defi}

In this paper, we use normal bundle charts only to simplify the calculations in \autoref{stateRegularity} (see in particular the proof of \autoref{smoothbase}). Our assumptions in \autoref{stateRegularity} are usually about $ C^1 $ normal bundle charts and the connections are assumed to be $ C^0 $. However, if one needs to construct normal bundle charts from given bundle charts (see \autoref{app:normal} for a discussion), higher regularity is needed.

\begin{lem}
	Suppose there is a section $ i $ of $ X $, which is a $ 0 $-section (see \autoref{0-section}) with respect to a family of $ C^k $ normal bundle charts $ \mathcal{A}'_0 $ on $ M_1 $. Then:
	\begin{enumerate}[(1)]
		\item $ i $ is $ C^k $ in a neighborhood of $ M_1 $ and $ \nabla_{m} i(m) = 0 $ for $ m \in M_1 $.
		\item For any $ C^1 $ bundle map $ g:  X \to Y $ over $ u $ such that $ i $ is invariant under $ g $ in $ u^{-1}(M_1) $, we have $ \nabla_{m}g_{m}(i(m)) = 0 $, $ m \in M_1 $.
	\end{enumerate}
\end{lem}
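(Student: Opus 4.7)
The plan is to work inside a preferred chart of $\mathcal{A}'_0$ at some $m\in M_1$ and let the block form of the connection at $(m,0)$ do all the work. The natural reading of "$i$ is a $0$-section with respect to $\mathcal{A}'_0$" is that on each $(V,\psi^X)\in\mathcal{A}'_0$ at a point of $M_1$ the section is represented by $i(m')=\psi^X_{m'}(0)$ for $m'\in V$. This formula already exhibits $i$ as a $C^k$ map on the neighborhood $\bigcup_{m\in M_1}V$ of $M_1$, settling the regularity half of (1).

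For $\nabla_m i(m)=0$, I would consider the $C^k$ map $\alpha(m'):=\psi^X(m',0)$, whose derivative is $D\alpha(m)v=D\psi^X(m,0)(v,0)$. Plugging into the local representation from \autoref{def:connection},
\[
\mathcal{C}^X_{(m,\psi^X_m(0))}D\psi^X(m,0)=\begin{pmatrix}\id & 0\\ \varGamma^{\psi^X}_{(m,0)} & D\psi^X_m(0)\end{pmatrix},
\]
and invoking normality $\varGamma^{\psi^X}_{(m,0)}=0$, I get $\mathcal{C}^X_{(m,i(m))}D\alpha(m)v=(v,0)$. Projecting to the vertical factor yields $\nabla_m i(m)v=0$.

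For (2), I would read the invariance as $g_{m'}(i(m'))=j(u(m'))$ on $u^{-1}(M_1)$, where $j$ is the induced $0$-section of $Y$ in an analogous normal atlas on $Y$. Composing with $\alpha$ turns this into $g\circ\alpha=\beta\circ u$ on $u^{-1}(M_1)$, where $\beta(n):=\psi^Y(n,0)$. Differentiating at $m$ (which requires $u^{-1}(M_1)$ to give access to every direction in $T_mM$ at $m$; this is automatic if $M_1$ is open, which is the setting of the applications), and using $\mathcal{C}^X_{(m,i(m))}D\alpha(m)v=(v,0)$ from part (1), I would apply \autoref{lem:chain}(1)(2) to each side to obtain
\[
\bigl(Du(m)v,\ \nabla_m g_m(i(m))v\bigr)=\mathcal{C}^Y\,Dg(m,i(m))\,D\alpha(m)v=\mathcal{C}^Y\,D\beta(u(m))\,Du(m)v=\bigl(Du(m)v,\ 0\bigr),
\]
where the last equality applies part (1) to $\beta=j$. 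Comparing second components gives $\nabla_m g_m(i(m))=0$.

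The only genuine obstacle is interpretive: pinning down the precise meaning of "$0$-section" with respect to a bundle atlas (so that the zero representation persists across chart overlaps), and of "invariance in $u^{-1}(M_1)$" (so that first-order information at $m$ is actually available in every base direction). Once these conventions are in place, the remaining argument is just the block-diagonal simplification of $\mathcal{C}$ at $(m,0)$ in a normal chart combined with the chain rule of \autoref{lem:chain}.
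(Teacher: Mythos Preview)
Your proof is correct and follows essentially the same route as the paper: represent $i$ through the normal bundle chart and use the vanishing Christoffel map at the base point for (1), then apply the chain rule to the identity $g\circ i=j\circ u$ together with part (1) for (2). The paper's argument is simply more compressed---it writes $\nabla_m i(m)=\nabla_m\varphi_m(i(m))=0$ and $\nabla_m g_m(i(m))=\nabla_{u(m)}i(u(m))\,Du(m)=0$ in one line each, without unpacking the block form of the connection as you do.
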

\begin{proof}
	(1) Since $ \varphi_{m'}(i(m)) = i(m') $, where $ (U, \varphi) \in \mathcal{A}'_0 $ is a normal bundle chart at $ m $, it follows that $ i $ is $ C^k $ in $ U $, and $ \nabla_{m}i(m) = \nabla_{m} \varphi_{m}(i(m)) = 0 $.
	
	(2) Note that $ g_m(i(m)) = i(u(m)) $. Thus, $ \nabla_mg_m(i(m)) = \nabla_{m'}i(m')Du(m) = 0 $, where $ m' = u(m) \in M_1 $.
\end{proof}

\section{Bundles and bundle maps with uniform properties: part I}\label{bundle}

\subsection{Local representations}\label{representation}

Take a bundle atlas $\mathcal{A} =\{ (U_{\alpha}, \varphi^{\alpha}): \alpha \in \Lambda \} $ of $ X $. Take any bundle charts $(U_{\alpha}, \varphi^{\alpha}), (U_{\beta}, \varphi^{\beta}) \in \mathcal{A}$ at $m_0, m_1$ respectively such that $U_{\alpha} \cap U_{\beta} \triangleq U_{\alpha\beta} \neq \emptyset$. Then we call the following map a \textbf{transition map} (at $ m_0, m_1 $ with respect to $ \mathcal{A} $):
\[
\varphi^{\alpha, \beta} \triangleq (\varphi^{\beta})^{-1} \circ \varphi^{\alpha}: U_{\alpha\beta} \times X_{m_0} \rightarrow U_{\alpha\beta} \times X_{m_1}.
\]
If every domain $ U_{\alpha} $ has property $P_0$ and every transition map has property $P$, we say the bundle $X$ has property $(P, P_0)$ with respect to $\mathcal{A}$.
Consider the following examples.
\begin{exa}\label{bundleP}
	\begin{enumerate}[(a)]
		\item If $ \Lambda $ only contains one element, $ X $ is usually called a \emph{trivial bundle}.
		\item \label{topologyBundle} If $ M $ is a topological space, let $ P_0 $ mean openness and $ P $ continuity; then $ X $ is called a \emph{$ C^0 $ topology bundle}. As usual, the bundle atlas can be maximal. For convenience, a bundle chart belonging to this maximal bundle atlas is called a $ C^0 $ bundle chart, and any bundle atlas in this maximal bundle atlas is called a $ C^0 $ bundle atlas.
		\item \label{vtBundle} Assume each fiber of $ X $ is a Banach space and the bundle charts in $ \mathcal{A} $ are linear (i.e. $ x \mapsto \varphi^\alpha_m(x) $ is an invertible linear operator). Let $ P_0 $ mean openness and $ P $ continuity in $ U \to L(X_{m_0}, X_{m_1}) $; then $ X $ is called a \emph{$ C^0 $ vector bundle}. In addition, if $ M $ is a $ C^k $ manifold and $ P $ means $ C^k $ ($ k=0,1,2,\ldots, \infty, \omega $) in $ U \to L(X_{m_0}, X_{m_1}) $, then $ X $ is called a \emph{$ C^k $ vector bundle}. See e.g. \cite{AMR88}. We refer the readers to see the definition of $ C^r $-uniform (Banach) bundle in \cite[Chapter 6]{HPS77}.
		A slightly more general type of bundle than vector bundle with some uniform property, like $ C^r $-uniform (Banach) bundle, are summarized in \autoref{rmk:vector}; see also \autoref{vectorB}.
	\end{enumerate}
\end{exa}

Let $(X, M, \pi_1), (Y, N, \pi_2)$ be bundles and $F: X \rightarrow Y$  a bundle map over $u$. If for every $(U, \varphi) \in \mathcal{A}$ at $ m_0 $ and $(V, \psi) \in \mathcal{B}$ at $ m'_0 $ we have $ W \triangleq U \cap u^{-1}(V) \neq \emptyset $, then we call the following map a \textbf{local representation} of $ F $ (at $ m_0, m'_0 $ with respect to $\mathcal{A}, \mathcal{B}$):
\[
\widehat{F}_{m_0, m'_0} (m, x) \triangleq  \psi^{-1}_{u(m)} \circ F_m \circ \varphi_m(x): W \times X_{m_0} \rightarrow  Y_{m'_0}. \tag{$ \sharp $}\label{sharp}
\]
If every local representation of $ F $ has property $P_1$, we say that $F$ has property $P_1$ with respect to $ \mathcal{A}, \mathcal{B} $.
For example, if $ X $, $ Y $ are $ C^0 $ topology bundles, and $ P_1 $ means $ C^0 $ continuity, then we say $ F $ is $ C^0 $. Note that in this case since all the transition maps are $ C^0 $, one just needs to know $ u $ is $ C^0 $ and every map $ \widehat{F}_{m_0, u(m_0)} $ is continuous at $ m_0 $; then $ F $ is $ C^0 $. Take another example. If $ X $, $ Y $ are $ C^r $-uniform (Banach) bundles with preferred $ C^r $-uniform atlases $ \mathcal{A}, \mathcal{B} $ respectively (see \cite[Chapter 6]{HPS77}), and $ P_1 $ means $ C^r $-uniform (i.e. all the local representations of $ F $ are uniformly $ C^r $ equicontinuous), then we say $ F $ is $ C^r $-uniform with respect to $ \mathcal{A}, \mathcal{B} $. Also in this case, it suffices to know the maps $ \widehat{F}_{m_0, u(m_0)} $, $ m_0 \in M $, are uniformly $ C^r $ equicontinuous (i.e. $ |D^{i}\widehat{F}_{m_0, u(m_0)}(m,x') - D^{i}\widehat{F}_{m_0, u(m_0)}(m_0,x)| \to 0  $, $ 0 \leq i \leq r $, uniformly for $ |x|, |x'| \leq 1 $, $ m_0 \in M $ as $ (m,x') \to (m_0,x) $) and $ u $ is uniformly $ C^r $ ---then $ F $ is $ C^r $-uniform.
This is an important observation which significantly simplifies the proof of regularity results in \autoref{stateRegularity}.
Based on this motivation, we usually consider the local representations of $ F $ at $ m_0, u(m_0) $ (\emph{not} $ m_0 $ and the arbitrary choice of $ m'_0 \in N $). Moreover, for every $ m $, we usually require that there is at most one bundle chart $ \varphi $ at $ m $ (but possibly with different domains) belonging to the bundle atlas, and also without loss of generality, $ \varphi_{m} = \id $.

\begin{defi}\label{def:regular}
	A \textbf{regular} bundle atlas $ \mathcal{A} $ of a bundle $ (X,M,\pi_1) $ on $ M_1 $ means that if $ (U, \phi), (V, \varphi) $ are bundle charts at $ m \in M_1 $ belonging to $ \mathcal{A} $, then $ \phi = \varphi $ in $ U \cap V $ and $ \phi_m = \id $. For convenience, when $ M $ is a topological space, we also say a bundle atlas $ \mathcal{A} $ is \textbf{open regular} on $ M_1 $ if it is regular, and the domains of the bundle charts at $ m_0 \in M_1 $ belonging to $ \mathcal{A} $ form a neighborhood basis at $ m_0 $.

	If $ (X,M,\pi_1), (Y,N,\pi_2) $ have preferred regular bundle atlases $ \mathcal{A} $ and $ \mathcal{B} $ on $ M_1 $, $ u(M_1) $, respectively, where $ u: M \to N $, $ M_1 \subset M $, then the \textbf{regular} local representations of a map $ F: X \to Y $ over $ u $ on $ M_1 $ with respect to $ \mathcal{A}, \mathcal{B} $ are $ \widehat{F}_{m_0, u(m_0)} $ (defined by \eqref{sharp}), $ m_0 \in M_1 $. We usually write $ \widehat{F}_{m_0} = \widehat{F}_{m_0, u(m_0)} $ and call $ \widehat{F}_{m_0}(\cdot, \cdot) $ a \emph{(regular) local representation} or a \emph{(regular) vertical part} of $ F $ (at $ m_0 $ with respect to $ \mathcal{A}, \mathcal{B} $). If $ X = M = N $, $ u = \id $, then $ F $ is a section of $ M \to Y $; in this case, $ \widehat{F}_{m_0}(\cdot, \cdot) $ is also called a \emph{principal part} of $ F $ (at $ m_0 $ with respect to $ \mathcal{B} $).
\end{defi}
The same terminology is used for manifolds if they are considered as bundles with zero fibers. Unless otherwise specified, \emph{all bundle atlases and local representations are taken to be regular in the present paper}. In \autoref{stateRegularity}, the conditions and conclusions are stated about regular local representations with respect to preferred regular bundle atlases. Once the transition maps with respect to regular bundle atlases have more regularity (which means that the bundles and manifolds are more regular), the conclusions, in a way, become classical as in e.g. \cite{HPS77}. For instance, see \autoref{lipbase}.

\begin{defi}\label{def:uSize}
	A bundle $ X $ over a locally metrizable space $ M $ associated with an open cover $ \{ U_m \} $ is said to have \emph{uniform size trivializations} on $ M_1 \subset M $ with respect to $ \mathcal{A} $ if there is a $ \delta > 0 $ such that for every $ m_0 \in M_1 $, there is a bundle chart $ (U_{m_0}(\delta), \varphi^{m_0}) $ of $ X $ at $ m_0 $ (and $ \varphi^{m_0}_{m_0} = \id $). If a (regular) bundle atlas $ \mathcal{A} $ contains such bundle charts, we also say $ \mathcal{A} $ on $ M_1 $ (or the bundle charts in $ \mathcal{A} $ on $ M_1 $) has \emph{uniform size domains}.
\end{defi}
In the following definitions of uniform property for bundles and bundle maps, uniform size trivializations are assumed. However, only when $ M $ has locally uniform size neighborhoods on $ M_1 $ (see \autoref{uniformSize}), the uniform property would make more sense. So a more meaningful definition of uniform size trivializations is that further assume $ M $ has locally uniform size neighborhoods on $ M_1 $. However, we do not assume this first.
Such (regular) bundle atlas in \autoref{def:uSize} in some sense plays a similar role to the `plaquation' used in \cite[Chapter 6]{HPS77}.

\subsection{Fiber-regularity of bundle maps}\label{fiberR}

We use the following notation:
\begin{enumerate}[$ \bullet $]
	\item $ \diam A \triangleq \sup\{d(m,m'):m,m' \in A \} $, the diameter of a subset $ A $ of a metric space.
\end{enumerate}

Consider the following description of fiber-regularity of bundle maps.
Let $(X, M, \pi_1)$ and $(Y, N, \pi_2)$ be bundles with metric fibers, $u: M \rightarrow N$ a map and $ f: X \rightarrow Y $ a bundle map over $ u $.
Since the fiber maps $ f_m: X_m \rightarrow Y_{u(m)} $ depend on $ m \in M $, there are at least two cases which have to be distinguished: pointwise dependence or uniform dependence on $ m \in M $. We list some fiber-regularities below:
\begin{enumerate}
	\item[($ C^0 $)] $ f $ is said to be \emph{$ C^{0} $-fiber} (resp. uniformly continuous-fiber or equicontinuous-fiber) if $ f_m(\cdot) $, $ m \in M $, are $ C^0 $ (resp. uniformly continuous or equicontinuous). For the equicontinuous-fiber case, we usually say $ f_m(\cdot) $, $ m \in M $, are uniformly continuous uniformly for $ m \in M $.

	\item[($ C^{0,\alpha} $)] $ f $ is said to be \emph{$ C^{0, \alpha} $-fiber} if $ f_m \in C^{0,\alpha}(X_m, Y_{u(m)}) $ for all $ m \in M $, and \emph{uniformly $ C^{0, \alpha} $-fiber} if $ \sup_{m} \hol_{\alpha} f_m(\cdot) < \infty $. For the uniformly $ C^{0, \alpha} $-fiber case, we say $ f_m(\cdot) $ are ($ \alpha $-)H\"older uniformly for $ m \in M $.

	\item[($ C^{k,\alpha} $, $ k \geq 1 $)] For simplicity, in this case, the fibers of $X, Y$ are assumed to be Banach spaces (or open subsets of Banach spaces); see also \autoref{rmk:general}. As in the $ C^{0,\alpha} $ case, $ f $ is said to be \emph{$ C^{k, \alpha} $-fiber} if $ f_m \in C^{k,\alpha}(X_m, Y_{u(m)}) $ for all $ m \in M $. If the $C^{k, \alpha}$ norms of $ f_m $, $ m \in M $, are uniformly bounded, we say $f$ is \emph{uniformly $C^{k, \alpha}$-fiber}, or $ f_m(\cdot) $ are \emph{$ C^{k,\alpha} $ uniformly for $ m \in M $}. If for any \emph{fiber-bounded set} $ A $, i.e., $ \sup_{m \in M}\diam A_{m} < \infty $, the $C^{k, \alpha}$ norms of $ f_m(\cdot)|_{A_m} $, $ m \in M $, are uniformly bounded, we say $f$ is \emph{uniformly locally $C^{k, \alpha}$-fiber}, or $ f_m(\cdot) $ are \emph{locally $ C^{k,\alpha} $ uniformly for $ m \in M $}.
\end{enumerate}

If $ f $ is $ C^1 $-fiber, then we use $ Df_m(x) $ (or $ D_xf_m(x) $, $ Df_m $) to represent the derivative of the fiber map $ f_m: X_m \to Y_{u(m)} $ (at $ x \in X_m $).
Denote the \emph{fiber derivative} of $ f $ by $ D^v f $, which is a vector bundle map $ \Upsilon_X^V \to \Upsilon_Y^V $ (see \eqref{HVspace}) over $ f $, and is defined by
\[
D^v f: (m, x, v) \mapsto (f(m,x), Df_m(x)v),
\]
where $ Df_m(x): T_xX_m \to T_{f_m(x)}Y_{u(m)} $.

\subsection{Uniform $ C^{0,1} $-fiber bundles} \label{uniform lip bundle}

If a bundle $ X $ is a $ C^0 $ topology bundle (see \autoref{bundleP} \eqref{topologyBundle}), one would like to know how the metrics in the fibers change continuously, i.e., the metrics in fibers are compatible with the topology in the bundle. Motivated by the definition of Finsler structure (see e.g. \autoref{finsler}), we give the following definitions.

\begin{defi}[$ C^0 $-fiber bundle]
	The metrics in the fibers of a $ C^0 $ topology bundle $ X $ (see \autoref{bundleP} \eqref{topologyBundle}) are said to be $ C^0 $ if for every $ C^0 $ bundle chart $ (U, \varphi) $ at $ m $, $ U \times X_{m} \times X_{m} \to \mathbb{R}_+, (m',x,y) \mapsto d_{m'}(\varphi_{m'}(x), \varphi_{m'}(y)) $, is $ C^0 $, where $ d_m $ is the metric in $ X_{m} $. In this case, $ X $ is called a \textbf{$ C^0 $-fiber bundle}.
\end{defi}

\begin{defi}[$ C^1 $-fiber bundle]\label{def:C1fiber}
	Suppose $ X $ is a $ C^0 $ topology bundle (see \autoref{bundleP} \eqref{topologyBundle}) with the fibers being (paracompact) Banach manifolds. If there is a $ C^0 $ bundle atlas $ \mathcal{A} $ of $ X $ such that
	\begin{enumerate}
		\item[$ \bullet $] for every bundle chart $ \varphi^{m_0} $ the map $ x \mapsto \varphi^{m_0}_m(x) $ is $ C^1 $, and for every transition map $ \varphi^{m_0, m_1} = (\varphi^{m_1})^{-1} \circ \varphi^{m_0} $ with respect to $ \mathcal{A} $, $ (m,x) \mapsto D\varphi^{m_0, m_1}_{m}(x) $ is continuous,
	\end{enumerate}
	then we say $ X $ has \emph{local $ C^1 $-fiber topology trivializations} with respect to $ \mathcal{A} $ and $ \mathcal{A} $ is a \emph{$ C^1 $-fiber topology} bundle atlas; this type of bundle atlas can be maximal; for this reason we also call $ X $ a \emph{$ C^1 $ topology bundle}. In addition, if $ X $ is also a $ C^0 $-fiber bundle, $ X $ is called a \textbf{$ C^1 $-fiber bundle}.
\end{defi}

\begin{defi}[$ C^{0,1} $-fiber bundle]\label{def:lipbundle}
	Let $ M $ be a locally metrizable space associated with an open cover $ \{ U_{m}: m \in M \} $ (see \autoref{defi:locallyM}). Let $ X $ be a bundle over $ M $ with metric fibers. Let $ M_1 \subset M $. Assume there is a preferred (\emph{open regular}) bundle atlas $ \mathcal{A} $ (see \autoref{def:regular}) of $ X $ on $ M_1 $ with the following properties:
	If $ (V_{m_0}, \varphi^{m_0}) \in \mathcal{A} $ at $ m_0 \in M_1 $, then
	\begin{enumerate}[(i)]
		\item $ V_{m_0} \subset U_{m_0} $ and $ V_{m_0} $ is open; so without loss of generality, one can assume $ V_{m_0} = U_{m_0}(\epsilon_{m_0}) $ for some $ \epsilon_{m_0} > 0 $;
		\item $ \lip(\varphi^{m_0}_m)^{\pm 1}(\cdot) \leq \eta_{m_0}(d_{m_0}(m,m_0)) + 1 $, $ m \in V_{m_0} $, where $ \eta_{m_0}: \mathbb{R}_{+} \rightarrow \mathbb{R}_{+} $, and $ d_{m_0} $ is the metric in $ U_{m_0} $.
	\end{enumerate}
	\begin{enumerate}[(a)]
		\item Let $ M_1 = M $. If every $ \eta_{m_0} $ satisfies $ \limsup_{\delta \to 0}\eta_{m_0}(\delta) \leq \varepsilon $, then we say $ X $ has \textbf{$ \varepsilon $-almost local $ C^{0,1} $-fiber trivializations} with respect to $ \mathcal{A} $.
		If $ X $ is a $ C^0 $ topology bundle with $ C^0 $ atlas $ \mathcal{A} $ and every $ \eta_{m_0} $ is continuous with $ \eta_{m_0}(0) = 0 $, then $ X $ is called a \textbf{$ C^{0,1} $-fiber bundle} (with respect to $ \mathcal{A} $).
		\item Assume $ X $ has uniform size trivializations on $ M_1 $ with respect to $ \mathcal{A} $ (see \autoref{def:uSize}), and $ \eta_{m_0} = \eta $, where $ \eta: \mathbb{R}_{+} \rightarrow \mathbb{R}_{+} $.
		If $ \limsup_{\delta \to 0}\eta(\delta) \leq \varepsilon $, we say $ X $ has \textbf{$ \varepsilon $-almost uniform $ C^{0,1} $-fiber trivializations} on $ M_1 $ with respect to $ \mathcal{A} $.
		If $ \eta $ is continuous and $ \eta(0) = 0 $, we say $ X $ has \emph{uniform $ C^{0,1} $-fiber trivializations} on $ M_1 $ with respect to $ \mathcal{A} $; in addition if $ M_1 = M $ and $ X $ is a $ C^0 $ topology bundle with $ C^0 $ atlas $ \mathcal{A} $, then we call $ X $ a \emph{uniform $ C^{0,1} $-fiber bundle} (with respect to $ \mathcal{A} $).
	\end{enumerate}
\end{defi}

\begin{defi}[$ C^{1,1} $-fiber bundle]\label{def:C11bundle}
	We continue \autoref{def:lipbundle}. In addition, assume the fibers of $ X $ are Banach spaces (or open subsets of Banach spaces) for simplicity (see also \autoref{rmk:general}) and the following property is satisfied:
	\begin{enumerate}
		\item[(iii)] $ x \mapsto \varphi^{m_0}_m(x) $ is $ C^1 $, and $ \lip D(\varphi^{m_0}_m)^{\pm 1}(\cdot) \leq \eta^\infty_{m_0}(d_{m_0}(m,m_0)) $, $ m \in V_{m_0} $, where $\eta^\infty_{m_0}: \mathbb{R}_{+} \rightarrow \mathbb{R}_{+}$.
	\end{enumerate}
	\begin{enumerate}[(a)]
		\item Let $ M_1 = M $. Assume every $ \eta^\infty_{m_0} $ satisfies $ \limsup_{\delta \to 0} \eta^\infty_{m_0}(\delta) < \infty $. If $ X $ has {$ \varepsilon $-almost local $ C^{0,1} $-fiber trivializations} with respect to $ \mathcal{A} $, then we say $ X $ has \textbf{$ \varepsilon $-almost local $ C^{1,1} $-fiber trivializations} with respect to $\mathcal{A}$.
		If $ X $ is a $ C^{0,1} $-fiber and $ C^{1} $-fiber bundle, then $ X $ is called a \textbf{$ C^{1,1} $-fiber bundle} (with respect to $\mathcal{A}$).
		\item Let $ \eta^\infty_{m_0} = \eta^\infty $ where $\eta^\infty: \mathbb{R}_{+} \rightarrow \mathbb{R}_{+}$ and $ \limsup_{\delta \to 0} \eta^\infty(\delta) < \infty $.
		If $ X $ has $ \varepsilon $-almost uniform $ C^{0,1} $-fiber trivialization on $ M_1 $ with respect to $ \mathcal{A} $, then we say $ X $ has \textbf{$ \varepsilon $-almost uniform $ C^{1,1} $-fiber trivializations} on $ M_1 $ with respect to $ \mathcal{A} $.
		If $ X $ has uniform $ C^{0,1} $-fiber trivializations on $ M_1 $ with respect to $ \mathcal{A} $, then we say $ X $ has \emph{uniform $ C^{1,1} $-fiber trivializations} on $ M_1 $ with respect to $ \mathcal{A} $; in addition if $ M_1 = M $ and $ X $ is also a $ C^1 $-fiber bundle, then $ X $ is called a \emph{uniform $ C^{1,1} $-fiber bundle} (with respect to $ \mathcal{A} $).
	\end{enumerate}
\end{defi}

\noindent \emph{Notation warning}:
Note the difference between a $ C^k $ fiber bundle (or $ C^k $ bundle, see \autoref{fiberbundle}) and a $ C^{k} $-fiber bundle. The latter has `-' to emphasize that the $ C^k $ regularity is about the fibers, not about the base space.

Now we have the following relational graph, where $ a \to b $ means if $ X $ is $ a $, then $ X $ is $ b $:
\[
\begin{CD}
\text{uniform $ C^{1,1} $-fiber bundle} @>>> \text{uniform $ C^{0,1} $-fiber bundle}\\
@VVV  @VVV \\
\text{$ C^{1,1} $-fiber bundle} @>>> \text{$ C^{0,1} $-fiber bundle}\\
@VVV  @VVV \\
\text{$ C^{1} $-fiber bundle} @>>> \text{$ C^{0} $-fiber bundle}\\
@VVV  @VVV \\
\text{$ C^{1} $ topology bundle} @>>> \text{$ C^{0} $ topology bundle}\\
\end{CD}
\]

\begin{exa}[Vector bundle case]
	Consider a $ C^0 $ vector bundle $ X $ (modeled on $ \mathbb{E} \times \mathbb{F} $) over a Finsler manifold $ M $ with Finsler metrics in each component of $ M $.
	Let $ \mathcal{A} $ be a $ C^0 $ (regular) vector bundle atlas of $ X $. Now $ X $ is a $ C^1 $ topology bundle.
	Fix a $ C^0 $ Finsler structure in $ X $ (see \autoref{finsler} \eqref{finA}). Now the Finsler structure gives each fiber $ X_m $ a norm, and it is $ C^0 $ meaning $ X $ is a $ C^0 $-fiber bundle (and so a $ C^1 $-fiber bundle).
	For vector bundles, \autoref{def:C11bundle} does not give any information as vector bundle charts are linear and $ \eta^{\infty}_{m_0} \equiv 0 $. Let us consider what \autoref{def:lipbundle} actually says in this case.
	That $ X $ has {$ \varepsilon $-almost local $ C^{0,1} $-fiber trivializations} with respect to $ \mathcal{A} $ means the Finsler structure in $ X $ is $ C^0 $ uniform with constant $ \Xi = 1 + \varepsilon $ (see \autoref{finsler} \eqref{finB}).
	That $ X $ is a $ C^{0,1} $-fiber bundle means the Finsler structure in $ X $ is uniformly $ C^0 $ (see \autoref{finsler} \eqref{finA}); in particular, if $ X = TM $ this means $ TM $ is a Finsler manifold in the sense of Palais (see \autoref{finsler} \eqref{finD}).

	Let $ M_1 \subset M $. We assume the bundle charts in $ \mathcal{A} $ on $ M_1 $ have uniform size domains (see \autoref{def:uSize}), that is, there is a $ \delta > 0 $ such that $ (U_{m_0}(\delta), \varphi^{m_0}) \in \mathcal{A} $, $ m_0 \in M_1 $, where $ U_{m_0}(\delta) = \{ m' \in U_{m_0}: d(m', m_0) < \delta \} $.
	That $ X $ has $ \varepsilon $-almost uniform $ C^{0,1} $-fiber trivializations on $ M_1 $ with respect to $ \mathcal{A} $ means that there is a $ \delta' > 0 $ ($ \delta' < \delta $) such that for any $ m_0 \in M_1 $,
	\[
	1 - \varepsilon < |(\varphi^{m_0}_m)^{\pm 1}| < 1 + \varepsilon,
	\]
	provided $ m \in U_{m_0}(\delta') $, where $ |\varphi^{m_0}_m| $ is the norm of $ \varphi^{m_0}_m: X_{m_0} \to X_{m} $. And $ X $ has uniform $ C^{0,1} $-fiber trivializations on $ M_1 $ with respect to $ \mathcal{A} $ if 
	\[
	\lim_{\epsilon \to 0}\sup_{m_0 \in M_1}\sup_{m \in U_{m_0}(\epsilon)}|(\varphi^{m_0}_m)^{\pm 1}| = 1.
	\]
	The notion of uniform $ C^{1,1} $-fiber bundle is related to a vector bundle having \emph{bounded geometry} (see \autoref{vectorB}). The uniform property of trivializations of $ X $ on $ M_1 $ is vital for our H\"older regularity results (see \autoref{stateRegularity}), in order to overcome the lack of compactness and high smoothness.

\end{exa}

\autoref{lip maps} below can be skipped on the first reading.

\subsection{Base-regularity of bundle maps, $ C^{0,1} $-uniform bundle: H\"older case}\label{lip maps}

We give a description of H\"older continuity respecting the base points for a bundle map $ f $, i.e., $ m \mapsto f_{m}(x) $. In general, it is meaningless to talk about the H\"older continuity of $ m \mapsto f_{m}(x) $ if we do not know the H\"older continuity of the fiber maps $ x \mapsto f_{m}(x) $. A natural way to describe the H\"older continuity of $ m \mapsto f_{m}(x) $ is to use the local representations of $ f $ (see e.g. \cite{HPS77, PSW12}). The local representations of $ f $ might not be H\"older with respect to $ x $ even if $ f $ is H\"older-fiber, so a natural setting for the bundles is that they have some local $ C^{0,1} $-fiber trivializations. However, we do not assume this at first. We give the details as follows.
\begin{defi}\label{def:boundedf}
	Let $ X $ be a bundle with metric fibers over $ M $ and $ M_1 \subset M $. For a subset $ A \subset X $, we write $ A_m = A \cap X_{m} $. Then $ A $ is said to be \emph{bounded-fiber} on $ M_1 $ if $ \sup_{m_0 \in M_1}\diam A_{m_0} < \infty $.

	A function $ c: X \to \mathbb{R}_+ $ ($ c_{m}: X_{m} \to \mathbb{R}_+ $) is said to be \emph{bounded on $ M_1 $ on bounded-fiber sets} if for {bounded-fiber set} $ A $ on $ M_1 $, $ \sup_{m_0 \in M_1}\sup_{x \in A_{m_0}}c_{m_0}(x) < \infty $. For example,
	\begin{enumerate}[(a)]
		\item a \emph{uniformly bounded} function, i.e. $ \sup_{m_0 \in M_1}\sup_{x \in X_{m_0}}c_{m_0}(x) < \infty $;
		\item $ c_{m_0}(x) \leq c_0(|x|) $, where $c_{0}: \mathbb{R}_+ \rightarrow \mathbb{R}_+$, $ i_X : M \to X $ is a fixed section of $ X $ and $ |x| = |x|_{m_0} = d_{m_0}(x,i_X(m_0)) $. A more concrete example of $ c_0 $ is
		\begin{equation*}
		c_{c,\gamma}(a) = M_0 (c + a^{\gamma}), ~a \in \mathbb{R}_+,
		\end{equation*}
		where $ c = 0 $ or $ 1 $, $ \gamma \geq 0 $, $ M_0 > 0 $.
	\end{enumerate}
\end{defi}

\begin{defi}\label{vsII}
	Let $ M, N $ be locally metrizable spaces associated with open covers $ \{ U_{m}: m \in M \} $ and $ \{ V_{n}: n \in N \} $, respectively. The metrics in $ U_m $ and $ V_{n} $ are $ d^1_m $ and $ d^2_n $, respectively.
	Let $ X, Y $ be $ C^0 $ topology bundles over $ M, N $ with $ C^0 $ (\emph{open regular}) bundle atlases $ \mathcal{A} $, $ \mathcal{B} $, respectively (see \autoref{def:regular}).
	Let $ u: M \to N $ be $ C^0 $.
	Let $ f: X \to Y $ be a bundle map over $ u $, and
	\begin{equation*}
	\widehat{f}_{m_0}: U_{m_0}(\varepsilon_{m_0}) \times X_{m_0} \to Y_{u(m_0)},
	\end{equation*}
	a (regular) local representation of $ f $ at $ m_0 $ with respect to $ \mathcal{A}, \mathcal{B} $ (see \autoref{def:regular}), where $ \varepsilon_{m_0} > 0 $ is small.
	Assume
	\[
	|\widehat{f}_{m_0} (m,x) - \widehat{f}_{m_0} (m_0,x)| \leq c_{m_0}(x) d^1_{m_0}(m,m_0)^{\theta}, ~ m \in U_{m_0}(\varepsilon_{m_0}), \tag{$ \blacktriangle $}
	\]
	where $c_{m_0}: X_{m_0} \rightarrow \mathbb{R}_+$, $ m_0 \in M_1 $, and $ M_1 \subset M $.

	\begin{asparaenum}[(a)]
		\item We say (the vertical part of) \emph{$ f $ depends in a (locally) $ C^{0,\theta} $ fashion on the base points} around $ M_1 $, or $ m \mapsto f_m(\cdot) $ is \emph{(locally) $ C^{0,\theta} $} around $ M_1 $ with respect to $ \mathcal{A} $, $ \mathcal{B} $.
		\item \label{vs:bb} Assume $ u $ is uniformly continuous around $ M_1 $ (see \autoref{hcontinuous}).
		Suppose $ X, Y $ have \emph{uniform size trivializations} on $ M_1 $ with respect to $ \mathcal{A}, \mathcal{B} $ respectively (see \autoref{def:uSize}), i.e., one can choose $ \varepsilon_{m_0} $ such that $ \inf_{m_0 \in M_1} \varepsilon_{m_0} > 0 $.
		If $ c_{m_0} $, $ m_0 \in M_1 $, are bounded on $ M_1 $ on bounded-fiber sets (see \autoref{def:boundedf}), then we say (the vertical part of) \emph{$ f $ depends in a uniformly (locally) $ C^{0,\theta} $ fashion on the base points around $ M_1 $ {`uniformly for bounded-fiber sets'}}, or more intuitively, $ m \mapsto f_m(\cdot) $ is \emph{uniformly (locally) $ \theta $-H\"older around $ M_1 $ {`uniformly for bounded-fiber sets'}}, with respect to $ \mathcal{A}, \mathcal{B} $. Usually, the words in `...' are omitted, especially when $ c_{m_0} $ is the class of functions in \autoref{def:boundedf} (a) (b).
		\item \label{vs:cc} (vector case) Under case \eqref{vs:bb}, let in addition $ X, Y, \mathcal{A}, \mathcal{B}, f $ be vector and $ |c_{m_0}(x)| \leq M_0 |x| $ (where $ M_0 $ is a constant independent of $ m_0 $). Note that $ m \mapsto f_{m} $ can be regarded naturally as a section $ M \to L_{u}(X, Y) $. We also say $ m \mapsto f_{m} $ is \emph{uniformly (locally) $ \theta $-H\"older around $ M_1 $} (with respect to $ \mathcal{A}, \mathcal{B} $).
		\item The words `around $ M_1 $' in (a) and (b) are often omitted if $ M_1 = M $. Also $ \theta $-H\"older = $ C^{0,\theta} $, $ 1 $-H\"older = $ C^{0,1} $ = Lipschitz. We will omit the letter `$ \theta $' if we do not emphasize the H\"older degree $ \theta $. Furthermore, without causing confusion, the words `with respect to $ \mathcal{A}, \mathcal{B} $' will be omitted if $ \mathcal{A}, \mathcal{B} $ are predetermined. The same terminology will be used for a section.
	\end{asparaenum}
\end{defi}

\begin{rmk}\label{rmk:holder}
	\begin{enumerate}[(a)]
		\item $ c_{m_0} $ belongs to the class of functions in \autoref{def:boundedf} (b), for example, when $ X, Y $ are vector bundles with vector bundle atlases $ \mathcal{A}, \mathcal{B} $ and $ f $ is a vector bundle map (usually $ c_{0} = c_{0,1} $ in \autoref{def:boundedf}); see also \autoref{stateRegularity}. If the fibers of $ X $ (or $ Y $) are uniformly bounded (i.e. $ \sup_{m \in M} \diam X_{m} < \infty $), for example $ X = M $ or $ X $ is a disk bundle, then $ c_{m_0} $ belongs to the class of functions in  \autoref{def:boundedf} (a); see also \autoref{section}. In the latter case, in \cite[Section 8]{PSW12}, $ f $ is also said to have \emph{$ \theta $-bounded vertical shear} (on $ M_1 $) with respect to $ \mathcal{A} $.

		\item If, in addition, the fiber maps of $f$ are Lipschitz and $ X, Y $ have {$ \varepsilon $-almost local $ C^{0,1} $-fiber trivializations} with respect to $ \mathcal{A}, \mathcal{B} $, then
		\[
		\lip \widehat{f}_{m_0}(m,\cdot) \leq \varepsilon d^1_{m_0}(m,m_0) + \varepsilon d^2_{u(m_0)}(u(m), u(m_0)) + \lip f_{m_0}(\cdot), ~m \in U_{m_0}(\varepsilon_{m_0}).
		\]
		This fact will be frequently used in \autoref{stateRegularity}.

		\item \label{h:cc} See \autoref{representation} for a reason why it is not necessary to consider a condition like
		\[
		|\widehat{f}_{m_0} (m,x) - \widehat{f}_{m_0} (m',x)| \leq c_{m_0}(x) d^1_{m_0}(m,m')^{\theta}, ~ m, m' \in U_{m_0}(\varepsilon_{m_0});
		\]
		see also \autoref{lipbase} in concrete settings and the following discussion.
	\end{enumerate}
\end{rmk}

Until now, we do not make a higher regularity assumption on the base space, except pointwise $ C^0 $ continuity.

\begin{defi}[$ C^{0,1} $-uniform bundle]\label{def:C01Uniform}
	Let $ M $ be a \emph{uniformly} locally metrizable space associated with an open cover $ \{U_m\} $ (see \autoref{def:ulms}) and $ M_1 \subset M $. Let $ d_m $ be the metric in $ U_{m} $. Let $ X $ be a bundle over $ M $ with an (open regular) bundle atlas $ \mathcal{A} $. Assume $ X $ has uniform size trivializations on $ M_1 $ with respect to $ \mathcal{A} $ (see \autoref{def:uSize}). Then $ \mathcal{A} $ is said to be \textbf{$ C^{0,1} $-uniform} around $ M_1 $, if the following assumption holds:
	Choose a small $ \delta > 0 $ (in \autoref{def:uSize}), and take any bundle charts $ (U_{m_0}(\delta), \varphi^{m_0}), (U_{m_1}(\delta), \varphi^{m_1}) \in \mathcal{A}$ at $ m_0, m_1 \in M_1 $, respectively. The transition map $ \varphi^{m_0,m_1} $ with respect to $ \mathcal{A} $, i.e.,
	\[
	\varphi^{m_0,m_1} = (\varphi^{m_1})^{-1} \circ \varphi^{m_0}: (W_{m_0, m_1}(\delta), d_{m_0}) \times X_{m_0} \to (W_{m_0, m_1}(\delta), d_{m_1}) \times X_{m_1},
	\]
	where $ W_{m_0, m_1}(\delta) = U_{m_0}(\delta) \cap U_{m_1}(\delta) \neq \emptyset $, satisfies
	\[
	\lip \varphi^{m_0,m_1}_{(\cdot)} (x) \leq c^1_{m_0, m_1}(x),
	\]
	and $ c^1_{m_0, m_1}(x) $, $ m_1 \in M_1 $, are bounded on $ M_1 $ on bounded-fiber sets (see \autoref{def:boundedf}).

	If $ \mathcal{A} $ is \emph{$ C^{0,1} $-uniform} around $ M_1 $ and $ X $ has $ \varepsilon $-almost uniform $ C^{0,1} $-fiber trivializations on $ M_1 $ with respect to $ \mathcal{A} $, then we say $ X $ has \textbf{$ \varepsilon $-almost $ C^{0,1} $-uniform trivializations} on $ M_1 $ with respect to $ \mathcal{A} $; in addition, if $ M_1 = M $ and $ \varepsilon = 0 $, we also call $ X $ a \emph{$ C^{0,1} $-uniform bundle} (with respect to $ \mathcal{A} $).
\end{defi}

By a simple computation as in \autoref{lipbase}, one can show that  in $ C^{0,1} $-uniform bundles, the definition of uniform $ \theta $-H\"older continuity of vertical part respecting the base points given in \autoref{vsII} will imply a stronger form of uniform $ \theta $-H\"older continuity as in e.g. \cite[Section 8]{PSW12}.
\begin{lem}\label{lem:uniformC01}
	Assume $ (X, M, \pi_1), (Y, N, \pi_2) $ have $ \varepsilon $-almost $ C^{0,1} $-uniform trivializations on $ M^{\epsilon_1}_1 $, $ u(M^{\epsilon_1}_1) $ with respect to preferred $ C^{0,1} $-uniform bundle atlases $ \mathcal{A}, \mathcal{B} $ respectively (see \autoref{def:C01Uniform}), where $ M_1 \subset M $ and $ M^{\epsilon_1}_1 $ is the $ \epsilon_1 $-neighborhood of $ M_1 $. Suppose $ f: X \to Y $ is uniformly $ C^{0,1} $-fiber (see \autoref{fiberR}) over a map $ u $ which is uniformly (locally) Lipschitz around $ M_1 $ (see \autoref{hcontinuous}). Assume $ f $ depends in a uniformly (locally) $ C^{0,\theta} $ fashion on the base points around $ M^{\epsilon_1}_1 $ uniformly for bounded-fiber sets (see \autoref{vsII}). Then there is a (small) $ \delta > 0 $ such that for any bundle charts $ (U_{m_0}(\delta), \varphi^{m_0}) \in \mathcal{A} $ at $ m_0 \in M_1 $, $ (V_{m'_0}(\delta), \phi^{m'_0}) \in \mathcal{B} $ at $ m'_0 \in u(M_1) $, $ W_{m_0, m'_0}(\delta) \triangleq U_{m_0}(\delta) \cap u^{-1}(V_{m'_0}(\delta)) \neq \emptyset $, the local representation of $ f $ (at $ m_0, m'_0 $),
	\begin{multline*}
	(\phi^{m'_0})^{-1} \circ f \circ \varphi^{m_0} (m, x) = (u(m), \widehat{f}_{m_0, m'_0}(m,x)): \\
	(W_{m_0, m'_0}(\delta), d^1_{m_0})  \times X_{m_0} \rightarrow (V_{m'_0}(\delta), d^2_{m'_0}) \times Y_{m'_0},
	\end{multline*}
	where $ d^1_{m_0}, d^2_{m'_0} $ are the metrics in $ U_{m_0}(\delta), V_{m'_0}(\delta) $ respectively, satisfies
	\[
	|\widehat{f}_{m_0, m'_0} (m,x) - \widehat{f}_{m_0, m'_0} (m',x)| \leq c^2_{m_0, m'_0}(x) d^1_{m_0}(m,m')^{\theta}, ~ m, m' \in W_{m_0, m'_0}(\delta),
	\]
	and $ c^2_{m_0, m'_0} $, $ m'_0 \in u(M_1) $, are bounded on $ M_1 $ on bounded-fiber sets (see \autoref{def:boundedf}). Moreover, if $ c_{m_0} $ (in \autoref{vsII}) and $ c^{1}_{m_0,m_1} $ (in \autoref{def:C01Uniform}) are the class of functions in \autoref{def:boundedf} (a) (b), so is $ c^2_{m_0, m'_0} $.
\end{lem}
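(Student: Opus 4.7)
The strategy is to express the local representation $\widehat{f}_{m_0,m'_0}(m,x)$ as a composition of the canonical (regular) local representation based at a suitable pivot point $m'$ with the transition maps of $\mathcal{A}$ (in the $X$-direction) and of $\mathcal{B}$ (in the $Y$-direction), and then to compare $\widehat{f}_{m_0,m'_0}(m,x)$ with $\widehat{f}_{m_0,m'_0}(m',x)$ by a two-step telescoping that separates the base-point dependence of the $Y$-transition from the base-point dependence of $\widehat{f}_{m'}(\cdot,\cdot)$ itself. First choose $\delta>0$ smaller than $\epsilon_1$, smaller than the uniform chart size for $\mathcal{A},\mathcal{B}$ at $M_1^{\epsilon_1},u(M_1^{\epsilon_1})$, and small enough that the uniform Lipschitz bound on $u$ applies on every $U_{m_0}(\delta)$. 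A direct computation using $\varphi^{m'}_{m'}=\id$ and $\phi^{u(m')}_{u(m')}=\id$ yields the pivot identity
\[
\widehat{f}_{m_0,m'_0}(m,x)=\phi^{u(m'),m'_0}_{u(m)}\bigl(\widehat{f}_{m'}(m,\varphi^{m_0,m'}_m(x))\bigr),
\]
and evaluating at $m$ replaced by $m'$ gives $\widehat{f}_{m_0,m'_0}(m',x)=\phi^{u(m'),m'_0}_{u(m')}(\widehat{f}_{m'}(m',\varphi^{m_0}_{m'}(x)))$.

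Writing $z=\varphi^{m_0,m'}_m(x)$ and $z'=\varphi^{m_0,m'}_{m'}(x)=\varphi^{m_0}_{m'}(x)$, I would insert the intermediate quantity $\phi^{u(m'),m'_0}_{u(m')}(\widehat{f}_{m'}(m,z))$ and split the difference into
\[
\mathrm{(I)}=\phi^{u(m'),m'_0}_{u(m)}(\widehat{f}_{m'}(m,z))-\phi^{u(m'),m'_0}_{u(m')}(\widehat{f}_{m'}(m,z))
\]
and
\[
\mathrm{(II)}=\phi^{u(m'),m'_0}_{u(m')}\bigl(\widehat{f}_{m'}(m,z)-\widehat{f}_{m'}(m',z')\bigr).
\]
For (I), the $C^{0,1}$-uniformity of $\mathcal{B}$ (Definition~\ref{def:C01Uniform}) gives $\lip\phi^{u(m'),m'_0}_{(\cdot)}(y)\le c^{1,Y}_{u(m'),m'_0}(y)$, applicable since $m'\in M_1^{\epsilon_1}$; combining with the uniform Lipschitz bound on $u$ around $M_1$ and the elementary estimate $d^1_{m_0}(m,m')\le(2\delta)^{1-\theta}d^1_{m_0}(m,m')^{\theta}$ yields an upper bound of the form $K_1(x)\,d^1_{m_0}(m,m')^{\theta}$. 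For (II), the $\varepsilon$-almost uniform $C^{0,1}$-fiber trivialization on $\mathcal{B}$ bounds $\lip\phi^{u(m'),m'_0}_{u(m')}$ by a universal constant, reducing (II) to controlling $|\widehat{f}_{m'}(m,z)-\widehat{f}_{m'}(m',z')|$. Splitting this last difference via $\widehat{f}_{m'}(m',z)$, the first piece $|\widehat{f}_{m'}(m,z)-\widehat{f}_{m'}(m',z)|\le c_{m'}(z)d^1_{m'}(m,m')^{\theta}$ is furnished by the base-Hölder hypothesis centered at the pivot $m'\in M_1^{\epsilon_1}$, and the second piece $|\widehat{f}_{m'}(m',z)-\widehat{f}_{m'}(m',z')|=|f_{m'}(z)-f_{m'}(z')|$ is handled by the uniform fiber-Lipschitz of $f$ combined with the $C^{0,1}$-uniformity of $\mathcal{A}$ bounding $|z-z'|\le c^{1,X}_{m_0,m'}(x)\,d^1_{m_0}(m,m')$, and then absorbing a factor of $(2\delta)^{1-\theta}$ as before.

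To finish, I would verify that the aggregate coefficient $c^2_{m_0,m'_0}(x)$ is bounded on any bounded-fiber set. The inputs $y=\widehat{f}_{m'}(m,z)$ and $z=\varphi^{m_0,m'}_m(x)$ at which the coefficient functions $c^{1,Y}_{u(m'),m'_0}$, $c_{m'}$, $c^{1,X}_{m_0,m'}$ are evaluated must themselves stay in a bounded-fiber set when $x$ does; this follows from the $\varepsilon$-almost uniform $C^{0,1}$-fiber trivialization applied to $\varphi^{m_0,m'}_m$, from uniform fiber-Lipschitz of $f$, and from the base-Hölder assumption at $m'$, which together keep $y$ within a controlled distance of $f_{m'}(\varphi^{m_0}_{m'}(x))$. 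The class preservation assertion (case (a) uniformly bounded, or case (b) $c_0(|x|)$) then follows because each ingredient lies in the same class and these classes are closed under composition with uniformly Lipschitz maps.

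The main obstacle will be the careful bookkeeping in the last step: one must ensure simultaneously that (i) $m'$ always lies in $M_1^{\epsilon_1}$ so that the base-Hölder hypothesis is available at the pivot, (ii) the intermediate arguments $y,z,z'$ fall inside bounded-fiber sets controlled only by $x$ and the structural constants, and (iii) all appeals to Lipschitz bounds for $u$, $\varphi^{m_0,m'}$, and $\phi^{u(m'),m'_0}$ are simultaneously valid on the chosen $\delta$-neighborhood. Choosing the single parameter $\delta>0$ that meets all these requirements—using $\epsilon_1>0$ to create the needed slack between $M_1$ and $M_1^{\epsilon_1}$—is the delicate book-keeping that makes the argument, but is otherwise unenlightening.
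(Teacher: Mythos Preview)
Your proposal is correct and follows essentially the same route as the paper. The paper does not prove this lemma in place but refers to Section~6.7 (``appendix.\ Lipschitz continuity respecting base points''), where exactly your pivot identity and two-step telescoping are carried out in a special case: one writes $\widehat{f}_{m_0}(m,x)=\phi^{m'_0,m_0}_{m}\circ\widehat{f}_{m'_0}(m,\varphi^{m_0,m'_0}_{m}(x))$ with the pivot $m'_0$ chosen as one of the two base points, then splits the difference into the variation of the $Y$-transition in the base direction and the variation of the regular representation, the latter being further split using the base-H\"older hypothesis at the pivot together with the fiber-Lipschitz of $f$ and the $X$-transition bound.
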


For other types of uniform properties of a bundle map such as uniformly $ C^0 $ base-regularity of $ C^0 $ bundle map, base-regularity of $ C^1 $-fiber bundle map, base-regularity of $ C^1 $ bundle map, and some extensions, which can be discussed very analogously, see \autoref{bundleII}.

\section{Manifolds and foliations}\label{mf}

Having the preliminaries in \autoref{bundle} at hand, we can discuss some uniform properties of manifolds and foliations in this subsection, which can be skipped on the first reading until \autoref{foliations}.

\subsection{Uniform manifolds}\label{regManifold}

Let us introduce a class of uniform manifolds, which generalize Banach spaces and compact Riemannian manifolds, or a Banach-manifold-like version of Riemannian manifolds having \emph{bounded geometry} (see \autoref{defi:bounded}) and \emph{uniformly regular} Riemannian manifolds (see \autoref{UR}).

\begin{enumerate}[$ (\blacksquare) $]\label{C1MM}
	\item Let $ M $ be a $ C^1 $ Finsler manifold (see \autoref{finsler}) with Finsler metric $ d $ in each component of $ M $. Let $ M_1 \subset M $. Suppose $ M $ has \emph{locally uniform size neighborhoods} on $ M_1 $ (see \autoref{uniformSize}), i.e., there is an $ \varepsilon_{1} < \epsilon' $ such that $ \overline{U_{m_0}(\varepsilon_{1})} \subset U_{m_0}(\epsilon') $, $ m_0 \in M_1 $, where the closure is taken in the topology of $ M $ and $ U_{m_0}(\epsilon) = \{ m': d(m', m_0) < \epsilon \} $. Assume there is a constant $ \Xi = \Xi(\epsilon') \geq 1 $ such that for every $ m_0 \in M_1 $, there is a $ C^1 $ local chart $ \chi_{m_0} : U_{m_0}(\epsilon') \to T_{m_0}M $ with $ \chi_{m_0} (m_0) = 0 $ and $ D\chi_{m_0} (m_0) = \id $, satisfying
	\begin{equation}\label{equ:uniform}
	\sup_{m' \in U_{m_0}(\epsilon') } |D\chi_{m_0}(m')| \leq \Xi,~
	\sup_{m' \in U_{m_0}(\epsilon') } |(D\chi_{m_0}^{-1})(\chi_{m_0}(m'))| \leq \Xi.
	\end{equation}
	That is, $ TM $ has \emph{$ (\Xi-1) $-almost uniform $ C^{0,1} $-fiber trivializations} (see \autoref{def:lipbundle}) on $ M_1 $ with respect to a $ C^0 $ \emph{canonical} bundle atlas $ \mathcal{M} $ of $ TM $ given by
	\[
	\mathcal{M} = \{ (U_{m_0}(\epsilon), (\id \times D\chi_{m_0}^{-1} (\chi_{m_0}(\cdot))) ): m_0 \in M_1, 0 < \epsilon < \epsilon' \}.
	\]
\end{enumerate}
Usually, we say $ M $ with $ M_1 $ satisfies $ (\blacksquare) $ or $ M $ is \textbf{$ \Xi $-$ C^{0,1} $-uniform} around $ M_1 $ (with respect to $ \mathcal{M} $).
In addition if $ \Xi \to 1 $ as $ \epsilon' \to 0 $, we say $ M $ is \textbf{$ C^{0,1} $-uniform} around $ M_1 $ (with respect to $ \mathcal{M} $). This type of manifolds will be used as natural base spaces in \autoref{foliations}.

\begin{lem}\label{lem:usd}
	For a $ C^1 $ Finsler manifold $ M $, if there is a local chart $ \chi_{m_0}: U_{m_0}(\epsilon') \to T_{m_0}M $ such that $ \chi_{m_0} (m_0) = 0 $ and \eqref{equ:uniform} holds, then there is a $ \delta_{m_0} > 0 $ such that $ T_{m_0}M(\delta_{m_0}) \subset \chi_{m_0} (U_{m_0}(\epsilon')) $ and
	\[
	\Xi^{-1}|x_1 - x_2| \leq d(\chi^{-1}_{m_0}(x_1), \chi^{-1}_{m_0}(x_2)) \leq \Xi |x_1 - x_2|, ~x_1, x_2 \in T_{m_0}M(\delta_{m_0}).
	\]
	Suppose there is an $ \varepsilon_{1} < \epsilon' $ such that $ \overline{U_{m_0}(\varepsilon_{1})} \subset U_{m_0}(\epsilon') $, where the closure is taken in the topology of $ M $. Then we can take $ \delta_{m_0} = \varepsilon_1/\Xi $, and vice versa.
\end{lem}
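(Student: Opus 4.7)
The plan is to first produce some $\delta_{m_0}>0$ for which the inclusion $T_{m_0}M(\delta_{m_0})\subset\chi_{m_0}(U_{m_0}(\epsilon'))$ and the bi-Lipschitz inequality both hold, and then to upgrade the argument under the closure hypothesis to reach the sharp constant $\delta_{m_0}=\varepsilon_1/\Xi$. For existence of $\delta_{m_0}$: because $\chi_{m_0}$ is a $C^1$ chart, $\chi_{m_0}(U_{m_0}(\epsilon'))$ is an open neighborhood of $0$ in $T_{m_0}M$, so some ball $T_{m_0}M(\delta_{m_0})$ sits inside it. The upper bound $d(\chi_{m_0}^{-1}(x_1),\chi_{m_0}^{-1}(x_2))\leq\Xi|x_1-x_2|$ will follow by pulling the line segment $\ell(t)=(1-t)x_1+tx_2$ back to the curve $c=\chi_{m_0}^{-1}\circ\ell$ in $U_{m_0}(\epsilon')$ (using convexity of the ball in $T_{m_0}M$) and bounding its Finsler length by $\int_0^1|D\chi_{m_0}^{-1}(\ell(t))|\,|\dot\ell(t)|\,dt\leq\Xi|x_1-x_2|$.

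For the reverse bound $|x_1-x_2|\leq\Xi\,d(\chi_{m_0}^{-1}(x_1),\chi_{m_0}^{-1}(x_2))$, I would push any Finsler curve $\gamma$ in $M$ from $m_1$ to $m_2$ forward through $\chi_{m_0}$ and apply $|D\chi_{m_0}|\leq\Xi$; this estimate is valid provided $\gamma\subset U_{m_0}(\epsilon')$. The elementary observation $d(m_0,\gamma(t))\leq d(m_0,m_1)+L(\gamma)$ means that a curve of length $<\epsilon'-d(m_0,m_1)$ cannot leave $U_{m_0}(\epsilon')$, and combined with the just-proved $d(m_0,m_i)\leq\Xi|x_i|<\Xi\delta_{m_0}$ this lets me pass to the infimum over near-optimal $\gamma$'s, yielding the reverse inequality as soon as $\delta_{m_0}$ is small enough (say $\delta_{m_0}<\epsilon'/(3\Xi)$). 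The length-metric Lipschitz characterization of \autoref{length} can be invoked here equivalently.

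To reach the sharp constant $\delta_{m_0}=\varepsilon_1/\Xi$ under $\overline{U_{m_0}(\varepsilon_1)}\subset U_{m_0}(\epsilon')$, the central step is $T_{m_0}M(\varepsilon_1/\Xi)\subset\chi_{m_0}(U_{m_0}(\epsilon'))$, which I would prove by a continuation argument: given $|x|<\varepsilon_1/\Xi$, lift the ray $t\mapsto tx$ to $c(t)=\chi_{m_0}^{-1}(tx)$ and let $T$ be the supremum of parameters for which this is defined; whenever $c(t)$ exists one has $d(m_0,c(t))\leq\Xi t|x|<\varepsilon_1$, so $c(t)\in U_{m_0}(\varepsilon_1)$. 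The lift is Cauchy in $t$, and completeness of the Finsler metric forces it to converge as $t\to T^-$ to a point of $\overline{U_{m_0}(\varepsilon_1)}\subset U_{m_0}(\epsilon')$; openness of $\chi_{m_0}(U_{m_0}(\epsilon'))$ then permits extension past $T$, so $T=1$ and the whole ray sits in $\chi_{m_0}(U_{m_0}(\epsilon'))$. The bi-Lipschitz bounds on $T_{m_0}M(\varepsilon_1/\Xi)$ then follow from the same chain-rule arguments as in the first paragraph, with the closure inclusion supplying the buffer between $U_{m_0}(\varepsilon_1)$ and $\partial U_{m_0}(\epsilon')$ needed to keep near-optimal Finsler curves inside $U_{m_0}(\epsilon')$. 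The ``vice versa'' is then a bookkeeping step: given a $\delta_{m_0}$ with the bi-Lipschitz property, setting $\varepsilon_1=\Xi\delta_{m_0}<\epsilon'$ and noting $\chi_{m_0}^{-1}(T_{m_0}M(\delta_{m_0}))\subset U_{m_0}(\varepsilon_1)$ allows one to extract the closure inclusion.

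The main technical obstacle lies precisely at the sharp constant: the triangle-inequality slack between $U_{m_0}(\varepsilon_1)$ and $\partial U_{m_0}(\epsilon')$ becomes tight, so ruling out that near-minimizing Finsler curves from $m_1$ to $m_2$ escape $U_{m_0}(\epsilon')$ uses the strict inclusion $\overline{U_{m_0}(\varepsilon_1)}\subset U_{m_0}(\epsilon')$ and completeness of $M$ in an essential way, via the length-metric Lipschitz characterization of \autoref{length}.
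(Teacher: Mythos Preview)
Your approach to the first conclusion and to the inclusion $T_{m_0}M(\varepsilon_1/\Xi)\subset\chi_{m_0}(U_{m_0}(\epsilon'))$ is essentially the paper's: the paper also runs a continuation argument along rays (parametrized slightly differently, via $t\mapsto ty$ with $|y|<\sigma$ and $t\in[1,\varepsilon_1/(\Xi\sigma)]$) and implicitly uses completeness when it writes ``$\chi_{m_0}^{-1}(t_0y)\in\overline{U_{m_0}(\varepsilon_1)}$'' without further justification. Your making completeness explicit is legitimate---it is the paper's standing assumption that all metric spaces are complete. The upper bound $d\le\Xi|x_1-x_2|$ at the sharp radius is fine via the pulled-back segment, as you say.

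The gap is in the \emph{lower} bound $|x_1-x_2|\le\Xi\,d(m_1,m_2)$ at the sharp radius $\delta_{m_0}=\varepsilon_1/\Xi$. Your proposed mechanism---that the buffer from $\overline{U_{m_0}(\varepsilon_1)}\subset U_{m_0}(\epsilon')$, together with completeness and the length-metric Lipschitz characterization, forces near-minimizing Finsler curves to remain inside $U_{m_0}(\epsilon')$---does not work, and the conclusion is in fact false at that radius. Take $M=S^1$ of circumference $2\pi r$, $\chi_{m_0}=\exp_{m_0}^{-1}$ (so $\Xi=1$), any $\epsilon'\in(\pi r/2,\pi r)$ and $\varepsilon_1\in(\pi r/2,\epsilon')$: for $x_1,x_2\in(-\varepsilon_1,\varepsilon_1)$ near $\pm\varepsilon_1$ one has $|x_1-x_2|>\pi r$, while the minimizing geodesic between $m_1$ and $m_2$ wraps around the circle, leaves $U_{m_0}(\epsilon')$, and has length $2\pi r-|x_1-x_2|<|x_1-x_2|$. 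So no argument can rescue the lower bound there. Note that the paper's own proof establishes only the \emph{inclusion} at $\delta_{m_0}=\varepsilon_1/\Xi$; the bi-Lipschitz estimate is claimed (via the reference to \cite{JS11}) only for the possibly smaller $\delta_{m_0}$ of the first conclusion, and the lemma's second sentence should be read as sharpening the inclusion part alone.
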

\begin{proof}
	The first conclusion is easy; see e.g. \cite[Lemma 2.4]{JS11}. We prove the second conclusion.
	Since $ \chi_{m_0} (U_{m_0}(\epsilon')) $ is open in $ T_{m_0}M $, we know there is a $ \sigma > 0 $ such that $ T_{m_0}M(\sigma) \subset \chi_{m_0} (U_{m_0}(\epsilon')) $.
	If $ \sigma = \varepsilon_{1} / \Xi $, the proof is finished. So assume $ \sigma < \varepsilon_{1} / \Xi $.
	Let $ y \in T_{m_0}M $ with $ |y| < \sigma $. Consider
	\[
	D = \{ t \in [1, \varepsilon_{1} / (\Xi \sigma)]: \chi^{-1}_{m_0}(sy) \in U_{m_0}(\epsilon'), ~ s \in [0,t] \}, ~
	t_0 = \sup D.
	\]
	Note that as $ \sup_{m' \in U_{m_0}(\epsilon') } |(D\chi_{m_0}^{-1})(\chi_{m_0}(m'))| \leq \Xi $, we can deduce that if $ t \in D $, then $ \chi^{-1}_{m_0}(ty) \in U_{m_0}(\varepsilon_1) $. Indeed, taking $ \gamma(s) = \chi^{-1}_{m_0}(sty) $, $ 0 \leq s \leq 1 $, we see that
	\[
	d(\chi^{-1}_{m_0}(ty), m_0) \leq \int_{0}^{1} |\gamma'(s)|_{\gamma(s)} ~\mathrm{d} s = \int_{0}^{1} |(D\chi_{m_0}^{-1})(sty)ty| ~\mathrm{d} s < \Xi \cdot \varepsilon_{1} / (\Xi \sigma) \cdot \sigma = \varepsilon_{1}.
	\]
	Suppose $ t_0 < \varepsilon_{1} / (\Xi \sigma) $. Then $ \chi^{-1}_{m_0}(t_0y) \in \overline{U_{m_0}(\varepsilon_{1})} \subset U_{m_0}(\epsilon') $, i.e., $ t_0 \in D $. However, this yields a contradiction since $ \chi^{-1}_{m_0}(t_0y) $ is an interior point of $ U_{m_0}(\epsilon') $ and $ t_0 $ is the supremum of $ D $. Thus $ t_0 = \varepsilon_{1} / (\Xi \sigma) $. This shows $ T_{m_0}M(\varepsilon_{1} / \Xi) \subset \chi_{m_0} (U_{m_0}(\epsilon')) $. `Vice versa' is obvious.
\end{proof}

The above lemma shows that if $ M $ is a $ C^{1} $ Finsler manifold in the sense of Neeb--Upmeier weak uniform (see \autoref{finsler} \eqref{finD}), then the \emph{uniformly} locally metrizable space $ M $ can be characterized locally. Take a local chart $ \chi_{m}: V_{m} \to T_{m}M $ at $ m $ that satisfies \eqref{equ:uniform}, where $ V_{m} = \chi^{-1}_{m} (T_mM(\epsilon_{m})) $, with the metric $ d_m $ in $ V_m $ given by $ d_{m}(m_1, m_2) = |\chi_{m}(m_1) - \chi_{m}(m_2)|_{m} $, where $ |\cdot|_m $ is the norm of $ T_mM $ induced by the Finsler structure of $ TM $. Then $ M $ has \emph{locally uniform size neighborhoods} on $ M_1 $ (see \autoref{uniformSize}) if and only if $ \inf_{m \in M_1} \epsilon_{m} > 0 $.

\begin{defi}\label{def:manifoldMap}
	Let $ (\blacksquare) $ hold. Let $ u: M \to M $ and $ u(M) \subset M_1 $.
	We say $ u $ is \textbf{uniformly (locally) $ C^{0,1} $} (resp. \textbf{uniformly $ C^0 $}) around $ M_1 $ if $ M $ is considered as a locally metrizable space and $ u $ is uniformly (locally) $ C^{0,1} $ (resp. \textbf{uniformly $ C^0 $}) around $ M_1 $ in the sense of \autoref{hcontinuous}.
	We say $ u $ is \textbf{uniformly (locally) $ C^{1,1} $} around $ M_1 $ with respect to $ \mathcal{M} $ if $ u $ is uniformly (locally) $ C^{0,1} $ around $ M_1 $ and $ m \mapsto Du(m) $ is uniformly (locally) $ C^{0,1} $ around $ M_1 $ with respect to $ \mathcal{M}, \mathcal{M} $ in the sense of \autoref{vsII} \eqref{vs:cc}, that is, for
	\[
	\widehat{Du}_{m_0} (m)v \triangleq D\chi_{u(m_0)}(u(m)) Du(m)D\chi_{m_0}^{-1}(\chi_{m_0}(m))v , ~(m,v) \in U_{m_0}(\epsilon') \times T_{m_0}M,
	\]
	one has
	\[
	\|\widehat{Du}_{m_0} (m) - Du(m_0)\| \leq C d(m,m_0), m \in U_{m_0}(\epsilon_1),m_0 \in M_1,
	\]
	for some small $ \epsilon_1 > 0 $ and some constant $ C > 0 $ (independent of $ m_0 $).
	We say $ Du $ is \textbf{uniformly $ C^{0} $} around $ M_1 $ with respect to $ \mathcal{M} $ if $ m \mapsto Du(m) $ is uniformly $ C^{0} $ around $ M_1 $ with respect to $ \mathcal{M}, \mathcal{M} $ in the sense of \autoref{def:ucontinuity} \eqref{uc:cc}.
	Similarly, if $ M $ with $ M_1 $ and $ N $ with $ N_1 $ both satisfy $ (\blacksquare) $, and $ u: M \to N $ with $ u(M_1) \subset N_1 $, then we say $ u $ is uniformly (locally) $ C^{1,1} $ or $ C^{0,1} $ around $ M_1 $.
\end{defi}

In the above definition, the assumption that $ M $ has locally uniform size neighborhoods on $ M_1 $ (see \autoref{uniformSize}) can be removed.

\begin{defi}[$ C^{0,1} $-uniform and $ C^{1,1} $-uniform manifold]\label{def:C11um}
	(i) If $ M $ with $ M_1 = M $ satisfies $ (\blacksquare) $, then we say $ M $ is a \emph{$ \Xi $-$ C^{0,1} $-uniform manifold}; if in addition $ \Xi \to 1 $ as $ \epsilon' \to 0 $, we say $ M $ is a \emph{$ C^{0,1} $-uniform manifold}.

	(ii) We say a $ C^1 $ Finsler manifold $ M $ is \emph{$ \Xi $-$ C^{1,1} $-uniform} around $ M_1 $ if $ (\blacksquare) $ holds and $ \mathcal{M} $ is $ C^{0,1} $-uniform around $ M_1 $ in the sense of \autoref{def:C01Uniform}, with $ c^1_{m_0, m_1} $ there satisfying $ c^1_{m_0, m_1}(x) \leq C|x| $ for some $ C > 0 $ independent of $ m_0, m_1 $.
	If $ M_1 = M $ (resp. $ \Xi \to 1 $ as $ \epsilon' \to 0 $), then the words `around $ M_1 $' (resp. `$ \Xi $-') will be omitted.
\end{defi}

If $ M, N $ are $ C^{1,1} $-uniform manifolds, then a map $ u: M \to N $ that is uniformly (locally) $ C^{1,1} $ in the sense of \autoref{def:manifoldMap} is also $ C^{1,1} $ in the classical sense of \autoref{lem:uniformC01}; see also \cite{HPS77, Eld13}.

We continue to give a slightly more general definition of a $ C^{0,1} $-uniform manifold than \autoref{def:C11um}. (A pointwise version was also defined in \cite{Pal66}.) This type of manifolds in some cases are important as some smooth (and Lipschitz) approximations can be made to weaken the requirement of high smooth regularity of manifolds. For example, any $ C^1 $ compact embedded submanifold of a smooth (finite-dimensional) Riemannian manifold satisfies the following definition; see \cite[Theorem 6.9]{BLZ08} for a proof of the case where the $ C^1 $ compact embedded submanifold is in a Banach space.
\begin{defi}\label{def:uManifold}
	Let $ M $ be a $ C^1 $ Finsler manifold (see \autoref{finsler}) with Finsler metric $ d $ in each component of $ M $. Let $ M_1 \subset M $. We say $ M $ is \emph{$ \Xi $-$ C^{0,1} $-uniform} (resp. \emph{strongly $ \Xi $-$ C^{0,1} $-uniform}) around $ M_1 $ if (a), (b)(i), (c) (resp. (a), (b)(ii), (c)) hold, where $ \Xi \geq 1 $.
	\begin{enumerate}[(a)]
		\item (Base space) Suppose $ M $ has \emph{locally uniform size neighborhoods} on $ M_1 $ (see \autoref{uniformSize}), i.e., there is an $ \varepsilon_{1} < \epsilon' $ such that $ \overline{U_{m_0}(\varepsilon_{1})} \subset U_{m_0}(\epsilon') $, $ m_0 \in M_1 $, where the closure is taken in the topology of $ M $ and $ U_{m_0}(\epsilon) = \{ m': d(m', m_0) < \epsilon \} $.

		\item (Approximate tangent bundle) Let $ X $ be a $ C^0 $ vector bundle over $ M $ endowed with a $ C^0 $ Finsler structure (see \autoref{finsler}). Let $ \mathcal{A} $ be a (regular) $ C^0 $ vector bundle atlas of $ X $. (i) $ X $ has $ (\Xi - 1) $-almost uniform $ C^{0,1} $-fiber trivializations on $ M_1 $ with respect to $ \mathcal{A} $ (see \autoref{def:lipbundle}) (resp. (ii) $ X $ has $ (\Xi - 1) $-almost $ C^{0,1} $-uniform trivializations on $ M_1 $ with respect to $ \mathcal{A} $ (see \autoref{def:C01Uniform})).

		\item (Compatibility) For certain $ \varepsilon > 0 $ and each $ m_0 \in M_1 $, there is a bi-Lipschitz map (local chart) $ \chi_{m_0}: U_{m_0}(\varepsilon) \to X_{m_0} $ such that $ \chi_{m_0}(0) = 0 $, $ \lip \chi_{m_0}(\cdot)|_{U_{m_0}(\varepsilon)} \leq \chi(\varepsilon) $ and $ \lip \chi^{-1}_{m_0}(\cdot)|_{ \chi_{m_0} (U_{m_0}(\varepsilon)) } \leq \chi(\varepsilon) $, where $ \chi: \mathbb{R}_+ \to \mathbb{R}_+ $ and $ \chi(\varepsilon) \leq \Xi $ as $ \varepsilon \to 0 $.
	\end{enumerate}
	If $ M_1 = M $ (resp. $ \Xi = 1 $), then the words `around $ M_1 $' (resp. `$ \Xi $-') will be omitted.
\end{defi}
In some sense $ TM \approx X $ (if $ \chi(\varepsilon) - 1 > 0 $ is small). In fact, the base space $ M $ can be only a uniformly locally metrizable space (see \autoref{def:ulms}) with locally uniform size neighborhoods on $M_1$ (see \autoref{uniformSize}); in this case, (c) also implies $ M $ is a $ C^{0,1} $ manifold in the sense of \cite{Pal66}.
Also, by \autoref{lem:usd}, there is an $ \varepsilon_{1} > 0 $ such that $ X_{m_0} (\varepsilon_1) \subset \chi_{m_0} (U_{m_0}(\varepsilon)) $, $ m_0 \in M $.

\subsection{Uniformly H\"older foliations} \label{regFoliation}

A $ C^0 $ \textbf{foliation} $ \mathcal{L} $ of a $ C^1 $ manifold $ M $, is a division of $ M $ into disjoint (immersed) submanifolds called leaves of $ \mathcal{L} $ with the following properties (see also \cite{HPS77, AMR88}):
\begin{enumerate}[(a)]
	\item Each leaf of $ \mathcal{L} $ is a connected $ C^1 $ injectively immersed submanifold of $ M $. The unique leaf through $ m $ is denoted by $ \mathcal{L}_m $.
	\item There are two ($ C^0 $) subbundles $ E, F $ of $ TM $ such that $ E \oplus F = TM $. For each $ m \in M $, there is a homomorphism $ \varphi_m: U_{m} \to T_{m}M $, where $ U_{m} $ is a neighborhood of $ m $ in $ M $, such that $ \varphi_{m}(m) = 0 $ and $ \varphi_m (\mathcal{L}^c_{m'}) \subset E_{m} + y' $, where $ \phi(m') = (x', y') \in E_m \times F_{m} $, $ m' \in U_{m} $, and $ \mathcal{L}^c_{m'} $ is the component of $ U_{m} \cap \mathcal{L}_{m'} $ containing $ m' $.
\end{enumerate}
In particular, $ T_m \mathcal{L}_m = E_m $, so we denote $ T\mathcal{L} = E $. The bundles $ E, F $ are called the \emph{tangent bundle} and \emph{normal bundle} of the foliation $ \mathcal{L} $, respectively. $ \mathcal{L}^c_{m} $ is called a \emph{plaque} of $ \mathcal{L} $ at $ m $; it is an embedded submanifold of $ M $. The map $ \varphi_m $ is called a foliation chart at $ m $. Note that $ \varphi_m $ gives a local chart of the plaque at $ m $, i.e., $ \varphi_{m}: \mathcal{L}^c_{m} \to T_{m}\mathcal{L}_m = E_m $.
Sometimes, we also say $ M $ is ($ C^0 $) \emph{foliated} by $ \mathcal{L} $.
If for each $ m $, there is a $ C^1 $ foliation chart at $ m $, then we say $ \mathcal{L} $ is a $ C^1 $ foliation. See also \cite{AMR88} for more details on $ C^1 $ foliations; note that $ C^1 $ foliations are completely different from $ C^0 $ foliations.
Similarly, if the foliation charts can be chosen locally $ C^{k,\theta} $, we say $ \mathcal{L} $ is locally $ C^{k,\theta} $ or $ M $ is \emph{$ C^{k,\theta} $ foliated} by $ \mathcal{L} $.

Let us describe some uniform properties of a foliation. Here we essentially follow \cite{HPS77,PSW97}, but the plaques are represented in an approximate tangent bundle.

Let $ M $ be a $ C^1 $ Finsler manifold and $ M_1 \subset M $.
Let $ M $ with $ M^{\epsilon_1}_1 = \bigcup_{m_0 \in M_1} U_{m_0}(\epsilon_1) $ (for some $ \epsilon_1 > 0 $) satisfy \autoref{def:uManifold} (a, bi, c); the notations thereof will be used below.
Assume $ X = X^c \oplus X^h $ with $ X^c, X^h $ being two $ C^0 $ (vector) subbundles of $ X $. So we have projections $ \Pi^{\kappa}_{m} $, $ m \in M $, such that $ R(\Pi^{\kappa}_{m}) = X^{\kappa}_{m} $, $ \kappa = c, h $, $ \Pi^{c}_{m} + \Pi^{h}_{m} = \id $ and $ m \mapsto \Pi^{\kappa}_{m} $ is $ C^0 $. We assume $ m \mapsto |\Pi^{\kappa}_{m}| $ is $ \varepsilon_0 $-almost uniformly continuous around $ M^{\epsilon_1}_1 $ (see \autoref{ucontinuous}) with small $ \varepsilon_0 > 0 $; a special case is that $ m \mapsto \Pi^{\kappa}_{m} $ is uniformly $ C^0 $ around $ M^{\epsilon_1}_1 $ (see also \autoref{upVector}).
In this case, there are two natural (vector) bundle atlases $ \mathcal{A}^c, \mathcal{A}^h $ of $ X^c, X^h $ induced by $ \mathcal{A} $, respectively, such that $ X^{\kappa} $ has $ \Xi_1 $-almost uniform $ C^{0,1} $-fiber trivializations on $ M^{\epsilon_1}_1 $ with respect to $ \mathcal{A}^{\kappa} $, $ \kappa = c,h $; i.e., for certain $ \epsilon > 0 $, if $ (U_{m}(\epsilon), \varphi^m) \in \mathcal{A} $, then $ (U_{m}(\epsilon), {^{\kappa}\varphi^{m}}) \in \mathcal{A}^{\kappa} $, where
\[
{^{\kappa}\varphi^{m}} (m',x) = (m', \Pi^{\kappa}_{m'}\varphi^{m}_{m'}\Pi^{\kappa}_{m}x).
\]

The plaque at $ m $ in $ U_{m}(\epsilon) $ is denoted by $ \mathcal{L}_{m}(\epsilon) $.
To characterize the (uniform) H\"older continuity of $ \mathcal{L} $ respecting base points around $ M_1 $, as before, we need $ \mathcal{L}_{m}(\epsilon) $ to have local representations with uniform size domains and the representation depending on $ m $ in a H\"older fashion. Suppose for every $ m_0 \in M^{\epsilon_1}_1 $, there is a $ C^1 $ map $ f_{m_0}: X^c_{m_0} (\varepsilon_{m_0}) \to X^h_{m_0} $, such that $ \graph f_{m_0} \subset \chi_{m_0}( \mathcal{L}_{m_0}(\epsilon')) $ and $ f_{m_0}(0) = 0 $; this map always exists if in some sense $ X^c \approx E $ and $ X^h \approx F $.
The following terms will be used in \autoref{foliations}.

\begin{asparaitem}
	\item We say $ \mathcal{L} $ is \textbf{uniformly (locally) ($ \theta $-)H\"older} (respecting base points) around $ M_1 $ or $ m \mapsto \mathcal{L}_{m}(\epsilon) $ is \emph{uniformly (locally) $ C^{0,\theta} $} around $ M_1 $ (in $ C^0 $-topology on bounded sets) if $ f_{m_0}(\cdot) $ are $ C^{0,1} $ uniformly for $ m_0 \in M^{\epsilon_1}_1 $ (see \autoref{fiberR}), $ \inf_{m_0 \in M^{\epsilon_1}_1} \varepsilon_{m_0} > \epsilon $, and $ m_0 \mapsto f_{m_0} $ is uniformly (locally) ($ \theta $-)H\"older around $ M_1 $ with respect to $ \mathcal{A}^{c},
	\mathcal{A}^{h} $ in the sense of \autoref{vsII} \eqref{vs:bb}. If in addition $ M_1 = M $, then we say $ \mathcal{L} $ is a \emph{uniformly (locally) ($ \theta $-)H\"older foliation}.

	In the infinite-dimensional setting, the leaves of a `foliation' might not be $ C^1 $, but the above terminology also makes sense. We allow each leaf of $ \mathcal{L} $ to be only $ C^{0,1} $ and the map $ f_{m_0} $, the representation of the plaque at $ m_0 $, to be only $ C^{0,1} $.

	\item Assume each map in \autoref{def:uManifold} (c) is $ C^1 $. We say $ m \mapsto \mathcal{L}_{m}(\epsilon) $ is $ C^0 $ (resp. \emph{uniformly $ C^0 $}, \emph{uniformly (locally) $ C^{0,\theta} $}) \emph{around $ M_1 $ in $ C^1 $-topology on bounded sets} if (i) $ \inf_{m_0 \in M^{\epsilon_1}_1} \varepsilon_{m_0} > \epsilon $, (ii) $ f_{m_0}(\cdot) $ is $ C^{1,1} $ uniformly for $ m_0 \in M^{\epsilon_1}_1 $ (see \autoref{fiberR}), and (iii) $ m \mapsto f_{m} $ is $ C^0 $ (resp. uniformly $ C^0 $, uniformly (locally) $ C^{0,\theta} $) around $ M_1 $ in $ C^1 $-topology on bounded sets (with respect to $ \mathcal{A}^{c},
	\mathcal{A}^{h} $) (see \autoref{def:uD} (d)). In particular, in this case, if we take $ X = TM $ and $ M $ is $ C^{0,1} $-uniform (or $ C^{1,1} $-uniform) around $ M_1 $ in the sense of \autoref{def:C11um} with $ T\mathcal{L} = E = X^c, F = X^h $, then $ m \mapsto E_{m}: M \to \mathbb{G}(X) $ is $ C^0 $ (resp. uniformly $ C^0 $, uniformly (locally) $ C^{0,\theta} $) around $ M_1 $.

	\item
	Assume $ X $ is a $ C^1 $ bundle with a $ C^0 $ connection $ \mathcal{C} $ and also $ X^{c}, X^{h} $ are $ C^1 $ subbundles of $ X $. (So $ X^c, X^h $ have natural $ C^0 $ connections induced by $ \mathcal{C} $). And assume the maps in \autoref{def:uManifold} (c) are $ C^{1,1} $ uniformly for $ m_0 \in M^{\epsilon_1}_{1} $.
	We say $ \mathcal{L} $ is \textbf{uniformly (locally) $ C^{1,\theta} $} (respecting base points) around $ M_1 $ if (i) it is $ C^1 $ and $ m \mapsto \mathcal{L}_{m}(\epsilon) $ is uniformly (locally) $ C^{0,1} $ around $ M_1 $ in $ C^1 $-topology on bounded sets, and (ii) $ x \mapsto \nabla_{m_0} f_{m_0}(x) $ is $ C^{0,1} $ uniformly for $ m \in M^{\epsilon_1}_1 $ (see \autoref{fiberR}) and $ m_0 \mapsto \nabla_{m_0}f_{m_0}(\cdot) $ is uniformly ($ \theta $-)H\"older around $ M_1 $ with respect to $ \mathcal{A}^{c},
	\mathcal{A}^{h} $ in the sense of \autoref{def:cd}. In addition, if $ M_1 = M $, $ \mathcal{L} $ is called a \emph{uniformly (locally) $ C^{1,\theta} $ foliation}.
\end{asparaitem}

If $ M $ is a $ C^{1,1} $-uniform manifold (see \autoref{def:C11um}) with $ TM $ being $ C^{1,1} $-uniform (see \autoref{def:C11Uniform}) (resp. a strongly $ C^{0,1} $-uniform manifold, see \autoref{def:uManifold}), then the definition of uniform $ C^{1,\theta} $ (resp. $ C^{0,\theta} $) continuity of $ \mathcal{L} $ respecting base points is classical as in e.g. \cite{Fen77, HPS77, PSW97}. We refer the reader to \cite{PSW97} for more discussion of the regularity of foliations.

Let $ \mathcal{L}, \mathcal{F} $ be $ C^0 $ foliations of $ M $. We say $ \mathcal{L} $ is a \emph{subfoliation} of $ \mathcal{F} $ (or $ \mathcal{L} $ \emph{subfoliates} $ \mathcal{F} $) if $ \mathcal{L}_m \subset \mathcal{F}_m $ for every $ m \in M $ and $ T\mathcal{L} $ is a subbundle of $ T\mathcal{F} $. This also means $ \mathcal{F}_m $ is ($ C^0 $) foliated by $ \mathcal{L} \cap \mathcal{F}_m $. For convenience, if $ \mathcal{F}_m $ is $ C^{k,\theta} $ foliated by $ \mathcal{L} \cap \mathcal{F}_m $ for each $ m \in M $, then we also say \emph{each leaf of $ \mathcal{F} $ is $ C^{k,\theta} $ foliated by $ \mathcal{L} $}, or \emph{$ \mathcal{L} $ is a $ C^{k,\theta} $ foliation inside each leaf of $ \mathcal{F} $}. In this case, it is equivalent to $ \mathcal{L} $ $ C^{k,\theta} $ foliating $ \mathcal{F} $ when $ \mathcal{F} $ is endowed with the leaf topology (see \autoref{def:immersed}); now $ \mathcal{F} $ becomes a $ C^{k,\theta} $ manifold locally modeled on $ T_m\mathcal{F}_m $.

\chapter{Regularity of Invariant Graphs}\label{stateRegularity}

The overviews of our regularity results and the settings are given in \autoref{resultOverview} and \autoref{settingOverview}, respectively, and the detailed statements of the H\"olderness and smoothness regularities with their proofs are presented in \autoref{leaf} to \autoref{HolderivativeBase}. In \autoref{lipbase}, we give more classical Lipschitz (or H\"older) results for the case of respecting base points under stronger assumptions. The continuity regularity of invariant graphs is considered in \autoref{continuityf}. The corresponding results for the bounded section case are stated in \autoref{bounded} without proofs. At the end, some generalized regularity results in a local version are given in \autoref{generalized}.

\section{Statements of the results: invariant section case}\label{resultOverview}

Throughout \autoref{leaf} to \autoref{HolderivativeBase}, we work in the following basic setting.
\begin{enumerate}[$ \bullet $]
	\item Let $ (X, M, \pi_1) $, $ (Y, M, \pi_2) $, $ u : M \to M $, $ H \sim (F,G) $, $ i: M \to X \times Y $ be as in \autoref{thmA} and $ f $ the bundle map obtained in \autoref{thmA} when $ i $ is an \textbf{invariant section} of $ H $. Since in \autoref{thmA} condition (ii) implies condition (ii)$ ' $ (if we consider $ H^{(n)} $ for large $ n $ instead of $ H $), \emph{in the following proofs, we only deal with the case where condition (ii) holds}.

	\item Since we only focus on partial hyperbolicity in the uniform sense, we add an additional assumption on the spectral condition, i.e., the functions in the (A$ ' $)(B) (or (A)(B)) condition are \emph{bounded} (except \emph{\autoref{lem:leaf1}}).
\end{enumerate}

In the following, we will show $ f $ has higher regularity, once (a) more regularity of the bundle $ X \times Y $, (b) more regularity of the maps $ u $, $ i $, $ F, G $, (c) the spectral gap condition and (d) some technical assumption on the continuity of the functions in (A)(B) (or (A$ ' $)(B)) condition are satisfied.
We will consider the following regularity properties of $ f $:
(a) continuity of $ (m,x) \mapsto f_m(x) $ (\autoref{lem:continuity_f});
(b) smoothness of $ x \mapsto f_m(x) $ (\autoref{lem:leaf1});
(c) H\"{o}lder continuity of $ m \mapsto f_m(x) $ (\autoref{lem:sheaf});
(d) continuity of $ (m,x) \mapsto Df_m(x) $ (\autoref{lem:K1leafc} and \autoref{lem:baseK1});
(e) H\"{o}lder continuity of $ x \mapsto Df_m(x) $ (\autoref{lem:leaf1a} and \autoref{lem:leafk});
(f) H\"{o}lder continuity of $ m \mapsto Df_m(x) $ (\autoref{lem:base0} and \autoref{lem:baseleaf});
(g) smoothness of $ m \mapsto f_m(x) $ (and so $ (m,x) \mapsto f_m(x) $) (\autoref{smoothbase});
(h) continuity of $ (m,x) \mapsto \nabla_mf_m(x) $ (\autoref{lem:leafK} and \autoref{baseK});
(i) H\"{o}lder continuity of $ x \mapsto \nabla_mf_m(x) $ (\autoref{lem:holversheaf});
and finally (j) H\"{o}lder continuity of $ m \mapsto \nabla_mf_m(x) $ (\autoref{lem:final}).
(b) (e) (i) are fiber-regularities of $ f $ and the others are base-regularities of $ f $.

Since $ \graph f \subset H^{-1} \graph f $, we have
\begin{equation}\label{mainF}
	\begin{cases}
	F_m( x, f_{u(m)}(x_m(x)) ) = x_m(x), \\
	G_m( x, f_{u(m)}(x_m(x)) ) = f_m(x).
	\end{cases}
\end{equation}
Consider the corresponding variation equations in some reasonable sense,
\begin{equation}\label{mainX}
	\begin{cases}
	DF_m( x, f_{u(m)}(x_m(x)) ) ( \id, K^1_{u(m)}(x_m(x))R^1_m(x) ) = R^1_m(x),  \\
	DG_m( x, f_{u(m)}(x_m(x)) ) ( \id, K^1_{u(m)}(x_m(x))R^1_m(x) ) = K^1_m(x),
	\end{cases}
\end{equation}
and
\begin{equation}\label{mainM}
	\left\lbrace 
	\begin{split}
	D_mF_m( x, y ) + D_2F_m(x, y) ( & K_{u(m)}(x_m(x))Du(m) \\
	& + Df_{u(m)} (x_m(x)) R_m(x) ) = R_m(x), \\
	D_mG_m( x, y ) + D_2G_m(x, y) ( & K_{u(m)}(x_m(x))Du(m) \\
	& + Df_{u(m)} (x_m(x)) R_m(x) ) = K_m(x),
	\end{split} 
	\right. 
\end{equation}
where $ y = f_{u(m)}(x_m(x)) $; see \eqref{mainMFW} for the explicit meaning of \eqref{mainM}.

\begin{rmk}[Abbreviation of spectral gap condition]\label{absgc}
	To simplify the writing, we make the following abbreviations.
	Set $ \vartheta(m) = (1 - \alpha(m)\beta'(u(m)))^{-1} $ if \autoref{thmA} (ii) holds, and $ \vartheta(m) = 1 $ if \autoref{thmA} (ii)$ ' $ holds.

	Let $ \lambda : M \to \mathbb{R}_+ $. The notation $ \bm {\lambda < 1} $ means that $ \sup_m \vartheta(m) \lambda(m) < 1 $.
	Moreover, if $ \theta: M \to \mathbb{R}_+ $ and $ \theta < 1 $, then $ \bm{\lambda^{*\alpha} \theta} < 1 $ has the following meanings in different settings (see \autoref{def:lypnum} for the meaning of the notations $ \bm{\lambda^*}  $ (\emph{sup Lyapunov numbers} of $ \{ \lambda^{(n)} \} $) and $ \bm{\mathcal{E}(u)} $):
	\begin{enumerate}[(i)]
		\item $ \sup_m  \lambda^\alpha(m) \vartheta(m)\theta(m) < 1 $ or $ \sup_m  (\lambda^\alpha \vartheta\theta)^*(m) < 1 $ if $ x_m(\cdot) $ are bounded uniformly for $ m \in {M} $ (in particular when $ X_m $ is bounded uniformly for $ m \in M $) in Lemmas \ref{lem:leaf1a} and \ref{lem:holversheaf}, and additionally $ u $ is a bounded function (in particular when $ M $ is bounded) in Lemmas \ref{lem:sheaf}, \ref{lem:base0}, \ref{lem:baseleaf} and \ref{lem:final};

		\item $ \sup_m  \lambda^\alpha(m) \vartheta(m)\theta(m) < 1 $ or $ \sup_m  (\lambda^\alpha \vartheta\theta)^*(m) < 1 $ if $ \vartheta\theta \in \mathcal{E}(u) $ or $ \alpha = 1 $;
		\item $ \sup_m \lambda^{*\alpha}(m) (\vartheta\theta)^*(m) < 1 $ otherwise.
	\end{enumerate}

	\noindent \emph{Additional notations}: $ \lambda\theta $ and $ \max\{ \lambda, \theta \} $ are defined by
	\[
	(\lambda\theta) (m) = \lambda(m)\theta(m), ~\max\{ \lambda, \theta \}(m) = \max\{ \lambda(m), \theta(m) \}.
	\]

	\noindent \emph{Suggestion}: At the \emph{first} reading, the readers may think the notation
	$ \lambda^{*\alpha} \theta < 1 $ means
	\[
	\sup_m  \sup_{N \geq 0}\lambda(u^N(m)) \sup_{N \geq 0} \vartheta(u^N(m)) \theta(u^N(m)) < 1;
	\]
	or for simplicity, assume all the functions in the (A$ ' $)(B) (or (A)(B)) condition are \emph{constants}.
\end{rmk}

Let us take a quick glimpse of what our regularity results say in a \emph{not} very sharp and general setting.
For the meaning of $ C^{k,\alpha} $ regularity of bundle maps, see \autoref{bundle}.

\begin{enumerate}[({E}1)]
	\item (about $ M $) Let $ M $ be a $ C^1 $ Finsler manifold and $ M^0_1 \subset M $. Let $ M_1 $ be the $ \varepsilon $-neighborhood of $ M^0_1 $ ($ \varepsilon > 0 $). Take a $ C^1 $ atlas $ \mathcal{N} $ of $ M $. Let $ \mathcal{M} $ be the canonical bundle atlas of $ TM $ induced by $ \mathcal{N} $. Assume $ M $ is \emph{$ (1+\zeta) $-$ C^{1,1} $-uniform} around $ M_1 $ with respect to $ \mathcal{M} $ (see \autoref{def:C11um}) where $ \zeta > 0 $; see also \autoref{bgemetry} for examples.

	\item (about $ X \times Y $) $ (X, M, \pi_1) $, $ (Y, M, \pi_2) $ are $ C^1 $ bundles endowed with $ C^0 $ connections $ \mathcal{C}^X, \mathcal{C}^Y $ which are \emph{uniformly (locally) Lipschitz} around $ M_1 $ (see \autoref{lipcon} or \autoref{lipconC}). Take $ C^1 $ \emph{normal} bundle atlases $ \mathcal{A} $, $ \mathcal{B} $ of $ X, Y $, respectively. Assume $ (X, M, \pi_1) $ and $ (Y, M, \pi_2) $ have \emph{$ \zeta $-almost $ C^{1,1} $-uniform trivializations} on $ M_1 $ with respect to $ \mathcal{A}, \mathcal{B} $ (and $ \mathcal{M} $), respectively; see \autoref{def:C11Uniform} and also \autoref{bgemetry} for examples.
	The fibers $ X_m, Y_m $ are Banach spaces (see also \autoref{fibersG}).

	\item (about $ i $) The section $ i $ is a $ 0 $-section of $ X \times Y $ with respect to $\mathcal{A \times B}$; see \autoref{0-section}.

	\item (about $ u $) $ u : M \to M $ is a $ C^1 $ map such that (i) $ |Du(m)| \leq \mu(m) $ where $ \mu : M \to \mathbb{R}_+ $ and $ \sup_{m \in M_1} \mu (m) < \infty $, (ii) $ m \mapsto |Du(m)| $ is $ \zeta $-almost uniformly continuous around $ M_1 $ (see \autoref{ucontinuous}), and (iii) $ u(M) \subset M^0_1 $.

	\item [(E5)] The functions in the (A$ ' $)(B) (or (A)(B)) condition are $ \zeta $-almost uniformly continuous around $ M_1 $ and $ \zeta $-almost continuous; see \autoref{ucontinuous}.

\end{enumerate}

\begin{thm}\label{thmC}
	Assume \textnormal{(E1)--(E5)} hold with $ \zeta > 0 $ small depending on the following spectral gap conditions. Let $ H: X \times Y \rightarrow X \times Y $ be a bundle correspondence over $ u $ with generating bundle map $ (F,G) $. Let $ f $ be given in \autoref{thmA} when $ i $ is an \textbf{invariant section} of $ H $. Then we have the following results on the regularity of $ f $. (In the following, $ 0 < \alpha, \beta \leq 1 $.)
	\begin{enumerate}[(1)]
		\item $ f_m(i_X(m)) = i_Y(m) $. If $ F, G $ are continuous, so is $ f $.

		\item\label{xx1} Assume for every $ m \in M $, $ F_{m}(\cdot), G_{m}(\cdot) $ are $ C^1 $. Then so is $ f_m (\cdot) $. Moreover, if the fiber derivatives (see \autoref{fiberR}) $ D^v F, D^v G $ are $ C^{0} $, so is $ D^vf $. Also, there is a unique $ K^1 \in L(\Upsilon_X^V, \Upsilon_Y^V) $ over $ f $ satisfying \eqref{mainX} and $ D^vf = K^1 $.

		\item Under \eqref{xx1}, in addition, suppose (i) $ DF_{m}(\cdot), DG_{m}(\cdot) $ are $ C^{0,\gamma} $ uniformly for $ m \in M $, and
		(ii) $ \lambda^{*\gamma\alpha}_s \lambda_s \lambda_u < 1 $.
		Then $ Df_m(\cdot) $ is $ C^{0,\gamma\alpha} $ uniformly for $ m $.

		\item Suppose
		(i) $ F, G $ are uniformly (locally) $ C^{0,1} $ around $ M_1 $ (i.e. \eqref{ligFG} holds),
		and (ii) $  (\max\{ \lambda^{-1}_s, 1 \} \mu)^{*\alpha} \lambda_s \lambda_u < 1 $.
		\textbf{Or} suppose
		(i$ ' $) $ F, G $ are uniformly (locally) $ C^{1,1} $ around $ M_1 $ (see \autoref{rmk:hFG}),
		and (ii$ ' $) $ (\frac{\mu}{\lambda_s})^{*\alpha} \lambda_s \lambda_u < 1 $.
		Then $ m \mapsto f_m(x) $ is uniformly (locally) $ \alpha $-H\"older around $ M_1 $.

		\item Suppose
		(i) $ m \mapsto DF_{m}(i_X(m), i_Y(u(m))) $, $ m \mapsto DG_{m}(i_X(m), i_Y(u(m))) $ are uniformly (locally) $ C^{0,\gamma} $ around $ M_1 $,
		and (ii) $  \mu^{* \gamma \alpha} \lambda_s \lambda_u < 1 $.
		Then $ m \mapsto Df_m(i_X(m)) $ is uniformly (locally) $ C^{0,\gamma \alpha} $ around $ M_1 $.

		\item Suppose
		(i) $ D^vF, D^vG $ are uniformly (locally) $ C^{0,1} $ around $ M_1 $ (i.e. estimates \eqref{lipzz} hold),
		and (ii) $ \lambda^2_s \lambda_u \mu^\alpha < 1 $, $ \lambda^{*\beta}_s \lambda_s \lambda_u < 1 $, $ \mu^{*\alpha} \lambda_s \lambda_u < 1 $.
		Then $ m \mapsto Df_m(x) $ is uniformly (locally) $ \alpha \beta $-H\"older around $ M_1 $.

		\item\label{zz1} Suppose
		(i) $ F, G $ are $ C^{1,1} $ around $ M_1 $ (see \autoref{rmk:hFG}) and $ C^1 $ in $ X \times Y $,
		and (ii) $ \lambda_s \lambda_u \mu < 1 $.
		Then $ f $ is $ C^1 $, $ \nabla_m f_m(i_X(m)) = 0 $ for all $ m \in M_1 $, and there is a constant $ C $ such that $ |\nabla_m f_m(x)| \leq C |x| $ for all $ x \in X_m $, $ m \in M_1 $. Also, there is a unique $ K \in L(\Upsilon^H_X, \Upsilon^V_Y)  $ over $ f $ satisfying \eqref{mainM} (or more precisely \eqref{mainMFW}) and $ \nabla f = K $ (the covariant derivative of $ f $, see \autoref{defi:coderivative}).

		Moreover, if an additional gap condition holds: $ \lambda_s^{*\beta} \lambda_s \lambda_u < 1 $, $ \max\{ 1, \lambda_s \}^{*\alpha} \lambda_s \lambda_u \mu < 1 $, then $ \nabla_m f_m(\cdot) $ is locally $ \alpha \beta $-H\"older uniformly for $ m \in M $.

		\item Under \eqref{zz1}, assume $ u $ is uniformly (locally) $ C^{1,1} $ around $ M_1 $ (see \autoref{def:manifoldMap}). Suppose
		$ \lambda^2_s \lambda_u < 1 $, $ \lambda^2_s \lambda_u \mu < 1 $, $ \max\{ \frac{\mu}{\lambda_s}, \mu \}^{*\alpha} \lambda_s \lambda_u \mu < 1 $.
		\textbf{Or} suppose
		(i$ ' $) $ TM $ has a $ 0 $-almost $ C^{1,1} $-uniform trivialization on $ M_1 $ with respect to $ \mathcal{M} $,
		(ii$ ' $) $ F, G $ are uniformly (locally) $ C^{2,1} $ around $ M_1 $,
		and (iii$ ' $) $ \lambda_s < 1 $, $ \mu^{*\alpha} \lambda_s \lambda_u \mu < 1 $.
		Then $ m \mapsto \nabla_m f_m(x) $ is uniformly (locally) $ \alpha $-H\"older around $ M_1 $.
	\end{enumerate}
\end{thm}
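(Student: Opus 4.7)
The plan is to run the graph transform argument on a space of H\"older vector bundle maps $K \in L(\Upsilon^H_X, \Upsilon^V_Y)$ over $f$. By item~(7), we already know $\nabla f = K$ is the unique $C^0$ solution of \eqref{mainM} (i.e.\ \eqref{mainMFW}), so it remains to upgrade its base-regularity from $C^0$ to uniformly (locally) $\alpha$-H\"older around $M_1$. Following the pattern of \autoref{lem:sheaf}, \autoref{lem:base0} and \autoref{lem:baseleaf}, I would define a complete metric space $\mathcal{K}^\alpha$ consisting of those $C^0$ bundle maps $K \in L(\Upsilon^H_X,\Upsilon^V_Y)$ over $f$ whose regular local representations $\widehat{K}_{m_0}(m,x)$ (with respect to $\mathcal{M}, \mathcal{A}, \mathcal{B}$) satisfy a pointwise bound and an $\alpha$-H\"older estimate of the form
\[
|\widehat{K}_{m_0}(m,x) - \widehat{K}_{m_0}(m_0,x)| \le c\,(1+|x|)\,d(m,m_0)^\alpha,
\]
with appropriate weight built from $\lambda_s$, $\mu$ (mirroring the weight appearing in the spectral gap condition), metrized so that the natural graph transform associated to \eqref{mainMFW} is well defined.

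First I would verify that the transform preserves $\mathcal{K}^\alpha$. Writing \eqref{mainMFW} schematically as
\[
K_m(x) = A_m(x) + B_m(x)\bigl[K_{u(m)}(x_m(x))Du(m) + Df_{u(m)}(x_m(x))R_m(x)\bigr],
\]
each ingredient has known uniform H\"older properties: $x_m(x)$ is H\"older in $m$ by the analogue of item~(4), $Df$ is H\"older in $m$ by item~(6), $Du$ is H\"older by the assumption that $u$ is uniformly (locally) $C^{1,1}$, and the base-H\"older properties of $D_mF, D_mG, D_2F, D_2G$ come from $F,G \in C^{1,1}$ (or $C^{2,1}$ in the second case), together with the uniform H\"older regularity of the connections $\mathcal{C}^X,\mathcal{C}^Y$ and the preferred atlases. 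Pushing a $K \in \mathcal{K}^\alpha$ through the right-hand side therefore yields a new section whose H\"older constant is controlled by that of $K$ times a factor asymptotic to $\lambda_s \lambda_u \mu$ on the base derivative contribution and by the appropriate composite weights $\max\{\mu/\lambda_s,\mu\}^{*\alpha}\lambda_s\lambda_u\mu$ (or $\mu^{*\alpha}\lambda_s\lambda_u\mu$ in the second case) coming from the need to transport the H\"older seminorm at $u(m)$ back to $m$ through $x_m(\cdot)$ and $Du$. The spectral gap hypothesis is exactly what makes this factor strictly less than $1$, so the transform is a contraction on $\mathcal{K}^\alpha$.

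The Banach fixed point theorem then produces a unique $K \in \mathcal{K}^\alpha$ solving \eqref{mainMFW}; by the uniqueness statement in item~(7), this fixed point coincides with $\nabla f$, giving the H\"olderness of $m \mapsto \nabla_m f_m(x)$. In the second alternative, the stronger regularity of $F,G$ and of $TM$ allows us to dispense with the loss of $\lambda_s^{-1}$ in the composition estimate (since one can now differentiate once more and absorb the mismatch), which is why the spectral gap weakens to $\mu^{*\alpha}\lambda_s\lambda_u\mu < 1$ and $\lambda_s < 1$ alone.

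The main obstacle will be the bookkeeping of the Christoffel contributions when comparing $\widehat{K}_{m_0}(m,x)$ at different base points: the chain rule for covariant derivatives (\autoref{lem:chain}) produces, in non-normal charts, additional terms of the form $\varGamma \cdot (Du, Df \cdot R)$ whose H\"older behaviour has to be absorbed into the weight. The normality of the atlases $\mathcal{A}, \mathcal{B}$ at the $0$-section makes these terms vanish at the reference fiber, so that their contribution is of order $|x|$ times the H\"older constant of $\varGamma$; combined with the uniform local Lipschitz property of the connections, this is controlled by the assumed spectral gap. This is precisely the argument structure already isolated in \autoref{Appbb}, which I would invoke to avoid reproducing the computation from scratch.
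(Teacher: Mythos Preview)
Your proposal identifies the right ingredients---the local representation of \eqref{mainMFW}, the prior H\"older estimates from items (3)--(6), the Lipschitz continuity of the connections, and the technical machinery of \autoref{Appbb}---and in that sense matches the paper. But the framing as a Banach fixed point on a complete metric space $\mathcal{K}^\alpha$ of H\"older sections is \emph{not} what the paper does, and for $\alpha<1$ this framing is actually problematic.

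The paper's proof (\autoref{lem:final}) does not construct $K$ afresh in a H\"older space. It takes the $K=\nabla f$ already given by item~(7), writes its local representation $\widehat{K}_{m_0}$ via \eqref{KK1local}, and derives directly the recursive inequality
\[
|\widehat{K}_{m_0}(m_1,x)-\widehat{K}_{m_0}(m_0,x)|\le \widetilde{C}|m_1-m_0|(1+|x|) + \tfrac{\lambda''_u\mu''}{1-\alpha''\beta''}\,|\widehat{K}_{u(m_0)}(u(m_1),\hat{x}_1)-\widehat{K}_{u(m_0)}(u(m_0),\hat{x}_1)|,
\]
using the already-established Lipschitz bounds on $\widehat{f}_{m_0}$, $\widehat{K}^1_{m_0}$, $K_m(\cdot)$ (from \autoref{lem:sheaf}, \autoref{lem:leaf1a}, \autoref{lem:baseleaf}, \autoref{lem:holversheaf}). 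This inequality is then \emph{iterated explicitly} via \autoref{lem:rlocal}\,(b), not closed by a contraction. The point is that the spectral gap in the form $\max\{\mu/\lambda_s,\mu\}^{*\alpha}\lambda_s\lambda_u\mu<1$ involves sup-Lyapunov numbers (\autoref{def:lypnum}): it controls only the asymptotic rate of the iterated product, not the single-step factor. A genuine contraction on $\mathcal{K}^\alpha$ would require the one-step factor to be $<1$, which the hypothesis does not give for $\alpha<1$. The iteration argument in \autoref{Appbb} (specifically \autoref{argumentapp}\,\eqref{lem5}) is designed precisely to extract the H\"older conclusion from a recursion whose single step may expand, by choosing the number of iterations adaptively. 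Your final sentence correctly points to \autoref{Appbb}, but that appendix is an iteration lemma, not a fixed-point package; the proof works because you already \emph{have} $K$ and only need to bound its increments, not because the graph transform contracts a H\"older norm.
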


\begin{proof}
	See Lemmas \ref{lem:continuity_f}, \ref{lem:leaf1}, \ref{lem:leaf1a}, \ref{lem:K1leafc}, \ref{lem:sheaf}, \ref{lem:base0}, \ref{lem:baseleaf}, \ref{smoothbase}, \ref{lem:holversheaf}, \ref{lem:final}, and \autoref{lipbase}.
\end{proof}

\section{Settings for regularity of invariant graphs: an overview}\label{settingOverview}

We need some basic assumptions on the base space $ M $ and the bundles $ X, Y $ to study the regularity of $ f $. The assumptions will change from lemma to lemma according to different regularities of $ f $. The assumptions for the invariant section $ i $ and the map $ u $ are also given here.

\begin{enumerate}
	\item [(H1)] (about $ M $) \textbf{(i)} Let $ M $ be a locally metrizable space (associated with an open cover $ \{ U_m: m \in M \} $); see \autoref{defi:locallyM}. The metric in $ U_{m} $ is denoted by $ d_m(\cdot,\cdot) = |\cdot - \cdot| $. Let $ M_1 \subset M $.

	\noindent\textbf{(ii)} $ M $ is a $ C^1 $ Finsler manifold (see \autoref{finsler}). The norm of $ T_mM $ is denoted by $ |\cdot|_m $.
\end{enumerate}

\begin{enumerate} [({H1}a)]
	\item Let (H1) (i) hold.

	\item \label{baseSpaceAA} (\emph{uniform compatibility of (i) (ii)}) Let (H1) hold.  There exist positive constants $ \Theta_1, \Theta_2 $ such that for every $ m_0 \in M_1 $, there is a constant $ \epsilon'_{m_0} >0 $ and a $ C^1 $ local chart $ \chi_{m_0}: U_{m_0}(\epsilon'_{m_0}) \to T_{m_0}M $, $ \chi_{m_0}(m_0) = 0 $, such that the norm $ |\cdot|_{m_0} $ of $ T_{m_0}M $ with $ D\chi_{m_0}(m_0) = \id $ satisfies
	\begin{equation}\label{A11}
	\Theta_1d_{m_0}(m_1,m_2)  \leq  |\chi_{m_0}(m_1) - \chi_{m_0}(m_2)|_{m_0}  \leq \Theta_2 d_{m_0}(m_1,m_2),
	\end{equation}
	for all $ m_1,m_2 \in U_{m_0}(\epsilon'_{m_0}) $.

	\item \label{baseSpaceAS}
	Let $ M $ be a $ C^1 $ Finsler manifold with Finsler metric $ d $ in its components.
	Assume there are constants $ \Xi, \epsilon' > 0 $ and $ C^1 $ local charts $ \chi_{m_0}: U_{m_0}(\epsilon') \to T_{m_0}M $ at $ m_0 \in M_1 $ satisfying $ D\chi_{m_0}(m_0) = \id $, and
	\[
	\sup_{m' \in U_{m_0}(\epsilon')}|D\chi_{m_0}(m')| \leq \Xi,~ \sup_{m' \in U_{m_0}(\epsilon')}|D\chi_{m_0}^{-1}(\chi_{m_0}(m'))| \leq \Xi.
	\]
	Note that now (H1b) holds with $ d_m = d $ (where $ \Theta_1 = \Xi^{-1} $, $ \Theta_2 = \Xi $) and $ \inf_{m_0 \in M_1}\epsilon'_{m_0} > 0 $.
\end{enumerate}

\begin{enumerate}
	\item[(H2)] (about $ X \times Y $) Let $ \epsilon_2 > 0 $ (possibly $ \epsilon_2 = \infty $) and
	\begin{align*}
	\mathcal{A} = \{ (U_{m_0}(\epsilon), \varphi^{m_0}) ~&\text{a bundle chart of $ X $ at}~ m_0: \\
	& \varphi^{m_0}_{m_0} = \id, m_0 \in M_1, 0 < \epsilon \leq \epsilon_2 \}  \subset \mathcal{A}', \\
	\mathcal{B} = \{ (U_{m_0}(\epsilon), \phi^{m_0}) ~&\text{a bundle chart of $ Y $ at}~ m_0: \\
	& \phi^{m_0}_{m_0} = \id, m_0 \in M_1, 0 < \epsilon \leq \epsilon_2 \}  \subset \mathcal{B}',
	\end{align*}
	where $ \mathcal{A}', \mathcal{B}' $ are bundle atlases of $ X, Y $ on $ M $, respectively.

	\noindent\textbf{(i)} The fibers $ X_m, Y_m $ of the bundles $ X, Y $ are Banach spaces (see also \autoref{fibersG}).

	\noindent\textbf{(ii)} $ X, Y $ are $ C^1 $ bundles with $ C^0 $ connections $ \mathcal{C}^X, \mathcal{C}^Y $, respectively (see also \autoref{connections}). $ \mathcal{A}' $, $ \mathcal{B}' $ are $ C^1 $ and $ \mathcal{A}, \mathcal{B} $ are \emph{normal} with respect to $ \mathcal{C}^X, \mathcal{C}^Y $ respectively (see \autoref{def:normal}).

	\noindent\textbf{(iii)} $ X, Y $ are $ C^{0} $ topology bundles with $ C^{0} $ bundle atlases $ \mathcal{A}', \mathcal{B}' $ (see e.g. \autoref{bundleP} \eqref{topologyBundle}).

	\noindent\textbf{(iv)} $ X, Y $ are $ C^{1} $ topology bundles with $ C^{1} $-fiber bundle atlases $ \mathcal{A}', \mathcal{B}' $; see \autoref{def:C1fiber}.

	\noindent\textbf{(v)} Let $ \varepsilon > 0 $ be small (depending on the following spectral gap conditions). (1) $ X, Y $ have $ \varepsilon $-almost \emph{uniform} $ C^{0,1} $-fiber trivializations on $M_1$ with respect to $ \mathcal{A}, \mathcal{B} $, respectively; (2) $ X, Y $ have $ \varepsilon $-almost \emph{uniform} $ C^{1,1} $-fiber trivializations on $M_1$ with respect to $ \mathcal{A}, \mathcal{B} $, respectively; (1$ ' $) $ X, Y $ have $ \varepsilon $-almost local $ C^{0,1} $-fiber trivializations with respect to $ \mathcal{A}', \mathcal{B}' $, respectively; (2$ ' $) $ X, Y $ have $ \varepsilon $-almost local $ C^{1,1} $-fiber trivializations with respect to $ \mathcal{A}', \mathcal{B}' $, respectively. See \autoref{uniform lip bundle}.
\end{enumerate}

\begin{enumerate}[({H2}a)]
	\item Let (H2)(v1) hold.

	\item Let (H2)(i, v2) hold.

	\item Let (H2)(i, ii, v1) hold.

	\item Let (H2)(i, ii, v2) hold.
\end{enumerate}

\begin{enumerate}
	\item [(H3)] (about $ i $) The section $ i $ is a $ 0 $-section of $ X \times Y $ with respect to $\mathcal{A \times B}$; see \autoref{0-section}. In this case, we use the \emph{notations}
	\[
	|(x,y)| \triangleq \max\{ |x|, |y| \}, ~|x| = d(x,i_X(m)), ~|y| = d(y,i_Y(m)),
	\]
	if $(x,y) \in X_m \times Y_m$.
	\item [(H3$ ' $)] $ i: M \to X \times Y $ is continuous.
\end{enumerate}

\begin{enumerate}
	\item [(H4)] (about $ u $) $ u(M) \subset M_1 $ and $ u: M \to M $ is continuous.
	\item [(H4$ ' $)] $ u: M \to M $ is uniformly continuous around $ M_1 $; see \autoref{ucontinuous}.
\end{enumerate}
\begin{enumerate}[({H4}a)]
	\item  (\emph{Lipschitz continuity of $ u $}) Let (H4) and (H1a) hold. There exist an $ \varepsilon_1 > 0 $ (possibly $ \varepsilon_1 = \infty $) and a function $ \mu: M \to \mathbb{R}_+ $ such that $ \sup_{m_0 \in M_1} \mu(m_0) \triangleq \hat{\mu} < \infty $ and for every $m_0 \in M_1 $,
	$ u(U_{m_0}(\mu^{-1}(m_0) \varepsilon_1)) \subset U_{u(m_0)} $,
	\[
	d_{u(m_0)}(u(m),u(m_0)) \leq \mu(m_0) d_{m_0}(m,m_0), ~m \in U_{m_0}(\mu^{-1}(m_0) \varepsilon_1).
	\]

	\item  (\emph{smoothness of $ u $}) Let (H4a) and (H1) hold. Assume $ u $ is $ C^1 $ and $ |Du(m_0)| \leq \mu(m_0) $ for every $ m_0 \in M_1 $, where $ |Du(m)| $ is defined by
	\[
	|Du(m)| = \sup\{ |Du(m)x|_{u(m)}: x \in T_mM, |x|_m \leq 1 \}.
	\]

	\item  (\emph{Lipschitz continuity of $ Du $}) Let (H4b) and (H1c) hold. $ Du $ is uniformly (locally) Lipschitz around $ M_1 $ in the sense of \autoref{def:manifoldMap}; that is, for the local representation
	\[
	\widehat{Du}_{m_0} (m)v \triangleq D\chi_{u(m_0)}(u(m)) Du(m)D\chi_{m_0}^{-1}(\chi_{m_0}(m))v , ~(m,v) \in U_{m_0}(\epsilon'_1) \times T_{m_0}M,
	\]
	one has
	\[
	\|\widehat{Du}_{m_0} (m) - Du(m_0)\| \leq C_0 d(m,m_0), m \in U_{m_0}(\epsilon'_1), m_0 \in M_1,
	\]
	for some small $ \epsilon'_1 < \hat{\mu}^{-1} \varepsilon_{1} $ and some constant $ C_0 > 0 $ (independent of $ m_0 $).
\end{enumerate}

\begin{enumerate}
	\item [(H5)] The functions in the (A$ ' $)(B) (or (A)(B)) condition are $ \varepsilon $-almost uniformly continuous around $ M_1 $ and $ \varepsilon $-almost continuous in $ M $, where $ \varepsilon > 0 $ is small (depending on the following spectral gap conditions); see \autoref{ucontinuous}.
	\item [(H5$ ' $)] The functions in (A$ ' $)(B) (or (A)(B)) condition are $ \varepsilon $-almost continuous in $ M $, where $ \varepsilon > 0 $ is small (depending on the following spectral gap conditions); see \autoref{ucontinuous}.
\end{enumerate}

We give some comments on the above settings.

\begin{rmk}
	\begin{enumerate}[(a)]
		\item For (H1a), see also \autoref{locallyM}; we emphasize again no requirement that $ M $ admits a metric is made. For (H1c), see also \autoref{regManifold} and \autoref{examples} for some examples; here, in contrast to assumption ($ \blacksquare $) (on \autopageref{C1MM}), we do not assume $ M $ has \emph{locally uniform size neighborhoods} on $ M_1 $ (see \autoref{uniformSize}).

		\item For (H2), $ X \times Y $ has a natural bundle atlas on $ M_1 $, i.e., $ \mathcal{A \times B} \triangleq \{ (U_{m_0}(\epsilon), \varphi^{m_0} \times \phi^{m_0}): m_0 \in M_1, 0 < \epsilon \leq \epsilon_2 \} $. If (H2a) (resp. (H2b)) holds, then $ X \times Y $ also has $ \varepsilon $-almost uniform $ C^{0,1} $- (resp. $ C^{1,1} $-)fiber trivializations on $M_1$ with respect to $ \mathcal{A} \times \mathcal{B} $; so does $ X \otimes_u Y $ (see \eqref{Whsum}) with respect to $ \mathcal{A} \otimes_u \mathcal{B} \triangleq \{ (U_{m_0}(\epsilon), \varphi^{m_0} \times \phi^{u(m_0)}): m_0 \in M_1, 0 < \epsilon \leq \epsilon_2 \} $ if in addition (H4a) holds. If (H2) (ii) holds and $ X \times Y $ is equipped with the product connection $ \mathcal{C}^X \times \mathcal{C}^Y $, then $ \mathcal{A \times B} $ is also normal on $ M_1 $ with respect to $ \mathcal{C}^X \times \mathcal{C}^Y $.

		\item For (H4b), from assumptions (H1b) and (H4a), we have $ |Du(m_0)| \leq \frac{\Theta_2}{\Theta_1} \mu(m_0) $. So (H4b) really only says $ u \in C^1 $ if we use $ \frac{\Theta_2}{\Theta_1} \mu $ instead of $ \mu $ to characterize the `spectral gap condition' in the following subsections. If one chooses a `better' metric in $ U_{m_0} $, then $ \frac{\Theta_2}{\Theta_1} $ could be sufficiently close to $ 1 $ (or equal to $ 1 $). For example, in many applications, we take $ d_{m_0}(m_1, m_2) = |\chi_{m_0}(m_1) - \chi_{m_0}(m_2)|_{m_0} $. But this is not a `global' metric. Another choice is the Finsler metric (see \autoref{length}). Furthermore, if $ M $ is a Finsler manifold in the sense of Palais (see \autoref{finsler}), then the Finsler metric in each component of $ M $ is a \emph{length metric} (see \autoref{length}) and $ |Du(m_0)| \leq \mu(m_0) $.

		\item For (H5) and (H5$ ' $), the functions may not even be continuous.
	\end{enumerate}
\end{rmk}

\begin{rmk}\label{fibersG}
	Actually, assumption (H2) (i) (i.e. the fibers $ X_m, Y_m $ of the bundles $ X $, $ Y $ are Banach spaces) is not general. This can be weakened in some sense but the fibers of $ X $, $ Y $ have to be paracompact $ C^1 $ Banach manifolds. We need this assumption at least for two purposes: \textbf{(i)} We need that $ (DF(z), DG(z)) $ satisfies the (A$ ' $)(B) (or (A)(B)) condition which is implied by that $ (F, G) $ satisfies the (A$ ' $)(B) (or (A)(B)) condition (see \autoref{lem:c1}); \textbf{(ii)} some bounded geometry property of the fibers (see e.g. \autoref{bgemetry}) is needed in order to get some uniform estimates.

	For \textbf{(i)}, due to the local result \autoref{lem:cAB}, evidently $ (DF(z), DG(z)) $ satisfies the (A$ ' $)(B) (or (A)(B)) condition once $ (F, G) $ satisfies the (A$ ' $)(B) (or (A)(B)) condition with a restriction on the functions $ \alpha, \beta $, and $ X_m $, $ Y_m $ are complete $ C^1 $ Finsler manifolds \emph{in the sense of Palais} (see \autoref{finsler}) with Finsler metrics (note that in this case \autoref{liplength} holds). If the metrics of the fibers are not \emph{length metrics}, one can assume directly that $ (DF(z), DG(z)) $ satisfies the (A$ ' $)(B) (or (A)(B)) condition or use a stronger Lipschitz condition instead (see e.g. \autoref{lem:ab} and \autoref{lem:a3}), which is similar to what we do for $ u $ (by assuming (H4a) and (H4b) separately).

	For \textbf{(ii)}, if $ X_m, Y_m $ are complete Riemannian manifolds having bounded geometry (see \autoref{bgemetry}), then the results in this section indeed hold (but the statements are more complex). As we do for $ M $, one can generalize $ X_m, Y_m $ to the Banach-manifold-like setting (for example, $ X_m, Y_m $ are complete connected $ C^1 $ Finsler manifolds which are $ C^{1,1} $-uniform, see \autoref{def:C11um}).

	In this paper, we do not give a detailed analysis of the generalization of (H2) (i). All the main ideas have being given when dealing with the generalization of the base space $ M $.
\end{rmk}

\begin{rmk}[A few different generalizations]
	\begin{enumerate}[(a)]
		\item For the H\"older continuity of $ m \mapsto f_m(x) $, $ K^1_m(x) $, $ K_m(x) $, there is a simpler case with `better' spectral gap conditions: there is an $ \varepsilon_1 > 0 $ ($ \varepsilon_1 = \infty $ is possible) such that for any $ m_0 \in M_1 $, $ (U_{m_0}(\varepsilon_1), \varphi^{m_0}) \in \mathcal{A} $, $ (U_{m_0}(\varepsilon_1), \phi^{m_0}) \in \mathcal{B} $ (in (H2)) and $ u(U_{m_0}(\varepsilon_1)) \subset U_{u(m_0)} (\varepsilon_1) $ (in (H4a)); see \autoref{rmk:simple}. This situation will appear, e.g., in the trivial bundles; see also \autoref{thm:bundlemaps}, \autoref{thm:fake1} and \autoref{thm:fake2}.
		\item In fact the regularity results do not depend on the existence results; see \autoref{generalized}.
	\end{enumerate}
\end{rmk}

\begin{rmk}[Uniform $ C^{k,\gamma} $ continuity of $ F, G $]\label{rmk:hFG}
	Let $ \widehat{F}_{m_0}(\cdot,\cdot), \widehat{G}_{m_0}(\cdot,\cdot) $ be the (regular) local representations of $ F, G $ at $ m_0 \in M_1 $ with respect to $ \mathcal{A}, \mathcal{B} $ under (H1a), (H2a), (H4a); see \eqref{localrepre} or \autoref{def:regular}. Also write $ \widehat{D}_m F_{m_0}(\cdot,\cdot), \widehat{D}_m G_{m_0}(\cdot,\cdot) $ as the (regular) local representations of $ \nabla F, \nabla G $ at $ m_0 \in M_1 $ with respect to $ \mathcal{A}, \mathcal{B}, \mathcal{M} $ under (H1c), (H2d), (H4b) and $ F,G $ being $ C^1 $; see \eqref{basediff00} or \autoref{tensorH}.
	For convenience, we use the following notions. Consider the estimates below:
	\begin{gather}\label{ligFG}
	\max\{ |\widehat{F}_{m_0}(m_1, z) - \widehat{F}_{m_0}(m_0, z)|, |\widehat{G}_{m_0}(m_1, z) - \widehat{G}_{m_0}(m_0, z)| \} \leq M_0 |m_1 - m_0|, \\
	\begin{cases}\label{lipzz}
	\begin{split}
	\max\{ | D_z\widehat{F}_{m_0}(m_1, z) - D_z\widehat{F}_{m_0}(m_0, z) |,  & | D_z\widehat{G}_{m_0}(m_1, z) - D_z\widehat{G}_{m_0}(m_0, z) | \} \\
	& \leq M_0 |m_1 - m_0|,
	\end{split}\\
	\max\{ | D F_{m}(z_1) - D F_{m}(z_2)|, |D G_{m}(z_1) - D G_{m}(z_2) | \} \leq M_0 |z_1 - z_2|,
	\end{cases}\\
	\begin{cases}\label{ligmm}
	\begin{split}
	\max\{ |\widehat{D}_m F_{m_0}(m_1, z) - \widehat{D}_m F_{m_0}(m_0, z)|,  & |\widehat{D}_m G_{m_0}(m_1, z) - \widehat{D}_m G_{m_0}(m_0, z)| \} \\
	& \leq M_0 |m_1 - m_0| ,
	\end{split}\\
	\max\{ | \nabla_m F_{m}(z_1) - \nabla_m F_{m}(z_2)|, |\nabla_m G_{m}(z_1) - \nabla_m G_{m}(z_2) | \} \leq M_0 |z_1 - z_2|,
	\end{cases}
	\end{gather}
	for all $ m_1 \in U_{m_0} (\epsilon') $, $ z \in X_{m_0} \times Y_{u(m_0)} $, $ m_0 \in M_1 $, and $ z_1, z_2 \in X_{m} \times Y_{u(m)} $, $ m \in M $, where $ \epsilon' > 0 $ is small and $ M_0 > 0 $ independent of $ m_0 \in M_1 $. Note that we have assumed $ F, G $ are uniformly $ C^{0,1} $-fiber (see \autoref{fiberR}).
	\begin{enumerate}[(i)]
		\item Under (H1a), (H2a), (H4a), we say $ F, G $ are uniformly (locally) $ C^{0,1} $ around $ M_1 $ if \eqref{ligFG} holds.
		\item Under (H1a), (H2b), (H4a), we say $ D^vF, D^vG $ are uniformly (locally) $ C^{0,1} $ around $ M_1 $ if \eqref{lipzz} holds.
		\item Under (H1c), (H2d), (H4b), we say $ \nabla F, \nabla G $ are uniformly (locally) $ C^{0,1} $ around $ M_1 $ if \eqref{ligmm} holds.
		\item Under (H1c), (H2d), (H4b), we say $ F, G $ are uniformly (locally) $ C^{1,1} $ around $ M_1 $ if \eqref{ligFG}--\eqref{ligmm} hold.
		\item Similarly, $ C^{0,\gamma} $ and $ C^{1,\gamma} $ can be defined.
	\end{enumerate}
\end{rmk}

\emph{In the following, the constants $ M_0 $, $ C $ are independent of $ m \in M $ or $ m_0 \in M_1 $. Also in the proofs, the concrete value of the symbol $ \widetilde{C} $ will change from line to line, but it is independent of $ m \in M $ or $ m_0 \in M_1 $.}

\section{Smooth leaves: $ C^{k,\alpha} $ continuity of $ x \mapsto f_m(x) $} \label{leaf}

\subsection{$ C^1 $ leaves: smoothness of $ x \mapsto f_m(x) $}

\begin{lem}\label{lem:leaf1}
	Assume the following conditions hold:
	\begin{enumerate}[(a)]
		\item The fibers of the bundles $ X, Y $ are Banach spaces (see also \autoref{fibersG}).
		\item For every $ m \in M $, $ F_m(\cdot), G_m(\cdot) $ are $ C^1 $.
		\item (spectral gap condition) $ \lambda_s \lambda_u < 1 $; see \autoref{absgc}.
	\end{enumerate}
	Then we have the following conclusions:
	\begin{enumerate}[(1)]
		\item There exists a unique vector bundle map $ K^1 \in L(\Upsilon_X^V, \Upsilon_Y^V) $ (see \eqref{HVspace}) over $ f $ such that $ |K^1_m(x)| \leq \beta'(m) $, for all $ m \in M $ and \eqref{mainX} holds.
		\item ($ C^1 $ smooth leaves) For every $ m \in M $, $ f_m(\cdot): X_m \to Y_m $ is $ C^1 $, and $ Df_m(x) = K^1_m(x) $ (i.e. $ D^v f = K^1 $).
	\end{enumerate}
\end{lem}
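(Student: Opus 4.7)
The plan is to construct $K^1$ as the unique fixed point of a linear graph transform in the bundle $L(\Upsilon_X^V, \Upsilon_Y^V)$ over $f$, and then separately identify it with the fiber derivative $D^v f$. The approach mirrors the construction of $f$ itself in \autoref{thmA}, but now applied to the linearized system, which is why we pick up the classical spectral gap $\lambda_s\lambda_u<1$ rather than the weaker threshold used for the existence of $f$.

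First, I would observe that \autoref{lem:c1} (or \autoref{lem:cAB}) applied fiberwise to $(F_m,G_m)$ implies that for every $m\in M$ and every $z\in X_m\times Y_{u(m)}$, the pair $(DF_m(z),DG_m(z))$ satisfies (A)$(\alpha;\alpha',\lambda_u)$ (B)$(\beta;\beta',\lambda_s)$ condition as a \emph{linear} correspondence between the tangent spaces. Consider the complete metric space
\[
\mathcal{K}\triangleq\{K:K_m(x)\in L(X_m,Y_m),\ |K_m(x)|\le\beta'(m)\ \forall (m,x)\in X\},
\]
equipped with the analogue of the metric $\widehat d$ used in \autoref{proofs}. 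For $K\in\mathcal{K}$, the angle condition $\alpha(m)\beta'(u(m))<1$ makes the first line of \eqref{mainX} uniquely solvable by Banach's theorem for $R^1_m(x)\in L(X_m,X_{u(m)})$, with $|R^1_m(x)|\le\lambda_s(m)$ by (B). Setting
\[
(\widetilde\Gamma K)_m(x)\triangleq DG_m\bigl(x,f_{u(m)}(x_m(x))\bigr)\bigl(\id,\,K_{u(m)}(x_m(x))\,R^1_m(x)\bigr)
\]
defines a self-map of $\mathcal{K}$ by (A), and a computation identical to the one in \autoref{proofs} shows $\lip\widetilde\Gamma\le\sup_m\lambda_s(m)\lambda_u(m)\vartheta(m)<1$. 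Hence there is a unique fixed point $K^1\in\mathcal{K}$ satisfying \eqref{mainX}, proving (1).

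For (2), I would fix $m$ and $x\in X_m$, take $h\in X_m$ small, and write
\[
\Delta_k\triangleq f_{u^k(m)}\bigl(x^{(k)}(x+h)\bigr)-f_{u^k(m)}\bigl(x^{(k)}(x)\bigr),\qquad \xi_k\triangleq x^{(k)}(x+h)-x^{(k)}(x),
\]
where $x^{(k)}=x_{u^{k-1}(m)}\circ\cdots\circ x_m$, and use \eqref{mainF} together with the $C^1$ smoothness of $F_m,G_m$ to obtain
\[
\xi_{k+1}=DF_{u^k(m)}\bigl(\cdot,\cdot\bigr)(\xi_k,\Delta_{k+1})+o(|\xi_k|+|\Delta_{k+1}|),
\]
and an analogous expansion for $\Delta_k-\Delta_{k+1}$. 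Combining with the linear equation \eqref{mainX} satisfied by $K^1$, the quantity
\[
\rho_k\triangleq\Delta_k-K^1_{u^k(m)}\bigl(x^{(k)}(x)\bigr)\,\xi_k
\]
is seen to satisfy, up to an $o(|\xi_k|)$ error, the same linear recursion governed by $(DF,DG)$. Iterating and using $|\xi_k|\le\lambda_s^{(k)}(m)|h|$ together with the contraction $\lambda_s\lambda_u<1$, one passes $k\to\infty$ exactly as in part (3) of \autoref{properties}, concluding $\rho_0=o(|h|)$, i.e.\ $Df_m(x)=K^1_m(x)$. The fact that $K^1\in L(\Upsilon_X^V,\Upsilon_Y^V)$ is a bundle map over $f$ then identifies it with $D^vf$.

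The main obstacle I expect is the bookkeeping in the second step: one must verify that the $o(\cdot)$ remainders produced by the fiber-$C^1$ assumption can be summed over the orbit $\{u^k(m)\}$ without any uniform modulus of continuity being assumed on $DF,DG$. This is handled by the usual trick of estimating $\rho_k$ only for finitely many $k$ before letting $h\to0$ first and $k\to\infty$ afterwards, exploiting that $|\xi_k|\to 0$ forces the remainder terms evaluated along the orbit to be controlled pointwise by the $C^1$ modulus of the fixed maps $F_{u^k(m)},G_{u^k(m)}$ at the fixed points $(x^{(k)}(x),f(\cdots))$.
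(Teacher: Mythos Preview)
Your plan for part (1) is essentially the paper's: it defines the same space (there called $E_1^L$), the same graph transform $\varGamma^1$, and obtains the same contraction rate $\sup_m\frac{\lambda_s(m)\lambda_u(m)}{1-\alpha(m)\beta'(u(m))}$ via \autoref{lem:bas0}. One small point you omit: the paper builds the continuity requirement $x\mapsto K^1_m(x)\in C^0$ into $E_1^L$ from the start, so that the fixed point is automatically continuous and the conclusion ``$f_m\in C^1$'' (not merely differentiable) follows. You can recover this a posteriori from \eqref{mainX} and \autoref{lem:fconx}, but it should be said.

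For part (2) the paper takes a cleaner route than your explicit forward iteration. It sets
\[
Q(m,x',x)=f_m(x')-f_m(x)-K^1_m(x)(x'-x),
\]
and proves the single inequality
\[
\sup_{x\in X_m}\limsup_{x'\to x}\frac{|Q(m,x',x)|}{|x'-x|}\le\frac{\lambda_s(m)\lambda_u(m)}{1-\alpha(m)\beta'(u(m))}\sup_{x\in X_{u(m)}}\limsup_{x'\to x}\frac{|Q(u(m),x',x)|}{|x'-x|}.
\]
Taking the supremum over $m$ and using that the left side is finite (trivially, since $\lip f_m\le\beta'(m)$ and $|K^1_m|\le\beta'(m)$) forces it to vanish. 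The advantage of this packaging is exactly the issue you flag as ``the main obstacle'': by placing $\limsup_{x'\to x}$ \emph{inside} the quantity being compared, the $o(\cdot)$ remainders from the $C^1$ hypothesis disappear in the limit before any iteration happens, so no uniform modulus and no ``$h\to0$ first, $k\to\infty$ afterwards'' bookkeeping is needed. Your orbitwise iteration is correct and leads to the same inequality in the end, but the paper's formulation makes the argument two lines instead of a page.
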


Hereafter, for $ K^1 \in L_{f}(\Upsilon_X^V, \Upsilon_Y^V) $ we write $ K^1_m(x) = K^1_{(m,x)}  $.
\begin{proof}
	Define a metric space
	\begin{align*}
	E^L_1 \triangleq \{  K^1 \in L(\Upsilon_X^V, & \Upsilon_Y^V)~ \text{a vector bundle map over}~ f : \\
	& |K^1_m(x)| \leq \beta'(m), \forall x \in X_m,
	x \mapsto K^1_m(x) ~\text{is continuous},  m \in M \},
	\end{align*}
	with metric
	\[
	d_1 (K^1, K^{1}{'}) \triangleq \sup_{m \in M} \sup_{x \in X_m} |K^1_m(x) - K^{1}_{m}{'}(x)|.
	\]
	Note that $ \sup_m \beta(m) \leq \hat{\beta} $. So $ d_1 $ is well defined and $ (E^L_1, d_1) \neq \emptyset $ is complete.

	Also, for each $ (x,y) \in X_m \times Y_{u(m)} $, the map $ (DF_m(x,y), DG_m(x,y)) $ satisfies the (A$'$) ($ \alpha(m) $, $ \lambda_u(m) $), (B) ($ \beta'(u(m)) $; $ \beta'(m) $, $ \lambda_s(m) $) condition when $ (F_m, G_m) $ satisfies the (A$'$) ($ \alpha(m) $, $ \lambda_u(m) $) (B) ($ \beta'(u(m)) $; $ \beta'(m) $, $ \lambda_s(m) $) condition (here $ \beta'(u(m)) \leq \beta(m) $); see e.g. \autoref{lem:c1}.

	Since $ \alpha(m) \beta'(u(m)) < 1 $, given a $ K^1 \in E^L_1 $, there is a unique $ R^1_m(x) \in L(X_m, X_{u(m)}) $ satisfying
	\[
	DF_m( x, f_{u(m)}(x_m(x)) ) ( \id, K^1_{u(m)}(x_m(x))R^1_m(x) ) = R^1_m(x),
	\]
	and $ |R^1_m(x)| \leq \lambda_s(m) $ (by (B)). Moreover, $ x \mapsto R^1_m(x) $ is continuous; see e.g. \autoref{lem:fconx}. Now let
	\[
	\widetilde{K}^1_m(x) \triangleq DG_m( x, f_{u(m)}(x_m(x)) ) ( \id, K^1_{u(m)}(x_m(x))R^1_m(x) ).
	\]
	By (B), we see $ \widetilde{K}^1 \in E^L_1 $.

	Consider the graph transform $ \varGamma^1: E^L_1 \to E^L_1, K^1 \mapsto \widetilde{K}^1 $.
	\begin{slem}\label{slem:TT}
		$ \varGamma^1 $ is Lipschitz, and $ \lip \varGamma^1 \leq \sup_m \dfrac{\lambda_s(m) \lambda_u(m)}{1-\alpha(m)\beta'(u(m))} < 1  $.
	\end{slem}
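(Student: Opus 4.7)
The plan is to imitate, on the level of fiber derivatives, the Lipschitz estimate for the original graph transform $\varGamma$ that was established in the proof of \autoref{thmA} (cf.\ \autoref{lem:bas0}). Fix $K^{1},K^{1\prime}\in E^L_1$ and let $R^1_m(x),R^{1\prime}_m(x)$ be the associated fixed points, so that
\[
DF_m(z)\bigl(\id,\, K^1_{u(m)}(x_m(x))R^1_m(x)\bigr)=R^1_m(x),
\]
and analogously with primes, where $z=(x,f_{u(m)}(x_m(x)))$. Subtracting and using that $\lip D_2F_m(z)(\cdot)\leq \alpha(m)$ (from the (A$'$) condition on $(DF,DG)$, which is legitimate by \autoref{lem:c1} since the fibers are Banach spaces and $F_m,G_m\in C^1$), I will get
\[
|R^1_m(x)-R^{1\prime}_m(x)|\leq \alpha(m)\bigl|K^1_{u(m)}(x_m(x))R^1_m(x)-K^{1\prime}_{u(m)}(x_m(x))R^{1\prime}_m(x)\bigr|.
\]
Adding and subtracting $K^{1\prime}_{u(m)}(x_m(x))R^1_m(x)$ on the right, using $\|K^{1\prime}_{u(m)}(x_m(x))\|\leq\beta'(u(m))$ and $\|R^1_m(x)\|\leq\lambda_s(m)$, I obtain
\[
|R^1_m(x)-R^{1\prime}_m(x)|\leq \frac{\alpha(m)\lambda_s(m)}{1-\alpha(m)\beta'(u(m))}\,\bigl\|K^1_{u(m)}(x_m(x))-K^{1\prime}_{u(m)}(x_m(x))\bigr\|.
\]

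Next, from the defining formula for $\widetilde K^1$ and the analogous Lipschitz bound on $D_2G_m(z)$ given by the (A2) condition (with constant $\lambda_u(m)$), a parallel computation yields
\[
|\widetilde K^1_m(x)-\widetilde K^{1\prime}_m(x)|\leq \lambda_u(m)\,\bigl|K^1_{u(m)}(x_m(x))R^1_m(x)-K^{1\prime}_{u(m)}(x_m(x))R^{1\prime}_m(x)\bigr|,
\]
and estimating the right-hand side exactly as above gives
\[
|\widetilde K^1_m(x)-\widetilde K^{1\prime}_m(x)|\leq \frac{\lambda_s(m)\lambda_u(m)}{1-\alpha(m)\beta'(u(m))}\,\bigl\|K^1_{u(m)}(x_m(x))-K^{1\prime}_{u(m)}(x_m(x))\bigr\|.
\]
Taking the supremum over $(m,x)$ and using the spectral gap hypothesis $\lambda_s\lambda_u<1$ (which, per \autoref{absgc}, means $\sup_m\vartheta(m)\lambda_s(m)\lambda_u(m)<1$ with $\vartheta(m)=(1-\alpha(m)\beta'(u(m)))^{-1}$) yields the stated contraction constant strictly less than $1$.

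The main subtlety, rather than any hard obstacle, is book-keeping: one must verify that the application of the (A$'$)(B) condition is legitimate pointwise on the tangent vector $(x_0,y'_0)=(\id,K^1_{u(m)}(x_m(x))R^1_m(x))$, i.e.\ that the cone condition $|x_0|\leq\alpha(m)|y'_0|$ holds, so that both (A) estimates can be invoked together with the (B$'$) bound $\|K^{1\prime}\|\leq\beta'$. This is immediate once one rewrites the fixed-point equation for $R^{1}$ on the pair $(\id,K^1_{u(m)}(x_m(x))R^1_m(x))$ and observes that the vertical component already has the required relative size. With this verified, the contraction estimate goes through and the sublemma follows; the unique fixed point of $\varGamma^1$ then provides $K^1$, and the identification $D^vf=K^1$ will follow in the main lemma by a standard argument (compare \autoref{lem:fdiifx}).
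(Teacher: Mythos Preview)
Your argument is correct and essentially the same as the paper's, just unfolded: the paper invokes \autoref{lem:bas0} directly (applied to the linear correspondence $(DF_m(z),DG_m(z))$ with the ``graphs'' $K^1_{u(m)}(x_m(x))$ and $K^{1\prime}_{u(m)}(x_m(x))$), which yields in one stroke
\[
|\widetilde K^1_m(x)-\widetilde K^{1\prime}_m(x)|\leq \frac{\lambda_u(m)}{1-\alpha(m)\beta'(u(m))}\,\bigl|K^1_{u(m)}(x_m(x))R^1_m(x)-K^{1\prime}_{u(m)}(x_m(x))R^1_m(x)\bigr|,
\]
and then bounds $|R^1_m(x)|\leq\lambda_s(m)$. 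Your explicit computation of $|R^1_m(x)-R^{1\prime}_m(x)|$ followed by $|\widetilde K^1_m(x)-\widetilde K^{1\prime}_m(x)|$ is exactly the algebra inside \autoref{lem:bas0}, so the two routes coincide.

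One small remark: your final paragraph about verifying a cone condition $|x_0|\leq\alpha(m)|y'_0|$ is unnecessary and slightly confused. You are using the (A$'$) condition on $(DF_m(z),DG_m(z))$, which gives the \emph{unconditional} operator-norm bounds $\|D_2F_m(z)\|\leq\alpha(m)$ and $\|D_2G_m(z)\|\leq\lambda_u(m)$; no cone hypothesis is required to invoke these. The cone condition is relevant only for the (A) condition, which you never actually use in your estimate. The bound $\|R^1_m(x)\|\leq\lambda_s(m)$ comes from the (B) condition (via the construction in the paper, see \autoref{lem:gr}), and $\|K^{1\prime}_{u(m)}(x_m(x))\|\leq\beta'(u(m))$ is simply membership in $E^L_1$. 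So you can drop that caveat entirely.
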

	\begin{proof}
		Let $ \widetilde{K}^1 = \varGamma^1 K^1, ~ \widetilde{K}^1{'} = \varGamma^1 K^1{'} $. By \autoref{lem:bas0}, 
		\begin{align*}
		| \widetilde{K}^1_m(x) - \widetilde{K}^{1}_m{'}(x) | & \leq \frac{\lambda_u(m)}{1-\alpha(m) \beta'(u(m))} | K^1_{u(m)} (x_m(x))R^1_m(x) - K^1_{u(m)}{'} (x_m(x))R^1_m(x) | \\
		& \leq \frac{\lambda_s(m)\lambda_u(m)}{1-\alpha(m) \beta'(u(m))} | K^1_{u(m)} (x_m(x)) - K^1_{u(m)}{'} (x_m(x)) |,
		\end{align*}
		completing the proof.
	\end{proof}
	Therefore there is a unique $ K^1 \in E^L_1 $ satisfying \eqref{mainX}. We will show $ Df_m(x) = K^1_m(x) $.
	Set
	\[
	Q(m, x', x) \triangleq f_m(x') - f_m(x) - K^1_m(x)(x' - x).
	\]

	\begin{slem}\label{slem:TS}
		$ \sup\limits_m \sup\limits_{x \in X_m} \limsup\limits_{x' \to x} \frac{|Q(m,x',x)|}{|x'-x|} < \infty $, and
		\[
		\sup_{x \in X_m} \limsup_{x' \to x} \dfrac{|Q(m,x',x)|}{|x'-x|} \leq \frac{\lambda_s(m)\lambda_u(m)}{1-\alpha(m) \beta'(u(m))} \sup_{x \in X_{u(m)}} \limsup_{x' \to x} \dfrac{|Q(u(m),x',x)|}{|x'-x|}.
		\]
	\end{slem}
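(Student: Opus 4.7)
The plan is to derive the claimed estimate from the fixed-point equations \eqref{mainF} and \eqref{mainX} by substituting the $C^{1}$ Taylor expansions of $F_{m}$ and $G_{m}$ at the distinguished point $(x,f_{u(m)}(x_{m}(x)))$, and then identifying the resulting ``remainder" as $Q$ at the image point $x_{m}(x)\in X_{u(m)}$. First, the uniform boundedness part is immediate: since $\lip f_{m}\le \beta'(m)\le \hat{\beta}$ and $|K^{1}_{m}(x)|\le \beta'(m)\le \hat{\beta}$, one has $|Q(m,x',x)|\le 2\hat{\beta}|x'-x|$, hence $\sup_{m,x}\limsup_{x'\to x}|Q(m,x',x)|/|x'-x|\le 2\hat{\beta}<\infty$.

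For the contraction inequality, fix $m$ and $x\in X_{m}$. Write $\Delta x = x'-x$, $y=f_{u(m)}(x_{m}(x))$, $y'=f_{u(m)}(x_{m}(x'))$, $R=x_{m}(x')-x_{m}(x)$, $K^{1}=K^{1}_{u(m)}(x_{m}(x))$, $R^{1}=R^{1}_{m}(x)$, and
\[
Q_{0}\triangleq f_{u(m)}(x_{m}(x'))-f_{u(m)}(x_{m}(x))-K^{1}R,
\]
so that $Q_{0}=Q(u(m),x_{m}(x'),x_{m}(x))$. Applying \eqref{mainF} and the $C^{1}$ expansion of $F_{m}$ at $(x,y)$, we get
\[
R = D_{1}F_{m}(x,y)\Delta x + D_{2}F_{m}(x,y)\bigl(K^{1}R+Q_{0}\bigr)+o(|\Delta x|),
\]
where $o(|\Delta x|)$ absorbs both Taylor remainders (since $|y'-y|\le \beta'(u(m))\lambda_{s}(m)|\Delta x|$). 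Subtracting the relation $R^{1}\Delta x = D_{1}F_{m}(x,y)\Delta x + D_{2}F_{m}(x,y)K^{1}R^{1}\Delta x$ from \eqref{mainX} gives $(I-D_{2}F_{m}K^{1})(R-R^{1}\Delta x)=D_{2}F_{m}Q_{0}+o(|\Delta x|)$, and since $|D_{2}F_{m}(x,y)|\le \alpha(m)$ and $|K^{1}|\le \beta'(u(m))$ with $\alpha(m)\beta'(u(m))<1$, we obtain
\[
|R-R^{1}\Delta x|\le \frac{\alpha(m)}{1-\alpha(m)\beta'(u(m))}|Q_{0}|+o(|\Delta x|).
\]

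Now expand $G_{m}$ in the same way. Using $K^{1}_{m}(x)\Delta x = DG_{m}(x,y)(\Delta x, K^{1}R^{1}\Delta x)$ from \eqref{mainX},
\begin{align*}
Q(m,x',x) &= G_{m}(x',y')-G_{m}(x,y)-K^{1}_{m}(x)\Delta x\\
&= D_{2}G_{m}(x,y)\bigl(K^{1}(R-R^{1}\Delta x)+Q_{0}\bigr)+o(|\Delta x|).
\end{align*}
Invoking $|D_{2}G_{m}(x,y)|\le \lambda_{u}(m)$ and the bound on $|R-R^{1}\Delta x|$ above yields
\[
|Q(m,x',x)|\le \lambda_{u}(m)\Bigl(\beta'(u(m))\tfrac{\alpha(m)}{1-\alpha(m)\beta'(u(m))}+1\Bigr)|Q_{0}|+o(|\Delta x|)=\frac{\lambda_{u}(m)}{1-\alpha(m)\beta'(u(m))}|Q_{0}|+o(|\Delta x|).
\]
Since $|x_{m}(x')-x_{m}(x)|\le \lambda_{s}(m)|\Delta x|$ and $x_{m}(x')\to x_{m}(x)$ as $x'\to x$, dividing by $|\Delta x|$, taking $\limsup$, and replacing the resulting quotient at $(u(m),x_{m}(x'),x_{m}(x))$ by $\lambda_{s}(m)$ times the supremum over $\tilde{x}\to x_{m}(x)$ gives the asserted inequality.

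The main obstacle is a bookkeeping one: controlling the two Taylor remainders (coming from the expansions of $F_{m}$ and $G_{m}$) uniformly in $x'$, which rests on the $C^{1}$-fiber hypothesis and on the a priori Lipschitz estimates $\lip x_{m}(\cdot)\le \lambda_{s}(m)$ and $\lip f_{m}\le \beta'(m)$ already supplied by \autoref{thmA}; once these are in hand, the algebraic manipulation above is essentially forced by the variation equation \eqref{mainX}.
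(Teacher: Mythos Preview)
Your proof is correct and follows essentially the same route as the paper: expand $F_m$ and $G_m$ to first order at $(x,f_{u(m)}(x_m(x)))$, subtract the variation equation \eqref{mainX}, and identify the remainder with $Q(u(m),x_m(x'),x_m(x))$. The only cosmetic difference is that you invert the operator $I-D_2F_m(x,y)K^1$ directly to bound $R-R^1\Delta x$, whereas the paper writes the same thing as a scalar self-referential inequality and solves it; both yield the identical factor $\alpha(m)/(1-\alpha(m)\beta'(u(m)))$, and the final estimate and the passage to the $\limsup$ are the same.
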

	\begin{proof}
		The first inequality is apparently true. Let us consider the second.

		We write $ |g_m(x',x)| \leq O_m(1) $ if $ \limsup_{x' \to x}|g_m(x',x)| = 0 $.
		We compute
		\begin{align*}
		& |x_m(x') - x_m(x) - R^{1}_m(x)(x'-x)| \\
		= & \left| F_{m}(x', f_{u(m)}(x_m(x'))) - F_{m}(x, f_{u(m)}(x_m(x))) \right. \\
		& \qquad \left. - DF_m(x,f_{u(m)}(x_m(x))) \left\{ ( x'-x, f_{u(m)}(x_m(x')) - f_{u(m)}(x_m(x)) )\right\} \right. \\
		& \left. + D_2F_m(x,f_{u(m)}(x_m(x))) \left\lbrace  f_{u(m)}(x_m(x')) - f_{u(m)}(x_m(x)) \right.\right.\\
		& \qquad \left. \left. - K^1_{u(m)}(x_m(x)) R^1_m(x)(x' -x) \right\rbrace \right|   \\
		\leq & O_m(1)|x'-x| + \alpha(m) | Q(u(m),x_m(x'),x_m(x)) | \\
		& + \alpha(m) \beta'(u(m)) |x_m(x') - x_m(x) - R^{1}_m(x)(x'-x)|.
		\end{align*}
		Similarly,
		\begin{align*}
		|Q(m, x', x)| \leq & O_m(1)|x'-x| + \lambda_u(m)| Q(u(m),x_m(x'),x_m(x))| \\
		& + \lambda_u(m) \beta'(u(m)) |x_m(x') - x_m(x) - R^1_m(x)(x'-x)| \\
		\leq & O_m(1)|x'-x| + \frac{\lambda_u(m)}{1-\alpha(m) \beta'(u(m))} |Q( u(m), x_m(x'), x_m(x) )|.
		\end{align*}
		Thus,
		\begin{align*}
		& \sup_{x \in X_m} \limsup_{x' \to x} \dfrac{|Q(m,x',x)|}{|x'-x|} \\
		\leq & \frac{\lambda_u(m)}{1-\alpha(m) \beta'(u(m))} \sup_{x \in X_m} \limsup_{x' \to x} \frac{|Q( u(m), x_m(x'), x_m(x) )|}{| x_m(x') - x_m(x) |}\frac{|x_m(x')-x_m(x)|}{|x'-x|} \\
		\leq & \frac{\lambda_s(m)\lambda_u(m)}{1-\alpha(m) \beta'(u(m))} \sup_{x \in X_{u(m)}} \limsup_{x' \to x} \dfrac{|Q(u(m),x',x)|}{|x'-x|}.
		\end{align*}
		The proof is complete.
	\end{proof}
	Using the above sublemma and $ \theta \triangleq \sup_m \frac{\lambda_s(m) \lambda_u(m)}{1-\alpha(m)\beta'(u(m))} < 1 $, we have
	\begin{multline*}
	\sup_{m}\sup_{x \in X_m} \limsup_{x' \to x} \dfrac{|Q(m,x',x)|}{|x'-x|} \\
	\leq \sup_{m}\frac{\lambda_s(m)\lambda_u(m)}{1-\alpha(m) \beta'(u(m))} \sup_{x \in X_{u(m)}} \limsup_{x' \to x} \dfrac{|Q(u(m),x',x)|}{|x'-x|} \\
	\leq \theta \sup_{m}\sup_{x \in X_m} \limsup_{x' \to x} \dfrac{|Q(m,x',x)|}{|x'-x|} < \infty,
	\end{multline*}
	yielding $ \lim\limits_{x' \to x} \frac{|Q(m,x',x)|}{|x'-x|} = 0 $. The proof of \autoref{lem:leaf1} is finished.
\end{proof}

\begin{rmk}\label{00diff}
	A careful examination of the above proof shows that if $ F_m, G_m $ are only differentiable at $ (i_X(m), i_Y(u(m))) $, then $ f_m $ is differentiable at $ i_X(m) $; see also \autoref{generalized}.
\end{rmk}

\subsection{$ C^{1,\alpha} $ leaves: H\"{o}lderness of $ x \mapsto Df_m(x) = K^1_m(x) $}

\begin{lem}\label{lem:leaf1a}
	Under the conditions of \autoref{lem:leaf1}, assume in addition that
	\begin{enumerate}[(a)]
		\item $ DF_m(\cdot), DG_m(\cdot) $ are $ C^{0,\gamma} $ uniformly for $ m \in M $, i.e.,
		\[
		|DF_m(z_1) - DF_m(z_2)| \leq M_0 |z_1 - z_2|^{\gamma}, ~|DG_m(z_1) - DG_m(z_2)| \leq M_0 |z_1 - z_2|^{\gamma},
		\]
		for all $ z_1, z_2 \in X_m \times Y_{u(m)} $ and $ m \in M $, where $ 0 < \gamma \leq 1 $;
		\item (spectral gap condition) $ \lambda_s \lambda_u < 1 $, $ \lambda^{*\gamma\alpha}_s \lambda_s \lambda_u < 1 $, where $ 0 <\alpha \leq 1 $ (see \autoref{absgc}).
	\end{enumerate}
	Then $ K^1_m(\cdot) $ is $ C^{0,\gamma\alpha} $ uniformly for $ m $, i.e.,
	\[
	|K^1_m(x_1) - K^1_m(x_2)| \leq C |x_1 - x_2|^{\gamma\alpha},~\forall x_1, x_2 \in X_m.
	\]
	In particular, if $ F_m(\cdot), G_m(\cdot) \in C^{1,1} $ uniformly for $ m $, and $ \lambda_s\lambda_u < 1, \lambda^{2}_s \lambda_u < 1 $, then $ K^1_m(\cdot) \in C^{0,1} $ uniformly for $ m $.
\end{lem}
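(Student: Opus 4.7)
\medskip
\noindent\textbf{Proof proposal for \autoref{lem:leaf1a}.}
My plan is to upgrade the existence of $K^1$ in \autoref{lem:leaf1} by working on a smaller metric subspace of $E^L_1$ that already enforces a uniform $\gamma\alpha$-H\"older bound on the fiber maps $x \mapsto K^1_m(x)$. Concretely, pick a candidate constant $C_0 > 0$ (to be determined by the spectral gap condition) and define
\[
E^{L,\gamma\alpha}_{1,C_0} \triangleq \{ K^1 \in E^L_1 : \hol_{\gamma\alpha} K^1_m(\cdot) \leq C_0 ~\text{for all}~ m \in M \},
\]
which is closed in $(E^L_1, d_1)$ and nonempty (it contains the constant $0$ section when $C_0$ is large enough, since $|K^1_m(x)| \leq \beta'(m) \leq \hat{\beta}$). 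If I can show that the graph transform $\varGamma^1$ of \autoref{lem:leaf1} preserves $E^{L,\gamma\alpha}_{1,C_0}$ for some $C_0$, then its unique fixed point $K^1 \in E^L_1$ automatically lies in $E^{L,\gamma\alpha}_{1,C_0}$, giving the desired uniform H\"older bound $|K^1_m(x_1) - K^1_m(x_2)| \leq C_0 |x_1-x_2|^{\gamma\alpha}$.

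\medskip
First I would derive the recursive H\"older estimate. Fix $K^1 \in E^{L,\gamma\alpha}_{1,C_0}$ and set $z_i = (x_i, f_{u(m)}(x_m(x_i)))$, $y_i = x_m(x_i)$ for $x_1, x_2 \in X_m$; since $\lip f_{u(m)} \leq \hat{\beta}$ and $\lip x_m \leq \lambda_s(m)$, one has $|z_1-z_2| \leq \Lambda |x_1-x_2|$ and $|y_1-y_2| \leq \lambda_s(m)|x_1-x_2|$, where $\Lambda$ depends only on $\hat{\beta}$ and $\sup_m \lambda_s(m)$. Using (a) to bound $|DF_m(z_1)-DF_m(z_2)|, |DG_m(z_1)-DG_m(z_2)| \leq M_0 \Lambda^\gamma |x_1-x_2|^\gamma$, and the fact that $K^1_{u(m)}(\cdot)$ is $\gamma\alpha$-H\"older with constant $C_0$, a routine expansion of the equations in \eqref{mainX} at $x_1$ and $x_2$ gives
\begin{align*}
|R^1_m(x_1) - R^1_m(x_2)| &\leq O(|x_1-x_2|^\gamma) + \alpha(m)\bigl( \lambda_s^{\gamma\alpha}(m)\lambda_s(m) C_0 |x_1-x_2|^{\gamma\alpha} + \beta'(u(m))|R^1_m(x_1)-R^1_m(x_2)|\bigr),\\
|K^1_m(x_1) - K^1_m(x_2)| &\leq O(|x_1-x_2|^\gamma) + \lambda_u(m)\bigl(\lambda_s^{\gamma\alpha}(m)\lambda_s(m) C_0 |x_1-x_2|^{\gamma\alpha} + \beta'(u(m))|R^1_m(x_1)-R^1_m(x_2)|\bigr),
\end{align*}
where the $O(|x_1-x_2|^\gamma)$ terms collect the $M_0\Lambda^\gamma$-contributions multiplied by the uniformly bounded factor $|(\id, K^1_{u(m)}(y_1)R^1_m(x_1))|$. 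Solving the first inequality for $|R^1_m(x_1)-R^1_m(x_2)|$ via $1-\alpha(m)\beta'(u(m)) > 0$, substituting into the second, and using $\gamma\alpha \leq \gamma$ on a bounded-diameter reduction (or more precisely on each small ball $|x_1-x_2| \leq 1$, which suffices to bound the H\"older constant), yields
\[
\hol_{\gamma\alpha}(\varGamma^1 K^1)_m(\cdot) \leq \widetilde{C} + \frac{\lambda_u(m)\lambda_s(m)^{\gamma\alpha}\lambda_s(m)}{1-\alpha(m)\beta'(u(m))} \, C_0
\]
with $\widetilde{C}$ independent of $m$ and $C_0$.

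\medskip
To close the estimate uniformly in $m$, I would invoke the spectral gap condition $\lambda_s^{*\gamma\alpha}\lambda_s\lambda_u < 1$ as formalized in \autoref{absgc}: the standard trick of replacing the `naive' sup bound by sup Lyapunov weights from $\mathcal{E}(u)$ produces a weighted H\"older constant $\mathcal{H}_m = \omega(m)^{-1}\hol_{\gamma\alpha} K^1_m$ (with an appropriate weight $\omega$ adapted to $\lambda_s, \lambda_u, \alpha, \beta'$ via the machinery of \autoref{Appbb}) for which the recursion becomes a genuine contraction $\mathcal{H}_m \leq \widetilde{C}' + \theta\, \mathcal{H}_{u(m)}$ with $\theta < 1$. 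Iterating and taking $C_0 = \widetilde{C}'/(1-\theta)$ (renormalized by $\sup_m \omega(m) < \infty$) then shows $\varGamma^1$ preserves $E^{L,\gamma\alpha}_{1,C_0}$. The main obstacle is precisely this last step: the naive bound $\sup_m \lambda_s(m)^{\gamma\alpha}\lambda_s(m)\lambda_u(m)/(1-\alpha(m)\beta'(u(m))) < 1$ is stronger than what the lemma states, so I must appeal to the weighted spectral machinery from \autoref{Appbb} to convert $\lambda_s^{*\gamma\alpha}\lambda_s\lambda_u < 1$ into an honest contraction of a rescaled H\"older constant --- this is the only non-routine ingredient, everything else being a direct computation with \eqref{mainX}. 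The final assertion for $C^{1,1}$-fiber data (with $\gamma=\alpha=1$ and the gap $\lambda_s^2\lambda_u<1$) follows as a special case.
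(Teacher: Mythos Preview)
Your argument is correct in outline but takes a different route from the paper. The paper does not show that $\varGamma^1$ preserves a H\"older ball; instead it works directly with the fixed point $K^1$ already obtained in \autoref{lem:leaf1}. Expanding \eqref{mainX} at $x_1,x_2$ but \emph{leaving the difference $|K^1_{u(m)}(x_m(x_1))-K^1_{u(m)}(x_m(x_2))|$ unevaluated} (using only $|R^1_m|\le\lambda_s(m)$ and $|K^1_{u(m)}|\le\beta'(u(m))$) yields the self-referential inequality
\[
|K^1_m(x_1)-K^1_m(x_2)|\le \widetilde C\,|x_1-x_2|^\gamma+\frac{\lambda_s(m)\lambda_u(m)}{1-\alpha(m)\beta'(u(m))}\,\bigl|K^1_{u(m)}(x_m(x_1))-K^1_{u(m)}(x_m(x_2))\bigr|,
\]
which is then iterated along the forward orbit; the uniform bound $|K^1|\le\hat\beta$ controls the tail, and the exponent $\gamma\alpha$ emerges from the balancing argument of \autoref{Appbb} (\autoref{lem:rglobal}\,\eqref{g1} with $\mu=\lambda_s$, $\theta=\vartheta\lambda_s\lambda_u$, $\gamma_2=\gamma$, $\zeta_2=0$). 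Your invariant-ball approach instead inserts the assumed H\"older bound on $K^1_{u(m)}$ at the very first step, producing the one-step contraction factor $\lambda_s^{1+\gamma\alpha}\lambda_u\vartheta$; this closes immediately under the pointwise condition $\sup_m\lambda_s(m)^{\gamma\alpha}\vartheta(m)\lambda_s(m)\lambda_u(m)<1$, and under the weaker starred condition you must manufacture an adapted weight $\omega$ with $\rho(m)\,\omega(u(m))/\omega(m)<1$ uniformly (a Mather-type construction). That works, but note that \autoref{Appbb} does \emph{not} build such weights: it uses iteration plus the optimization of \autoref{appexb}. Your method is self-contained once the weight exists; the paper's plugs directly into the abstract lemma it reuses for all subsequent regularity results.
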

\begin{proof}
	Since $ K^1 $ satisfies \eqref{mainX}, we have
	\begin{align*}
	& |K^1_m(x_1) -K^1_m(x_2) |  \\
	 = & | DG_m( x_1, f_{u(m)}(x_m(x_1)) )( \id, K^1_{u(m)}(x_m(x_1))R^1_m(x_1) ) \\
	& -  DG_m( x_2, f_{u(m)}(x_m(x_2)) )( \id, K^1_{u(m)}(x_m(x_2))R^1_m(x_2)  ) | \\
	 \leq & |  ( DG_m( x_1, f_{u(m)}(x_m(x_1)) ) - DG_m( x_2, f_{u(m)}(x_m(x_2)) )  )( \id, K^1_{u(m)}(x_m(x_1))R^1_m(x_1) )|  \\
	& + | D_2G_m( x_2, f_{u(m)}(x_m(x_2)) ) ( K^1_{u(m)}(x_m(x_1))R^1_m(x_1) - K^1_{u(m)}(x_m(x_2))R^1_m(x_2) ) | \\
	\leq & \widetilde{C} |x_1 - x_2|^\gamma + \lambda_u(m) | K^1_{u(m)}(x_m(x_1))R^1_m(x_1) - K^1_{u(m)}(x_m(x_2))R^1_m(x_2) | \\
	\leq & \widetilde{C} |x_1 - x_2|^\gamma +  \lambda_s(m)\lambda_u(m) | K^1_{u(m)}(x_m(x_1)) - K^1_{u(m)}(x_m(x_2)) | \\
	& + \lambda_u(m)\beta'(u(m)) |R^1_m(x_1) - R^1_m(x_2)|.
	\end{align*}
	Similarly,
	\begin{multline*}
	|R^1_m(x_1) - R^1_m(x_2)| \leq \widetilde{C} |x_1 - x_2|^\gamma \\
	+ \alpha(m)\lambda_s(m) |K^1_{u(m)}(x_m(x_1)) - K^1_{u(m)}(x_m(x_2))|
	+ \alpha(m)\beta'(u(m)) |R^1_m(x_1) - R^1_m(x_2)|.
	\end{multline*}
	Thus,
	\begin{equation}\label{k11}
	|K^1_m(x_1) -K^1_m(x_2)| \leq \widetilde{C} |x_1 - x_2|^\gamma + \dfrac{\lambda_s(m) \lambda_u(m)}{1-\alpha(m)\beta'(u(m))} |K^1_{u(m)}(x_m(x_1)) - K^1_{u(m)}(x_m(x_2))|.
	\end{equation}
	Now using the argument in \autoref{Appbb} (see \autoref{argumentapp} \eqref{lem1}), we obtain the result.
\end{proof}

\subsection{$ C^k $ leaves: higher order smoothness of $ x \mapsto f_m(x) $}
\begin{lem}\label{lem:leafk}
	Under the assumptions of \autoref{lem:leaf1}, further assume $ F $, $ G $ are $ C^k $-fiber and uniformly $ C^{k-1,1} $-fiber (see \autoref{lip maps}) and $ \lambda^k_s \lambda_u < 1 $. Then $ f $ is $ C^k $-fiber and uniformly $ C^{k-1,1} $-fiber.
\end{lem}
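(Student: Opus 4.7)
The plan is to proceed by induction on $k$, with the base case $k=1$ being precisely \autoref{lem:leaf1} combined with the fact that $\lip f_m \leq \beta'(m) \leq \hat\beta$ (so $f$ is automatically uniformly $C^{0,1}$-fiber). For the inductive step, assume $f$ is $C^{k-1}$-fiber and uniformly $C^{k-2,1}$-fiber, and assume $F, G$ are $C^k$-fiber and uniformly $C^{k-1,1}$-fiber with $\lambda_s^k \lambda_u < 1$. Differentiating \eqref{mainF} or \eqref{mainX} formally $k-1$ more times with respect to $x$ using the chain rule (Fa\`a di Bruno's formula), I expect to derive a fixed-point system of the form
\begin{equation*}
\begin{cases}
R^k_m(x) = DF_m(\zeta)(\id,\, K^1_{u(m)}(x_m(x))\, R^k_m(x)) + D_2 F_m(\zeta)\, K^k_{u(m)}(x_m(x))(R^1_m(x))^{\otimes k} + \Phi_m(x), \\
K^k_m(x) = DG_m(\zeta)(\id,\, K^1_{u(m)}(x_m(x))\, R^k_m(x)) + D_2 G_m(\zeta)\, K^k_{u(m)}(x_m(x))(R^1_m(x))^{\otimes k} + \Psi_m(x),
\end{cases}
\tag{$\dag$}
\end{equation*}
where $\zeta = (x, f_{u(m)}(x_m(x)))$, where the unknowns $K^k_m(x), R^k_m(x)$ are the candidates for $D^k f_m(x)$ and $D^k x_m(x)$, and where the lower-order remainders $\Phi_m, \Psi_m$ are built polynomially from $D^j F_m, D^j G_m, K^j, R^j$ with $1\leq j\leq k-1$ and are therefore already known, bounded and continuous by the inductive hypothesis.

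I would then solve $(\dag)$ by the same graph transform strategy as in \autoref{lem:leaf1}. Let $E^L_k$ be the space of bundle maps $K^k \in L(\Upsilon^V_X,\ldots)$ of $k$-multilinear fiber maps over $f$ with an appropriate uniform bound (determined by $\Phi, \Psi$ and the (A)(B) constants), equipped with the sup-norm; this space is complete. For each $K^k \in E^L_k$, the first line of $(\dag)$ uniquely determines $R^k_m(x)$ via the contraction $\alpha(m)\beta'(u(m)) < 1$, and then the second line defines the transform $\widetilde K^k_m(x)$. Exactly as in \autoref{slem:TT} and the computation in \autoref{lem:leaf1a}, the resulting graph transform $\varGamma^k$ is a contraction with Lipschitz constant
\[
\sup_m \frac{\lambda_u(m)\,\lambda_s(m)^k}{1-\alpha(m)\beta'(u(m))} < 1,
\]
the extra factor $\lambda_s^{k-1}$ coming from the tensor $(R^1_m(x))^{\otimes k}$ composed with $R^1_m(x)$ once more through the chain rule. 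The Banach Fixed Point Theorem yields a unique continuous $k$-multilinear $K^k$ solving $(\dag)$.

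Finally, to show that this $K^k$ is actually $D K^{k-1} = D^k f$, I would follow the template of \autoref{slem:TS}. Setting
\[
Q_k(m, x', x) \triangleq K^{k-1}_m(x') - K^{k-1}_m(x) - K^k_m(x)(x'-x),
\]
and expanding both sides using $(\dag)$ and the $C^{k-1,1}$-fiber regularity of $F, G$ (so that $D^{k-1}F, D^{k-1}G$ admit first-order Taylor expansions with a Lipschitz remainder), I expect to obtain an estimate of the form
\[
\sup_{m,x}\limsup_{x'\to x}\frac{|Q_k(m,x',x)|}{|x'-x|} \leq \frac{\lambda_u(m)\lambda_s(m)^k}{1-\alpha(m)\beta'(u(m))}\sup_{m,x}\limsup_{x'\to x}\frac{|Q_k(m,x',x)|}{|x'-x|},
\]
forcing the $\limsup$ to be zero, hence $D^k f = K^k$. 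Uniform $C^{k-1,1}$-fiberness of $f$ then follows from a direct Lipschitz-in-$x$ estimate on $(\dag)$ via the inductive argument used in \autoref{lem:leaf1a}: subtracting $(\dag)$ at $x_1$ and $x_2$, using the uniform Lipschitz continuity of $D^{k-1}F, D^{k-1}G$, the induction, and the contraction factor $\lambda_s^k\lambda_u/(1-\alpha\beta') < 1$, one iterates as in \eqref{k11} to bound $|K^k_m(x_1)-K^k_m(x_2)|$ linearly in $|x_1-x_2|$ uniformly in $m$. The main obstacle I anticipate is the combinatorial book-keeping in Fa\`a di Bruno's expansion of $(\dag)$: separating the genuinely highest-order unknowns $K^k, R^k$ from the polynomial remainders $\Phi, \Psi$ in the lower-order data, and verifying that the leading term indeed carries the factor $\lambda_s^k$ and no additional $\lambda_u^j$ with $j>1$, so that the spectral gap condition $\lambda_s^k\lambda_u<1$ is sharp enough to close the contraction.
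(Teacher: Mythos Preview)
Your approach is essentially identical to the paper's: induction on $k$, derive the variant system via Fa\`a di Bruno with the highest-order unknowns $K^k, R^k$ isolated from lower-order remainders, solve by graph transform with contraction constant $\lambda_s^k\lambda_u/(1-\alpha\beta')$, then verify $D^k f = K^k$ by an argument parallel to \autoref{slem:TS}. The paper's sketch does exactly this.

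One point to clean up: your system $(\dag)$ is not quite right for $k\geq 2$. The term $DF_m(\zeta)(\id,\, K^1_{u(m)} R^k_m(x))$ does not type-check, since $R^k_m(x)$ is $k$-multilinear while $\id\colon X_m\to X_m$ is linear. When you differentiate $F_m(x, f_{u(m)}(x_m(x)))$ $k$ times in $x$, the contribution from $D_1 F_m$ at top order vanishes because $D^k_x\, x = 0$ for $k\geq 2$. The correct leading terms (as in the paper's $(\dag)$) are
\[
R^k_m(x) = D_2 F_m(\zeta)\bigl\{K^1_{u(m)}(x_m(x))\, R^k_m(x) + K^k_{u(m)}(x_m(x))(R^1_m(x))^{\otimes k}\bigr\} + \Phi_m(x),
\]
and analogously for the $G$-equation. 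This slip does not affect your contraction estimate, since the spurious $D_1 F\cdot\id$ term does not involve $K^k$; but it does matter for the equation actually satisfied by $D^k f$, hence for the $Q_k$-verification step. This is precisely the Fa\`a di Bruno book-keeping you anticipated; once corrected, your argument goes through as written.
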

\begin{proof}(sketch).
	The proof is essentially the same as in the $ C^1 $-fiber case by induction (note that we also have $ \lambda^i_s \lambda_u < 1 $, $ i = 1, \ldots, k $). We give a sketch here. We use the notation $ L^k(Z_1, Z_2) \triangleq L(\underbrace{Z_1 \times \ldots \times Z_1}_{k}, Z_2) $ if $ Z_i $ are vector bundles over $ M_{i} $, $ i = 1,2 $. Assume \autoref{lem:leafk} holds for $ k-1 $ ($ k \geq 2 $). Taking the $ k $th order derivative of \eqref{mainF} with respect to $ x $ informally, one has
	\[
	\begin{cases}
	\begin{split}
	W^k_{1,m}(x) + D_2F_m( x, y ) \left\{ K^k_{u(m)} (x_m(x)) \right. & (R^1_m)^k(x) \\
	& + \left. K^1_{u(m)} (x_m(x)) R^k_m(x)  \right\} = R^k_m(x), 
	\end{split}\\
	\begin{split}
	W^k_{2,m}(x) + D_2G_m( x, y ) \left\{ K^k_{u(m)} (x_m(x)) \right. & (R^1_m)^k(x) \\
	& + \left. K^1_{u(m)} (x_m(x)) R^k_m(x)  \right\} = K^k_m(x),
	\end{split}
	\end{cases} \tag{\dag}
	\]
	where $ y = f_{u(m)}( x_m(x) ) $, $ K^k \in L^k(\Upsilon_X^V, \Upsilon_Y^V) $ over $ f $, $ R^{k} \in L^k(\Upsilon_X^V, \Upsilon_X^V) $ over $ x_{(\cdot)}(\cdot) $, and $ (R^1_m)^k(x) = (R^1_m(x), \cdots, R^1_m(x)) $ ($ k $ components). 
	$ W^k_{i,m} $, $ i = 1,2 $, consist of a finite sum of terms which can be explicitly calculated with the help of the Fa\`{a} di Bruno formula (see e.g. \cite{MR09a, FdlLM06}); the non-constant factors are $ D^j F_m $, $ D^j G_m $ ($ 1 \leq j \leq k $), $ D^j f_{m} $ ($ 1 \leq j < k $), $ D^j x_m $ ($ 1 \leq j < k $). Note that $ W^k_{i,m} $, $ i = 1,2 $, are bounded uniformly for $ m \in M $ by induction. Since $ \lambda^k_s \lambda_u < 1 $, there exists a unique $ K^k \in L^k(\Upsilon_X^V, \Upsilon_Y^V) $ over $ f $ satisfying ($\dag $) (see \autoref{slem:TT} for a similar proof). One can further show $ D^k f_m (x) = K^k_m(x) $ by an analogous argument to that in \autoref{slem:TS}.
\end{proof}

\section{H\"{o}lder vertical parts: H\"{o}lderness of $ m \mapsto f_m(x) $} \label{hverticalsheaf}

In order to make sense of the H\"{o}lder continuity of $ m \mapsto f_m(x) $, we need some (uniform) assumptions on the bundles $ X, Y $ and the base space $ M $. The natural settings are (H1), (H2a), (H3), (H4a), (H5).

Regard $ F: X \otimes_u Y \to X $ as a bundle map over $ u $ and $ G: X \otimes_u Y \to Y $ as a bundle map over $ \id $; also $ f: X \to Y $ over $ \id $, and $ x_{(\cdot)}(\cdot) : X \to X $ over $ u $.

Consider the local representations of $ F, G $, $ f, x_{(\cdot)}(\cdot) $ at $ m_0 \in M_1 $ with respect to $ \mathcal{A}, \mathcal{B} $, i.e.,
\begin{equation}\label{localrepre}
\begin{cases}
\begin{split}
\widehat{F}_{m_0} (m, x, y) \triangleq (\varphi^{u(m_0)}_{u(m)})^{-1} \circ & F_m \circ ( \varphi^{m_0}_m(x) , \phi^{u(m_0)}_{u(m)}(y) ) : \\
& U_{m_0} (\mu^{-1}(m_0) \varepsilon_1) \times X_{m_0} \times Y_{u(m_0)} \to X_{u(m_0)} ,
\end{split}\\
\begin{split}
\widehat{G}_{m_0} (m, x, y) \triangleq (\phi^{m_0}_{m})^{-1} \circ G_m \circ & ( \varphi^{m_0}_m(x) , \phi^{u(m_0)}_{u(m)}(y) ) : \\
&U_{m_0} (\mu^{-1}(m_0) \varepsilon_1) \times X_{m_0} \times Y_{u(m_0)} \to Y_{m_0} ,
\end{split}\\
\widehat{f}_{m_0}(m, x) \triangleq (\phi^{m_0}_m)^{-1} \circ f_m \circ \varphi^{m_0}_m(x) :  U_{m_0} (\mu^{-1}(m_0) \varepsilon_1) \times X_{m_0} \to Y_{m_0} , \\
\widehat{x}_{m_0}(m, x) \triangleq (\varphi^{u(m_0)}_{u(m)})^{-1} \circ x_{m} \circ \varphi^{m_0}_m(x) : U_{m_0} (\mu^{-1}(m_0) \varepsilon_1) \times X_{m_0} \to X_{u(m_0)}.
\end{cases}
\end{equation}
Then
\begin{equation}\label{mainLocal}
	\begin{cases}
		\widehat{F}_{m_0} ( m, x, \widehat{f}_{u(m_0)} (u(m), \widehat{x}_{m_0} (m,x) ) ) = \widehat{x}_{m_0} (m, x), \\
		\widehat{G}_{m_0} ( m, x, \widehat{f}_{u(m_0)} (u(m), \widehat{x}_{m_0} (m,x) ) ) = \widehat{f}_{m_0} (m, x),
	\end{cases}
\end{equation}
for $ m \in U_{m_0} (\hat{\mu}^{-2} \varepsilon_1) $, $ x \in X_{m_0} $, $ m_0 \in M_1 $.

If $ i $ is an invariant section of $ H $ and also a $ 0 $-section of $ X \times Y $, then $ \widehat{f}_{m_0} (m, i_X(m_0)) = i_Y(m_0) $, $ \widehat{x}_{m_0} (m, i_X(m_0)) = i_X(u(m_0)) $, and also
\[
\begin{cases}
	\widehat{F}_{m_0} (m, i_X(m_0), i_Y(u(m_0))) = i_X(u(m_0)), \\
	\widehat{G}_{m_0} (m, i_X(m_0), i_Y(u(m_0))) = i_Y(m_0).
\end{cases}
\]
Hereafter, if $ m_1 \in U_{m_0} $, then $ |m_1 - m_0| $ means the distance between $ m_1, m_0 $ in $ U_{m_0} $, i.e., $ |m_1 - m_0| = d_{m_0}(m_1, m_0) $, where $ d_{m_0} $ is the metric in $ U_{m_0} $ (see (H1) (i)).

\begin{lem}\label{lem:sheaf}
	Assume the following conditions hold:
	\begin{enumerate}[(a)]
		\item Let \textnormal{(H1a), (H2a), (H3), (H4a), (H5)} hold.
		\item $ F, G $ satisfy
		\begin{equation}\label{cc2}
		\begin{cases}
		|\widehat{F}_{m_0}(m_1, z) - \widehat{F}_{m_0}(m_0, z) | \leq M_0 |m_1 - m_0|^\gamma |z|^\zeta, \\
		|\widehat{G}_{m_0}(m_1, z) - \widehat{G}_{m_0}(m_0, z)| \leq M_0 |m_1 - m_0|^\gamma |z|^\zeta,
		\end{cases}
		\end{equation}
		for all $ m_1 \in U_{m_0} (\mu^{-1}(m_0) \varepsilon_1), z \in X_{m_0} \times Y_{u(m_0)} $, $ m_0 \in M_1 $, where $ 0< \gamma \leq 1 $, $ \zeta \geq 0 $.
		\item (spectral gap condition) $ \lambda_s \lambda_u < 1 $, $  (\max\{ \lambda^{\zeta - 1}_s, 1 \} \mu^{\gamma})^{*\alpha} \lambda_s \lambda_u < 1 $, where $ 0 < \alpha \leq 1 $; see \autoref{absgc}.
	\end{enumerate}
	If $ \varepsilon^*_1 \leq \hat{\mu}^{-2} \varepsilon_1 $ is small, then
	\begin{equation}\label{cc3}
	|\widehat{f}_{m_0}(m_1, x) - \widehat{f}_{m_0}(m_0, x)| \leq C |m_1 - m_0|^{\gamma \alpha} |x|^{\zeta \alpha + 1 - \alpha},
	\end{equation}
	for all $ m_1 \in U_{m_0} (\varepsilon^*_1), x \in X_{m_0}, m_0 \in M_1 $ with 
	\[
	|m_1 - m_0|^\gamma |x|^\zeta \leq \hat{r}\min\{ |x|, |x|^{\zeta c -(c-1)} \},
	\]
	where the constant $ C $ depends on the constant $ \hat{r} > 0 $ but not on $ m_0 \in M_1 $, $ c > 1 $ and $ \hat{r} $ does not depend on $ m_0 \in M_1 $.
	In particular, if $ \gamma = \zeta = 1 $ and $ \lambda_s \lambda_u < 1 $, $ \lambda_s \lambda_u  \mu  < 1 $, then
	\[
	|\widehat{f}_{m_0}(m_1, x) - \widehat{f}_{m_0}(m_0, x)| \leq C |m_1 - m_0| |x|.
	\]
\end{lem}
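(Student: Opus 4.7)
The plan is to derive from the invariance equation \eqref{mainLocal} a recursive inequality that compares $|\widehat{f}_{m_0}(m_1, x) - \widehat{f}_{m_0}(m_0, x)|$ with the same quantity evaluated one orbit step later at $u(m_0)$, and then to close the recursion using the interpolation scheme of \autoref{Appbb}. I will work throughout in the regular local representations of \eqref{localrepre}; because $i$ is a $0$-section (H3), the invariant graph passes through the origin in these charts, and the $\varepsilon$-almost uniform $C^{0,1}$-fiber trivialization (H2a) together with the $\varepsilon$-almost continuity of the structure functions (H5) only contribute multiplicative distortions $1 + O(\varepsilon)$, which can be absorbed into the spectral gap by taking $\varepsilon$ sufficiently small.

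Introduce the shorthand $A = \widehat{f}_{m_0}(m_1, x)$, $B = \widehat{f}_{m_0}(m_0, x)$, $A' = \widehat{f}_{u(m_0)}(u(m_1), \widehat{x}_{m_0}(m_1, x))$, $B' = \widehat{f}_{u(m_0)}(u(m_0), \widehat{x}_{m_0}(m_0, x))$, so that \eqref{mainLocal} becomes $A = \widehat{G}_{m_0}(m_1, x, A')$, $B = \widehat{G}_{m_0}(m_0, x, B')$, with an analogous characterization of $\widehat{x}_{m_0}$ through $\widehat{F}_{m_0}$. A plus-minus split, using \eqref{cc2} for the base-variable increment and the (A$'$)~(B) Lipschitz bound for the fiber variable, yields
\[
|A - B| \le M_0 |m_1-m_0|^\gamma (|x| + |A'|)^\zeta + \lambda_u(m_0)\,|A' - B'|,
\]
and the analogous split for $|\widehat{x}_{m_0}(m_1,x) - \widehat{x}_{m_0}(m_0,x)|$ produces a coupled inequality with coefficient $\alpha(m_0)$ in front of $|A' - B'|$. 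A further decomposition $|A' - B'| \le \beta'(u(m_0))\,|\widehat{x}_{m_0}(m_1,x) - \widehat{x}_{m_0}(m_0,x)| + \Delta'$, where $\Delta'$ is the very quantity $|A-B|$ read off at $u(m_0)$ with base displacement $u(m_1) - u(m_0)$ and fiber point $\widehat{x}_{m_0}(m_0, x)$, allows one to solve the resulting $2 \times 2$ linear system through the angle condition $\alpha\beta' < 1$. Using (H4a) to bound $|u(m_1) - u(m_0)| \le \mu(m_0)|m_1-m_0|$ and the (B)-estimate $|\widehat{x}_{m_0}(m_0,x)| \le \lambda_s(m_0)|x|$ (valid because $i$ is invariant and $i_X$ is a fixed point of $x_{m_0}$ in these normal charts), and absorbing $|A'| \lesssim \hat\beta\lambda_s(m_0)|x|$ into $|x|$, one arrives at the key recursion
\[
|A - B| \le C|m_1 - m_0|^\gamma |x|^\zeta + \vartheta(m_0)\lambda_u(m_0)\,\Delta',
\]
with $\vartheta(m) = (1-\alpha(m)\beta'(u(m)))^{-1}$.

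Iterating this recursion $n$ times produces a weighted sum with coefficient $(\vartheta\lambda_u)^{(k)}(m_0)(\mu^{(k)}(m_0))^\gamma(\lambda_s^{(k)}(m_0))^\zeta$ at the $k$-th step, plus a tail that is absorbed by the trivial fiber-Lipschitz bound $|A - B| \le 2\hat\beta |x|$ coming from \autoref{thmA} combined with $\lambda_s\lambda_u < 1$. To extract the exponent $\alpha$ I will run the abstract argument of \autoref{Appbb} (cf.\ \autoref{argumentapp}): posit the a priori estimate $|A - B| \le C|m_1 - m_0|^{\gamma\alpha}|x|^{\zeta\alpha + 1 - \alpha}$ and verify self-consistency of the recursion, which factors as $\vartheta\lambda_u\lambda_s \cdot (\mu^\gamma \lambda_s^{\zeta-1})^\alpha < 1$ in the sup-Lyapunov sense of \autoref{def:lypnum} and is dominated exactly by the stated spectral gap $(\max\{\lambda_s^{\zeta-1},1\}\mu^\gamma)^{*\alpha}\lambda_s\lambda_u < 1$. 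The special case $\gamma = \zeta = 1$, $\alpha = 1$ then collapses to the Lipschitz statement asserted at the end. The hardest part will be the bookkeeping of the joint orbital rescaling $(|m_1-m_0|,|x|) \mapsto (\mu(m_0)|m_1-m_0|, \lambda_s(m_0)|x|)$ so that the a priori bound is reproduced under iteration; this is precisely why the sup-Lyapunov exponents $\lambda^{*\alpha}$ of \autoref{absgc} are indispensable and not interchangeable with pointwise suprema, and it is also the reason for the domain restriction $|m_1-m_0|^\gamma|x|^\zeta \le \hat r\min\{|x|, |x|^{\zeta c - (c-1)}\}$, which ensures that the trivial $|x|$-bound continues to dominate the direct bound $\delta^\gamma|x|^\zeta$ throughout the iteration. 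Secondary technical points, such as keeping all iterates inside the coordinate patches, are arranged by shrinking $\varepsilon_1^*$ at the outset.
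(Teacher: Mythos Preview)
Your proposal is correct and follows essentially the same route as the paper: derive from \eqref{mainLocal} a recursion of the shape $|\widehat{f}_{m_0}(m_1,x)-\widehat{f}_{m_0}(m_0,x)|\le \widetilde{C}|m_1-m_0|^\gamma|x|^\zeta + \vartheta\lambda_u\cdot(\text{same quantity at }u(m_0))$ by splitting with \eqref{cc2}, the fiber-Lipschitz bounds, and the angle condition, then feed it into \autoref{lem:rlocal}~(b) via \autoref{argumentapp}~\eqref{lem2}. The only cosmetic difference is that the paper splits so that the recursive term is evaluated at the fiber point $\widehat{x}_{m_0}(m_1,x)$ rather than your $\widehat{x}_{m_0}(m_0,x)$; either choice works and yields the same recursion up to harmless constants.
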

See also \autoref{lipbase} and \autoref{rmk:holder} \eqref{h:cc}.

\begin{rmk}\label{holsheaf}
	Consider some cases where the above condition on $ F, G $ can be satisfied.
	First, under (H1c), (H2d), (H3), if \eqref{ligFG} holds and 
	\[
	\begin{gathered}
	|\nabla_{m} \varphi^{m_0}_m(x)| \leq M_0 |x|^{\zeta}, |\nabla_{m} \phi^{m_0}_m(x)| \leq M_0 |x|^{\zeta}, \\
	|\nabla_{m}F_{m}(z)| \leq M_0|z|^{\zeta}, |\nabla_{m}G_{m}(z)| \leq M_0|z|^{\zeta},
	\end{gathered}
	\]
	for $ x \in X_{m_0} $, $ z \in X_{m} \times Y_{u(m)} $, $ m \in U_{m_0}(\varepsilon_{1}) $, $ m_0 \in M_1 $,
	then the estimates \eqref{cc2} hold with $ \gamma = 1 $. This is quite easy.
	It suffices to consider $ F $. Let $ m_0 \in M_1 $, $ \chi_{m_0}(m_1) = x_1 \in T_{m_0}M $. We compute
	\begin{align*}
	& |\widehat{F}_{m_0}(m_1, z) - \widehat{F}_{m_0}(m_0, z)| \\
	= & |\widehat{F}_{m_0}(\chi^{-1}_{m_0}(x_1), z) - \widehat{F}_{m_0}(\chi^{-1}_{m_0}(0), z)|
	\leq \sup_{v}|D_v \widehat{F}_{m_0}(\chi^{-1}_{m_0}(v),z)| |x_1 - x_0|_{m_0}  \\
	\leq & \sup_{v}|D_m\widehat{F}_{m_0}(\chi^{-1}_{m_0}(v),z)D\chi^{-1}_{m_0}(v)| |x_1 - x_0|_{m_0}
	\leq \widetilde{C}_0 |x_1 - x_0|_{m_0} |z|^{\zeta} \\
	\leq & \widetilde{C}_0 |m_1 - m_0||z|^{\zeta},
	\end{align*}
	where \eqref{kk1} (see below) is used.
	\begin{enumerate}[(a)]
		\item If $ F, G $ are uniformly (locally) $ C^{0,1} $ (i.e. estimates \eqref{ligFG} hold), then \eqref{cc2} is satisfied for $ \gamma = 1 $, $ \zeta = 0 $. If $ \lambda_s < 1 $, then the spectral gap condition reads: $ \lambda_s \lambda_u < 1 $, $  (\frac{\mu}{\lambda_s})^{*\alpha} \lambda_s \lambda_u< 1  $. This case was also discussed in e.g. \cite{Sta99, Cha08, Wil13}.

		\item A well known case is the following: If $ D_m\widehat{F}_{m_0}(m, \cdot) $, $ D_m\widehat{G}_{m_0}(m, \cdot) \in C^{0,\beta} $ uniformly for $ m_0, m $, then the estimates \eqref{cc2} hold for $ \gamma = 1 $, $ \zeta = \beta $. In particular, in this case with $ \beta = 1 $ and the spectral gap condition: $ \lambda_s \lambda_u < 1 $, $\mu^{*\alpha} \lambda_s \lambda_u  < 1  $, the uniform $ \alpha $-H\"{o}lder continuity of $ m \mapsto f_m(x) $ is obtained. We mention that if $ \lambda_s < 1 $, the spectral gap condition in this case is better than (a); that is, \emph{the higher the regularity of $ F, G $, the better the spectral gap condition}.
	\end{enumerate}
\end{rmk}

\begin{rmk}\label{h3*}
	When $ \zeta = 0 $, the assumption (H3) can be replaced by the assumption that $ i $ is Lipschitz around $ M_1 $ with respect to $ \mathcal{A} \times \mathcal{B} $ in the following sense:
	\begin{enumerate}
		\item [$ \mathrm{(B3)} $] Let
		\[
		\widehat{i}^{m_0}(m) = (\widehat{i}^{m_0}_X(m), \widehat{i}^{m_0}_Y(m)) = ((\varphi^{m_0}_m)^{-1}(i_X(m)), (\phi^{m_0}_m)^{-1}(i_Y(m)) ).
		\]
		Then $ m \mapsto \widehat{i}^{m_0}(m): U_{m_0}(\epsilon_0) \to X_{m_0} \times Y_{m_0}  $ is Lipschitz with Lipschitz constant no more than $ c_0 $ for a fixed $ \epsilon_0 $ and every $ m_0 \in M_1 $, where $ c_0 $ is independent of $ m_0 $.
	\end{enumerate}
	Assumption $ \mathrm{(H3)} $ actually means that $ i $ is Lipschitz with Lipschitz constant $ 0 $, i.e., $ i $ is locally constant.
	Now under the assumptions of \autoref{lem:sheaf} with $ \mathrm{(H3)} $ replaced by $ \mathrm{(B3)} $ and $ \zeta = 0 $, we also find that $ f $ is H\"older respecting the base points. For a proof, see \autoref{argumentapp} \eqref{lem2}.
	See also \cite{Cha08} for the same result in the trivial bundle case.
\end{rmk}

\begin{proof}[Proof of \autoref{lem:sheaf}]
	Take $ m_0 \in M_1 $.
	First note that by (H5), (H2a), if $ m \in U_{m_0} (\hat{\mu}^{-2} \varepsilon_1) $ and $ \varepsilon_1 $ is small,  then we can choose $ \alpha'', \beta'', \lambda''_s, \lambda''_u $, such that $ 1-\alpha''(m)\beta''(u(m)) > 0 $, and
	\[
	\lambda''_s \lambda''_u < 1,~  (\max\{ (\lambda''_s)^{\zeta - 1}, 1 \} \mu^{\gamma})^{ *\alpha }  \lambda''_s \lambda''_u < 1,
	\]
	and the following inequalities hold:
	\begin{equation}\label{AB''}
	\begin{cases}
		\lip \widehat{F}_{m_0}(m, x, \cdot) \leq (1+\eta(d(u(m),u(m_0)))) \alpha(m) (1+\eta(d(u(m),u(m_0)))) \leq \alpha''(m_0),\\
		\lip \widehat{G}_{m_0} (m,x,\cdot) \leq (1+\eta(d( m,m_0   ))) \lambda_u(m) (1+\eta(d(u(m),u(m_0)))) \leq \lambda''_u(m_0), \\
		\lip \widehat{G}_{m_0} (m,\cdot, y) \leq (1+\eta(d( m,m_0   ))) \beta'(m) (1+\eta(d( m,m_0   ))) \leq \beta''(m_0),\\
		\lip \widehat{F}_{m_0}(m,\cdot,y) \leq (1+\eta(d(u(m),u(m_0)))) \lambda_s(m) (1+\eta(d( m,m_0   ))) \leq \lambda''_s(m_0), \\
		\lip \widehat{f}_{m_0}(m, \cdot) \leq (1+\eta(d( m,m_0 ))) \beta'(m) (1+\eta(d( m,m_0   ))) \leq \beta''(m_0), \\
		\lip \widehat{x}_{m_0}(m, \cdot) \leq (1+\eta(d(u(m),u(m_0)))) \lambda_s(m) (1+\eta(d( m,m_0 ))) \leq \lambda''_s(m_0),
	\end{cases}
	\end{equation}
	where $ \eta(\cdot) $ is the function in the definition of uniform $ C^{0,1} $-fiber trivializations for both $ X, Y $ (see \autoref{def:lipbundle}).

	Note that as $ \widehat{f}_{m_0} (m, i(m_0)) = i(m_0) $, $ \widehat{x}_{m_0} (m, i(m_0)) = i(u(m_0)) $, we have
	\[
	|\widehat{f}_{m_0} (m, x)| \leq \beta''(m_0)|x|,~|\widehat{x}_{m_0} (m, x)| \leq \lambda''_s(m_0) |x| .
	\]
	Since $ \widehat{f}_{m_0}(m, x) $ satisfies \eqref{mainLocal}, we have
	\begin{align*}
	& |\widehat{f}_{m_0}(m_1,x) - \widehat{f}_{m_0}(m_0,x)| \\
	\leq & | \widehat{G}_{m_0} (m_1, x, \widehat{f}_{u(m_0)} (u(m_1), \widehat{x}_{m_0} (m_1,x) ) ) - \widehat{G}_{m_0} (m_0, x, \widehat{f}_{u(m_0)} (u(m_1), \widehat{x}_{m_0} (m_1,x) ) ) | \\
	& + | \widehat{G}_{m_0} (m_0, x, \widehat{f}_{u(m_0)} (u(m_1), \widehat{x}_{m_0} (m_1,x) ) ) - \widehat{G}_{m_0} (m_0, x, \widehat{f}_{u(m_0)} (u(m_0), \widehat{x}_{m_0} (m_0,x) ) ) | \\
	\leq & \widetilde{C} |m_1 - m_0|^\gamma|x|^\zeta + \lambda''_u(m_0) |  \widehat{f}_{u(m_0)} (u(m_1), \widehat{x}_{m_0} (m_1,x) ) - \widehat{f}_{u(m_0)} (u(m_0), \widehat{x}_{m_0} (m_0,x) ) | \\
	\leq & \widetilde{C} |m_1 - m_0|^\gamma|x|^\zeta +  \lambda''_u(m_0) |  \widehat{f}_{u(m_0)} (u(m_1), \widehat{x}_{m_0} (m_1,x) ) - \widehat{f}_{u(m_0)} (u(m_0), \widehat{x}_{m_0} (m_1,x) ) | \\
	& + \lambda''_u(m_0) \beta''(u(m_0)) | \widehat{x}_{m_0} (m_1,x) - \widehat{x}_{m_0} (m_0,x) |.
	\end{align*}
	Similarly,
	\begin{multline*}
	| \widehat{x}_{m_0} (m_1,x) - \widehat{x}_{m_0} (m_0,x) |
	\leq   \widetilde{C} |m_1 - m_0|^\gamma|x|^\zeta + \alpha''(m_0) |  \widehat{f}_{u(m_0)} (u(m_1), \widehat{x}_{m_0} (m_1,x) ) - \\
	\widehat{f}_{u(m_0)} (u(m_0), \widehat{x}_{m_0} (m_1,x) ) |
	+ \alpha''(m_0) \beta''(u(m_0)) | \widehat{x}_{m_0} (m_1,x) - \widehat{x}_{m_0} (m_0,x) |.
	\end{multline*}
	Thus,
	\begin{multline}\label{basef}
	|\widehat{f}_{m_0}(m_1,x) - \widehat{f}_{m_0}(m_0,x)|
	\leq \widetilde{C} |m_1 - m_0|^\gamma|x|^\zeta \\
	+ \frac{\lambda''_u(m_0)}{1-\alpha''(m_0)\beta''(u(m_0))} |  \widehat{f}_{u(m_0)} (u(m_1), \widehat{x}_{m_0} (m_1,x) ) - \widehat{f}_{u(m_0)} (u(m_0), \widehat{x}_{m_0} (m_1,x) ) |.
	\end{multline}
	Using the argument in \autoref{Appbb} (see \autoref{argumentapp} \eqref{lem2}), we get the results.
\end{proof}

\section{H\"{o}lder distributions: H\"{o}lderness of $ m \mapsto Df_m(x) = K^1_m(x) $}
(Based on the H\"{o}lder continuity of $ m \mapsto f_m(x) $ and $ x \mapsto K^1_m(x) $.)

Let $ K^1 \in L(\Upsilon_X^V, \Upsilon_Y^V) $ be the bundle map over $ f $ obtained in \autoref{lem:leaf1}.
In order to make sense of the H\"{o}lder continuity of $ m \mapsto Df_m(x) = K^1_m(x) $, we need (H2b) instead of (H2a).
Now taking the derivative of \eqref{mainLocal} with respect to $ x $, we have
\begin{equation}\label{localleaf1}
\begin{cases}
	D_z\widehat{F}_{m_0} ( m, z ) ( \id,
	\widehat{K}^1_{u(m_0)} (u(m), \widehat{x}_{m_0} (m,x) ) \widehat{R}^1_{m_0} (m,x) ) = \widehat{R}^1_{m_0} (m,x), \\
	D_z\widehat{G}_{m_0} ( m, z ) ( \id,
	\widehat{K}^1_{u(m_0)} (u(m), \widehat{x}_{m_0} (m,x) ) \widehat{R}^1_{m_0} (m,x) ) = \widehat{K}^1_{m_0} (m,x),
\end{cases}
\end{equation}
where $ z = (x, \widehat{f}_{u(m_0)} (u(m), \widehat{x}_{m_0} (m,x) ) ) $, and $ \widehat{K}^1 $, $ \widehat{R}^1 $ are the local representations of $ K^1, R^1 $ with respect to $ \mathcal{A}, \mathcal{B} $, i.e.,
\begin{equation}\label{K1R1}
\begin{cases}
\begin{split}
\widehat{K}^1_{m_0} (m, x) \triangleq ( D \phi^{m_0}_{m} )^{-1} ( f_m( & \varphi^{m_0}_{m}(x) ) ) K^1_m( \varphi^{m_0}_{m}(x) ) D\varphi^{m_0}_{m}(x): \\
& U_{m_0}(\mu^{-1}(m_0)\varepsilon_1) \times X_{m_0} \to L (X_{m_0}, Y_{m_0}) ,
\end{split}\\
\begin{split}
\widehat{R}^1_{m_0} (m, x) \triangleq ( D\varphi^{u(m_0)}_{u(m)} )^{-1} ( x_m( &  \varphi^{m_0}_{m}(x) ) ) R^1_m(\varphi^{m_0}_{m}(x)) D\varphi^{m_0}_{m}(x): \\
& U_{m_0}(\mu^{-1}(m_0)\varepsilon_1) \times X_{m_0} \to L (X_{m_0}, X_{u(m_0)}).
\end{split}
\end{cases}
\end{equation}
Note that $ \widehat{K}^1_{m_0} (m, x) =
D_x \widehat{f}_{m_0} ( m, x ) $, $ \widehat{R}^1_{m_0} (m, x) = D_x \widehat{x}_{m_0} (m,x) $.

First consider a special case, i.e., $ m \mapsto K^1_{m}(i_X(m)) $.
\begin{lem}\label{lem:base0}
	Let the conditions in \autoref{lem:leaf1} hold. In addition, assume the following:
	\begin{enumerate}[(a)]
		\item Let \textnormal{(H1a), (H2b), (H3), (H4a), (H5)} hold.

		\item $ DF_m(i_X(m), i_Y(u(m))) $, $ DG_m(i_X(m), i_Y(u(m))) $ are uniformly (locally) $ \gamma $-H\"{o}lder \linebreak around $ M_1 $ in the following sense:
		\begin{align*}
		| D_z\widehat{F}_{m_0}(m_1, i^{1}(m_0)) - D_z\widehat{F}_{m_0}(m_0, i^{1}(m_0)) | \leq M_0 |m_1 - m_0|^\gamma, \\
		| D_z\widehat{G}_{m_0}(m_1, i^{1}(m_0)) - D_z\widehat{G}_{m_0}(m_0, i^{1}(m_0)) | \leq M_0 |m_1 - m_0|^\gamma,
		\end{align*}
		for all $ m_1 \in U_{m_0} (\mu^{-1}(m_0) \varepsilon_1) $, $ m_0 \in M_1 $, where $ i^{1}(m_0) = (i_X(m_0), i_Y(u(m_0))) $ and $ 0 < \gamma \leq 1 $.

		\item (spectral gap condition) $ \lambda_s \lambda_u < 1 $, $  \mu^{* \gamma \alpha} \lambda_s \lambda_u < 1 $, where $ 0 < \alpha \leq 1 $; see \autoref{absgc}.

	\end{enumerate}
	If $ \varepsilon^*_1 \leq \hat{\mu}^{-2} \varepsilon_1 $ is small, then 
	\begin{equation}\label{k1optimal}
	| \widehat{K}^1_{m_0} (m_1, i_X{(m_0)}) - \widehat{K}^1_{m_0} (m_0, i_X{(m_0)}) | \leq C |m_1 - m_0|^{\gamma\alpha},
	\end{equation}
	for all $ m_1 \in U_{m_0} (\varepsilon^*_1) $, $ m_0 \in M_1 $.

\end{lem}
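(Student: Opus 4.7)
The plan is to specialize the identity \eqref{localleaf1} to the fiber point $x = i_X(m_0)$, where the invariant $0$-section hypothesis collapses the arguments of $\widehat{f}_{u(m_0)}$ and $\widehat{x}_{m_0}$ to constants, and then run a coupled H\"older estimate for $\widehat{K}^1$ and $\widehat{R}^1$ that produces a linear recursion along the orbit of $u$. Write $z_0 = (i_X(m_0), i_Y(u(m_0)))$. Since $i$ is a $0$-section invariant under $H$, we have $\widehat{f}_{u(m_0)}(u(m), i_X(u(m_0))) = i_Y(u(m_0))$ and $\widehat{x}_{m_0}(m, i_X(m_0)) = i_X(u(m_0))$, so evaluating \eqref{localleaf1} at $x = i_X(m_0)$ gives
\begin{equation*}
\begin{cases}
D_z\widehat{F}_{m_0}(m, z_0)\bigl(\id, \widehat{K}^1_{u(m_0)}(u(m), i_X(u(m_0)))\,\widehat{R}^1_{m_0}(m, i_X(m_0))\bigr) = \widehat{R}^1_{m_0}(m, i_X(m_0)),\\
D_z\widehat{G}_{m_0}(m, z_0)\bigl(\id, \widehat{K}^1_{u(m_0)}(u(m), i_X(u(m_0)))\,\widehat{R}^1_{m_0}(m, i_X(m_0))\bigr) = \widehat{K}^1_{m_0}(m, i_X(m_0)).
\end{cases}
\end{equation*}
This is exactly the analogue of \eqref{mainLocal} at the tangent level, but now \emph{with frozen arguments} in the fiber direction, which is what makes the H\"older-in-$m$ problem accessible.

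Next I would form the two differences $\Delta K(m_1) \triangleq \widehat{K}^1_{m_0}(m_1, i_X(m_0)) - \widehat{K}^1_{m_0}(m_0, i_X(m_0))$ and $\Delta R(m_1)\triangleq \widehat{R}^1_{m_0}(m_1, i_X(m_0)) - \widehat{R}^1_{m_0}(m_0, i_X(m_0))$, and control each via the H\"older hypothesis on $D_z\widehat{F}_{m_0}, D_z\widehat{G}_{m_0}$ together with the (A)(B) Lipschitz bounds of \eqref{AB''} inherited from the previous lemma. Subtracting the two instances of the equation at $m = m_1$ and $m = m_0$, adding and subtracting the obvious intermediate terms, and using $\lip \widehat{K}^1_{u(m_0)}(u(m_0), \cdot) \le \beta''(u(m_0))$ and $\lip\widehat{R}^1 \le \lambda''_s$, I expect to arrive at
\begin{align*}
|\Delta K(m_1)| &\le \widetilde{C}|m_1 - m_0|^{\gamma} + \lambda''_u(m_0)\,\lambda''_s(m_0)\,|\widehat{K}^1_{u(m_0)}(u(m_1),i_X(u(m_0))) - \widehat{K}^1_{u(m_0)}(u(m_0),i_X(u(m_0)))| \\
&\quad + \lambda''_u(m_0)\,\beta''(u(m_0))\,|\Delta R(m_1)|,\\
|\Delta R(m_1)| &\le \widetilde{C}|m_1 - m_0|^{\gamma} + \alpha''(m_0)\,\lambda''_s(m_0)\,|\widehat{K}^1_{u(m_0)}(u(m_1),i_X(u(m_0))) - \widehat{K}^1_{u(m_0)}(u(m_0),i_X(u(m_0)))| \\
&\quad + \alpha''(m_0)\,\beta''(u(m_0))\,|\Delta R(m_1)|.
\end{align*}
Solving the second inequality for $|\Delta R(m_1)|$ (the denominator $1 - \alpha''\beta''$ is strictly positive by the angle condition) and feeding the result into the first yields the clean recursion
\begin{equation*}
|\Delta K(m_1)| \le \widetilde{C}|m_1 - m_0|^{\gamma} + \frac{\lambda_s(m_0)\lambda_u(m_0)}{1 - \alpha(m_0)\beta'(u(m_0))}\,\bigl|\widehat{K}^1_{u(m_0)}(u(m_1), i_X(u(m_0))) - \widehat{K}^1_{u(m_0)}(u(m_0), i_X(u(m_0)))\bigr|,
\end{equation*}
where the factor in front of the pulled-back $\widehat{K}^1$-difference is exactly $\vartheta(m_0)\lambda_s(m_0)\lambda_u(m_0)$.

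Finally, using the Lipschitz estimate $|u(m_1)-u(m_0)|\le \mu(m_0)|m_1-m_0|$ from (H4a), this recursion is of precisely the form treated in \autoref{Appbb}: the driving H\"older exponent is $\gamma$, the base-point contraction factor is $\mu(m_0)$, and the vertical contraction factor is $\vartheta\lambda_s\lambda_u$. The spectral gap hypothesis $\mu^{*\gamma\alpha}\lambda_s\lambda_u < 1$ is exactly what is required to iterate and close the bound at H\"older exponent $\gamma\alpha$, producing \eqref{k1optimal}. The main obstacle is a bookkeeping one rather than a conceptual one: one must verify that after compositions along the orbit $\{u^k(m_0)\}$ the product of the local Lipschitz corrections $(1+\eta(\cdot))$ appearing in \eqref{AB''} stays uniformly bounded, so that the effective constants $\alpha''(m_0),\beta''(u(m_0)),\lambda''_s(m_0),\lambda''_u(m_0)$ can be chosen uniformly close to $\alpha,\beta',\lambda_s,\lambda_u$ for all small enough $\varepsilon_1^*$; this is guaranteed by (H5) and (H2b) together with the choice of a sufficiently small neighborhood radius, and it is the reason the result is stated locally around $M_1$ with $\varepsilon_1^*$ small.
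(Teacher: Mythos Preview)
Your proposal is correct and follows essentially the same route as the paper: specialize \eqref{localleaf1} to $x=i_X(m_0)$ (where the $0$-section hypothesis makes $\hat{x}_1=\hat{x}_0$ and $y_1=y_2$, killing the $\Delta_1$ term), derive the coupled estimates for $\Delta K$ and $\Delta R$, eliminate $\Delta R$ to obtain the recursion \eqref{K110} with contraction factor $\theta''_1(m_0)=\frac{\lambda''_s\lambda''_u}{1-\alpha''\beta''}$, and then invoke the iteration argument of \autoref{Appbb} (specifically \autoref{argumentapp}\,\eqref{lem3}, i.e.\ \autoref{lem:rlocal}\,\eqref{r1}). One small wording issue: where you write ``$\lip \widehat{K}^1_{u(m_0)}(u(m_0),\cdot)\le\beta''$'' and ``$\lip\widehat{R}^1\le\lambda''_s$'', what is actually used (and what you use in the inequalities) is the \emph{operator-norm} bounds $|\widehat{K}^1|\le\beta''$ and $|\widehat{R}^1|\le\lambda''_s$ from \eqref{kkr}, not Lipschitz constants.
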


\begin{lem}\label{lem:baseleaf}
	Let the conditions in \autoref{lem:leaf1} hold. In addition, assume the following:
	\begin{enumerate}[(a)]
		\item Let \textnormal{(H1a), (H2b), (H3), (H4a), (H5)} hold.

		\item $ DF, DG $ are $ C^{0,1} $ around $ M_1 $, i.e., the estimates \eqref{lipzz} hold.
		Moreover, the estimates \eqref{cc2} in \autoref{lem:sheaf} hold for $ \gamma = \zeta = 1 $; see also \autoref{holsheaf}.

		\item (spectral gap condition) $ \lambda_s \lambda_u < 1 $, $ \lambda^2_s \lambda_u \mu^\alpha < 1 $, $ \lambda^{*\beta}_s \lambda_s \lambda_u < 1 $, $ \mu^{*\alpha} \lambda_s \lambda_u < 1 $, where $ 0 < \alpha, \beta \leq 1 $; see \autoref{absgc}.
	\end{enumerate}
	If $ \varepsilon^*_1 \leq \hat{\mu}^{-2} \varepsilon_1 $ is small, then 
	\begin{equation}\label{bleaf}
	| \widehat{K}^1_{m_0} (m_1, x) - \widehat{K}^1_{m_0} (m_0, x) | \leq C\left\lbrace |m_1 - m_0|^\alpha (|x|+1) + (|m_1 - m_0|^\alpha |x|)^\beta \right\rbrace ,
	\end{equation}
	for all $ m_1 \in U_{m_0} (\varepsilon^*_1), x \in X_{m_0} $, $ m_0 \in M_1 $, where the constant $ C $ depends on the constant $ \varepsilon^*_1 $ but not on $ m \in M $.
\end{lem}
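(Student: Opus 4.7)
The plan is to differentiate equation \eqref{mainLocal} with respect to $x$ in the form already recorded as \eqref{localleaf1}, subtract the two instances evaluated at $(m_1,x)$ and $(m_0,x)$, and extract a recursive estimate for
$$A_{m_0}(m_1,x)\triangleq |\widehat K^1_{m_0}(m_1,x)-\widehat K^1_{m_0}(m_0,x)|$$
by repeating the scheme used for \autoref{lem:base0} but now accounting for the extra $x$-dependence. Writing $\Theta_i=\widehat K^1_{u(m_0)}(u(m_i),\widehat x_{m_0}(m_i,x))\widehat R^1_{m_0}(m_i,x)$ and $z_i=(x,\widehat f_{u(m_0)}(u(m_i),\widehat x_{m_0}(m_i,x)))$, I will split
$$\widehat K^1_{m_0}(m_1,x)-\widehat K^1_{m_0}(m_0,x)=\bigl[D_z\widehat G_{m_0}(m_1,z_1)-D_z\widehat G_{m_0}(m_0,z_0)\bigr](\mathrm{id},\Theta_1)+D_z\widehat G_{m_0}(m_0,z_0)(0,\Theta_1-\Theta_0).$$
The first bracket is bounded by a base-direction piece of size $M_0|m_1-m_0|$ (from \eqref{lipzz}) plus a fiber piece of size $M_0|z_1-z_0|$ controlled by \autoref{lem:sheaf} applied to $\widehat f$ and $\widehat x_{(\cdot)}$; together with the uniform bound $|\Theta_1|\le\lambda_s(m_0)\beta'(u(m_0))$ this contributes an inhomogeneous term $\widetilde C|m_1-m_0|^\alpha(|x|+1)$.

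For the second piece I will mimic the derivation of \eqref{k11} (now with the $u$-shift) to solve out the analogous difference for $\widehat R^1$ as in \autoref{lem:bas0}, which converts the $(A')(B)$ coefficients into the factor $\lambda_s(m_0)\lambda_u(m_0)/(1-\alpha(m_0)\beta'(u(m_0)))$. The resulting recursion involves $\widehat K^1_{u(m_0)}(u(m_1),\widehat x_{m_0}(m_1,x))-\widehat K^1_{u(m_0)}(u(m_0),\widehat x_{m_0}(m_0,x))$, which I then split as
$$\bigl[\widehat K^1_{u(m_0)}(u(m_1),\widehat x_{m_0}(m_1,x))-\widehat K^1_{u(m_0)}(u(m_0),\widehat x_{m_0}(m_1,x))\bigr]+\bigl[\widehat K^1_{u(m_0)}(u(m_0),\widehat x_{m_0}(m_1,x))-\widehat K^1_{u(m_0)}(u(m_0),\widehat x_{m_0}(m_0,x))\bigr].$$
The first bracket is $A_{u(m_0)}(u(m_1),\widehat x_{m_0}(m_1,x))$, reproducing the recursion; the second is bounded by \autoref{lem:leaf1a} times $|\widehat x_{m_0}(m_1,x)-\widehat x_{m_0}(m_0,x)|^\beta\le\widetilde C(|m_1-m_0|^\alpha|x|)^\beta$ via \autoref{lem:sheaf} applied to $\widehat x$.

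Combining, I will arrive at an inequality of the form
$$A_{m_0}(m_1,x)\le\widetilde C\bigl\{|m_1-m_0|^\alpha(|x|+1)+(|m_1-m_0|^\alpha|x|)^\beta\bigr\}+\frac{\lambda_s(m_0)\lambda_u(m_0)}{1-\alpha(m_0)\beta'(u(m_0))}\,A_{u(m_0)}(u(m_1),\widehat x_{m_0}(m_1,x)).$$
Iterating along the orbit, at step $k$ the $m$-displacement grows like $\mu^{(k)}(m_0)|m_1-m_0|$, while $|\widehat x^{(k)}_{m_0}(m_1,x)|\lesssim\lambda_s^{(k)}(m_0)|x|$, so the three inhomogeneous components scale with different weights. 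The three spectral gap conditions are precisely tailored to these three weights: $\mu^{*\alpha}\lambda_s\lambda_u<1$ absorbs the $|m_1-m_0|^\alpha|x|$ part, $\lambda_s^2\lambda_u\mu^\alpha<1$ absorbs the $|m_1-m_0|^\alpha$ part (note the additional $\lambda_s$ coming from the fiber-shift of the argument $\widehat x^{(k)}_{m_0}(m_1,x)$ when reusing the $|x|+1$ template), and $\lambda_s^{*\beta}\lambda_s\lambda_u<1$ absorbs the $(|m_1-m_0|^\alpha|x|)^\beta$ part. Summing the resulting geometric series by the device of Appendix \ref{Appbb} (in the form of \autoref{argumentapp}) gives \eqref{bleaf}.

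The main obstacle is the bookkeeping: three genuinely different H\"older scales propagate simultaneously, each pinned to its own gap condition, and one must keep track of how $\widehat x^{(k)}_{m_0}$ shrinks $x$ while $u^{(k)}$ expands $|m_1-m_0|$, so that the sup-Lyapunov estimates of Appendix \ref{Appbb} apply term by term. A secondary technical point is ensuring the coefficient $(1-\alpha(u^k(m_0))\beta'(u^{k+1}(m_0)))^{-1}$ remains uniformly bounded along the orbit and that the local representations \eqref{localrepre} behave compatibly under the $u$-shift; both are guaranteed by the smallness of $\varepsilon$ in (H5) together with the $\varepsilon$-almost uniform $C^{1,1}$-fiber trivialization in (H2b), which controls the bundle-chart perturbations as in \eqref{AB''}.
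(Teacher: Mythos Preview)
Your proposal is correct and follows essentially the same route as the paper: the paper derives the identical recursion \eqref{K111} by splitting $D_z\widehat G_{m_0}(m_1,z_1)-D_z\widehat G_{m_0}(m_0,z_0)$ into a fiber piece (bounded via \autoref{lem:sheaf}) and a base piece (from \eqref{lipzz}), solves out the $\widehat R^1$-difference to extract the factor $\theta''_1(m_0)=\lambda''_s\lambda''_u/(1-\alpha''\beta'')$, splits the resulting $\widehat K^1_{u(m_0)}$-difference exactly as you do (recursion term plus a fiber-H\"older piece handled by \autoref{lem:leaf1a}), and then hands the recursion off to Appendix~\ref{Appbb} via \autoref{argumentapp}~\eqref{lem3}. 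Your packaging of the inhomogeneous term as $|m_1-m_0|^\alpha(|x|+1)$ in place of the paper's $|m_1-m_0|^\alpha|x|+|m_1-m_0|$ is an inessential regrouping.
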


One can discuss the case where $ \widehat{F}_{m_0}(\cdot), \widehat{G}_{m_0}(\cdot) \in C^{1,\gamma} $, but it is more complicated.

\begin{proof}[Proof of \autoref{lem:base0} and \autoref{lem:baseleaf}]
	As in the proof of \autoref{lem:sheaf}, when we let $ \varepsilon_1 $ be small and $ m \in U_{m_0} (\hat{\mu}^{-2} \varepsilon_1) $, we can choose $ \alpha'', \beta'', \lambda''_s, \lambda''_u $, such that
	the following spectral gap condition holds (see \autoref{absgc} for the actual meaning where $ \alpha'', \beta'' $ replace $ \alpha, \beta' $):
	\[
	\lambda''_s \lambda''_u < 1,  (\lambda''_s)^2 \lambda''_u \mu^\alpha < 1 ,  (\lambda''_s)^{*\beta} \lambda''_s \lambda''_u < 1, \mu^{*\alpha} \lambda''_s \lambda''_u  < 1,
	\]
	\eqref{AB''} holds, and in addition
	\begin{equation}\label{kkr}
	\begin{cases}
	\sup_{x \in X_m} | \widehat{K}^1_{m_0}(m, x) | \leq (1+\eta(d( m,m_0 ))) \beta'(m) (1+\eta(d( m,m_0   ))) \leq \beta''(m_0), \\
	\sup_{x \in X_m} | \widehat{R}^1_{m_0}(m, x) | \leq (1+\eta(d(u(m),u(m_0)))) \lambda_s(m) (1+\eta(d( m,m_0 ))) \leq \lambda''_s(m_0).
	\end{cases}
	\end{equation}

	Let $ y_1 = \widehat{f}_{u(m_0)} (u(m_1), \widehat{x}_{m_0} (m_1,x) ), ~ y_2 = \widehat{f}_{u(m_0)} (u(m_0), \widehat{x}_{m_0} (m_0,x) ) $, $ \Delta_1 = |y_1 - y_2| $. By \eqref{localleaf1}, 
	\begin{align*}
	& | \widehat{K}^1_{m_0} (m_1, x) - \widehat{K}^1_{m_0} (m_0, x) |  \\
	\leq & | (  D_z\widehat{G}_{m_0} ( m_1, x, y_1 ) - D_z\widehat{G}_{m_0} ( m_1, x, y_2 ) ) ( \id, \widehat{K}^1_{u(m_0)} (u(m_1), \widehat{x}_{m_0} (m_1,x) ) \widehat{R}^1_{m_0} (m_1,x) ) | \\
	& + | ( D_z\widehat{G}_{m_0} ( m_1, x, y_2 ) - D_z\widehat{G}_{m_0} ( m_0, x, y_2 ) ) \\
	& \qquad ( \id, \widehat{K}^1_{u(m_0)} (u(m_1), \widehat{x}_{m_0} (m_1,x) ) \widehat{R}^1_{m_0} (m_1,x) ) | \\
	& + | D_y\widehat{G}_{m_0} ( m_0, x, y_2 ) (\widehat{K}^1_{u(m_0)} (u(m_1), \widehat{x}_{m_0} (m_1,x) ) \widehat{R}^1_{m_0} (m_1,x) \\
	& \qquad - \widehat{K}^1_{u(m_0)} (u(m_0), \widehat{x}_{m_0} (m_0,x) ) \widehat{R}^1_{m_0} (m_0,x)) | \\
	\leq & \widetilde{C}\Delta_1 + \widetilde{C} |m_1 - m_0| + \lambda''_s(m_0)\lambda''_u(m_0) |\widehat{K}^1_{u(m_0)} (u(m_1), \widehat{x}_{m_0} (m_1,x) ) \\
	& \qquad - \widehat{K}^1_{u(m_0)} (u(m_0), \widehat{x}_{m_0} (m_0,x) ) |
	 + \lambda''_u(m_0)\beta''(u(m_0)) | \widehat{R}^1_{m_0} (m_1,x) - \widehat{R}^1_{m_0} (m_0,x) |.
	\end{align*}
	Similarly,
	\begin{multline*}
	|\widehat{R}^1_{m_0} (m_1,x) - \widehat{R}^1_{m_0} (m_0,x)|
	\leq \widetilde{C}\Delta_1 + \widetilde{C} |m_1 - m_0| \\
	+ \lambda''_s(m_0)\alpha''(m_0) \left|\widehat{K}^1_{u(m_0)} (u(m_1), \widehat{x}_{m_0} (m_1,x) ) - \widehat{K}^1_{u(m_0)} (u(m_0), \widehat{x}_{m_0} (m_0,x) ) \right| \\
	+ \alpha''(m_0)\beta''(u(m_0)) | \widehat{R}^1_{m_0} (m_1,x) - \widehat{R}^1_{m_0} (m_0,x) |.
	\end{multline*}
	Now we have
	\begin{multline*}
	| \widehat{K}^1_{m_0} (m_1, x) - \widehat{K}^1_{m_0} (m_0, x) |
	\leq \widetilde{C}\Delta_1 + \widetilde{C} |m_1 - m_0| \\
	+ \theta''_1(m_0) |\widehat{K}^1_{u(m_0)} (u(m_1), \widehat{x}_{m_0} (m_1,x) ) - \widehat{K}^1_{u(m_0)} (u(m_0), \widehat{x}_{m_0} (m_0,x) ) |,
	\end{multline*}
	where $ \theta''_1(m_0) =  \frac{\lambda''_s(m_0)\lambda''_u(m_0)}{1-\alpha''(m_0)\beta''(u(m_0))} < 1 $.

	For the proof of \autoref{lem:base0}, the estimate becomes
	\begin{multline}\label{K110}
	| \widehat{K}^1_{m_0} (m_1, i_X(m_0)) - \widehat{K}^1_{m_0} (m_0, i_X(m_0)) | \\
	\leq \widetilde{C} |m_1 - m_0|^\gamma + \theta''_1(m_0) |\widehat{K}^1_{u(m_0)} (u(m_1), i_X(m_0) ) - \widehat{K}^1_{u(m_0)} (u(m_0), i_X(m_0) ) |.
	\end{multline}
	Using the argument in \autoref{Appbb} (see \autoref{argumentapp} \eqref{lem3}), one can obtain \autoref{lem:base0}.

	For the general case, we need the H\"{o}lder continuity of $ m \mapsto f_m(x) $ and $ x \mapsto K^1_m(x) $, i.e., \autoref{lem:sheaf} and \autoref{lem:leaf1a}. There is a constant $ \varepsilon'_1 > 0 $ such that (a), (b) below hold.

	(a) By \autoref{lem:leaf1a} and (H2b), we have
	\[
	| \widehat{K}^1_{m_0} (m, x_1) - \widehat{K}^1_{m_0} (m, x_2) | \leq \widetilde{C} |x_1 - x_2|^\beta,~ | \widehat{R}^1_{m_0} (m, x_1) - \widehat{R}^1_{m_0} (m_, x_2) | \leq \widetilde{C} |x_1 - x_2|^\beta,
	\]
	for $ m \in U_{m_0} (\varepsilon'_1) $, $ x_1, x_2 \in X_{m_0} $, where the constant $ \widetilde{C} $ only depends on $ \varepsilon'_1 $.

	(b) Meanwhile, by \autoref{lem:sheaf},
	\[
	|\widehat{f}_{m_0}(m_1, x) - \widehat{f}_{m_0}(m_0, x)| \leq \widetilde{C} |m_1 - m_0|^{\alpha} |x|,~|\widehat{x}_{m_0}(m_1, x) - \widehat{x}_{m_0}(m_0, x)| \leq \widetilde{C} |m_1 - m_0|^{\alpha} |x|,
	\]
	for $ m_1 \in U_{m_0} (\varepsilon'_1), x \in X_{m_0} $, where the constant $ \widetilde{C} $ only depends on $ \varepsilon'_1 $.
	Now we see that
	\begin{multline}\label{K111}
	| \widehat{K}^1_{m_0} (m_1, x) - \widehat{K}^1_{m_0} (m_0, x) |
	\leq \widetilde{C} \left\{  |m_1 - m_0|^\alpha |x| + (|m_1 - m_0|^{\alpha}|x|)^{\beta} \right\} + \widetilde{C} |m_1 - m_0| \\
	+ \theta''_1(m_0) |\widehat{K}^1_{u(m_0)} (u(m_1), \widehat{x}_{m_0} (m_1,x) ) - \widehat{K}^1_{u(m_0)} (u(m_0), \widehat{x}_{m_0} (m_1,x) ) |.
	\end{multline}
	Apply the argument in \autoref{Appbb} (see \autoref{argumentapp} \eqref{lem3}) to obtain the result.
\end{proof}

\section{Smoothness of $ m \mapsto f_m(x) $ and H\"{o}lderness of $ x \mapsto \nabla_mf_m(x) = K_m(x) $} \label{derivativeBase}

\subsection{Smoothness of $ m \mapsto f_m(x) $}
(Based on the $ C^1 $ smoothness of $ f_m(\cdot) $ and the Lipschitz continuity of $ m \mapsto f_m(x) $.)

In the following, we need some smoothness properties of the bundles $ X, Y $ in order to make sense of the smoothness of $ m \mapsto f_m(x) $. The natural assumption is that $ X, Y $ are $ C^1 $ bundles with $ C^0 $ \emph{connections} (see \autoref{connections}).
Now we can interpret \eqref{mainM} precisely by using the notion of \emph{covariant derivative} (see \autoref{defi:coderivative}), i.e.,
\begin{equation}\label{mainMFW}
\begin{cases}
\begin{split}
\nabla_mF_m( x, y ) + D_2F_m(x, y) ( & K_{u(m)}(x_m(x))Du(m) \\
& + Df_{u(m)} (x_m(x)) R_m(x) ) = R_m(x), 
\end{split}\\
\begin{split}
\nabla_mG_m( x, y ) + D_2G_m(x, y) ( & K_{u(m)}(x_m(x))Du(m) \\
& + Df_{u(m)} (x_m(x)) R_m(x) ) = K_m(x),
\end{split}
\end{cases}
\end{equation}
where $ y = f_{u(m)}(x_m(x)) $, $ K \in L(\Upsilon^H_X, \Upsilon^V_Y) $ is a vector bundle map over $ f $, and $ R \in L(\Upsilon^H_X, \Upsilon^V_X ) $ is a vector bundle map over $ x_{(\cdot)}(\cdot) $. Hereafter for $ K \in L(\Upsilon^H_X, \Upsilon^V_Y) $ (over $ x_{(\cdot)}(\cdot) $), we write $ K_{m} (x) = K_{(m,x)} $.

\begin{lem}\label{smoothbase}
	Assume the following conditions hold:
	\begin{enumerate}[(a)]
		\item Let \textnormal{(H1b), (H2c), (H3), (H4b), (H5)} hold.
		\item $ F, G $ are $ C^1 $ and
		\begin{equation}\label{ccc0}
		\begin{cases}
		|\widehat{F}_{m_0}(m_1, z) - \widehat{F}_{m_0}(m_0, z) | \leq M_0 |m_1 - m_0| |z|,\\
		|\widehat{G}_{m_0}(m_1, z) - \widehat{G}_{m_0}(m_0, z)| \leq M_0 |m_1 - m_0| |z|,
		\end{cases}
		\end{equation}
		for all $ m_1 \in U_{m_0} (\mu^{-1}(m_0) \varepsilon_1), z \in X_{m_0} \times Y_{u(m_0)} $, $ m_0 \in M_1 $.
		\item (spectral gap condition) $ \lambda_s \lambda_u < 1 $, $ \lambda_s \lambda_u \mu < 1 $; see also \autoref{absgc}.
	\end{enumerate}
	Then the following conclusions hold:
	\begin{enumerate}[(1)]
		\item There exists a unique $ C^0 $ vector bundle map $ K \in L(\Upsilon^H_X, \Upsilon^V_Y)  $ over $ f $ such that
		\[
		K_m(i_X(m)) = 0, ~\text{for}~ m \in M_1, ~\sup_{m \in M_1}\sup_{x \neq i_X(m)} \frac{|K_m(x)|}{|x|} < \infty,
		\]
		and it satisfies \eqref{mainMFW}. In addition, if
		\[
		\nabla_m F_m (i_X(m), i_Y(u(m))) = 0 ,~\nabla_m G_m (i_X(m), i_Y(u(m))) = 0,
		\]
		for all $ m \in M $, then $ K_m(i_X(m)) = 0 $ for all $ m \in M $.
		\item $ f $ is $ C^1 $ and $ \nabla_m f_m (x) = K_m(x) $ for $ m \in M, x \in X_m $.
	\end{enumerate}
\end{lem}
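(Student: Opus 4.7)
The plan is to follow the now-standard two-step strategy used earlier in \autoref{lem:leaf1}: first treat \eqref{mainMFW} as a formal equation and produce its unique solution $K$ by a contractive graph transform on a complete space of vector bundle maps, and then show by a Taylor-style argument in local charts that this $K$ coincides with the covariant derivative $\nabla f$.

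For step one, I would work on the space
\[
E_H \triangleq \{K \in L(\Upsilon^H_X,\Upsilon^V_Y) \text{ over } f : K_m(\cdot) \text{ continuous}, \ |K_m(x)| \leq C_K(|x|+1), \ m \in M_1\},
\]
equipped with the metric $\widehat d(K,K') = \sup_{m\in M_1}\sup_{x\in X_m}|K_m(x)-K'_m(x)|/(|x|+1)$, which is complete. Given $K \in E_H$, the angle condition $\sup_m \alpha(m)\beta'(u(m))<1$ together with the (A$'$)(B) property of $(DF,DG)$ (obtained from (H2c) and \autoref{lem:c1} applied fiberwise) gives a unique $R_m(x)\in L(T_mM,X_{u(m)})$ satisfying the first equation of \eqref{mainMFW}; the bounds \eqref{ccc0} together with (H3) force $|\nabla_m F_m(i_X(m),i_Y(u(m)))|$ and the analogous quantity for $G$ to be controlled by $|x|$, giving the desired fiber-linear growth for $R_m(x)$ and, via the second equation, for $\tilde K_m(x)$. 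Thus the transform $\Gamma:K\mapsto \tilde K$ sends $E_H$ into itself, and the same estimate as in \autoref{slem:TT}, now with an extra factor $\mu(m)$ arising from the $K_{u(m)}Du(m)$ term, gives
\[
\lip \Gamma \leq \sup_m \frac{\lambda_s(m)\lambda_u(m)\mu(m)}{1-\alpha(m)\beta'(u(m))} < 1,
\]
exactly matching the spectral gap hypothesis $\lambda_s\lambda_u\mu<1$. This yields a unique fixed point $K\in E_H$, which is $C^0$.

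Step two is the main obstacle, since making sense of $\nabla_m f_m(x)$ requires comparing fibers over different base points. Here the normality assumption in (H2)(ii) becomes essential: in the normal bundle charts of (H2), the Christoffel symbols vanish along the section $i$, and more precisely the local representation $\widehat f_{m_0}(m,x)$ built from \eqref{localrepre} is such that, if $f$ is $C^1$ at $m_0$, its $m$-derivative at $m=m_0$ is exactly the covariant derivative $\nabla_m f_{m_0}(x)$ pulled back through the chart. The Lipschitz continuity of $m\mapsto f_m(x)$ needed to make this argument quantitative is provided by \autoref{lem:sheaf} applied with $\gamma=\zeta=1$, $\alpha=1$, whose spectral gap condition is precisely $\lambda_s\lambda_u<1$ and $\lambda_s\lambda_u\mu<1$; this produces the bound $|\widehat f_{m_0}(m,x)-\widehat f_{m_0}(m_0,x)|\leq C|m-m_0|(|x|+1)$, and similarly for $\widehat x_{m_0}$.

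With these bounds in hand, I would introduce the remainder
\[
Q(m_0,m,x) \triangleq \widehat f_{m_0}(m,x) - \widehat f_{m_0}(m_0,x) - \widehat K_{m_0}(x)\,\chi_{m_0}(m),
\]
and the analogous $Q'$ for $\widehat x_{m_0}$. Differencing the identity \eqref{mainLocal} and the chart version of \eqref{mainMFW} (using (H4b) to expand $u(m)=u(m_0)+Du(m_0)\cdot\chi_{m_0}(m)+o(|m-m_0|)$), the hypothesis \eqref{ccc0} and $F,G\in C^1$ produce a self-improving inequality in the spirit of \autoref{slem:TS},
\[
\sup_{m_0\in M_1}\sup_{x\in X_{m_0}}\limsup_{m\to m_0}\frac{|Q(m_0,m,x)|}{|m-m_0|(|x|+1)} \leq \theta \cdot \sup_{m_0,x}\limsup_{m\to u(m_0)}\frac{|Q(u(m_0),m,x)|}{|m-u(m_0)|(|x|+1)}
\]
with $\theta = \sup_m \lambda_s(m)\lambda_u(m)\mu(m)/(1-\alpha(m)\beta'(u(m))) < 1$, the Lipschitz bound on $\widehat f$ above guaranteeing that the supremum is \emph{a priori} finite. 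Iterating (or rewriting as a self-map fixed point statement) forces the supremum to vanish, so $\widehat f_{m_0}$ is differentiable in $m$ at $m_0$ with derivative $\widehat K_{m_0}$, i.e.\ $\nabla_m f_m(x)=K_m(x)$. Combined with \autoref{lem:leaf1} and the continuity of $K$, this gives $f\in C^1$. The final statements, namely $K_m(i_X(m))=0$ on $M_1$ under (H3) (since \eqref{mainMFW} evaluated at $(i_X(m),i_Y(u(m)))$ is a fixed point equation with trivial solution when the Christoffel data of $F,G$ along $i$ vanishes) and the extension to all of $M$ when $\nabla_m F_m,\nabla_m G_m$ vanish on $i$, follow directly from \eqref{mainMFW} and uniqueness of $K$.
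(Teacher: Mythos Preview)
Your two-step strategy (contractive graph transform for $K$, then a remainder argument in normal charts to identify $K=\nabla f$) is exactly the route the paper takes. But the specific weighting you chose, $(|x|+1)$, breaks both steps under the stated spectral gap.

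In step one, from \eqref{mainMFW} one gets (as in the paper's \eqref{kkk})
\[
|\widetilde K_m(x)-\widetilde K'_m(x)| \leq \frac{\lambda_u(m)\mu(m)}{1-\alpha(m)\beta'(u(m))}\,|K_{u(m)}(x_m(x))-K'_{u(m)}(x_m(x))|.
\]
Dividing by $|x|+1$ introduces the factor $(|x_m(x)|+1)/(|x|+1) \leq (\lambda_s(m)|x|+1)/(|x|+1) \leq \max\{\lambda_s(m),1\}$, so your contraction constant is $\sup_m \max\{\lambda_s,1\}\lambda_u\mu/(1-\alpha\beta')$, \emph{not} $\sup_m \lambda_s\lambda_u\mu/(1-\alpha\beta')$. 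When $\lambda_s<1$ but $\lambda_u\mu>1$ (which the hypotheses allow, e.g.\ $\lambda_s=0.1$, $\lambda_u=5$, $\mu=1$) your map $\Gamma$ is not a contraction on $E_H$. The same mismatch recurs in step two: dividing the remainder $Q$ by $|m-m_0|(|x|+1)$ produces the wrong factor in the self-improving inequality. Relatedly, \autoref{lem:sheaf} with $\gamma=\zeta=1$ gives $|\widehat f_{m_0}(m,x)-\widehat f_{m_0}(m_0,x)|\leq C|m-m_0|\,|x|$, not $C|m-m_0|(|x|+1)$; the ``$+1$'' you wrote is not there.

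The fix is precisely what the paper does: use the weight $|x|$ (i.e.\ the space $E^L_0=\{K:|K_m(x)|\leq C_1|x|\}$ with metric $\sup_{m,x\neq i_X(m)}|K_m(x)-K'_m(x)|/|x|$, and divide $Q^0$ by $|m'-m_0|\,|x|$). This is legitimate because (H3) and \eqref{ccc0} force $\nabla_mF_m$, $\nabla_mG_m$ to vanish at the zero section, so $|\nabla_mF_m(z)|\leq C_0|z|$ and hence $|\widetilde K_m(x)|\leq C_1|x|$ (your own parenthetical already hints at this). With the $|x|$-weight, $|x_m(x)|/|x|\leq\lambda_s(m)$ gives exactly $\lambda_s\lambda_u\mu$ as the contraction factor, matching the spectral gap hypothesis, and the conclusion $K_m(i_X(m))=0$ on $M_1$ is then automatic rather than needing a separate argument.
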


\begin{rmk}\label{weakFG}
	\begin{enumerate}[(a)]
		\item Since $ \mathcal{A},~ \mathcal{B} $ consist of normal bundle charts, by the \emph{chain rule} (see \autoref{lem:chain}), we have
		\begin{multline}\label{kk1}
		D_{m} \widehat{F}_{m_0} (m,z)|_{m = m_0}
		=  \nabla_{u(m_0)} (\varphi^{u(m_0)}_{u(m_0)})^{-1}(x'') Du(m_0)
		+ D(\varphi^{u(m_0)}_{u(m_0)})^{-1}(x'') \\
		\cdot \left\{  \nabla_{m_0} F_{m_0}(z')  +  DF_{m_0}(z') ( \nabla_{m_0}\varphi^{m_0}_{m_0} (x), \nabla_{u(m_0)}\phi^{u(m_0)}_{u({m_0})}(y)Du(m_0) ) \right\} \\
		= \nabla_{m_0} F_{m_0}(z'),
		\end{multline}
		where $ z = (x,y) $, $ z' =  (\varphi^{m_0}_{m_0}(x) , \phi^{u(m_0)}_{u(m_0)}(y)) $, $ x'' = F_m(z') $. Similarly,
		\begin{equation}\label{kk2}
		D_{m} \widehat{G}_{m_0} (m,z)|_{m = m_0}  = \nabla_{m_0} G_{m_0}(z').
		\end{equation}

		\item One can replace \eqref{ccc0} by (see also \autoref{holsheaf}):
		\begin{gather*}
		|\widehat{F}_{m_0}(m_1, z) - \widehat{F}_{m_0}(m_0, z) | \leq M_0 |m_1 - m_0| |z|^{\gamma}, \\
		|\widehat{G}_{m_0}(m_1, z) - \widehat{G}_{m_0}(m_0, z)| \leq M_0 |m_1 - m_0| |z|^{\gamma},
		\end{gather*}
		and the spectral gap condition by $ \lambda_s \lambda_u < 1 $, $ \lambda^{\gamma}_s \lambda_u \mu < 1 $, $ \lambda_s < 1 $ (or $ \lambda_s \lambda_u < 1 $, $  \max\{ \lambda^{\gamma - 1}_s, 1 \} \mu \lambda_s \lambda_u < 1 $), where $ 0 < \gamma \leq 1 $. Then also $ f \in C^1 $ but
		\[
		\sup_{m \in M_1}\sup_{x \neq i_X(m)} \frac{|\nabla_m f_m(x)|}{|x|^{\gamma}} < \infty.
		\]
		The proof is very similar to the case $ \gamma = 1 $. Note that under $ \lambda^{\gamma}_s \lambda_u \mu < 1 $, $ \lambda_s < 1 $, by using \autoref{lem:sheaf}, one gets
		\[
		\sup_{m_0 \in M_1}\sup_{x \in X_{m_0}}\limsup_{m_1 \to m_0}\frac{|\widehat{f}_{m_0}(m_1, x) - \widehat{f}_{m_0}(m_0, x)|}{|m_1 - m_0||x|^\gamma} \leq C.
		\]
		Now we can use the same steps as in the proof of \autoref{smoothbase} with some nature minor modifications.
	\end{enumerate}
\end{rmk}

\begin{proof}[Proof of \autoref{smoothbase}]
	First note that the covariant derivatives of $ F, G $ satisfy
	\begin{equation}\label{ccc}
	|\nabla_{m_0} F_{m_0}(z)| \leq C_0|z|,~ |\nabla_{m_0} G_{m_0}(z)| \leq C'_0|z|,
	\end{equation}
	for all $ z \in X_{m_0} \times Y_{u(m_0)} $ and $ m_0 \in M_1 $, where $ C'_0 > 0 $ is independent of $ m_0 $.
	This is quite simple as the following shows. Let $ m_0 \in M_1 $, $ \chi_{m_0}(m_1) = x_1 \in T_{m_0}M $. Then
	\[
	\frac{|\widehat{F}_{m_0}(\chi^{-1}_{m_0}(x_1), z) - \widehat{F}_{m_0}(\chi^{-1}_{m_0}(0), z)|}{|x_1|}
	\leq \Theta^{-1}_1 M_0|z|,
	\]
	and so
	\[
	|\nabla_{m_0} F_{m_0}(z)| = |D_{m} \widehat{F}_{m_0} (m,z)|_{m = m_0} D\chi^{-1}_{m_0}(0)| \leq \Theta^{-1}_1 M_0|z|
	\]
	by \eqref{kk1}, and similarly for $ |\nabla_{m_0} G_{m_0}(z)| $.

	\textbf{(I)} Let $ e: M_1 \to M $ be the inclusion map, i.e., $ e(m) = m $. Consider the pull-back bundles $ e^*X, e^*Y $. Via two natural inclusion maps of $ e^*X \to X $, $ e^*Y \to Y $, induced by $ e $, we get natural pull-back vector bundles $ e^*TX, e^*TY $. Using $ e^*X, e^*Y $ and $ e^*TX, e^*TY $ instead of $ X,Y $, $ TX, TY $, we can, without loss of generality, assume $ M_1 = M $. \emph{So first let} $ M_1 = M $. \emph{The existence of $ K $ suffices under the assumptions in \autoref{lem:leafK}} below. So in this step, we in fact prove the conclusion (1) of \autoref{lem:leafK}.

	Define a metric space
	\[
	E^L_0 \triangleq \{ K \in L(\Upsilon^H_X, \Upsilon^V_Y) ~\text{a $ C^0 $ vector bundle map over}~  f: |K_m(x)| \leq C_1 |x| ~\text{if}~m \in M_1 \},
	\]
	with metric
	\[
	d_0(K,K') \triangleq \sup_m \sup_{x \neq i_X(m)} \frac{|K_m(x) - K_m'(x)|}{|x|},
	\]
	where
	\begin{equation*}
	C_1 = \frac{C_0}{1- \theta_0},~\theta_0 = \sup_{m \in M_1}\frac{\lambda_s(m) \lambda_u(m) \mu(m)}{1-\alpha(m)\beta'(u(m))} < 1,
	\end{equation*}
	and $ C_0 $ is given by \eqref{eee} below.
	Note that in our notation $ |x| = d_m(x,i_X(m)) $ if $ x \in X_m $.
	The metric space $ (E^L_0, d_0)  \neq \emptyset $ is complete. (Note that in the local representations, convergence in $ E^L_0 $ is uniform for $ m $ and $ x $ belonging to any bounded-fiber set.)

	Since $ \alpha(m)\beta'(u(m)) < 1 $, given a $ K \in E^L_0 $, there is a unique $ R \in L(\Upsilon^H_X, \Upsilon^V_X) $ over $ x_{(\cdot)}(\cdot) $ satisfying
	\begin{equation}\label{K1e}
	\nabla_mF_m( x, y ) + D_2F_m(x, y) ( K_{u(m)}(x_m(x))Du(m) + Df_{u(m)} (x_m(x)) R_m(x) ) = R_m(x),
	\end{equation}
	where $ y = f_{u(m)}(x_m(x)) $; and define
	\[
	\widetilde{K}_m(x) \triangleq \nabla_mG_m( x, y) + D_2G_m(x, y) ( K_{u(m)}(x_m(x))Du(m) + Df_{u(m)} (x_m(x)) R_m(x) ).
	\]

	Consider the graph transform $ \varGamma^0: K \mapsto \widetilde{K} $.
	\begin{slem}
		$ \varGamma^0 $ is a Lipschitz map $ E^L_0 \to E^L_0 $ with 
		\[
		\lip \varGamma^0 \leq \sup_{m \in M_1} \frac{\lambda_s(m) \lambda_u(m) \mu(m)}{1-\alpha(m)\beta'(u(m))} < 1.
		\]
	\end{slem}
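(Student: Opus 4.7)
By Lemma~\ref{lem:c1}, since $(F_m, G_m)$ satisfies the (A)(B) condition, so does the pair of fiber-derivatives $(DF_m(z), DG_m(z))$ at every $z \in X_m \times Y_{u(m)}$; in particular $\alpha(m)\beta'(u(m)) < 1$ uniformly, so for each $K \in E^L_0$ the first line of~\eqref{mainMFW} is uniquely solvable for $R_m(x)$ via Banach fixed point, and a standard uniform-contraction argument (using that $K$, $K^1 = Df$, $x_{(\cdot)}(\cdot)$, and all the data $F, G, DF, DG, \nabla_m F, \nabla_m G, Du$ are continuous) shows $(m,x)\mapsto R_m(x)$ is a $C^0$ vector bundle map over $x_{(\cdot)}(\cdot)$. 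The formula defining $\varGamma^0$ then gives a $C^0$ bundle map $\widetilde K \in L(\Upsilon^H_X, \Upsilon^V_Y)$ over $f$.

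To check $\widetilde K \in E^L_0$: since $i$ is an invariant $0$-section and $\lip f_{u(m)} \leq \beta'(u(m))$, one has $|y| = |f_{u(m)}(x_m(x))| \leq \hat{\beta}\lambda_s(m)|x|$, so $|(x,y)| \leq c_0 |x|$ uniformly. Combining this with~\eqref{ccc}, the assumed bound $|K_{u(m)}(x_m(x))|\leq C_1\lambda_s(m)|x|$, the estimate $|D_2F_m|\leq\alpha(m)$, and $|Du(m)|\leq\mu(m)$, solving the first line of~\eqref{mainMFW} yields
\[
|R_m(x)| \leq \frac{C_0 c_0 + \alpha(m)\mu(m)\lambda_s(m)C_1}{1 - \alpha(m)\beta'(u(m))}|x|.
\]
Substituting into the second line and using $|D_2G_m| \leq \lambda_u(m)$, $|Df_{u(m)}|\leq\beta'(u(m))$ reduces the target inequality $|\widetilde K_m(x)| \leq C_1 |x|$ to $\widetilde c_0 C_0 + \theta_0 C_1 \leq C_1$ with $\theta_0 = \sup_{m\in M_1}\lambda_s(m)\lambda_u(m)\mu(m)/(1-\alpha(m)\beta'(u(m))) < 1$; this is closed by the choice $C_1 = C_0/(1-\theta_0)$ after enlarging $C_0$ to absorb $\widetilde c_0$.

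For the Lipschitz bound I subtract the equations corresponding to $K, K' \in E^L_0$. The inhomogeneous terms $\nabla_m F$, $\nabla_m G$ cancel, leaving the homogeneous linear system
\[
R_m(x) - R'_m(x) = D_2F_m(x,y)\bigl[(K_{u(m)} - K'_{u(m)})(x_m(x))Du(m) + Df_{u(m)}(x_m(x))(R_m(x) - R'_m(x))\bigr],
\]
\[
\widetilde K_m(x) - \widetilde K'_m(x) = D_2G_m(x,y)\bigl[(K_{u(m)} - K'_{u(m)})(x_m(x))Du(m) + Df_{u(m)}(x_m(x))(R_m(x) - R'_m(x))\bigr].
\]
Solving the first gives $|R_m - R'_m| \leq \alpha(m)\mu(m)/(1-\alpha(m)\beta'(u(m)))\cdot|(K_{u(m)} - K'_{u(m)})(x_m(x))|$; plugging into the second yields
\[
|\widetilde K_m(x) - \widetilde K'_m(x)| \leq \frac{\lambda_u(m)\mu(m)}{1-\alpha(m)\beta'(u(m))}|(K_{u(m)} - K'_{u(m)})(x_m(x))|.
\]
Dividing by $|x|$ and using $|x_m(x)| \leq \lambda_s(m)|x|$ (which holds because $x_m(i_X(m)) = i_X(u(m))$ by invariance of $i$ and the uniqueness in Lemma~\ref{lem:gr}) produces exactly the claimed Lipschitz constant.

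The main obstacle in this plan is the continuity assertion for $\widetilde K$, since $R$ is defined only implicitly; however, because $\alpha\beta'<1$ is uniform, the contraction defining $R$ is uniform in $(m,x)$, and combined with the already-established continuity of $K^1 = Df$ (from the preceding subsection on leaf-regularity) this reduces to a routine continuity check. Everything else is direct algebraic manipulation of the linear system~\eqref{mainMFW}, guided by the observation that $(DF, DG)$ inherits the (A)(B) condition from $(F,G)$.
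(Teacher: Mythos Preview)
Your proof is correct and follows essentially the same approach as the paper: subtract the two instances of \eqref{mainMFW}, solve the $F$-equation for $R_m - R'_m$ using $|D_2F_m|\leq\alpha(m)$ and $|Df_{u(m)}|\leq\beta'(u(m))$, substitute into the $G$-equation, and then divide by $|x|$ using $|x_m(x)|\leq\lambda_s(m)|x|$. The only organizational difference is that the paper first derives the key inequality $|\widetilde K_m(x)-\widetilde K'_m(x)|\leq \lambda_u(m)\mu(m)(1-\alpha(m)\beta'(u(m)))^{-1}|K_{u(m)}(x_m(x))-K'_{u(m)}(x_m(x))|$ and then re-uses it with $K'=0$ to verify $|\widetilde K_m(x)|\leq C_1|x|$, whereas you establish the self-map bound by a separate direct estimate on $R_m(x)$; both routes are fine.
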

	\begin{proof}
		Let $ \varGamma^0(K) = \widetilde{K} $, $ \varGamma^0(K') = \widetilde{K}' $. Since
		\begin{multline*}
		|\widetilde{K}_m(x) -\widetilde{K}_m'(x) | \leq | \{K_{u(m)} (x_m(x)) -  K_{u(m)}' (x_m(x))\} Du(m)  | \\
		+ \lambda_u(m)\beta'(u(m)) | R_m(x) - R_m'(x) |,
		\end{multline*}
		and
		\begin{multline*}
		|R_m(x) - R_m'(x)| \leq \alpha(m) | \{K_{u(m)} (x_m(x)) - K_{u(m)}' (x_m(x))\} Du(m)  | \\
		+ \alpha(m)\beta'(u(m)) | R_m(x) - R_m'(x) |,
		\end{multline*}
		we have
		\begin{equation}\label{kkk}
		| \widetilde{K}_m(x) -\widetilde{K}_m'(x) | \leq \frac{\lambda_u(m) \mu(m)}{1-\alpha(m) \beta'(u(m))} | K_{u(m)} (x_m(x)) - K_{u(m)}' (x_m(x)) |.
		\end{equation}
		If $ K \in E^L_0 $, then
		(a) $ \widetilde{K} \in L(\Upsilon^H_X, \Upsilon^V_Y) $ by construction,
		(b) $ \widetilde{K} $ is $ C^0 $,
		and (c) $ |\widetilde{K}_m(x)| \leq C_1 |x| $. Thus, $ \widetilde{K} \in E^L_0 $.

		To show (b), using a suitable local representation of \eqref{K1e} (see e.g. \eqref{K1elocal} below), the fact that $ f $, $ K^1 $ are continuous proved in \autoref{lem:continuity_f} and \autoref{lem:K1leafc} below, and assumption (H5$ ' $), one can easily see that $ \widetilde{K} $ is $ C^0 $ (see also the proof that $ K $ is $ C^0 $ in $ L(\Upsilon^H_X, \Upsilon^V_Y) $ below).

		We show (c) as follows. Letting $ K' = 0 $, by \eqref{ccc}, we have
		\begin{multline}\label{eee}
		|\varGamma^0(0)| \leq M_0 |z| + \lambda_u(m) \beta'(u(m)) |R_m'(x)| \\
		\leq M_0 |z| + \lambda_u(m) \beta'(u(m))   (1-\alpha(m)\beta'(u(m)))^{-1} M_0|z| \leq C_0 |x|,
		\end{multline}
		where $ z = (x,f_{u(m)}(x_m(x)) ) $, and $ C_0 $ is a constant independent of $ m $.
		Then by \eqref{kkk}, 
		\[
		|\widetilde{K}_m(x)| \leq |\varGamma^0(0)| +  \frac{\lambda_u(m)\mu(m)}{1-\alpha(m)\beta'(u(m))} |K_{u(m)} (x_m(x)) | \leq C_0 |x| + \theta_0 C_1 |x| \leq C_1 |x|.
		\]
		Inequality \eqref{kkk} also gives the estimate of the Lipschitz constant of $ \varGamma^0 $, and the proof is complete.
	\end{proof}
	Therefore, there is a unique $ K \in E^L_0 $ satisfying \eqref{mainMFW} when $ M_1 = M $. Next since $ u(M) \subset M_1 $, using \eqref{mainMFW}, one can uniquely define $ K $ in all of $ X $ (not only in $ e^*X $).

	We show $ K: \Upsilon^H_X \to \Upsilon^V_Y $ is a $ C^0 $ vector bundle map. This is direct from the local representation of \eqref{K1e}. We give the details. Let $ m_0 \in M $. Choose $ C^1 $ local charts of $ M $ at $ m_0, u(m_0) $, e.g., $ \xi_{m_0}: U_0 \to T_{m_0}M $ and $ \xi_{u(m_0)}: V \to T_{u(m_0)}M $, such that $ u(U_0) \subset V \subset U_{u(m_0)} $. Note that $ u(m_0) \in M_1 $. Moreover, take $ C^1 $ bundle charts $ (U_0, \varphi^0) $, $ (U_0, \phi^0) $ of $ X, Y $ at $ m_0 $, respectively, as well as $ (U_{u(m_0)}, \varphi^{u(m_0)} ) \in \mathcal{A} $, $ (U_{u(m_0)}, \phi^{u(m_0)} ) \in \mathcal{B} $. Using the above charts, one can give the local representations of $ K, R $:
	\begin{equation}\label{baseKR}
	\begin{cases}
	\begin{split}
	\widehat{K}'_{m_0} (m,x) \triangleq  D(\phi^{0}_{m})^{-1} ( f_m(\varphi^{0}_m(x)) )  K_{m} (\varphi^{0}_m & (x)) D\xi^{-1}_{m_0}(\xi_{m_0}(m)): \\
	& U_{0} \times X_{m_0} \to L(T_{m_0}M,Y_{m_0}),
	\end{split}\\
	\begin{split}
	\widehat{K}_{u(m_0)} (m,x) \triangleq  D(\phi^{u(m_0)}_{u(m)})^{-1} ( f_m( \varphi^{u(m_0)}_m & (x)) ) K_{m} (\varphi^{u(m_0)}_m (x)) D\xi^{-1}_{u(m_0)}(\xi_{u(m_0)}(m)): \\
	& u(U_0) \times X_{u(m_0)} \to L(T_{u(m_0)}M, Y_{u(m_0)}),
	\end{split}\\
	\begin{split}
	\widehat{R}_{m_0} (m,x) \triangleq  D ( \varphi^{u(m_0)}_{u(m)} )^{-1}  ( x_m(\varphi^0_m(x) ) )  R_m ( \varphi^{0}_m & (x) ) D\xi_{m_0}^{-1}(\xi_{m_0}(m)):\\
	& U_0 \times X_{m_0} \to L(T_{m_0}M, X_{u(m_0)}),
	\end{split}
	\end{cases}
	\end{equation}
	as well as $ \nabla F, \nabla G, F, G, x_{(\cdot)}(\cdot), Du$, i.e.,
	\begin{multline}
	\widehat{D}_m F_{m_0}(m, x, y)  \triangleq D( \varphi^{u(m_0)}_{u(m)} )^{-1}( F_m( \varphi^{0}_{m}(x), \phi^{u(m_0)}_{u(m)}(y) ) ) \\
	\cdot \nabla_m F_m( \varphi^{0}_{m}(x), \phi^{u(m_0)}_{u(m)}(y)  ) D\xi^{-1}_{m_0}(\xi_{m_0}(m)): \\
	U_0 \times X_{m_0} \times Y_{u(m_0)} \to L(T_{m_0}M, Y_{u(m_0)}), \label{basediff1}
	\end{multline}
	\begin{multline}
	\widehat{D}_m G_{m_0}(m, x, y)  \triangleq  D( \varphi^{0}_{m} )^{-1}( G_m( \varphi^{0}_{m}(x), \phi^{u(m_0)}_{u(m)}(y) ) )  \\
	\cdot \nabla_m G_m( \varphi^{0}_{m}(x), \phi^{u(m_0)}_{u(m)}(y)  ) D\xi^{-1}_{m_0}(\xi_{m_0}(m)): \\
	U_0 \times X_{m_0} \times Y_{u(m_0)} \to L(T_{m_0}M, Y_{m_0}) , \label{basediff2}
	\end{multline}
	\begin{equation}\label{basediff}
	\begin{cases}
	\widehat{F}'_{m_0} (m,x,y) \triangleq (\varphi^{u(m_0)}_{u(m)})^{-1} \circ F_m(\varphi^0_m(x), \phi^{u(m_0)}_{u(m)} (y) ): ~ U_{0} \times X_{m_0} \times Y_{u(m_0)} \to X_{u(m_0)} , \\
	\widehat{G}'_{m_0} (m,x,y) \triangleq (\varphi^{0}_{m})^{-1} \circ G_m(\varphi^0_m(x), \phi^{u(m_0)}_{u(m)} (y) ): ~ U_{0} \times X_{m_0} \times Y_{u(m_0)} \to X_{m_0} , \\
	\widehat{x}'_{m_0}(m, x) \triangleq (\varphi^{u(m_0)}_{u(m)})^{-1} \circ x_{m} (\varphi^{0}_m(x)) :~ U_{0} \times X_{m_0} \to X_{u(m_0)}, \\
	\widehat{Du}_{m_0} (m)v \triangleq D\xi_{u(m_0)}(u(m)) Du(m)D\xi_{m_0}^{-1}(\xi_{m_0}(m))v:~ U_0 \times T_{m_0}M \to T_{u(m_0)}M.
	\end{cases}
	\end{equation}
	Then by a simple computation,
	\begin{equation}\label{K1elocal}
	\begin{cases}
	\begin{split}
	\widehat{D}_m F_{m_0}( m, x, \hat{y}' ) + D_y \widehat{F}'_{m_0} ( & m, x, \hat{y}' ) \left\{ \widehat{K}_{u(m_0)} ( u(m),\hat{x}' ) \widehat{Du}_{m_0}(m) \right.\\
	&\left. + D_x \widehat{f}_{u(m_0)} (u(m), \hat{x}') \widehat{R}_{m_0} (m,x) \right\} = \widehat{R}_{m_0} (m,x),
	\end{split}\\
	\begin{split}
	\widehat{D}_m G_{m_0}( m, x, \hat{y}' ) + D_y \widehat{G}'_{m_0} & ( m, x, \hat{y}' ) \left\{  \widehat{K}_{u(m_0)} ( u(m),\hat{x}' ) \widehat{Du}_{m_0}(m) \right.\\
	&\left. + D_x \widehat{f}_{u(m_0)} (u(m), \hat{x}') \widehat{R}_{m_0} (m,x) \right\} = \widehat{K}'_{m_0} (m,x) ,
	\end{split}
	\end{cases}
	\end{equation}
	where $ \hat{x}' = \widehat{x}'_{m_0} (m,x), \hat{y}' = \widehat{f}_{u(m_0)} ( u(m), \hat{x}' ) $.
	Note that $ f $, $ K^1 $ are continuous as proved in \autoref{lem:continuity_f} and \autoref{lem:K1leafc} below, and so $ \widehat{D}_m F_{m_0} $, $ \widehat{D}_m G_{m_0} $, $ D_y \widehat{F}'_{m_0} $, $ D_y \widehat{G}'_{m_0} $, and $ \widehat{K}^1_{m_0} (m,x) = D_x \widehat{f}_{m_0} (m,x) $ are all continuous for every $ m_0 \in M $.

	By assumption (H5$ ' $), we get for $ m \in U_0 $,
	\[
	|D_y \widehat{F}'_{m_0} (m, x, \hat{y}' ) D_x \widehat{f}_{u(m_0)} (u(m), \hat{x}') | \approx \alpha(m_0)\beta'(u(m_0)) < 1,
	\]
	where $ U_0 $ may be smaller if necessary. Since $ \widehat{K}_{u(m_0)} (u(\cdot),\cdot) $ is continuous at $ (m_0,x) $ for all $ x \in X_{u(m_0)} $, we see $ \widehat{R}_{m_0}(\cdot,\cdot) $ is continuous at $ (m_0, x) $, and so is $ \widehat{K}'_{m_0}(\cdot,\cdot) $, which implies that $ K $ is $ C^0 $.

	\textbf{(II)} In the following, we need to show $ f $ is $ C^1 $ and $ \nabla_m f_m(x) = K_m(x) $. Working in bundle charts, let us consider \eqref{mainLocal} and the local representation of \eqref{mainMFW}.
	Let
	\[
	\begin{gathered}
	\widehat{K}_{m_0} (x) = D(\phi^{m_0}_{m_0})^{-1} (y') K_{m_0} (\varphi^{m_0}_{m_0}(x)) = K_{m_0}(x),\\
	\widehat{R}_{m_0} (x) = D(\varphi^{u(m_0)}_{u(m_0)})^{-1} (x'') R_{m_0} (\varphi^{m_0}_{m_0}(x)) = R_{m_0}(x),
	\end{gathered}
	\]
	where $ y' = f_{m_0}(\varphi^{m_0}_{m_0}(x)) $, $ x'' = x_{m_0}(\varphi^{m_0}_{m_0}(x)) $, $ m_0 \in M_1 $. Using \eqref{kk1} and \eqref{kk2}, we know
	\begin{equation*}
	\begin{cases}
	\begin{split}
	D_m \widehat{F}_{m_0} (m,x,y) & |_{m = m_0} + D_y\widehat{F}_{m_0} (m_0,x,y) \\
	& \cdot \left\lbrace  \widehat{K}_{u(m_0)} (\hat{x}) Du(m_0) + D_x \widehat{f}_{u(m_0)} (u(m_0), \hat{x} )\widehat{R}_{m_0}(x) \right\rbrace  = \widehat{R}_{m_0}(x),
	\end{split}\\
	\begin{split}
	D_m \widehat{G}_{m_0} (m,x,y) & |_{m = m_0} + D_y\widehat{G}_{m_0} (m_0,x,y) \\
	& \cdot \left\lbrace \widehat{K}_{u(m_0)} (\hat{x}) Du(m_0) + D_x \widehat{f}_{u(m_0)} (u(m_0), \hat{x} )\widehat{R}_{m_0}(x) \right\rbrace = \widehat{K}_{m_0}(x),
	\end{split}
	\end{cases}
	\end{equation*}
	where $ y = \widehat{f}_{u(m_0)} (u(m_0), \hat{x} ) $, $ \hat{x} = \widehat{x}_{m_0}(m_0,x) $.
	Define
	\[
	Q^0 (m', m_0, x) \triangleq \widehat{f}_{m_0} (m',x) - \widehat{f}_{m_0}(m_0,x) - \widehat{K}_{m_0}(x) (m' - m),
	\]
	where $ m' \in U_{m_0}(\hat{\mu}^{-2} \varepsilon_1 ) $, $ m_0 \in M_1 $. $ \varepsilon_1 $ is taken from the proof of \autoref{lem:sheaf}, as also are $ \alpha'', \beta'' $, $ \lambda''_s, \lambda''_u $, such that $ 1-\alpha''(m)\beta''(u(m)) > 0 $,
	\[
	\lambda''_s \lambda''_u < 1,~  \mu \lambda''_s \lambda''_u < 1,
	\]
	and \eqref{AB''}, \eqref{kkr} hold.

	\noindent \textbf{\emph{Convention}}.
	Here, we identify $ U_{m_0}(\hat{\mu}^{-2} \varepsilon_1 ) $ with the Banach space $ T_{m_0}M $ by the local chart $ \chi_{m_0}: U_{m_0}(\hat{\mu}^{-2} \varepsilon_1 ) \to T_{m_0}M $ given in (H1b); that is, we identify $ x' $ with $ m' $ if $ \chi_{m_0}(m') = x' $. For example, the local representation of $ \widehat{f}_{m_0}(m',x) $ by $ \chi_{m_0}: U_{m_0}(\epsilon_{m_0}') \to T_{m_0}M $ is $ \widehat{f}_{m_0}(\chi^{-1}_{m_0}(x'),x) $, but we also write it as $ \widehat{f}_{m_0}(m',x) $. The metric in $ T_{m_0}M $ is induced from $ d_{m_0} $, i.e., $ |x_1 - x_2| = d_{m_0}(\chi^{-1}_{m_0}{(x_1)}, \chi^{-1}_{m_0}{(x_2)}) $. Further, $ \varepsilon_1 $ may be taken small and may even depend on $ m $. This does not cause any problem since at last we will let $ \varepsilon_1 \to 0 $, i.e., $ U_{m_0} \ni m' \to m_0 $.

	We will write $ |g(m', m_0, x)| \leq O_{x}(|m'-m_0|) $ if 
	\[
	\limsup_{m' \to m_0} \frac{|g(m',m_0, x)|}{|m'-m_0|} = 0.
	\]
	\begin{slem}
		$ \sup\limits_{m_0 \in M_1}\sup\limits_{x \in X_{m_0}}\limsup\limits_{m' \to m_0} \frac{|Q^0 (m', m_0, x)|}{|m'-m_0||x|} < \infty $ and
		\[
		|Q^0 (m', m_0, x)| \leq O_{x}(|m'-m_0|) + \frac{\lambda''_u(m_0)}{1-\alpha''(m_0)\beta''(u(m_0))} |Q^0( u(m'), u(m_0), \widehat{x}_{m_0}(m',x) )|.
		\]
	\end{slem}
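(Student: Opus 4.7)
The plan is to mimic the strategy used for the fiber-smoothness result (Sublemma in the proof of \autoref{lem:leaf1}), but now with base-point differences instead of fiber differences. The key idea is that $\widehat{f}_{m_0}$ satisfies the functional equation \eqref{mainLocal}, while $\widehat{K}_{m_0}$ (through \eqref{baseKR} and its defining identities coming from \eqref{K1elocal} evaluated at $m=m_0$) satisfies the linearized version of this equation. Taking the difference and exploiting the contraction coming from $\lambda''_s\lambda''_u\mu<1$ together with $\alpha''\beta''<1$ should produce exactly the claimed recursion.

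Concretely, I would start from \eqref{mainLocal} and write
\[
\widehat{f}_{m_0}(m',x)-\widehat{f}_{m_0}(m_0,x)=\widehat{G}_{m_0}(m',x,y')-\widehat{G}_{m_0}(m_0,x,y),
\]
with $y'=\widehat{f}_{u(m_0)}(u(m'),\widehat{x}_{m_0}(m',x))$, $y=\widehat{f}_{u(m_0)}(u(m_0),\widehat{x}_{m_0}(m_0,x))$. Since $\widehat{G}_{m_0}$ is $C^1$ by (H2c) and the assumption that $F,G\in C^1$, a Taylor expansion at $(m_0,x,y)$ yields
\[
\widehat{G}_{m_0}(m',x,y')-\widehat{G}_{m_0}(m_0,x,y)=D_m\widehat{G}_{m_0}(m_0,x,y)(m'-m_0)+D_y\widehat{G}_{m_0}(m_0,x,y)(y'-y)+O_x(|m'-m_0|),
\]
where the error is $O_x(|m'-m_0|)$ because, by \autoref{lem:sheaf} (applicable under our spectral gap condition $\lambda_s\lambda_u\mu<1$), $|y'-y|\lesssim|m'-m_0|\,|x|$ and the $C^1$ modulus of $\widehat{G}_{m_0}$ is independent of $m_0$ uniformly on bounded-fiber sets. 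Next, I split $y'-y$ through the intermediate point $\widehat{f}_{u(m_0)}(u(m_0),\widehat{x}_{m_0}(m',x))$: the first piece is $Q^0(u(m'),u(m_0),\widehat{x}_{m_0}(m',x))+\widehat{K}_{u(m_0)}(\widehat{x}_{m_0}(m',x))(u(m')-u(m_0))$, and the second piece, by fiber $C^1$-smoothness (\autoref{lem:leaf1}) and lem:sheaf applied to $\widehat{x}_{m_0}$, equals $D_x\widehat{f}_{u(m_0)}(u(m_0),\widehat{x}_{m_0}(m_0,x))[Q^R(m',m_0,x)+\widehat{R}_{m_0}(x)(m'-m_0)]+O_x(|m'-m_0|)$, where $Q^R$ is the analog of $Q^0$ for $\widehat{x}_{m_0}$. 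Further, $u(m')-u(m_0)=Du(m_0)(m'-m_0)+o(|m'-m_0|)$ by (H4b), and on $\widehat{K}_{u(m_0)}(\widehat{x}_{m_0}(m',x))-\widehat{K}_{u(m_0)}(\widehat{x}_{m_0}(m_0,x))$ we use continuity of $K$.

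Collecting terms and using the defining identity for $\widehat{K}_{m_0}(x)$ (the local form of \eqref{mainMFW}), one sees that the $(m'-m_0)$-linear pieces cancel precisely, leaving
\[
Q^0(m',m_0,x)=D_y\widehat{G}_{m_0}(m_0,x,y)\bigl[Q^0(u(m'),u(m_0),\widehat{x}_{m_0}(m',x))+D_x\widehat{f}_{u(m_0)}\cdot Q^R(m',m_0,x)\bigr]+O_x(|m'-m_0|).
\]
An entirely parallel computation starting from the $\widehat{F}$-equation in \eqref{mainLocal} gives a corresponding identity for $Q^R$, and the bounds \eqref{AB''}, \eqref{kkr} yield, via $\alpha''\beta''<1$,
\[
|Q^R(m',m_0,x)|\leq\frac{\alpha''(m_0)}{1-\alpha''(m_0)\beta''(u(m_0))}|Q^0(u(m'),u(m_0),\widehat{x}_{m_0}(m',x))|+O_x(|m'-m_0|).
\]
Substituting this back into the previous identity and using $|D_y\widehat{G}_{m_0}|\leq\lambda''_u(m_0)$ produces the claimed contraction factor $\lambda''_u(m_0)/(1-\alpha''(m_0)\beta''(u(m_0)))$.

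The boundedness of the limsup follows independently: \autoref{lem:sheaf} applied with $\gamma=\zeta=1$ under our hypothesis $\lambda_s\lambda_u\mu<1$ yields $|\widehat f_{m_0}(m',x)-\widehat f_{m_0}(m_0,x)|\leq C|m'-m_0||x|$, and combined with $|\widehat{K}_{m_0}(x)|\leq C_1|x|$ (from membership in $E^L_0$), this immediately gives $\sup_{m_0,x}\limsup_{m'\to m_0}\frac{|Q^0(m',m_0,x)|}{|m'-m_0||x|}<\infty$. The main obstacle I anticipate is bookkeeping of the $O_x$ error terms: I need to verify that the $o(1)$ pieces from the $C^1$ Taylor expansions of $\widehat{G}_{m_0}$, $\widehat{F}_{m_0}$, $D_x\widehat{f}_{u(m_0)}$, and $Du$, once multiplied by the Lipschitz-type increments (of order $|m'-m_0|\,|x|$), really produce a clean $O_x(|m'-m_0|)$ bound uniform in $m_0\in M_1$; this will rely crucially on the uniform continuity assumption (H5) and the uniform fiber-trivialization estimates \eqref{AB''}, \eqref{kkr}.
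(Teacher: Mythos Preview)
Your proposal is correct and follows essentially the same route as the paper: start from \eqref{mainLocal}, expand $\widehat{G}_{m_0}$ to first order (the paper uses the integral form $\int_0^1 D_y\widehat{G}_{m_0}(m',x,y_t)\,dt$ rather than a pointwise Taylor expansion, but the effect is identical), split $y'-y$ through the intermediate point $\widehat{f}_{u(m_0)}(u(m_0),\widehat{x}_{m_0}(m',x))$, use the defining identity for $\widehat{K}_{m_0}(x)$ to cancel the linear pieces, run the parallel computation from the $\widehat{F}_{m_0}$-equation for what you call $Q^R$ (the paper's $\textcircled{6}$), and eliminate $Q^R$ via $\alpha''\beta''<1$; the finiteness of the limsup likewise comes from \eqref{fff} (i.e.\ \autoref{lem:sheaf}) and $|\widehat{K}_{m_0}(x)|\le C_1|x|$. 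One small remark: the $O_x(|m'-m_0|)$ terms are pointwise in $(m_0,x)$ by definition, so you do not need uniformity in $m_0$ for the error bookkeeping---that uniformity is only used afterwards, when the sublemma is applied to conclude $\limsup=0$.
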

	\begin{proof}
		Under the conditions given in the lemma, by \autoref{lem:sheaf}, we have
		\begin{equation}\label{fff}
		\left\{
		\begin{split}
		\sup\limits_{m_0 \in M_1}\sup\limits_{x \in X_{m_0}}\limsup\limits_{m_1 \to m_0}\frac{|\widehat{x}_{m_0}(m_1, x) - \widehat{x}_{m_0}(m_0, x)|}{|m_1 - m_0||x|} \leq C , \\
		\sup\limits_{m_0 \in M_1}\sup\limits_{x \in X_{m_0}}\limsup\limits_{m_1 \to m_0}\frac{|\widehat{f}_{m_0}(m_1, x) - \widehat{f}_{m_0}(m_0, x)|}{|m_1 - m_0||x|} \leq C .
		\end{split}\right.
		\end{equation}
		Now the first inequality follows from \eqref{fff}, \eqref{A11} and the above construction of $ K $. (That the constants $ \Theta_1, \Theta_2 $ in (H1b) are independent of $ m_0 $ is used exactly here.) Next, let us consider the second inequality. We compute
		\begin{align*}
		& |Q^0 (m', m_0, x)|  \\
		= & \left|\widehat{G}_{m_0} ( m', x, \widehat{f}_{u(m_0)} (u(m'), \widehat{x}_{m_0} (m',x) ) ) \right. \\
		& \left. - \widehat{G}_{m_0} ( m_0, x, y )  - D_m \widehat{G}_{m_0} (m,x,y)|_{m = m_0} (m' - m_0)\right. \\
		& \left. - D_y\widehat{G}_{m_0} (m_0,x,y) ( \widehat{K}_{u(m_0)} (\hat{x}) Du(m_0) + D_x \widehat{f}_{u(m_0)} (u(m_0), \hat{x} )\widehat{R}_{m_0}(x) ) (m' - m_0) \right| \\
		\leq & |\square_1| + |\square_2|,
		\end{align*}
		where $ y = \widehat{f}_{u(m_0)} (u(m_0), \hat{x} ) $, $ \hat{x} = \widehat{x}_{m_0}(m_0,x) $, and
		\begin{align*}
		\square_1 \triangleq & \widehat{G}_{m_0} ( m', x, \widehat{f}_{u(m_0)} (u(m'), \widehat{x}_{m_0} (m',x) ) ) - \widehat{G}_{m_0} ( m', x, \widehat{f}_{u(m_0)} (u(m_0), \hat{x} ) ) \\
		& - D_y\widehat{G}_{m_0} (m_0,x,y) ( \widehat{K}_{u(m_0)} (\hat{x}) Du(m_0) + D_x \widehat{f}_{u(m_0)} (u(m_0), \hat{x} )\widehat{R}_{m_0}(x) ) (m' - m_0) ,\\
		\square_2 \triangleq & \widehat{G}_{m_0} ( m', x, y ) - \widehat{G}_{m_0} ( m_0, x, y ) - D_m \widehat{G}_{m_0} (m,x,y)|_{m = m_0} (m' - m_0).
		\end{align*}
		Note that $ | \square_2 | \leq O_{x}(|m' - m_0|) $ and
		\begin{align*}
		\square_1 = & \int_{0}^{1} D_y\widehat{G}_{m_0} (m',x,y_t) ~\mathrm{d}t \left\{ \widehat{f}_{u(m_0)} (u(m'), \widehat{x}_{m_0} (m',x) ) - \widehat{f}_{u(m_0)} (u(m_0), \hat{x} ) \right\} \\
		& -  D_y\widehat{G}_{m_0} (m_0,x,y) ( \widehat{K}_{u(m_0)} (\hat{x}) Du(m_0) + D_x \widehat{f}_{u(m_0)} (u(m_0), \hat{x} )\widehat{R}_{m_0}(x) ) (m' - m_0) \\
		= & \int_{0}^{1} \{ D_y\widehat{G}_{m_0} (m',x,y_t) - D_y\widehat{G}_{m_0} (m_0,x,y) \} ~\mathrm{d}t \left\{ \widehat{f}_{u(m_0)} (u(m'), \widehat{x}_{m_0} (m',x) ) \right. \\
		& \qquad \left.- \widehat{f}_{u(m_0)} (u(m_0), \hat{x} ) \right\} \\
		& + D_y\widehat{G}_{m_0} (m_0,x,y)  \left\lbrace \widehat{f}_{u(m_0)} (u(m'), \widehat{x}_{m_0} (m',x) ) - \widehat{f}_{u(m_0)} (u(m_0), \hat{x} ) \right. \\
		& \qquad \left. - \widehat{K}_{u(m_0)} (\hat{x}) Du(m_0) (m' - m_0) - D_x \widehat{f}_{u(m_0)} (u(m_0), \hat{x} )\widehat{R}_{m_0}(x) (m' - m_0) \right\rbrace \\
		\triangleq & \textcircled{1} + D_y\widehat{G}_{m_0} (m_0,x,y)\{ \textcircled{2} \},
		\end{align*}
		where $ y_t = t \widehat{f}_{u(m_0)} (u(m'), \widehat{x}_{m_0} (m',x) ) + (1-t) \widehat{f}_{u(m_0)} (u(m_0), \hat{x} ) $.
		By \eqref{fff} and the Lipschitz continuity of $ u(\cdot), \widehat{f}_{m_0}(m, \cdot), \widehat{x}_{m_0}(m,\cdot) $, we have
		\begin{multline*}
		|\textcircled{1}| \triangleq \left|\int_{0}^{1} \{ D_y\widehat{G}_{m_0} (m',x,y_t) - D_y\widehat{G}_{m_0} (m_0,x,y) \} ~\mathrm{d}t \left\{ \widehat{f}_{u(m_0)} (u(m'), \widehat{x}_{m_0} (m',x) ) \right. \right. \\
		\left. \left. - \widehat{f}_{u(m_0)} (u(m_0), \hat{x} ) \right\} \right| \leq O_{x}(|m' - m_0|) .
		\end{multline*}
		Consider
		\begin{align*}
		\textcircled{2} \triangleq & \widehat{f}_{u(m_0)} (u(m'), \widehat{x}_{m_0} (m',x) ) - \widehat{f}_{u(m_0)} (u(m_0), \hat{x} ) \\
		& - \widehat{K}_{u(m_0)} (\hat{x}) Du(m_0) (m' - m_0) - D_x \widehat{f}_{u(m_0)} (u(m_0), \hat{x} )\widehat{R}_{m_0}(x) (m' - m_0) \\
		= & \widehat{f}_{u(m_0)} ( u(m'), \widehat{x}_{m_0} (m',x) ) - \widehat{f}_{u(m_0)} ( u(m_0), \widehat{x}_{m_0} (m',x) ) \\
		& \quad \quad - \widehat{K}_{u(m_0)} ( \widehat{x}_{m_0}(m',x) ) \{ u(m') - u(m_0) \} \\
		& + \widehat{f}_{u(m_0)} ( u(m_0), \widehat{x}_{m_0} (m',x) ) - \widehat{f}_{u(m_0)} (u(m_0), \hat{x} ) \\
		& \quad \quad  - D_x \widehat{f}_{u(m_0)} (u(m_0), \hat{x} ) \{ \widehat{x}_{m_0} (m',x) - \widehat{x}_{m_0}(m_0,x) \} \\
		& + \left\{ \widehat{K}_{u(m_0)} ( \widehat{x}_{m_0}(m',x) ) - \widehat{K}_{u(m_0)} (\hat{x}) \right\} ( u(m') - u(m_0) ) \\
		& + \widehat{K}_{u(m_0)} (\hat{x}) \left\{ u(m') - u(m_0) - Du(m_0)(m' - m_0) \right\} \\
		& + D_x \widehat{f}_{u(m_0)} ( u(m_0), \hat{x} ) \left\{ \widehat{x}_{m_0} (m',x) - \widehat{x}_{m_0}(m_0,x) - \widehat{R}_{m_0}(x)(m' - m_0) \right\} \\
		\triangleq & Q^0( u(m'), u(m_0), \widehat{x}_{m_0}(m',x) ) + \textcircled{3} + \textcircled{4} + \textcircled{5} + D_x \widehat{f}_{u(m_0)} ( u(m_0), \hat{x} ) \{\textcircled{6}\}.
		\end{align*}
		Since $ (m,x)  \to \widehat{x}_{m_0}(m,x) $ is continuous (see e.g. \autoref{lem:continuity_f} below), $ f_m(\cdot) $ is $ C^1 $, and \eqref{fff} holds, we have
		\begin{align*}
		|\textcircled{3}| & \triangleq | \widehat{f}_{u(m_0)} ( u(m_0), \widehat{x}_{m_0} (m',x) ) - \widehat{f}_{u(m_0)} (u(m_0), \hat{x} ) \\
		& \quad - D_x \widehat{f}_{u(m_0)} (u(m_0), \hat{x} ) \{ \widehat{x}_{m_0} (m',x) - \widehat{x}_{m_0}(m_0,x) \} | \\
		& \leq O_{x}(|m'-m_0|).
		\end{align*}
		Since $ \widehat{x}_{m_0}(\cdot) $ is continuous, $ K $ is $ C^0 $, and $ u $ is Lipschitz, we have
		\[
		|\textcircled{4}| \triangleq | \left\{ \widehat{K}_{u(m_0)} ( \widehat{x}_{m_0}(m',x) ) - \widehat{K}_{u(m_0)} (\hat{x}) \right\} ( u(m') - u(m_0) ) | \leq O_{x}(|m'-m_0|).
		\]
		As $ u $ is $ C^1 $ and \eqref{A11}, we have
		\[
		|\textcircled{5}| \triangleq | \widehat{K}_{u(m_0)} (\hat{x}) \{ u(m') - u(m_0) - Du(m_0)(m' - m_0) \} | \leq O_{x}(|m'-m_0|).
		\]
		Thus,
		\begin{align*}
		& |Q^0 (m', m_0, x)| \\
		\leq & O_x(|m'-m_0|) + \lambda''_u(m_0) |Q^0( u(m'), u(m_0), \widehat{x}_{m_0}(m',x) )| + \lambda''_u(m_0)\beta''(u(m_0))  |\textcircled{6}|.
		\end{align*}

		The same computation gives
		\begin{align*}
		|\textcircled{6}| \triangleq & |  \widehat{x}_{m_0} (m',x) - \widehat{x}_{m_0}(m_0,x) - \widehat{R}_{m_0}(x)(m' - m_0) | \\
		\leq & O_{x}(|m'-m_0|) + \alpha''(m_0) |Q^0( u(m'), u(m_0), \widehat{x}_{m_0}(m',x) )| + \alpha''(m_0)\beta''(u(m_0)) |\textcircled{6}|.
		\end{align*}
		Therefore,
		\[
		|Q^0 (m', m_0, x)| \leq O_{x}(|m'-m_0|) + \frac{\lambda''_u(m_0)}{1-\alpha''(m_0)\beta''(u(m_0))} |Q^0( u(m'), u(m_0), \widehat{x}_{m_0}(m',x) )|.
		\]
		The proof is complete.
	\end{proof}
	Using the above sublemma, we obtain
	\[
	\sup_{m_0 \in M_1}\sup_{x \in X_{m_0}}\limsup_{m' \to m_0} \frac{|Q^0 (m', m_0, x)|}{|m'-m_0||x|} \leq \theta''_0 \sup_{m_0 \in M_1}\sup_{x \in X_{m_0}} \limsup_{m' \to m_0} \frac{|Q^0 (m', m_0, x)|}{|m'-m_0||x|},
	\]
	where $ \theta''_0 = \sup_{m \in M_1}\frac{\lambda''_s(m) \lambda''_u(m) \mu''(m)}{1-\alpha''(m)\beta''(u(m))} < 1 $, which yields $ \limsup\limits_{m' \to m_0} \frac{|Q^0 (m', m_0, x)|}{|m'-m_0|} = 0 $, i.e., $ \widehat{f}_{m_0} (\cdot,x) $ is differentiable at $ m_0 \in M_1 $ and
	\begin{equation}\label{fk}
	D_m \widehat{f}_{m_0} (m,x)|_{m = m_0} = \widehat{K}_{m_0}(x).
	\end{equation}

	Finally, we need to show $ f $ is $ C^1 $ everywhere. Let $ m', m_0 \in M $, and choose $ C^1 $ bundle charts $ (U_0, \varphi^0) $, $ (U_0, \phi^0) $ of $ X, Y $ at $ m' $, respectively, such that 
	\[
	u(U_0) \subset U_{u(m_0)}(\mu^{-1}(u(m_0))\varepsilon_1).
	\]
	Note that $ u(m_0) \in M_1 $. Consider $ F, G, f, x_{(\cdot)}(\cdot) $ in local bundle charts $ \varphi^0: U_0 \times X_{m'} \to X $, $ \phi^0: U_0 \times Y_{m'} \to Y $, $ \varphi^{u(m_0)}: U_{u(m_0)}(\epsilon_2) \times X_{u(m_0)} \to X $, $ \phi^{u(m_0)}: U_{u(m_0)}(\epsilon_2) \times Y_{u(m_0)} \to Y $. That is,
	\begin{equation}\label{local11}
	\begin{cases}
	\begin{split}
	\widehat{F}'_{m_0} (m, x, y) \triangleq (\varphi^{u(m_0)}_{u(m)})^{-1} \circ F_m \circ ( \varphi^{0}_m(x) , & \phi^{u(m_0)}_{u(m)}(y) ) : \\
	& U_{0} \times X_{m'} \times Y_{u(m_0)} \to X_{u(m_0)},
	\end{split}\\
	\begin{split}
	\widehat{G}'_{m_0} (m, x, y) \triangleq (\phi^{0}_{m})^{-1} \circ G_m \circ ( \varphi^{0}_m(x) , \phi^{u(m_0)}_{u(m)} & (y) ) : \\
	& U_{0} \times X_{m'} \times Y_{u(m_0)} \to Y_{m'} ,
	\end{split}\\
	\widehat{f}'_{0}(m, x) \triangleq (\phi^{0}_m)^{-1} \circ f_m \circ  \varphi^{0}_m(x) :  U_{0} \times X_{m'} \to Y_{m'} , \\
	\begin{split}
	\widehat{f}_{u(m_0)}(m, x) \triangleq (\phi^{u(m_0)}_{m})^{-1} & \circ f_{u(m)} \circ ( \varphi^{u(m_0)}_{m}(x) ) : \\
	& U_{u(m_0)} (\mu^{-1}(u(m_0)) \varepsilon_1) \times X_{u(m_0)} \to Y_{u(m_0)} ,
	\end{split}\\
	\widehat{x}'_{m_0}(m, x) \triangleq (\varphi^{u(m_0)}_{u(m)})^{-1} \circ x_{m} (\varphi^{0}_m(x)) : U_{0} \times X_{m'} \to X_{u(m_0)}. \\
	\end{cases}
	\end{equation}
	Then (see also \eqref{mainLocal})
	\begin{equation}\label{localeq}
	\begin{cases}
	\widehat{F}'_{m_0} ( m, x, \widehat{f}_{u(m_0)} (u(m), \widehat{x}'_{m_0} (m,x) ) ) = \widehat{x}'_{m_0} (m, x), \\
	\widehat{G}'_{m_0} ( m, x, \widehat{f}_{u(m_0)} (u(m), \widehat{x}'_{m_0} (m,x) ) ) = \widehat{f}'_{0} (m, x).
	\end{cases}
	\end{equation}
	Due to (H5), we can let $ U_0 $ be much smaller such that if $ m \in U_0 $, then
	\[
	\lip \widehat{F}'_{m_0}(m, x, \cdot) \leq (1+\eta(d(u(m),u(m_0)))) \alpha(m) (1+\eta(d(u(m),u(m_0)))) \leq \alpha''(m_0),
	\]
	and $ \alpha''(m_0)\beta''(u(m_0)) < 1 $,
	where $ \beta''(u(m_0)) $ is given by
	\begin{multline*}
	\lip \widehat{f}_{u(m_0)}(u(m), \cdot) \leq (1+\eta(d( u(m),u(m_0) ))) \beta'(u(m))  \\
	\cdot (1+\eta(d( u(m),u(m_0)   ))) \leq \beta''(u(m_0)).
	\end{multline*}

	Due to $ \lip \widehat{F}'_{m_0} ( m, x, \widehat{f}_{u(m_0)}(u(m), \cdot) ) \leq \alpha''(m_0)\beta''(u(m_0)) < 1 $ and the differentiability of $ \widehat{f}_{u(m_0)} (\cdot, x ) $  at $ u(m_0) $, using \autoref{lem:fdiifx}, we know that $ \widehat{f}'_{0} (\cdot, x) $ is differentiable at $ m_0 $. Since $ m_0 $ can be taken in a small neighborhood of $ m' $, $ \widehat{f}'_{0} (\cdot, x) $ is differentiable in a neighborhood of $ m' $ and then $ (m, x) \mapsto D_m\widehat{f}'_{0} (m, x) $ is continuous (as $ K $ is $ C^0 $ and \eqref{fk}). Moreover, $ (m, x) \mapsto D_x\widehat{f}'_{0} (m, x) $ is continuous, hence $ f $ is differentiable and hence $ C^1 $.

	Now $ \nabla_{m} f_m(x) = K_m(x) $ if $ m \in M_1 $, as $ \varphi^{m}, \phi^{m} $ are normal bundle charts and we have \eqref{fk}. For $ m \in M \backslash M_1 $, this follows from \eqref{mainMFW}, $ \nabla_{u(m)} f_{u(m)}(x) = K_{u(m)}(x) $, and taking the covariant derivative of \eqref{mainF}. The proof is thus finished.
\end{proof}

\subsection{H\"{o}lderness of $ x \mapsto \nabla_mf_m(x) = K_m(x)$}
(Based on the H\"{o}lder continuity of $ x \mapsto Df_m(x) = K^1_m(x) $.)

It suffices to consider the unique vector bundle map $ K $ satisfying \eqref{mainMFW}. Since we do not need $ \nabla_m f_m(x) = K_m(x) $, some assumptions in \autoref{smoothbase} can be weakened; see the proof of \autoref{smoothbase}.

\begin{lem}\label{lem:holversheaf}
	Assume the following conditions hold:
	\begin{enumerate}[(a)]
		\item Let \textnormal{(H1), (H2)(i, ii, v1$ ' $), (H3), (H4b), (H5$ ' $)} hold.
		\item 
		\begin{enumerate}[(i)]
			\item $ F, G $ are $ C^1 $.
			$|\nabla_m F_m(z)| \leq M_0|z|$, $ |\nabla_m G_m(z)| \leq M_0|z|$ for all $ m \in M_1 $ and $ z \in X_m \times Y_{u(m)} $.
			\item $ \nabla_mF_m(\cdot) $, $ \nabla_mG_m(\cdot) $, $ DF_m(\cdot) $ and $ DG_m(\cdot) $ are $ C^{0,\gamma} $ uniformly for $ m $, i.e.,
			\begin{equation*}
			\begin{cases}
			\max\{|\nabla_mF_m(z_1) - \nabla_mF_m(z_2)|,  |\nabla_mG_m(z_1) - \nabla_mG_m(z_2)|\} \leq M_0 |z_1 - z_2|^{\gamma}, \\
			\max\{|DF_m(z_1) - DF_m(z_2)|,  |DG_m(z_1) - DG_m(z_2)| \} \leq M_0 |z_1 - z_2|^{\gamma},
			\end{cases}
			\end{equation*}
			for all $ z_1, z_2 \in X_m \times Y_{u(m)} $, $ m \in M $, where $ 0 < \gamma \leq 1 $.
		\end{enumerate}
		\item (spectral gap condition) $ \lambda_s \lambda_u < 1 $, $ \lambda_s \lambda_u \mu < 1 $, $ \lambda_s^{*\beta} \lambda_s \lambda_u < 1 $, and 
		
		$ \max\{ \lambda^{-(1-\gamma)}_s, \lambda^{\gamma}_s, \lambda^{\beta}_s \}^{*\alpha} \lambda_s \lambda_u \mu < 1 $, where $ \gamma \geq \beta $; see also \autoref{absgc}.
	\end{enumerate}
	Then the $ C^0 $ vector bundle map $ K \in L(\Upsilon^H_X, \Upsilon^V_Y) $ over $ f $ satisfying \eqref{mainMFW} has the following H\"older property:
	\[
	|K_m(x_1) - K_m(x_2)| \leq C  \left\lbrace ( |x_1 - x_2|^{\gamma} (1+|x_1|) )^{\alpha} + (|x_1 - x_2|^\beta |x_1|)^\alpha \right\rbrace (|x_1| + |x_2|)^{1-\alpha}
	\]
	if $ |x_1 - x_2|^{\gamma} (1+|x_1|) + |x_1 - x_2|^\beta |x_1| \leq \hat{r} (|x_1| + |x_2|) $, where the constant $ C $ depends on the constant $ \hat{r} > 0 $ but not $ m \in M $.
\end{lem}
\begin{proof}
	First,under these conditions, by \autoref{lem:leaf1a}, we have
	\[
	Df_m(x) = K^1_m(x), ~|K^1_m(x_1) - K^1_m(x_2)| \leq \widetilde{C} |x_1 - x_2|^\beta.
	\]
	Also, $ \sup_{m \in M_1}\sup_{x \neq i_X(m)} |K_m(x)|/|x| < \infty $ by \autoref{smoothbase} (1).
	Using \eqref{mainMFW}, one gets
	\begin{align*}
	& K_m(x_1) - K_m(x_2) \\
	= & \nabla_mG_m( x_1, f_{u(m)}(x_m(x_1)) ) - \nabla_mG_m( x_2, f_{u(m)}(x_m(x_2)) ) \\
	& + \left\lbrace  D_2G_m(x_1, f_{u(m)}(x_m(x_1))) - D_2G_m(x_2, f_{u(m)}(x_m(x_2))) \right\rbrace  \\
	& \quad \cdot ( K_{u(m)}(x_m(x_1))Du(m) + Df_{u(m)} (x_m(x_1)) R_m(x_1) ) \\
	& + D_2G_m(x_2, f_{u(m)}(x_m(x_2))) \left\{ K_{u(m)}(x_m(x_1))Du(m) + Df_{u(m)} (x_m(x_1)) R_m(x_1)\right. \\
	& \quad  \left.- K_{u(m)}(x_m(x_2))Du(m) - Df_{u(m)} (x_m(x_2)) R_m(x_2) \right\}.
	\end{align*}
	Now for $ m \in M $ we have 
	\begin{multline*}
	|K_m(x_1) - K_m(x_2)| \leq \widetilde{C} |x_1 - x_2|^\gamma (1+|x_1|) + \widetilde{C} |x_1 - x_2|^\beta |x_1| \\
	+ \lambda_u(m) \mu(m) | K_{u(m)}(x_m(x_1)) - K_{u(m)}(x_m(x_2)) | + \lambda_u(m) \beta'(u(m)) |R_m(x_1) - R_m(x_2)|,
	\end{multline*}
	and similarly
	\begin{multline*}
	|R_m(x_1) - R_m(x_2)| \leq \widetilde{C} |x_1 - x_2|^\gamma (1+|x_1|) + \widetilde{C} |x_1 - x_2|^\beta |x_1| \\
	+ \alpha(m) \mu(m) | K_{u(m)}(x_m(x_1)) - K_{u(m)}(x_m(x_2)) | + \alpha(m) \beta'(u(m)) |R_m(x_1) - R_m(x_2)|.
	\end{multline*}
	Hence,
	\begin{multline}\label{Kxx}
	|K_m(x_1) - K_m(x_2)| \leq \widetilde{C} |x_1 - x_2|^\gamma (1+|x_1|) + \widetilde{C} |x_1 - x_2|^\beta |x_1| \\
	+ \frac{\lambda_u(m)\mu(m)}{1-\alpha(m) \beta'(u(m))}| K_{u(m)}(x_m(x_1)) - K_{u(m)}(x_m(x_2)) |.
	\end{multline}
	Apply the argument in \autoref{Appbb} to conclude the proof (see \autoref{argumentapp} \eqref{lem4}).
\end{proof}

Under the conditions in \autoref{smoothbase}, we know that if $ F, G $ are uniformly $ C^{1,1} $, then from this lemma, $ \nabla_m f_m(\cdot) $ is at least locally H\"older uniformly for $ m \in M $.

\section{H\"{o}lderness of $ m \mapsto \nabla_mf_m(x) = K_m(x) $}\label{HolderivativeBase}
(Based on all the previous lemmas.)

Let $ K \in L(\Upsilon^H_{X}, \Upsilon^V_{Y}) $ over $ f $ be obtained in \autoref{smoothbase}. In particular, $ K $ and $ R \in L(\Upsilon^H_X, \Upsilon^V_X ) $ over $ x_{(\cdot)}(\cdot) $ satisfy \eqref{mainMFW}. Write $ \widehat{D}_m F_{m_0}(\cdot,\cdot), \widehat{D}_m G_{m_0}(\cdot,\cdot) $, $ \widehat{K}_{m_0}(\cdot,\cdot), \widehat{R}_{m_0}(\cdot,\cdot) $ as the local representations of $ \nabla_m F_m, \nabla_m G_m $, $ K, R $ at $ m_0 \in M_1 $ with respect to $ \mathcal{A}, \mathcal{B}, \mathcal{M} $ (see e.g. \eqref{basediff1} \eqref{basediff2} \eqref{baseKR} but using the local bundle charts belonging to $ \mathcal{A}, \mathcal{B}, \mathcal{M} $). We rewrite them explicitly here for the convenience of the reader (see also \autoref{tensorH} and \autoref{summary}):

\begin{equation}\label{basediff00}
\begin{cases}
\begin{split}
\widehat{D}_m F_{m_0}(m, x,  y)  \triangleq D( \varphi^{u(m_0)}_{u(m)} & )^{-1}( F_m( z ) ) \nabla_m F_m( z  ) D\chi^{-1}_{m_0}(\chi_{m_0}(m)): \\
& U_{m_0}(\epsilon') \times X_{m_0} \times Y_{u(m_0)} \to L(T_{m_0}M, Y_{u(m_0)}) ,
\end{split}\\
\begin{split}
\widehat{D}_m G_{m_0}(m, x,  y)  \triangleq  D( \varphi^{m_0}_{m} )^{-1}( & G_m( z ) )   \nabla_m G_m( z  ) D\chi^{-1}_{m_0}(\chi_{m_0}(m)): \\
& U_{m_0}(\epsilon') \times X_{m_0} \times Y_{u(m_0)} \to L(T_{m_0}M, Y_{m_0}) ,
\end{split}
\end{cases}
\end{equation}
where $ z = (\varphi^{m_0}_{m}(x), \phi^{u(m_0)}_{u(m)}(y)) $;
\begin{equation}\label{baseKR0}
\begin{cases}
\begin{split}
\widehat{K}_{m_0} (m,x) \triangleq  D(\phi^{m_0}_{m})^{-1} ( f_m(\varphi^{m_0}_m(x)) )  K_{m} & (\varphi^{m_0}_m(x)) D\chi^{-1}_{m_0}(\chi_{m_0}(m)): \\
& U_{m_0}(\epsilon') \times X_{m_0} \to L(T_{m_0}M, Y_{m_0}),
\end{split}\\
\begin{split}
\widehat{R}_{m_0} (m,x) \triangleq  D ( \varphi^{u(m_0)}_{u(m)} )^{-1}  ( x_m(\varphi^{m_0}_m (x &)) ) R_m ( \varphi^{m_0}_m(x) ) D\chi_{m_0}^{-1}(\chi_{m_0}(m)):\\
& U_{m_0}(\epsilon') \times X_{m_0} \to L(T_{m_0}M, X_{u(m_0)}).
\end{split}
\end{cases}
\end{equation}

By analogy with \eqref{K1elocal}, we know that
\begin{equation}\label{KK1local}
\begin{cases}
\begin{split}
\widehat{D}_m F_{m_0}(m, x, \hat{y} ) + D_y \widehat{F}_{m_0}  & (m, x, \hat{y} ) \left\{ \widehat{K}_{u(m_0)} ( u(m),\hat{x} ) \widehat{Du}_{m_0}(m) \right. \\
&\left. + \widehat{K}^1_{u(m_0)} (u(m),\hat{x}) \widehat{R}_{m_0} (m,x) \right\} = \widehat{R}_{m_0} (m,x),
\end{split}\\
\begin{split}
\widehat{D}_m G_{m_0}(m, x, \hat{y} ) + D_y \widehat{G}_{m_0}  & (m, x, \hat{y} ) \left\{  \widehat{K}_{u(m_0)} ( u(m),\hat{x} ) \widehat{Du}_{m_0}(m) \right. \\
&\left. + \widehat{K}^1_{u(m_0)} (u(m),\hat{x}) \widehat{R}_{m_0} (m,x) \right\} = \widehat{K}_{m_0} (m,x) ,
\end{split}
\end{cases}
\end{equation}
where $ \hat{x} = \widehat{x}_{m_0} (m,x) $, $ \hat{y} = \widehat{f}_{u(m_0)} ( u(m), \hat{x} ) $, $ \widehat{K}^1_{u(m_0)} (u(m),\hat{x}) = D_x \widehat{f}_{u(m_0)} (u(m),\hat{x}) $.

\begin{lem}\label{lem:final}
	Assume the following conditions hold:
	\begin{enumerate}[(a)]
		\item Let \textnormal{(H1c), (H2d), (H3), (H4c), (H5)} hold.
		\item $ F, G $ are $ C^1 $ and uniformly (locally) $ C^{1,1} $ around $ M_1 $ in the following sense: \eqref{ccc0} (see also \autoref{holsheaf}), \eqref{lipzz} ($ D^vF, D^vG \in C^{0,1} $) and \eqref{ligmm} ($ \nabla F, \nabla G \in C^{0,1} $) hold.
		\item (spectral gap condition) $ \lambda_s \lambda_u < 1 $, $ \lambda^2_s \lambda_u < 1 $, $ \lambda_s \lambda_u \mu < 1 $, $ \lambda^2_s \lambda_u \mu < 1 $, and
		
		$ \max\{ \frac{\mu}{\lambda_s}, \mu \}^{*\alpha} \lambda_s \lambda_u \mu < 1 $, where $ 0 < \alpha \leq 1 $. See \autoref{absgc}.
	\end{enumerate}
	Then there exists $ \epsilon'_1 > 0 $ small such that the following hold:

	\begin{enumerate}[(1)]
		\item We have
		\begin{equation}\label{finaleq}
		|\widehat{K}_{m_0}(m_1, x) - \widehat{K}_{m_0}(m_0, x)| \leq C |m_1 - m_0|^\alpha (|x|+1)^\alpha |x|^{1-\alpha},
		\end{equation}
		for all $ m_1 \in U_{m_0} (\epsilon'_1) $, $ x \in X_{m_0} $, $ m_0 \in M_1 $.
		The constant $ C $ depends on the constant $ \epsilon'_1 > 0 $ but not on $ m_0 \in M_1 $.
		\item Suppose $ \lambda_s < 1 $ and $ M $ is $ C^{1,1} $-uniform around $ M_1 $ (see \autoref{def:C11um}). If $ F, G $ satisfy the additional estimates
		\begin{align*}
		| \widehat{D}_m F_{m_0}(m_1, z) - \widehat{D}_m F_{m_0}(m_0, z) | \leq M_0 |m_1 - m_0||z|^\zeta, \\
		| \widehat{D}_m G_{m_0}(m_1, z) - \widehat{D}_m G_{m_0}(m_0, z) | \leq M_0 |m_1 - m_0||z|^\zeta,
		\end{align*}
		for all $ m_1 \in U_{m_0} (\epsilon') $, $ z_1, z_2 \in X_{m_0} \times Y_{u(m_0)} $, $ m_0 \in M_1 $, where $ 0 < \zeta \leq 1 $, and (a `better' spectral gap condition) $ \lambda_s \lambda_u < 1 $, $ \lambda_s \lambda_u \mu < 1 $, $ \ (\mu/\lambda^{1-\zeta}_s)^{*\alpha} \lambda_s \lambda_u \mu < 1 $, where $ 0 < \alpha \leq 1 $, then
		\[
		| \widehat{K}_{m_0}(m_1, x) - \widehat{K}_{m_0}(m_0, x) | \leq C |m_1 - m_0|^\alpha (|x|+|x|^\zeta)^\alpha |x|^{1-\alpha}
		\]
		for all $ m_1 \in U_{m_0} (\epsilon'_1) $, $ m_0 \in M_1 $, $ x \in X_{m_0} $.
		The case $ \zeta = 1 $ is satisfied, e.g., when $ F, G \in C^{2,1} $, $ \lambda_s \lambda_u < 1 $, $ \lambda_s \lambda_u \mu < 1 $, $ \mu^{*\alpha} \lambda_s \lambda_u \mu < 1 $, $ 0 < \alpha \leq 1 $; now
		\[
		| \widehat{K}_{m_0}(m_1, x) - \widehat{K}_{m_0}(m_0, x) | \leq C |m_1 - m_0|^\alpha |x|.
		\]
	\end{enumerate}
\end{lem}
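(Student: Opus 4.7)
The strategy is to pattern the proof on the previous base-regularity lemmas (especially \autoref{lem:baseleaf}): start from the two local identities in \eqref{KK1local} written at $m_1$ and at $m_0$, subtract them, and arrange the result so that on the right-hand side the only uncontrolled term involves the same quantity $\widehat{K}_{u(m_0)}(\cdot,\cdot) - \widehat{K}_{u(m_0)}(\cdot,\cdot)$ evaluated at the images under $u$, multiplied by a coefficient whose supremum (along $u$-orbits) is less than $1$; then invoke the abstract iteration argument of \autoref{Appbb}.

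First I would assemble the ingredients already available. From \autoref{smoothbase} together with the (A)(B) condition I get $|\widehat{K}_{m_0}(m_0,x)| \le C|x|$ and $|\widehat{R}_{m_0}(m_0,x)| \le C|x|$. From \autoref{lem:sheaf} applied with $\gamma=\zeta=1$ I get $|\widehat{f}_{u(m_0)}(u(m_1),\hat{x}_1)-\widehat{f}_{u(m_0)}(u(m_0),\hat{x}_0)| \le C|m_1-m_0|(|x|+1)$ (and the same for $\widehat{x}_{m_0}$). From \autoref{lem:leaf1a} and \autoref{lem:baseleaf} I obtain the corresponding control on $\widehat{K}^1$ in both the base and the fiber variables. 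From (H4c) I get $|\widehat{Du}_{m_0}(m_1) - Du(m_0)| \le C_0|m_1-m_0|$. Finally, the hypothesis $\nabla F,\nabla G\in C^{0,1}$ combined with the normal-chart reduction as in \eqref{kk1} yields $|\widehat{D}_m F_{m_0}(m_1,z)-\widehat{D}_m F_{m_0}(m_0,z)|\le \widetilde{C}|m_1-m_0|(|z|+1)$ (and similarly for $\widehat{D}_m G_{m_0}$), while $DF,DG\in C^{0,1}$ gives uniform Lipschitzness of $D_y\widehat{F}_{m_0},D_y\widehat{G}_{m_0}$ in all their arguments. As before, (H5) lets me replace the pointwise constants $\alpha(m),\beta'(m),\lambda_s(m),\lambda_u(m),\mu(m)$ by slightly enlarged envelope constants $\alpha''(m_0),\beta''(u(m_0)),\lambda''_s(m_0),\lambda''_u(m_0),\mu''(m_0)$ that still satisfy the spectral gap condition after passing to a small $\varepsilon'_1$.

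Next I would derive the key recursive inequality. Subtracting the second identity of \eqref{KK1local} at $m_1$ and at $m_0$ and expanding the product
\[
\widehat{K}_{u(m_0)}(u(m),\hat{x})\widehat{Du}_{m_0}(m)+\widehat{K}^1_{u(m_0)}(u(m),\hat{x})\widehat{R}_{m_0}(m,x)
\]
as a telescoping sum in each of its four arguments, one absorbs every term except the principal one into an error of size $\widetilde{C}|m_1-m_0|(|x|+1)$. Doing the same for the first identity produces an analogous estimate for $|\widehat{R}_{m_0}(m_1,x)-\widehat{R}_{m_0}(m_0,x)|$ with $\alpha''(m_0)$ in place of $\lambda''_u(m_0)$. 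Solving the resulting $2\times 2$ linear system in these two differences eliminates the $\widehat{R}$-difference and produces
\[
|\widehat{K}_{m_0}(m_1,x)-\widehat{K}_{m_0}(m_0,x)| \le \widetilde{C}|m_1-m_0|(|x|+1) + \theta''_0(m_0)\,|\widehat{K}_{u(m_0)}(u(m_1),\hat{x}_1)-\widehat{K}_{u(m_0)}(u(m_0),\hat{x}_0)|,
\]
with $\theta''_0(m_0)=\lambda''_u(m_0)\mu''(m_0)/(1-\alpha''(m_0)\beta''(u(m_0)))$. Feeding this into the iteration scheme of \autoref{Appbb} (the same mechanism that powers \autoref{argumentapp}), using that $|\hat{x}_i|\le \lambda''_s(m_0)|x|$ and that $|\hat{x}_1-\hat{x}_0|\le \widetilde{C}|m_1-m_0|\cdot|x|$, produces after $n$ iterations a geometric series whose ratio is, up to the sup-Lyapunov prefactor, exactly $\max\{\mu/\lambda_s,\mu\}^{*\alpha}\lambda_s\lambda_u\mu$, and this yields the bound \eqref{finaleq}.

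For part (2), the only structural change is in the error term: the stronger hypothesis $|\widehat{D}_m F_{m_0}(m_1,z)-\widehat{D}_m F_{m_0}(m_0,z)|\le M_0|m_1-m_0||z|^\zeta$ replaces the inhomogeneous factor $(|z|+1)$ by the homogeneous $|z|^\zeta$, and the assumption that $M$ is $C^{1,1}$-uniform removes the inhomogeneous correction that otherwise appears in the change-of-chart contributions via $D\chi_{m_0}$; the recursive inequality then carries $(|x|+|x|^\zeta)$ in place of $(|x|+1)$, which allows the iteration in \autoref{Appbb} to close under the sharper spectral gap $(\mu/\lambda_s^{1-\zeta})^{*\alpha}\lambda_s\lambda_u\mu<1$ by the same argument. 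The main obstacle of the whole proof will be the bookkeeping in the telescoping expansion of the product above together with the cascade of previously proved lemmas invoked to bound each telescoped difference; in particular, one must verify that every such bound is compatible with the H\"older exponent $\alpha$ and the fiber weight $|x|$ demanded by the final statement, which is precisely why the iteration must be applied to the envelope constants rather than the pointwise ones and why the sup-Lyapunov formulation of the spectral gap condition in \autoref{absgc} is the right one to assume.
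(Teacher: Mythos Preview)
Your strategy matches the paper's: subtract the two lines of \eqref{KK1local} at $m_1$ and at $m_0$, telescope, eliminate the $\widehat{R}$-difference via the companion estimate coming from the first line, and feed the resulting recursion into \autoref{Appbb}. There is one genuine omission, however. Your recursive inequality ends with
\[
\theta''_0(m_0)\,\bigl|\widehat{K}_{u(m_0)}(u(m_1),\hat{x}_1)-\widehat{K}_{u(m_0)}(u(m_0),\hat{x}_0)\bigr|,
\]
i.e.\ with \emph{different} fiber arguments $\hat{x}_1,\hat{x}_0$. The iteration scheme in \autoref{Appbb} (see (R2) and \autoref{argumentapp}~\eqref{lem5}) is set up for a recursion of the form $|g_m(x_1,y)-g_m(x_2,y)|\le\cdots+\theta(m)|g_{w(m)}(u_m(x_1),v_m(x_1,y))-g_{w(m)}(u_m(x_2),v_m(x_1,y))|$, where the fiber argument is the \emph{same} on both sides. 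To get your inequality into this form you must insert and subtract $\widehat{K}_{u(m_0)}(u(m_0),\hat{x}_1)$ and then control $|\widehat{K}_{u(m_0)}(u(m_0),\hat{x}_1)-\widehat{K}_{u(m_0)}(u(m_0),\hat{x}_0)|$. Merely knowing $|\hat{x}_1-\hat{x}_0|\le\widetilde{C}|m_1-m_0|\,|x|$ is not enough; you also need the fiber-Lipschitz estimate for $K$ itself, namely $|K_m(x_1)-K_m(x_2)|\le C_0|x_1-x_2|(|x_1|+1)$, which is precisely \autoref{lem:holversheaf} (with $\gamma=\beta=\alpha=1$, available under the hypotheses $\lambda_s\lambda_u<1$, $\lambda_s^2\lambda_u<1$, $\lambda_s\lambda_u\mu<1$). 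The paper lists this as prerequisite~(a) at the start of its proof and uses it to reduce \eqref{k011} to a recursion with $\hat{x}_1$ on both sides before invoking \autoref{lem:rlocal}~\eqref{r2}. Once you add this step, your argument coincides with the paper's; your treatment of part~(2) is correct as stated.
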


The result like (1) was also obtained in \cite[Lemma 3, p. 82]{ZZJ14} for invertible maps in trivial bundles, where the authors used the Perron method; see also \cite[Theorem 1.3]{Sta99}. The result like (2) is well known in different settings; see e.g. \cite{PSW97}.

\begin{proof}[Proof of \autoref{lem:final}]
	We need the desired Lipschitz continuity of $ m \mapsto f_m(x) $, $ x \mapsto K^1_m(x), K_m(x) $ and $ m \mapsto K^1_m(x) $.
	\begin{enumerate}[(a)]
		\item From \autoref{lem:leaf1a} and \autoref{lem:holversheaf}, under the spectral gap condition $ \lambda_s \lambda_u < 1 $, $ \lambda^{2}_s \lambda_u < 1 $, $ \lambda_s \lambda_u \mu < 1 $, we see that
		\[
		|K^1_m(x_1) - K^1_m(x_2)| \leq C_0 |x_1 - x_2|, ~|K_m(x_1) - K_m(x_2)| \leq C_0 |x_1 - x_2|(|x_1|+1).
		\]
		\item From \autoref{lem:sheaf}, under \eqref{ccc0} and the spectral gap condition: $ \lambda_s \lambda_u < 1 $, $ \lambda_s \lambda_u \mu < 1 $, we get
		\[
		|\widehat{f}_{m_0}(m_1, x) - \widehat{f}_{m_0}(m_0, x)|\leq C_0 |m_1 - m_0||x|
		\]
		for every $ m_1 \in U_{m_0} (\epsilon_*) $, where $ \epsilon_* $ is small and independent of $ m_0 \in M_1 $.
		\item Similarly, from \autoref{lem:baseleaf}, under the spectral gap condition $ \lambda_s \lambda_u < 1 $, $ \lambda^{2}_s \lambda_u < 1 $, $ \lambda_s \lambda_u \mu < 1 $, $ \lambda^2_s \lambda_u \mu < 1 $, we have
		\[
		| \widehat{K}^1_{m_0}(m_1,x) - \widehat{K}^1_{m_0}(m_0,x) | \leq C_0 |m_1 - m_0| (|x| + 1)
		\]
		for $ m_1 \in U_{m_0}(\epsilon_*) $.
	\end{enumerate}

	First observe that by \eqref{KK1local},
	\begin{align*}
	& \widehat{K}_{m_0} (m_1,x) -  \widehat{K}_{m_0} (m_0,x) \\
	= &  \widehat{D}_m F_{m_0}(m_1, x, \hat{y}_1 ) - \widehat{D}_m F_{m_0}(m_0, x, \hat{y}_0 ) + \left\lbrace  D_y \widehat{G}_{m_0} ( m_1, x, \hat{y}_1 ) - D_y \widehat{G}_{m_0} ( m_0, x, \hat{y}_0 )    \right\rbrace \\
	& \quad \times \left\{ \widehat{K}_{u(m_0)} ( u(m_1), \hat{x}_1 ) \widehat{Du}_{m_0}(m_1) +  \widehat{K}^1_{u(m_0)} ( u(m_1), \hat{x}_1 )  \widehat{R}_{m_0} (m_1,x) \right\} \\
	& + D_y \widehat{G}_{m_0} ( m_0, x, \hat{y}_0 ) \left\lbrace  \widehat{K}_{u(m_0)} ( u(m_1), \hat{x}_1 ) \widehat{Du}_{m_0}(m_1) +  \widehat{K}^1_{u(m_0)} ( u(m_1), \hat{x}_1 )  \widehat{R}_{m_0} (m_1,x) \right. \\
	& \quad - \left. \widehat{K}_{u(m_0)} ( u(m_0), \hat{x}_0 ) \widehat{Du}_{m_0}(m_0) -  \widehat{K}^1_{u(m_0)} ( u(m_0), \hat{x}_0 )  \widehat{R}^0_{m_0} (m_0,x) \right\rbrace,
	\end{align*}
	for $ m_1 \in U_{m_0}(\epsilon_*) $, where $ \hat{x}_i = \widehat{x}_{m_0} (m_i,x)$, $ \hat{y}_i = \widehat{f}_{u(m_0)} ( u(m_i), \hat{x}_i ) $, $ i = 0,1 $. Here $ \epsilon_* $ is taken still smaller, so that the functions $ \alpha'', \beta'', \lambda''_s, \lambda''_u $ in the proof of \autoref{lem:sheaf}, satisfy \eqref{AB''} \eqref{kkr}, and the function $ \mu'' $ satisfies (by (H4c)) $ |\widehat{Du}_{m_0}(m_1)| \leq \mu''(m_0) $
	for $ m_1 \in U_{m_0}(\epsilon_*) $, and
	in addition these functions fulfill the spectral gap condition,
	\[
	\lambda''_s \lambda''_u < 1, \lambda''^2_s \lambda''_u < 1,  \lambda''_s \lambda''_u \mu'' < 1 , \lambda''^2_s \lambda''_u \mu'' < 1 ,  \max\{\mu''/\lambda''_s, \mu'' \}^{*\alpha} \lambda''_s \lambda''_u \mu'' < 1.
	\]
	We compute
	\begin{align*}
	& |\widehat{K}_{m_0} (m_1,x) -  \widehat{K}_{m_0} (m_0,x) | \\
	\leq & \widetilde{C} |m_1 - m_0|( 1 + |x| )
	+ \lambda''_u(m_0) | \widehat{K}_{u(m_0)} ( u(m_1), \hat{x}_1 ) \widehat{Du}_{m_0}(m_1) \\
	& \qquad - \widehat{K}_{u(m_0)} ( u(m_0), \hat{x}_0 ) \widehat{Du}_{m_0}(m_0) | \\
	& + \lambda''_u(m_0) |  \widehat{K}^1_{u(m_0)} ( u(m_1), \hat{x}_1 )  \widehat{R}_{m_0} (m_1,x) - \widehat{K}^1_{u(m_0)} ( u(m_0), \hat{x}_0 )  \widehat{R}_{m_0} (m_0,x) | \\
	\leq & \widetilde{C} |m_1 - m_0|( 1 + |x| ) + \lambda''_u(m_0) \mu''(m_0) | \widehat{K}_{u(m_0)} ( u(m_1), \hat{x}_1 ) - \widehat{K}_{u(m_0)} ( u(m_0), \hat{x}_1 ) | \\
	& + \lambda''_u(m_0) \beta''(u(m_0)) | \widehat{R}_{m_0} (m_1,x) - \widehat{R}_{m_0} (m_0,x) |.
	\end{align*}
	Similarly,
	\begin{align*}
	& |\widehat{R}_{m_0} (m_1,x) -  \widehat{R}_{m_0} (m_0,x) | \\
	\leq & \widetilde{C} |m_1 - m_0|( 1 + |x| )  + \alpha''(m_0) \mu''(m_0) | \widehat{K}_{u(m_0)} ( u(m_1), \hat{x}_1 ) - \widehat{K}_{u(m_0)} ( u(m_0), \hat{x}_1 ) | \\
	& + \alpha''(m_0) \beta''(u(m_0)) | \widehat{R}_{m_0} (m_1,x) - \widehat{R}_{m_0} (m_0,x) |.
	\end{align*}
	Thus,
	\begin{multline}\label{k011}
	|\widehat{K}_{m_0} (m_1,x) -  \widehat{K}_{m_0} (m_0,x)| \leq \widetilde{C} |m_1 - m_0|( 1 + |x| ) \\
	+ \frac{ \lambda''_u(m_0) \mu''(m_0) }{1-\alpha''(m_0)\beta''(u(m_0))} |  \widehat{K}_{u(m_0)} ( u(m_1), \hat{x}_1 ) - \widehat{K}_{u(m_0)} ( u(m_0), \hat{x}_1 ) |.
	\end{multline}

	Now we can apply the argument in \autoref{Appbb} to conclude the proof of (1) (see \autoref{argumentapp} \eqref{lem5}). The proof of (2) is the same as that of (1), where the following estimate is needed:
	\begin{align*}
	& |\widehat{K}_{m_0} (m_1,x) -  \widehat{K}_{m_0} (m_0,x)| \\
	\leq & \widetilde{C} |m_1 - m_0|( |x|^{\zeta} + |x| )  + \lambda''_u(m_0) \mu''(m_0) | \widehat{K}_{u(m_0)} ( u(m_1), \hat{x}_1 ) - \widehat{K}_{u(m_0)} ( u(m_0), \hat{x}_1 ) | \\
	& + \lambda''_u(m_0) \beta''(u(m_0)) | \widehat{R}_{m_0} (m_1,x) - \widehat{R}_{m_0} (m_0,x) |. 
	\end{align*}
\end{proof}

\begin{rmk}[Lipschitz continuity of connection] \label{lipconC}
	How about the $ C^{1,\gamma} $ continuity of $ f $? We need the Lipschitz continuity of the connections $ \mathcal{C}^X, \mathcal{C}^Y $ (see also \autoref{lipcon}). Let $ (U_{m_0}(\epsilon'), \varphi^{m_0}) \in \mathcal{A} $, and
	\begin{align*}
	\widehat{D}_m \varphi^{m_0} (m,x) & \triangleq (D\varphi^{m_0}_{m})^{-1} (\varphi^{m_0}_m(x)) \nabla_m \varphi^{m_0}_m(x) D\chi^{-1}_{m_0}(\chi_{m_0}(m)) \\
	& = \widehat{\varGamma}^{m_0}_{(m,x)} D\chi^{-1}_{m_0}(\chi_{m_0}(m)) :~ U_{m_0}(\epsilon'_{m_0}) \times X_{m_0} \to L(T_{m_0}M, X_{m_0}),
	\end{align*}
	where $ \widehat{\varGamma}^{m_0}_{(m,x)} $ is the Christoffel map in the bundle chart $ \varphi^{m_0} $ (see \autoref{def:connection}).
	We say the connection $ \mathcal{C}^X $ is \emph{uniformly (locally) Lipschitz} around $ M_1 $ with respect to $ \mathcal{A}, \mathcal{M} $ if
	\begin{gather*}
	\sup_{m_0 \in M}\sup_{m \in U_{m_0}(\epsilon')}\lip_{x} \widehat{\varGamma}^{m_0}_{(m,\cdot)} < \infty, ~
	|\widehat{D}_{m} \varphi^{m_0} (m_1,x) | \leq C|x| |m_1 - m_0|,~ m_1 \in U_{m_0}(\epsilon'_{m_0}),
	\end{gather*}
	where $ C > 0 $. (Note what this means for linear connections.)
	We have
	\begin{align*}
	D_{m} \widehat{f}_{m_0} (m,x)  = \nabla_{m} (\phi^{m_0}_{m})^{-1}(x'')  + D(\phi^{m_0}_{m})^{-1}(x'') \left\lbrace  \nabla_{m} f_{m}(x')
	+ Df_{m}(x') \nabla_{m}\varphi^{m_0}_{m} (x) \right\rbrace,
	\end{align*}
	where $ x' = \varphi^{m_0}_m(x) $, $ x'' = f_m(x') $. From this, one can deduce the H\"older continuity of $ m \mapsto D_{m} \widehat{f}_{m_0} (m,x) $, i.e.,
	\[
	m \mapsto D_{m} \widehat{f}_{m_0} (m,x) D\chi^{-1}_{m_0}(\chi_{m_0}(m))
	\]
	is H\"older, and in addition $ x \mapsto D_{m} \widehat{f}_{m_0} (m,x)  $ is (uniformly) Lipschitz from the $ C^{0,1} $ continuity of $ x \mapsto \nabla_m f_{m}(x), Df_{m}(x) $. Thus, by using the H\"older continuity of $ (m,x) \mapsto Df_{m}(x), \nabla_{m} f_{m}(x) $ (and the uniform Lipschitz continuity of $ \mathcal{C}^X, \mathcal{C}^Y $), the $ C^{1,\gamma} $ continuity of $ f $ is well understood.
\end{rmk}

\section{Appendix. Lipschitz continuity respecting base points}\label{lipbase}

One can describe the H\"older continuity of $ m \mapsto f_m(x), K^1_m(x), K_m(x) $ in a more classical way if some Lipschitz property of the transition maps is assumed.
Let $ M^\epsilon_1 = \bigcup_{m_0 \in M_1} U_{m_0}(\epsilon) $. Use $ M^{\epsilon_1}_1 $ instead of $ M_1 $ in all assumptions in \autoref{hverticalsheaf} to \autoref{HolderivativeBase} for some small $ \epsilon_1 > 0 $.

\begin{enumerate} [$ (\bullet) $]
	\item When (H1a) or (H1b) holds, we assume $ M $ is a \emph{uniformly} locally metrizable space (see \autoref{def:ulms}); note that if (H1c) holds, then $ M $ is already a uniformly locally metrizable space.
	Moreover, assume $ X, Y $ have \emph{$ \varepsilon $-almost $ C^{0,1} $-uniform trivializations} (i.e. ($ \Re $) below holds) if (H1a) holds, or \emph{$ \varepsilon $-almost $ C^{1,1} $-fiber-uniform trivializations} (i.e. ($ \Re $) and ($ \Im $) below hold) if (H1b) or (H1c) holds, on $ M^{\epsilon_1}_1 $ with respect to bundle atlases $ \mathcal{A}, \mathcal{B} $ (in (H2)), respectively; $ M $ is \emph{$ C^{1,1} $-uniform} around $ M_1 $ (i.e. ($ \aleph $) below holds) if (H1c) holds. (See also \autoref{bundleII}.) There is an $ \epsilon_0 > 0 $ such that the transition maps with respect to $ \mathcal{A}, \mathcal{B}, \mathcal{M} $,
	\begin{align*}
	\varphi^{m_0,m_1} = (\varphi^{m_1})^{-1} \circ \varphi^{m_0} : (W_{\epsilon_0}, d_{m_0}) \times X_{m_0} \to (W_{\epsilon_0}, d_{m_1}) \times X_{m_1} ,\\
	\phi^{m_0,m_1} = (\phi^{m_1})^{-1} \circ \phi^{m_0} : (W_{\epsilon_0}, d_{m_0}) \times Y_{m_0} \to (W_{\epsilon_0}, d_{m_1}) \times Y_{m_1}, \\
	\chi^{m_0,m_1} (m') = D\chi_{m_1}(m')(D\chi_{m_0}(m'))^{-1}: W_{\epsilon_{m_0}} \to L(T_{m_0}M, T_{m_1}M),
	\end{align*}
	where $ W_{\epsilon_0} = U_{m_0}(\epsilon_0) \cap U_{m_1}(\epsilon_0) \neq \emptyset $, satisfy the following Lipschitz conditions:
	\begin{gather*}
	\lip \varphi^{m_0,m_1}_{(\cdot)} (x) \leq \hat{\eta}^{X,0}_{m_0}(x),~
	\lip \phi^{m_0,m_1}_{(\cdot)} (y) \leq \hat{\eta}^{Y,0}_{m_0}(y), \tag{$ \Re $} \\
	\lip D_x\varphi^{m_0,m_1}_{(\cdot)} (x) \leq \hat{\eta}^{X,1}_{m_0}(x),
	\lip D_y\phi^{m_0,m_1}_{(\cdot)} (y) \leq \hat{\eta}^{Y,1}_{m_0}(y), \tag{$ \Im $} \\
	\lip\chi^{m_0,m_1} (\cdot) \leq M_0,\tag{$ \aleph $}
	\end{gather*}
	where the functions $ \hat{\eta}^{X,i}_{m_0}(\cdot), ~\hat{\eta}^{Y,i}_{m_0}(\cdot): X_{m_0} \to \mathbb{R}_{+} $, $ i = 0,1 $, satisfy
	\[
	\hat{\eta}^{X,0}_{m_0}(x) \leq M_0 |x|,~
	\hat{\eta}^{X,1}_{m_0}(x) \leq M_0,~
	\hat{\eta}^{Y,0}_{m_0}(y) \leq M_0 |y|,~
	\hat{\eta}^{Y,1}_{m_0}(y) \leq M_0,
	\]
	where $ M_0 > 0 $ is a constant independent of $ m_0 \in M^{\epsilon_1}_1, x,y $.
\end{enumerate}
Take small $ \epsilon'_2 < \epsilon_1 $. Now we have the following statements:
\begin{enumerate}[(a)]
	\item In \autoref{lem:sheaf}, \eqref{cc3} can be changed to
	\[
	|\widehat{f}_{m_0}(m_1, x) - \widehat{f}_{m_0}(m_2, x)| \leq C |m_1 - m_2|^{\gamma \alpha} |x|^{\zeta \alpha + 1 - \alpha},
	\]
	for all $ m_1, m_2 \in U_{m_0} (\varepsilon^*_1), x \in X_{m_0}, m_0 \in M^{\epsilon'_2}_1 $, provided 
	\[
	|m_i - m_0|^\gamma |x|^\zeta \leq \hat{r}\min\{ |x|, |x|^{\zeta c -(c-1)} \}, ~i = 1,2.
	\]
	\item In \autoref{lem:base0}, \eqref{k1optimal} can be taken as
	\[
	| \widehat{K}^1_{m_0} (m_1, i_X{(m_0)}) - \widehat{K}^1_{m_0} (m_2, i_X{(m_0)}) | \leq C |m_1 - m_2|^{\gamma\alpha},
	\]
	for all $ m_1, m_2 \in U_{m_0} (\varepsilon^*_1) $, $ m_0 \in M^{\epsilon'_2}_1 $.

	In \autoref{lem:baseleaf}, \eqref{bleaf} is taken as
	\[
	| \widehat{K}^1_{m_0} (m_1, x) - \widehat{K}^1_{m_0} (m_2, x) | \leq C\left\lbrace |m_1 - m_2|^\alpha (|x|+1) + (|m_1 - m_2|^\alpha |x|)^\beta \right\rbrace,
	\]
	for all $ m_1, m_2 \in U_{m_0} (\varepsilon^*_1), x \in X_{m_0} $, $ m_0 \in M^{\epsilon'_2}_1 $.

	\item In \autoref{lem:final}, \eqref{finaleq} is now strengthened to
	\[
	|\widehat{K}_{m_0}(m_1, x) - \widehat{K}_{m_0}(m_2, x)| \leq C |m_1 - m_2|^\alpha (|x|+1)^\alpha |x|^{1-\alpha},
	\]
	for all $ m_1, m_2 \in U_{m_0} (\epsilon'_1) $, $ x \in X_{m_0} $, $ m_0 \in M^{\epsilon'_2}_1 $.
\end{enumerate}
Here $ |m_1 - m_2| \triangleq d_{m_0}(m_1, m_2) $, where $ d_{m_0} $ is the metric in $ U_{m_0} $.

\begin{proof}
	As an example, we only consider the case $ m \mapsto f_m(x) $ in addition to that in \autoref{lem:sheaf}, $ \alpha = 1 $, $ \gamma = 1 $, $ \zeta = 1 $.
	Let $ m'_0 \in U_{m_0}(\epsilon_*) $, $ m_0 \in M^{\epsilon'_2}_1 $, $ \epsilon_* < \epsilon_2 $, where $ \epsilon_*, \epsilon'_2 (< \epsilon_2) $ are chosen such that $ U_{m_0}(\epsilon_*) \subset M^{\epsilon_2}_1 $ for all $ m_0 \in M^{\epsilon'_2}_1 $. Note that we have the following local change of coordinates:
	\[
	\widehat{f}_{m_0}(m,x) = \phi^{m'_0, m_0}_{m} \circ \widehat{f}_{m'_0} (m, \varphi^{m_0, m'_0}_{m}(x)).
	\]
	Let $ m_1, m'_0 \in U_{m_0}(\epsilon_*) $. We compute
	\begin{align*}
	& |\widehat{f}_{m_0}(m_1, x) - \widehat{f}_{m_0}(m'_0, x)|\\
	= & \left| \phi^{m'_0, m_0}_{m_1} \circ \widehat{f}_{m'_0} (m_1,  \varphi^{m_0, m'_0}_{m_1}(x)) - \phi^{m'_0, m_0}_{m'_0} \circ \widehat{f}_{m'_0}  (m'_0, \varphi^{m_0, m'_0}_{m'_0}(x)) \right| \\
	\leq & \left| \phi^{m'_0, m_0}_{m_1} \circ \widehat{f}_{m'_0}  (m_1, \varphi^{m_0, m'_0}_{m_1}(x)) - \phi^{m'_0, m_0}_{m'_0} \circ \widehat{f}_{m'_0} (m_1, \varphi^{m_0, m'_0}_{m_1}(x)) \right| \\
	& + \left| \phi^{m'_0, m_0}_{m'_0} \circ \widehat{f}_{m'_0}  (m_1, \varphi^{m_0, m'_0}_{m_1}(x)) - \phi^{m'_0, m_0}_{m_0} \circ \widehat{f}_{m'_0} (m'_0, \varphi^{m_0, m'_0}_{m'_0}(x))  \right| \\
	\leq & M_0 |\widehat{f}_{m'_0} (m_1, \varphi^{m_0, m'_0}_{m_1}(m_1, x))| |m_1 - m'_0|_{m'_0} + \lip \phi^{m'_0, m_0}_{m'_0}(\cdot)\\
	& \qquad \cdot \left| \widehat{f}_{m'_0} (m_1, \varphi^{m'_0, m_0}_{m_1}(x)) - \widehat{f}_{m'_0} (m'_0, \varphi^{m_0, m'_0}_{m'_0}(x)) \right| \\
	\leq & \widetilde{C} |m_1 - m'_0|_{m_0} |x| + \lip \phi^{m'_0, m_0}_{m'_0}(\cdot) \left\{ \left| \widehat{f}_{m'_0} (m_1, \varphi^{m_0, m'_0}_{m_1}(x)) - \widehat{f}_{m'_0} (m'_0, \varphi^{m_0, m'_0}_{m_1}(x)) \right| \right. \\
	& \qquad \left. + \left| \widehat{f}_{m'_0} (m'_0, \varphi^{m_0, m'_0}_{m_1}(x)) - \widehat{f}_{m'_0} (m'_0, \varphi^{m_0, m'_0}_{m'_0}(x)) \right| \right\}\\
	\leq & C_0 |m_1 - m'_0| |x|,
	\end{align*}
	for $ m_1, m'_0 \in U_{m_0}(\epsilon_*) $, where $ \epsilon_* $ is taken smaller than $ \epsilon^*, \epsilon', \epsilon_2 $, independent of $ m_0 \in M_1 $, and $ C_0 > 0 $ is independent of $ m_0 \in M_1 $.
\end{proof}

\section{Continuity properties of $ f $} \label{continuityf}

For a bundle map $ g: X \to Y $, one can talk about its fiber-continuity (i.e. $ x \mapsto g_m(x) $) and base-continuity (i.e. $ m \mapsto g_m(x) $). The $ C^0 $ continuity of $ x \mapsto g_m(x) $ or $ (m,x) \mapsto g_m(x) $  is easy. Let us consider the continuity in the uniform sense. The following different types of continuity usually arise in applications; see also \autoref{uniformC} and \autoref{tensorV}.
\begin{enumerate}[(a)]
	\item (The uniform $ C^0 $-fiber case) (i) For each $ m \in M $, $ x \mapsto g_m(x) $ is uniformly continuous; (ii) $ x \mapsto g_m(x) $ is uniformly continuous uniformly for $ m $, i.e., $ x \mapsto g_m(x) $, $ m \in M $, are equicontinuous.
	\item (The uniform $ C^0 $-base case) (i) $ m \mapsto g_m(\cdot) $ is \emph{continuous} or \emph{uniformly continuous} in \emph{$ C^0 $-topology on bounded sets or in the whole space}. This means for the local representation $ \widehat{g} $ of $ g $ with respect to the bundle atlases of $ \mathcal{A}, \mathcal{B} $, one has $ \mathcal{L} |\widehat{g}_{m_0}(m',x) - \widehat{g}_{m_0}(m_0,x)| = 0 $, where $ \mathcal{L} $ stands for the following four different limits, respectively:
	\begin{equation}\label{equ:limits}
	\left\{
	\begin{gathered}
	\mathcal{L}^{base}_1: \lim_{r \to \infty} \sup_{m_0 \in M} \limsup_{m' \to m_0} \sup_{|x|\leq r}, ~\mathcal{L}^{base}_{1,u}: \lim_{r \to \infty} \limsup_{\epsilon \to 0} \sup_{m_0 \in M_1} \sup_{m' \in U_{m_0}(\epsilon)} \sup_{|x|\leq r},\\
	\mathcal{L}^{base}_2: \sup_{m_0 \in M} \limsup_{m' \to m_0} \sup_{x \in X_{m_0}}, ~\mathcal{L}^{base}_{2,u}: \limsup_{\epsilon \to 0} \sup_{m_0 \in M_1} \sup_{m' \in U_{m_0}(\epsilon)} \sup_{x \in X_{m_0}}.
	\end{gathered}
	\right.
	\end{equation}
	(ii) $ m \mapsto g_m(\cdot) $ is \emph{continuous} or \emph{uniformly continuous} in \emph{$ C^1 $-topology on bounded sets or in the whole space}. This means that $ \mathcal{L} |\widehat{g}_{m_0}(m',x) - \widehat{g}_{m_0}(m_0,x)| = 0 $ and $ \mathcal{L}|\widehat{D}_xg_{m_0}(m',x) - \widehat{D}_xg_{m_0}(m_0,x)| = 0 $, where $ \widehat{D}_xg_{m_0} $ is the local representative of the fiber derivative $ D_xg $, i.e., $ \widehat{D}_xg_{m_0} = D_x \widehat{g}_{m_0} $.
\end{enumerate}

In the following, we will consider the continuity of $ (m,x) \mapsto f_m(x) $, $ K^1_m(x), K_m(x) $. Unlike the H\"older case, the assumptions on $ M $, $ X \times Y $ (as well as $ u $) can be pointwise.
We write $ \widehat{F}_{m_0}, \widehat{G}_{m_0},  \widehat{f}_{m_0}, \widehat{x}_{m_0} $ for the local representations of $ F, G $, $ f, x_{(\cdot)}(\cdot) $ at $ m_0 \in M_1 $ with respect to $ \mathcal{A}, \mathcal{B} $; see \eqref{localrepre}.

We say $ m' \to m_0 \in M_1 $ is in the \emph{uniform} sense if the limit $ \lim_{m' \to m_0} $ is replaced by $ \limsup_{\epsilon \to 0} \sup_{m_0 \in M_1} \sup_{m' \in U_{m_0}(\epsilon)} $.

\begin{lem}[Continuity of $ f $]\label{lem:continuity_f}
	Assume the following conditions hold:
	\begin{enumerate}[(a)]
		\item Let \textnormal{(H1a), (H2)(iii,v1$ ' $), (H3$ ' $), (H4), (H5$ ' $)} hold.
		\item $ F, G $ are continuous and (spectral gap condition) $ \lambda_s \lambda_u < 1 $; see \autoref{absgc}.
	\end{enumerate}
	Then
	\begin{enumerate}[(1)]
		\item  $ f : X \to Y $ is continuous. Moreover, the stronger the continuity of $ F,G $, the stronger that of $ f $, i.e.,
		\item If for any $ r > 0 $,
		\begin{equation}\label{ukk}
		\left\{
		\begin{split}
		\lim_{m' \to m_0} \sup_{0 \neq |z - {i}^1_{m_0}| \leq r} \frac{|\widehat{F}_{m_0}(m', z) - \widehat{F}_{m_0}(m_0, z)|}{|z - {i}^1_{m_0}|} = 0,\\
		\lim_{m' \to m_0} \sup_{0 \neq |z - {i}^1_{m_0}| \leq r} \frac{|\widehat{G}_{m_0}(m', z) - \widehat{G}_{m_0}(m_0, z)|}{|z - {i}^1_{m_0}|} = 0,
		\end{split}
		\right.
		\end{equation}
		where $ {i}^1_{m_0} = ({i}_X({m_0}), {i}_Y(u(m_0))) $, $ z \in X_{m_0} \times Y_{u(m_0)} $, $ m_0 \in M_1 $, then
		\begin{equation}\label{fkk}
		\lim_{m' \to m_0} \sup_{0 \neq |x - {i}_X({m_0})| \leq r} \frac{|\widehat{f}_{m_0}(m', x) - \widehat{f}_{m_0}(m_0, x)|}{|x - {i}_X({m_0})|} = 0.
		\end{equation}
		If \eqref{ukk} holds for $ r = \infty $, so does \eqref{fkk}.

		\item Assume that $ i: M \to X \times Y $ is uniformly continuous around $ M_1 $ (see \autoref{ucontinuous}) and \textnormal{(H3$ '' $) (H4$ ' $)} hold. Then if \eqref{ukk} holds with $ m' \to m_0 \in M_1 $ in the uniform sense, so does \eqref{fkk}.
	\end{enumerate}
\end{lem}
\begin{proof}
	We first show $ f $ is continuous at $ (m_0,x) $, $ m_0 \in M_1 $, $ x \in X_{m_0} $. It suffices to show for fixed $ x $, $ m \mapsto \widehat{f}_{m_0}(m, x) $ is continuous at $ m_0 $, since $ x \mapsto \widehat{f}_{m_0}(m, x) $ is Lipschitz locally uniformly for $ m $. By (H5$ ' $), we have $ \limsup_{m' \to m_0 } |\kappa(m') - \kappa(m_0)| \leq \varepsilon $ where $ \kappa(\cdot) $ is taken to be the function in the (A$ ' $)(B) (or (A)(B)) condition; set $ \kappa''(m_0) = \kappa(m_0) + \varepsilon $.
	From the computation in the proof \autoref{lem:sheaf}, we have
	\begin{multline*}
	\limsup_{m' \to m_0} |\widehat{f}_{m_0}(m',x) - \widehat{f}_{m_0}(m_0,x)| \leq \frac{\lambda''_u(m_0)}{1-\alpha''(m_0)\beta''(u(m_0))} \\ \cdot \limsup_{m' \to m_0} |  \widehat{f}_{u(m_0)} (u(m'), \widehat{x}_{m_0} (m_0,x) ) - \widehat{f}_{u(m_0)} (u(m_0), \widehat{x}_{m_0} (m_0,x) ) |.
	\end{multline*}

	Set $ \theta = \sup_{m_0 \in M_1}\frac{\lambda''_s(m_0) \lambda''_u(m_0)}{1-\alpha''(m_0)\beta''(u(m_0))} < 1 $ (by taking $ \varepsilon $ small). Then
	\begin{align*}
	& \sup_{m_0 \in M_1} \sup_{x \in X_{m_0}} \limsup_{m' \to m_0} \frac{|\widehat{f}_{m_0}(m',x) - \widehat{f}_{m_0}(m_0,x)|}{|x - {i}_X(m_0)|} \\
	\leq & \theta \sup_{m_0 \in M_1} \sup_{x \in X_{m_0}} \limsup_{m' \to m_0} \frac{| \widehat{f}_{u(m_0)} (u(m'), \widehat{x}_{m_0} (m_0,x) ) - \widehat{f}_{u(m_0)} (u(m_0), \widehat{x}_{m_0} (m_0,x) ) |}{|\widehat{x}_{m_0} (m_0,x) - {i}_X(u(m_0)) |} \\
	\leq & \theta \sup_{m_0 \in M_1} \sup_{x \in X_{m_0}} \limsup_{m' \to m_0} \frac{|\widehat{f}_{m_0}(m',x) - \widehat{f}_{m_0}(m_0,x)|}{|x - {i}_X(m_0)|}.
	\end{align*}
	Note that
	\[
	\sup_{m_0 \in M_1} \sup_{x \in X_{m_0}} \limsup_{m' \to m_0} \frac{|\widehat{f}_{m_0}(m',x) - \widehat{f}_{m_0}(m_0,x)|}{|x - {i}_X(m_0)|} < \infty,
	\]
	which follows from
	\[
	|\widehat{f}_{m_0}(m',x) - \widehat{f}_{m_0}(m_0,x)| \leq |\widehat{f}_{m_0}(m',x) - {i}_Y(m_0)| + |{i}_Y(m_0) - \widehat{f}_{m_0}(m_0,x)|,
	\]
	and
	\begin{align*}
	& \limsup_{m' \to m_0} |\widehat{f}_{m_0}(m',x) - {i}_Y(m_0)| \\
	\leq & \limsup_{m' \to m_0} \left\lbrace | \widehat{f}_{m_0}(m',x) - \widehat{f}_{m_0}(m', \widetilde{i}^{m_0}_X(m'))  |  +  | \widehat{f}_{m_0}(m', \widetilde{i}^{m_0}_X(m')) - {i}_Y(m_0) |  \right\rbrace  \\
	\leq & \beta'(m_0) \limsup_{m' \to m_0} |x - \widetilde{i}^{m_0}_X(m') | + \limsup_{m' \to m_0} | (\phi^{m_0}_{m'})^{-1} (i_Y(m')) - {i}_Y(m_0) |\\
	\leq & \beta'(m_0) | x - {i}_X(m_0) |,
	\end{align*}
	where $ \widetilde{i}^{m_0}_X(m') = (\varphi^{m_0}_{m'})^{-1}(i_X(m')) $. Since $ \theta < 1 $, we have
	\[
	\lim_{m' \to m_0} |\widehat{f}_{m_0}(m',x) - \widehat{f}_{m_0}(m_0,x)| = 0.
	\]

	Thus we have shown $ f $ is continuous at $ (m_0, x) $, $ m_0 \in M_1 $. We need to show $ f $ is continuous everywhere. This follows by the same argument as in the last part proof of \autoref{smoothbase}.

	Let $ m_0 \in M $, and choose $ C^0 $ bundle charts $ (U_0, \varphi^0) $, $ (U_0, \phi^0) $ at $ m_0 $ of $ X, Y $, respectively, such that $ u(U_0) \subset V_{u(m_0)} \subset U_{u(m_0)} $. Note that $ u(m_0) \in M_1 $. Consider $ F, G, f, x_{(\cdot)}(\cdot) $ in local bundle charts $ \varphi^0: U_0 \times X_{m_0} \to X $, $ \phi^0: U_0 \times Y_{m_0} \to Y $, $ \varphi^{u(m_0)}: V_{u(m_0)} \times X_{u(m_0)} \to X $, $ \phi^{u(m_0)}: V_{u(m_0)} \times Y_{u(m_0)} \to Y $. That is (see also \eqref{local11}),
	\begin{equation*}
	\begin{cases}
	\begin{split}
	\widehat{F}'_{m_0} (m, x, y) \triangleq (\varphi^{u(m_0)}_{u(m)})^{-1} \circ F_m \circ ( \varphi^{0}_m(x) , \phi^{u(m_0)}_{u(m)} & (y) ) : \\
	& U_{0} \times X_{m_0} \times Y_{u(m_0)} \to X_{u(m_0)}  ,
	\end{split}\\
	\widehat{G}'_{m_0} (m, x, y) \triangleq (\phi^{0}_{m})^{-1} \circ G_m \circ ( \varphi^{0}_m(x) , \phi^{u(m_0)}_{u(m)}(y) ) : U_{0} \times X_{m_0} \times Y_{u(m_0)} \to Y_{m_0} ,\\
	\widehat{f}'_{0}(m, x) \triangleq (\phi^{0}_m)^{-1} \circ f_m \circ ( \varphi^{0}_m(x) ) :  U_{0} \times X_{m_0} \to Y_{m_0} , \\
	\widehat{f}_{u(m_0)}(m, x) \triangleq (\phi^{u(m_0)}_{u(m)})^{-1} \circ f_{u(m)} \circ ( \varphi^{u(m_0)}_{u(m)}(x) ) :  U_{0} \times X_{u(m_0)} \to Y_{u(m_0)} ,\\
	\widehat{x}'_{m_0}(m, x) \triangleq (\varphi^{u(m_0)}_{u(m)})^{-1} \circ x_{m} (\varphi^{0}_m(x)) : U_{0} \times X_{m_0} \to X_{u(m_0)}. \\
	\end{cases}
	\end{equation*}
	Then we see \eqref{localeq} holds.

	By (H5$ ' $), we can take $ U_0 $ smaller such that if $ m \in U_0 $, then
	\[
	\lip \widehat{F}'_{m_0} ( m, x, \widehat{f}_{u(m_0)}(u(m), \cdot) )  < 1.
	\]
	As $ \widehat{f}_{u(m_0)} (\cdot, \cdot ) $ is continuous at $ (u(m_0),x) $, by \autoref{lem:fconx} and \eqref{localeq}, $ \widehat{f}'_{0} (\cdot, \cdot) $ is continuous at $ (m_0, x) $, and thus the proof of (1) is complete.

	To prove (2), consider the following limits (the uniformity of $ m' \to m_0 $ is the same):
	\[
	\lim_{r \to \infty} \sup_{m_0 \in M_1}\limsup_{m' \to m_0 } \sup_{|x - \widehat{i}_X(m_0)| \leq r}  \frac{|\widehat{f}_{m_0}(m',x) - \widehat{f}_{m_0}(m_0,x)|}{|x - {i}_X(m_0)|},
	\]
	\[
	\sup_{m_0 \in M_1}\limsup_{m' \to m_0 } \sup_{x}  \frac{|\widehat{f}_{m_0}(m',x) - \widehat{f}_{m_0}(m_0,x)|}{|x - {i}_X(m_0)|}.
	\]
	Also, note that by \eqref{ukk},
	\[
	\sup_{m_0 \in M_1}  \limsup_{m' \to m_0} \sup_{x \in X_{m_0}}\frac{|\widehat{f}_{m_0}(m',x) - \widehat{f}_{m_0}(m_0,x)|}{|x - {i}_X(m_0)|} < \infty.
	\]
	Using the same argument as in (1), one completes the proof.
\end{proof}

We write $ \widehat{K}^1_{m_0}(\cdot, \cdot), \widehat{R}^1_{m_0}(\cdot, \cdot) $ for the local representations of $ K^1, R^1 $ at $ m_0 \in M_1 $ with respect to $ \mathcal{A}, \mathcal{B} $, respectively; see \eqref{K1R1}.

\begin{lem}[Continuity of $ K^1_m(\cdot) $]\label{lem:K1leafc}
	Assume the following conditions hold:
	\begin{enumerate}[(a)]
		\item Let \textnormal{(H1a), (H2)(i, iv, v1$ ' $), (H3$ ' $), (H4), (H5$ ' $)} hold.
		\item Assume $ F, G $ are $ C^0 $ and $ C^1 $-fiber with continuous fiber derivatives $ D^vF, D^vG $ and (spectral gap condition) $ \lambda_s \lambda_u < 1 $; see \autoref{absgc}.
	\end{enumerate}
	Then:
	\begin{enumerate}[(1)]
		\item The bundle map $ K^1 \in L_{f}(\Upsilon^V_X, \Upsilon^V_Y) $ obtained in \autoref{lem:leaf1} is $ C^0 $.

		\item If for every $ m \in M $, $ DF_m(\cdot), DG_m(\cdot) $ are uniformly continuous, then so is $ K^1_m(\cdot) $.
		If $ DF_m(\cdot), DG_m(\cdot) $, $ m \in M $, are equicontinuous, then so are $ K^1_m(\cdot) $, $ m \in M $.
	\end{enumerate}
\end{lem}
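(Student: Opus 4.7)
The proof of (1) proceeds by the same contraction-closure argument that produces \autoref{lem:continuity_f}, now applied to the fiber-derivative system \eqref{localleaf1}. I would first observe that the present hypotheses include all those of \autoref{lem:continuity_f}, so $f$ and the associated map $x_{(\cdot)}(\cdot)$ are already continuous on the total spaces; in particular, their local representations $\widehat{f}_{m_0}, \widehat{f}_{u(m_0)}$ and $\widehat{x}_{m_0}$ defined in \eqref{localrepre} are jointly continuous in $(m,x)$. Combined with (H2)(iv) (the $C^1$-fiber topology bundle atlases, which make $(m,x)\mapsto D\varphi^{m_0}_m(x)$ and $(m,y)\mapsto D\phi^{m_0}_m(y)$ jointly continuous) and the hypothesis $D^vF, D^vG \in C^0$, the local derivatives $(m,z)\mapsto D_z\widehat{F}_{m_0}(m,z)$ and $(m,z)\mapsto D_z\widehat{G}_{m_0}(m,z)$ are jointly continuous as well.

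Now fix $m_0 \in M_1$ and $x_0 \in X_{m_0}$, and let $(m_1,x_1) \to (m_0,x_0)$ inside $U_{m_0} \times X_{m_0}$. Writing
\[
K^{1,u}_i = \widehat{K}^1_{u(m_0)}\bigl(u(m_i),\widehat{x}_{m_0}(m_i,x_i)\bigr),\quad R^1_i = \widehat{R}^1_{m_0}(m_i,x_i),\quad T_i = K^{1,u}_i R^1_i,
\]
and $z_i = \bigl(x_i,\widehat{f}_{u(m_0)}(u(m_i),\widehat{x}_{m_0}(m_i,x_i))\bigr)$, the decomposition
\[
\widehat{K}^1_{m_0}(m_1,x_1) - \widehat{K}^1_{m_0}(m_0,x_0) = \bigl[D_z\widehat{G}_{m_0}(m_1,z_1) - D_z\widehat{G}_{m_0}(m_0,z_0)\bigr](\id,T_1) + D_y\widehat{G}_{m_0}(m_0,z_0)(T_1 - T_0),
\]
the analogous identity for $\widehat{R}^1$, and an invocation of (H5$'$) to enlarge $\alpha,\beta',\lambda_s,\lambda_u$ to neighborhood bounds $\alpha''(m_0),\beta''(m_0),\lambda''_s(m_0),\lambda''_u(m_0)$ (still satisfying $\alpha''(m_0)\beta''(u(m_0))<1$ and $\theta_{m_0}\triangleq\lambda''_s(m_0)\lambda''_u(m_0)/(1-\alpha''(m_0)\beta''(u(m_0)))<1$ by the strict inequality $\lambda_s\lambda_u<1$) together allow one to eliminate $|R^1_1 - R^1_0|$ exactly as in the proof of \autoref{slem:TT} and obtain
\[
|\widehat{K}^1_{m_0}(m_1,x_1) - \widehat{K}^1_{m_0}(m_0,x_0)| \le o(1) + \theta_{m_0}\,|K^{1,u}_1 - K^{1,u}_0|,
\]
with $o(1)\to 0$ by joint continuity of $D_z\widehat{F}_{m_0}, D_z\widehat{G}_{m_0}, \widehat{f}_{u(m_0)}, \widehat{x}_{m_0}$.

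Setting $\Omega = \sup_{m_0\in M_1}\sup_{x_0\in X_{m_0}}\limsup_{(m_1,x_1)\to(m_0,x_0)}|\widehat{K}^1_{m_0}(m_1,x_1) - \widehat{K}^1_{m_0}(m_0,x_0)|$, the containment $u(M)\subset M_1$ combined with the continuity $(u(m_1),\widehat{x}_{m_0}(m_1,x_1))\to(u(m_0),\widehat{x}_{m_0}(m_0,x_0))$ lets the right-hand side be re-indexed so that $\Omega \le \theta\,\Omega$ with $\theta=\sup_{m_0}\theta_{m_0}<1$; since the a priori bound $|K^1_m(x)|\le\beta'(m)\le\hat\beta$ from \autoref{lem:leaf1} keeps $\Omega$ finite, one concludes $\Omega=0$, i.e., joint continuity of $\widehat{K}^1_{m_0}$ at every $(m_0,x_0)$ with $m_0\in M_1$. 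Continuity at $m_0\notin M_1$ then follows by the change-of-chart device at the end of the proof of \autoref{smoothbase}: rewriting \eqref{localleaf1} in arbitrary $C^1$-fiber charts at $m_0$ gives a contractive fixed-point system for $(\widehat{R}^1_{m_0},\widehat{K}^1_{m_0})$ whose parameter-dependent iterate $\widehat{K}^1_{u(m_0)}$ is already known continuous (since $u(m_0)\in M_1$), so \autoref{lem:fconx} transfers continuity to $m_0$. For (2), the same computation with $m_1 = m_0$ fixed yields the fiber recursion $\omega^{K^1}_m(\delta) \le \omega^{DF,DG}_m(\delta) + \theta_m\,\omega^{K^1}_{u(m)}(\lambda_s(m)\delta)$ for the moduli $\omega^{K^1}_m(\delta)=\sup_{|x-x'|\le\delta}|K^1_m(x)-K^1_m(x')|$; iterating $N$ times and using the uniform bound $\omega^{K^1}_m\le 2\hat\beta$ to kill the tail gives $\omega^{K^1}_m(\delta)\to 0$ (uniform continuity of each $K^1_m$), while if $\{DF_m,DG_m\}_{m\in M}$ is equicontinuous the same iteration together with boundedness of $\lambda_s$ (part of the standing setup of \autoref{resultOverview}) gives $\sup_m\omega^{K^1}_m(\delta)\to 0$, i.e., equicontinuity of $\{K^1_m\}_{m\in M}$. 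The main subtlety throughout is securing $\Omega<\infty$ before running the contraction-closure, which is exactly why the a priori $L^\infty$ bound on $K^1$ from \autoref{lem:leaf1} is indispensable.
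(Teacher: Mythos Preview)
Your argument is correct for both parts. For part (2) it coincides with the paper's proof (which also appeals to the computation in \autoref{lem:leaf1a} and the a priori bound $|K^1_m|\le\hat\beta$ to run the same self-improvement on the two limits you wrote as moduli).

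For part (1) your route differs from the paper's. You argue directly on the fixed point $K^1$ via the oscillation quantity $\Omega$, obtaining $\Omega\le\theta\,\Omega$ and $\Omega<\infty$ exactly as in the proof of \autoref{lem:continuity_f}. The paper instead shortcuts this by working at the level of the graph transform $\varGamma^1$ from \autoref{lem:leaf1}: it sets $E^{L,c}_1=\{K^1\in E^L_1: K^1 \text{ is } C^0\}$, checks that $E^{L,c}_1$ is closed in $(E^L_1,d_1)$ (the $d_1$-convergence is uniform) and that $\varGamma^1 E^{L,c}_1\subset E^{L,c}_1$ (using \eqref{localleaf1}, \autoref{lem:fconx}, and the already-established continuity of $f$), so the unique fixed point of the contraction $\varGamma^1$ lies in $E^{L,c}_1$. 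This buys a cleaner proof that avoids the explicit $o(1)$ bookkeeping and the separate chart-change step for $m_0\notin M_1$, since $E^L_1$ is defined over all of $M$; your approach, on the other hand, is self-contained (it does not re-invoke the construction of $\varGamma^1$) and makes the role of the spectral gap more transparent.
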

\begin{proof}
	(1) That $ K^1 \in L(\Upsilon^V_X, \Upsilon^V_Y) $ is $ C^0 $ means $ \widehat{K}^1_{m_0}(\cdot, \cdot) $, the local representation of $ K^1 $ at $ m_0 $ (see e.g. \eqref{K1R1}), is continuous at $ (m_0,x) $ for $ m_0 \in M $, $ x \in X_{m_0} $. Let
	\[
	E^{L,c}_1 = \{ K^1 \in E^L_1: K^1 ~\text{is $ C^0 $} \}.
	\]
	Then one can easily verify that $ E^{L,c}_1 $ is closed in $ E^L_1 $ (using the fact that the convergence in $ E^L_1 $ is uniform) and $ \varGamma^{1} $, the graph transform defined in the proof of \autoref{lem:leaf1}, satisfies $ \varGamma E^{L,c}_1 \subset E^{L,c}_1 $ (by \eqref{localleaf1}, \autoref{lem:fconx} and the continuity of $ f $). Now we can deduce that $ K^1 \in E^{L,c}_1 $.

	(2) The proof is essentially the same as in the proof of \autoref{lem:continuity_f}, by using the computation in the proof of \autoref{lem:leaf1a} and considering the following limits, respectively:
	\[
	\sup_{m \in M} \lim_{r \to 0^+ } \sup_{|x_1 - x_2| \leq r} |K^1_m(x_1) - K^1_m(x_2)|, ~
	\lim_{r \to 0^+ } \sup_{m \in M} \sup_{|x_1 - x_2| \leq r} |K^1_m(x_1) - K^1_m(x_2)|.
	\]
	Note that $ \sup_{m \in M} \sup_{x_1, x_2} |K^1_m(x_1) - K^1_m(x_2)| < \infty $. So the above limits are both finite, and finally equal to $ 0 $.
\end{proof}

\begin{lem}[Continuity of $ m \mapsto K^1_m(x) $]\label{lem:baseK1}
	Assume the following conditions hold:
	\begin{enumerate}[(a)]
		\item Let \textnormal{(H1a), (H2)(i, iv, v2$ ' $), (H3), (H4), (H5$ ' $)} hold.
		\item Assume $ F, G $ are $ C^0 $ and (spectral gap condition) $ \lambda_s \lambda_u < 1 $; see also \autoref{absgc}.
	\end{enumerate}
	\begin{enumerate}[(1)]
		\item Suppose $ DF_m(\cdot), DG_m(\cdot) $, $ m \in M $, are uniformly continuous (resp. equicontinuous with additionally \textnormal{(H2)(v2), (H4$ ' $), (H5)} holding).
		If $ m \mapsto DF_{m}(\cdot), DG_{m}(\cdot) $ are continuous (resp. uniformly continuous) around $ M_1 $ in $ C^0 $-topology on bounded sets (or in the whole space), so is $ m \mapsto K^1_{m}(\cdot) $; see \autoref{def:uD}.

		\item Assume \textnormal{(H4$ ' $), (H5)} instead of \textnormal{(H4), (H5$ ' $)}. If $ m \mapsto DF_m(i^1(m)), DG_m(i^1(m)) $ are uniformly continuous around $ M_1 $ (see \autoref{ucontinuous}), so is $ m \mapsto K^1_m(i_X(m)) $, where $ i^1(m) = (i_X(m), i_Y(u(m))) $.
	\end{enumerate}
\end{lem}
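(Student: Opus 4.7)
The plan is to reuse the local identity \eqref{localleaf1} satisfied by $\widehat{K}^1_{m_0}$ and $\widehat{R}^1_{m_0}$, and to run the same graph-transform contraction argument that underlies the H\"older estimates of \autoref{lem:base0} and \autoref{lem:baseleaf}, with the H\"older moduli $|m-m_0|^{\gamma}$ replaced by abstract continuity moduli inherited from the hypotheses on $DF$, $DG$ and from the continuity of $f$ and $K^1$ already furnished by \autoref{lem:continuity_f} and \autoref{lem:K1leafc}. Concretely, the algebra that produced \eqref{K110}--\eqref{K111} still goes through without any H\"older input and yields, schematically,
\begin{equation*}
|\widehat{K}^1_{m_0}(m,x) - \widehat{K}^1_{m_0}(m_0,x)| \leq \widetilde{C}\,\Omega_{m_0}(m,x) + \theta''_1(m_0)\,|\widehat{K}^1_{u(m_0)}(u(m), \widehat{x}_{m_0}(m,x)) - \widehat{K}^1_{u(m_0)}(u(m_0), \widehat{x}_{m_0}(m_0,x))|,
\end{equation*}
where $\sup_{m_0 \in M_1}\theta''_1(m_0) < 1$ by the spectral gap condition and (H5$'$), and $\Omega_{m_0}(m,x)$ packages the oscillations of $D_z\widehat{F}_{m_0}$ and $D_z\widehat{G}_{m_0}$ with respect to the base-point together with fiber-oscillation terms of $\widehat{f}_{u(m_0)}$ and $\widehat{x}_{m_0}$ controlled by \autoref{lem:continuity_f}.

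For part (2), this reduction is particularly clean: evaluating at $x = i_X(m_0)$, invariance of $i$ and the $0$-section property give $\widehat{x}_{m_0}(m, i_X(m_0)) \equiv i_X(u(m_0))$ and $\widehat{f}_{u(m_0)}(u(m), i_X(u(m_0))) \equiv i_Y(u(m_0))$ for all $m$, so $\Omega_{m_0}(m, i_X(m_0))$ collapses to
\begin{equation*}
\omega(m,m_0) = |D_z\widehat{F}_{m_0}(m, i^1(m_0)) - D_z\widehat{F}_{m_0}(m_0, i^1(m_0))| + |D_z\widehat{G}_{m_0}(m, i^1(m_0)) - D_z\widehat{G}_{m_0}(m_0, i^1(m_0))|,
\end{equation*}
which by hypothesis tends to $0$ uniformly in $m_0 \in M_1$ as $m \to m_0$ uniformly. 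Setting $\Psi(\sigma) := \sup_{m_0 \in M_1}\sup_{m \in U_{m_0}(\sigma)} |\widehat{K}^1_{m_0}(m, i_X(m_0)) - \widehat{K}^1_{m_0}(m_0, i_X(m_0))|$ (which is bounded thanks to $|K^1_m(x)| \leq \hat\beta$), iterating the one-step bound along the orbit $m \mapsto u(m) \mapsto u^{(2)}(m) \mapsto \cdots$ (permissible because $u(M) \subset M_1$ and $u$ is uniformly continuous around $M_1$ by (H4$'$)) gives $\Psi(\sigma) \leq \widetilde{C}\sum_{k=0}^{N-1}\theta^k \omega(\delta^{(k)}(\sigma)) + \theta^N \cdot \Psi_\infty$, so choosing $N$ first and then $\sigma$ small forces $\Psi(\sigma) \to 0$. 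This is exactly the continuity-version analogue of the scheme of \autoref{argumentapp}.

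For part (1), the same one-step inequality is iterated with $x$ varying in bounded fibers (resp. the entire fiber), and the extra task is to verify that the composite term $|\widehat{K}^1_{u(m_0)}(u(m), \widehat{x}_{m_0}(m,x)) - \widehat{K}^1_{u(m_0)}(u(m_0), \widehat{x}_{m_0}(m_0,x))|$ splits into a pure base-oscillation piece (on which one iterates) plus a fiber-oscillation piece that is dominated via \autoref{lem:K1leafc} and the (uniform continuity / equicontinuity) of $DF_m(\cdot), DG_m(\cdot)$ in $x$; the upgraded hypotheses (H2)(v)(2), (H4$'$), (H5) in the equicontinuous case are precisely what promote each ingredient from pointwise-in-$m_0$ to uniform-in-$m_0$ behaviour. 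The main obstacle, as usual in this style of argument, is bookkeeping: one has to thread the four different limit modes $\mathcal{L}^{base}_1, \mathcal{L}^{base}_{1,u}, \mathcal{L}^{base}_2, \mathcal{L}^{base}_{2,u}$ of \eqref{equ:limits} through the one-step inequality, checking in each case that both $\Omega_{m_0}(m,x)$ and the fiber-oscillation term vanish in the matching uniform mode, and that the contraction factor $\theta''_1(m_0) < 1$ is uniform over $M_1$ so that the iteration truly absorbs the supremum over $m_0$. Once this bookkeeping is in place the conclusion is immediate.
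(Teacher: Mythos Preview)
Your proposal is correct and follows essentially the same approach as the paper: derive from \eqref{localleaf1} a one-step recursive inequality with contraction factor $\theta''_1(m_0)<1$, absorb the fiber-oscillation of $\widehat{K}^1$ and of $\widehat{f},\widehat{x}$ using \autoref{lem:K1leafc} and \autoref{lem:continuity_f}, and then exploit the contraction. The paper streamlines the last step slightly differently: instead of iterating $N$ times and sending $N\to\infty$, it packages the quantity directly as $Q=\mathcal{L}|\widehat{K}^1_{m_0}(m',x)-\widehat{K}^1_{m_0}(m_0,x)|$ (finite since $|K^1|\le\hat\beta$), shows $Q\le\theta Q$, and concludes $Q=0$. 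One small point you gloss over: before invoking \autoref{lem:continuity_f} for the uniform-in-$x$ control of $\widehat{f}_{m_0}$ needed in part~(1), the paper first verifies that \eqref{ukk} holds, via the integral identity $\widehat{F}_{m_0}(m',z)-\widehat{F}_{m_0}(m_0,z)=\int_0^1\{D_z\widehat{F}_{m_0}(m',z_t)-D_z\widehat{F}_{m_0}(m_0,z_t)\}\,\mathrm{d}t\,(z-i^1(m_0))$ with $z_t=tz+(1-t)i^1(m_0)$, which is exactly where the $C^0$-in-base continuity of $D_zF,D_zG$ enters.
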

\begin{proof}
	Note that we have assumed $ i $ is a $ 0 $-section. We first show that, under (1), \eqref{ukk} holds. This is easy from
	\[
	\widehat{F}_{m_0}(m', z) - \widehat{F}_{m_0}(m_0, z) = \int^{1}_{0} \left\lbrace  D_z\widehat{F}_{m_0}(m', z_t) - D_z\widehat{F}_{m_0}(m_0, z_t) \right\rbrace ~\mathrm{d}t~ (z - i^1(m_0)),
	\]
	where $ z_t = tz+(1-t)i^1(m_0) $, $ i^1(m_0) = (i_X(m_0), i_Y(u(m_0))) $, and similarly for $ \widehat{G}_{m_0} $. So by \autoref{lem:continuity_f}, $ \widehat{f}_{m_0} $ is continuous at $ (m_0,x) $ uniformly for $ x $ belonging to any bounded set of $ X_{m_0} $. Using the above fact, \autoref{lem:K1leafc} (by the assumption (b) on $ F,G $), and the computation in the proof of \autoref{lem:baseleaf} (i.e. \eqref{localleaf1}), we see that
	\begin{align*}
	& \limsup_{m' \to m_0} \sup_{|x| \leq r} |\widehat{K}^1_{m_0}(m',x) - \widehat{K}^1_{m_0}(m_0,x)| \\
	\leq & \frac{\lambda_s(m_0)\lambda_u(m_0)}{1-\alpha(m_0)\beta'(u(m_0))} \limsup_{m' \to m_0} \sup_{|x| \leq r} |\widehat{K}^1_{u(m_0)}(m',\widehat{x}_{m_0}(m_0,x)) - \widehat{K}^1_{u(m_0)}(m_0,\widehat{x}_{m_0}(m_0,x))|.
	\end{align*}
	Thus,
	\begin{align*}
	& \limsup_{r \to \infty} \sup_{m_0 \in M_1} \limsup_{m' \to m_0} \sup_{|x| \leq r} |\widehat{K}^1_{m_0}(m',x) - \widehat{K}^1_{m_0}(m_0,x)| \\
	\leq & \theta \limsup_{r \to \infty} \sup_{m_0 \in M_1} \limsup_{m' \to m_0} \sup_{|x| \leq r} |\widehat{K}^1_{m_0}(m',x) - \widehat{K}^1_{m_0}(m_0,x)|
	\end{align*}
	where $ \theta = \sup_{m_0 \in M_1}\frac{\lambda_s(m_0) \lambda_u(m_0)}{1-\alpha(m_0)\beta'(u(m_0))} < 1 $, which gives for any $ r > 0 $,
	\[
	\limsup_{m' \to m_0} \sup_{|x| \leq r} |\widehat{K}^1_{m_0}(m',x) - \widehat{K}^1_{m_0}(m_0,x)| = 0.
	\]
	The proof of the case $ Z^1_{m_0} = X_{m_0} \times Y_{u(m_0)} $ is similar by using $ \mathcal{L}^{base}_{2} $ instead of $ \mathcal{L}^{base}_{1} $; for the uniform case, use the limits $ \mathcal{L}^{base}_{1,u} $, $ \mathcal{L}^{base}_{2,u} $ instead of $ \mathcal{L}^{base}_{1} $, $ \mathcal{L}^{base}_{2} $ (see \eqref{equ:limits}). The proof of (2) is much easier than that of (1) by taking $ x = i_X(m_0) $ in the above analysis.
\end{proof}

\begin{lem}[Continuity of $ K_m(\cdot) $] \label{lem:leafK}
	Assume the following:
	\begin{enumerate}[(a)]
		\item Let \textnormal{(H1), (H2)(i, ii, v1$ ' $), (H3), (H4b), (H5$ ' $)} hold.
		\item Let $ F, G $ be $ C^1 $ and satisfy
		\begin{equation*}
		|\nabla_{m_0} F_{m_0}(z)| \leq M_0|z|,~ |\nabla_{m_0} G_{m_0}(z)| \leq M_0|z|,
		\end{equation*}
		for all $ {m_0} \in M_1 $ and $ z \in X_{m_0} \times Y_{u({m_0})} $, where $ M_0 $ is a constant independent of $ m $.
		\item (spectral gap condition) $ \lambda_s \lambda_u < 1 $, $ \lambda_s \lambda_u \mu < 1 $; see also \autoref{absgc}.
	\end{enumerate}
	Then the following results hold:
	\begin{enumerate}[(1)]
		\item There exists a unique $ C^0 $ vector bundle map $ K \in L_f(\Upsilon^H_X, \Upsilon^V_Y) $ such that 
		\[
		\sup_{m \in M_1}\sup_{x}\frac{|K_m(x)|}{|x|} < \infty
		\]
		and \eqref{mainMFW} is satisfied.

		\item
		If $ DF_m, DG_m, \nabla_{m}F_m, \nabla_mG_m $ satisfy
		\begin{equation}\label{DFDG1}
		\mathcal{L} \frac{|DF_m(z_1)- DF_m(z_2)|}{|z_1|+|z_2|} = 0,~
		\mathcal{L} \frac{|DG_m(z_1)- DG_m(z_2)|}{|z_1|+|z_2|} =0,
		\end{equation}
		\begin{equation}\label{DFDG2}
		\mathcal{L} \frac{|\nabla_mF_m(z_1)- \nabla_mF_m(z_2)|}{|z_1|+|z_2|} =0,~
		\mathcal{L} \frac{|\nabla_mG_m(z_1)- \nabla_mG_m(z_2)|}{|z_1|+|z_2|} =0,
		\end{equation}
		then
		\[
		\mathcal{L} \frac{|K_m(x_1)- K_m(x_2)|}{|x_1|+|x_2|} =0,
		\]
		where $ \mathcal{L} $ stands for the following two limits, respectively: 
		\[
		\sup\limits_{m \in M_1}\lim\limits_{r \to 0^+} \sup\limits_{|z_1 - z_2| \leq r },~\lim\limits_{r \to 0^+}\sup\limits_{m \in M_1} \sup\limits_{|z_1 - z_2| \leq r }.
		\]
		(For $ K $, use $ x_1, x_2 $ instead of $ z_1, z_2 $ in the supremum.) In particular, this means $ K_m(\cdot) $, $ m \in M_1 $ are uniformly continuous and equicontinuous, respectively.
	\end{enumerate}
\end{lem}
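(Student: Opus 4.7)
The plan is to prove both conclusions by the same graph transform machinery that has already been applied throughout the section, essentially reproducing Step (I) of the proof of \autoref{smoothbase} for part (1), and then running a variant of the argument that established \autoref{lem:leaf1a} and \autoref{lem:K1leafc} for part (2).

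For part (1), I would introduce the complete metric space
\[
E^L_0 \triangleq \{K \in L_f(\Upsilon^H_X, \Upsilon^V_Y) : K \text{ is } C^0,\ |K_m(x)| \le C_1 |x| \text{ for } m \in M_1\}
\]
with the scale-invariant metric $d_0(K, K') = \sup_m \sup_{x \ne i_X(m)} |K_m(x) - K'_m(x)|/|x|$, where $C_1$ is chosen large enough based on the bound $|\nabla_m F_m(z)|, |\nabla_m G_m(z)| \le M_0|z|$. Given $K \in E^L_0$, I would use the angle condition $\alpha(m)\beta'(u(m)) < 1$ to solve the first equation of \eqref{mainMFW} uniquely for $R_m(x) \in L(T_mM, X_{u(m)})$, then define $\widetilde K_m(x)$ from the second equation, yielding a graph transform $\Gamma^0 : K \mapsto \widetilde K$. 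A routine computation parallel to \autoref{lem:bas0} gives
\[
|\widetilde K_m(x) - \widetilde K'_m(x)| \le \frac{\lambda_u(m)\mu(m)}{1-\alpha(m)\beta'(u(m))} |K_{u(m)}(x_m(x)) - K'_{u(m)}(x_m(x))|,
\]
so $\lip\Gamma^0 \le \sup_m \lambda_s(m)\lambda_u(m)\mu(m)/(1-\alpha(m)\beta'(u(m))) < 1$ by the spectral gap. The $C^0$ subspace $E^{L,c}_0 \subset E^L_0$ is closed under $d_0$ (the convergence is uniform on bounded-fiber sets), and $\Gamma^0$ preserves it because the local representation \eqref{K1elocal} expresses $\widehat{\widetilde K}_{m_0}$ as a composition of continuous objects: the continuity of $f$ (from \autoref{lem:continuity_f}) and of $K^1 = D^v f$ (from \autoref{lem:K1leafc}) combined with \autoref{lem:fconx} applied to the implicit equation for $\widehat R_{m_0}$. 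The Banach fixed point theorem then produces the unique $K$ satisfying \eqref{mainMFW}.

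For part (2), I would subtract the two instances of \eqref{mainMFW} at $x_1$ and $x_2$ and estimate, in exact analogy with the derivation of \eqref{Kxx} in the proof of \autoref{lem:holversheaf}, to obtain
\[
|K_m(x_1) - K_m(x_2)| \le \varepsilon_m(x_1, x_2) + \frac{\lambda_u(m)\mu(m)}{1-\alpha(m)\beta'(u(m))} |K_{u(m)}(x_m(x_1)) - K_{u(m)}(x_m(x_2))|,
\]
where $\varepsilon_m(x_1,x_2)$ gathers the contributions from the differences of $\nabla_m F_m, \nabla_m G_m, DF_m, DG_m$ evaluated between the two nearby points $z_i = (x_i, f_{u(m)}(x_m(x_i)))$. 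Since $|z_1 - z_2| \le \widetilde C |x_1 - x_2|$ by the uniform Lipschitz fiber estimates and $|z_i| \le \widetilde C |x_i|$, the hypotheses \eqref{DFDG1}--\eqref{DFDG2} together with the linear growth bounds $|K^1| \le \beta'$, $|R_m(x)| \le \lambda_s(m)$ yield $\mathcal L\, \varepsilon_m(x_1,x_2)/(|x_1|+|x_2|) = 0$ under either choice of $\mathcal L$. Dividing the recursion by $|x_1|+|x_2|$ and using $|x_m(x_i)| \le \lambda_s(m)|x_i|$ (consequence of $x_m(i_X(m)) = i_X(u(m))$ and $\lip x_m \le \lambda_s(m)$), one deduces
\[
\mathcal L \frac{|K_m(x_1)-K_m(x_2)|}{|x_1|+|x_2|} \le \sup_m \frac{\lambda_s(m)\lambda_u(m)\mu(m)}{1-\alpha(m)\beta'(u(m))} \cdot \mathcal L \frac{|K_m(x_1)-K_m(x_2)|}{|x_1|+|x_2|},
\]
which, combined with the a priori finiteness of the left-hand side (itself bounded by $2C_1$), forces it to vanish by the spectral gap condition $\lambda_s\lambda_u\mu < 1$.

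The main obstacle is the technical bookkeeping needed to pass the $o(|x_1|+|x_2|)$ control through the recursion uniformly in $m$ and to distinguish the two limit orders $\sup_m \lim_{r\to 0^+} \sup_{|z_1-z_2|\le r}$ versus $\lim_{r\to 0^+}\sup_m \sup_{|z_1-z_2|\le r}$. The first order requires only pointwise uniform continuity of $DF_m, DG_m, \nabla_mF_m, \nabla_mG_m$ in the fiber variable at each fixed $m$, while the second requires equicontinuity across $m \in M_1$; in both cases, closure of the argument rests on the finiteness of the corresponding iterated-supremum of the quotient, which is where one needs the linear-growth bound for $K$ established in part (1) together with the preservation of fiber scale by $x_m(\cdot)$.
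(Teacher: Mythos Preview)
Your approach matches the paper's: part (1) is exactly Step (I) of the proof of \autoref{smoothbase}, and part (2) is the same recursion-and-contraction scheme, derived from the computation \eqref{Kxx} in \autoref{lem:holversheaf} and closed by the a priori bound $\sup_{m,x_1,x_2}|K_m(x_1)-K_m(x_2)|/(|x_1|+|x_2|)<\infty$. One small omission in your bookkeeping for part (2): the error term $\varepsilon_m(x_1,x_2)$ also picks up the contribution $|K^1_{u(m)}(x_m(x_1))-K^1_{u(m)}(x_m(x_2))|\cdot|R_m(x_1)|$, and here $|R_m(x)|\le C|x|$ has linear growth (you wrote $\lambda_s(m)$, which is the bound for $R^1$, not $R$); to make this term vanish after dividing by $|x_1|+|x_2|$ you need the (equi)continuity of $K^1_m(\cdot)$ from \autoref{lem:K1leafc}(2), which the paper invokes by noting that \eqref{DFDG1} supplies its hypothesis.
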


\begin{proof}
	The conclusion (1) has already been proved in \autoref{smoothbase}.
	To prove (2), we can use almost the same strategy as in proving \autoref{lem:K1leafc}. Consider the limits
	\[
	\mathcal{L} \frac{|K_m(x_1)- K_m(x_2)|}{|x_1|+|x_2|}.
	\]
	Using the continuity of $ K^1 $ (by \autoref{lem:K1leafc} and noting that \eqref{DFDG1} implies the condition on $ DF_m, DG_m $ in \autoref{lem:K1leafc} (2)), the continuity of $ f $ (by \autoref{lem:continuity_f}) and the computation in the proof of \autoref{lem:holversheaf} (by \eqref{mainMFW}), one can deduce that
	\[
	\mathcal{L} \frac{|K_m(x_1)- K_m(x_2)|}{|x_1|+|x_2|} \leq \theta_0 \mathcal{L} \frac{|K_m(x_1)- K_m(x_2)|}{|x_1|+|x_2|},
	\]
	where $ \theta_0 = \sup_{m \in M_1}\frac{\lambda_s(m)\lambda_u(m)\mu(m)}{1-\alpha(m)\beta'(u(m))} < 1 $.
	Observe that 
	\[
	\sup_{m \in M_1}\sup_{x_1,x_2} \frac{|K_m(x_1)- K_m(x_2)|}{|x_1|+|x_2|} < \infty,
	\]
	which gives the results.
\end{proof}

We write $ \widehat{D}_{m_0} F_{m_0}(\cdot,\cdot)  $, $ \widehat{D}_{m_0} G_{m_0}(\cdot,\cdot) $, $ \widehat{K}_{m_0}(\cdot,\cdot) $, $\widehat{R}_{m_0}(\cdot,\cdot) $ for the local representations of $ \nabla F $, $ \nabla G $, $ K $, $ R $ (at $ m_0 \in M_1 $) with respect to $ \mathcal{A}, \mathcal{B} $, and any $ C^1 $ local charts $ \xi_{m_0}, \xi_{u(m_0)} $ of $ M $ at $ m_0 $, $ u(m_0) $, such that $ D\xi_{m_0}(m_0) = \id, D\xi_{u(m_0)}(u(m_0)) = \id $. See e.g.  \eqref{basediff1}, \eqref{basediff2}, \eqref{basediff}, \eqref{baseKR}. If (H1c) is assumed, then $ \xi_{m_0}, \xi_{u(m_0)} $ are replaced by $ \chi_{m_0}, \chi_{u(m_0)} $; see \eqref{basediff00} and \eqref{baseKR0}.

\begin{lem}[Continuity of $ m \mapsto K_m(x) $] \label{baseK}
	Under the assumptions in \autoref{lem:leafK} with \textnormal{(H2)(v1$ ' $)} replaced by \textnormal{(H2)(v2$ ' $)}, and with \eqref{DFDG1} and \eqref{DFDG2} satisfied for 
	\[
	\mathcal{L} = \sup\limits_{m \in M_1}\lim\limits_{r \to 0^+} \sup\limits_{|z_1 - z_2| \leq r },
	\]
	the following results hold:
	\begin{enumerate}[(1)]
		\item If $ \nabla F, \nabla G $ satisfy for any $ m_0 \in M_1 $, $ r > 0 $,
		\begin{equation}\label{aabb}
		\left\{
		\begin{split}
		\lim_{m' \to m_0}\sup_{|z| \leq r}\frac{|\widehat{D}_{m_0} F_{m_0}(m',z) - \widehat{D}_{m_0} F_{m_0}(m_0,z)|}{|z|} = 0, \\
		\lim_{m' \to m_0}\sup_{|z| \leq r}\frac{|\widehat{D}_{m_0} G_{m_0}(m',z) - \widehat{D}_{m_0} G_{m_0}(m_0,z)|}{|z|} = 0,
		\end{split}
		\right.
		\end{equation}
		and the $ C^0 $ continuity in the bounded-fiber sets case in \autoref{lem:baseK1} (1) is satisfied,
		then
		\begin{equation}\label{KKK111}
		\lim_{m' \to m_0}\sup_{|x| \leq r}\frac{|\widehat{K}_{m_0}(m',x) - \widehat{K}_{m_0}(m_0,x)|}{|x|} = 0.
		\end{equation}
		\item If $ \nabla F, \nabla G $ satisfy \eqref{aabb} for $ r = \infty $, and the $ C^0 $ continuity in the whole space case in \autoref{lem:baseK1} (1) is satisfied, then $ K $ also satisfies \eqref{KKK111} for $ r = \infty $.
		\item Suppose that \textnormal{(H1c), (H2d), (H5)} hold and $ Du $ is uniformly continuous around $ M_1 $, that \eqref{DFDG1} and \eqref{DFDG2} are satisfied for the limit $ \lim\limits_{r \to 0^+}\sup\limits_{m \in M_1} \sup\limits_{|z_1 - z_2| \leq r } $, and that the uniform continuity in the bounded fiber sets (resp. in the whole space) case in \autoref{lem:baseK1} (1) is satisfied. If  \eqref{aabb} holds with $ m' \to m_0 \in M_1 $ in the uniform sense for any $ r > 0 $ (resp. $ r = \infty $), so does \eqref{KKK111}.
	\end{enumerate}
\end{lem}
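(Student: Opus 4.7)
The plan is to follow the self-referential ``graph transform'' argument that already powered \autoref{lem:continuity_f}, \autoref{lem:K1leafc}, \autoref{lem:baseK1} and \autoref{lem:leafK}, but now applied to the equation \eqref{KK1local} satisfied by the local representations of $K$ and $R$. Writing the same equation at $m_0$ and at $m'$ and subtracting, one decomposes
\[
\widehat{K}_{m_0}(m',x)-\widehat{K}_{m_0}(m_0,x)
\]
into (i) a term measuring the variation of $\widehat{D}_m F_{m_0}$, $\widehat{D}_m G_{m_0}$ in the base direction, controlled by the hypothesis \eqref{aabb}; (ii) a term measuring the variation of $D_y\widehat{F}_{m_0}$, $D_y\widehat{G}_{m_0}$ along the points $(m',x,\hat y_1)$ and $(m_0,x,\hat y_0)$, controlled by \eqref{DFDG1} and the continuity of $f$ established in \autoref{lem:continuity_f}; (iii) a term involving $\widehat{K}^1_{u(m_0)}(u(m'),\hat x_1)-\widehat{K}^1_{u(m_0)}(u(m_0),\hat x_0)$, controlled by the continuity of $K^1$ from \autoref{lem:K1leafc} together with the base-continuity of $m\mapsto K^1_m(\cdot)$ from \autoref{lem:baseK1} (whose hypothesis ``$C^0$ in bounded-fiber sets'' is precisely what \eqref{DFDG1} supplies); (iv) a term with $\widehat{Du}_{m_0}(m')-\widehat{Du}_{m_0}(m_0)$, controlled by (H4b) in parts (1)--(2) and by the assumed uniform continuity of $Du$ in part (3); and finally (v) the ``transport'' term
\[
\bigl|\widehat{K}_{u(m_0)}\!\bigl(u(m'),\hat x_1\bigr)-\widehat{K}_{u(m_0)}\!\bigl(u(m_0),\hat x_1\bigr)\bigr|
\]
accompanied by an analogous $\widehat{R}$-term. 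Coupling the $\widehat{K}$- and $\widehat{R}$-estimates through (A) and (B) conditions just as in \eqref{k011} produces
\[
|\widehat{K}_{m_0}(m',x)-\widehat{K}_{m_0}(m_0,x)|\le\mathrm{err}(m',m_0,x)+\theta_0\,\bigl|\widehat{K}_{u(m_0)}(u(m'),\hat x_1)-\widehat{K}_{u(m_0)}(u(m_0),\hat x_1)\bigr|,
\]
where $\theta_0=\sup_{m_0\in M_1}\lambda''_u(m_0)\mu''(m_0)/(1-\alpha''(m_0)\beta''(u(m_0)))<1$ by the spectral gap $\lambda_s\lambda_u\mu<1$.

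Next, dividing by $|x|$ and taking the appropriate supremum gives a self-mapping inequality $A\le\theta_0 A+o(1)$ in the spirit of the proof of \autoref{lem:K1leafc}. A priori, $A$ is finite: since $K\in L_f(\Upsilon^H_X,\Upsilon^V_Y)$ satisfies $\sup_{m\in M_1}\sup_{x\ne i_X(m)}|K_m(x)|/|x|<\infty$ by \autoref{smoothbase}(1) (equivalently \autoref{lem:leafK}(1)), the triangle inequality yields $\sup_{m_0,m',x}|\widehat{K}_{m_0}(m',x)-\widehat{K}_{m_0}(m_0,x)|/|x|<\infty$. Hence $A=0$. The key is to choose the supremum matching the conclusion sought: for part (1) use
\[
A_r\triangleq\limsup_{m'\to m_0}\sup_{0<|x|\le r}\frac{|\widehat{K}_{m_0}(m',x)-\widehat{K}_{m_0}(m_0,x)|}{|x|},
\]
which requires, via the substitution $x\mapsto\hat x_1=\widehat{x}_{m_0}(m_1,x)$, only the $C^0$-continuity of $\widehat{x}_{m_0}$ and the $C^0$-continuity in bounded-fiber sets of $m\mapsto K^1_m(\cdot)$ already established; for part (2) use $\sup_{x}$ in place of $\sup_{|x|\le r}$ and invoke the stronger hypotheses; for part (3) replace the pointwise $m'\to m_0$ by the uniform version $\limsup_{\varepsilon\to0}\sup_{m_0\in M_1}\sup_{m'\in U_{m_0}(\varepsilon)}$ and invoke the corresponding uniform-continuity inputs.

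The main obstacle I anticipate is bookkeeping. Each of the five error pieces (i)--(iv) must be shown to vanish in precisely the limit $\mathcal L$ under consideration, and for (iii) and the transport term one must pass through the composition with $\widehat{x}_{m_0}(m',x)$ (and through $u$) without losing the uniformity class; this requires carefully invoking the previously proved statements at the right uniformity level. In particular, for part (3) the hypotheses (H4$'$) and the uniform continuity of $Du$ are used to ensure that $\widehat{Du}_{m_0}(m')-\widehat{Du}_{m_0}(m_0)$ vanishes uniformly in $m_0\in M_1$, and the hypothesis that \eqref{DFDG1}--\eqref{DFDG2} hold in the stronger sense $\lim_{r\to 0^+}\sup_{m\in M_1}$ feeds into both \autoref{lem:baseK1} and \autoref{lem:continuity_f} (applied uniformly around $M_1$) so that the base-continuity of $K^1$ and $f$ used in (ii) and (iii) holds uniformly in $m_0$. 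Once these pieces are verified, the contraction argument closes identically in all three cases.
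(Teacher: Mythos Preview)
Your proposal is correct and follows essentially the same route as the paper: subtract the local representations of \eqref{mainMFW} (the paper points to \eqref{K1elocal}, you to \eqref{KK1local}, which amounts to the same computation), decompose the difference into error pieces controlled by the continuity inputs from \autoref{lem:continuity_f}, \autoref{lem:K1leafc}, \autoref{lem:baseK1}, \autoref{lem:leafK} plus a transport term with contraction factor $\theta_0<1$, then close with the self-referential inequality using the a priori bound $\sup_{m,x}|K_m(x)|/|x|<\infty$. The only cosmetic difference is that the paper packages the limit as $\limsup_{r\to\infty}\sup_{m_0}\lim_{m'\to m_0}\sup_{|x|\le r}$ in one shot, whereas you work with $A_r$ for fixed $r$; both versions give the conclusion.
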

\begin{proof}
	We only consider (1); the others are similar. Under the assumptions of this lemma, we can use \autoref{lem:continuity_f} (2), \autoref{lem:K1leafc} (2), \autoref{lem:baseK1} (1), \autoref{lem:leafK} (2) to obtain the desired continuity of $ f, K^1, K_m(\cdot) $.
	Consider the following limit:
	\[
	\limsup_{r \to \infty}\sup_{m_0 \in M_1}\lim_{m' \to m_0}\sup_{|x| \leq r}\frac{|\widehat{K}_{m_0}(m',x) - \widehat{K}_{m_0}(m_0,x)|}{|x|}.
	\]
	By a similar computation to the proof of \autoref{lem:final} (but this time using \eqref{K1elocal}), one concludes that
	\begin{align*}
	& \limsup_{r \to \infty}\sup_{m_0 \in M_1}\lim_{m' \to m_0}\sup_{|x| \leq r}\frac{|\widehat{K}_{m_0}(m',x) - \widehat{K}_{m_0}(m_0,x)|}{|x|} \\
	\leq & \theta_0\limsup_{r \to \infty}\sup_{m_0 \in M_1}\lim_{m' \to m_0}\sup_{|x| \leq r}\frac{|\widehat{K}_{m_0}(m',x) - \widehat{K}_{m_0}(m_0,x)|}{|x|},
	\end{align*}
	where $ \theta_0 = \sup_{m \in M_1}\frac{\lambda_s(m)\lambda_u(m)\mu(m)}{1-\alpha(m)\beta'(u(m))} < 1 $. And note that
	$ \sup_{m_0 \in M_1}\sup_{x} \frac{|K_{m_0}(x)|}{|x|} < \infty $.
	Now combine all the above facts to get \eqref{KKK111}.
\end{proof}

The condition on $ F, G $ in \autoref{baseK} can be satisfied if $ F, G $ are $ C^{1,1} $. This is actually used in some classical results; see e.g. \cite{HPS77}. Also, note that the assumptions in the $ C^0 $ continuity results are much weaker than those in the H\"older continuity results.
Now we have extended the results in \cite[Chapter 5 (about plaque families), Chapter 6]{HPS77} to our general settings.

\begin{rmk}\label{rmk:con}
	A basic application of the continuity results is the study of dynamical systems with parameters. Consider a special case. Let $ u = \id: M \to M $, $ X = M \times X_0 $, $ Y =  M \times Y_0 $ (the trivial bundle case), where $ M, X_0, Y_0 $ are all Banach spaces. Now $ m \mapsto H_m \sim (F_m, G_m) $ can be viewed as parameter-dependent correspondences. The associated invariant graphs $ f_m $, $ m \in M $, obtained in \autoref{thmA}, depend on the parameter $ m $. \autoref{lem:continuity_f} (2) for $ r = \infty $ and \autoref{lem:baseK1} (1) give us the continuous dependence of $ m \mapsto f_m(\cdot) $ and $ m \mapsto Df_m(\cdot) $, i.e., $ \lim_{m \to m_0} |f_m - f_{m_0}|_{C^1_b(X_0, Y_0)} = 0 $. Also, \autoref{baseK} (2) gives the smooth dependence of $ m \mapsto f_m(\cdot) $. To see this, consider
	\begin{align*}
	\lim_{m \to m_0} \frac{\| f_m - f_{m_0} - K_{m_0} (m - m_0) \|}{|m - m_0|} = & \lim_{m \to m_0} \sup_{x}\frac{| f_m(x) - f_{m_0}(x) - K_{m_0}(x) (m - m_0) |}{|m - m_0|} \\
	\leq & \int_{0}^{1} \lim_{m \to m_0} \sup_{x}| K_{m_t}(x) - K_{m_0}(x)| ~\mathrm{d}t,
	\end{align*}
	where $ m_t = t m + (1-t)m_0 $. In the above argument we technically assume $ F_m, G_m $, $ m \in M $, are bounded in order to let $ f_m \in C_b (X_0, Y_0) $.

	We can also take $ \mathrm{Lip}_0(X_0, Y_0) = \{g: X_0 \to Y_0 ~\text{is Lipschitz:}~ g(0) = 0 \} $ instead of $ C_b(X_0, Y_0) $ (assuming the section $ i = 0 $). However, in this case the results do not give the smooth dependence of $ m \mapsto f_m(\cdot) $ in $ \mathrm{Lip}_0(X_0, Y_0) $. This can be achieved if we consider a higher order differential of $ f $.
\end{rmk}

\section{Appendix. Regularity of invariant graphs: bounded section case} \label{bounded}

In this appendix, we make the following assumptions.
\begin{enumerate}[$ \bullet $]
	\item Let $ (X, M, \pi_1) $, $ (Y, M, \pi_2) $, $ u : M \to M $, $ H \sim (F,G) $, $ i: M \to X \times Y $ be as in \autoref{thmA}. Let $ f $ be the bundle map obtained in \autoref{thmA} with $ i $ a \textbf{$ 1 $-pseudo-stable section} of $ H $ and $ \sup_m \eta(m) < \infty $.

	In general, the uniform assumption $ \sup_m \eta(m) < \infty $ can sometimes be weakened. For example, we can only assume that $ \sup_{n \geq 0}\eta(u^{n}(m)) < \infty $ ($ \forall m \in M $) and the function $ \eta(\cdot) $ is locally bounded. However, in this case, the conditions on $ F, G $ should pointwise depend on $ m $ in some exponential way. In this paper, we do not consider this situation.

	\item We add an additional assumption that the functions in the (A)(B) (or (A$ ' $)(B)) condition are bounded. Note that in the $ 1 $-pseudo-stable section case, the spectral condition in \autoref{thmA} is $ \lambda_s\lambda_u < 1 $, $ \lambda_u < 1 $, where we use the abbreviation in \autoref{absgc}.
\end{enumerate}

Like the case that $ i $ is an invariant section of $ H $, one can also show that $ f $ has more regularity.
Since the proofs of the relevant results are very similar to those in \autoref{leaf} to \autoref{HolderivativeBase}, we only give the statements and omit all the proofs. The conditions on $ M, X, Y $, and $ u $ are the same as in the invariant section case. The main different assumptions are about $ F, G $ and the spectral gap conditions. Also, the section $ i $ need not be a $ 0 $-section, but is instead assumed to be uniformly (locally) bounded (see the assumption (UB) below).

We use the same abbreviations for the spectral gap conditions (see \autoref{absgc}). Also, \emph{the constants $ M_0 $, $ C $, appearing in the following, are independent of $ m \in M $ and $ m_0 \in M_1 $.}

\vspace{.5em}
\noindent{\textbf{Smooth leaves: $ C^{k,\alpha} $ continuity of $ x \mapsto f_m(x) $.}}
The results of Lemmas \ref{lem:leaf1}, \ref{lem:leaf1a} and \ref{lem:leafk} also hold  when $ i $ is a $ 1 $-pseudo-stable section of $ H $. But there is no result like \autoref{00diff}.

\vspace{.5em}
\noindent{\textbf{H\"{o}lder vertical parts: H\"{o}lderness of $ m \mapsto f_m(x)$.}}
We use the notations of \autoref{hverticalsheaf}, $ \widehat{F}_{m_0} $, $ \widehat{G}_{m_0} $, $ \widehat{f}_{m_0} $, $ \widehat{x}_{m_0} $, for the local representations of $ F, G $, $ f, x_{(\cdot)}(\cdot) $ at $ m_0 \in M_1 $ with respect to $ \mathcal{A}, \mathcal{B} $; see \eqref{localrepre}.
Also, let
\begin{multline*}
\widehat{i}^{m_0}(m) \triangleq (\widehat{i}^{m_0}_X(m), \widehat{i}^{m_0}_Y(m)) = ((\varphi^{m_0}_m)^{-1}(i_X(m)), (\phi^{m_0}_m)^{-1}(i_Y(m)) ) : \\
U_{m_0}(\epsilon) \to X_{m_0} \times Y_{m_0},
\end{multline*}
be the local representation of $ i $ (at $ m_0 $) with respect to $ \mathcal{A} \times \mathcal{B} $.
For brevity, we also write $|x| \triangleq d(x, i_X(m))$ if $ x \in X_{m} $, $|y| \triangleq d(y, i_Y(m))$ if $ y \in Y_{m} $, and $ |z| = |(x,y)| = d((x,y), (i_X(m), i_{Y}(m)) $ if $ z = (x,y) \in X_{m} \times Y_{m} $. Note that $ \widehat{i}^{m_0}_X(m_0) = i_{X}(m_0) $, $ \widehat{i}^{m_0}_Y(m_0) = i_{Y}(m_0) $. The assumption on $ i $ is the following:

\begin{enumerate}
	\item [\textbf{(UB)}] \label{b3-} There are $ \epsilon_0 , \hat{c}_1 > 0 $ such that $ | \widehat{i}^{m_0}(m) - \widehat{i}^{m_0}(m_0) | \leq \hat{c}_1 $ for all $ m \in U_{m_0}(\epsilon_0) $ and $ m_0 \in M_1 $.
\end{enumerate}

\begin{lem}\label{lem1:sheaf}
	Assume the following conditions hold:
	\begin{enumerate}[(a)]
		\item Let  \textnormal{(H1a), (H2a), (UB), (H4a), (H5)}  hold.
		\item $ F, G $ are uniformly (locally) $ \gamma $-H\"{o}lder around $ M_1 $ in the following sense:
		\begin{align}
		|\widehat{F}_{m_0}(m_1, z) - \widehat{F}_{m_0}(m_0, z) | \leq M_0 |m_1 - m_0|^\gamma (|z|^\zeta + \hat{c}_0), \label{ccc2f}\\
		|\widehat{G}_{m_0}(m_1, z) - \widehat{G}_{m_0}(m_0, z)| \leq M_0 |m_1 - m_0|^\gamma (|z|^\zeta + \hat{c}_0), \label{ccc2g}
		\end{align}
		for all $ m_1 \in U_{m_0} (\mu^{-1}(m_0) \varepsilon_1) $, $ z \in X_{m_0} \times Y_{u(m_0)} $, $ m_0 \in M_1 $, where $ 0< \gamma \leq 1 $, $ \zeta \geq 0 $, $ \hat{c}_0 > 0 $.
		\item (spectral gap condition) $ \lambda_u < 1 $, $ \lambda_s \lambda_u < 1 $, $ (\max\{ 1, \lambda^\zeta_s \} \mu^\gamma)^{*\alpha} \lambda_u < 1 $, and
		\[
		(\max \{ 1, 1/\lambda_s, 1/\lambda^{1-\zeta}_s \} \mu^\gamma )^{*\alpha} \lambda_s \lambda_u < 1,
		\]
		where $ 0 < \alpha \leq 1 $; see also \autoref{absgc}.
	\end{enumerate}
	If $ \varepsilon^*_1 \leq \hat{\mu}^{-2} \varepsilon_1 $ is small, then 
	\[
	|\widehat{f}_{m_0}(m_1, x) - \widehat{f}_{m_0}(m_0, x)| \leq C |m_1 - m_0|^{\gamma \alpha} (|x|^{\zeta} + 1)^\alpha (|x| + 1)^{1-\alpha},
	\]
	for every $ m_1 \in U_{m_0} (\varepsilon^*_1), x \in X_{m_0}, m_0 \in M_1 $ under $ |m_1 - m_0|^\gamma (|x|^\zeta + 1) \leq \hat{r}\min\{ |x|+1, (|x|^\zeta + 1)^c (|x|+1)^{-(c-1)} \} $, where $ C $ depends on $ \hat{r} > 0 $ but not on $ m_0 \in M_1 $, $ c > 1 $, and $ \hat{r} $ does not depend on $ m_0 \in M_1 $.

\end{lem}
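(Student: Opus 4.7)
The plan is to mimic the proof of \autoref{lem:sheaf} as closely as possible, but replacing the exact invariance $\widehat{f}_{m_0}(m,i_X(m_0)) = i_Y(m_0)$, $\widehat{x}_{m_0}(m,i_X(m_0)) = i_X(u(m_0))$ (which was the engine of the $0$-section argument) by the uniform ``bounded-affine'' substitute that follows from \textnormal{(B3-)} together with the uniform bound $\sup_m \eta(m) < \infty$ and the estimate $|f_m(i_X(m))-i_Y(m)|\le K\eta(m)$ provided by \autoref{thmA}. Concretely, the first step is to fix $\varepsilon_1$ and choose $\alpha'',\beta'',\lambda''_s,\lambda''_u$ as in the proof of \autoref{lem:sheaf} so that \eqref{AB''} holds and the spectral gap condition (c) of the present lemma is satisfied with $\kappa''$ in place of $\kappa$, and then use \textnormal{(B3-)} to derive
\[
|\widehat{f}_{m_0}(m,x)| \le \beta''(m_0)|x| + \widehat{C}_1, \qquad |\widehat{x}_{m_0}(m,x)| \le \lambda''_s(m_0)|x| + \widehat{C}_1,
\]
for $m \in U_{m_0}(\hat{\mu}^{-2}\varepsilon_1)$, $x \in X_{m_0}$, $m_0 \in M_1$, where $\widehat{C}_1$ is a universal constant depending on $\hat{c}_0,\hat{c}_1,K,\sup_m\eta(m)$ but not on $m_0$. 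This replaces the pointwise-linear bounds $\beta''|x|$ and $\lambda''_s|x|$ of the $0$-section case.

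The second step reproduces the derivation of \eqref{basef}: inserting the functional equation \eqref{mainLocal} and using the Lipschitz part of (A$'$)(B) condition on $\widehat{F}_{m_0}(m_0,\cdot,\cdot)$ and $\widehat{G}_{m_0}(m_0,\cdot,\cdot)$ together with the Hölder estimates \eqref{ccc2f}--\eqref{ccc2g}, one obtains (after using that $|z|=(|x|,|y_1|)$ with $|y_1|\le \widehat{C}(|x|+1)$, so that $|z|^\zeta \le \widehat{C}(|x|^\zeta+1)$)
\[
|\widehat{f}_{m_0}(m_1,x)-\widehat{f}_{m_0}(m_0,x)| \le \widetilde{C}|m_1-m_0|^\gamma (|x|^\zeta+1) + \theta''_u(m_0)\,\Bigl|\widehat{f}_{u(m_0)}(u(m_1),\widehat{x}_{m_0}(m_1,x))-\widehat{f}_{u(m_0)}(u(m_0),\widehat{x}_{m_0}(m_1,x))\Bigr|,
\]
where $\theta''_u(m_0)=\lambda''_u(m_0)/(1-\alpha''(m_0)\beta''(u(m_0)))$; the analogous recursion for $\widehat{x}_{m_0}$ is absorbed in the standard way into the $\theta''_u$ factor exactly as in the invariant-section computation.

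The third step is to feed this recursion into the argument of \autoref{Appbb} (see \autoref{argumentapp} \eqref{lem2}). The only modification is bookkeeping: the ``weight'' $|x|^\zeta$ becomes $|x|^\zeta+1$, and the ``size'' $|x|$ becomes $|x|+1$; consequently when we iterate using $|\widehat{x}^{(k)}_{m_0}(m,x)|\le (\lambda''_s)^{(k)}(m_0)|x| + \widehat{C}_1\sum_{j=0}^{k-1}(\lambda''_s)^{(j)}(m_0)$, the $+1$ terms do \emph{not} decay, which forces the additional hypothesis $\lambda_u<1$ (needed so that the geometric series $\sum_k (\theta''_u)^{(k)}\cdot 1$ converges) and the extra factors $\max\{1,1/\lambda_s,1/\lambda^{1-\zeta}_s\}$ in the spectral gap condition. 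These factors arise precisely from estimating the ratio $(|\widehat{x}_{m_0}(m_1,x)|^\zeta+1)/(|x|^\zeta+1)$ (giving $\max\{1,\lambda^\zeta_s\}$ for the forward interpolation used in the ``$\alpha$-power trick'') and $(|\widehat{x}_{m_0}(m_1,x)|+1)/(|x|+1)$ (giving $\max\{1,\lambda_s\}$), together with the interpolation exponent $\alpha\in(0,1]$ that converts the two ends into the stated product $(|x|^\zeta+1)^\alpha(|x|+1)^{1-\alpha}$; the constraint in the conclusion is exactly the admissibility condition under which the interpolation lemma in \autoref{Appbb} applies.

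The main obstacle is the third step. In the invariant-section case, the orbit $\widehat{x}^{(k)}_{m_0}(m,x)$ converges geometrically to the base-point trajectory $i_X(u^k(m_0))$, which kills all constant terms in the iteration, and the conclusion follows from a single geometric series argument. In the bounded case the orbits stay at bounded distance from, but do not collapse onto, the base trajectory, so one must carry both the $|x|$-part and the $1$-part through the iteration and interpolate between them; this is the reason the spectral gap condition splits into the stable-type requirement $(\max\{1,\lambda^\zeta_s\}\mu^\gamma)^{*\alpha}\lambda_u<1$ (controlling the bounded part, where the $\lambda_u<1$ factor alone provides decay) and the ``compensated'' requirement $(\max\{1,1/\lambda_s,1/\lambda^{1-\zeta}_s\}\mu^\gamma)^{*\alpha}\lambda_s\lambda_u<1$ (controlling the genuinely $|x|$-proportional part, where both factors cooperate). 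Once this split interpolation is correctly set up, verifying that the admissibility constraint $|m_1-m_0|^\gamma(|x|^\zeta+1)\le \hat{r}\min\{|x|+1,(|x|^\zeta+1)^c(|x|+1)^{-(c-1)}\}$ is preserved under iteration (using $|\widehat{x}_{m_0}(m_1,x)|+1\le \widetilde{\lambda}_s(|x|+1)$ for a suitable $\widetilde{\lambda}_s$ slightly larger than $\lambda''_s$) is routine, and the estimate follows.
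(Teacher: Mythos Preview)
Your proposal is correct and matches the paper's intended approach: the paper explicitly omits the proof of \autoref{lem1:sheaf}, stating that it is ``very similar'' to the invariant-section case, and your three steps (affine bounds from \textnormal{(B3-)}, the recursion analogous to \eqref{basef} with $|x|^{\zeta}+1$ in place of $|x|^{\zeta}$, and then the interpolation argument of \autoref{Appbb} via \autoref{argumentapp}~\eqref{lem2} combined with \autoref{moregeneral}) are precisely the modifications the paper points to. Your heuristic for how the split spectral gap conditions arise from carrying both the $|x|$-part and the constant part through the iteration is accurate and is what the minor-change remark in \autoref{moregeneral} is alluding to.
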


\begin{rmk}
	\begin{enumerate}[(a)]
		\item Note that $ \hat{c}_0 = 0 $ is equivalent to $ i $ being invariant. So we only consider $ \hat{c}_0 > 0 $, and in this case we can assume $ \hat{c}_0 = 1 $.
		Two special cases are important: $ \zeta = 0 $ and $ \zeta = 1 $.
		For the case $ \gamma = 1 $, $ \zeta = 1 $, under the spectral gap condition $ \lambda_u < 1 $, $ \lambda_s \lambda_u < 1 $, $ \lambda_u \mu < 1 $, $ \lambda_s \lambda_u \mu < 1 $, we have for all $ m_1 \in U_{m_0} (\varepsilon^*_1) $,
		\begin{equation*}
		|\widehat{f}_{m_0}(m_1, x) - \widehat{f}_{m_0}(m_0, x)| \leq C|m_1 - m_0|(|x| + 1).
		\end{equation*}

		\item
		Under conditions (a), (b) in \autoref{lem1:sheaf},
		with $\sup_m \widetilde{d}( f_m(X_m), i_Y(m) ) < \infty$ and a `better' spectral gap condition, $ \lambda_u < 1 $, $ \lambda_s \lambda_u < 1 $, $ (\max\{ 1, \lambda^\zeta_s \} \mu^\gamma)^{*\alpha} \lambda_u < 1 $, we get
		\[
		|\widehat{f}_{m_0}(m_1, x) - \widehat{f}_{m_0}(m_0, x)| \leq C |m_1 - m_0|^{\gamma \alpha} (|x|^{\zeta} + 1)^\alpha,
		\]
		for every $ m_1 \in U_{m_0} (\varepsilon^*_1), x \in X_{m_0}, m_0 \in M_1 $ under $ |m_1 - m_0|^\gamma (|x|^\zeta + 1) \leq \hat{r} $, where the constant $ C $ depends on $ \hat{r} > 0 $ but not on $ m_0 \in M_1 $, and $ \hat{r} $ does not depend on $ m_0 \in M_1 $.

		\item Consider the trivial bundle $ X\times Y  = M \times X_0 \times Y_0 $ where $ X_0, Y_0 $ are Banach spaces.
		A special condition where $ F, G $ satisfy \textnormal{(b)} is the following: Let
		\[
		F_m(x,y) = A(m)x + f(m,x,y),~~G_m(x,y) = B(m)y + g(m,x,y),
		\]
		where $ A(m), B(m) \in L(X_0, Y_0) $, $ f : M \times X_0 \times Y_0 \to X_0 $, $ g: M \times X_0 \times Y_0 \to Y_0 $. Assume $ A(\cdot), B(\cdot) \in C^{0,1} $, $ f(\cdot,x,y), g(\cdot,x,y)  $ are $ C^{0,\gamma} $ uniformly for $ x,y $, and $ \sup_{m,y}|f(m,0,y)| \leq \eta_0 $, $ \sup_{m,x}|g(m,x,0)| \leq \eta_0 $ for some constant $ \eta_0 > 0 $. Take the section $ i = 0 $. See also similar results in \cite{CY94} for cocycles in the finite-dimensional setting.
	\end{enumerate}
\end{rmk}

\vspace{.5em}
\noindent{\textbf{H\"{o}lder distribution: H\"{o}lderness of $ m \mapsto Df_m(x) = K^1_m(x) $.}}
There is no result like \autoref{lem:base0}.
\begin{lem}
	Assume the conditions in \autoref{lem:baseleaf} hold with \textnormal{(H3)} and \eqref{cc2} in \autoref{lem:sheaf} replaced by \textnormal{(UB)} and \eqref{ccc2f} \eqref{ccc2g} in \autoref{lem1:sheaf} (for $ \gamma = 1, \zeta = 1 $), respectively. In addition, we need the following spectral gap condition \textnormal{(c)$ ' $} instead of \textnormal{(c)}:
	\begin{enumerate}

		\item [\textnormal{(c)}$ ' $] (spectral gap condition) $ \lambda_u < 1 $, $ \lambda_s \lambda_u < 1 $, $ \mu^\alpha \lambda^2_s \lambda_u < 1 $, $ \lambda_s^{*\beta} \lambda_s \lambda_u < 1 $, $ \mu^{*\alpha} \lambda_u < 1 $; see also \autoref{absgc}.

	\end{enumerate}
	Then the results in \autoref{lem:baseleaf} all hold.
\end{lem}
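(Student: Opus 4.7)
The plan is to adapt the proof of \autoref{lem:baseleaf} step by step, substituting the bounded-section analogues of its ingredients. The differential identity \eqref{localleaf1}, obtained by differentiating the functional equation \eqref{mainLocal} in $x$, is independent of the nature of $i$, so it remains the starting point of the computation, as do the associated estimates on $\alpha''$, $\beta''$, $\lambda''_s$, $\lambda''_u$ coming from (H5) together with \eqref{AB''} and \eqref{kkr}.

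First I would establish the fiber H\"older regularity of $K^1_m$ and $R^1_m$ via \autoref{lem:leaf1a}, whose hypotheses (H2b) and the $C^{0,1}$ bound on $DF,DG$ are unaffected by the change from invariant to bounded section; under $\lambda_s^{*\beta}\lambda_s\lambda_u<1$ this yields $|K^1_m(x_1)-K^1_m(x_2)|\le C|x_1-x_2|^\beta$ and analogously for $R^1_m$. Next, in place of \autoref{lem:sheaf}, I would invoke \autoref{lem1:sheaf} with $\gamma=\zeta=1$ to estimate the base variation of $f$ and of $x_{(\cdot)}(\cdot)$: the conditions $\lambda_u<1$, $\lambda_s\lambda_u<1$, $\mu^{*\alpha}\lambda_u<1$ and $\mu^{*\alpha}\lambda_s\lambda_u<1$ in (c)$'$ are precisely what that lemma requires (for $\gamma=\zeta=1$, $\hat c_0=1$), and they deliver a Lipschitz-type bound of the form $|\widehat{f}_{m_0}(m_1,x)-\widehat{f}_{m_0}(m_0,x)|\le C|m_1-m_0|(|x|+1)$ on a uniform neighborhood, together with the same kind of bound for $\widehat{x}_{m_0}$.

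Running these ingredients through the block of inequalities that produces \eqref{K111} in the invariant case yields the bounded-section recursion
\begin{equation*}
\begin{split}
|\widehat{K}^1_{m_0}(m_1,x)-\widehat{K}^1_{m_0}(m_0,x)|
& \le \widetilde{C}\left\{|m_1-m_0|^\alpha(|x|+1)+(|m_1-m_0|^\alpha(|x|+1))^\beta\right\} \\
& \quad +\theta''_1(m_0)\,\bigl|\widehat{K}^1_{u(m_0)}(u(m_1),\widehat{x}_{m_0}(m_1,x))-\widehat{K}^1_{u(m_0)}(u(m_0),\widehat{x}_{m_0}(m_1,x))\bigr|,
\end{split}
\end{equation*}
with the same contraction coefficient $\theta''_1(m_0)=\lambda''_s(m_0)\lambda''_u(m_0)/(1-\alpha''(m_0)\beta''(u(m_0)))$ as before. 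The conclusion then follows from \autoref{argumentapp}(3), with appropriate exponents read off from the recursion.

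The main obstacle is the bookkeeping of the inhomogeneous terms when iterating. In the invariant-section case the forcing $|m_1-m_0|^\alpha|x|$ transports along forward orbits with the combined factor $\mu^{*\alpha}\lambda_s\lambda_u$, because $|x|$ contracts under $\widehat{x}_{m_0}$ by $\lambda_s$; in the bounded case the additive $1$ in $(|x|+1)$ does not contract, so its iteration only picks up a factor $\mu^{*\alpha}\lambda_u$ per step. This is exactly why (c)$'$ demands $\mu^{*\alpha}\lambda_u<1$ in place of $\mu^{*\alpha}\lambda_s\lambda_u<1$, and imposes the auxiliary $\lambda_u<1$ both to close this constant part of the recursion and to secure the existence of $f$ through \autoref{thmB}. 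The remaining conditions $\lambda_s\lambda_u<1$, $\mu^\alpha\lambda_s^2\lambda_u<1$ and $\lambda_s^{*\beta}\lambda_s\lambda_u<1$ control the $x$-dependent part of the forcing and the first-iteration constants exactly as in the proof of \autoref{lem:baseleaf}. Once the exponents in the forcing are paired correctly with the available spectral gaps in (c)$'$, \autoref{argumentapp}(3) closes the estimate and gives the same H\"older bound on $m\mapsto \widehat{K}^1_{m_0}(m,x)$ as in \autoref{lem:baseleaf}.
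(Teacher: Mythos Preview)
Your approach is correct and matches what the paper intends: the paper explicitly omits all proofs in the bounded-section appendix, stating they are ``very similar'' to those in \autoref{leaf}--\autoref{HolderivativeBase}, and your proposal carries out precisely this adaptation of the proof of \autoref{lem:baseleaf}, substituting \autoref{lem1:sheaf} for \autoref{lem:sheaf} and correctly explaining why the additive constant in $(|x|+1)$ forces the weaker condition $\mu^{*\alpha}\lambda_u<1$ in place of $\mu^{*\alpha}\lambda_s\lambda_u<1$.
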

\begin{rmk}
	Assume the conditions in \autoref{lem:baseleaf} hold, but replace \textnormal{(H3)} by \textnormal{(UB)}, and \eqref{cc2} in \autoref{lem:sheaf} by
	\begin{equation*}
	|\widehat{F}_{m_0}(m_1, z) - \widehat{F}_{m_0}(m_0, z) | \leq M_0 |m_1 - m_0|,~|\widehat{G}_{m_0}(m_1, z) - \widehat{G}_{m_0}(m_0, z)| \leq M_0 |m_1 - m_0|,
	\end{equation*}
	with $\sup_m \widetilde{d}( f_m(X_m), i_Y(m) ) < \infty$. In addition, we need the following spectral gap condition: $ \lambda_u < 1 $, $ \lambda_s \lambda_u < 1 $, $ \lambda^{1+\beta}_s\lambda_u < 1 $, $ \lambda_u\mu^\alpha < 1 $, $ \lambda_s\lambda_u\mu^\alpha < 1 $. If $ \varepsilon^*_1 \leq \hat{\mu}^{-2} \varepsilon_1 $ is small, then 
	\[
	| \widehat{K}^1_{m_0} (m_1, x) - \widehat{K}^1_{m_0} (m_0, x) | \leq C |m_1 - m_0|^{\alpha\beta},\quad \forall m_1 \in U_{m_0} (\varepsilon^*_1), x \in X_{m_0}, m_0 \in M_1.
	\]
\end{rmk}

\vspace{.5em}
\noindent{\textbf{Smoothness of $ m \mapsto f_m(x) $ and H\"{o}lderness of $ x \mapsto \nabla_mf_m(x) = K_m(x) $}.}
\begin{lem}\label{smoothbase1}
	Assume the conditions in \autoref{smoothbase} hold, but replace \textnormal{(H3)} by \textnormal{(UB)}, and conditions \textnormal{(b) (c)} by the following conditions \textnormal{(b)$ ' $ (c)$ ' $} or \textnormal{(b)$ '' $ (c)$ '' $}.
	\begin{enumerate}
		\item [\textnormal{(b)}$ ' $]\textnormal{(i)} $ F, G \in C^1 $.  \textnormal{(ii)} \eqref{ccc2f} \eqref{ccc2g} hold for $ \gamma = 1 $, $ \zeta = 1 $.
		\item [\textnormal{(c)}$ ' $](spectral gap condition) $ \lambda_u < 1 $, $ \lambda_s \lambda_u < 1 $, $ \lambda_u \mu < 1 $, $ \lambda_s \lambda_u \mu < 1 $; see also \autoref{absgc}.

		\item [\textnormal{(b)}$ '' $]\textnormal{(i)}  $ F, G \in C^1 $.  \textnormal{(ii)} \eqref{ccc2f} \eqref{ccc2g} hold for $ \gamma = 1 $, $ \zeta = 0 $. \textnormal{(iii)} In addition, 
		\[
		\sup_m \widetilde{d}( f_m(X_m), i_Y(m) ) < \infty.
		\]

		\item [\textnormal{(c)}$ '' $](spectral gap condition) $ \lambda_u < 1 $, $ \lambda_s \lambda_u < 1 $, $ \lambda_u \mu < 1 $; see also \autoref{absgc}.
	\end{enumerate}
	Then the following conclusions hold:
	\begin{enumerate}[(1)]
		\item There exists a unique $ C^0 $ vector bundle map $ K \in L(\Upsilon^H_X, \Upsilon^V_Y) $ over $ f $ that satisfies \eqref{mainMFW}, and if \textnormal{(b)$ ' $ (c)$ ' $} hold then $ |K_m(x)| \leq C (|x|+ 1) $, while if \textnormal{(b)$ '' $ (c)$ '' $} hold then $ |K_m(x)| \leq C $, for all $ x \in X_m $, $ m \in M_1 $.

		\item $ f $ is $ C^1 $ and $ \nabla f = K $.
	\end{enumerate}
\end{lem}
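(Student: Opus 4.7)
The plan is to closely parallel the proof of \autoref{smoothbase}, the main modifications being that $i$ is no longer a $0$-section and so the target growth estimate on $K$ must be replaced by $|K_m(x)|\leq C(|x|+1)$ in case (b)$'$(c)$'$ and $|K_m(x)|\leq C$ in case (b)$''$(c)$''$; the metric space of candidate ``covariant derivatives'' must be adjusted to reflect this. First, by passing to the pull-back bundles $e^*X, e^*Y$ over $M_1$ exactly as in Part (I) of the proof of \autoref{smoothbase}, I may reduce to $M_1=M$. Next, I would derive preliminary growth estimates on the covariant derivatives of $F$ and $G$ analogous to \eqref{ccc}: from (b)$'$ one has $|\nabla_m F_m(z)|, |\nabla_m G_m(z)| \leq C_0(|z|+1)$, whereas from (b)$''$ one has $|\nabla_m F_m(z)|, |\nabla_m G_m(z)| \leq C_0$. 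Both follow by the same chain-rule computation \eqref{kk1}--\eqref{kk2} because $\mathcal A, \mathcal B$ are normal with respect to the connections.

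The core step is to define an appropriate metric space and a contraction. In case (b)$''$(c)$''$ set
\[
E^L_0=\{K\in L(\Upsilon^H_X,\Upsilon^V_Y)\ C^0\text{ vector bundle map over }f:\ |K_m(x)|\leq C_1,\ \forall m\in M,\ x\in X_m\}
\]
with the sup metric $d_0(K,K')=\sup_{m,x}|K_m(x)-K'_m(x)|$; in case (b)$'$(c)$'$ set
\[
E^L_0=\{K:\ |K_m(x)|\leq C_1(|x|+C_2),\ \forall m,x\},\qquad d_0(K,K')=\sup_{m,x}\frac{|K_m(x)-K'_m(x)|}{|x|+C_2},
\]
with $C_2\geq 1$ to be chosen below. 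The graph transform $\varGamma^0:K\mapsto\widetilde K$ is defined exactly as in \autoref{smoothbase} via \eqref{K1e} and \eqref{mainMFW}. Invariance $\varGamma^0(E^L_0)\subset E^L_0$ and contractivity are verified from the estimates
\[
|R_m(x)|\leq\frac{|\nabla_m F_m(x,y)|+\alpha(m)\mu(m)|K_{u(m)}(x_m(x))|}{1-\alpha(m)\beta'(u(m))},
\]
\[
|\widetilde K_m(x)|\leq|\nabla_m G_m(x,y)|+\lambda_u(m)\mu(m)|K_{u(m)}(x_m(x))|+\lambda_u(m)\beta'(u(m))|R_m(x)|,
\]
combined with the orbit bound $|x_m(x)|\leq\lambda_s(m)|x|+C_*$ supplied by \autoref{thmA}(2) (where $C_*$ is a uniform constant coming from $\sup_m\eta(m)<\infty$). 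In case (b)$''$(c)$''$ contraction is immediate with rate $\lambda_u\mu/(1-\alpha\beta')<1$; in case (b)$'$(c)$'$ the weighted ratio contributes a factor $\sup_x(\lambda_s|x|+C_*+C_2)/(|x|+C_2)=\max\{\lambda_s,\,1+C_*/C_2\}$, which can be made $<(1-\alpha\beta')/(\lambda_u\mu)$ either by taking $C_2$ large when $\lambda_s<1$ (using $\lambda_u\mu<1$) or directly from $\lambda_s\lambda_u\mu<1$ when $\lambda_s\geq 1$. Continuity of the unique fixed point $K$ is then established as in Part (I) of \autoref{smoothbase}, using the local representation \eqref{K1elocal} together with the continuity of $f$ and $K^1$ supplied by \autoref{lem:continuity_f} and \autoref{lem:K1leafc} in the bounded-section setting.

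For conclusion (2), that $f$ is $C^1$ with $\nabla f=K$, I would follow Part (II) of the proof of \autoref{smoothbase} verbatim. Introduce
\[
Q^0(m',m_0,x)=\widehat f_{m_0}(m',x)-\widehat f_{m_0}(m_0,x)-\widehat K_{m_0}(x)(m'-m_0),
\]
and derive the recursion
\[
|Q^0(m',m_0,x)|\leq O_x(|m'-m_0|)+\frac{\lambda''_u(m_0)}{1-\alpha''(m_0)\beta''(u(m_0))}\,|Q^0(u(m'),u(m_0),\widehat x_{m_0}(m',x))|,
\]
which is identical to the invariant-section case because its derivation relies only on local Taylor expansion, the $C^1$-ness of each fiber map, and the Lipschitz bound $\lip\widehat f_{m_0}(m,\cdot)\leq\beta''$, never on $K_m(i_X(m))=0$. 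The needed a priori Lipschitz control of $\widehat f_{m_0}$ in $m$ is supplied by \autoref{lem1:sheaf} (in case (b)$'$(c)$'$, with weight $|x|+1$) and by $\sup_m\widetilde d(f_m(X_m),i_Y(m))<\infty$ (in case (b)$''$(c)$''$, giving the sup-weight). Dividing the recursion by $|m'-m_0|(|x|+C_2)$ or by $|m'-m_0|$, respectively, and iterating produces a self-bounding inequality with contraction constant strictly less than $1$ under exactly the spectral conditions (c)$'$ or (c)$''$, forcing $\limsup_{m'\to m_0}|Q^0|/|m'-m_0|=0$, i.e.\ $\nabla_m f_m(x)=K_m(x)$ for $m\in M_1$. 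Propagating to all of $M$ via \eqref{localeq} and \autoref{lem:fdiifx} gives $f\in C^1$ globally.

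The principal technical obstacle, and the precise reason the hypotheses bifurcate into cases $'$ and $''$, is the choice of weight function in defining $E^L_0$: the linearly-growing bound on $K$ forced by the source term $|\nabla_m F_m(z)|\lesssim|z|+1$ in case (b)$'$ only closes up under the additional spectral gap $\lambda_s\lambda_u\mu<1$, because the dynamics $x\mapsto x_m(x)$ now shifts by a non-trivial additive error $C_*$ whose contribution must be absorbed by making the weight ``constant plus linear'' rather than purely linear. In case (b)$''$, boundedness of $\nabla_m F_m$ together with $\sup_m\widetilde d(f_m(X_m),i_Y(m))<\infty$ lets one work with a pure sup norm, so only $\lambda_u\mu<1$ is needed. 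Once this weight issue is properly handled, every other step is a routine adaptation of \autoref{smoothbase}.
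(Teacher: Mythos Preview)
Your proposal is correct and follows precisely the adaptation the paper intends: the paper explicitly omits the proof of this lemma, stating in \autoref{bounded} that ``the proof of results are very similar with those in \autoref{leaf} to \autoref{HolderivativeBase}'', so there is no independent argument to compare against. Your treatment of the weighted metric space $E^L_0$ (with weight $|x|+C_2$ in case (b)$'$(c)$'$ and sup norm in case (b)$''$(c)$''$), the contraction-rate computation via $\sup_x(\lambda_s|x|+C_*+C_2)/(|x|+C_2)=\max\{\lambda_s,1+C_*/C_2\}$, and the replacement of \autoref{lem:sheaf} by \autoref{lem1:sheaf} in Part~(II) are exactly the modifications required to carry the proof of \autoref{smoothbase} over to the bounded-section setting.
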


\begin{lem}
	Assume the conditions in \autoref{lem:holversheaf} hold, but replace \textnormal{(H3)} by \textnormal{(UB)}, and conditions \textnormal{(b) (ii) (c)} by the following \textnormal{(b)$ ' $ (ii) (c)$ ' $} or \textnormal{(b)$ '' $ (ii) (c)$ '' $}.
	\begin{enumerate}
		\item [\textnormal{(b)$ ' $}] (ii) $ |\nabla_m F_m(z)| \leq M_0(|z|+1),~|\nabla_m G_m(z)| \leq M_0(|z|+1) $.

		\item [\textnormal{(c)$ ' $}] (spectral gap condition) Let condition \textnormal{(c)$ ' $} in \autoref{smoothbase1} hold and $ \lambda^{*\beta}_s \lambda_s \lambda_u < 1 $, $ (\lambda_s^{\gamma})^{ *\alpha}\lambda_s \lambda_u \mu < 1 $; see also \autoref{absgc}.

		\item [\textnormal{(b)$ '' $}] (ii) $ |\nabla_m F_m(z)| \leq M_0 $, $ |\nabla_m G_m(z)| \leq M_0 $.

		\item [\textnormal{(c)$ '' $}] (spectral gap condition) Let condition \textnormal{(c)$ '' $} in \autoref{smoothbase1} hold and $ \lambda^{*\beta}_s \lambda_s \lambda_u < 1 $, $ (\lambda_s^{\gamma})^{ *\alpha} \lambda_u \mu < 1 $; see also \autoref{absgc}.
	\end{enumerate}
	If $ 0 < \alpha \leq 1 $, then the $ C^0 $ vector bundle map $ K \in L(\Upsilon^H_X, \Upsilon^V_Y) $ over $ f $ satisfying \eqref{mainMFW} has the following H\"older property:
	\begin{enumerate}[(1)]
		\item If \textnormal{(b)$ ' $ (c)$ ' $} hold, then
		\begin{multline*}
		|K_m(x_1) - K_m(x_2)| \\
		\leq C  \left\lbrace ( |x_1 - x_2|^{\gamma} (1+|x_1|) )^{\alpha} + (|x_1 - x_2|^\beta (1+|x_1|))^\alpha \right\rbrace (|x_1| + |x_2| + 1)^{1-\alpha},
		\end{multline*}
		for all $ m \in M_1 $, under $ (|x_1 - x_2|^{\gamma} + |x_1 - x_2|^\beta) (1+|x_1|)\leq \hat{r} (|x_1| + |x_2| + 1) $, where the constant $ C $ depends on $ \hat{r} > 0 $ but not on $ m \in M $.

		\item If \textnormal{(b)$ '' $ (c)$ '' $} hold, then
		\[
		|K_m(x_1) - K_m(x_2)| \leq C (|x_1 - x_2|^{\gamma\alpha} + |x_1 - x_2|^{\beta\alpha} ).
		\]
	\end{enumerate}
\end{lem}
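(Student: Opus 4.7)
The plan is to follow the same three-stage strategy as in the proof of \autoref{lem:holversheaf}, but with the ingredients upgraded from the invariant-section setting to the bounded-section setting via \autoref{smoothbase1} and the bounded-section version of \autoref{lem:leaf1a}. First I would collect the preliminary regularities: under (b)$'$(c)$'$ (respectively (b)$''$(c)$''$), \autoref{smoothbase1} provides the unique $C^0$ vector bundle map $K$ satisfying \eqref{mainMFW}, together with the crucial growth bound $|K_m(x)|\le C(|x|+1)$ (resp.\ $|K_m(x)|\le C$); and (the bounded-section analogue of) \autoref{lem:leaf1a} applied with $C^{0,\gamma}$-H\"olderness of $DF,DG$ yields $|K^1_m(x_1)-K^1_m(x_2)|\le C|x_1-x_2|^{\beta}$ under the spectral gap $\lambda_s^{*\beta}\lambda_s\lambda_u<1$. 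I would also record that $|R_m(x)|\le\lambda_s(m)$ and $\lip x_m(\cdot)\le\lambda_s(m)$ from (B) condition, and that $\lip f_m\le\beta'(m)$.

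Next I would differentiate the identity \eqref{mainMFW} at two points $x_1,x_2\in X_m$ and split the resulting difference $K_m(x_1)-K_m(x_2)$ into three groups of terms, exactly as in the proof of \autoref{lem:holversheaf}: (i) a term from $\nabla_m F_m,\nabla_m G_m$ evaluated at two different arguments, which by H\"older continuity of $\nabla_m F,\nabla_m G$ and the growth $|\nabla_m F_m(z)|,|\nabla_m G_m(z)|\lesssim(|z|+1)$ (resp.\ $\lesssim 1$) contributes $C|x_1-x_2|^{\gamma}(1+|x_1|)$ (resp.\ $C|x_1-x_2|^{\gamma}$); (ii) a term from the difference of $D_2F_m,D_2G_m$, which by $C^{0,1}$-H\"olderness and the $C^{0,\beta}$-estimate on $K^1$ together with the bound on $K$ contributes $C|x_1-x_2|^{\beta}(1+|x_1|)$ (resp.\ $C|x_1-x_2|^{\beta}$); and (iii) the principal linear term, which after using $\lip x_m\le\lambda_s(m)$ and the (A)(B) cancellation $\sup_m\alpha(m)\beta'(u(m))<1$ collapses (as in \eqref{Kxx}) into the recursive piece
\[
\frac{\lambda_u(m)\mu(m)}{1-\alpha(m)\beta'(u(m))}\,|K_{u(m)}(x_m(x_1))-K_{u(m)}(x_m(x_2))|.
\]

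Putting these together, I obtain, in case (b)$'$(c)$'$, the recursive inequality
\begin{equation*}
|K_m(x_1)-K_m(x_2)|\le \widetilde C\bigl(|x_1-x_2|^{\gamma}+|x_1-x_2|^{\beta}\bigr)(1+|x_1|)+\Theta(m)\,|K_{u(m)}(x_m(x_1))-K_{u(m)}(x_m(x_2))|,
\end{equation*}
with $\Theta(m)=\frac{\lambda_u(m)\mu(m)}{1-\alpha(m)\beta'(u(m))}$, and an identical one in case (b)$''$(c)$''$ but with the factor $(1+|x_1|)$ replaced by $1$. The third step is to feed this into the abstract argument of \autoref{Appbb} (specifically \autoref{argumentapp}\eqref{lem4}), whose hypotheses are tailored so that a chain of $n$-fold iterations of the recursion, combined with the \emph{a priori} affine bound $|K_m(x)|\le C(|x|+1)$ (resp.\ $|K_m(x)|\le C$) that keeps the $n$-th term from blowing up, produces the stated H\"older estimate exactly under the spectral gap conditions $\lambda_s\lambda_u<1$, $\lambda_u\mu<1$, $\lambda_s\lambda_u\mu<1$, $\lambda_s^{*\beta}\lambda_s\lambda_u<1$, $(\lambda_s^{\gamma})^{*\alpha}\lambda_s\lambda_u\mu<1$ (resp.\ $(\lambda_s^{\gamma})^{*\alpha}\lambda_u\mu<1$).

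The main obstacle I anticipate is bookkeeping rather than conceptual: one must verify that, in case (b)$''$(c)$''$, the additional uniform boundedness $\sup_m\widetilde{d}(f_m(X_m),i_Y(m))<\infty$ from \autoref{smoothbase1} genuinely allows the $(1+|x_1|)$ factors to be dropped (otherwise the conclusion in (2) would have to carry the extra $(|x_1|+|x_2|+1)^{1-\alpha}$ tail), and that the weaker spectral gap $\lambda_u<1$ instead of $\lambda_s\lambda_u<1$ on the recursive factor suffices when the growth of $K$ is bounded rather than affine. A careful choice of the weighted norm in the application of \autoref{argumentapp}\eqref{lem4}—dividing by $(|x_1|+|x_2|+1)^{1-\alpha}$ in case (1) and by $1$ in case (2)—handles this uniformly.
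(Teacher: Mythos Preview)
Your proposal is correct and follows exactly the approach the paper intends: the paper explicitly omits this proof, stating at the start of \S6.8 that the bounded-section results are proved ``very similar with those in \autoref{leaf} to \autoref{HolderivativeBase}'', and your adaptation of the proof of \autoref{lem:holversheaf}---deriving the recursive inequality analogous to \eqref{Kxx} and feeding it into \autoref{argumentapp}\eqref{lem4}---is precisely that. The only refinement worth noting is that in case (2) the appropriate sublemma from \autoref{Appbb} is \autoref{lem:rglobal}\eqref{g1} (since $|K_m(x)|\le C$ is bounded, not affine), while in case (1) one uses the variant in \autoref{moregeneral} with $|y|+1$ in place of $|y|$; this is what underlies the distinction you correctly flagged in your final paragraph.
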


\vspace{.5em}
\noindent{\textbf{H\"{o}lderness of $ m \mapsto \nabla_mf_m(x) = K_m(x) $.}}
\begin{lem}\label{final1}
	Let \autoref{lem:final} \textnormal{(a)} hold with \textnormal{(H3)} replaced by \textnormal{(UB)}. In addition, assume the following conditions \textnormal{(b)$ ' $ (c)$ ' $} or \textnormal{(b)$ '' $ (c)$ '' $} hold.
	\begin{enumerate}
		\item [\textnormal{(b)$ ' $}] $ F, G $ are $ C^1 $. Furthermore, \eqref{ccc2f} \eqref{ccc2g} hold for $ \gamma = 1, \zeta = 1 $, as well as \eqref{lipzz} and \eqref{ligmm}.

		\item [\textnormal{(c)$ ' $}] (spectral gap condition) $ \lambda_u < 1 $, $ \lambda_s \lambda_u < 1 $, $ \lambda_s^2\lambda_u < 1 $, $ \lambda_u \mu < 1 $, $ \lambda_s^2 \lambda_u \mu < 1 $, $ \max\{ \mu/\lambda_s, \mu \}^{*\alpha} \lambda_s \lambda_u \mu < 1 $; see also \autoref{absgc}.

		\item [\textnormal{(b)$ '' $}] $ F, G $ are $ C^1 $; \eqref{ccc2f} \eqref{ccc2g} hold for $ \gamma = 1 $, $ \zeta = 0 $; \eqref{lipzz} ($ DF, DG \in C^{0,1} $) and \eqref{ligmm} ($ \nabla F, \nabla G \in C^{0,1} $) hold. Also, $\sup_m \widetilde{d}( f_m(X_m), i_Y(m) ) < \infty$.

		\item [\textnormal{(c)$ '' $}] (spectral gap condition) $ \lambda_u < 1 $, $ \lambda_s \lambda_u < 1 $, $ \lambda_s^2\lambda_u < 1 $, $ \lambda_u \mu < 1 $, $ \lambda_s \lambda_u \mu < 1 $, $ \mu^{* \alpha}\lambda_u\mu  < 1 $; see also \autoref{absgc}.
	\end{enumerate}
	If $ 0 < \alpha \leq 1 $, then there exists a small $ \epsilon'_1 > 0 $ such that the following hold:
	\begin{enumerate}[(1)]
		\item Under conditions \textnormal{(b)$ ' $ (c)$ ' $}, we have
		\[
		|\widehat{K}_{m_0}(m_1, x) - \widehat{K}_{m_0}(m_0, x)| \leq C |m_1 - m_0|^\alpha (|x|+1)
		\]
		for all $ m_1 \in U_{m_0} (\epsilon'_1) $, $ m_0 \in M_1 $. The constant $ C $ depends on $ \epsilon'_1 $ but not on $ m_0 \in M_1 $.

		\item Under conditions \textnormal{(b)$ '' $ (c)$ '' $}, we have
		\[
		|\widehat{K}_{m_0}(m_1, x) - \widehat{K}_{m_0}(m_0, x)| \leq C |m_1 - m_0|^\alpha,
		\]
		for all $ m_1 \in U_{m_0} (\epsilon'_1) $, $ m_0 \in M_1 $. The constant $ C $ depends on $ \epsilon'_1 $ but not on $ m_0 \in M_1 $.
	\end{enumerate}

\end{lem}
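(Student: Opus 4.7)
The proof will proceed in parallel with that of \autoref{lem:final}, treating the two cases (b)$'$(c)$'$ and (b)$''$(c)$''$ simultaneously since the only genuine difference is a bookkeeping of the growth in $|x|$ of various inhomogeneous terms. The plan is to first invoke the bounded-section analogues of the fiber and base H\"older lemmas proved earlier in \autoref{bounded} to obtain, in a neighborhood $U_{m_0}(\epsilon_*)$ of $m_0 \in M_1$, the preparatory Lipschitz estimates: $|\widehat{f}_{m_0}(m_1,x)-\widehat{f}_{m_0}(m_0,x)| \leq C_0 |m_1-m_0|(|x|+1)$ (respectively $\leq C_0 |m_1-m_0|$), the analogous estimate for $\widehat{x}_{m_0}$, fiber Lipschitzness of $x \mapsto K^1_m(x)$ and $x \mapsto K_m(x)$ (with appropriate growth), and base Lipschitzness $|\widehat{K}^1_{m_0}(m_1,x)-\widehat{K}^1_{m_0}(m_0,x)| \leq C_0|m_1-m_0|(|x|+1)$ (resp.\ $\leq C_0|m_1-m_0|$). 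The spectral gap conditions (c)$'$ or (c)$''$ are precisely what is needed to activate each of these preparatory lemmas; in particular the conditions $\lambda_u < 1$, $\lambda_s\lambda_u < 1$, $\lambda_s^2 \lambda_u < 1$, $\lambda_u\mu < 1$, $\lambda_s^2\lambda_u\mu < 1$ (resp.\ $\lambda_s\lambda_u\mu<1$) cover all the prerequisite regularities.

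Next I would differentiate the local representation \eqref{KK1local} across $m_1$ and $m_0$, splitting the difference of $\widehat{K}_{m_0}(m_1,x) - \widehat{K}_{m_0}(m_0,x)$ into three groups exactly as in the proof of \autoref{lem:final}: the difference of $\widehat{D}_m G_{m_0}$ terms, the difference of $D_y \widehat{G}_{m_0}$ terms multiplied by the (bounded) bracket containing $\widehat{K}_{u(m_0)}\widehat{Du}_{m_0} + \widehat{K}^1_{u(m_0)}\widehat{R}_{m_0}$, and the frozen-coefficient difference of the bracket itself. The three inhomogeneous terms are controlled using the hypotheses \eqref{ligmm}, \eqref{lipzz}, \eqref{ccc2f}, \eqref{ccc2g} together with the boundedness of $\widehat{f}_{u(m_0)}$ and $\widehat{x}_{m_0}$ near the section and the preparatory Lipschitz estimates. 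The key bookkeeping point: in case (b)$'$ the bound grows like $|m_1-m_0|(|x|+1)$ while in case (b)$''$ (using also $\sup_m \widetilde{d}(f_m(X_m),i_Y(m))<\infty$) the bound is merely $|m_1-m_0|$. Doing the same manipulation with $\widehat{R}_{m_0}$ and eliminating $|\widehat{R}_{m_0}(m_1,x)-\widehat{R}_{m_0}(m_0,x)|$ via the bracket $(1-\alpha''\beta''\circ u)^{-1}$ yields the key recursive inequality
\begin{equation*}
|\widehat{K}_{m_0}(m_1,x) - \widehat{K}_{m_0}(m_0,x)| \leq \widetilde{C}\, |m_1-m_0|\, \Phi(|x|) + \frac{\lambda''_u(m_0)\mu''(m_0)}{1-\alpha''(m_0)\beta''(u(m_0))} \bigl|\widehat{K}_{u(m_0)}(u(m_1),\hat{x}_1) - \widehat{K}_{u(m_0)}(u(m_0),\hat{x}_1)\bigr|,
\end{equation*}
where $\Phi(|x|) = |x|+1$ in case (b)$'$ and $\Phi(|x|) \equiv 1$ in case (b)$''$.

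Finally, the result follows by applying the abstract interpolation/iteration argument of \autoref{Appbb} to this recursion. The spectral conditions of (c)$'$ or (c)$''$ ensure in particular that $\lambda''_u \mu''$ stays below $1$ along orbits and that the composite factor $(\max\{\mu/\lambda_s,\mu\})^{*\alpha}\lambda_s\lambda_u\mu < 1$ (resp.\ $\mu^{*\alpha}\lambda_u\mu < 1$) gives the right exponent $\alpha$. The main technical obstacle I anticipate is not any single estimate but the careful tracking of the growth factor $\Phi(|x|)$ along the orbit $\hat{x}_1 = \widehat{x}_{m_0}(m_1,x)$: in case (b)$'$, since $i$ is only $1$-pseudo-stable (not invariant), $|\widehat{x}_{m_0}(m_1,x)|$ inherits an additive constant $\sim \eta(u(m_0))$ on top of the $\lambda_s$-contraction, so the iterated $\Phi$ contains terms of the form $\prod_{k} (1+\lambda_s^{(k)})$ which must be summable against the decay of $\lambda''_u\mu''/(1-\alpha''\beta''\circ u)$; this is exactly what forces the sharper form $\lambda_u<1$ and the specific spectral gap in (c)$'$, and is the reason the bounded-section case requires one strictly more hypothesis than the invariant-section case \autoref{lem:final}. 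Once this bookkeeping is handled verbatim as in \autoref{argumentapp} \eqref{lem5}, both (1) and (2) follow, and the constant $C$ depends on $\epsilon'_1$ via the choice of $\epsilon_*$ where the preparatory estimates hold uniformly in $m_0 \in M_1$.
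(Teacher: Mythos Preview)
Your proposal is correct and follows precisely the route the paper intends: the paper in fact omits the proof of this lemma entirely, stating only that the arguments in \autoref{bounded} are ``very similar with those in \autoref{leaf} to \autoref{HolderivativeBase}'' and that all proofs are left to the reader. Your outline faithfully adapts the proof of \autoref{lem:final} to the $1$-pseudo-stable section setting, invoking the bounded-section preparatory lemmas, deriving the recursive inequality analogous to \eqref{k011} with the correct inhomogeneity $\Phi(|x|)$, and closing via \autoref{argumentapp}~\eqref{lem5}; this is exactly the implied argument.
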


\vspace{.5em}
\noindent{\textbf{Continuity properties of $ f $.}}
The results on the continuity of $ f $ are essentially the same as in \autoref{continuityf} with minor changes. In order not to increase the length of the paper, the statements are all omitted and left to the readers.

\begin{rmk}
	We have already considered the regularities of $ f $ obtained in \autoref{thmA} for the two extreme cases, i.e., $ \varepsilon = 0 $ and $ \varepsilon = 1 $. The results of \autoref{lem:leaf1} and \autoref{lem:leaf1a} also hold for all $ 0 \leq \varepsilon \leq 1 $ without changing the proofs. Similar results such as the H\"older continuity and smoothness respecting base points should also hold when $ i $ is an $\varepsilon$-pseudo-stable section, where $ 0 < \varepsilon < 1 $.
	The readers can also consider the regularities of $ f $ obtained in \autoref{thmB}, which are very similar to the results listed in this appendix in the case of $\sup_m \widetilde{d}( f_m(X_m), i_Y(m) ) < \infty$.
\end{rmk}

\section{A local version of the regularity results}\label{generalized}

Generally speaking, the regularity results do not depend on the existence results. That is, if there is a graph which is (locally) invariant under $ H $, then one can directly investigate the regularities of this graph; our proofs given in \autoref{leaf} to \autoref{HolderivativeBase} have indicated this. For a simple motivation, see also \autoref{lem:fconx} and \autoref{lem:fdiifx}; this idea was also used in \cite{BLZ99} to prove the smoothness of invariant manifolds. In the following, we will present a detailed statement of the conditions such that the regularity results hold in a local version for our applications; see \autoref{thm:fake1}, \autoref{thm:fake2}, and \cite{Che18b, Che18} as well. In this way, one may comprehend what we really need in our proofs.

Take a bundle correspondence $ H \sim (F, G): X \times Y \to X \times Y $ over $ u $.
Let $ X^2_m \subset X^1_m \subset X_m $. Assume there are functions $ f_m : X_m \to Y_m $, and $ x_m(\cdot): X_m \to X_{u(m)} $, $ m \in M $, such that
\[
\graph f_m|_{X^1_m} \subset H^{-1}_m \graph f_{u(m)} |_{X^2_{u(m)}},
\]
i.e., $ x_m(X^1_m) \subset X^2_{u(m)} $, and for all $ x \in X^1_m $,
\[
\begin{cases}
F_m(x, f_{u(m)}(x_m(x))) = x_m(x), \\
G_m(x, f_{u(m)}(x_m(x))) = f_m(x).
\end{cases}
\]
Take $ Y^1_m \subset Y_m $ such that $ f_m(X^1_m) \subset Y^1_m $. Consider the following assumptions:

\begin{enumerate}[(I)]
	\item $ H_m \sim (F_m, G_m): X^1_m \times Y^1_m \to X^1_{u(m)} \times Y^1_{u(m)} $ satisfies the (A$ ' $)($ \alpha(m) $, $ \lambda_s(m) $) (B$ ' $)($ \beta'(m) $, $ \lambda_u(m) $) condition.

	\item For each $ m \in M $ and each $ z \in X^{1}_m \times Y^1_{u(m)} $, $ (DF_m(z), DG_m(z)): X_m \times Y_m \to X_{u(m)} \times Y_{u(m)} $ satisfies (A$ ' $)($ \alpha(m) $, $ \lambda_s(m) $) (B)($ \beta'(u(m)) $; $ \beta'(m) $, $ \lambda_u(m) $) condition.

	\item For every $ m \in M $, there is a neighborhood $ \mathring{X}^1_m $ of $ X^1_m $ in $ X_m $ such that
	\[
	\lip f_{m}|_{\mathring{X}^1_m} \leq \beta'(m), ~\lip x_{m} |_{\mathring{X}^1_m} \leq \lambda_s(m).
	\]
	If (H3) is assumed, then we also suppose that $ |f_m(x)| \leq \beta'(m)|x| $ and $ |x_m(x)| \leq \lambda_s(m)|x| $ for all $ x \in X^1_m $, $ m \in M_1 $.

	\item For $ m \in M $, there are $ \epsilon_{m}'' > 0 $, $ X^0_{m} $, $ Y^0_{m} $ such that $ X^2_{m} \subset X^0_{m} \subset X^1_{m} $, $ f_{m}(X^1_{m}) \subset Y^0_{m} \subset Y^1_{m} $, and moreover
	\[
	X^2_m \subset \varphi^{m_0}_{m} (X^0_{m_0}) \subset X^1_m, ~f_{m}(X^1_{m}) \subset \phi^{m_0}_m(Y^0_{m_0}) \subset Y^1_{m},~m \in U_{m_0}(\epsilon_{m_0}''), m_0 \in M,
	\]
	where $ (U_{m_0}(\epsilon_{m_0}''), \varphi^{m_0}) \in \mathcal{A}' $, $ (U_{m_0}(\epsilon_{m_0}''), \phi^{m_0}) \in \mathcal{B}' $ (in (H2)), and $ \inf_{m_0 \in M_1}\epsilon_{m_0}'' > 0 $.
\end{enumerate}

Let $ X^1 = \{(m,x): x \in X^1_m, m \in M \} $ and $ Y^1 = \{(m,y): y \in Y^1_m, m \in M \} $ be the subbundles of $ X $ and $ Y $, respectively.
For simplicity, write (see \eqref{HVspace})
\begin{equation*}
\Upsilon^V_{X^1} \triangleq \bigcup_{(m,x) \in X^1} (m,x) \times X_m, ~\Upsilon^H_{X^1} \triangleq \bigcup_{(m,x) \in X^1} (m,x) \times T_mM.
\end{equation*}
Then the results in \autoref{leaf}--\autoref{continuityf} also hold with some modifications, namely, \emph{$ X_{m_0} \times Y_{u(m_0)} $ is replaced by $ X^0_{m_0} \times Y^0_{u(m_0)} $, $ X_{m_0} $ by $ X^0_{m_0} $, $ X_m \times Y_{u(m)} $ by $ X^1_m \times Y^1_{u(m)} $, and $ X_m $ by $ X^1_m $}; in the following \autoref{thm:LocalRegularity} we make this convention.
\begin{thm}\label{thm:LocalRegularity}
	\begin{enumerate}[(a)]
		\item ($ x \mapsto f_m(x) $) Under (I)--(III) and the same conditions as in \linebreak \autoref{lem:leaf1} with (b) replaced by (b$ ' $): $ F_m, G_m $ are differentiable on $ X^1_m \times Y^1_{u(m)} $ and $ X^1_m \times Y^1_{u(m)} \ni z \mapsto DF_m(z), DG_m(z) $ are $ C^0 $, the map $ f_m $ is differentiable on $ X^1_m $. Moreover, there exists a unique vector bundle map $ K^1 \in L(\Upsilon_{X^1}^V, \Upsilon_{Y^1}^V) $ over $ f $, such that $ |K^1_m(x)| \leq \beta'(m) $, $ m \in M $, $ X^1_m \ni x \mapsto K^1_m(x) $ is $ C^0 $, and it satisfies \eqref{mainX} with $ Df_m(x) = K^1_m(x) $, $ x \in X^1_m $.

		\item ($ x \mapsto K^1_m(x) $) Let (I)--(III) hold. (i) Then \autoref{lem:leaf1a} holds.

		(ii) Suppose $ F, G $ are uniformly $ C^{k-1,1} $-fiber in a neighborhood of $ X^1 \otimes_{u} Y^1 $ (see \autoref{lip maps}) and $ \sup_{m}\lip D^{k-1} f_{m}(\cdot)|_{\mathring{X}^1_m} < \infty $. If $ \lambda_s \lambda_u < 1, \lambda^k_s \lambda_u < 1 $, and $ F, G $ are $ C^k $-fiber on $ X^1 \otimes_{u} Y^1 $, then $ f $ is $ C^k $-fiber on $ X^1 $ (\autoref{lem:leafk}).

		\item ($ m \mapsto f_m(x) $) Let (I), (III), (IV) hold. Then \autoref{lem:sheaf} holds.

		\item ($ m \mapsto K^1_m(x) $) Let (I)--(IV) hold. Then \autoref{lem:baseleaf} holds.

		\item \label{ee0} ($ C^1 $ smoothness of $ f $) (i) Under (I)--(III) and the same conditions as in \autoref{lem:leafK} but assuming $ F, G $ are $ C^1 $ in a neighborhood of $ X^1 \otimes_{u} Y^1 $ instead of in $ X \otimes_{u} Y $, there exists a unique $ C^0 $ vector bundle map $ K \in L(\Upsilon^H_{X^1}, \Upsilon^V_{Y^1}) $ (over $ f $) such that $ \sup_{m \in M_1}\sup_{x}|K_m(x)|/|x| < \infty $ and it satisfies \eqref{mainMFW} for all $ x \in X^1_m $, $ m \in M $.

		(ii) Let (I)--(IV) hold. Assume all the conditions in \autoref{smoothbase} hold but $ F, G $ are $ C^1 $ in a neighborhood of $ X^1 \otimes_{u} Y^1 $ instead of in $ X \otimes_{u} Y $; then $ f $ is differentiable in $ X^2 = \bigcup_{m \in M} X^2_m $ with $ \nabla_{m} f_m(x) = K_m(x) $ if $ x \in X^2_m $ and $ m \in M $, where $ K $ is given by (i).

		\item ($ x \mapsto K_m(x) $) Let (I)--(III) hold. Assume $ F, G $ are $ C^1 $ in a neighborhood of $ X^1 \otimes_{u} Y^1 $ instead of condition (b) (i) in \autoref{lem:holversheaf}. Then \autoref{lem:holversheaf} holds.

		\item ($ m \mapsto K_m(x) $) Let (I)--(IV) hold. Assume $ F, G $ are $ C^1 $ in a neighborhood of $ X^1 \otimes_{u} Y^1 $ instead of condition (b) (i) in \autoref{lem:final}. Then \autoref{lem:final} holds.
		A similar result to \autoref{lem:final} (2) also holds.

		\item We leave similar modifications for the continuities of $ f $ in \autoref{continuityf} to the readers, and the results in \autoref{bounded} as well.
	\end{enumerate}
\end{thm}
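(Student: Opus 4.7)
The plan is to observe that the arguments in \autoref{leaf} through \autoref{continuityf} are fundamentally local in nature: once the invariant graph $\graph f|_{X^1} \subset H^{-1} \graph f|_{X^2}$ is supplied by hypothesis, every subsequent step either (i) constructs an auxiliary object ($K^1$, $K$, or the higher derivatives) as a fixed point of a graph transform on a function space of sections over $f$, or (ii) verifies that $f$ (or its derivatives) actually coincides with this constructed object by an estimate of the form $\sup|Q|/|\cdot| \leq \theta \sup|Q|/|\cdot|$. In both cases the underlying equations ``live'' at points of $X^1 \otimes_u Y^1$, and the iteration is made legal by the invariance $x_m(X^1_m) \subset X^2_{u(m)} \subset X^1_{u(m)}$ together with condition (IV).

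The execution proceeds item by item along the same scaffolding as the global theorem. First, for the fiber-regularity parts (a) and (b), I would fix a section $K^1 \in L(\Upsilon^V_{X^1}, \Upsilon^V_{Y^1})$ over $f$ and repeat verbatim the contraction argument of \autoref{lem:leaf1}: condition (II) supplies the (A$'$)(B) structure of $(DF_m(z), DG_m(z))$ at every $z \in X^1_m \times Y^1_{u(m)}$, condition (III) gives the Lipschitz constants of $f_m$ and $x_m(\cdot)$ on the neighborhood $\mathring{X}^1_m$, and the spectral gap is unchanged; the telescoping identity in \autoref{slem:TS}, which compares $Q(m,x',x)$ with $Q(u(m), x_m(x'), x_m(x))$, closes because $x_m$ maps $X^1_m$ into $X^1_{u(m)}$. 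The H\"older and higher-order fiber statements in \autoref{lem:leaf1a}, \autoref{lem:leafk} then follow from the same recursion \eqref{k11} and its $k$-th order analogue ($\dag$), applied on $X^1$.

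Next, for the base-regularity parts (c), (d), (\ref{ee0}), (f), (g), the essential point is that the local representations $\widehat{F}_{m_0}$, $\widehat{G}_{m_0}$, $\widehat{f}_{m_0}$, $\widehat{x}_{m_0}$ given by \eqref{localrepre} must make sense on uniformly sized domains $U_{m_0}(\epsilon) \times X^0_{m_0}$ with values landing in $X^1_{u(m)} \times Y^1_{u(m)}$, so that the master identity \eqref{mainLocal} remains valid and iterable. This is exactly what condition (IV) supplies: the inclusions $X^2_m \subset \varphi^{m_0}_m(X^0_{m_0}) \subset X^1_m$ and $f_m(X^1_m) \subset \phi^{m_0}_m(Y^0_{m_0}) \subset Y^1_m$ guarantee that both $\widehat{x}_{m_0}(m, x) \in X^0_{u(m_0)}$ and $\widehat{f}_{u(m_0)}(u(m), \widehat{x}_{m_0}(m,x)) \in Y^0_{u(m_0)}$ for $m$ sufficiently close to $m_0$, so the local equations close up on themselves. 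With this in hand the telescoping estimates \eqref{basef}, \eqref{K110}, \eqref{K111}, \eqref{K1e}--\eqref{K1elocal}, and \eqref{k011} are all derived exactly as before and fed into the argument of \autoref{Appbb}. For the existence of the covariant derivative $K$ in part (\ref{ee0})(i), I would redo the contraction on the space $E^L_0$ restricted to sections over $f|_{X^1}$; part (\ref{ee0})(ii) requires the differentiability identity $\nabla_m f_m(x) = K_m(x)$ only on $X^2$ because that is exactly the set where one iterate of the local equation can be applied to pull the difference quotient back.

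The main obstacle, and the only place where the argument genuinely diverges from the global case, is this bookkeeping with the nested sets $X^2 \subset X^0 \subset X^1$ under condition (IV): one has to verify at each base-regularity step that the domains chosen in the relevant sublemma survive one application of the graph transform, i.e. that the backward iterate $\widehat{x}_{m_0}(m, x)$ for $(m,x)$ near $(m_0, X^0_{m_0})$ lies in $X^0_{u(m_0)}$ (not merely in $X^1$). For part (\ref{ee0})(ii) specifically, differentiability at a point $x \in X^2_m$ is the most one can assert because establishing it at a point $x \in X^1_m \setminus X^2_m$ would require iterating further, but $x_m(x)$ need not lie in $X^2_{u(m)}$. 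Once this domain tracking is laid out cleanly, the spectral gap inequalities and H\"older constants come out identical to the invariant-section case.
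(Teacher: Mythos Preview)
Your proposal is correct and follows the same approach as the paper: the paper's own proof simply states that the arguments of \autoref{leaf}--\autoref{HolderivativeBase} carry over verbatim after the notational substitutions $X_{m_0}\times Y_{u(m_0)} \rightsquigarrow X^0_{m_0}\times Y^0_{u(m_0)}$, $X_m\times Y_{u(m)} \rightsquigarrow X^1_m\times Y^1_{u(m)}$, etc. Your write-up is in fact more detailed than the paper's, since you explicitly articulate why conditions (I)--(III) suffice for the fiber-regularity items while (IV) is the extra ingredient making the local representations \eqref{mainLocal} iterable for the base-regularity items, and why differentiability in (\ref{ee0})(ii) can only be claimed on $X^2$.
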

\begin{proof}
	Without any changes except some notations (e.g. $ X_{m_0} \times Y_{u(m_0)} $ replaced by $ X^0_{m_0} \times Y^0_{u(m_0)} $, $ X_{m_0} $ by $ X^0_{m_0} $, $ X_m \times Y_{u(m)} $ by $ X^1_m \times Y^1_{u(m)} $, and $ X_m $ by $ X^1_m $), the proofs given in \autoref{leaf} to \autoref{HolderivativeBase} also show this theorem holds.
\end{proof}

\begin{rmk}
	In \autoref{leaf}--\autoref{HolderivativeBase}, we in fact take $ X^i_m = X_m $, $ i = 0,1,2 $, and $ Y^i_m = Y_m $, $ i = 0, 1 $. In this case, (IV) is redundant. The conditions in \autoref{thmA} imply (I) (III), and also (II) if $ F_m(\cdot), G_m(\cdot) \in C^1 $ (see \autoref{lem:c1}).

	In many cases, \textbf{(i)} we might take $ X^i_m = \{i_X(m)\} $ (and $ Y^1_m = \{i_Y(m)\} $) if $ i: M \to X \times Y $ is an invariant section of $ H $. Now (IV) holds if (H3) is assumed, i.e., $ i $ is a $ 0 $-section of $ X \times Y $ with respect to $ \mathcal{A} \times \mathcal{B} $. For example, see \autoref{00diff}, \autoref{lem:base0} and \autoref{lem:baseK1} (2).

	\textbf{(ii)} Since in some situations, we use bump functions or blid maps (see \autoref{bump}) to truncate the systems, the fibers of $ X, Y $ are usually Banach spaces (i.e. (H2) (i) holds). In this case, we generally take $ X^i_m = X_m(r_i) $ ($ r_2 < r_0 < r_1 $) and $ Y^i_m = Y_m(\delta_i) $ ($ \delta_0 < \delta_1 $). Now (IV) is guaranteed by $ X, Y $ having $ \varepsilon $-almost uniform $ C^{0,1} $-fiber trivializations on $ M_1 $ with respect to $ \mathcal{A} $, $ \mathcal{B} $. If $ \sup_{m}\lambda_s(m) < 1 $ and $ i = 0 $ is an invariant section of $ H $, then $ x_m(X^1_m) \subset X^2_{u(m)} $ is satisfied. Thus, we can apply \autoref{thm:LocalRegularity} to deduce the regularities of the invariant graph obtained in \autoref{thm:local}. (Also note that the conditions of \autoref{thm:local} have implied (I)--(III) hold; for (II), see also \autoref{lem:cAB}.) Here it is worth noting that although we use the radial retraction (see \eqref{radial}), the systems would be non-smooth in all $ X \times Y $ but they are smooth in $ X^1 \times Y^1 $. In general Banach spaces, radial retraction always exists but smooth and Lipschitz bump functions might not, unlike in $ \mathbb{R}^n $ or separable Hilbert spaces. \autoref{thm:LocalRegularity} is sometimes important for infinite-dimensional dynamical systems.
	We refer readers to \autoref{foliations} and \cite{Che18b} for more applications of cases \textbf{(i) (ii)}.

	Finally, when we work in Riemannian manifolds, the local bundle charts in $ \mathcal{A} $ and $ \mathcal{B} $ are usually taken as parallel translations along geodesics, so they are isometric (see also \cite[Section 1.8]{Kli95} for more results on the local isometric trivialization, particularly Theorem 1.8.23 thereof). For (IV), in this situation it may happen $ X^2_{m} = X^0_m = X^1_m = X_m(r) $, $ Y^0_m = Y^1_m = Y_m(\delta) $.
\end{rmk}

\chapter{Some Applications}\label{application}

In the following, we give some applications of our main results, namely the $ C^{k,\alpha} $ section theorem (\autoref{section}), and the existence and regularity of (un)strong stable foliations, fake invariant foliations and holonomies over laminations for bundle maps (\autoref{foliations}). For applications to differential equations, see \cite{Che18c}.

\section{$ C^{k,\alpha} $ section theorem} \label{section}

The $ C^{k,\alpha} $ section theorem, which appeared in \cite[Chapters 3, 6]{HPS77}, \cite[Theorem 3.2]{PSW97} and \cite[Theorem 10]{PSW12} in different settings, has wide applications in invariant manifold theory, e.g. it is used to prove the regularity of invariant manifolds (and invariant foliations). As a first application of our paper's main results obtained in \autoref{invariant} and \autoref{stateRegularity}, we will give a $ C^{k,\alpha} $ section theorem in more general settings.

\begin{enumerate}[({HH}1)]
	\item Let $ M $ be a locally metrizable space (associated with an open cover $ \{ U_m: m \in M \} $); see \autoref{defi:locallyM}. The metric in $ U_{m} $ is denoted by $ d_m $. Let $ M_1 \subset M $.

	\item Let $ (X, M, \pi) $ be a $ C^{0} $ topology bundle (see \autoref{bundleP} \eqref{topologyBundle}) with metric fibers. Let
	\begin{multline*}
	\mathcal{A} = \{ (U_{m_0}(\epsilon), \varphi^{m_0}) ~\text{a $ C^0 $ bundle chart of $ X $ at}~ m_0:\\ \varphi^{m_0}_{m_0} = \id, m_0 \in M_1, 0 < \epsilon < \epsilon' \},
	\end{multline*}
	be a bundle atlas of $ X $ on $ M_1 $. $ (X, M, \pi) $ has $ \varepsilon $-almost uniform $ C^{0,1} $-fiber trivializations on $M_1$ with respect to $ \mathcal{A} $, where $ \varepsilon > 0 $ is small; see \autoref{uniform lip bundle}.

	\item (Lipschitz continuity of $ h^{-1} $) Let $ h : M_1 \to M $ be a $ C^0 $ invertible map. Assume there exist an $ \varepsilon_1 > 0 $ and a function $ \mu: M \to \mathbb{R}_+ $ such that 

	(i) for every $ m_0 \in M_1 $, $ h^{-1}(U_{m_0}(\mu^{-1}(m_0) \varepsilon_1)) \subset U_{h^{-1}(m_0)} $,
	\[
	d_{h^{-1}(m_0)} (h^{-1}(m), h^{-1}(m_0)) \leq \mu(m_0) d_{m_0}(m, m_0),~m \in U_{m_0}(\mu^{-1}(m_0) \varepsilon_1),
	\]

	(ii) $ \sup_{m_0 \in M_1} \mu(m_0)  < \infty $.
\end{enumerate}

\begin{thm}[$ C^{k,\alpha} $ section theorem I]\label{thm:sect1}
	Let \textnormal{(HH1), (HH2), (HH3)} hold. Let $ f: X \to X $ be a $ C^0 $ bundle map over $ h $. Assume that the fibers of $ X $ are uniformly bounded, i.e. $ \sup_m\diam X_m < \infty $, and that $ \lip f_m \leq \lambda(m) $, where $ \lambda : M \to \mathbb{R}_+ $ is $ \varepsilon $-almost uniformly continuous around $ M_1 $ and $ \varepsilon $-almost continuous (see \autoref{ucontinuous}), and $ \sup_m\lambda(m) < 1 $. If $ \varepsilon $ is sufficiently small, then the following assertions hold:
	\begin{enumerate}[(1)]
		\item There is a unique section $ \sigma : M \to X $ which is invariant under $ f $. Also $ \sigma $ is $ C^0 $.
		\item In addition, if $ f_m $ depends in a uniformly Lipschitz fashion on the base point $ m $ (around $ M_1 $ with respect to $ \mathcal{A} $) and $ \sup_m \mu^{\theta}(m) \lambda (m) < 1 $ (bunching condition), then $ \sigma $ is uniformly (locally) $ C^{0,\theta} $ (around $ M_1 $).
	\end{enumerate}
\end{thm}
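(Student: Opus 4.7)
The plan is to establish existence and uniqueness by a direct Banach fixed point argument on the space of bounded sections, and then deduce the H\"older regularity by the same telescoping technique used in the proof of \autoref{lem:sheaf} (culminating in the key argument of \autoref{Appbb}). In fact the statement can also be recovered as a degenerate case of \autoref{thmB} by taking one factor bundle to have one-point fibers, but the direct route is more transparent.

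\textbf{Existence, uniqueness, and continuity.} Let $E$ denote the set of all sections $\sigma : M \to X$, equipped with $d(\sigma_1, \sigma_2) = \sup_{m \in M} d(\sigma_1(m), \sigma_2(m))$. Since $\sup_m \diam X_m < \infty$ and each fiber is complete, $(E, d)$ is a nonempty complete metric space. Using the invertibility of $h : M_1 \to M$, I define the graph transform
\[
\Gamma : E \to E, \qquad (\Gamma\sigma)(m) = f_{h^{-1}(m)}(\sigma(h^{-1}(m))).
\]
A short computation gives $\lip \Gamma \leq \sup_m \lambda(m) < 1$, so the Banach fixed point theorem yields a unique $\sigma \in E$ with $\Gamma\sigma = \sigma$, which is precisely the desired invariant section ($f_m \sigma(m) = \sigma(h(m))$ for $m \in M_1$). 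For continuity, in any local bundle chart $\varphi^{m_0}$ (whose distortion on the fibre metric is controlled by (HH2)), the $\Gamma$-iterates $\sigma_n = \Gamma^n \sigma_0$ of a locally continuous reference section $\sigma_0$ remain continuous near $m_0$ and converge uniformly in the fibre metric to $\sigma$; this translates via the almost uniform $C^{0,1}$-fiber trivialization to local uniform convergence in the chart representation, so $\sigma$ is $C^0$ at $m_0$.

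\textbf{H\"older regularity.} Under the additional assumption that $f_m$ depends in a uniformly Lipschitz fashion on $m$ (around $M_1$, with respect to $\mathcal{A}$), I pass to local charts at $m_0 \in M_1$ and at $h^{-1}(m_0)$, writing $\hat\sigma_{m_0}(m) = (\varphi^{m_0}_m)^{-1}(\sigma(m))$. Expanding the invariance equation in these charts and using both the uniform Lipschitz dependence of $f$ on its base and the $\varepsilon$-almost uniform $C^{0,1}$-fiber trivialization yields an estimate of the form
\[
|\hat\sigma_{m_0}(m_1) - \hat\sigma_{m_0}(m_0)| \leq C |m_1 - m_0| + \lambda''(m_0) |\hat\sigma_{h^{-1}(m_0)}(h^{-1}(m_1)) - \hat\sigma_{h^{-1}(m_0)}(h^{-1}(m_0))|,
\]
where $\lambda''(m_0) = (1 + O(\varepsilon))\,\lambda(h^{-1}(m_0))$ and $C$ is independent of $m_0$. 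Combined with the Lipschitz bound $|h^{-1}(m_1) - h^{-1}(m_0)| \leq \mu(m_0)|m_1 - m_0|$ from (HH3), iteration of this inequality along the backward orbit $\{h^{-n}(m_0)\}_{n \geq 0}$ is governed by the product $\lambda \cdot \mu^\theta$, and the abstract argument of \autoref{Appbb} (exactly as invoked in the proof of \autoref{lem:sheaf}) then gives $|\hat\sigma_{m_0}(m_1) - \hat\sigma_{m_0}(m_0)| \leq C |m_1 - m_0|^\theta$ uniformly in $m_0 \in M_1$, which is the claimed uniform local $\theta$-H\"older continuity.

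\textbf{Main obstacle.} The main technical hurdle is the bookkeeping in the local H\"older estimate: absorbing the $(1 + O(\varepsilon))$ factor from the almost-uniform $C^{0,1}$-fiber trivialization into the bunching condition, so that with $\varepsilon$ small (as permitted by (HH2)) the ``ideal'' bunching $\sup_m \mu^\theta(m) \lambda(m) < 1$ continues to control the perturbed quantity $\sup_m \mu^\theta(m) \lambda''(m)$ that actually drives the iteration. This is entirely parallel to the use of $\alpha''$, $\beta''$, $\lambda''_s$, $\lambda''_u$ in the proof of \autoref{lem:sheaf} and presents no new conceptual difficulty once the local estimate is in place.
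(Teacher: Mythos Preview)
Your argument is correct, and you even note the paper's alternative route yourself. The paper proceeds purely by reduction: it sets $\overline{X}_m = \{m\}$, $\overline{Y}_m = X_m$, $u = h^{-1}$, $G_m(x,y) = f_{h^{-1}(m)}(y)$, $F_m(x,y) = u(m)$, observes that $H \sim (F,G)$ satisfies an (A)(B) condition trivially, and then simply invokes \autoref{thmA}/\autoref{thmB} for part (1) and \autoref{lem1:sheaf} for part (2). Your direct fixed-point and iteration argument is exactly what that machinery unwinds to in this degenerate case, so the two approaches are equivalent in content; the paper's is shorter given the infrastructure already in place, yours is more self-contained. One small alignment point: the regularity lemma the paper actually cites is \autoref{lem1:sheaf} (the bounded-section version, with hypothesis (B3-)) rather than \autoref{lem:sheaf} (the invariant $0$-section version, with (H3)), since in this setup there is no natural invariant $0$-section of $\overline{Y}$ and one relies instead on the uniform boundedness of the fibers to control the tail term --- precisely as you do in your direct iteration. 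This does not affect the correctness of your argument, only the cross-reference.
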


Here we explain some notions.
\begin{enumerate}[(a)]
	\item That $ f_m $ depends in a uniformly Lipschitz fashion on the base point $ m $ around $ M_1 $ (see also \cite[Section 2.5]{AV10} using this expression without the word `uniformly') means that $ \widehat{f} $, the local representation of $ f $ at $ m_0 \in M_1 $ with respect to $ \mathcal{A} $, i.e.,
	\[
	\widehat{f}_{m_0}(m, x) \triangleq (\varphi^{m_0}_{m})^{-1} \circ f_{h^{-1}(m)} \circ ( \varphi^{h^{-1}(m_0)}_{h^{-1}(m)}(x) ) :  U_{m_0} (\mu^{-1}(m_0) \varepsilon_1) \times X_{h^{-1}(m_0)} \to X_{m_0},
	\]
	where $ \varphi^{m_0}, \varphi^{h^{-1}(m_0)} \in \mathcal{A} $ are two bundle charts at $ m_0, h^{-1}(m_0) $, respectively, satisfies
	\[
	|\widehat{f}_{m_0}(m, x) - \widehat{f}_{m_0}(m_0, x)| \leq M_0 d_{m_0}(m, m_0),
	\]
	where the constant $ M_0 $ does not depend on $ m_0 \in M_1 $ (see \autoref{vsII} and \autoref{rmk:holder}); see also \cite{PSW12}.

	\item Similarly, that $ \sigma $ is uniformly (locally) $ C^{0,\theta} $ (around $ M_1 $) means that $ \widehat{\sigma} $, the local representation of $ \sigma $ at $ m_0 \in M_1 $ with respect to $ \mathcal{A} $, i.e.,
	\[
	\widehat{\sigma}_{m_0}(m) \triangleq (\varphi^{m_0}_{m})^{-1} \circ \sigma (m) :  U_{m_0} (\mu^{-1}(m_0) \varepsilon_1) \to X_{m_0},
	\]
	satisfies
	\[
	|\widehat{\sigma}_{m_0}(m) - \widehat{\sigma}_{m_0}(m_0)| \leq C_0 d_{m_0}(m, m_0)^{\theta},
	\]
	where the constant $ C_0 > 0 $ does not depend on $ m_0 \in M_1 $.
\end{enumerate}

\begin{rmk}\label{rmk:bounded}
	The assumption that the fibers of $ X $ are uniformly bounded is only for simplicity. One can replace this assumption by \textbf{(a)} \emph{there is a section $ i: M \to X $ such that $ \sup_m d( f_m(i(m)), i(h(m)) ) < \infty $}, where $ d $ is the metric in $ X_m $. The existence result now follows from \autoref{thmA}, and others are the same; the regularity results would need that \textbf{(b)} \emph{$ i $ satisfies assumption (UB) of \autopageref{b3-}}. The uniqueness now means that if there is an invariant section $ \sigma' : M \to X $ of $ f $ such that $ \sup_m d(\sigma'(m), i(m)) < \infty $, then $ \sigma' = \sigma $. Note that \emph{no} requirement that $ X $ has a continuous (or H\"older) section is made.
\end{rmk}

\begin{proof}[Proof of \autoref{thm:sect1}]
	Let $ \overline{M} = M $, $ \overline{X}_{m} = \{ m \} $, $ \overline{Y}_m = X_m $, $ u = h^{-1} $, and
	\[
	G_m(x, y) = f_{h^{-1}(m)}(y) : \overline{X}_m \times \overline{Y}_{u(m)} \to \overline{Y}_m, ~
	F_m (x, y) = u(m) : \overline{X}_m \times \overline{Y}_{u(m)} \to \overline{X}_{u(m)}.
	\]
	Then $ H \sim (F, G) $. Note that $ H $ satisfies the (A)($ \alpha $, $ \lambda $) (B)($ 0 $, $ 0 $) condition for any positive constant $ \alpha < 1 $. Now \autoref{thmA} or \autoref{thmB} gives (1); that $ \sigma $ is $ C^0 $ can be proved similarly to \autoref{lem:continuity_f}. (2) is a direct consequence of \autoref{lem1:sheaf}.
\end{proof}

\begin{enumerate}[(HE1)]
	\item Let $ M $ be a $ C^1 $ Finsler manifold with Finsler metric $ d $ in its components, which satisfies (H1c) in \autoref{settingOverview}; see also \autoref{examples} for some examples.

	\item $ (X, M, \pi) $ is a $ C^{1} $ bundle with a $ C^{0} $ connection $ \mathcal{C}^X $. Take a $ C^1 $ (regular) normal bundle atlas $ \mathcal{A} $ of $ X $.
	Assume $ (X, M, \pi) $ has $ \varepsilon $-almost uniform $ C^{1,1} $-fiber trivializations on $ M_1 $ with respect to $ \mathcal{A} $ (see \autoref{uniform lip bundle}), where $ \varepsilon > 0 $ is sufficiently small.
	The fibers of $ X $ are Banach spaces (see also \autoref{fibersG}).

	\item $ h : M_1 \to M $ is a $ C^1 $ invertible map. Assume $ h $ satisfies the following:
	\begin{enumerate}[(i)]
		\item There is a function $ \mu : M \to \mathbb{R}_+ $ such that $ |Dh^{-1}(m)| \leq \mu(m) $.
		\item $ m \mapsto |Dh^{-1}(m)| $ is $ \varepsilon $-almost uniformly continuous around $ M_1 $; see \autoref{ucontinuous}.
		\item $ \sup_{m \in M_1} \mu (m) < \infty $.
	\end{enumerate}
\end{enumerate}

\begin{thm}[$ C^{k,\alpha} $ section theorem II]\label{thm:sect2}
	Let the assumptions in \autoref{thm:sect1} hold with \textnormal{(HH1), (HH2), (HH3)} replaced by \textnormal{(HE1), (HE2), (HE3)}. (Also the assumption that the fibers of $ X $ are uniformly bounded can be replaced by \autoref{rmk:bounded} \textbf{(a) (b)}). Assume $ f $ is $ C^1 $. Let $ \sigma $ be the unique invariant section of $ f $. Then the following results hold:
	\begin{enumerate}[(1)]
		\item If $ f_m $ depends in a uniformly Lipschitz fashion on the base points (around $ M_1 $ with respect to $ \mathcal{A} $) and the bunching condition holds, $ \sup_m \mu(m) \lambda (m) < 1 $, then $ \sigma $ is $ C^{1} $ (and uniformly (locally) $ C^{0,1} $ around $ M_1 $).
		\item In addition, assume that $ D^vf, \nabla f $ are uniformly (locally) $ C^{0,1} $ around $ M_1 $, $ h^{-1} $ is uniformly (locally) $ C^{1,1} $ around $ M_1 $ and the bunching condition $ \sup_m \mu^{1+\theta}(m) \lambda (m) < 1 $ holds. Then $ m \mapsto \nabla_m\sigma(m) $ is uniformly (locally) $ C^{0,\theta} $ (around $ M_1 $).
	\end{enumerate}
\end{thm}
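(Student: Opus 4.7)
The plan is to extend the reduction used in the proof of \autoref{thm:sect1} and then invoke the bounded-section regularity lemmas of \autoref{bounded}. Recall that in \autoref{thm:sect1} one sets $\overline{M} = M$, $\overline{X}_m = \{m\}$, $\overline{Y}_m = X_m$, $u = h^{-1}$, and
\[
F_m(x,y) = u(m),\qquad G_m(x,y) = f_{h^{-1}(m)}(y),
\]
so that $H \sim (F,G)$ is a bundle correspondence over $u$ satisfying (A)($\alpha,\lambda\circ h^{-1}$)(B)($0,0$) condition for any positive constant $\alpha$. The invariant section $\sigma$ obtained there is exactly the unique invariant graph of $H$ in the sense of \autoref{thmA}. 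Under the relaxation of \autoref{rmk:bounded}\textbf{(a)(b)}, any reference section $i$ of $\overline{Y}$ that is bounded relative to $\sigma$ can be used, so the $0$-section structure is unneeded; the regularity lemmas of \autoref{bounded} apply with $\lambda_s = \beta' = 0$ and $\lambda_u(m) = \lambda(h^{-1}(m))$, while the base-derivative bound from \autoref{settingOverview} is $\mu(m) = |Dh^{-1}(m)|$.

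For part (1), I would apply \autoref{smoothbase1} in case (b)''(c)''. The hypotheses (HE1)--(HE3) translate directly to (H1c), (H2) and (H4b) of \autoref{settingOverview} for the reformulated bundles; the assumption that $f$ is $C^1$ gives that $G_m(x,y)$ is $C^1$ in $y$, while $F_m$ is independent of $(x,y)$. The hypothesis that $f_m$ depends in a uniformly Lipschitz fashion on the base points yields the base-Lipschitz estimate \eqref{ccc2g} with $\gamma = 1$, $\zeta = 0$; the corresponding estimate for $F_m$ is trivial. The uniform boundedness of the fibres (or the weaker alternative in \autoref{rmk:bounded}) supplies the bounded-section hypothesis $\sup_m \widetilde{d}(f_m(\overline{Y}_m), i_Y(m)) < \infty$. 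With $\lambda_s = 0$, condition (c)'' collapses to $\lambda_u < 1$ and $\lambda_u\mu < 1$, i.e.\ $\sup_m\lambda(m) < 1$ (already assumed) together with the bunching hypothesis $\sup_m\mu(m)\lambda(m) < 1$. \autoref{smoothbase1} then produces a $C^0$ vector bundle map $K$ over $\sigma$ satisfying the analogue of \eqref{mainMFW}, and $\sigma$ is $C^1$ with $\nabla\sigma = K$.

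For part (2), I would apply \autoref{final1} in case (b)''(c)''. The assumptions that $D^v f$ and $\nabla f$ are uniformly (locally) $C^{0,1}$ around $M_1$, combined with $h^{-1}$ being uniformly (locally) $C^{1,1}$ around $M_1$ and the $\varepsilon$-almost uniform $C^{1,1}$-fibre trivialization of $X$ with respect to $\mathcal{A}$ and $\mathcal{C}^X$, deliver the Lipschitz estimates \eqref{ccc2f}\eqref{ccc2g} (with $\zeta = 0$), \eqref{lipzz}, and \eqref{ligmm} for the reformulated $F,G$: $F$ carries no $(x,y)$ dependence, while the fibre- and base-derivatives of $G$ are inherited from those of $f$ via the transition maps of $\mathcal{A}$ and the chain rule (\autoref{lem:chain}). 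With $\lambda_s = 0$, the spectral condition in (c)'' reduces to $\lambda_u < 1$, $\lambda_u\mu < 1$, and $\mu^{*\theta}\lambda_u\mu < 1$, the last of which under the abbreviation of \autoref{absgc} follows from $\sup_m\mu(m)^{1+\theta}\lambda(m) < 1$. The conclusion of \autoref{final1} then yields $|\widehat{K}_{m_0}(m_1,\cdot) - \widehat{K}_{m_0}(m_0,\cdot)| \leq C|m_1 - m_0|^\theta$, that is, $m \mapsto \nabla_m\sigma(m)$ is uniformly (locally) $C^{0,\theta}$ around $M_1$.

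The main obstacle will be bookkeeping: the correspondence built in \autoref{thm:sect1} runs over $u = h^{-1}$, so the fibre bound $\lambda$, the base-derivative modulus $\mu$, and the local uniform properties required by the regularity lemmas are indexed by $h^{-1}(m)$ rather than $m$. Verifying that the abbreviated spectral gap conditions of \autoref{absgc} transport correctly under this reindexing, so that the $*$-products along $u$-orbits precisely match the bunching conditions $\mu\lambda < 1$ and $\mu^{1+\theta}\lambda < 1$ stated on the original base, is where most of the care is needed. Beyond that, expressing the local representations $\widehat{G}_{m_0}(m,x,y)$ through the transition maps of $\mathcal{A}$ and the chart $\chi_{m_0}$ in (H1c), and reading off the estimates \eqref{ccc2g}, \eqref{lipzz}, \eqref{ligmm} from those of $f$, is a routine application of \autoref{lem:chain} together with the bounds already encoded in the $\varepsilon$-almost uniform $C^{1,1}$-fibre trivialization of $X$.
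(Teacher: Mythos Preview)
Your proposal is correct and follows essentially the same approach as the paper: reduce to the setup of \autoref{thm:sect1} with $\overline{X}_m$ a zero space, and then invoke \autoref{smoothbase1} and \autoref{final1} (case (b)$''$(c)$''$) from \autoref{bounded}. The paper's proof is a single sentence to this effect; your additional verification of the spectral-gap reductions (with $\lambda_s = 0$) and the reindexing under $u = h^{-1}$ is sound bookkeeping that the paper leaves implicit.
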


	Some interpretations of the conditions in the above theorem are needed. We use the notations in assumption $ (\blacksquare) $ (on \autopageref{C1MM}).
	\begin{enumerate}[(a)]
		\item We say that $ D^vf, \nabla f $ are uniformly (locally) $ C^{0,1} $ around $ M_1 $, which means that (i) $ Df_{m}(\cdot), \nabla_{m} f_{m}(\cdot) \in C^{0,1} $ uniformly for $ m \in M $, and (ii) for the local representations $ D_x\widehat{f}_{m_0}, \widehat{D}_{m} f_{m_0} $ of $ D^vf, \nabla f $ with respect to $ \mathcal{A}, \mathcal{M} $, one has
		\begin{multline*}
		\max\left\{ |D_x\widehat{f}_{m_0}(m_1,x) - D_x\widehat{f}_{m_0}(m_0,x) |, |\widehat{D}_{m} f_{m_0}(m_1,x) - \widehat{D}_{m} f_{m_0}(m_0,x) | \right\} \\ 
		\leq M_0d(m_1,m_0)
		\end{multline*}
		for all $ m_1 \in U_{m_0} (\mu^{-1}(m_0) \varepsilon_1) $, $ x \in X_{m_0} $, $ m_0 \in M_1 $, where $ M_0 $ is independent of $ m_0 \in M_1 $; see also \autoref{rmk:hFG}.
		\item That $ h^{-1} $ is uniformly (locally) $ C^{1,1} $ around $ M_1 $ means that for the local representation $ \widehat{Dh^{-1}}_{m_0} $ of $ Dh $ at $ m_0 $, defined by
		\[
		\widehat{Dh^{-1}}_{m_0} (m)v \triangleq D\chi_{h^{-1}(m_0)}(h^{-1}(m)) Dh^{-1}(m)D\chi_{m_0}^{-1}(\chi_{m_0}(m))v,
		\]
		where $ (m,v) \in U_{m_0}(\epsilon') \times T_{m_0}M $, one has 
		\[
		\|\widehat{Dh^{-1}}_{m_0} (m) - Dh^{-1}(m_0)\| \leq C_0 d(m,m_0),~m \in U_{m_0}(\epsilon'_1), m_0 \in M_1,
		\]
		for some small $ \epsilon'_1 > 0 $ and some constant $ C_0 > 0 $ (independent of $ m_0 $); see also \autoref{def:manifoldMap}.
		\item Similarly, that $ m \mapsto \nabla_m\sigma(m) $ is uniformly (locally) $ C^{0,\theta} $ (around $ M_1 $) means that for the local representations $ \widehat{D}_{m} \sigma_{m_0} $ of $ \nabla \sigma $ with respect to $ \mathcal{A}, \mathcal{M} $, one has
		\[
		| \widehat{D}_{m}\sigma_{m_0}(m_1) - \nabla_{m_0}\sigma(m_0) | \leq C_0 d(m_1, m_0)^{\theta}, ~m_1 \in U_{m_0}(\epsilon''), m_0 \in M_1,
		\]
		for some small $ \epsilon'' > 0 $ and some constant $ C_0 $ independent of $ m_0 \in M_1 $.
		\item See also \autoref{lipbase} (with $ \hat{\eta}^{X,0}_{m_0}(\epsilon_0, x) , \hat{\eta}^{X,1}_{m_0}(\epsilon_0, x)  $ smaller than a fixed constant in this case) and \autoref{lipcon} for a classical way of describing the $ C^{1,1} $ continuity of $ \sigma $ when the transition maps (with respect to $ \mathcal{A} $) and $ \mathcal{C}^X $ are uniformly (locally) Lipschitz.
	\end{enumerate}

\begin{proof}
	Using the notations in the proof of \autoref{thm:sect1} and considering $ \overline{X}_m $ as a zero space, now from \autoref{smoothbase1} and \autoref{final1} we obtain the results.
\end{proof}

\section{Invariant foliations}\label{foliations}

Invariant foliations have been investigated by many authors: see e.g. \cite{HPS77, Fen72, PSW97, PSW12, CY94} in the finite-dimensional setting, \cite{CLL91, BLZ00, BLZ99, Rue82, LL10} in the infinite-dimensional setting, \cite{Cha08} even in metric spaces and many others in different settings, too numerous to list here. The settings include different hyperbolicity assumptions (uniform hyperbolicity, non-uniform hyperbolicity, partial hyperbolicity, etc.), different types of `dynamics' (maps, semiflows, cocycles, random dynamical systems, etc.), and different state spaces (compact Riemannian manifolds, Riemannian manifolds having bounded geometry, $ \mathbb{R}^n $, Banach spaces, metric spaces, etc.). Invariant foliations (and invariant manifolds) are extremely useful in the study of qualitative properties of a dynamical system near invariant sets. And they also provide some coordinates such that the dynamical system can be decoupled (see e.g. \autoref{decoupling} and \cite{Lu91}).
Roughly, strong (un)stable foliations exist in partially hyperbolic systems, but center foliations do not (see e.g. \cite{RHRHTU12}). The notion of \emph{dynamical coherence} is important in the study of center foliations (see \cite{HPS77}). To avoid assuming dynamical coherence, Burns and Wilkinson \cite{BW10} introduced `fake invariant foliations' (see also \autoref{fake} below) which are locally invariant.

The regularity of invariant foliations is of crucial importance. For example, the H\"older and smooth regularity of foliations (or more precisely holonomy maps) are not only vital in the study of stably ergodic behavior (see e.g. \cite{PS97}) but also important for higher smooth linearization (see e.g. \cite{ZZJ14} and \autoref{rmk:decoupling}). Usually, the smoothness of leaves of strong (un)stable foliations is the same as that of dynamical systems (see e.g. \cite{HPS77}). However, the foliations may only be H\"older. The absolute continuity of invariant foliations usually requires the H\"older continuity of the foliations as preparations; see e.g. \cite{LYZ13, BY17a} in the infinite-dimensional setting.

As a second application of our results, we present some results on strong (un)stable foliations, fake invariant foliations and holonomies over laminations for bundle maps, which are extremely useful in the study of partially hyperbolic dynamics.
\emph{For some notions related to foliations we will use, see \autoref{regFoliation}}.

\subsection{Invariant foliations for bundle maps}

The following result was re-proved by many authors: see e.g. \cite{HPS77, Sta99, Cha04, Cha08} and \cite{CHT97,CLL91, Hal61, Yi93, CY94, CL97}.

\begin{thm}[Invariant foliations: global version]\label{thm:bundlemaps}
	Let $ (X, M, \pi_1) $, $ (Y, M, \pi_2) $ be bundles with metric fibers. Let $ u: M \to M $ be a map. Let $ H \sim (F,G): X \times Y \to X \times Y $ be a bundle map over $ u $ satisfying (i) the (A$'$)$(\alpha, \lambda_u)$ (B)$(\beta; \beta', \lambda_s)$ condition (\textbf{or} (i$ ' $) the (A)$(\alpha, \lambda_u)$ (B)$(\beta; \beta', \lambda_s)$ condition), where $\alpha, \lambda_u, \beta, \beta', \lambda_s$ are \emph{bounded} functions $ M \rightarrow \mathbb{R}_+ $. Set $ \vartheta(m) = (1 - \alpha(m)\beta'(u(m)))^{-1} $ if (i) holds, and $ \vartheta(m) = 1 $ if (i$ ' $) holds. In addition,
	\begin{enumerate}[(a)]
		\item (angle condition) $ \sup_m \alpha(m) \beta'(u(m)) < 1 $, $ \beta'(u(m)) \leq \beta(m)$, $ \forall m \in M $,
		\item (spectral condition) $ \sup_m  \lambda_s(m) \lambda_u(m) \vartheta(m)  < 1 $.
	\end{enumerate}

	\textbf{Existence}. There exist unique $ W^s_m \triangleq \bigsqcup_{q \in X_m \times Y_m} W^s_m(q) $, $ m \in M $, such that the following (1), (2) hold.
	\begin{enumerate}[(1)]
		\item $ q \in W^s_m(q) $ and each $ W^s_m(q) $ is a Lipschitz graph of $ X_m \to Y_m $ with Lipschitz constant no more than $ \beta'(m) $;
		\item $ H_m W^s_m(q) \subset W^s_{u(m)}(H_m(q)) $, $ q \in X_m \times Y_m $, $ m \in M $.

		In addition,
		\item $ W^s_m(q) = \{ p \in X_m \times Y_m: d_{u^n(m)}( H^{(n)}_m(p), H^{(n)}_m(q) ) \lesssim \varepsilon^{(n)}(m), n \to \infty \} $, where $ \varepsilon(\cdot) $ is a function $ M \to \mathbb{R}_+ $ satisfying $ \lambda_s(m) + \varsigma < \varepsilon(m) < (\lambda_u(m)\vartheta(m))^{-1} - \varsigma $ for all $ m \in M $, where $ \varsigma $ is small and $ d_m $ is the metric in $ X_m \times Y_m $;
		\item for each $ m \in M $, $ W^s_m $ is a $ C^0 $ `foliation' of $ X_m \times Y_m $; in particular, if $ W^s_m(q_1) \cap W^s_m(q_2) \neq \emptyset $, then $ q_1 \in W^s_m(q_2) $.

		\noindent\textbf{Regularity}.
		\item If $ \sup_m \lip H_m(\cdot) < \infty $, then $ W^s_m $ is a uniformly (locally) H\"older `foliation' (uniformly for $ m $).
		\item Moreover, suppose $ X_m, Y_m $, $ m \in M $, are Banach spaces (see also \autoref{fibersG}).
		\begin{enumerate}[(i)]
			\item If $ F_m(\cdot), G_m(\cdot) $ are $ C^1 $ for all $ m \in M $, then every $ W^s_m(q) $ is a $ C^1 $ graph. If $ F_m(\cdot), G_m(\cdot) $ are $ C^{1, \gamma} $ ($ \gamma > 0 $) uniformly for $ m $, then every $ W^s_m(q) $ is a $ C^{1,\zeta} $ graph (uniformly for $ m, q $) where $ 0< \zeta \leq \gamma $; if in addition $ \sup_m \lip H_m(\cdot) < \infty $, then the tangent distribution $ T_q W^s_m(q) $ depends in a uniformly (locally) H\"older fashion on $ q \in X_m \times Y_m $, where the H\"older constants are independent of $ m $.

			\item If $ \sup_m \lip H_m(\cdot) < \infty $, $ F_m(\cdot), G_m(\cdot) \in C^{1,1} $ uniformly for $ m $, and the following bunching condition holds:
			\[
			\sup_m \lip H_m(\cdot) \lambda_s(m) \lambda_u(m) \vartheta(m) < 1,
			\]
			then $ W^s_m $ is a $ C^1 $ foliation of $ X_m \times Y_m $. In addition, assume (a) $ H_m(\cdot) \in C^{1,1} $ uniformly for $ m $, and (b) $ \sup_m\lambda_s(m) < 1 $, or (b$ ' $)
			\[
			\sup_m\lambda^2_s(m) \lambda_u(m) \vartheta(m) < 1, ~\sup_m \lip H_m(\cdot) \lambda^2_s(m) \lambda_u(m) \vartheta(m) < 1.
			\]
			Then the foliation $ W^s_m $ is uniformly (locally) $ C^{1,\varsigma} $ for some $ \varsigma > 0 $ (uniformly for $ m $).
		\end{enumerate}
	\end{enumerate}
\end{thm}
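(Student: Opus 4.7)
\smallskip

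\noindent\textbf{Proof plan.} The proof splits naturally into existence (conclusions (1)--(3)), the foliation structure (conclusion (4)), the uniform H\"older regularity (conclusion (5)), and the smoothness statements (conclusion (6)).

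For existence, fix $m \in M$ and $q \in X_m \times Y_m$ and consider the one-sided orbit $\widehat{M}_{m} = \{u^n(m): n \geq 0\}$ together with the forward orbit $q_n \triangleq H^{(n)}_m(q) \in X_{u^n(m)} \times Y_{u^n(m)}$. Then $n \mapsto q_n$ is an invariant section for the restriction $H|_{\widehat{M}_m}$. The spectral condition $\sup_m\lambda_s(m)\lambda_u(m)\vartheta(m) < 1$ together with the angle condition is precisely hypothesis (ii) (or (ii)$'$) of \autoref{thmA} with $\varepsilon_1 \equiv 0$. Thus \autoref{thmA} produces a unique bundle map $f^{q}: X|_{\widehat{M}_m} \to Y|_{\widehat{M}_m}$ with $f^q_{u^n(m)}$ Lipschitz (constant $\beta'(u^n(m))$) whose graph passes through each $q_n$ and is invariant under $H$. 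Set $W^s_m(q) \triangleq \graph f^q_m$. Conclusions (1) and (2) are immediate, and the asymptotic characterization (3) is read off from the description of the stable graph given in \autoref{properties} (items (2)--(5) there), applied to the orbit of $q$.

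For the foliation property (4), observe that by (3) the relation $p \sim q \iff d_{u^n(m)}(H^{(n)}_m(p), H^{(n)}_m(q)) \lesssim \varepsilon^{(n)}(m)$ is transitive, so either $W^s_m(p) = W^s_m(q)$ or $W^s_m(p) \cap W^s_m(q) = \emptyset$. That $W^s_m$ is a $C^0$ partition of $X_m \times Y_m$ into Lipschitz graphs will be obtained by reading $q \mapsto W^s_m(q)$ as a continuous graph-valued map, which fits into the framework of the next step.

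For the regularity statements (5) and (6), the plan is to reinterpret the \emph{family} of leaves $\{W^s_m(q)\}_{q \in X_m \times Y_m}$ as a single invariant graph for an auxiliary bundle correspondence, and then to invoke the regularity machinery of \autoref{stateRegularity}. Take as new base space $\widetilde{M} = X \times Y$ with the projection $\tilde\pi: \widetilde{M} \to M$ and the self-map $\tilde u = H: \widetilde{M} \to \widetilde{M}$; over $q \in X_m \times Y_m \subset \widetilde{M}$ take fibers $\widetilde{X}_q = X_m$ and $\widetilde{Y}_q = Y_m$, and define a bundle correspondence $\widetilde{H}$ over $\tilde u$ whose generating map at $q$ is the translate of $(F_m, G_m)$ by the orbit pair $(q, H_m(q))$ (the translation is taken in the vector sense for the Banach-fiber case (6), and more generally by recentering the metric estimates in the metric-fiber case (5)). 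The zero section of $\widetilde{X} \times \widetilde{Y}$ is then tautologically invariant, and its unique invariant Lipschitz graph given by \autoref{thmA} is precisely the bundle map $q \mapsto f^q_m(\cdot) - (\text{value at }q)$, i.e.\ the family of leaves. The uniform Lipschitz hypothesis $\sup_m \lip H_m < \infty$ in (5) turns $\tilde u = H$ into a uniformly Lipschitz map on $\widetilde M$ with Lipschitz constant controlling the $\mu$ in the regularity lemmas; \autoref{lem:sheaf} (applied to $\widetilde H$) then yields the uniform (local) H\"older continuity of $q \mapsto W^s_m(q)$. For (6), the Banach-fiber assumption lets us apply \autoref{lem:leaf1}, \autoref{lem:leaf1a}, \autoref{lem:leafk} (fiber-smoothness of each leaf), \autoref{lem:base0} (H\"older dependence of the tangent distribution on the basepoint $q$), \autoref{smoothbase} ($C^1$ regularity with bunching $\lip H \cdot \lambda_s \lambda_u \vartheta < 1$), and finally \autoref{lem:final} (to upgrade to $C^{1,\varsigma}$ under the stronger bunching $\lip H \cdot \lambda_s^2 \lambda_u \vartheta < 1$).

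\smallskip

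\noindent\textbf{Main obstacle.} The delicate point is the setup of the auxiliary bundle $(\widetilde{M}, \widetilde{X} \times \widetilde{Y}, \widetilde{H})$ and the verification that the uniform structural hypotheses (H1)--(H5) of \autoref{settingOverview} pass to it. The base $\widetilde{M} = X \times Y$ inherits a locally metrizable structure from $M$ together with the fiber metrics on $X_m \times Y_m$; the Lipschitz / smoothness / connection assumptions on $X, Y$ must be promoted to corresponding assumptions on $\widetilde{X} \times \widetilde{Y}$ (in particular the $\varepsilon$-almost uniform $C^{1,1}$-fiber trivializations needed for (6)(ii)), and the almost-continuity of the functions $\alpha, \beta, \lambda_s, \lambda_u$ entering (A$'$)(B) must be preserved under the translation procedure. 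In the metric-fiber case (5), care is needed since there is no linear structure on the fibers of $X, Y$; one works instead with the distance-based (A$'$)(B) estimates for the shifted system, using uniqueness and the asymptotic characterization (3) to identify the invariant graph of $\widetilde{H}$ with the family of leaves. Once this bookkeeping is done, the regularity conclusions (5) and (6) follow mechanically from the corresponding lemmas in \autoref{stateRegularity} with the bunching conditions matching those stated.
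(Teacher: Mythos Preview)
Your approach is essentially the same as the paper's: introduce the auxiliary base space $\widehat{M} = X \times Y$ with self-map $H$, put $\widehat{X}_{(m,p)} = X_m$, $\widehat{Y}_{(m,p)} = Y_m$, and read the family of leaves as a single invariant graph. Two differences are worth noting.

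First, the paper applies \autoref{thmA} \emph{once} to the auxiliary bundle to get existence of all leaves simultaneously, rather than running the orbit-by-orbit argument you describe for (1)--(3); this is cleaner and yields uniqueness in one stroke.

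Second, and more to the point, the paper avoids your translation step entirely. Instead of shifting $(F_m,G_m)$ so that the zero section becomes invariant, it takes the section $i(m,p) = p$ directly; this is tautologically invariant under $\widehat{H}(m,p,q) = (H(m,p), H_m(q))$ and needs no linear structure. For the H\"older statement (5) in metric fibers, $i$ is then not a $0$-section but satisfies the Lipschitz condition (B3), so the paper invokes \autoref{h3*} rather than the $0$-section version of \autoref{lem:sheaf}. This sidesteps the ``recentering the metric estimates'' issue you flag as the main obstacle. Relatedly, your concern about verifying (H1)--(H5) for the auxiliary bundle is largely dissolved by equipping $\widehat{M} = X \times Y$ with the \emph{fiber topology}: the bundles $\widehat{X}, \widehat{Y}$ are then trivial over each fiber $X_m \times Y_m$, so the uniform $C^{k,1}$-fiber trivialization conditions hold automatically. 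With that setup the regularity lemmas (\autoref{lem:leaf1}, \autoref{lem:leaf1a}, \autoref{lem:base0}, \autoref{smoothbase}, \autoref{lem:final}) apply exactly as you indicate.
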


\begin{rmk}
	\begin{enumerate}[(a)]
		\item The H\"older exponent $ \theta $ in item (5) can be chosen as $ ((\mu / \lambda_s)^*)^\theta \vartheta \lambda_s \lambda_u < 1 $ (see \autoref{absgc}), where $ \mu(m) = \lip H_m(\cdot) $. If $ \sup_m \diam H_m(X_m \times Y_m) < \infty $ or $ \theta = 1 $, then $ ((\mu / \lambda_s)^*)^\theta \lambda_s \lambda_u \vartheta < 1 $ could be read as
		\[
		\sup_m(\mu(m) / \lambda_s (m) )^\theta \lambda_s(m) \lambda_u(m) \vartheta(m) < 1.
		\]
		\item Let $ W^s_m(q) = \graph f_{(m,q)} $, where $ f_{(m,q)}: X_m \to Y_m $. That the foliation $ W^s_m $ is uniformly (locally) $ C^{k,\gamma} $ ($ \gamma \geq 0 $) uniformly for $ m $ means that
		\[
		(q,x) \mapsto f_{(m,q)} (x) :( X_m \times Y_m ) \times X_m \to Y_m
		\]
		is (locally) $ C^{k,\gamma} $, and if $ \gamma > 0 $, the (locally) $ C^{k,\gamma} $ constant is less than a fixed constant independent of $ m $. $ W^{s}_m (q) $ is a $ C^{k,\gamma} $ graph (uniformly for $ m, q $) if $ f_{(m,q)} (\cdot) \in C^{k,\gamma} $ with $ \sup_{m,q}|f_{(m,q)} (\cdot)|_{C^{k,\gamma}} < \infty $ (if $ \gamma > 0 $) and $ (q,x) \mapsto D^{i}f_{(m,q)}(x) $ being $ C^0 $, $ i=0,1,\ldots,k $.
		\item In the first statement of item (6) (i), suppose further that $ DF_m(\cdot), DG_m(\cdot) $, $ m \in M $, are uniformly continuous; then $ q \mapsto f_{(m,q)}(\cdot) $ is uniformly continuous in $ C^1 $-topology on bounded subset of $ X_m $, i.e. for any bounded subset $ A \subset X_m $, one has
		\[
		\sup_{x \in A} | f_{(m,q')}(x) - f_{(m,q)}(x) | + \sup_{x \in A} \| Df_{(m,q')}(x) - Df_{(m,q)}(x) \| \to 0,
		\]
		as $ q' \to p $, uniformly for $ p $. This follows from \autoref{lem:continuity_f} and \autoref{lem:baseK1}.
		\item We do not consider the continuity and smoothness of $ m \mapsto W^s_m(q) $. This is what our general results deal with, so see \autoref{stateRegularity} for a detailed study. A simple result is that if $ M $ is a topological space, $ X, Y $ are $ C^{0,1} $-fiber bundles (see \autoref{def:lipbundle}), and $ u, H $ (also $\alpha, \lambda_u, \beta, \beta', \lambda_s$) are continuous, then $ (m,q) \mapsto W^s_m(q) $ is continuous pointwise, i.e., $ (m,q,x) \mapsto f_{(m,q)}(x) $ is continuous (as a bundle map $ (X \times Y) \times X \to Y $ over $ \id $).
	\end{enumerate}
\end{rmk}

\begin{proof}[Proof of \autoref{thm:bundlemaps}]
	Let $ \widehat{M} = X \times Y $ be a bundle over $ M $, where the topology of $ \widehat{M} $ is given by the fiber topology (see \autoref{immersed}). Define the bundle $ \widehat{X} \times \widehat{Y} $ over $ \widehat{M} $ as $ (X \times Y) \times (X \times Y) $. Consider the bundle map $ \widehat{H} $ over $ H $, defined by
	$ \widehat{H}(m,p, q) = (H(m,p), H_m(q))  $, where $ (m,p) \in \widehat{M} $, $ q \in \widehat{X}_{(m,p)} \times \widehat{Y}_{(m,p)} = X_m \times Y_m $.
	Set $ i(m,p) = p: \widehat{M} \to \widehat{X} \times \widehat{Y} $. Then $ i $ is an invariant section of $ \widehat{H} $. Now the existence of $ W^s_m $ such that (1) (2) hold follows from \autoref{thmA} applied to $ \widehat{H} $, $ \widehat{X} \times \widehat{Y} $, $ H $, $ \widehat{M} $, $ i $; (3) follows from \autoref{properties} and (4) is a consequence of \autoref{lem:continuity_f} and (3); (5) follows from \autoref{h3*}.

	The first and second assertions in item (6) (i) follow from \autoref{lem:leaf1} and \autoref{lem:leaf1a}, respectively. The third statement in item (6) (i) follows from \autoref{lem:base0}. The results in item (6) (ii) are consequences of \autoref{smoothbase} and \autoref{lem:final}. Here note that $ F_m(\cdot), G_m(\cdot) \in C^{1} $ implies $ H_m(\cdot) \in C^1 $.
\end{proof}

For a local version, one can use \autoref{lem:general} and radial retractions (or smooth bump functions or blid maps (see \autoref{bump}) if one needs smooth results). See also \autoref{thm:fake1} below for a local version for maps. The parallel results for continuous cocycles, which we do not give here, can be deduced from the bundle map case; see also \cite{Che18c}.

\subsection{Strong stable laminations}\label{sslaminations}

Let us consider the local version of \autoref{thmA} about maps, as well as that of the relevant results in \autoref{properties}. The parallel results for semiflows also hold. The results can hold for Lipschitz maps in metric spaces (which include two-sided shifts of finite type and restrictions of Axiom A diffeomorphisms to hyperbolic basic sets) and smooth maps in Finsler (Banach) manifolds. First we re-prove a classical result in the finite-dimensional setting to show how our main results can be applied.

\noindent\textbf{\emph{(Setting)}}: Let $ f: M \to M $ be a $ C^{1,1} $ map. For simplicity, assume $ M $ is a smooth compact Riemannian manifold without boundary and $ f $ is invertible. Assume $ f $ is partially hyperbolic with $ TM = X^{s} \oplus X^{c} \oplus X^{u} $ where $ X^{s} $, $ X^{c} $, $ X^{u} $ are three continuous vector subbundles of $ TM $ and invariant under $ Df $, and $ Df $ in $ X^{s} $ and $ X^{u} $ uniformly contracts and expands, respectively. Let $ T^\kappa f = Df|_{X^\kappa} $, $ \kappa = s, c, u $. Then
\[
\| T^s_m f \| < 1, ~ \| (T^u_m f)^{-1} \| < 1,~ \| T^s_m f \| \cdot \| (T^c_m f)^{-1} \| < 1, ~\| T^c_m f \| \cdot \| (T^u_m f)^{-1} \| < 1, ~ m \in M.
\]
The strong stable (resp. unstable) foliation $ W^{s} $ (resp. $ W^{u} $) exists uniquely such that $ T_mW^{s}_m = X^{s}_m $ and $ f(W^{s}_m) \subset W^{s}_{f(m)} $ (resp. $ T_mW^{u}_m = X^{u}_m $ and and $ f^{-1}(W^{u}_m) \subset W^{u}_{f^{-1}(m)} $), where $ W^s_m $ (resp. $ W^u_m $) is the leaf of $ W^s $ (resp. $ W^{u} $) through $ m $; see \cite{HPS77} or \autoref{corr:smoothcase} below.
Assume there are ($ f $-invariant) foliations $ W^{cs} $ and $ W^{cu} $ such that $ TW^{cs} = X^{s} \oplus X^{c} $, $ TW^{cu} = X^{c} \oplus X^{u} $. Under this context, $ H $ is said to be \emph{dynamically coherent} (see \cite{HPS77}) and it also yields the integrability of $ X^c $. By the characterization of $ W^s $, we see $ W^s $ subfoliates $ W^{cs} $. In \cite{PSW97, PSW00}, Pugh, Shub and Wilkinson showed the following:
\begin{thm}[\cite{PSW97, PSW00}]\label{holfoliation}
	Under the above setting, if the following center bunching condition holds:
	\[
	\| T^{c}_mf \| \cdot \|T^{s}_m f\| \cdot \| (T^{c}_m f)^{-1} \| < 1,~m \in M,
	\]
	then $ W^{s} $ is a $ C^1 $ foliation inside each leaf of $ W^{cs} $. In fact, it is also $ C^{1,\zeta} $ ($ \zeta > 0 $) under the additional assumption $ \| T^c_m f \|^2 \cdot \| (T^u_m f)^{-1} \| < 1, ~ m \in M $. (A similar fact holds about $ W^u $, $ W^{cu} $ as well.)
\end{thm}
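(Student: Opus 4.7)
The plan is to reduce the claim to \autoref{thm:bundlemaps} (item (6)(ii)) applied to the restriction of $f$ to the $W^{cs}$-foliation, where $W^s$ re-emerges as the invariant graph foliation over the ``stable'' direction $X^s$ relative to the ``center'' direction $X^c$. The center bunching hypothesis will be exactly the spectral gap condition required for $C^1$ regularity of the invariant foliation, and the supplementary hypothesis will match the condition for the $C^{1,\varsigma}$ upgrade.

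First I would set up the correct bundle framework. Because $f$ is assumed dynamically coherent, $f$ permutes the $W^{cs}$-leaves and restricts to a map $\widetilde{f}: W^{cs} \to W^{cs}$ which is $C^{1,1}$ when $W^{cs}$ is equipped with its leaf topology (see \autoref{immersed} and \autoref{regFoliation}); each connected component is a $C^{1,1}$ immersed submanifold of $M$ whose tangent bundle is $X^s \oplus X^c$. Since $M$ is compact and the splitting $TM = X^s \oplus X^c \oplus X^u$ is $C^0$, the leaves have bounded geometry uniformly in the base point and the assumptions (H1)--(H2) of \autoref{settingOverview} hold for the base space $M_1 = W^{cs}$ (leaf topology) with a preferred regular bundle atlas obtained from exponential charts along leaves, after possibly shrinking to get $\varepsilon$-almost continuity. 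The partial hyperbolicity splitting is only H\"older-continuous on $M$, but this continuity is sufficient to verify (H5) with small $\varepsilon$, which is all that is needed.

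Next, I apply a local form of \autoref{thm:bundlemaps}, with ``$X$''$=X^s$ and ``$Y$''$=X^c$, to the generating bundle pair associated with $\widetilde{f}$; the generating pair is produced by \autoref{lem:exampleH} after truncating with a smooth blid map (see \autoref{bump}) to make the data globally defined in each fiber. In these coordinates the spectral data read
\[
\lambda_s(m) = \|T^s_m f\| + \varsigma,\qquad \lambda_u(m) = \|(T^c_m f)^{-1}\| + \varsigma,
\]
with $\alpha(m),\beta(m)\to 0$ and hence $\vartheta(m)\to 1$ as the truncation radius shrinks, while the fiber Lipschitz constant of the generating map is controlled by $\|T^c_m f\|+\varsigma$. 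The characterization in \autoref{properties} identifies the invariant graph foliation produced by the theorem with $W^s$ restricted to $W^{cs}$, by the forward-exponential-contraction description of the strong stable leaves and the uniqueness statement in \autoref{thm:bundlemaps}(3).

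The $C^1$ conclusion is then immediate from item (6)(ii) of \autoref{thm:bundlemaps}: the required bunching $\sup_m \mathrm{Lip}(\widetilde{f}_m)\cdot \lambda_s(m)\lambda_u(m)\vartheta(m) < 1$ becomes, in the limit $\varsigma\to 0$, exactly
\[
\sup_m \|T^c_m f\|\cdot\|T^s_m f\|\cdot\|(T^c_m f)^{-1}\| < 1,
\]
the center bunching hypothesis. For the $C^{1,\varsigma}$ improvement, the alternative (b$'$) in item (6)(ii), namely $\sup_m \lambda_s^2\lambda_u\vartheta<1$ together with $\sup_m \mathrm{Lip}(H)\lambda_s^2\lambda_u\vartheta<1$, is verified by dualizing via \autoref{dual} (using invertibility of $f$) and transferring the excess gap onto the $T^uf$ factor via the pseudo-duality between $(W^s,W^{cs})$ and $(W^u,W^{cu})$, which yields the stated supplementary condition $\|T^c_m f\|^2\cdot\|(T^u_m f)^{-1}\|<1$.

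The main obstacle will be two-fold. First, the base space $W^{cs}$ in the leaf topology is non-separable and typically not metrizable as a whole, so the uniform assumptions of \autoref{settingOverview} must be checked fiber-by-fiber (connected component by connected component), using the bounded geometry of the leaves uniformly in the base point; this is where compactness of $M$ and the fact that the plaques of $W^{cs}$ are embedded $C^{1,1}$ disks of uniform size play a decisive role. Second, one must translate the infinitesimal hyperbolicity data $(T^sf,T^cf)$ into (A)($'$)(B) conditions for the generating pair of $\widetilde{f}$; the transition is made through \autoref{lem:c1} together with the local version \autoref{lem:cAB}, which forces us to choose the truncation radius small enough that the angle parameters $\alpha\beta'$ stay below $1/2$ so that $\vartheta$ is close to $1$ and no spurious spectral gap is lost. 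Once these two points are in place, the remaining verifications (uniform size trivializations and Lipschitz continuity of the transition maps respecting $\mathcal{A},\mathcal{B}$) are routine, and the conclusion follows by direct application of items (6)(ii) of \autoref{thm:bundlemaps} read through \autoref{thm:LocalRegularity}.
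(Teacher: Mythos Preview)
Your proposal has a genuine gap at its first step: you assert that the $W^{cs}$-leaves are $C^{1,1}$ immersed submanifolds and hence that $\widetilde{f}:W^{cs}\to W^{cs}$ is $C^{1,1}$ in the leaf topology. This is not established by the standing hypotheses --- the leaves of $W^{cs}$ are in general only $C^1$. That matters because your application of \autoref{thm:bundlemaps}\,(6)(ii) with $X=X^s$, $Y=X^c$ makes the fiber maps $H_m$ (hence $F_m,G_m$) equal to $\chi_{f(m)}\circ\widetilde f\circ\chi_m^{-1}$, where $\chi_m$ is a plaque chart of the $W^{cs}$-leaf. Since $\chi_m$ is only $C^1$, the maps $F_m(\cdot),G_m(\cdot)$ are only $C^1$, and the hypothesis ``$F_m(\cdot),G_m(\cdot)\in C^{1,1}$ uniform for $m$'' of \autoref{thm:bundlemaps}\,(6)(ii) is not available. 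The same issue blocks the $C^{1,\varsigma}$ upgrade, which needs $H_m(\cdot)\in C^{1,1}$.

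The paper's proof is designed precisely to avoid this loss of regularity. It takes the fiber bundle to be $\widehat X^{s}\oplus\widehat X^{cu}$ --- the \emph{full} ambient tangent space $T_mM$ --- over the base $\widehat M=W^{cs}$ with leaf topology, so that the fiber maps come from the ambient exponential charts of $M$ and are genuinely $C^{1,1}$. The price is that the base $\widehat M$ is only $C^1$, so the local representations $\widetilde F_{m_0},\widetilde G_{m_0}$ are \emph{not} $C^{1,1}$ in the base variable; instead the paper proves a dedicated sublemma showing that the weaker estimate \eqref{ccc0} of \autoref{smoothbase} still holds, using the parallel-translation structure of the bundle charts (and the key inequality \eqref{translation}) together with $f\in C^{1,1}$. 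This sublemma is the heart of the argument and is exactly what your route via \autoref{thm:bundlemaps} bypasses without replacement. Finally, your duality explanation for the $C^{1,\zeta}$ part is off: the role of the extra hypothesis $\|T^c_mf\|^2\,\|(T^u_mf)^{-1}\|<1$ is that it forces each $W^{cs}$-leaf to be $C^2$, which then makes $\widetilde F_{m_0},\widetilde G_{m_0}\in C^{1,1}$ and allows \autoref{lem:final}\,(1) to apply.
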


\begin{proof}
	Since $ M $ is a smooth compact boundaryless Riemannian manifold, there is a $ \delta > 0 $ such that for every $ k \in \mathbb{N} $, the Riemannian metric up to $ k $th order derivatives and the Christoffel symbols in the normal coordinates up to $ k $th order derivatives are all uniformly bounded in normal coordinates of radius $ \delta $ around each $ m \in M $, with bounds independent of $ m \in M $. In particular, for some small $ \delta > 0 $, the normal coordinate or exponential map $ \exp_m $ at $ m $,
	\[
	\exp_m : T_mM(\delta) \to U_{m}(\delta) = \{ m' \in M: d(m', m) < \delta \},
	\]
	satisfies (i) $ |D \exp^{\pm 1}_m(\cdot)| \leq C $ (so $ |\exp^{-1}_m(m_1)| \leq Cd(m,m_1) $), and (ii)
	\begin{equation}\label{translation}
	|\exp^{-1}_{m_1} \exp_m (z_0 + z) - P^{m}_{m_1} z | \leq C d(m_1, m)|z|,
	\end{equation}
	for some constant $ C > 0 $ independent of $ m, m_1 $, where $ z_0 = \exp^{-1}_m m_1 $, $ m_1 \in U_{m}(\delta/4) $, $ z \in T_mM(\delta / 4) $, and $ P^{m}_{m_1}: T_{m}M \to T_{m_1} M $ is the parallel translation along the geodesic connecting $ m, m_1 $. For the convenience of the readers, we prove \eqref{translation} below.
	What we really need here is that $ M $ has $ 3 $th order \emph{bounded geometry} (see \autoref{defi:bounded}).

	\begin{proof}[Proof of \eqref{translation}]
		For the Christoffel symbol $ \varGamma^{m_1}(z) \in L(T_{m_1}M, T_{m_1}M) $, $ z \in  T_{m_1}M(\delta / 4) $ in the local chart $ \exp^{-1}_{m_1} $ (see also \autoref{def:connection}), $ z \mapsto \varGamma^{m_1}(z) $ is $ C^{1,1} $ uniformly for $ m_1 \in M $. The constructions of geodesic and parallel translation are through solving the following two ODEs, respectively:
		\[
		u'' = -\varGamma^{m_1}(u)(u',u'), ~y' = -\varGamma^{m_1}(x_{a})(x'_a,y),
		\]
		where $ x_a(t) $, $ 0 \leq t\leq 1 $, is the representation of the geodesic connecting $ \exp_{m_1}a $ and $ m_1 $ with $ x_{a}(0) = \exp_{m_1}a $ in the local chart $ \exp^{-1}_{m_1} $. Denote the two solutions associated with the above ODEs by $ u(t)(z,v) $, $ y(t)(a)v $, respectively, where $ u(0)(z,v) = z, u'(0)(z,v) = v $, $ y(0)(a)v = v $. Then
		\begin{gather*}
		u(1)(\exp^{-1}_{m_1}m, D\exp^{-1}_{m_1}(m) (z_0 + z)) = \exp^{-1}_{m_1} \exp_m (z_0 + z), \\
		y(1)(\exp^{-1}_{m_1}m,D\exp^{-1}_{m_1}(m)z) = P^{m}_{m_1}z.
		\end{gather*}
		Since $ z \mapsto \varGamma^{m_1}(z) $ is $ C^{1,1} $, we see $ (z, v) \mapsto u(1)(z,v) $ is $ C^{1,1} $, and so $ a \mapsto x_{a}(\cdot) $ and $ a \mapsto y(1)(a) \in L(T_{m_1}M, T_{m_1}M) $ are $ C^{0,1} $, uniformly for $ m_1 $. Let $ z_1 = \exp^{-1}_{m_1}m $. Note that $ y(1)(0,v) = v $, $ u(1)(0, v) = v $, and $ u(1)(z_1, D\exp^{-1}_{m_1}(m) (z_0)) = 0 $.
		So
		\begin{align*}
		& |u(1)(z_1, D\exp^{-1}_{m_1}(m) z_0 + v) - y(1)(z_1)v| \\
		= & \left| \int_{0}^{1} D_{2} u(1) (z_1, D\exp^{-1}_{m_1}(m)z_0 + tv) v- y(1)(z_1)v ~\mathrm{d}t \right| \\
		\leq & \int_{0}^{1} | D_{2} u(1) (z_1, D\exp^{-1}_{m_1}(m)z_0 + tv) v - D_{2} u(1) (0, D\exp^{-1}_{m_1}(m)z_0 + tv)v | \\
		& \quad + | y(1)(z_1)v - y(1)(0)v | ~\mathrm{d}t
		\leq \widetilde{C}|z_1||v|.
		\end{align*}
		Letting $ v = D\exp^{-1}_{m_1}(m)z $, we finish the proof.
	\end{proof}

	Let $ \mathbb{G}(TM) $ be the Grassmann manifold of $ TM $. Let $ \Pi^{\kappa}_m \in L(T_mM, T_mM) $ be the projections such that $ R(\Pi^{\kappa}_m) = X^{\kappa}_m $ and $ \Pi^s_m + \Pi^{c}_m + \Pi^{u}_m = \id $, $ m \in M $, $ \kappa = s, c, u $. Since $ m \mapsto X^{\kappa}_m \in \mathbb{G}(TM) $ is $ C^0 $, by the smooth approximation of $ X^{\kappa} $ in $ \mathbb{G}(TM) $, there are projections $ \widehat{\Pi}^{\kappa}_m \in L(T_mM, T_mM) $, $ R(\widehat{\Pi}^{\kappa}_m) = \widehat{X}^\kappa_m $, $ \kappa = s, c, u $, such that $ m \mapsto \widehat{X}^\kappa_m $ is $ C^2 $ (and so $ C^{1,1} $) and $ \sup_m|\Pi^{\kappa}_m - \widehat{\Pi}^{\kappa}_m| \leq \varepsilon $ for sufficiently small $ \varepsilon > 0 $.

	\textbf{(I)} (base space) Let $ \widehat{M} $ ($ = M $) be $ W^{cs} $ endowed with the leaf topology (see \autoref{immersed}). Since each leaf of $ W^{cs} $ is a $ C^1 $, boundaryless and injectively immersed submanifold of the compact manifold $ M $, we see that the $ C^1 $ Finsler manifold $ \widehat{M} $ satisfies (H1b) (see \autoref{settingOverview}); the details are given as follows. (Here note that in general $ \widehat{M} $ is non-separable and so is not metrizable, but each leaf of $ W^{cs} $ is a Riemannian manifold.)
	As the open cover of $ \widehat{M} $ we take $ \widehat{U}_m = W^{cs}_m $, $ m \in M $. There is an $ \epsilon_0 > 0 $ such that the small plaque at $ m $ with radius $ \epsilon_0 $, denoted by $ \widehat{U}_m(\epsilon_0) $ (the component of $ W^{cs}_m \cap U_{m}(\epsilon_0) $ containing $ m $), is a $ C^1 $ embedded submanifold of $ M $ which can be represented as $ \exp^{-1}_m \widehat{U}_m(\epsilon_0) = \graph g_m $, where $ g_m : \widehat{X}^{cs}_m \to \widehat{X}^u_m $ with $ \sup_m \lip g_m|_{\widehat{X}^{cs}_m(\epsilon_0)} $ small and $ x \mapsto Dg_m(x) $, $ m \in M $, being equicontinuous. See also \cite[Chapters 6--7]{HPS77}. So
	\[
	\chi_{m}: \widehat{U}_{m}(\epsilon_0) \to \widehat{X}^{cs}_m \approx X^{cs}_m,
	\]
	defined by $ \chi^{-1}_{m} (x) = \exp_m(x+g_m(x)) $, satisfies $ \sup_m \sup|D\chi^{\pm1}_{m}(\cdot)| < \infty $, and $ m' \mapsto D\chi^{\pm 1}_{m}(m') $, $ m \in M $, are equicontinuous; through the map $ \Pi^{cs}_{m}: \widehat{X}^{cs}_m \to X^{cs}_m $, we can assume $ D\chi_{m}(m) = \id $.

	\textbf{(II)} (base map) Since $ W^{cs} $ is invariant under $ f $, i.e. $ f(W^{cs}_m) \subset W^{cs}_{f(m)} $, where $ W^{cs}_m $ is the leaf of $ W^{cs} $ through $ m $, we see that $ f $ induces a map in $ \widehat{M} $, also denoted by $ f: \widehat{U}_{m} \to \widehat{U}_{f(m)} $, which is $ C^1 $ with $ \lip f|_{\widehat{U}_{m} (\epsilon_0)} \triangleq \mu(m) $ approximately equal to $ \|T^{cs}_mf\| = \|T^c_mf\| $ if $ \epsilon_0 $ is small.

	\textbf{(III)} (bundles) $ \widehat{X}^{\kappa} $ ($ \kappa = s, c, u $) can be considered naturally as a bundle over $ \widehat{M} $ which is $ C^1 $. As a natural bundle atlas $ \mathcal{B}^{\kappa} $ for $ \widehat{X}^{\kappa} $ we take
	\[
	{^\kappa \varphi^{m_0}}: \widehat{U}_{m_0}(\delta) \times \widehat{X}^{\kappa}_{m_0} \to \widehat{X}^{\kappa}, ~(m,x) \mapsto (m, \widehat{\Pi}^{\kappa}_m P^{m_0}_m x).
	\]
	Since $ M \to \mathbb{G}(TM) $, $ m \mapsto \widehat{\Pi}^{\kappa}_m $, $ m \mapsto P^{m_0}_m $, are $ C^2 $, there exists a natural $ C^0 $ connection $ \mathcal{C}^{\kappa} $ for $ \widehat{X}^{\kappa} $ such that $ 	\mathcal{C}^{\kappa} D( {^\kappa \varphi^{m_0}} ) (m_0,x) = \id $, i.e. $ {^\kappa \varphi^{m_0}} $ is normal with respect to $ \mathcal{C}^{\kappa} $.
	(Here also note that $ \nabla_{m_0} P^{m_0}_{m_0} = 0 $ and what is more this connection $ \mathcal{C}^{\kappa} $ is uniformly Lipschitz in the sense of \autoref{lipcon}.) That is, $ \widehat{X}^{s} \times \widehat{X}^{cu} $ satisfies (H2c) (see \autoref{settingOverview}).

	\textbf{(IV)} (bundle map) Let
	\[
	\widehat{f}_m = \exp^{-1}_{f(m)} \circ f \circ \exp_m: T_m M(\delta) \to T_{f(m)} M.
	\]
	Since $ D\widehat{f}_m(0) = T_mf $, we see that $ \widehat{f}_m \sim (F_m, G_m) $, where $ F_m: \widehat{X}^{s}_m(r_1) \oplus \widehat{X}^{cu}_{f(m)}(r'_2) \to X^{s}_{f(m)}(r_2) $ and $ G_m: \widehat{X}^{s}_m(r_1) \oplus \widehat{X}^{cu}_{f(m)}(r'_2) \to X^{cu}_{m}(r'_1) $,
	satisfies the (A) ($\alpha $, $\lambda_u(m)$) (B) ($\beta$, $\lambda_{s}(m)$) condition (see \autoref{lem:general}),
	where $ \lambda_{s}(m) = \|T^{s}_mf\| + \zeta $, $ \lambda_u(m) = \|(T^{c}_mf)^{-1}\| + \zeta $, and $ \zeta $, $ \alpha $, $ \beta \rightarrow 0 $ as $ r_1, r'_2 \rightarrow 0 $.

	By using smooth bump functions, one finds $ \widehat{F}_m, \widehat{G}_m $, defined in all $ \widehat{X}^{s}_m \oplus \widehat{X}^{cu}_{f(m)} $ such that
	\begin{gather}
	\widehat{F}_m|_{\widehat{X}^{s}_m(r) \oplus \widehat{X}^{cu}_{f(m)}(r)} = F_m, ~\widehat{G}_m|_{\widehat{X}^{s}_m(r) \oplus \widehat{X}^{cu}_{f(m)}(r)} = G_m, \label{mmm}\\
	\widehat{F}_m(\widehat{X}^{s}_m \oplus \widehat{X}^{cu}_{f(m)}) \subset \widehat{X}^{s}_{f(m)}(r_2), ~\widehat{G}_m(\widehat{X}^{s}_m \oplus \widehat{X}^{cu}_{f(m)}) \subset \widehat{X}^{cu}_m(r'_1), \notag
	\end{gather}
	and $ \widehat{H} \sim (\widehat{F}_m, \widehat{G}_m):\widehat{X}^{s}_m \oplus \widehat{X}^{cu}_{m} \to \widehat{X}^{s}_{f(m)} \oplus \widehat{X}^{cu}_{f(m)}  $ satisfies the (A)$(\alpha, \lambda_u(m))$ (B)$(\beta, \lambda_{s}(m))$ condition (see e.g. \autoref{lem:exampleH} \eqref{it:H3}), where $ r < \min\{ r_i, r'_i: i = 1, 2 \} / 2 $ is small.
	Let $ \widetilde{F}_m(\cdot,\cdot,\cdot), \widetilde{G}_m(\cdot,\cdot,\cdot) $ be the local representations of $ \widehat{F}_m, \widehat{G}_m $ with respect to the bundle atlases $ \mathcal{B}^s $, $ \mathcal{B}^{c} \times \mathcal{B}^{u} $.
	Although $ \widehat{F}: \widehat{X}^s \otimes_f \widehat{X}^{cu} \to \widehat{X}^s $ and $ \widehat{G}: \widehat{X}^s \otimes_f \widehat{X}^{cu} \to \widehat{X}^{cu} $ are $ C^1 $ and $ \widehat{F}_m(\cdot), \widehat{G}_m(\cdot) $ are $ C^{1,1} $ uniformly for $ m $ as $ f \in C^{1,1} $, in general, $ \widetilde{F}_m, \widetilde{G}_m $ are not $ C^{1,1} $ since $ \widehat{M} $ is only $ C^1 $. Let us show:
	\begin{slem}
		The local representations $ \widetilde{F}_{m_0}(\cdot,\cdot,\cdot) $ and $ \widetilde{G}_{m_0}(\cdot,\cdot,\cdot) $ satisfy \eqref{ccc0} in \autoref{smoothbase}.
	\end{slem}
	\begin{proof}
		The local representations $ \widetilde{F}_{m_0}(\cdot,\cdot,\cdot), \widetilde{G}_{m_0}(\cdot,\cdot,\cdot) $ can be written as
		\[
		\begin{cases}
		\begin{split}
		\widetilde{F}_{{m}_0} ({m}, x, y) \triangleq (^{s}\varphi^{{f}({m}_0)}_{{f}({m})})^{-1} \circ  \widehat{F}_{m} & \circ ( {^{s}\varphi^{m_0}_{m}}(x) , {^{cu}\varphi^{f(m_0)}_{f(m)}}(y) ) :\\
		& \widehat{U}_{m_0} (\epsilon_1) \times \widehat{X}^{s}_{m_0}(\hat{r}) \times \widehat{X}^{cu}_{f(m_0)}(\hat{r}) \to \widehat{X}^{s}_{f(m_0)} ,
		\end{split}\\
		\begin{split}
		\widetilde{G}_{{m}_0} ({m}, x, y) \triangleq (^{cu}\varphi^{{m}_0}_{m})^{-1} \circ  \widehat{G}_{m} & \circ ( {^{s}\varphi^{m_0}_{m}}(x) , {^{cu}\varphi^{f(m_0)}_{f(m)}}(y) ) : \\
		& \widehat{U}_{m_0} (\epsilon_1) \times \widehat{X}^{s}_{m_0}(\hat{r}) \times \widehat{X}^{cu}_{f(m_0)}(\hat{r}) \to \widehat{X}^{cu}_{m_0},
		\end{split}
		\end{cases}
		\]
		where $ \epsilon_1 < (\sup_{m_0}\mu(m_0))^{-1}r $ is small and $ \hat{r} < \delta / 4 $ is small.
		Let $ m_0 \in M $, $ m_1 \in \widehat{U}_{m_0}(\epsilon_1) $, and $ \exp^{-1}_{m_0} m_1 = x^s_0 + x^{cu}_0 $, $ \exp^{-1}_{f(m_0)} f(m_1) = x^s_1 + x^{cu}_1 $, where $ x^{\kappa}_0 \in \widehat{X}^{\kappa}_{m_0} $, $ x^{\kappa}_1 \in \widehat{X}^{\kappa}_{f(m_0)} $, $ \kappa = s, cu $.
		Note that since $ \epsilon_1 $ is small and \eqref{mmm} holds, we see that $ \widehat{F}_{m_0}( x^s_0, x^{cu}_1) = x^s_1 $ and $ \widehat{G}_{m_0}(x^s_0, x^{cu}_1) = x^{cu}_0 $.
		Let $ z = (x, y) \in X^{s}_{m_0}(\hat{r}) \times X^{cu}_{f(m_0)}(\hat{r}) $, and
		\[
		\begin{gathered}
		\hat{y} = \widehat{G}_{m_0}(x, y), ~y'_1 = \widehat{G}_{m_0}(x + x^s_0, y + x^{cu}_1) - x^{cu}_0, \\
		\hat{x} = \widehat{F}_{m_0}(x, y), ~x'_2 = \widehat{F}_{m_0}(x + x^s_0, y + x^{cu}_1) - x^s_1, \\
		y_1 = \widehat{G}_{{m}_1} ({^{s}\varphi^{m_0}_{m_1}}(x), {^{cu}\varphi^{f(m_0)}_{f(m_1)}}(y) ), ~
		x_2 = \widehat{F}_{{m}_1} ({^{s}\varphi^{m_0}_{m_1}}(x), {^{cu}\varphi^{f(m_0)}_{f(m_1)}}(y) ).
		\end{gathered}
		\]
		We need to show $ |\hat{x} - (^{s}\varphi^{f(m_0)}_{f(m_1)})^{-1} x_2| + |\hat{y} - (^{cu}\varphi^{m_0}_{m_1})^{-1} y_1 | \leq \widetilde{C} d(m_1, m_0)|z| $ for some constant $ \widetilde{C} $ independent of $ m_0, m_1 $. (In the following, the value of $ \widetilde{C} $ will vary from line to line.)
		As $ \widehat{F}_m(\cdot), \widehat{G}_m(\cdot) $ are $ C^{1,1} $, we get
		\[
		|\hat{y} - y'_1| + |\hat{x} - x'_2| \leq \widetilde{C}(|x^s_0|+|x^{cs}_1|)|z| \leq \widetilde{C}d(m_1,m_0)|z|.
		\]
		Set
		\begin{gather*}
		\exp^{-1}_{m_1} \exp_{m_0} (x + x^s_0 + y'_1 + x^{cu}_0) = \overline{x}^s_1 + \overline{x}^{cu}_1, \\
		\exp^{-1}_{f(m_1)} \exp_{f(m_0)} (x'_2 + x^s_1 + y + x^{cu}_1) = \overline{x}^s_2 + \overline{x}^{cu}_2,
		\end{gather*}
		where $ \overline{x}^{\kappa}_1 \in \widehat{X}^{\kappa}_{m_1} $, $ \overline{x}^{\kappa}_2 \in \widehat{X}^{\kappa}_{f(m_1)} $, $ \kappa = s, cu $. Then by \eqref{translation}, we have
		\begin{gather*}
		|\overline{x}^s_1 + \overline{x}^{cu}_1 - {^{s}\varphi^{m_0}_{m_1}}(x) - {^{cu}\varphi^{m_0}_{m_1}}(y'_1)|  \leq \widetilde{C} d(m_1, m_0) (|x|+|y'_1|)  \leq \widetilde{C} d(m_1, m_0) (|x|+|y|), \\
		\begin{split}
		|\overline{x}^s_2 + \overline{x}^{cu}_2 - {^{s}\varphi^{f(m_0)}_{f(m_1)}}(x'_2) - {^{cu}\varphi^{f(m_0)}_{f(m_1)}}(y)|  & \leq \widetilde{C} d(m_1, m_0) (|x'_2|+|y|) \\
		& \leq \widetilde{C} d(m_1, m_0) (|x|+|y|),
		\end{split}
		\end{gather*}
		yielding
		\begin{align*}
		|\overline{x}^{cu}_1 - y_1| & = |\widehat{G}_{{m}_1} (\overline{x}^s_1, \overline{x}^{cu}_2 ) - \widehat{G}_{{m}_1} ({^{s}\varphi^{m_0}_{m_1}}(x), {^{cu}\varphi^{f(m_0)}_{f(m_1)}}(y) )| \\
		& \leq \widetilde{C} (|\overline{x}^s_1 - {^{s}\varphi^{m_0}_{m_1}}(x)| + |\overline{x}^{cu}_2 - {^{cu}\varphi^{f(m_0)}_{f(m_1)}}(y)|) \leq \widetilde{C} d(m_1, m_0) (|x|+|y|),
		\end{align*}
		and similarly $ |\overline{x}^{s}_2 - x_2| \leq \widetilde{C} d(m_1, m_0) (|x|+|y|) $. Therefore,
		\begin{align*}
		|\hat{x} - (^{s}\varphi^{f(m_0)}_{f(m_1)})^{-1} x_2| & \leq |\hat{x} - x'_2| + |x'_2 - (^{s}\varphi^{f(m_0)}_{f(m_1)})^{-1} x_2| \\
		& \leq \widetilde{C} d(m_1, m_0) (|x|+|y|) + |(^{s}\varphi^{f(m_0)}_{f(m_1)})^{-1}||{^{s}\varphi^{f(m_0)}_{f(m_1)}}(x'_2) - x_2| \\
		& \leq \widetilde{C} d(m_1, m_0) (|x|+|y|) + \widetilde{C} (|{^{s}\varphi^{f(m_0)}_{f(m_1)}}(x'_2) - \overline{x}^s_2| + |\overline{x}^s_2 - x_2|) \\
		& \leq \widetilde{C} d(m_1, m_0) (|x|+|y|),
		\end{align*}
		and analogously $ |\hat{y} - (^{cu}\varphi^{m_0}_{m_1})^{-1} y_1 | \leq \widetilde{C} d(m_1, m_0)|z| $. This completes the proof.
	\end{proof}

	Now, all the assumptions in \autoref{smoothbase} are satisfied for $ \widehat{H} $, $ \widehat{X}^{s} \oplus \widehat{X}^{cu} $, $ f $, $ \widehat{M} $ and the natural $ 0 $-section of $ \widehat{X}^{s} \oplus \widehat{X}^{cu} $, which yields the unique invariant graph $ \widehat{W}^{s} $ of $ \widehat{X}^s \to \widehat{X}^{cu} $ such that $ \widehat{H} \widehat{W}^{s} \subset \widehat{W}^{s} $ is $ C^1 $. Let $ \widehat{W}^{s}_m(\sigma) = \widehat{W}^{s} \cap (\widehat{X}^{s}_m(\sigma) \oplus \widehat{X}^{cu}_m) $. From $ \sup_m\|T^{s}_m f\| < 1 $, $ \widehat{H}_m|_{\widehat{W}^{s}_m(\sigma)} = \widehat{f}_m|_{\widehat{W}^{s}_m(\sigma)} $ and the characterization of the strong stable foliation $ W^{s} $ of $ f $, if $ \sigma $ is small, then $ \exp^{-1}_m \widehat{W}^{s}_m(\sigma) $ is open in $ W^s_m $, the leaf of $ W^{s} $ through $ m $. The $ C^1 $ dependence of $ W^{cs}_{m_0} \ni m \mapsto \exp^{-1}_m \widehat{W}^{s}_m(\sigma) $ shows that $ W^{s} $ is a $ C^1 $ foliation inside each leaf of $ W^{cs} $.

	For the last conclusion, by noting that under $ \| T^c_m f \|^2 \cdot \| (T^u_m f)^{-1} \| < 1, ~ m \in M $, each leaf of $ W^{cs} $ is $ C^2 $ and consequently $ \widehat{F}, \widehat{G} \in C^{1,1} $ (i.e. $ \widetilde{F}_m(\cdot,\cdot,\cdot), \widetilde{G}_m(\cdot,\cdot,\cdot) \in C^{1,1} $ uniformly for $ m $), the conclusion follows from \autoref{lem:final} (1).
\end{proof}

\begin{rmk}
	\begin{enumerate}[(a)]
		\item If $ f \in C^{1,\gamma} $ ($ \gamma > 0 $) and the center bunching condition is replaced by the following
		\[
		\| T^{c}_mf \| \cdot \|T^{s}_m f\|^{\gamma} \cdot \| (T^{c}_m f)^{-1} \| < 1,~m \in M,
		\]
		then one also gets $ W^{s} $ is a $ C^1 $ foliation inside each leaf of $ W^{cs} $; see \autoref{weakFG}.

		\item Assume $ TM $ has a $ Df $-invariant continuous splitting $ TM = X^s \oplus X^c $ with
		\[
		\|T^s_mf\|< 1, ~\|T^s_mf\| \cdot \|(T^c_mf)^{-1}\| < 1, ~m \in M.
		\]
		Let $ X^{c_1} $ be a $ Df $-invariant continuous subbundle of $ TM $ such that $ X^{c_1} \subset X^{c} $. Assume there is a $ C^0 $ foliation $ W^{c_1} $ tangent to $ X^s \oplus X^{c_1} $ which is invariant under $ f $, i.e. $ f(W^{c_1}_m) \subset W^{c_1}_{f(m)} $, where $ W^{c_1}_m $ is the $ C^1 $ leaf of $ W^{c_1} $ through $ m $. Suppose the following bunching condition holds:
		\[
		\| T^{c_1}_mf \| \cdot \|T^{s}_m f\| \cdot \| (T^{c_1}_m f)^{-1} \| < 1,~m \in M.
		\]
		Then each leaf of $ W^{c_1} $ is $ C^1 $-foliated by the strong stable foliation $ W^s $ if $ f \in C^{1,1} $. In the above, we use the notation $ T^\kappa f = Df|_{X^\kappa} $, $ \kappa = s, c, c_1 $. The proof is the same as for \autoref{holfoliation} (by using $ W^{c_1} $ instead of $ W^{cs} $).

		\item This argument given in \autoref{holfoliation} was also used in the proof that the center-stable manifold of a normally hyperbolic invariant manifold is $ C^1 $ foliated by the strong stable foliation in the Banach space setting; see \cite{Che18b}.
	\end{enumerate}
\end{rmk}

\begin{thm}[Strong stable laminations I: Lipschitz case]\label{thm:ssl}
	Let $ M $ be a metric space, $ u: M \to M $ a continuous map, and $ \Lambda \subset M $ a subset such that $ u(\Lambda) \subset \Lambda $. Assume that there is an $ \epsilon > 0 $ such that, for every $ m \in \Lambda $, there is an open ball $ U_m(\epsilon') \subset M $, where $ U_{m}(\epsilon) = \{ m' \in M : d(m',m) < \epsilon \} $, such that $ U_m(\epsilon') \subset X_m \times Y_m $, where $ X_m, Y_m $ are complete metric spaces. (This means that there is $ \phi_m:  U_m(\epsilon') \to \phi_m(U_m(\epsilon')) \subset X_m \times Y_m $ which is bi-Lipschitz with (bi-)Lipschitz constant no more than a fixed constant. The metric in $ \phi_m(U_m(\epsilon')) $ is now induced from $ U_m(\epsilon') $ through $ \phi_m $.)

	For each $ m \in \Lambda $, there is a correspondence $ \tilde{u}_m: X_m \times Y_m \to X_{u(m)} \times Y_{u(m)} $ satisfying the (A$ ' $)($ \alpha $, $ \lambda_u(m) $) (B)($ \beta $, $ \lambda_s(m) $) condition, such that $ u|_{U_m(\epsilon')} = \tilde{u}_m|_{U_m(\epsilon')} $, where $ \alpha, \beta > 0 $ are constants (for simplicity) and $ \lambda_s, \lambda_u $ are bounded functions of $ \Lambda \to \mathbb{R}_+ $.
	Assume $ \alpha \beta < 1 $ and $ \sup_m(\lambda_s(m) + \epsilon)\lambda_u(m) < 1 - \alpha \beta $, where $ \epsilon > 0 $.

	Then there are unique $ \Lambda^s_{loc}(m, \epsilon') \subset U_m(\epsilon') $, $ m \in \Lambda $, such that \textnormal{(1) (2)} below hold.
	\begin{enumerate}[(1)]
		\item $ m \in \Lambda^s_{loc}(m, \epsilon') $ and every $ \Lambda^s_{loc}(m, \epsilon') $ is a Lipschitz graph of $ X_m \to Y_m $ with Lipschitz constant no more than $ \beta $.
		\item $ u(\Lambda^s_{loc}(m, \epsilon')) \cap U_{u(m)}(\epsilon') \subset \Lambda^s_{loc}(u(m), \epsilon') $.

		Moreover, $ \Lambda^s_{loc}(m, \epsilon') $, $ m \in \Lambda $, have the following properties:
		\item $ \{ x \in U_m(\epsilon'): u^n(x) \in U_{u^n(m)}(\epsilon'), | u^n(x) - u^n(m) | \lesssim \varepsilon^{(n)}(m), n \to \infty \} \subset \Lambda^s_{loc}(m,\epsilon') $, where $ \varepsilon: \Lambda \to \mathbb{R}_+ $ is such that $ \lambda_s(m) + \epsilon < \varepsilon(m) < \lambda^{-1}_u(m) (1 - \alpha \beta) $; if $ u(\Lambda^s_{loc}(m, \epsilon')) \subset \Lambda^s_{loc}(u(m), \epsilon') $ for all $ m \in \Lambda $, then the two sets are equal.
		\item  Suppose that (i) $ \sup_m \lambda_s(m) + \epsilon < 1 $, and  (ii) if $ m_i \in \Lambda $, $ i = 1, 2 $ and $ d(m_1 , m_2) < \delta $, then $ |\lambda_s(m_1) - \lambda_s(m_2)| < \epsilon $. Let
		\[
		\Lambda^s_m \triangleq \bigcup_{n \geq 0} u^{-n} (\Lambda^s_{loc}(u^n(m), \epsilon')) = \{ x \in M: | u^n(x) - u^n(m) | \lesssim \varepsilon^{(n)}_1(m), n \to \infty \}.
		\]
		Then $ \Lambda^s \triangleq \{ \Lambda^s_m: m \in \Lambda \} $ is a ($ u $-invariant) lamination called the (global) \textbf{strong stable lamination} of $ \Lambda $ for $ u $, where $ \lambda_s(m) + \epsilon < \varepsilon_1(m) < \min\{1, \lambda^{-1}_u(m) (1 - \alpha \beta) \} - \varsigma $, and $ \varsigma > 0 $ is sufficiently small.
	\end{enumerate}
\end{thm}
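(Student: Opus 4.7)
The plan is to recognize the local data $(\tilde u_m)_{m\in\Lambda}$ as a bundle correspondence (with generating bundle map) over $u|_\Lambda$ for which the natural section $i(m)=m\in X_m\times Y_m$ is invariant, apply \autoref{thmA} to obtain unique global invariant Lipschitz graphs $f_m:X_m\to Y_m$, and then set $\Lambda^s_{loc}(m,\epsilon')\triangleq\graph f_m\cap U_m(\epsilon')$. Each of the four assertions is then read off from \autoref{thmA} and the asymptotic characterizations in \autoref{properties}.

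First I would view $X\triangleq\bigsqcup_{m\in\Lambda}X_m$ and $Y\triangleq\bigsqcup_{m\in\Lambda}Y_m$ as bundles with metric fibers over $\Lambda$, and $\tilde u\triangleq(\tilde u_m)$ as a bundle correspondence over $u|_\Lambda$. Under the angle condition $\alpha\beta<1$, the (A$'$)(B) data produce a generating bundle map $(F,G)$ in the sense of \autoref{basicCorr}, since for each $(x_1,y_2)$ the equation fixing $y_1$ is a strict contraction in $y_1$ (the Banach fixed point theorem applied fiberwise). Because $u(\Lambda)\subset\Lambda$ and $\tilde u_m$ agrees with $u$ on $U_m(\epsilon')$, the section $i(m)=m$ is an invariant section of $\tilde u$. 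The hypothesis $\sup_m(\lambda_s(m)+\epsilon)\lambda_u(m)<1-\alpha\beta$ is precisely the spectral condition (case (ii)) of \autoref{thmA}, with margin $\epsilon$ to spare. Hence \autoref{thmA} delivers a unique bundle map $f:X\to Y$ over $\id$ with $\mathrm{Lip}\,f_m\leq\beta$, $f_m(x_m)=y_m$ when $m=(x_m,y_m)$, and $\graph f\subset\tilde u^{-1}\graph f$.

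With $\Lambda^s_{loc}(m,\epsilon')$ defined as above, conclusion (1) is immediate. For (2), since $\tilde u_m|_{U_m(\epsilon')}=u|_{U_m(\epsilon')}$, the $\tilde u$-invariance of $\graph f$ gives $u(z)=\tilde u_m(z)\in\graph f_{u(m)}$ for $z\in\Lambda^s_{loc}(m,\epsilon')$, so $u(z)\in\Lambda^s_{loc}(u(m),\epsilon')$ whenever $u(z)\in U_{u(m)}(\epsilon')$. Uniqueness of $\Lambda^s_{loc}(m,\epsilon')$ as a local Lipschitz graph satisfying (1)–(2) reduces, via orbit-extension along the (B)-direction and the uniqueness clause of \autoref{thmA}, to uniqueness of $f_m$. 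Conclusion (3) is a direct translation of the orbit characterization of $\graph f$ in \autoref{properties}: any forward $\tilde u$-orbit starting at $z\in U_m(\epsilon')$ that stays in the $\epsilon'$-neighborhoods of $u^n(m)$ and shadows $u^n(m)$ at rate $\lesssim\varepsilon^{(n)}(m)$ must lie on $\graph f$, and conversely.

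The main obstacle will be conclusion (4), where the overlap/lamination argument has to reconcile the merely local identification $\tilde u_m=u$ with the need for forward invariance along entire orbits. Under $\sup_m\lambda_s(m)+\epsilon<1$ the (B) estimate becomes a genuine contraction: for $z$ on a slope-$\beta$ Lipschitz graph through $m$ one has $|u(z)-u(m)|\leq(\lambda_s(m)+\epsilon)|z-m|$ as long as everything remains in the local pieces, and the choices $\epsilon_1\leq\min\{\delta,\epsilon'\}/4$ and $|\lambda_s(m_1)-\lambda_s(m_2)|<\epsilon$ for nearby $m_i$ give the triangle-inequality room needed to pass between plaques at nearby base points without leaving $U_{u^n(m_i)}(\epsilon')$. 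Given $z\in\Lambda^s_{loc}(m_1,\epsilon_1)\cap\Lambda^s_{loc}(m_2,\epsilon_1)$, contraction then forces $|u^n(m_1)-u^n(m_2)|\to 0$ at the rate $\varepsilon_1^{(n)}(m_i)$, whence by (3) each $m_i$ lies in the other's strong stable manifold, and so $\Lambda^s_{loc}(m_1,\epsilon_1)=\Lambda^s_{loc}(m_2,\epsilon_1)$. The identification $\Lambda^s_m=\{x:|u^n(x)-u^n(m)|\lesssim\varepsilon_1^{(n)}(m)\}$ then follows by pulling back along orbits, applying the local characterization inside each plaque, and using $u(\Lambda)\subset\Lambda$ to iterate backward through $u^{-n}$.
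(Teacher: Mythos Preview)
Your proposal is correct and follows essentially the same route as the paper: assemble $X=\bigsqcup_{m\in\Lambda}X_m$, $Y=\bigsqcup_{m\in\Lambda}Y_m$ into bundles over $\Lambda$, regard $(\tilde u_m)$ as a bundle correspondence over $u|_\Lambda$ with the invariant section $i(m)=m$, invoke \autoref{thmA} for (1)--(2), read (3) from \autoref{properties}, and for (4) use the contraction $\sup_m\lambda_s(m)+\epsilon<1$ together with a common point of two plaques to force the orbits of the two base points to stay in each other's $\epsilon'$-neighborhoods and decay together, then conclude via (3). One small point: your sentence about the (A$'$)(B) data ``producing'' a generating map via a fiberwise contraction is unnecessary---the (A$'$) condition is by definition a Lipschitz bound on an already-given generating map $(F,G)$, so there is nothing to construct here.
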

Write $ \Lambda^s_{loc}(m_1, \epsilon_1) = \Lambda^s_{loc}(m_1, \epsilon') \cap U_m(\epsilon_1) $ where $ \epsilon_1 \leq \min \{ \delta, \epsilon' \} / 4 $. In the setting of \autoref{thm:ssl} (4), $ \Lambda^s_{loc}(\epsilon_1) \triangleq \{ \Lambda^s_{loc}(m,\epsilon_1): m \in \Lambda \} $ is called the \emph{local strong stable lamination} of $ \Lambda $ for $ u $; $ \Lambda^s_m $ (resp. $ \Lambda^s_{loc}(m,\epsilon_1) $) is called the strong stable manifold (resp. local strong stable manifold) of $ m $ for $ u $.

\begin{proof}
	Let $ X \times Y = \{ (m,x,y): (x,y) \in X_m \times Y_m, m \in \Lambda \} $ be a bundle over $ \Lambda $. Let $ H(m,z) = (u(m), \tilde{u}_m(z)): X \times Y \to X \times Y $ be a bundle correspondence over $ u $. Now the existence of $ \Lambda^s_{loc}(m, \epsilon) $ and (1) (2) follow from \autoref{thmA} applied to $ H $, $ X \times Y $, $ u $, $ \Lambda $; and (3) follows from \autoref{properties}.

	To show (4), note that $ \sup_m \lambda_s(m) + \epsilon < 1 $, and so $ u(\Lambda^s_{loc}(m, \epsilon'')) \subset \Lambda^s_{loc}(u(m), \epsilon'') $ for all $ \epsilon'' \leq \epsilon' $.
	Let $ \epsilon_1 \leq \min \{ \delta, \epsilon' \} / 4 $ and write $ \Lambda^s_{loc}(m_1, \epsilon_1) = \Lambda^s_{loc}(m_1, \epsilon') \cap U_m(\epsilon_1) $.
	If $ x \in \Lambda^s_{loc}(m_1, \epsilon_1) \cap \Lambda^s_{loc}(m_2, \epsilon_1) \neq \emptyset $, then $ u^n(x) \in \Lambda^s_{loc}(u^n(m_1), \epsilon_1) \cap \Lambda^s_{loc}(u^n(m_2), \epsilon_1) \subset U_{u^n(m_2)} (\epsilon') $. So by (3), one gets $ m_1 \in \Lambda^s_{loc}(m_2, \epsilon') $. If $ z \in \Lambda^s_{loc}(m_1, \epsilon_1) $, then $ u^n(z) \in \Lambda^s_{loc}(u^n(m_1), \epsilon_1) \subset U_{u^n(m_2)} (\epsilon') $ and $ | u^n(z) - u^n(m_2) | \lesssim \varepsilon^{(n)}_1(m_2) $, $ n \to \infty $, which yields $ \Lambda^s_{loc}(m_1, \epsilon_1) \subset \Lambda^s_{m_2} $. We conclude that if $ \Lambda^s_{m_1} \cap \Lambda^s_{m_2} \neq \emptyset $, then $ \Lambda^s_{m_1} = \Lambda^s_{m_2} $, i.e., $ \Lambda^s $ is a lamination. 
\end{proof}

\begin{cor}[Holonomy map]\label{corr:hol}
	Let \autoref{thm:ssl} (4) hold. Assume the following uniform continuity of $ m \mapsto X_m, Y_m $ holds:
	\begin{enumerate}
		\item [\textnormal{(UC)}] Let $ (\phi^{m_1, m}_1, \phi^{m_1, m}_2) = \phi_m \circ \phi^{-1}_{m_1}: U_{m_1}(\epsilon) \subset X_{m_1} \times Y_{m_1} \to U_{m}(\epsilon') \subset X_{m} \times Y_{m} $, $ m_1, m \in \Lambda $, where $ \epsilon < \epsilon' / 2 $. Then for any $ \eta > 0 $, there is a $ \rho > 0 $ such that $ \lip\phi^{m_1, m}_i(\cdot) < 1 + \eta $, $ i = 1, 2 $, provided $ d(m_1, m) < \rho < \epsilon' /2 $.
	\end{enumerate}
	Let $ \tilde{\Lambda} \subset U_m(\epsilon_1) \cap \Lambda $. Consider $ \Lambda^s_{m,\epsilon_1} = \bigcup_{m' \in \tilde{\Lambda}} \Lambda^s_{loc}(m', \epsilon_1) \subset U_m(\epsilon') $, a stack of leaves over $ \tilde{\Lambda} $.
	For any Lipschitz graphs $ \varSigma_1, \varSigma_2 \subset U_m(\epsilon') $ of $ Y_m \to X_m $ with Lipschitz constants no more than $ \beta $, and small $ \epsilon_1 > 0 $, we have:
	\begin{enumerate}[(1)]
	 	\item For any $ z \in \Lambda^s_{m,\epsilon_1} \cap \varSigma_1 $, there is a unique $ m' = m'(z) \in \tilde{\Lambda} $ such that $ \Lambda^s_{loc}(m', \epsilon_1) \cap \varSigma_1 = \{z\} $.
	 	\item If $ p(z) $ denotes the unique point of $ \Lambda^s_{loc}(m'(z),\epsilon_1) \cap \varSigma_2 $, then $ z \to p(z) $ is continuous. The map $ p(\cdot) $ is called a \textbf{holonomy map} for the strong stable lamination $ \Lambda^s $.
	 	\item
	 	If $ u $ is Lipschitz on $ \Lambda_{\epsilon'/2} = \{m : d(m, \Lambda) < \epsilon'/2 \} $, then $ z \mapsto p(z) $ is also H\"older.
	\end{enumerate}
\end{cor}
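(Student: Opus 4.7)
The plan is to handle (1) and (2) by direct graph-theoretic arguments using (UC), and to derive (3) from the orbit characterization of leaves together with a Hadamard-type iteration.

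For (1), I will use (UC) with $\eta > 0$ small to choose $\epsilon_1 \leq \min\{\delta, \epsilon'\}/4$ so that the transition maps $\phi^{m',m}$ have Lipschitz constant at most $1+\eta$ for every $m' \in U_m(\epsilon_1) \cap \Lambda$. Read in the chart $\phi_m$, each leaf $\Lambda^s_{loc}(m', \epsilon_1)$ then appears as a Lipschitz graph $X_m \to Y_m$ with constant at most $\beta(1+\eta)^2$, while $\varSigma_1$ is a Lipschitz graph $Y_m \to X_m$ with constant at most $\beta$; since $\beta^2(1+\eta)^2 < 1$ for small $\eta$, composing the two graph maps yields a contraction, and the Banach fixed point theorem delivers the unique intersection $z$. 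Uniqueness of the leaf through $z$ follows from the disjointness part of \autoref{thm:ssl}(4). For (2), continuity of $z \mapsto p(z)$ reduces to continuous dependence of $\Lambda^s_{loc}(m', \epsilon_1)$ on $m' \in \tilde{\Lambda}$ in the $C^0$-graph topology, which is obtained by applying \autoref{lem:continuity_f} to the bundle correspondence underlying \autoref{thm:ssl}; continuity of graph-intersection with $\varSigma_2$ then completes the argument.

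The substantive content is (3). Set $L \triangleq \lip(u|_{\Lambda_{\epsilon'/2}})$ and $\lambda^* \triangleq \sup_{m \in \Lambda} \lambda_s(m) + \epsilon < 1$. Since any two points on the same leaf contract at rate $\lambda_s$ per iterate (by the (B) condition applied along the leaf via the graph transform of \autoref{graphTrans}), I have $d(u^n(z_i), u^n(p_i)) \lesssim (\lambda^*)^n \epsilon_1$ for all $n \geq 0$, while the Lipschitz bound on $u$ yields $d(u^n(z_1), u^n(z_2)) \leq L^n d(z_1, z_2)$ so long as the orbits remain in $\Lambda_{\epsilon'/2}$ (which they do because orbits of $m'_i$ stay in $\Lambda$ and the $z_i, p_i$ remain within $\epsilon_1$ of them after iteration). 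Given $z_1, z_2 \in \Lambda^s_{m,\epsilon_1} \cap \varSigma_1$ with $r \triangleq d(z_1, z_2)$ small, choose $N = N(r)$ maximal such that $L^N r \leq \epsilon_1$. By the triangle inequality, $d(u^N(p_1), u^N(p_2)) \lesssim \epsilon_1 + 2(\lambda^*)^N \epsilon_1$, which via the Lipschitz graph structure of the two leaves at $u^N(m)$ controls the "leaf-to-leaf" distance there.

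The main obstacle is transferring this estimate back to time $0$, since $u$ has no available inverse. My plan is to avoid inversion altogether by a forward-only argument modeled on the graph transform of \autoref{graphTrans}: parameterize $\varSigma_1$ and $\varSigma_2$ simultaneously as $\beta$-Lipschitz graphs $Y_m \to X_m$ and iterate the graph transform of the correspondence $\tilde{u}_m$, observing that at each step the discrepancy between $p(z_1)$ and $p(z_2)$ viewed in the transversal $\varSigma_2$ is contracted by a factor involving $\lambda^*$ relative to the current leaf distance after inflation by $L$. Summing the geometric series over the $N$ iterates and balancing $L^N r \simeq \epsilon_1$ against $(\lambda^*)^N$ then gives $d(p_1, p_2) \lesssim r^\theta$ with H\"older exponent $\theta = \log(1/\lambda^*)/\log L$ when $L > 1$, and Lipschitz continuity when $L \leq 1$ (by the same argument with $N = \infty$).
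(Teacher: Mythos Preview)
Your argument for (1) is fine and is essentially how the paper sets things up. The gap is in (2), and it propagates into (3).

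You write that continuity of $z\mapsto p(z)$ ``reduces to continuous dependence of $\Lambda^s_{loc}(m',\epsilon_1)$ on $m'\in\tilde\Lambda$''. It does not: $p(z)=z_2\circ z_1^{-1}(z)$ where $z_i(m')=\Lambda^s_{loc}(m',\epsilon_1)\cap\varSigma_i$, and while each $z_i$ is continuous and injective, nothing in your argument forces $z_1^{-1}$ (i.e.\ $z\mapsto m'(z)$) to be continuous when $\tilde\Lambda$ is an arbitrary subset of a possibly non-locally-compact space. The paper flags exactly this point and resolves it by a \emph{reparameterization trick}: since $u(\Lambda^s_{loc}(\epsilon_2))\subset\Lambda^s_{loc}(\epsilon_2)$ for small $\epsilon_2$, the hypotheses of \autoref{thm:ssl} hold with $\Lambda$ replaced by $\Lambda^s_{loc}(\epsilon_2)$, yielding a lamination $\{\tilde\Lambda^s_{loc}(z,\epsilon_2):z\in\Lambda^s_{loc}(\epsilon_2)\}$ indexed by the leaf points themselves. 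By the orbit characterization, $\tilde\Lambda^s_{loc}(z,\epsilon_2)\subset\Lambda^s_{loc}(m'(z),\epsilon_1)$, so $p(z)=\tilde\Lambda^s_{loc}(z,\epsilon_2)\cap\varSigma_2$; now $z$ is the base point, and continuity of $z\mapsto\tilde\Lambda^s_{loc}(z,\epsilon_2)$ comes straight from \autoref{lem:continuity_f}, bypassing $m'(z)$ entirely.

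For (3), the same extension makes the H\"older estimate a one-line application of \autoref{lem:sheaf} to $z\mapsto\tilde\Lambda^s_{loc}(z,\epsilon_2)$. Your orbit-based alternative is a reasonable instinct, but as written it does not close: you obtain a bound on $d(u^N(p_1),u^N(p_2))$ and then say you will ``avoid inversion'' via a graph-transform iteration that ``contracts the discrepancy between $p(z_1)$ and $p(z_2)$''. But the graph transform acts on graphs, not on the fixed transversal points $p_i\in\varSigma_2$, and the transversals $\varSigma_1,\varSigma_2$ are not carried to transversals by $u$. Making your sketch precise requires bounding the $C^0$ distance between the two leaf-graphs in terms of $d(z_1,z_2)$, which is precisely the statement that the leaf depends H\"older on a basepoint chosen \emph{on the leaf}---again the extension trick. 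So either way you need that idea; once you have it, the direct orbit balancing is superfluous.
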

\begin{proof}
	Choose $ \epsilon_1 $ small. Then by assumption (UC) we know that $ m' \mapsto  \Lambda^s_{loc}(m',\epsilon_1) $ is continuous pointwise (\autoref{lem:continuity_f}), and for any $ m' \in \tilde{\Lambda} $, $ \Lambda^s_{loc}(m',\epsilon_1) \cap \varSigma_i $ consists of one point $ z_i(m') $, $ i = 1, 2 $, and $ m' \mapsto z_i(m') $ is continuous (see e.g. \autoref{lem:fconx}).
	By the lamination property, $ m' \to z_i(m') $ is injective.
	But one cannot expect that in general $ z \mapsto z^{-1}_i(z) $ is also continuous (where it can be defined) except when $ \tilde{\Lambda} $ is \emph{compact}. In contrast, the map $ p(z) = z_2(z^{-1}_1(z)) $ is continuous. This can be argued as follows (see also \cite[proof of Corollary 4.4]{PSW97}). Note that $ u(\Lambda^s_{loc}(\epsilon_2)) \subset \Lambda^s_{loc}(\epsilon_2) $ and the assumptions on $ \Lambda $ are also satisfied by $ \Lambda^s_{loc}(\epsilon_2) $, where $ \epsilon_2 < \epsilon_1 $ is sufficiently small. So there also exists the local strong stable lamination $ \{ \tilde{\Lambda}^s_{loc}(m', \epsilon_2) : m' \in \Lambda^s_{loc}(\epsilon_2) \} $ of $ \Lambda^s_{loc}(\epsilon_2) $. By the characterization of the strong stable lamination, if $ \epsilon_2 $ is small, one has $ \tilde{\Lambda}^s_{loc}(m', \epsilon_2) \subset \Lambda^s_{loc}(m, \epsilon_1) $ if $ m' \in \Lambda^s_{loc}(m, \epsilon_2) $. Also, note that $ m' \mapsto \tilde{\Lambda}^s_{loc}(m', \epsilon_2) $ is continuous pointwise. Now
	\[
	\{p(z)\} = \Lambda^s_{loc}(m'(z),\epsilon_2) \cap \varSigma_2 = \tilde{\Lambda}^s_{loc}(z,\epsilon_2) \cap \varSigma_2
	\]
	if $ z \in \tilde{\Lambda}^s_{loc}(z,\epsilon_2) \cap \varSigma_1 \subset \Lambda^s_{m,\epsilon_2} \cap \varSigma_1 $. Taking $ \epsilon_2 $ (small) instead of $ \epsilon_1 $, we finish the proof of (1) (2).

	To show (3), by \autoref{lem:sheaf}, if $ u $ is Lipschitz on $ \Lambda_{\epsilon'/2} $, then also $ m' \mapsto \tilde{\Lambda}^s_{loc}(m',\epsilon_1) $ is H\"older if $ \epsilon_1 $ is small, which yields the H\"older continuity of $ m' \mapsto z_i(m') $ (see e.g. \autoref{lem:fconx}). Thus, we obtain the H\"olderness of $ p(\cdot) $, completing the proof.
\end{proof}

If $ \Lambda = M $, \autoref{thm:ssl} means $ u $ admits an $ s $-lamination $ \{ \Lambda^s_m: m \in M \} $ (see e.g. \cite{AV10}). In addition, if $ u $ is a homeomorphism and $ \tilde{u}_m: X_m \times Y_m \to X_{u(m)} \times Y_{u(m)} $ satisfies (A)($ \alpha $, $ \lambda_u(m) $) (B)($ \beta $, $ \lambda_s(m) $) condition with $ \sup_{m}\lambda_s(m) + \epsilon < 1 $ and $ \sup_{m}\lambda_u(m) + \epsilon < 1 $, then $ u $ admits an $ u $-lamination. \autoref{corr:hol} also shows that the homeomorphism $ u $ is \emph{(uniformly) hyperbolic} (see also \cite{Via08, AV10}).

Now, let us consider the smooth case.

\begin{enumerate}[({SS}1)]
	\item Let $ X $ be a $ C^1 $ Finsler manifold with Finsler metric $ d $ in its components, which is $ C^{0,1} $-uniform (i.e. assumption $ (\blacksquare) $ on \autopageref{C1MM} holds for $ X = M = M_1 $); see also \autoref{examples} for some examples. Let $ \chi_m: U_m(\epsilon') \to T_{m}X $ be the local chart given by ($ \blacksquare $), and $ \mathcal{M} $ the {canonical} bundle atlas of $ TX $ induced by $ \chi_{m_0} $, $ m_0 \in X $.
	Let $ u: X \rightarrow X $ be a $ C^1 $ map, $ \Lambda \subset X $ and $ u(\Lambda) \subset \Lambda $.
	\item Let $ T_mX = X^{s}_m \oplus X^u_m $ with projections $ \Pi^{\kappa}_m $, so that $ \Pi^s_m + \Pi^u_m= \id $, $ R(\Pi^\kappa_m) = X^\kappa_m $, $ \kappa = s, u $, $ m \in \Lambda $. Suppose $ \sup_{m}|\Pi^s_m| < \infty $ and $ m \mapsto X^{\kappa}_m : X \to \mathbb{G}(TX) $ is uniformly continuous around $ \Lambda $, $ \kappa = s, u $, where $ \mathbb{G}(TX) $ is the Grassmann manifold of $ TX $; see \autoref{upVector}.
	\item Assume that $ Du(m): X^{s}_m \oplus X^u_m \rightarrow X^{s}_{u(m)} \oplus X^u_{u(m)} $, $ m \in \Lambda $, satisfy
	\begin{enumerate}[(i)]
		\item $ \|\Pi^s_{u(m)}Du(m)|_{X^{s}_m}\| = \lambda_{s}(m) $,
		\item $ \Pi^u_{u(m)}Du(m): X^u_{m} \rightarrow X^u_{u(m)} $ is invertible, $ \| (\Pi^u_{u(m)}Du(m)|_{X^u_m})^{-1} \| = \lambda_u(m) $,
		\item $ \| \Pi^{\kappa_1}_{u(m)} Du(m)\Pi^{\kappa_2}_m \| \leq \eta $, $ \kappa_1 \neq \kappa_2 $, $ \kappa_1, \kappa_2 \in \{s,u\} $,
		\item $ \sup_m \lambda_{s}(m) \lambda_u(m) < 1 $, $ \sup_m \lambda_s(m) < 1 $, $ \sup_m \lambda_{u}(m) < \infty $,
		\item $ Du $ is uniformly continuous around $ \Lambda $ with respect to $ \mathcal{M} $; see \autoref{def:manifoldMap}.
	\end{enumerate}
\end{enumerate}
\begin{cor}[Strong stable laminations II: smooth case]\label{corr:smoothcase}
	Let \textnormal{(SS1), (SS2), (SS3)} hold. If $ \eta > 0 $ is small, then there are $ \epsilon' > 0 $ and $ \eta_1 > \eta $ small such that there are unique $ \Lambda^s_{loc}(m, \epsilon') \subset U_m(\epsilon') $, $ m \in \Lambda $, such that the following \textnormal{(1) (2)} hold:
	\begin{enumerate}[(1)]
		\item $ m \in \Lambda^s_{loc}(m, \epsilon') $ and every $ \Lambda^s_{loc}(m, \epsilon') $ is a Lipschitz graph of $ X^s_m(\epsilon') \to X^u_m $ (through $ \chi_m $) with the Lipschitz constant no more than $ \eta_1 $.
		\item $ u(\Lambda^s_{loc}(m, \epsilon')) \subset \Lambda^s_{loc}(u(m), \epsilon') $.

		Moreover, $ \Lambda^s_{loc}(m, \epsilon') $, $ m \in \Lambda $, have the following properties:
		\item $ \{ x \in U_m(\epsilon'): u^n(x) \in U_{u^n(m)}(\epsilon'), | u^n(x) - u^n(m) | \lesssim \varepsilon^{(n)}(m), n \to \infty \} = \Lambda^s_{loc}(m,\epsilon') $, where $ \varepsilon: \Lambda \to \mathbb{R}_+ $ such that $ \lambda_s(m) + \varsigma < \varepsilon(m) < \lambda^{-1}_u(m) - \varsigma $ (for small $ \varsigma > 0 $).
		\item Each $ \Lambda^s_{loc}(m, \epsilon') $ is a $ C^1 $ graph. If $ \eta = 0 $, then $ T_m \Lambda^s_{loc}(m, \epsilon') = X^s_m $. Moreover, if $ u \in C^{k,\gamma} $, then $ \Lambda^s_{loc}(m, \epsilon') $ is a $ C^{k,\gamma} $ graph. Furthermore, $ m \mapsto \Lambda^s_{loc}(m,\epsilon') $ is continuous in $ C^1 $ topology on  bounded subset.
		\item Let
		\[
		\Lambda^s_m \triangleq \bigcup_{n \geq 0} u^{-n} (\Lambda^s_{loc}(u^n(m), \epsilon')) = \{ x \in X: | u^n(x) - u^n(m) | \lesssim \varepsilon^{(n)}_1(m), n \to \infty \}.
		\]
		Then $ \Lambda^s \triangleq \{ \Lambda^s_m: m \in \Lambda \} $ is a ($ u $-invariant) lamination called the \textbf{strong stable lamination} of $ \Lambda $ for $ u $, where $ \lambda_s(m) + \varsigma < \varepsilon_1(m) < \min\{1, \lambda^{-1}_u(m) \} - \varsigma $ (for small $ \varsigma > 0 $).
		\item The holonomy maps for $ \Lambda^s $ are uniformly (locally) H\"older if $ \sup_{m \in \Lambda} |Du(m)| < \infty $.
	\end{enumerate}
\end{cor}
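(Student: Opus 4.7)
The plan is to reduce the smooth case to the discrete local existence result \autoref{thm:local} and the local regularity statements \autoref{thm:LocalRegularity}, by passing to local bundle coordinates via the $C^1$ charts $\chi_m$ supplied by assumption $(\blacksquare)$. Since $X$ is $C^{0,1}$-uniform and $Du$ is uniformly continuous around $\Lambda$ with $\sup_m\lambda_u(m)<\infty$, the pull-back $\hat u_m=\chi_{u(m)}\circ u\circ\chi_m^{-1}$ is defined on a uniform ball $T_mX(\delta)\to T_{u(m)}X$, with $D\hat u_m(0)=Du(m)$ and $\hat u_m(0)=0$. Because the splitting $T_mX=X^s_m\oplus X^u_m$ over $\Lambda$ is only $C^0$ uniform, I will first smooth-approximate it on the Grassmannian as in the proof of \autoref{holfoliation} to obtain $C^1$ (in fact smoother, depending on $u$) subbundles $\widetilde X^s,\widetilde X^u$ with $|\Pi^\kappa_m-\widetilde\Pi^\kappa_m|\le\xi$ for $\xi$ arbitrarily small. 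The off-diagonal bound in (SS3)(iii), perturbed by $O(\xi)$, together with the uniform continuity of $Du$, then places $\hat u_m$ exactly into the hypotheses of \autoref{lem:exampleH}: it produces generating maps $(F_m,G_m)$ satisfying (A$'$)$(\alpha,\lambda_u(m)+\varsigma)$ (B)$(\beta;\beta'(m),\lambda_s(m)+\varsigma)$ on small boxes $\widetilde X^s_m(r_1)\oplus\widetilde X^u_{u(m)}(r'_2)$, with $\alpha,\beta\to 0$ as $\eta,\xi,r\to 0$ and with $F_m(0,0)=G_m(0,0)=0$ (since $0$ is invariant).

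With the invariant-section case $\varepsilon_1\equiv 0$, the spectral conditions of \autoref{thm:local} reduce to $\sup_m\lambda_s\lambda_u<1$ and $\sup_m\lambda_s<1$, both given by (SS3)(iv). I then invoke \autoref{thm:local} to produce, for small enough $\sigma_0$, Lipschitz graphs $f_m:\widetilde X^s_m(\sigma_0)\to\widetilde X^u_m$ with $\lip f_m\le\beta'(m)$ and $\graph f\subset H^{-1}\graph f$. Pushing forward, set $\Lambda^s_{loc}(m,\epsilon')\triangleq\chi_m^{-1}(\graph f_m)$; since $\widetilde X^\kappa$ is $\xi$-close to $X^\kappa$, such a graph is also representable as a Lipschitz graph $X^s_m\to X^u_m$ with Lipschitz constant $\eta_1=\eta_1(\eta,\xi,\beta')$ that is small when the parameters are. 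This yields (1) and (2); the characterization (3) is immediate from the asymptotic description of the invariant graph given in \autoref{properties} (applied in the smooth approximate splitting and transferred back using $\sup_m\lambda_s<1$ and $\sup_m\lambda_u^{-1}>\sup_m\lambda_s+\varsigma$).

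For (4), since $u\in C^1$ implies $F_m(\cdot),G_m(\cdot)\in C^1$, the local version \autoref{thm:LocalRegularity}(a) (essentially \autoref{lem:leaf1}) gives $f_m\in C^1$; the higher $C^{k,\gamma}$ statement follows from \autoref{lem:leafk}. When $\eta=0$ the off-diagonal terms of $Du(m)$ in the true splitting vanish, which forces $Df_m(0)=0$ via \eqref{mainX}, hence $T_m\Lambda^s_{loc}(m,\epsilon')=X^s_m$. Continuity of $m\mapsto f_m(\cdot)$ in $C^1$-topology on bounded sets comes from \autoref{lem:K1leafc} combined with \autoref{lem:baseK1}(1), using the uniform continuity of $Du$ with respect to the chosen atlases. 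The lamination property (5) follows from (3): if $x\in\Lambda^s_{loc}(m_1,\epsilon')\cap\Lambda^s_{loc}(m_2,\epsilon')$ then $|u^n(x)-u^n(m_i)|\lesssim\varepsilon^{(n)}(m_i)$ forces $|u^n(m_1)-u^n(m_2)|\to 0$, so by the local uniqueness piece of \autoref{thm:local} the two plaques agree. Finally, for (6), the uniform bound $\sup_{m\in\Lambda}|Du(m)|<\infty$ makes $u$ Lipschitz on a uniform-size neighborhood of $\Lambda$, and \autoref{corr:hol}(3) applies verbatim (with the local $C^{0,1}$ trivialization induced by $\chi_m$ serving as the uniform continuity hypothesis (UC)) to give Hölder holonomies.

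The main obstacle I expect is bookkeeping around the approximate splitting $\widetilde X^{s,u}$: one must verify that all the quantitative bounds of \autoref{lem:exampleH} depend only on $\eta+\xi$ and the continuous-splitting norms (not on derivatives of $m\mapsto X^\kappa_m$ which need not exist), and then that the invariant graph over $\widetilde X^s_m$ re-graphs as a genuine graph over $X^s_m$ with a controlled Lipschitz constant and correct tangency when $\eta=0$. Once that angular transfer is carried out carefully, the rest of the argument is a direct invocation of the general theorems.
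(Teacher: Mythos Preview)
Your overall strategy is sound and closely parallels the paper's, but there are two points worth noting.

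First, the Grassmannian smooth-approximation step is both unnecessary and potentially illegitimate here. The argument in the proof of \autoref{holfoliation} that you borrow relies on $M$ being a smooth compact Riemannian manifold, so that $C^2$ approximations of $C^0$ subbundles exist; under (SS1) the Finsler manifold $X$ may be infinite-dimensional and noncompact, and no such approximation is available in general. Fortunately you do not need it: \autoref{lem:exampleH} and \autoref{thm:local} require only that each fiber $X^s_m\oplus X^u_m$ be a Banach space with uniformly bounded projections and small off-diagonal block $\eta$, all of which (SS2)--(SS3) already give for the \emph{original} $C^0$ splitting. The leaf-regularity results you cite for (4) are likewise fiberwise statements and do not need any base-smoothness of $m\mapsto X^\kappa_m$. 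So you should simply drop the approximation and work in the given splitting; this also eliminates the ``angular transfer'' bookkeeping you flagged as the main obstacle.

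Second, the paper takes a slightly different (and shorter) route for the existence part: rather than invoking \autoref{thm:local}, it extends the local correspondence $\hat u_m\sim(F_m,G_m)$ to all of $X^s_m\oplus X^u_m$ via radial retraction (as in \autoref{lem:exampleH}\eqref{it:H3}), and then applies \autoref{thm:ssl} directly---the Lipschitz-case theorem already packaged with the lamination property (5) and the characterization (3). This saves re-deriving (5) from scratch; your argument for (5) via the characterization works too, but the step ``$|u^n(m_1)-u^n(m_2)|\to 0$ so by local uniqueness the plaques agree'' needs the intermediate observation that $m_1\in\Lambda^s_{loc}(m_2,\epsilon')$ first (exactly as in \autoref{thm:ssl}(4)), not merely uniqueness in \autoref{thm:local}. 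The treatment of (4) and (6) in your proposal matches the paper's use of \autoref{lem:leaf1}, \autoref{lem:leaf1a}, \autoref{lem:baseK1}, \autoref{thm:LocalRegularity}, and \autoref{corr:hol}.
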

\begin{proof}
	One can reduce the smooth case to the Lipschitz case.
	Let $ m \in \Lambda $. Consider
	\[
	\hat{u}_{m} = \chi_{u(m)} \circ u_m \circ \chi^{-1}_{m}: X^s_{m}(r_1) \oplus X^u_{m}(r'_1) \to X^s_{u(m)}(r_1) \oplus X^u_{u(m)}(r'_2),
	\]
	where $ r_1, r'_2 $ are chosen such that the maps are well defined. By taking $ r_1, r'_2 $ further smaller, we can assume that $ \hat{u}_{m} $ satisfies the (A$ ' $)($ \alpha $, $ \lambda'_u(m) $) (B$ ' $)($ \beta $, $ \lambda'_s(m) $) condition, where $ \lambda'_s(m) = \lambda_s(m) + \epsilon $, $ \lambda'_u(m) = \lambda_u(m) + \epsilon $, $ \epsilon > 0 $ (sufficiently small), and $ \alpha, \beta \to 0 $ as $ r_1, r'_1, r'_2, \eta \to 0 $.
	Using the radial retractions in $ X^s_m, X^u_m $, one gets
	\[
	\tilde{u}_{m} \sim (F_m, G_m) :  X^s_{m} \oplus X^u_{m} \to X^s_{u(m)} \oplus X^u_{u(m)}
	\]
	such that $ \tilde{u}_m|_{U_m(\epsilon')} = \hat{u}_m $ and it satisfies the (A$ ' $) ($ \alpha $, $ \lambda'_u(m) $) (B$ ' $) ($ \beta $, $ \lambda'_s(m) $) (see e.g. \autoref{lem:general} and \autoref{lem:exampleH}). Now (1), (2), (3) and (5) are consequences of \autoref{thm:ssl}. Note that $ \sup_{m \in \Lambda} |Du(m)| < \infty $ implies that $ |Du(m)| $ is bounded in a neighborhood of $ \Lambda $ for $ Du(m) $ is uniformly continuous around $ \Lambda $. So (6) follows from \autoref{corr:hol}. The regularity results in (4) are consequences of \autoref{lem:leaf1}, \autoref{lem:leaf1a}, \autoref{lem:continuity_f} and \autoref{lem:baseK1} (by taking $ \epsilon' $ smaller); see also \autoref{thm:LocalRegularity}.
\end{proof}

If $ \Lambda = X $, we also call $ \Lambda^s $, obtained in \autoref{corr:smoothcase}, the \emph{strong stable foliation} for $ u $.

\subsection{Fake invariant foliations}\label{fake}

In the following, we give a more general result on the existence and regularity of fake invariant foliations than that of \cite{BW10} (see also \cite{Wil13}). Also, with a little effort, we can deal with smooth maps in Finsler manifolds.

Let $ X $ be a Banach space, a compact Riemannian manifold without boundary, a Riemannian manifold having bounded geometry, or a Finsler manifold satisfying the following general setting (see e.g. \autoref{examples}).

\begin{enumerate}
	\item [(HF1)] Let $ X $ be a $ C^1 $ Finsler manifold with Finsler metric $ d $ in its components, modeled on a Banach space $ \mathbb{E} $, which is $ C^{0,1} $-uniform (i.e. assumption $ (\blacksquare) $ on \autopageref{C1MM} holds for $ X = M = M_1 $). Let $ \chi_m: U_m(\epsilon') \to T_{m}X $ be the local chart given by ($ \blacksquare $), and $ \mathcal{M} $ is the {canonical} bundle atlas of $ TX $ induced by $ \chi_{m_0} $, $ m_0 \in X $.

    Let $ u: X \rightarrow X $ be a $ C^1 $ map with $ \lip u < \infty $.
	\item [(HF2)] Let $ T_mX = X^{s}_m \oplus X^c_m \oplus X^u_m $ with projections $ \Pi^{\kappa}_m $, $ \kappa = c, s, u $, such that $ \Pi^c_m + \Pi^s_m + \Pi^u_m= \id $, $ R(\Pi^\kappa_m) = X^\kappa_m $, $ m \in X $. Set $ \Pi^{\kappa_1 \kappa_2}_m = \Pi^{\kappa_1}_m + \Pi^{\kappa_2}_m $ and $ X^{\kappa_1\kappa_2}_m = X^{\kappa_1}_m + X^{\kappa_2}_m $, $ \kappa_1 \neq \kappa_2 $, $ \kappa_i = c, s, u $, $ i = 1,2 $. Suppose $ \sup_{m}|\Pi^{\kappa}_m| < \infty $, and $ m \mapsto X^{\kappa}_m : X \to \mathbb{G}(TX) $ is uniformly continuous, $ \kappa = c, s, u $, where $ \mathbb{G}(TX) $ is the Grassmann manifold of $ TX $; see \autoref{upVector}.
    \item  [(HF3)] Assume that $ Du(m): X^{cs}_m \oplus X^u_m \rightarrow X^{cs}_{u(m)} \oplus X^u_{u(m)} $, $ m \in X $, satisfy
    \begin{enumerate}[(i)]
    	\item $ \|\Pi^s_{u(m)}Du(m)|_{X^{s}_m}\| = \lambda'_{s}(m) $,
    	$ \|\Pi^{cs}_{u(m)}Du(m)|_{X^{cs}_m}\| = \lambda'_{cs}(m) $,
    	$ \Pi^u_{u(m)}Du(m): X^u_{m} \rightarrow X^u_{u(m)} $ is invertible, $ \| (\Pi^u_{u(m)}Du(m)|_{X^u_m})^{-1} \| = \lambda'_u(m) $,
    	\item $ \| \Pi^{\kappa_1}_{u(m)} Du(m)\Pi^{\kappa_2}_m \| \leq \eta $, $ \kappa_1 \neq \kappa_2 $, $ \kappa_1, \kappa_2 \in \{s,c,u\} $,
    	\item $ \sup_m \lambda'_{cs}(m) \lambda'_u(m) < 1 $, $ \sup_m \lambda'_u(m) < \infty $, $ \sup_m \lambda'_{cs}(m) < \infty $.
    	\item $ Du $ is uniformly continuous with respect to $ \mathcal{M} $; see \autoref{def:manifoldMap}.
    \end{enumerate}
\end{enumerate}

\begin{thm}[Fake invariant foliations I]\label{thm:fake1}
	Let \textnormal{(HF1), (HF2), (HF3)} hold. Then the following conclusions hold:

	\textbf{Existence}.
	For any sufficiently small $ \varepsilon > 0 $, if $ \eta $ is small enough, then there are $ r > r_0 > 0 $ such that the following results hold. There exist (in general not unique) Lipschitz graphs $ W^{cs}_m(q, r) $, which can be parameterized as Lipschitz graphs of $ X^{cs}_m(r) \to X^{u}_m $ through $ \chi_m $ (i.e. the sets $ \chi_m (W^{cs}_m(q, r)) $), in a neighborhood of $ m \in X $ with Lipschitz constants no more than a fixed small constant (independent of $ m,q $), such that the following hold:
	\begin{enumerate}[(1)]
		\item (local invariance) $ u(W^{cs}_m(q, r_0)) \subset W^{cs}_{u(m)} (u(q), r) $, $ q \in {U}_m(r_0) $; $ q \in W^{cs}_m(q,r_0) \subset U_{m}(\epsilon') $. Moreover, if $ \sup_m \lambda'_{cs}(m) < 1 $, then $ u(W^{cs}_m(q, r_0)) \subset W^{cs}_{u(m)} (u(q), r_0) $ for $ q \in {U}_m(r_0) $.
		\item (tangency) For each $ m \in X $, $ W^{cs}_m(m,r) $ is differentiable at $ m $ with $ T_m W^{cs}_m(m,r) $ a (linear) graph of $ X^{cs}_m \to X^{u}_m $.
		Moreover, if for all $ m \in X $, $ Du(m) X^{cs}_m \subset X^{cs}_{u(m)} $, then $ T_m W^{cs}_m(m, r) = X^{cs}_m $.
		\item (exponential growth bounds) Let $ p_j = u^j(p) $, $ q_j = u^{j}(q) $, $ m_j = u^j(m) $, $ j = 1, 2, \ldots, n $. If $ q_j \in {U}_{m_j}(r_0) $, and $ p_j \in W^{cs}_{m_j}(q_j, r_0) $, $ j = 1, 2, \ldots, n-1 $, then $ p_n \in W^{cs}_{m_n}(q_n, r) $ and $ d(p_n, q_n) \leq \lambda^{(n)}_{cs}(m)d(p,q) $, where $ \lambda_{cs}(m) = \lambda'_{cs}(m) + \varepsilon $.
		\item If $ \sup_m\lambda'_{cs}(m) < 1 $, then for $ m \in X $, $ W^{cs}_m(m,r) $ is the (local) strong stable manifold of $ m $ for $ u $.
	\end{enumerate}

	\textbf{Regularity}.
	\begin{enumerate}[(1)]
		\item (H\"older Regularity) $ \mathcal{W}^{cs}_m(r) \triangleq \bigsqcup_{q \in U_{m}(r_0)} W^{cs}_m(q, r) $, $ m \in X $, are (uniformly) H\"older `foliations'.

		\item ($ C^1 $ leaves) Assume that $ \mathbb{E} $ has a $ C^1 \cap C^{0,1} $ bump function. Then for all $ m \in X $, all leaves $ W^{cs}_m(q,r_0) $, $ q \in U_{m}(r_0) $, are $ C^1 $ graphs (and so $ \mathcal{W}^{cs}_m(r) $ is indeed a foliation in $ U_{m}(\epsilon') $). Moreover, the tangent distribution $ T_qW^{cs}_m(q,r) $ depends in a uniformly (locally) H\"older fashion on $ q \in U_{m}(r_0) $ if $ u $ is uniformly (locally) $ C^{1,\gamma} $ ($ \gamma > 0 $) and in addition the bump function is $ C^{1,1} $; the H\"older constants are no more than a fixed constant independent of $ m $.

		If \textnormal{(i)} $ \sup_m\lambda'_{cs}(m) < 1 $, or
		\textnormal{(ii)} every $ X^{c}_m $ admits a $ C^1 \cap C^{0,1} $ bump function and $ \sup_m \lambda'_s(m) < 1 $, then for every $ m \in X $, the leaf $ W^{cs}_m(m,r_0) $ is a $ C^1 $ graph.
	\end{enumerate}
\end{thm}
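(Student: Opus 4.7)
The plan is to reduce this purely local statement about $u$ on $X$ to a \emph{global} bundle correspondence, and then quote Theorem~\ref{thm:bundlemaps} (together with the fibre-regularity Lemmas~\ref{lem:leaf1}, \ref{lem:leaf1a} and the H\"older base-regularity Lemma~\ref{lem:sheaf}). Form the bundle $\hat X$ over $X$ whose fibre at $m$ is $T_mX=X^{cs}_m\oplus X^u_m$, endowed with the vector bundle atlas built from the splitting in (HF2). For each $m$, the conjugation $\hat u_m\triangleq\chi_{u(m)}\circ u\circ\chi_m^{-1}$ is a well-defined $C^1$ map on a ball in $T_mX$ with $\hat u_m(0)=0$ and $D\hat u_m(0)=Du(m)$ (here I use $D\chi_m(m)=\id$ provided by $(\blacksquare)$). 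Using (HF3)(iv) for almost uniform continuity, the Lipschitz bound $\lip u<\infty$ and $\sup_m|\Pi^\kappa_m|<\infty$, I would apply Lemma~\ref{lem:exampleH} (with radial retraction in the fibres) to produce a generating bundle map $(\hat F,\hat G)$ defined on all of $X^{cs}\otimes_u X^u$, agreeing with the local conjugation on a ball of fixed radius $r$, and such that the induced bundle correspondence $\hat H\sim(\hat F,\hat G):\hat X\to\hat X$ over $u$ satisfies (A)($\alpha,\lambda_u$)(B)($\beta,\lambda_{cs}$) condition globally, with $\lambda_{cs}=\lambda'_{cs}+\varepsilon$, $\lambda_u=\lambda'_u+\varepsilon$, and $\alpha,\beta$ as small as we wish once $\eta$ is small.

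Next I apply Theorem~\ref{thm:bundlemaps} to $\hat H$: for every $m\in X$ and every $q\in\hat X_m=T_mX$ it yields a Lipschitz graph $\hat W^{cs}_m(q)$ of $X^{cs}_m\to X^u_m$, with uniform Lipschitz constant, containing $q$, and satisfying the invariance $\hat H_m\hat W^{cs}_m(q)\subset\hat W^{cs}_{u(m)}(\hat H_m(q))$ along with the asymptotic characterisation from Section~\ref{properties}. Pulling back, define
\[
W^{cs}_m(q,r)\triangleq\chi_m^{-1}\bigl(\hat W^{cs}_m(\chi_m(q))\cap(X^{cs}_m(r)\oplus X^u_m)\bigr).
\]
Choosing $r_0<r$ small enough that $\chi_{u(m)}^{-1}$ is defined on $\hat H_m$ of the radius-$r_0$ ball, the cut-off is inactive there, so $\hat H_m$ \emph{equals} $\chi_{u(m)}\circ u\circ\chi_m^{-1}$ on the relevant set and the abstract invariance becomes exactly (1). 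The exponential growth bound (3) is the iterated (B) condition for $\hat H$ combined with the two-sided Lipschitz control $\Xi^{-1}\!\le\!|\chi_m^{\pm1}|\!\le\!\Xi$ from $(\blacksquare)$. The tangency (2) is a direct application of Remark~\ref{00diff}/Lemma~\ref{lem:leaf1}: $(\hat F_m,\hat G_m)$ is differentiable at $(0,0)$ with derivatives given by the block decomposition of $Du(m)$, so the invariant graph function $\hat f_m$ is differentiable at $0$ with slope $K^1_m(0)$ solving the variational equation \eqref{mainX}; under the extra hypothesis $Du(m)X^{cs}_m\subset X^{cs}_{u(m)}$ the off-diagonal block vanishes, forcing $K^1_m(0)=0$ and hence $T_mW^{cs}_m(m,r)=X^{cs}_m$. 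Finally, when $\sup_m\lambda'_{cs}<1$, the image of a small radius-$r_0$ ball under $\hat H_m$ returns into the linear region, yielding the strengthened invariance and, by characterisation (3) from Section~\ref{properties}, identifying $W^{cs}_m(m,r)$ with the local strong stable manifold, giving (4).

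For the Regularity half, the H\"older statement (1) comes from Lemma~\ref{lem:sheaf} applied to the parameter $q$, after recasting $\{\hat W^{cs}_m(q)\}_{q}$ as an invariant graph problem for the lifted correspondence over the base $\hat X$ (exactly the device used to prove Theorem~\ref{thm:bundlemaps}): the spectral condition $\lambda_{cs}\lambda_u<1$ and the Lipschitz bound $\lip u<\infty$ enter as the ``$\mu<\infty$'' hypothesis there. The $C^1$-leaves statement in (2) is Lemma~\ref{lem:leaf1} applied fibrewise to $(\hat F_m,\hat G_m)$, since the spectral gap $\lambda_{cs}\lambda_u<1$ is what it demands; the H\"older dependence of $T_qW^{cs}_m(q,r)$ on $q$ then follows from Lemma~\ref{lem:leaf1a} (fibre H\"olderness of $K^1$) together with Lemma~\ref{lem:base0} (H\"olderness of the distribution in the base).

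The main technical obstacle is precisely the global extension of the local conjugation $\hat u_m$ to a generating bundle map satisfying (A)(B) \emph{while retaining the fibre regularity} demanded by Lemmas~\ref{lem:leaf1} and~\ref{lem:leaf1a}. On a general Banach space $\mathbb{E}$ the radial retraction is only Lipschitz, so the extended $(\hat F,\hat G)$ is merely $C^{0,1}$-fibre and only the existence/H\"older parts of the conclusion go through; to obtain $C^1$ leaves one must replace the radial cut-off by a $C^1\cap C^{0,1}$ bump on $\mathbb{E}$, which is the hypothesis flagged in item (2) of the regularity statement. The dichotomy in the very last sentence of the theorem reflects two routes to having enough bump-function-smoothness in the relevant directions: either $\sup_m\lambda'_{cs}<1$, in which case the full conjugation is effectively contracting and no cut-off in the centre is needed beyond the radial one already handled by $(\blacksquare)$; or $\sup_m\lambda'_s<1$ together with a $C^1\cap C^{0,1}$ bump on each $X^c_m$, allowing a separate smooth truncation of the centre component so that the extended system is $C^1$-fibre, and Lemma~\ref{lem:leaf1} applies as before.
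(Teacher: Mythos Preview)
Your proposal is correct and matches the paper's approach: the paper itself remarks that \autoref{thm:fake1} is a restatement of \autoref{thm:bundlemaps} in a local version before giving a direct proof that unpacks exactly the lifted-base-$(m,q)$ construction you reach by invoking \autoref{thm:bundlemaps}. The only imprecision is in the final dichotomy, where the extended generating map is $C^1$ only on the region where the cut-offs are inactive (the $X^s$-retraction remains non-smooth globally), so one appeals to \autoref{thm:LocalRegularity} rather than the global \autoref{lem:leaf1}; with that adjustment your identification of the key obstruction (radial retraction versus smooth bump) and the cited lemmas (\autoref{00diff} for tangency, \autoref{lem:base0} for the H\"older distribution) are on target.
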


\begin{enumerate}
	\item [(HF3$ ' $)] 
	(i) $ \Pi^{c}_{u(m)}Du(m): X^c_{m} \rightarrow X^c_{u(m)} $ is invertible, $ \| (\Pi^{c}_{u(m)}Du(m)|_{X^c_m})^{-1} \| = \lambda'_{1,c}(m) $,
	
	(ii) $ \sup_m \lambda'_{s}(m) \lambda'_{1,c}(m) < 1 $, $ \sup_m \lambda'_{1,c}(m) < \infty $.
\end{enumerate}

\begin{thm}[Fake invariant foliations II]\label{thm:fake2}
	Let \textnormal{(HF1), (HF2), (HF3), (HF3$ ' $)} hold. Let $ W^{cs}_m(q, r) $, $ W^{s}_m(q, r) $, $ q \in U_{m_0}(r_0) $, $ m \in X $, be obtained in \autoref{thm:fake1} such that they are parameterized as Lipschitz graphs of $ X^{cs}_m(r) \to X^{u}_m $ and $ X^{s}_m(r) \to X^{cu}_m $ through $ \chi_m $, respectively. Let $ \mathcal{W}^{\kappa}_m(r) \triangleq \bigsqcup_{q \in U_{m}(r_0)} W^{cs}_m(q, r) $, $ m \in X $, $ \kappa = cs, s $.
	Then we get the following conclusions:
	\begin{enumerate}[(1)]
		\item (coherence) Every leaf $ W^{cs}_m(q, r) $ is `foliated' by the leaves of $ \mathcal{W}^s_m(r) $; in particular $ W^s_m(p, r) \subset W^{cs}_m(q, r) $, if $ p \in W^{cs}_m(q, r_0) $. So $ \mathcal{W}^{s}_m(r) $ `foliates' $ \mathcal{W}^{cs}_m(r) $ as well.
		\item (regularity) Moreover suppose that
		$ \mathbb{E} $ has a $ C^1 \cap C^{0,1} $ bump function, $ u $ is $ C^{1,1} $, and $ \sup_m \lambda'_s(m) \lambda'_{1,c}(m) \lambda'_{cs}(m) < 1 $ (center bunching condition). Then each leaf of $ \mathcal{W}^{cs}_m(r) $ is $ C^{1} $ foliated by $ \mathcal{W}^{s}_m(r) $, $ m \in X $.
	\end{enumerate}
\end{thm}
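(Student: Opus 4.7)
The proof splits naturally into the coherence statement (1) and the $C^{1}$ regularity statement (2), both exploiting the construction of the fake foliations through the bundle map $\tilde{u}: TX \to TX$ over $u$ and the global results of \autoref{thm:bundlemaps}. Recall that the fake foliations of \autoref{thm:fake1} are produced by lifting $u$ to a $C^{1,1}$ bundle map $\tilde{u}_m : T_m X \to T_{u(m)} X$ via $\tilde{u}_m(v) = \chi_{u(m)} \circ u \circ \chi_m^{-1}(v)$ for small $v$, extending it to all of $T_m X$ using a $C^{1,1}\cap C^{0,1}$ bump function in $\mathbb{E}$, and then appealing to \autoref{thm:bundlemaps}. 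Under (HF2), (HF3), (HF3$'$), one obtains globally a center-stable foliation $\widetilde{\mathcal{W}}^{cs}$ in $TX$ (with leaves Lipschitz graphs of $X^{cs}_m \to X^{u}_m$ contracted at rate $\lambda_{cs}(m)=\lambda'_{cs}(m)+\varepsilon$) and a strong stable foliation $\widetilde{\mathcal{W}}^{s}$ (with leaves contracted at rate $\lambda_s(m)=\lambda'_s(m)+\varepsilon$), and the local fake foliations are recovered as $W^{\kappa}_m(q,r) = \chi_m^{-1}\bigl(\widetilde{W}^{\kappa}_m(\chi_m(q)) \cap X_m(r)\bigr)$.

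For part (1), the plan is to use the asymptotic characterization of leaves given in \autoref{thm:bundlemaps}~(3). Because $\lambda'_s(m) \leq \lambda'_{cs}(m)$ (a direct consequence of $X^{s}_m \subset X^{cs}_m$ and the partial hyperbolicity in (HF3)), one may choose the functions $\varepsilon_s, \varepsilon_{cs}$ in the exponential growth characterization so that $\varepsilon_s(m) \leq \varepsilon_{cs}(m)$ for all $m$. Any pair of points that shadow each other at rate $\varepsilon_s^{(n)}$ also shadow at rate $\varepsilon_{cs}^{(n)}$, and hence $\widetilde{W}^{s}_m(q) \subset \widetilde{W}^{cs}_m(q)$ globally in $TX$. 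Pulling this inclusion back through $\chi_m^{-1}$ and choosing $r>r_0$ large enough so that all relevant pieces remain inside $U_m(\epsilon')$ yields the local coherence $W^{s}_m(p,r) \subset W^{cs}_m(q,r)$ whenever $p\in W^{cs}_m(q,r_0)$, and thus that $\mathcal{W}^s_m(r)$ subfoliates $\mathcal{W}^{cs}_m(r)$.

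For part (2), the plan is to mimic the strategy of \autoref{holfoliation}. Endow the ambient lifted foliation $\widetilde{\mathcal{W}}^{cs}$ with its leaf topology, turning it into a $C^1$ Finsler manifold whose connected components are precisely the leaves $\widetilde{W}^{cs}_m(q)$; because $\tilde{u}$ is globally $C^{1,1}$, each such leaf is $C^{1,1}$ by \autoref{thm:bundlemaps}~(6)(ii), and so the resulting ambient space is locally metrizable in the sense of \autoref{defi:locallyM} and satisfies (H1b) with uniform constants. The restriction $\hat{u}$ of $\tilde{u}$ to $\widetilde{\mathcal{W}}^{cs}$ is then a $C^{1,1}$ map, and the tangent space of each leaf admits a continuous $\hat{u}$-almost-invariant splitting into an $X^{s}$-like piece (contracted at rate $\lambda'_s$) and an $X^{c}$-like piece (with inverse expansion rate $\lambda'_{1,c}$ by (HF3$'$) and contraction rate bounded by $\lambda'_{cs}$). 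After a second truncation with a $C^{1,1}$ bump function inside each leaf, we obtain a bundle correspondence with generating bundle map over $\hat{u}$, satisfying (A$'$)(B) with parameters $\lambda_s \sim \lambda'_s$, $\lambda_u \sim \lambda'_{1,c}$, and Lipschitz bound $\mu \sim \lambda'_{cs}$ for the base map. The center bunching hypothesis $\sup_m \lambda'_s(m)\lambda'_{1,c}(m)\lambda'_{cs}(m)<1$ is exactly the spectral gap condition $\lambda_s\lambda_u\mu<1$ needed for \autoref{smoothbase} (combined with \autoref{thm:LocalRegularity}), which upgrades the leaf-wise Lipschitz strong stable foliation to a $C^{1}$ one; transporting this via $\chi_m^{-1}$ gives the desired $C^{1}$ subfoliation of $\mathcal{W}^{cs}_m(r)$ by $\mathcal{W}^s_m(r)$.

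The main obstacle will be in part (2), specifically packaging the restriction of the dynamics to the center-stable leaves as a bundle correspondence over a locally metrizable base with enough uniform structure (uniform $C^{1,1}$-fiber trivialization, uniformly Lipschitz connection, uniform size plaques) so that \autoref{smoothbase} applies with constants \emph{independent} of $m$ and $q$. This requires selecting the leaf-wise chart (a ``fake'' leaf-exponential built from $\chi_m$ and the center-stable graph representation) uniformly in $m$, which rests on the uniform size and H\"older dependence of $\widetilde{W}^{cs}_m(q)$ in $q$ provided by \autoref{thm:bundlemaps}~(5) and (6), together with the fact that the leaves inherit $C^{1,1}$ regularity uniformly thanks to the uniform $C^{1,1}$ data of $\tilde{u}$; once this bookkeeping is done, the application of \autoref{smoothbase} is routine.
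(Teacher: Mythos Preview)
Your approach to part (1) via the asymptotic characterization is fine and matches the paper's one-line ``by the characterization (for $\overline{H}$)''.

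Part (2) has a real gap. You claim the center-stable leaves $\widetilde{W}^{cs}_m(q)$ are $C^{1,1}$, citing \autoref{thm:bundlemaps}~(6)(ii). But (6)(ii) asserts that the \emph{foliation} is $C^{1}$ (and $C^{1,\varsigma}$ only under an extra bunching condition); it does not give $C^{1,1}$ \emph{leaves}. Tracing through \autoref{lem:leaf1a}, a $C^{1,1}$ leaf would require $(\lambda'_{cs})^{2}\lambda'_{u}<1$, which is nowhere among the hypotheses of \autoref{thm:fake2}. Under the stated assumptions the leaves of $\widetilde{\mathcal{W}}^{cs}$ are in general only $C^{1}$, so when you pass to the leaf topology the base space $\widehat{M}^{cs}$ is only $C^{1}$, and the local representations of the restricted correspondence are \emph{not} $C^{1,1}$. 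Your intended application of \autoref{smoothbase} then fails as written, because you cannot verify the $C^{1,1}$ hypothesis on $\widehat{F}_{m_0},\widehat{G}_{m_0}$ through leaf charts.

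The paper's fix is exactly the trick from the proof of \autoref{holfoliation}: \autoref{smoothbase} does not actually need full $C^{1,1}$ of the local representations, only the weaker estimate \eqref{ccc0}, namely $|\widehat{F}_{m_0}(m_1,z)-\widehat{F}_{m_0}(m_0,z)|\le M_0|m_1-m_0||z|$ (and likewise for $G$). The point is that this estimate can be verified \emph{directly} from the fact that the ambient fiber map $\widehat{H}_{(m,p)}(\cdot)=\widehat{H}_m(\cdot+\chi_m(p))-\widehat{H}_m(\chi_m(p))$ is $C^{1,1}$ uniformly in $m$ (because $u$ is $C^{1,1}$ and the bump function is $C^{1,1}$), even though the leaf-wise parametrization $p\mapsto\chi_m(p)$ is only $C^{1}$. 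The paper states this explicitly: ``in general $(p,x)\mapsto\overline{H}^{cs}(m,q,p,x)$ is not $C^{1,1}$ (as $W^{cs}_m(p,r)$ only is $C^{1}$) but satisfies \eqref{ccc0} (since $H_m(\cdot)\in C^{1,1}$ uniform for $m$)''. Once \eqref{ccc0} is checked this way, the spectral gap $\lambda'_s\lambda'_{1,c}\lambda'_{cs}<1$ is exactly $\lambda_s\lambda_u\mu<1$ and \autoref{smoothbase} applies. Your outline becomes correct after replacing the unjustified $C^{1,1}$-leaf claim with this direct verification of \eqref{ccc0}.
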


\begin{proof}[Proof of \autoref{thm:fake1} and \autoref{thm:fake2}]
We write $ a \ll b $ if $ a $ is sufficiently smaller than $ b $, where $ a, b > 0 $.
\autoref{thm:fake1} is the local version of \autoref{thm:bundlemaps}. We also give a direct proof.

\textbf{(I)} Let $ \varepsilon > 0 $ be sufficiently small. As the norm in $ X_1 \oplus X_2 $ we take the maximal norm, i.e., $ |(x_1, x_2)| \triangleq |x_1 + x_2| = \max\{ |x_1|, |x_2| \} $, where $ x_i \in X_i $, $ i = 1,2 $.
Let
\[
{H}_m(x) = \chi_{u(m)} \circ u \circ \chi^{-1}_m(x) : X^{cs}_m(r_1) \oplus X^u_m(r'_1) \to X^{cs}_{u(m)}(r_2) \oplus X^u_{u(m)}(r'_2),
\]
which satisfies the (A)$(\alpha, \lambda_u(m))$ (B)$(\beta, \lambda_{cs}(m))$ condition by (HF3) (see \autoref{lem:a3} and \autoref{lem:general}),
where $ \lambda_{cs}(m) = \lambda'_{cs}(m) + \varepsilon $, $ \lambda_u(m) = \lambda'_u(m) + \varepsilon $, and $ \alpha , \beta \rightarrow 0 $ as $ r_1, r'_2, \eta \rightarrow 0 $. Here $ r_1, r'_1 $ are independent of $ m $ due to (HF2) (ii) and (HF3) (iv) and such that $ X^{cs}_m(r_1) \oplus X^u_m(r'_1) \subset \chi_m (U_m(\epsilon')) $ and $ u \circ \chi^{-1}_m ( X^{cs}_m(r_1) \oplus X^u_m(r'_1)) \subset \chi_{u(m)} (U_{u(m)}(\epsilon') ) $ for all $ m \in X $.

Take $ 0 < \epsilon_0 \ll r' \ll r \ll \min\{r_i , r'_i: i = 1, 2\} / 2 $.
Using the radial retractions in $ X^{\kappa}_m $ (see \eqref{radial}), one gets
\[
\widetilde{H}_m: T_m X  \rightarrow T_{u(m)}X
\]
such that $ \widetilde{H}_m |_{T_mX(\epsilon_0)} = H_m $ and $ \widetilde{H}_m |_{T_mX} \subset T_mX(r') $; here $ T_mX(\epsilon_0) = X^{s}_m(\epsilon_0) \oplus X^{c}_m(\epsilon_0) \oplus X^{u}_m(\epsilon_0) $.

Let $ H_m \sim (F_m, G_m) $, where $ F_m: X^{cs}_m(r_1) \oplus X^{u}_{u(m)}(r'_2) \to X^{cs}_{u(m)}(r_2) $ and $ G_m: X^{cs}_m(r_1) \oplus X^{u}_{u(m)}(r'_2) \to X^{u}_{m}(r'_1) $. As before, using the radial retractions in $ X^{\kappa}_m $ (see \eqref{radial}), one defines $ \widehat{F}_m, \widehat{G}_m $ in all $ X^{cs}_m \oplus X^{u}_{u(m)} $ such that $ \widehat{F}_m|_{X^{cs}_m(r') \oplus X^{u}_{u(m)}(r')} = {F}_m $, $ \widehat{G}_m|_{X^{cs}_m(r') \oplus X^{u}_{u(m)}(r')} = {G}_m $ (see e.g. \eqref{equ:rad}). Set
\[
\widehat{H}_m \sim (\widehat{F}_m, \widehat{G}_m): X^{cs}_m \oplus X^u_m \to X^{cs}_{u(m)} \oplus X^u_{u(m)}.
\]

Define
\[
\widehat{M} = \{  (m, q): q \in \widehat{U}_{m}(r) , m \in X \},~
\widehat{M}_1 = \{  (m, q): q \in \widehat{U}_{m}(r') , m \in X \},
\]
and
\[
\widehat{U}_m(\epsilon) = \chi_m^{-1}(X^{s}_m(\epsilon) \oplus X^{c}_m(\epsilon) \oplus X^u_m(\epsilon)), ~0 < \epsilon \leq r,
\]
with metric $ d_m(p_1,p_2) = |\chi_m(p_1) - \chi_m(p_2)|_m \approx d(p_1, p_2) $, where $ d $ is the Finsler metric in each component of $ X $. Let
\[
H^\sim_m = \chi^{-1}_{u(m)} \circ \widetilde{H}_m \circ \chi_m,
\]
and
\[
\widehat{H}_{(m,q)} (x) = \widehat{H}_m (x + \chi_m(q)) - \widehat{H}_m (\chi_m(q)), ~ ~ q \in \widehat{U}_m(r).
\]
Note that $ H^\sim_m|_{\widehat{U}_m(\epsilon_0)} = u $ and $ H^\sim_m (\widehat{U}_m(r)) \subset \widehat{U}_{u(m)} (r') $.

Let $ \overline{X} $, $ \overline{Y} $ be bundles over $ \widehat{M} $ with fibers $ \overline{X}_{(m, q)} = X^{cs}_m $, $
\overline{Y}_{(m, q)} = X^u_m $.
Then $ \widehat{M} \cong X \times X $ is indeed a $ C^1 $ bundle (over $ X $). But $ \overline{X} $, $ \overline{Y} $ are not even $ C^1 $ by our construction. We can give another topology in $ \widehat{M} $ such that $ \overline{X} $, $ \overline{Y} $ are $ C^1 $. This can be done if we consider the fiber topology in $ \widehat{M} $ (see \autoref{immersed}); i.e., consider $ \widehat{M} $ as a bundle over $ X $ with fibers $ \widehat{U}_{m}(r) $, $ m \in X $. Then $ \widehat{M} $ (with $ \widehat{M}_1 $) trivially satisfies (H1c) (of \autoref{settingOverview}). Now $ \overline{X}|_{m \times \widehat{U}_{m}(r)} $, $ \overline{Y}|_{m \times \widehat{U}_{m}(r)} $ are trivial, so $ \overline{X} $, $ \overline{Y} $ are $ C^1 $ and satisfy (H2c) (of \autoref{settingOverview}).
Let
\[
g(m, q) = (u(m), H^\sim_m(q)) : \widehat{M} \to \widehat{M}_1 \subset \widehat{M},
\]
and
\[
\overline{H}(m, q, x) = (g(m, q), \widehat{H}_{(m,q)}(x)) : \overline{X} \times \overline{Y} \to \overline{X} \times \overline{Y}.
\]
Now
\[
\overline{H}_{(m,q)} \sim ( \overline{F}_{(m,q)}, \overline{G}_{(m,q)} ) : \overline{X}_{(m,q)} \times \overline{Y}_{(m,q)} \to \overline{X}_{g(m,q)} \times \overline{Y}_{g(m,q)}
\]
satisfies the (A)$(\alpha, \lambda_u(m))$ (B)$(\beta, \lambda_{cs}(m))$ condition for all $ q \in \widehat{U}_m(r) $.

Let $ i(m,q) = 0 $ be the $ 0 $-section of $ \overline{X} \times \overline{Y} $.
Then $ i $ is invariant under $ \overline{H} $.
By applying \autoref{thmA} to $ \overline{H} $, $ \overline{X} \times \overline{Y} $, $ g $, $ \widehat{M} $, $ i $, we have $ f^{cs}_{(m,q)} : \overline{X}_{(m,q)} \to \overline{Y}_{(m,q)} $ with $ \lip f^{cs}_{(m,q)} \leq \widetilde{\beta}' < 1 $, and $ \graph f^{cs}_{(m,q)} \subset \overline{H}^{-1}_{(m,q)}  \graph f^{cs}_{( u(m), H^\sim_m(q) )} $, or more especially,
\[
( h^{cs}_{(m,q)}(p), f^{cs}_{( u(m), H^\sim_m(q) )} (h^{cs}_{(m,q)}(p))  ) \in \widehat{H}_{(m,q)} ( p , f^{cs}_{(m,q)} (p) ) ,~
p \in {X}^{cs}_{m},
\]
where $ h^{cs}_{(m,q)}: {X}^{cs}_{m} \to {X}^u_{u(m)} $ with $ \lip h^{cs}_{(m,q)} \leq \widetilde{\lambda}'_{cs}(m) $, $ q \in \widehat{U}_m(r) $.
Set
\[
f^{cs\sim}_{(m,q)} (\cdot) = \chi^{-1}_{m} \circ ( \Pi^{cs}_m \chi_m(\cdot) + f^{cs}_{(m,q)} (\Pi^{cs}_m \chi_m(\cdot) ) +  \chi_m(q) ) ,~
W^{cs}_m(q, \varepsilon) = f^{cs\sim}_{(m,q)} ( \widehat{U}_m(\varepsilon) ).
\]
\begin{asparaitem}
	\item If we take $ r_0 \ll \epsilon_0 $, $ q \in \widehat{U}_m(r_0) $, then $ W^{cs}_m(q, r_0) \subset \widehat{U}_{m} (\epsilon_0) $. So $ u(W^{cs}_m(q, r_0)) \subset W^{cs}_{u(m)} (u(q), r) $.

	\item $ W^{cs}_m(m,r) $ is differentiable at $ m $, i.e. $ f^{cs}_{(m,m)} $ is differentiable at $ m $ (see \autoref{00diff}). If $ Du(m) X^{cs}_m \subset X^{cs}_{u(m)} $, then $ Df^{cs}_{(m,q)}(0) = 0 $ and so $ T_m W^{cs}_m(m, r) = X^{cs}_m $.

	\item By our construction, for every $ m \in X $, $ W^{cs}_m(q,r) $, $ q \in \widehat{U}_{m}(r) $, are Lipschitz graphs with Lipschitz constants approximately no more than $ \widetilde{\beta}' $. By \autoref{lem:sheaf}, 
	\[
	\mathcal{W}^{cs}_m(r) \triangleq \bigsqcup_{q \in U_{m}(r_0)} W^{cs}_m(q, r),~m \in X,
	\]
	are uniformly (locally) H\"older foliations; the H\"older constants are no more than a fixed constant independent of $ m $. The disjointness property of $ W^{cs}_m(q,r) $, $ q \in \widehat{U}_{m}(\epsilon_0) $, follows from the characterization of $ \graph f^{cs}_{(m,q)} $ (see e.g. \autoref{properties}). Also under $ \sup_m\lambda'_{cs}(m) < 1 $, the assertion that for every $ m \in X $, $ W^{cs}_m(m,r) $ is the (local) strong stable manifold of $ m $ for $ u $, follows from the characterization of the (local) strong stable foliation for $ u $; see also \autoref{corr:smoothcase}.

	\item The characterization of the exponential growth bounds (\autoref{thm:fake1} (3)) follows directly from the (A) (B) condition.

	\item Let us consider the regularity of the leaves. The radial retraction is not smooth, and neither is $ \overline{H}_{(m,q)}(\cdot) $. Thus, \autoref{lem:leaf1} cannot be applied. However, if $ \mathbb{E} $ has a $ C^1 \cap C^{0,1} $ bump function, then we can choose a suitable $ C^1 \cap C^{0,1} $ bump function instead of the radial retractions so that $ \overline{H}_{(m,q)}(\cdot) $ is $ C^1 $. Now \autoref{lem:leaf1} implies that all leaves of $ \mathcal{W}^{cs}_m(r) $ are $ C^1 $. Moreover, by \autoref{lem:base0}, the tangent distribution $ T_qW^{cs}_m(q,r) $ depends in a uniformly (locally) H\"older fashion on $ q \in U_{m}(r) $ if $ u $ is $ C^{1,\gamma} $ ($ \gamma > 0 $) and in addition the bump function is $ C^{1,1} $; the H\"older constants are no more than a fixed constant independent of $ m $.

	Set
	\[
	W^{cs\sim}_{(m,q)} (\varepsilon) = \{ (x^s, x^c, f^{cs}_{(m,q)} (x^s, x^c)): (x^s, x^c) \in X^s_{m}(\varepsilon) \oplus X^c_{m}(\varepsilon) \}.
	\]
	Suppose that every $ X^{c}_m $ has a $ C^1 \cap C^{0,1} $ bump function. Now we can choose a suitable $ C^1 \cap C^{0,1} $ bump function instead of the radial retraction in $ X^c_m $.
	Since $ \sup_m \lambda'_{s}(m) < 1 $, now $ \overline{H}_{(m,m)} W^{cs\sim}_{(m,m)}(\epsilon_0) \subset W^{cs\sim}_{(u(m),u(m))}(\epsilon_0) $. Since $ \overline{H}_{(m,m)} = \widehat{H}_{m} $ and $ \widehat{F}_m(\cdot), \widehat{G}_m(\cdot) $ are now $ C^1 $ at $ W^{cs\sim}_{(m,m)}(\epsilon_0) $, by \autoref{thm:LocalRegularity}, $ W^{cs\sim}_{(m,m)}(\epsilon_0) $ is $ C^1 $ and so too is $ W^{cs}_m(m,\epsilon_0) $. If $ \sup_m \lambda'_{cs}(m) < 1 $, we can reduce this case to $ X^c_m = \{0\} $.
\end{asparaitem}

This completes the proof of \autoref{thm:fake1}.

\textbf{(II)} If \textnormal{(HF3$ ' $)} holds, then for every $ m \in X $ and $ q \in \widehat{U}_{m}(r) $, we get $ W^s_m(q, r) $, which is parameterized as a Lipschitz graph of $ X^{s}_m(r) \to X^{cu}_m $ through $ \chi_m $.

By the characterization (for $ \overline{H} $), one has $ W^s_m(p, r) \subset W^{cs}_m(q, r) $, if $ p \in W^{cs}_m(q, r_0) $, i.e. \autoref{thm:fake2} (1) holds. Next we will show for every $ m \in X $, $ \mathcal{W}^s_m(r) $ is a $ C^1 $ foliation inside every leaf of $ \mathcal{W}^{cs}_m(r) $.
Let
\begin{gather*}
\widehat{M}^{cs} = \{ (m,q, p): p \in W^{cs}_m(q, r), q \in \widehat{U}_m(r), m\in X \},\\
\widehat{M}^{cs}_1 = \{ (m,q, p): p \in W^{cs}_m(q, r'), q \in \widehat{U}_m(r'), m\in X \},
\end{gather*}
and
\[
g^{cs}(m,q, p) = ( g(m,q), H^\sim_m(p) ): \widehat{M}^{cs} \to \widehat{M}^{cs}_1 \subset \widehat{M}^{cs}.
\]
Consider $ \widehat{X} $, $ \widehat{Y} $ as bundles over $ \widehat{M}^{cs} $ with fibers $ \widehat{X}^{s}_{(m, q, p)} = X^{s}_m $, $ \widehat{Y}^{cu}_{(m, q, p)} = X^{cu}_m $, respectively.
Set
\[
\overline{H}^{cs}(m, q, p, x) = (g^{cs}(m, q, p), \widehat{H}_{(m,p)}(x)) : \widehat{X} \times \widehat{Y} \to \widehat{X} \times \widehat{Y}.
\]
Note that the unique invariant graph of $ \overline{H}^{cs} $, represented in the direct $ \widehat{X} \to \widehat{Y} $, contains $ \cup_{m,p} \chi_m( W^{s}_m(p, r) ) $, and $ \lip H^\sim_m|_{W^{cs}_m(q, r)} \leq \widetilde{\lambda}'_{cs}(m) $.

If $ \mathbb{E} $ has a $ C^1 \cap C^{0,1} $ bump function, then we can assume $ \widetilde{H}_m(\cdot) $ and $ \widehat{F}_m(\cdot), \widehat{G}_m(\cdot) $ are $ C^1 $, and so too are $ H^\sim_m(\cdot) $, $ W^{cs}_m(p,r) $ for every $ p $, and $ (q,x) \mapsto \overline{F}_{(m,q)}(x), \overline{G}_{(m,q)}(x) $.
Thus we can give the $ C^1 $ leaf topology in $ \widehat{M}^{cs} $ (by considering it as a bundle over $ \widehat{M} $, see \autoref{immersed}), making it satisfy (H1b). (Note that $ \widehat{M}^{cs} $ is not a $ C^1 $ bundle if we consider it locally as $ X \times X \times W^{cs}_m(p,r) $.)
Since $ \widehat{X}|_{(m,q) \times W^{cs}_m(q,r)} $, $ \widehat{Y}|_{(m,q) \times W^{cs}_m(q,r)} $ are trivial, we see $ \widehat{X} $, $ \widehat{Y} $ are $ C^1 $ and satisfy (H2c).
If $ u $ is $ C^{1,1} $ (i.e. $ H_m(\cdot) \in C^{1,1} $ uniformly for $ m $) and the center bunching condition
\[
\sup_m \lambda'_s(m) \lambda'_{1,c}(m) \lambda'_{cs}(m) < 1
\]
holds, then all the assumptions in \autoref{smoothbase} are satisfied for $ \overline{H}^{cs} $, $ \widehat{X} \times \widehat{Y} $, $ g^{cs} $, $ \widehat{M}^{cs} $ and the natural $ 0 $-section. So by \autoref{smoothbase}, $ \mathcal{W}^s_m(r) $ is a $ C^1 $ foliation inside every leaf of $ \mathcal{W}^{cs}_m(r) $. Here note that in general $ (p,x) \mapsto \overline{H}^{cs}(m,q,p,x) = \widehat{H}_{(m,p)}(x) $ is not $ C^{1,1} $ (as $ W^{cs}_m(p,r) $ is only $ C^1 $) but satisfies \eqref{ccc0} (since $ H_m(\cdot) \in C^{1,1} $ uniformly for $ m $).
\end{proof}

Here note that the fake invariant foliations are in general not unique. Our proof of the existence of fake invariant foliations depends on the particular choice of the bump functions; in fact, one can further discuss how the `fake' invariant foliations depend on this choice. See \autoref{bump} for a review of bump functions (and blid maps); actually, we can use blid maps instead of bump functions; note that smooth blid maps exist in $ C[0,1] $.

\subsection{Holonomy over a lamination for a bundle map}\label{holonomyL}

Let $ M $ be a set. It is called a \textbf{lamination} with metric leaves (or for short a lamination) if for every $ m \in M $, there is a set $ \mathcal{F}_m \subset M $ with the following properties.
\begin{enumerate}[(a)]
	\item $ m \in \mathcal{F}_m $, and if $ \mathcal{F}_{m_1} \cap \mathcal{F}_{m_2} \neq \emptyset $, then $ \mathcal{F}_{m_1} = \mathcal{F}_{m_2} $; $ \mathcal{F}_m $ is called a \textbf{leaf} of $ M $.
	\item Each leaf $ \mathcal{F}_m $ is a complete metric space with metric $ d_m $. $ \mathcal{F}_m (r) \triangleq \{ m' \in \mathcal{F}_m: d_m(m', m) < r \} $ is called a \textbf{plaque} of $ M $ (at $ m $).
\end{enumerate}
$ M / \mathcal{F} $ denotes the leaf space of $ M $ which is the set of all leaves of $ M $. Using $ M / \mathcal{F} $, one can define a leaf topology in $ M $, i.e., consider $ M $ as a bundle over $ M / \mathcal{F} $ with fibers the leaves of $ M $ and the projection $ \pi $ sending $ m \in M $ to $ \mathcal{F}_m $. The $ C^1 $ regularity of laminations can be defined as follows. Let $ M $ be a subset of a $ C^1 $ Riemannian manifold. $ M $ is a $ C^1 $ lamination if the disjoint leaves are $ C^1 $-injectively (and connected) immersed submanifolds and $ \mathcal{F}_m, T_{m}\mathcal{F}_m $ are $ C^0 $ with respect $ m $ (see \cite{HPS77}).

Let $ f: M_1 \to M_2 $ be a map, where $ M_1, M_2 $ are two laminations. Then $ f $ is called a \textbf{lamination map} if it sends leaves of $ M_1 $ to leaves of $ M_2 $, i.e. $ f: (M_1, M_1/\mathcal{F}, \pi) \to (M_2, M_2/\mathcal{F}, \pi) $ is a bundle map. If $ f: M \to M $ is a lamination map then $ M $ is called an $ f $-invariant lamination.
\begin{exa}
	A typical lamination is the strong stable lamination of a $ C^1 $ map on a Finsler manifold which is partially hyperbolic on a compact set; for a more general setting, see \autoref{thm:ssl}.
\end{exa}

\begin{defi}\label{holonomy}
	Let $ M $ be a lamination and $ \mathcal{E} $ a bundle over $ M $ with metric fibers. Let $ f : M \to M $ be a lamination map and $ H: \mathcal{E} \to \mathcal{E} $ a bundle map over $ f $. A family $ \mathcal{H} $ of $ C^0 $ maps $ h_{x,y} : \mathcal{E}_x \to \mathcal{E}_y $ for all $ x, y $ in the same leaf of $ M $ is called a \textbf{holonomy over the lamination $ M $ for $ H $} if the following properties are satisfied:
	\begin{enumerate}[(hl1)]
		\item (transition) $ h_{x,x} = \id $, $ h_{y,z} \circ h_{x,y} = h_{x,z} $ for all $ x, y, z $ in the same leaf of $ M $. So $ h_{x,y} $ is a homeomorphism.
		\item (invariance) $ H_y \circ h_{x,y} = h_{f(x), f(y)} \circ H_x $.
		\item (regularity) $ (x,y, \xi) \mapsto h_{x,y} (\xi) $ is continuous where $ x,y \in \mathcal{F}_m(\epsilon) $ if $ \mathcal{E} $ is a $ C^0 $ topology bundle (see \autoref{bundleP} \eqref{topologyBundle}).
	\end{enumerate}
\end{defi}
See also \cite{Via08,AV10} for the definitions of $ s $-holonomy and $ u $-holonomy which are special cases of \autoref{holonomy} (by taking $ M $ as strong (un)stable laminations).
Let us consider the existence of the holonomy over a lamination for a bundle map in a more general setting so that one could apply it to infinite-dimensional dynamical systems. We need the following assumptions.
\begin{enumerate}[({HS}1)]
	\item Let $ M $ be a lamination with plaques $ \{ \mathcal{F}_m(\epsilon): m \in M, \epsilon > 0 \} $. Let $ \phi: M \to M $ be a lamination map. Assume for small $ \epsilon < \epsilon' $, $ \phi_m \triangleq \phi: \mathcal{F}_m(\epsilon) \to  \mathcal{F}_{\phi(m)}(\epsilon) $ with $ \lip \phi_m \leq \lambda_s(m) $, and  $ \sup_m\lambda_s(m) < \infty $.
	\item Let $ \mathcal{E} $ be a \emph{uniform $ C^{0,1} $-fiber} bundle (see \autoref{def:lipbundle}) over $ M $ with respect to the bundle atlas $ \mathcal{A} $, where
	\[
	\mathcal{A} = \{ (\mathcal{F}_m(\epsilon), \varphi^{m}) ~\text{is a bundle chart of $ \mathcal{E} $ at}~ m: \varphi^{m}_m = \id, m \in M, 0 < \epsilon < \epsilon' \}.
	\]
	\item Let $ H: \mathcal{E} \to \mathcal{E} $ be a bundle map over $ \phi $. Assume the following conditions hold:
	\begin{enumerate}[(i)]
		\item The fiber maps $ H_m(\cdot) $, $ m \in M $, have \textbf{left invertible} maps $ H^{-1}_m(\cdot) $, i.e., $ H^{-1}_m \circ H_m = \id $, and $ H_m(\mathcal{E}_m) $ is open.
		\item $ \sup_m \lip H^{-1}_m(\cdot) < \infty $. $ m \mapsto H^{- 1}_m $ depends in a uniformly (locally) $ C^{0,1} $ fashion on the base points (in each small plaque $ \mathcal{F}_m(\epsilon') $), i.e., 
		\[
		\sup_{m} \sup_x \lip \widehat{H}^{-1}_m(\cdot, x) < \infty,
		\]
		where $ \lip\widehat{H}^{-1}_m(\cdot, x) $ is the Lipschitz constant of the vertical part $ H^{-1} $ at $ m $ in local representations with respect to $ \mathcal{A} $ (see \autoref{vsII}).
		\item $ \sup_m \lip H^{-1}_m(\cdot) \lambda_s(m) < 1 $ (fiber bunching condition) and $ m' \mapsto \lambda_s(m') $, $ m' \mapsto \lip H^{-1}_{m'}(\cdot) $ are $ C^0 $ (or $ \zeta $-almost $ C^0 $ with $ \zeta > 0 $ small (see \autoref{ucontinuous})) in each $ \mathcal{F}_m(\epsilon') $.
	\end{enumerate}
\end{enumerate}
\begin{thm}\label{thm:hol}
	Let \textnormal{(HS1), (HS2), (HS3)} hold with sufficiently small $ \epsilon $ (in \textnormal{(HS1)}). Then there is a unique (local) partition $ \mathcal{W}_{\epsilon} = \{ \mathcal{W}_{\epsilon}(m,x): (m,x) \in \mathcal{E} \} $ of $ \mathcal{E} $ such that:
	\begin{enumerate}[(1)]
		\item $ (m,x) \in \mathcal{W}_{\epsilon}(m,x) $ and every $ \mathcal{W}_{\epsilon}(m,x) $ is a Lipschitz graph of $ \mathcal{F}_m(\epsilon) \to \mathcal{E}_m $ (through the bundle chart in $ \mathcal{A} $) with Lipschitz constant no more than a fixed constant (independent of $ m,x $).
		\item $ H\mathcal{W}_{\epsilon}(m,x) \subset \mathcal{W}_{\epsilon}(H(m,x)) $ for all $ (m,x) \in \mathcal{E} $.
		\item Let $ h_{m_1, m_2} (x) = y $. If $ (m_2,y) \in \mathcal{W}_{\epsilon}(m_1,x) $, then $ h_{m_1, m_2} : \mathcal{E}_{m_1} \to \mathcal{E}_{m_2} $ is a $ C^0 $ map; in fact, $ (m_1, m_2, x) \mapsto h_{m_1, m_2}(x) $ is $ C^0 $ when $ m_1, m_2 \in \mathcal{F}_m(\epsilon) $. Moreover,
		\begin{equation}\label{hollip}
		|h_{m_1, m_2}(x) - x| \leq C d_{m_1}(m_1, m_2),~ m_2 \in \mathcal{F}_{m_1}(\epsilon), ~x \in \mathcal{E}_{m_1},
		\end{equation}
		where $ C $ is a constant independent of $ m_1, x $.
		If $ \sup_m \lip H_m(\cdot) < \infty $, then $ h_{m_1, m_2} $ is also a H\"older map with the H\"older constant no more than a fixed constant (independent of $ m_1, m_2 $); when the fibers of $ \mathcal{E} $ are uniformly bounded, the H\"older exponent $ \theta > 0 $ can be chosen uniformly such that
		\[
		\sup_m (\lip H_m(\cdot))^\theta \lip H^{-1}_m(\cdot) \lambda_s(m) < 1.
		\]
		\item $ \mathcal{H}_{\epsilon} = \{ h_{m_1,m_2}:  m_1 \in \mathcal{F}_{m_2}(\epsilon), m_2 \in M \} $ is a local holonomy over $ M $ for $ H $, i.e., $ \mathcal{H}_{\epsilon} $ satisfies \textnormal{(hl1), (hl2), (hl3)} in \autoref{holonomy}.
		\item In addition, if $ \sup_m \lambda_s(m) < 1 $, define $ \mathcal{W}(m,x) = \bigcup_{n \geq 0} H^{-n} \mathcal{W}_{\epsilon}(H^{n}(m,x)) $ and $ \mathcal{W} = \{ \mathcal{W}(m,x): (m,x) \in \mathcal{E} \} $. Then $ \mathcal{W} $ is a partition of $ \mathcal{E} $ which is invariant under $ H $. By using $ \mathcal{W} $, the maps $ h_{m_1, m_2} : \mathcal{E}_{m_1} \to \mathcal{E}_{m_2} $ can be defined for all $ m_1, m_2 $ in the same leaf, and they give a holonomy over $ M $ for $ H $.
	\end{enumerate}
\end{thm}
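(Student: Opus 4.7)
The plan is to realize $\mathcal{W}_\epsilon$ as the graphs of a bundle map $f$ obtained from a single application of \autoref{thmA} to a naturally constructed bundle correspondence over $\mathcal{E}$, and then to read off parts (3)--(5) from the uniqueness/asymptotic characterization of $f$ together with the regularity results of \autoref{stateRegularity}. First I would set the stage as follows. Take $\widehat{M}\triangleq\mathcal{E}$ with the topology coming from the leaf topology of $M$ in the base direction and the fiber topology of $\mathcal{E}$. Define bundles $\widetilde{X},\widetilde{Y}$ over $\widehat{M}$ by $\widetilde{X}_{(m,x)}\triangleq\mathcal{F}_m$ (with the plaque metric) and $\widetilde{Y}_{(m,x)}\triangleq\mathcal{E}_m$ (with the fiber metric pulled back by $\varphi^m\in\mathcal{A}$), and let $u(m,x)\triangleq H(m,x)=(\phi(m),H_m(x))$. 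Define a bundle correspondence $\widetilde{H}:\widetilde{X}\times\widetilde{Y}\to\widetilde{X}\times\widetilde{Y}$ over $u$ by the generating bundle map
\[
F_{(m,x)}(m',y'')\triangleq\phi(m'),\qquad G_{(m,x)}(m',y'')\triangleq H^{-1}_{m'}(y'').
\]
The section $i(m,x)\triangleq(m,x)\in\widetilde{X}_{(m,x)}\times\widetilde{Y}_{(m,x)}$ is invariant under $\widetilde{H}$ since $H^{-1}_m\circ H_m=\id$ by (HS3)(i).

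Second I would verify the (A$'$)(B) condition and invoke \autoref{thmA}. Because $F_{(m,x)}(m',\cdot)$ does not depend on $y''$, one has $\alpha\equiv 0$, so the angle condition is trivial. The remaining functions are $\lambda_s(m,x)=\lip\phi_m\le\lambda_s(m)$, $\lambda_u(m,x)=\sup_{m'}\lip H^{-1}_{m'}$, and a uniform bound on $\beta'(m,x)$ that comes from the uniformly Lipschitz dependence of $m'\mapsto H^{-1}_{m'}$ in (HS3)(ii) combined with the uniform $C^{0,1}$-fiber trivialization of $\mathcal{E}$ in (HS2). The spectral condition $\lambda_s\lambda_u<1$ is precisely the bunching hypothesis (HS3)(iii), so \autoref{thmA} (invariant-section case, $\varepsilon_1=0$) yields a unique bundle map $f:\widetilde{X}\to\widetilde{Y}$ over $\id_{\widehat{M}}$ with $\graph f\subset\widetilde{H}^{-1}\graph f$ and $i\in\graph f$. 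Writing $f_{(m,x)}\triangleq f|_{\widetilde{X}_{(m,x)}}:\mathcal{F}_m\to\mathcal{E}_m$, this gives $f_{(m,x)}(m)=x$, $\lip f_{(m,x)}\le\beta'$, and the invariance identity $H_{m'}(f_{(m,x)}(m'))=f_{(\phi(m),H_m(x))}(\phi(m'))$. Defining $\mathcal{W}_\epsilon(m,x)\triangleq\{(m',f_{(m,x)}(m')):m'\in\mathcal{F}_m(\epsilon)\}$ and $h_{m_1,m_2}(x)\triangleq f_{(m_1,x)}(m_2)$ settles items (1) and (2).

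Third, the partition/holonomy properties (3)--(5) I would extract as follows. For (hl1)--(hl2) of \autoref{holonomy}, if $(m_3,y_3)\in\mathcal{W}_\epsilon(m_1,x_1)\cap\mathcal{W}_\epsilon(m_2,x_2)$, the asymptotic characterization of the invariant graph in \autoref{properties} (forward $\widetilde{H}$-orbits starting on the graph contract exponentially towards $i$ at rate governed by $\lambda_s$) forces the two graphs to coincide near $(m_3,y_3)$, so the partition is well-defined and the cocycle identity $h_{m_2,m_3}\circ h_{m_1,m_2}=h_{m_1,m_3}$ follows. The Lipschitz estimate $|h_{m_1,m_2}(x)-x|\le Cd_{m_1}(m_1,m_2)$ and the continuity statement in (3) come respectively from the uniform Lipschitz bound on $f_{(m,x)}$ and from \autoref{lem:continuity_f} applied to the dependence of $f$ on the base point $(m,x)$. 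The H\"older dependence of $h_{m_1,m_2}(x)$ in $x$ under $\sup_m\lip H_m<\infty$ is a base-space H\"older regularity of $f$ in the $x$-parameter; I would obtain it from \autoref{lem1:sheaf} with $\gamma=1$ and $\zeta=0$ (using that $\lip u$ is comparable to $\max\{\lambda_s,\lip H\}$ on $\widehat{M}$), which produces precisely the spectral gap condition $\sup_m(\lip H_m)^\theta\lip H_m^{-1}\lambda_s(m)<1$ displayed in the theorem. Finally, under $\sup_m\lambda_s(m)<1$ the forward iterates $\phi^n(m_1),\phi^n(m_2)$ of any two points of the same leaf eventually lie in a single plaque, so $\mathcal{W}(m,x)\triangleq\bigcup_{n\ge 0}H^{-n}\mathcal{W}_\epsilon(H^n(m,x))$ is an $H$-invariant partition and $h_{m_1,m_2}(x)\triangleq(H^n_{m_2})^{-1}\circ h_{\phi^n(m_1),\phi^n(m_2)}\circ H^n_{m_1}(x)$ for $n$ large is well-defined and yields the global holonomy in (5).

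The main obstacle I anticipate is the partition property across distinct base points: \autoref{thmA} only delivers uniqueness of the invariant graph at a \emph{fixed} base point $(m,x)\in\widehat{M}$, whereas the partition claim requires that $\mathcal{W}_\epsilon(m_1,x_1)=\mathcal{W}_\epsilon(m_2,x_2)$ whenever they intersect. Resolving this requires carefully using the asymptotic characterization of \autoref{properties} to identify $(m_2,x_2)$ as a point of $\mathcal{W}_\epsilon(m_1,x_1)$ once the two graphs share a point, and then showing the two $\widetilde{H}$-invariant Lipschitz graphs through the same point must agree locally. A secondary subtlety is choosing the metric/atlas on $\widehat{M}$ so that the regularity lemmas of \autoref{stateRegularity} apply uniformly to parameters ranging over the plaque direction $m_1$ \emph{and} the fiber direction $x$; the uniform $C^{0,1}$-fiber hypothesis (HS2) is what makes these two directions enter on equal footing in those estimates.
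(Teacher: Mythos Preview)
Your proposal is correct and follows essentially the same route as the paper. Both proofs set up a bundle over (a re-topologized copy of) $\mathcal{E}$ whose $X$-fibers are plaques and $Y$-fibers are $\mathcal{E}$-fibers, take the generating pair $(F,G)=(\phi,H^{-1})$ so that $\alpha\equiv 0$, verify the spectral condition via the bunching hypothesis (HS3)(iii), and invoke \autoref{thmA} with the invariant section $i(m,x)=(m,x)$; the partition/holonomy axioms then come from the asymptotic characterization in \autoref{properties}, continuity from \autoref{lem:continuity_f}, and H\"older regularity in $x$ from the base-direction H\"older lemma.

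Two small points of alignment. First, the paper carries an extra index $\widehat m$ (writing the base as $\widetilde{\mathcal E}=\{(\widehat m,\overline m,\overline x)\}$) purely to separate the two roles of $\mathcal E$ when proving continuity versus H\"older regularity (``regarding $\widetilde{\mathcal E}$ as a bundle over $M$'' vs.\ ``over $\widehat M$''); your leaner indexing by $(m,x)$ works but you should make explicit that the bundle charts $\varphi^m\in\mathcal A$ are used to identify $\mathcal{E}_{m'}$ with $\mathcal{E}_m$, otherwise $G_{(m,x)}(m',y'')=H_{m'}^{-1}(y'')$ does not land in $\widetilde Y_{(m,x)}=\mathcal E_m$. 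Second, for the H\"older step the correct reference is \autoref{lem:sheaf} together with \autoref{h3*} (the section $i$ is invariant but not a $0$-section, so one uses the $\zeta=0$ case with the Lipschitz-section relaxation (B3)), rather than \autoref{lem1:sheaf}; the resulting spectral inequality is the same.
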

\begin{proof}
	Let
	\[
	\widehat{\mathcal{E}}_{\overline{m}} = \{ (m, x): x \in \mathcal{E}_m, m \in \overline{\mathcal{F}_{\overline{m}}(\epsilon)} \},
	\]
	which is a bundle over $ \overline{\mathcal{F}_{\overline{m}}(\epsilon)} = \{ m' \in \mathcal{F}_m: d_m(m', m) \leq \epsilon \} $. By the assumption (HS2), using the bundle charts in $ \mathcal{A} $, one can assume $ \widehat{\mathcal{E}}_{\overline{m}} \cong \overline{\mathcal{F}_{\overline{m}}(\epsilon)} \times \mathcal{E}_{\overline{m}} $.
	Set
	\[
	\widehat{M} = \{ (\widehat{m},m): m \in \overline{\mathcal{F}_{\widehat{m}} (\epsilon)}, \widehat{m} \in M \}.
	\]
	Define the bundle $ \widetilde{\mathcal{E}} $ as
	\[
	\widetilde{\mathcal{E}} = \{ (\widehat{m}, m, x): x \in \mathcal{E}_m, (\widehat{m},m) \in \widehat{M} \},
	\]
	where the topology in $ \widetilde{\mathcal{E}} $ is the leaf topology; here the bundle $ \widetilde{\mathcal{E}} $ will be considered over $ \widehat{M} $ or $ M $. (We define the bundle $ \widetilde{\mathcal{E}} $ only in order to give another immersed topology in $ \mathcal{E} $.)
	The maps $ H $ and $ \phi $ have natural extensions to $ \widetilde{\mathcal{E}} $ and $ \widehat{M} $, respectively, i.e.
	\[
	\phi^{\sim}(\widehat{m}, m) = ( \phi(\widehat{m}), \phi_{\widehat{m}}(m) ): \widehat{M} \to \widehat{M},
	\]
	\[
	H^{\sim} (\widehat{m}, m , x) = (\phi^{\sim}(\widehat{m}, m), H_m(x)) = (\phi(\widehat{m}), H(m,x)): \widetilde{\mathcal{E}} \to \widetilde{\mathcal{E}}.
	\]

	Define a bundle $ \varSigma $ over $ \widetilde{\mathcal{E}} $ as
	\[
	\varSigma = \{ (\widehat{m}, \overline{m}, \overline{x}, m, x): (m,x) \in \widehat{\mathcal{E}}_{\overline{m}}, (\widehat{m}, \overline{m}, \overline{x}) \in \widetilde{\mathcal{E}} \},
	\]
	and a bundle map $ {H}: \varSigma \to \varSigma $ over $ H^{\sim} $ with fiber maps
	\[
	{H}_{(\widehat{m}, \overline{m}, \overline{x})} (m, x) = ( \phi_{\overline{m}} (m), H_m(x) ) = H(m,x), ~(m,x) \in \widehat{\mathcal{E}}_{\overline{m}}.
	\]
	Through the bundle charts in $ \mathcal{A} $, we assume
	\[
	{H}_{(\widehat{m}, \overline{m}, \overline{x})} (m, x) = ( \phi_{\overline{m}} (m), \widehat{H}_{\overline{m}}(m, x) ): \overline{\mathcal{F}_{\overline{m}}(\epsilon)} \times \mathcal{E}_{\overline{m}} \to \overline{\mathcal{F}_{\phi(\overline{m})}(\epsilon)} \times \mathcal{E}_{\phi(\overline{m})},
	\]
	where $ \widehat{H}_{\overline{m}}( \cdot, \cdot ) $ is the local representation of $ H $ at $ \overline{m} $ with respect to $ \mathcal{A} $.
	Let $ \widehat{H}_{\overline{m}}^{-1}(m, \cdot) $ be the local representation of $ H^{-1}_{\overline{m}}(\cdot) $ at $ \overline{m} $ with respect to $ \mathcal{A} $.

	Now by \autoref{lem:a3}, $ \widetilde{H}_{(\widehat{m}, \overline{m}, \overline{x})} \sim ( \phi_{\overline{m}} (\cdot), \widehat{H}_{\overline{m}}^{-1}(\cdot, \cdot) ) $ satisfies the (A)($ 0 $, $ \lambda_s(\overline{m}) $) (B)($ \beta $, $ \lambda_u(\overline{m}) $) condition, where $ \lambda_u(\overline{m}) \to \lip H^{-1}_{\overline{m}}(\cdot) $ as $ \epsilon \to 0 $, and $ \beta $ is a large number depending on $ \sup_{m} \sup_x \lip \widehat{H}^{-1}_m(\cdot, x) $, $ \sup_m\lambda_s(m) $ and $ \sup_m\lambda_u(m) $.
	Let $ i(\widehat{m}, \overline{m}, \overline{x}) = (\overline{m}, \overline{x}) $ be a section of $ \varSigma $ which is invariant under $ \widetilde{H} $. The existence of $ \mathcal{W}_{\epsilon} $ is now a consequence of \autoref{thmA} applied to  $ \varSigma $, $ \widetilde{\mathcal{E}} $, $ \widetilde{H} $, $ H^{\sim} $, $ i $.
	More precisely, if $ \epsilon $ is small, then there are $ f_{(\overline{m}, \overline{x})} = f_{(\widehat{m}, \overline{m}, \overline{x})}: \mathcal{F}_{\overline{m}}(\epsilon) \to \mathcal{E}_{\overline{m}} $, $ (\overline{m}, \overline{x}) \in \mathcal{E} $, with $ \lip f_{(\overline{m}, \overline{x})} \leq \beta $ such that $ f_{(\overline{m}, \overline{x})} (\overline{m}) = \overline{x} $ and
	\[
	f_{H(\overline{m}, \overline{x})}( \mathcal{F}_{\phi(\overline{m})} (\epsilon) ) \subset \widehat{H}_{\overline{m}}( \mathcal{F}_{\overline{m}}(\epsilon') \times \mathcal{E}_{\overline{m}} ),
	\]
	\[
	{H}_{(\widehat{m}, \overline{m}, \overline{x})} \graph f_{(\overline{m}, \overline{x})} \subset \graph f_{H(\overline{m}, \overline{x})};
	\]
	here we use the fact that $ H_m(\mathcal{E}_m) $ is open ((HS3) (i)).
	Let $ \mathcal{W}_{\epsilon}(\overline{m}, \overline{x}) = \graph f_{(\overline{m}, \overline{x})} $, which is a Lipschitz graph of $ \mathcal{F}_{\overline{m}}(\epsilon) \to \mathcal{E}_{\overline{m}} $. Thus, we obtain (1) (2). Note that $ h_{m_1, m_2} (x) = f_{(m_1,x)}(m_2) $. Since $ f_{(m_1,x)}(m_1) = x $ and $ (m_2, x') \in \mathcal{W}_{\epsilon}(m_1, x) $ if and only if $ (m_1, x) \in \mathcal{W}_{\epsilon}(m_2, x') $, we have (hl1). The invariance of $ \mathcal{W}_\epsilon $ gives (hl2). \eqref{hollip} follows from the Lipschitz continuity of $ f_{(\overline{m}, \overline{x})}(\cdot) $.
	By \autoref{lem:continuity_f}, $ (m_1, m_2, x) \mapsto h_{m_1, m_2}(x) $ is $ C^0 $ by regarding $ \widetilde{\mathcal{E}} $ as a bundle over $ {M} $; the continuity of $ m' \mapsto \lambda_s(m'), \lip H^{-1}_{m'}(\cdot) $ was used here.
	By \autoref{lem:sheaf}, we know $ h_{m_1, m_2} $ is H\"older uniformly for $ m_1, m_2 $, under the condition $ \sup_m \lip H_m(\cdot) < \infty $ (considering $ \widetilde{\mathcal{E}} $ as a bundle over $ \widehat{M} $).
	Thus, we obtain conclusions (3) (4). Finally, (5) follows from (1)--(4) and the fact that for every $ m_1 \in \mathcal{F}_{m} $ there is an integer $ N $ such that $ \phi^N(m_1) \in \mathcal{F}_{\phi^N(m)} (\epsilon) $. 
\end{proof}

\begin{rmk}
	\begin{enumerate}[(a)]
		\item As the lamination $ M $ one usually takes the strong stable lamination of a partial hyperbolic (compact) set for the Lipschitz (or smooth) map $ \phi $ (see also \cite[Section 4]{AV10} and \cite{Via08}); so in this case $ \sup_m\lambda_s(m) < 1 $. For applications, the bundle $ \mathcal{E} $ is usually the center-stable foliation for $ \phi $.

		\item If for each $ m \in M $, $ H_m $ is invertible, then the Lipschitz continuity of $ m \mapsto H_m $ and $ \sup_m\lip H^{-1}_m(\cdot) < \infty $ imply the Lipschitz continuity of $ m \mapsto H^{-1}_m $; using this fact, \autoref{thm:hol} recovers the existence of $ s $($ u $)-holonomies for cocycles given in \cite{AV10, ASV13}.

		\item If one only focuses on the existence of $ \mathcal{W} $, the Lipschitz continuity of $ m \mapsto H^{-1}_m $ can be replaced by $ m \mapsto H^{-1}_m $ depending in a uniformly (locally) H\"older fashion on the base points, i.e., $ \sup_{m} \sup_x \hol_{\nu} \widehat{H}^{-1}_m(\cdot, x) < \infty $. But in this case the `bunching condition' has to be $ \sup_m \lip H^{-1}_m(\cdot) \lambda^\nu_s(m) < 1 $.
		Indeed, if we redefine the metric in $ \mathcal{F}_m(\epsilon) $ as $ d'_m(x, y) = d_m(x,y)^\nu $, then in this new metric the `bunching condition' is $ \sup_m \lip H^{-1}_m(\cdot) \lambda^\nu_s(m) < 1 $. So one can apply \autoref{thm:hol}. See also \cite[Section 3]{ASV13}.

		\item Since $ h_{m_1,m_2} $ satisfies \eqref{hollip}, an easy computation shows that
		\[
		h_{m_1, m_2} (x) = \lim_{n \to \infty} (H^{-1})^{(n)}_{m_2} \circ H^{(n)}_{m_1}(x),
		\]
		where the convergence is uniform for $ x, m_1, m_2 $. Using this fact, one can prove more regularities of $ x \mapsto h_{m_1, m_2}(x) $ (or $ (m_1, m_2, x) \mapsto h_{m_1, m_2}(x) $). See also \cite[Section 3]{ASV13} and \cite[Section 5]{AV10}. Moreover, if the bundle map in (HS3) depends on an external parameter $ \lambda $, i.e. $ \lambda \mapsto H^\lambda $, then the partition of $ \mathcal{E} $ obtained in \autoref{thm:hol} also depends on $ \lambda $, i.e. $ \lambda \mapsto \mathcal{W}^{\lambda} $. One can consider the continuity of $ \lambda \mapsto \mathcal{W}^{\lambda} $ by using the results in \autoref{continuityf} (left to the readers).
	\end{enumerate}
\end{rmk}

\begin{cor}[\cite{Via08, ASV13}] \label{corr:hol2}
	Assume \textnormal{(HS1)} and the following \textnormal{(HS3$ ' $)} hold.
	\begin{enumerate}
		\item [$(\mathrm{HS}3 ' )$] Let $ \mathcal{E} = M \times X $, where $ X $ is a Banach space. Suppose $ m \mapsto A(m) \in GL(X, X) $ (the space of all invertible continuous linear maps in $ X $) is uniformly $ \nu $-H\"older in each small plaque of $ M $, i.e.,
		\[
		\|A(m_1) - A(m_2)\| \leq C_0 d_m(m_1, m_2)^\nu, ~m_1 , m_2 \in \mathcal{F}_m(\epsilon),
		\]
		where $ C_0 > 0 $ is a constant independent of $ m $ and $ 0 < \nu \leq 1 $, and $ \sup_m\|A^{\pm 1}(m)\| < \infty $. Suppose the following fiber bunching condition holds:
		\[
		\sup_m\|A(m)\| \cdot \| A^{-1}(m) \| \lambda^\nu_s(m) < 1.
		\]
	\end{enumerate}

	Let $ H(m,x) = (\phi(m), A(m)x) $ be a bundle map on $ \mathcal{E} $. Then there is a unique holonomy over $ M $ for $ H $ satisfying \textnormal{(hl1) (hl2)} in \autoref{holonomy}. In fact, $ h_{m_1, m_2} $ is a linear map. Moreover, if $ m' \mapsto \lambda_s(m') $ is continuous in each small $ \mathcal{F}_m(\epsilon) $, then the holonomy over $ M $ for $ H $ also satisfies \textnormal{(hl3)} in \autoref{holonomy}.
\end{cor}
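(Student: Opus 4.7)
The plan is to derive this corollary as a specialization of Theorem \ref{thm:hol} to the linear cocycle $H(m,x) = (\phi(m), A(m)x)$. First I would verify the hypotheses (HS1)--(HS3): (HS1) is assumed outright; since $\mathcal{E} = M \times X$ is the trivial bundle, the tautological bundle atlas $\mathcal{A} = \{(\mathcal{F}_m(\epsilon), \id)\}$ gives a uniform $C^{0,1}$-fiber structure, establishing (HS2); for (HS3), the invertibility of $A(m)$ yields a (two-sided) inverse $H_m^{-1}(x) = A^{-1}(m)x$ whose fibrewise Lipschitz constant is $\|A^{-1}(m)\|$, uniformly bounded by hypothesis, and $H_m(\mathcal{E}_m) = X$ is open.

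The regularity of $m \mapsto A^{-1}(m)$ is only $\nu$-Hölder, not literally Lipschitz as required by (HS3)(ii). I would handle this via remark (c) following Theorem \ref{thm:hol}: replace the leaf metric $d_m$ by $d'_m = d_m^\nu$. Then $\phi$ becomes Lipschitz on plaques with constant $\lambda_s(m)^\nu$ in the new metric, and from the identity $A^{-1}(m_1) - A^{-1}(m_2) = A^{-1}(m_1)[A(m_2) - A(m_1)]A^{-1}(m_2)$ one obtains
\[
\|A^{-1}(m_1) - A^{-1}(m_2)\| \leq \bigl(\sup_m \|A^{-1}(m)\|\bigr)^2 C_0\, d_m(m_1,m_2)^\nu,
\]
promoting $m \mapsto A^{-1}(m)$ to a Lipschitz map in the rescaled metric. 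The bunching condition (HS3)(iii) in the new setting reads $\sup_m \|A^{-1}(m)\|\lambda_s(m)^\nu < 1$, which is weaker than the assumed fiber bunching $\sup_m \|A(m)\|\cdot\|A^{-1}(m)\|\lambda_s(m)^\nu < 1$ since $\|A(m)\|\cdot\|A^{-1}(m)\| \geq 1$ always. Theorem \ref{thm:hol} then produces a local $H$-invariant partition $\mathcal{W}_\epsilon$ with associated maps $h_{m_1,m_2} : \mathcal{E}_{m_1} \to \mathcal{E}_{m_2}$ satisfying (hl1) and (hl2); the continuity (hl3), under the hypothesis that $m \mapsto \lambda_s(m)$ is continuous on small plaques, is part (3) of that theorem. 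Because $\sup_m \lambda_s(m)$ may exceed $1$, I would globalize the partition via the iterated pullback $\mathcal{W}(m,x) = \bigcup_n H^{-n}\mathcal{W}_\epsilon(H^n(m,x))$ as in Theorem \ref{thm:hol}(5); fiber bunching is exactly what lets the backward iteration behave properly.

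Linearity would follow from the uniqueness embedded in Theorem \ref{thm:hol}: if $f_{(m_1,z)}$ parameterizes $\mathcal{W}_\epsilon(m_1,z)$ as a graph over the plaque, then because $A$ acts linearly on fibers, the function $m \mapsto \alpha f_{(m_1,x)}(m) + \beta f_{(m_1,y)}(m)$ again parameterizes an $H$-invariant Lipschitz graph through $(m_1, \alpha x + \beta y)$, whence $\widetilde{f} = f_{(m_1,\alpha x + \beta y)}$ and evaluating at $m = m_2$ gives linearity of $h_{m_1,m_2}$. For uniqueness of the holonomy among those merely satisfying (hl1)--(hl2), the iterated invariance gives $h_{m_1,m_2}(x) = A^{-n}(m_2)\, h_{\phi^n(m_1),\phi^n(m_2)}(A^n(m_1)x)$, and under the fiber bunching the composition $A^{-n}(m_2)A^n(m_1)$ converges (by a telescoping estimate exactly mirroring the contraction in the graph transform) to a limit independent of the choice of $h$, pinning it down uniquely.

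The main obstacle I foresee is bookkeeping with the metric rescaling: one must verify that the plaque structure in (HS1) and the bundle-chart regularity in (HS2) survive the substitution $d_m \mapsto d_m^\nu$ without degrading the uniform constants. A secondary technical point is that in (HS3)(ii) the quantity $\sup_m\sup_x \lip \widehat{H}^{-1}_m(\cdot,x)$ is measured uniformly in $x$, but for a linear cocycle the base-Lipschitz constant of $A^{-1}(m)x$ scales like $|x|$; this forces one to interpret (HS3) in its bounded-fiber version (which is exactly the form in which Theorem \ref{thm:hol} produces $\mathcal{W}_\epsilon$), and then the a posteriori linearity of the leaves makes the extension to the whole fiber $X$ automatic — each $\mathcal{W}(m,x)$ is a translate of the linear subspace $\{(m', L_{m,m'}x)\}$ and suffers no growth problem in $x$.
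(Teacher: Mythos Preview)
Your plan is close to the paper's, but the handling of the unbounded-in-$x$ obstacle is where the two diverge, and your version has a gap. You correctly identify that $\lip \widehat{H}^{-1}_m(\cdot,x)$ scales like $|x|$, violating (HS3)(ii) as stated, and you propose to work on a ``bounded-fiber version'' of Theorem~\ref{thm:hol}. But no such version is stated, and merely restricting to a ball $\mathbb{B}_R$ fails because $H_m = A(m)$ need not send $\mathbb{B}_R$ into itself. The paper fixes this by passing to the \emph{normalized} map
\[
H_1(m,x) = \bigl(\phi(m),\, \|A(m)\|^{-1} A(m)\,x\bigr) : M\times\mathbb{B}_1 \to M\times\mathbb{B}_1,
\]
which is now a genuine self-map of the unit-ball bundle with left inverse $H_1^{-1}(m,x)=r_1(\|A(m)\|A^{-1}(m)x)$, and Theorem~\ref{thm:hol} applies directly. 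The price is that $\lip H_1^{-1}(m,\cdot)\le 2\|A(m)\|\cdot\|A^{-1}(m)\|$, so the bunching condition required in (HS3)(iii) is the full $\|A\|\cdot\|A^{-1}\|\lambda_s^\nu<1$, not the weaker $\|A^{-1}\|\lambda_s^\nu<1$ you claim. (The factor $2$ from the radial retraction is absorbed by first replacing $H$ with $H^n$.) So your assertion that the stated hypothesis is strictly stronger than needed for existence is unjustified; the normalization step is exactly what consumes the extra factor $\|A\|$.

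Once $\mathcal{W}_\epsilon$ is built on $M\times\mathbb{B}_1$, both you and the paper extend by homogeneity: the paper observes from uniqueness that $f_{(m,ax)}=af_{(m,x)}$ for $|a|\le1$, and then rescales to define $f_{(m,x)}$ for all $x\in X$; your linearity argument via superposition of graphs is the same mechanism. Your metric-rescaling $d_m\mapsto d_m^\nu$ to reduce to $\nu=1$ matches the paper's ``without loss of generality''.
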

\begin{proof}
	Without loss of generality, we assume $ \nu = 1 $.
	Note that $ \hol_{\nu} H(\cdot,x) \leq C_0|x| $, so in general $ \sup_x\hol_\nu H(\cdot,x) = \infty $, which is not the case in \autoref{thm:hol}. In order to deal with this case, let us consider the map
	\[
	H_1(m,x) = (\phi(m), \|A(m)\|^{-1} A(m)x ):  M \times \mathbb{B}_1 \to M \times \mathbb{B}_1,
	\]
	where $ \mathbb{B}_1 $ is the closed unit ball of $ X $. Let $ r_1(\cdot) $ be the radial retraction (see \eqref{radial}). Then $ H_1(m,\cdot) $ has a natural left invertible map $ H_1^{-1}(m,x) = r_1( \|A(m)\| A^{-1}(m)x ) $.

	We can assume $ \sup_m\|A(m)\| \cdot \| A^{-1}(m) \| \lambda_s(m) < 1/2 $ if we consider $ H^n_1 $ instead of $ H_1 $ for large $ n $. Note that $ \lip H^{-1}_1(m, \cdot) \leq 2\|A(m)\| \cdot \| A^{-1}(m) \|  $, $ \sup_{x \in \mathbb{B}_1}\hol_{\nu} H_1(\cdot,x) < \infty $; furthermore $ H_1(m, \mathbb{B}_1) $ is open (as $ A(m) $ is invertible), and
	\[
	\|A^{-1}_{m_1} - A^{-1}_{m_2}\| = \|A^{-1}_{m_1} A_{m_2} A^{-1}_{m_2} - A^{-1}_{m_1}A_{m_1}A^{-1}_{m_2}\| \leq \|A^{-1}_{m_1}\| \cdot\|A_{m_1} - A_{m_2}\|\cdot\|A^{-1}_{m_2}\|.
	\]
	So applying \autoref{thm:hol} to $ H_1, M \times \mathbb{B}_1 $, one obtains $ f_{(m,x)} : \mathcal{F}_m(\epsilon) \to \mathbb{B}_1  $ such that $ H_1 \graph f_{(m,x)} \subset \graph f_{H_1(m,x)}  $ for all $ (m,x) \in M \times \mathbb{B}_1 $.
	By the uniqueness property, $ f_{(m,ax)} = a f_{(m,x)} $ for $ |a| \leq 1 $, $ x \in \mathbb{B}_1 $. Using this one can define $ f_{(m,x)} $ for all $ x \in X $ and $ H \graph f_{(m,x)} \subset \graph f_{H(m,x)} $. The proof is complete.
\end{proof}

We refer the readers to \cite{Via08, AV10, ASV13} for more results and applications.

\begin{appendices}
	\setcounter{equation}{0}
	
\chapter{Appendix. A key argument in the proof of regularity}\label{Appbb}

Let us give a general argument in order to prove the regularity results in \autoref{stateRegularity}. Since we also deal with the `pointwise dependence' and the `unbounded' case (for example the bundles are not usually assumed to be uniformly bounded), some additional preliminaries are needed.
\begin{defi}\label{def:lypnum}
		\begin{enumerate}[(a)]
		\item Let $ \overline{M} $ be a set, $ u: \overline{M} \to \overline{M} $ and $ \theta: \overline{M} \to \mathbb{R}_+ $. If we write $ \theta $ over $ u $, then we use the notation $ \theta^{(k)}(m) = \theta(m) \theta(u(m)) \cdots \theta(u^{k-1}(m)) $.

		\item A function sequence $ \theta_n : \overline{M} \to \mathbb{R}_+ $, $ n = 1,2,\ldots, $ is \textbf{ln-subadditive} (relative to $ u $) if $ \theta_n(m) > 0 $ and $ \ln \theta_{n_1 + n_2} (m) \leq \ln\theta_{n_1}(m) + \ln\theta_{n_2}(u^{n_1}(m)) $ for all $ n, n_1, n_2 \in \mathbb{N} $ and $ m \in \overline{M} $.

		\item We write $ \{ \theta_n \} \in \mathcal{E}(u) $ if $ \{ \theta_n \} $ is ln-subadditive (relative to $ u $) and 
		\[
		\overline{\theta}(m) \triangleq \lim_{n \to \infty} \theta_n(m)^{1/n}
		\]
		exists for every $ m \in \overline{M} $. In this case, $ \overline{\theta}(m) $ (or $ \ln \overline{\theta}(m) $) is called an \textbf{(exact) Lyapunov number} of $ \{ \theta_n \} $ (at $ m $). Note that in general $ \overline{\theta}(m) \leq \overline{\theta}(u(m)) $; $ \overline{\theta}(m) = \overline{\theta}(u(m)) $ if $ u $ is invertible. The Kingman Subadditive Ergodic Theorem (\cite{Kin68}) ensures that some function sequences belong to $ \mathcal{E}(u) $.

		\item A function $ \theta: \overline{M} \to \mathbb{R}_+ $ (over $ u $) is \textbf{orbitally decreased} (resp. \textbf{orbitally increased}) (relative to $ u $) if $ \theta(u(m)) \leq \theta(m) $ (resp. $ \theta(u(m)) \geq \theta(m) $); and $ \theta $ is (positively) \textbf{orbitally bounded} (relative to $ u $) if $ \sup_{N \geq 0}\theta(u^N(m)) < \infty $ for every $ m \in \overline{M} $.

		\item We write $ \theta \in \mathcal{E}(u) $ if $ \{ \theta^{(n)} \} \in \mathcal{E}(u) $. Note that in this case, $ \overline{\theta}(u(m)) = \overline{\theta}(m) $. If $ u $ is a period function or $ \theta $ is orbitally decreased, then $ \theta \in \mathcal{E}(u) $.

		\item Given a ln-subadditive (relative to $ u $) and strictly positive function sequence $ \{ \theta_n \} $, assume $ \theta_1 $ is orbitally bounded relative to $ u $. Note that by ln-subadditivity, $ \theta_n $, $ n = 1,2,\ldots, $ are all orbitally bounded. Set $ \theta^*_n(m) = \sup_{N \geq 0} \theta_n(u^N(m)) $. Then $ \{\theta^*_n(m)\} $ is also ln-subadditive (relative to $ u $). Since $ \theta^*_{kn}(m)^{1/(kn)} \leq \theta^*_{n}(m)^{1/n} $, we know that $ \{\theta^*_n(m)\} \in \mathcal{E}(u) $. Let $ \theta^*(m) $, $ m \in \overline{M} $ be its (exact) Lyapunov numbers. We call them the \textbf{sup Lyapunov numbers} of $ \{ \theta_n \} $. Note that $ \theta^*(u(m)) = \theta^*(m) $ and $ (\theta^*)^n(m) \leq \theta^*_n(m) $. Also, $ \theta^*(m) = \inf_{n \geq 1} ( \limsup_{N  \to \infty} \theta_n (u^N(m)) )^{1/n} $. The above notations will be used for the function $ \theta $ if we consider the function sequence $ \{ \theta^{(n)} \} $.
	\end{enumerate}
\end{defi}

\begin{exa}
	\begin{enumerate}[(a)]
		\item Let $ A $ be a bounded linear operator from a Banach space into itself. Let $ \theta_n = \| A^n \| $. Then by the Gelfand Theorem, $ \overline{\theta} = r(A) $, the spectral radius of $ A $.

		\item Let $ \overline{M} $ be a probability space with probability measure $ P $, and $ u : \overline{M} \to \overline{M} $ an invertible function preserving $ P $. Let $ A: \overline{M} \to L( X, X ) $ be strongly measurable (over $ u $) where $ X $ is a Banach space. Set $ \theta^n(m) = \|A^{(n)}(m)\| $. Assume $ \ln^+ \|A(\cdot)\| \in L^1(P) $. Then $ \overline{\theta}(m) $ exists for $ m \in \overline{M}_1 \subset \overline{M} $, where $ P(\overline{M}_1) = 1 $ and $ u(\overline{M}_1) = \overline{M}_1 $. Moreover, if $ u $ is ergodic, then $ \overline{\theta} $ is constant. This is a direct consequence of the Kingman Subadditive Ergodic Theorem \cite{Kin68}. $ \overline{\theta}(m) $, $ m \in \overline{M}_1 $, are the largest Lyapunov numbers. See \cite[Section 8.1]{CL99} for more characterizations of $ \theta^* $.
		See \cite{LL10} for some new important development of the Multiplicative Ergodic Theorem in infinite dimensions.

		\item Let $ \overline{X} $ be a bundle with base space $ \overline{M} $ and fibers $ \overline{X}_m $, $ m \in \overline{M} $, being metric spaces. Let $ w: \overline{M} \to \overline{M} $ and $ f : \overline{X} \to \overline{X} $ over $ w $. Assume that $ \lip f^{(n)}_m(\cdot) = \mu_n(m) $ and $ \sup_{N \geq 0}\mu_1(u^N(m)) < \infty $. Note that $ \{ \mu_n \} $ is ln-subadditive. Consider the sup Lyapunov numbers of $ \{\mu^{(n)}_1 \} $ and $ \{ \mu_n \} $, respectively, denoted by $ \mu^*_1 $ and $ \mu^* $, respectively. Then $ \mu^*(m) \leq \mu^*_1(m) $. Note that $ \mu^*_1 $, $ \mu^* $ do not depend on the choice of the uniform equivalent metrics in fibers of $ \overline{X} $, i.e. any $ \tilde{d}_m $ in $ \overline{X}_m $ satisfies $ C^{-1} d_m(x,y) \leq \tilde{d}_m(x,y) \leq C d_m(x,y) $ for all $ (x,y) \in \overline{X}_m $, where $ C $ is a constant independent of $ m \in \overline{M} $ and $ d_m $ is the original metric in $ \overline{X}_m $. For this reason, one can say $ \mu^*(m) $ is the (non-linear) \emph{spectral radius of $ f $ along the orbit from $ m $}.
	\end{enumerate}
\end{exa}

\begin{lem}
	Given two strictly positive function sequences $ \{ \mu_n \}, \{ \theta_n \} $ (over $ u $) which are ln-subadditive and orbitally bounded relative to $ u $, i.e., 
	\[
	\sup_{N \geq 0} \mu_1(u^N(m)) < \infty, ~\sup_{N \geq 0} \theta_1(u^N(m)) < \infty.
	\]
	Let us consider some different types of `spectral gap condition':
	\begin{enumerate}[(i)]
		\item $ \sup_{m} \mu_1(m) \sup_{N \geq 0} \theta_1(u^N(m)) < 1 $. $ (\mathrm{i})_k $ $ \sup_{m} \mu_k(m) \sup_{N \geq 0} \theta_k(u^N(m)) < 1 $.
		\item $ \sup_{m} \mu_1(m) \theta^*(m) < 1 $. $ (\mathrm{ii})_k $ $ \sup_{m} \mu_k(m) (\theta^*)^k(m) < 1 $.
		\item $ \sup_{m} \mu^*(m) \theta^*(m) < 1 $.
		\item $ \sup_m \mu_1(m) \theta_1(m) < 1 $. $ (\mathrm{iv})_k $ $ \sup_m \mu_k(m) \theta_k(m) < 1 $.
		\item $ \sup_m (\mu\theta)^*(m) < 1 $, where $ (\mu\theta)_n(m) = \mu_n(m)\theta_n(m) $.
		\item $ \sup_m (\overline{\mu\theta})(m) < 1 $ if $ \{ (\mu\theta)_n \} \in \mathcal{E}(u)  $.
	\end{enumerate}
	Then \textnormal{(i)} $ \Rightarrow $ $ (\mathrm{i})_k $, \textnormal{(ii)} $ \Rightarrow $ $ (\mathrm{ii})_k $, \textnormal{(iv)} $ \Rightarrow $ $ (\mathrm{iv})_k $ for all $ k \geq 1 $. $ (\mathrm{i})_k $ $ \Rightarrow $ $ (\mathrm{ii})_k $. \textnormal{(ii)} $ \Rightarrow $ $ (\mathrm{i})_k $ for large $ k $ (depending on $ m $). $ (\mathrm{ii})_k $ $ \Rightarrow $ \textnormal{(iii)} and \textnormal{(iii)} $ \Rightarrow $ $ (\mathrm{ii})_k $ for large $ k $ (depending on $ m $). \textnormal{(iii)} $ \Rightarrow $ \textnormal{(v)}. $ (\mathrm{iv})_k $ $ \Rightarrow $ \textnormal{(v)} and \textnormal{(v)} $ \Rightarrow $ $ (\mathrm{iv})_k $ for large $ k $ (depending on $ m $). \textnormal{(v)} $ \Rightarrow $ \textnormal{(vi)} and \textnormal{(vi)} $ \Rightarrow $ $ (\mathrm{iv})_k $ for large $ k $ (depending on $ m $). Thus, \textnormal{(v)} $ \Leftrightarrow $ \textnormal{(vi)} if $ \{ (\mu\theta)_n \} \in \mathcal{E}(u) $.
\end{lem}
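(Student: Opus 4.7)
The plan is to extract every implication from three mechanical consequences of ln-subadditivity, so that the proof becomes bookkeeping rather than ingenuity. First I would record the telescoping bounds $\mu_n(m)\leq\prod_{j=0}^{n-1}\mu_1(u^j(m))$, $\theta_n(m)\leq\prod_{j=0}^{n-1}\theta_1(u^j(m))$, and, after taking $\sup$ over the forward $u$-orbit, $\theta_n^*(m)\leq\prod_{j=0}^{n-1}\theta_1^*(u^j(m))$; then note the monotonicity facts $\theta_1^*(u(m))\leq\theta_1^*(m)$ and $\theta^*(u(m))=\theta^*(m)$ (and the same for $\mu$), together with $(\theta^*(m))^n\leq\theta_n^*(m)$ and the Fekete convergence $(\theta_n^*(m))^{1/n}\downarrow\theta^*(m)$. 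These ingredients, used at the right place, yield every implication in the lemma.

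For the ``uniform'' implications, I would combine telescoping with orbital monotonicity. From (i) I get
\[\mu_k(m)\theta_k^*(m)\leq\prod_{j=0}^{k-1}\mu_1(u^j(m))\theta_1^*(u^j(m))\leq c^k\]
with $c=\sup_m\mu_1(m)\theta_1^*(m)<1$, which is $(\mathrm{i})_k$; identical telescoping, using $\theta^*\circ u=\theta^*$, yields $(\mathrm{ii})\Rightarrow(\mathrm{ii})_k$ and $(\mathrm{iv})\Rightarrow(\mathrm{iv})_k$. The implication $(\mathrm{i})_k\Rightarrow(\mathrm{ii})_k$ is immediate from $(\theta^*(m))^k\leq\theta_k^*(m)$. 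For $(\mathrm{ii})_k\Rightarrow(\mathrm{iii})$, I use $\mu^*(m)\leq(\mu_k^*(m))^{1/k}$ together with $u$-invariance of $\theta^*$:
\[\mu^*(m)\theta^*(m)\leq\bigl(\mu_k^*(m)(\theta^*(m))^k\bigr)^{1/k}=\Bigl(\sup_{N\geq 0}\mu_k(u^N(m))(\theta^*(u^N(m)))^k\Bigr)^{1/k}\leq c^{1/k}<1,\]
where $c=\sup_m\mu_k(m)(\theta^*(m))^k<1$. The estimate $(\mu\theta)_n^*(m)\leq\mu_n^*(m)\theta_n^*(m)$, taking $n$-th roots and $n\to\infty$, gives $(\mathrm{iii})\Rightarrow(\mathrm{v})$. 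From $(\mathrm{iv})_k$, $(\mu\theta)_k^*(m)\leq\sup_m\mu_k(m)\theta_k(m)<1$ so $(\mu\theta)^*(m)\leq c^{1/k}<1$, yielding $(\mathrm{iv})_k\Rightarrow(\mathrm{v})$. Finally $(\mathrm{v})\Rightarrow(\mathrm{vi})$ is just $\overline{\mu\theta}(m)\leq(\mu\theta)^*(m)$.

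The pointwise-in-$m$ implications of the form ``for large $k$ depending on $m$'' all reduce to the Fekete/Gelfand principle that $(\alpha_n(m))^{1/n}$ converges to a limit strictly less than $1$ forces $\alpha_n(m)<1$ eventually. For $(\mathrm{ii})\Rightarrow(\mathrm{i})_k$ at fixed $m$, apply $(\mathrm{ii})\Rightarrow(\mathrm{ii})_k$ to get $\mu_k(m)(\theta^*(m))^k\leq c^k$, and observe $(\theta_k^*(m)/(\theta^*(m))^k)^{1/k}\to 1$, so $k$ can be chosen with $c\cdot(\theta_k^*(m)/(\theta^*(m))^k)^{1/k}<1$. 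For $(\mathrm{iii})\Rightarrow(\mathrm{ii})_k$ at fixed $m$, use $(\mu_k(m))^{1/k}\to\overline{\mu}(m)\leq\mu^*(m)$ so that $\mu_k(m)(\theta^*(m))^k\leq((\mu^*(m)+\varepsilon)\theta^*(m))^k<1$ for large $k$. The implications $(\mathrm{v})\Rightarrow(\mathrm{iv})_k$ and $(\mathrm{vi})\Rightarrow(\mathrm{iv})_k$ at fixed $m$ use the same argument on $\{(\mu\theta)_n\}$, via $(\mu\theta)_n(m)^{1/n}\to\overline{\mu\theta}(m)\leq(\mu\theta)^*(m)<1$.

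The main obstacle will be the bookkeeping rather than any analytic difficulty: I must keep track of which quantities survive $\sup_m$ (uniform implications) versus which are only pointwise (``large $k$ depending on $m$''), and apply the right monotonicity at the right moment. In particular, the $u$-invariance $\theta^*\circ u=\theta^*$ must be used to factor products such as $\prod_{j=0}^{k-1}\mu_1(u^j(m))\theta^*(u^j(m))$ into $k$ copies of $\mu_1(u^j(m))\theta^*(u^j(m))$ each bounded uniformly; whereas the weaker orbital-decreasing property $\theta_1^*\circ u\leq\theta_1^*$ only yields the weaker bound $\theta_n^*(m)\leq(\theta_1^*(m))^n$, which is not strong enough for some of the telescoping steps. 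Getting these two monotonicities in the right places is the only subtle point.
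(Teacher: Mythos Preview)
Your proposal is correct and supplies exactly the kind of routine bookkeeping the paper has in mind; the paper itself omits the proof entirely, stating only that it is easy. One very minor point: in your argument for $(\mathrm{iii})\Rightarrow(\mathrm{ii})_k$ you write $(\mu_k(m))^{1/k}\to\overline{\mu}(m)$, but $\{\mu_n\}$ is not assumed to lie in $\mathcal{E}(u)$, so this limit need not exist---replace it by $\limsup_k(\mu_k(m))^{1/k}\leq\mu^*(m)$, which is all you use anyway.
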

The proof is easy, so we omit it.

In the literature, each of the above types of `spectral gap condition' (or `bunching condition') has been used, e.g.,
\cite{PSW97, PSW12,BLZ99,BLZ08} use (iv) (so (v) (vi)), and \cite{Fen74, Has97, BLZ98, BLZ00, LYZ13} use (iii). In our general settings, we cannot use type (iv) (and so (v) (vi)) of spectral gap condition to give our regularity results. However, our results also recover some classical results when using type (iv) (and so (v) (vi)). This is not surprising since in the classical setting the fibers are assumed to be uniformly bounded; now case (ii) in (R3) (see below) can be applied. Note that in this case, what we actually need is that the Lipschitz functions are also H\"older.

First, consider a special example as a motivation.
\begin{exa}\label{appexb}
	Let
	\begin{equation}\label{appf}
	f(y) \leq x a^t + y b^t,~\forall t >0,
	\end{equation}
	where $ f(y) \geq 0$, $ x, y, a , b > 0 $.
	\begin{enumerate}[(1)]
		\item If $ 0 < a, b < 1 $, then $ f(y) = 0 $. If $ a < 1 = b $, then $ f(y) \leq y $.
		\item Assume $ a < 1 < b $. Take
		\[
		t_0(y) = \frac{1}{\ln b - \ln a} \ln \left( \frac{-x \ln a}{y \ln b} \right).
		\]
		Substituting in \eqref{appf} for $ t $, then we see that
		\begin{equation}\label{appf1}
		f(y) \leq C x^{1-\alpha}y^{\alpha}, ~\text{if}~ y \leq r x,
		\end{equation}
		where $ \alpha = \frac{r}{r+1} < 1 $, $ r = \frac{-\ln a}{\ln b} > 0 $, and $ C  = \sup_{r > 0}\{ e^{-\frac{r}{r+1} \ln r} + e^{\frac{1}{r+1}\ln r} \} $.

		If \eqref{appf} holds only for $ t \in \mathbb{N} $, then letting $ t = [t_0] + 1 $ in \eqref{appf}, we also have \eqref{appf1} for a different constant $ C $.
		If \eqref{appf} further holds for $ t \in \mathbb{R} $, the restriction $ y \leq r x $ is not needed.
	\end{enumerate}
\end{exa}

Let us give the following setting.

\begin{enumerate}[(R1)]
	\item Assume $ \overline{X} , \overline{Y} $, $ \widehat{Y} $ are bundles with base space $ \overline{M} $ and fibers $ \overline{X}_m, \overline{Y}_m $, $ \widehat{Y}_m $, $ m \in \overline{M} $, being metric spaces. We write all the metrics $ d(z,z') = |z-z'| $. Let $ \imath: \overline{M} \to \overline{Y} $, $ \jmath: \overline{M} \to \overline{X} $, $ \imath_0 : \overline{M} \to \widehat{Y} $ be fixed sections. We also write $ |y| = d(y,\imath (m)) $ if $ y \in \overline{Y}_m $; $ |x| = d(x,\jmath (m)) $ if $ x \in \overline{X}_m $; $ |y_0| = d(y_0,\imath_0 (m)) $ if $ y_0 \in \widehat{Y}_m $.

	\item Let $ w: \overline{M} \to \overline{M} $, and $ g: \overline{X} \times \overline{Y} \to  \widehat{Y} $ a bundle map over $ w $.
	Let $ u: \overline{X} (\epsilon_1) \to \overline{X} $, $ v: \overline{X} \times \overline{Y} \to \overline{Y} $ be two bundle maps over $ w $ satisfying
	\begin{gather}
	|u_{m}(x_1) - u_{m}(x_2)| \leq \mu(m)|x_1 - x_2|, ~x_1, x_2 \in \overline{X}_m (\epsilon_1), \label{ulip}\\
	|v_m(x, y)| \leq \kappa(m) |y|, ~ \forall (x,y) \in \overline{X}_m \times \overline{Y}_m, ~
	u_m(\jmath(m)) = \jmath(w(m)),~ m \in \overline{M}, \notag
	\end{gather}
	where
	\[
	\overline{X}(\epsilon) = \{ (m, x): x \in \overline{X}_m(\epsilon) , m \in \overline{M} \}, ~\overline{X}_m(\epsilon) = \{ x \in X_m: |x - \jmath(m)| \leq \epsilon \}.
	\]
	Assume
	\begin{multline}\label{appg}
	|g_m(x_1,y) - g_m(x_2,y)| \leq  C_1 |x_1 - x_2|^{\gamma_1}|y|^{\zeta_1} + C_2 |x_1- x_2|^{\gamma_2}|y|^{\zeta_2}  \\
	 + \theta(m)|g_{w(m)}(u_m(x_1), v_m(x_1,y)) - g_{w(m)}(u_m(x_2), v_m(x_1,y))|,
	\end{multline}
	for some $ 0 \leq \gamma_i, \zeta_i \leq 1 $, $ C_i \geq 0 $, $ i = 1,2 $.

	\item We use the notation $ (\kappa^{\zeta_2} \mu^{\gamma_2})^{*\alpha} \theta < 1 $ which means the following in different settings (similar for $ ( \kappa^{\zeta_2 } \mu^{\gamma_2} / \kappa)^{*\alpha} \theta \kappa < 1 $):
	\begin{enumerate}[(i)]
		\item $ C_2 = 0 $, delete the condition.
		\item $ \sup_m  (\kappa^{\zeta_2 }(m) \mu^{\gamma_2}(m))^\alpha \theta(m) < 1 $ or $ \sup_m  ((\kappa^{\zeta_2 } \mu^{\gamma_2})^\alpha \theta)^*(m) < 1 $ if $ u_m(\cdot)|_{X_{m}(\epsilon_1)} $, $ v_m(\cdot,\cdot) $ are bounded uniformly for $ m \in \overline{M} $; in particular when $ \epsilon_1 < \infty $, and $ \overline{Y}_m $ are bounded uniformly for $ m \in \overline{M} $ if $ \zeta_2 > 0 $.
		\item $ \sup_m  (\kappa^{\zeta_2 }(m) \mu^{\gamma_2}(m))^\alpha \theta(m) < 1 $ or $ \sup_m  ((\kappa^{\zeta_2 } \mu^{\gamma_2})^\alpha \theta)^*(m) < 1 $ if $ \theta \in \mathcal{E}(w) $.
		\item $ \sup_m  (\kappa^{\zeta_2 }(m) \mu^{\gamma_2}(m))^\alpha \theta(m) < 1 $ or $ \sup_m  ((\kappa^{\zeta_2 } \mu^{\gamma_2})^\alpha \theta)^*(m) < 1 $ if $ \alpha = 1 $.
		\item $ \sup_m (\kappa^{\zeta_2} \mu^{\gamma_2})^{*\alpha}(m) \theta^*(m) < 1 $ otherwise.
	\end{enumerate}
\end{enumerate}

\begin{rmk}
	\begin{enumerate}[(a)]
		\item Let $ \lip u^{(n)}_m (\cdot) = \mu_n (m) $ and $ \kappa_n(m) = \inf\{ \kappa': |v^{(n)}_m(x,y)| \leq \kappa'|y|  \} $. One can use $ \{ \mu_n \} $, $ \{ \kappa_n \} $ instead of $ \{ \mu^{(n)} \} $, $ \{ \kappa^{(n)} \} $ to give a better `spectral gap condition'. The proof is essentially the same.
		\item In \autoref{lem:rglobal} and \autoref{lem:rlocal}, the condition $ \sup_m \kappa^{\zeta_1}(m)\mu^{\gamma_1}(m) \theta(m) < 1 $ can be replaced by $ \sup_m  ((\kappa^{\zeta_1 } \mu^{\gamma_1})^\alpha \theta)^* (m) < 1 $.
	\end{enumerate}
\end{rmk}

First, consider the following two cases when $ \epsilon_1 = \infty $ or $ \sup_m \mu(m) \leq 1 $.
\begin{lem}\label{lem:rglobal}
	Let \textnormal{(R1), (R2), (R3)} hold. Assume $ \epsilon_1 = \infty $, or $ \sup_m \mu(m) \leq 1 $, or 
	\[
	u_m(X_m(\epsilon_1)) \subset X_{u(m)}(\epsilon_1)~\text{for all $ m $};
	\]
	$ \mu, \kappa, \theta $ are bounded functions; \eqref{ulip} and \eqref{appg} hold for all $ x_1, x_2 \in \overline{X}(\epsilon_1) $, \textbf{or} $ x_1 \in \overline{X}(\epsilon_1) $ and $ x_2 = \jmath(m) $.
	\begin{enumerate}[(a)]
		\item \label{g1} Suppose
		\begin{enumerate}[(i)]
			\item $ g $ is bounded in the sense that $ \sup_m \sup_{(x,y)\in \overline{X}_m \times \overline{Y}_m} |g_m(x,y)| < \infty $,
			\item $ \sup_m\theta(m) < 1 $, $ \sup_m \kappa^{\zeta_1}(m)\mu^{\gamma_1}(m) \theta(m) < 1 $,
			$ (\kappa^{\zeta_2} \mu^{\gamma_2})^{*\alpha} \theta < 1 $.
		\end{enumerate}
		Then
		\[
		|g_m(x_1,y) - g_m(x_2,y)| \leq CC_1|x_1 - x_2|^{\gamma_1}|y|^{\zeta_1} + CC_2(|x_1 - x_2|^{\gamma_2}|y|^{\zeta_2})^\alpha,
		\]
		if $ \alpha < 1 $ under a restriction $  |x_1 - x_2|^{\gamma_2}|y|^{\zeta_2} \leq \hat{r} $, and if $ \alpha = 1 $ for $ y \in \overline{Y}_m $, $ m \in \overline{M} $, where $ C $ is a constant depending on the constant $ \hat{r} > 0 $, but not on $ m \in \overline{M} $.

		\item \label{g2} Suppose
		\begin{enumerate}[(i)]
			\item $ |g_m(x,y)| \leq M_0|y| $ for all $ x \in \overline{X}_m $, $ y \in \overline{Y}_m $, where $ M_0 $ is a constant independent of $ m, x $,
			\item $ \sup_m \theta(m) \kappa(m) < 1 $, $ \sup_m \kappa^{\zeta_1}(m)\mu^{\gamma_1}(m) \theta(m) < 1 $, $ ({\kappa^{\zeta_2 } \mu^{\gamma_2}} / {\kappa})^{*\alpha} \theta \kappa < 1 $.
		\end{enumerate}
		Then
		\[
		|g_m(x_1,y) - g_m(x_2,y)| \leq CC_1|x_1 - x_2|^{\gamma_1}|y|^{\zeta_1} + CC_2(|x_1 - x_2|^{\gamma_2}|y|^{\zeta_2})^\alpha |y|^{1-\alpha},
		\]
		if $ \alpha < 1 $ under a restriction $  |x_1 - x_2|^{\gamma_2}|y|^{\zeta_2 - 1} \leq \hat{r} $, and if $ \alpha = 1 $ for $ y \in \overline{Y}_m $, $ m \in \overline{M} $, where $ C $ is a constant depending on the constant $ \hat{r} > 0 $, but not on $ m \in \overline{M} $.
	\end{enumerate}
\end{lem}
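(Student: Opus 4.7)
The plan is to iterate the recursive inequality \eqref{appg} and then, where the resulting geometric series diverges, apply the optimization trick of \autoref{appexb} to convert Lipschitz-type growth into H\"older decay. Fix $m \in \overline{M}$, $x_1, x_2$ and $y$, and write $x_i^{(k)} = u_m^{(k)}(x_i)$, $y^{(k)} = v_m^{(k)}(x_1, y)$, $m_k = w^k(m)$. Iterating \eqref{appg} $n$ times gives
\begin{multline*}
|g_m(x_1,y)-g_m(x_2,y)| \leq \sum_{k=0}^{n-1}\theta^{(k)}(m)\Bigl[C_1 |x_1^{(k)} - x_2^{(k)}|^{\gamma_1}|y^{(k)}|^{\zeta_1} \\
+ C_2 |x_1^{(k)} - x_2^{(k)}|^{\gamma_2}|y^{(k)}|^{\zeta_2}\Bigr] + R_n,
\end{multline*}
with $R_n = \theta^{(n)}(m)\bigl|g_{m_n}(x_1^{(n)}, y^{(n)}) - g_{m_n}(x_2^{(n)}, y^{(n)})\bigr|$. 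By \eqref{ulip} and the hypothesis on $v$, $|x_1^{(k)} - x_2^{(k)}|\leq \mu^{(k)}(m)|x_1-x_2|$ and $|y^{(k)}|\leq \kappa^{(k)}(m)|y|$; the invariance hypothesis $\epsilon_1=\infty$, $\sup\mu\leq 1$, or $u_m(\overline X_m(\epsilon_1))\subset \overline X_{u(m)}(\epsilon_1)$ keeps the iterates inside the admissible domain.

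Next I handle the two sums separately. The $\gamma_1,\zeta_1$ contribution is a genuine geometric sum: under $\sup_m\kappa^{\zeta_1}\mu^{\gamma_1}\theta<1$, the bound $\theta^{(k)}(m)(\mu^{(k)}(m))^{\gamma_1}(\kappa^{(k)}(m))^{\zeta_1}\leq(\sup_m \kappa^{\zeta_1}\mu^{\gamma_1}\theta)^k$ makes $\sum_{k\geq 0}$ uniformly bounded and contributes $C\,C_1|x_1-x_2|^{\gamma_1}|y|^{\zeta_1}$. For the $\gamma_2,\zeta_2$ contribution, the per-term factor $\theta^{(k)}(\mu^{(k)})^{\gamma_2}(\kappa^{(k)})^{\zeta_2}$ may grow; write $E=|x_1-x_2|^{\gamma_2}|y|^{\zeta_2}$ and (for orientation, treating $\mu,\kappa,\theta$ as constants) $b=\theta\mu^{\gamma_2}\kappa^{\zeta_2}$, $c=\theta$. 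Then the partial sum is $\leq \widetilde C\,(b^n\vee 1)\,E$, while in case \eqref{g1}, $R_n\leq 2\sup|g|\cdot c^n$, and in case \eqref{g2}, $R_n\leq 2M_0 c^n\kappa^{(n)}(m)|y|$.

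Combining, I obtain (case \eqref{g1}) $|g_m(x_1,y)-g_m(x_2,y)|-\text{(already-bounded)}\leq \widetilde C\,C_2 E\,b^n+2\sup|g|\,c^n$, which holds for every $n\in\mathbb{N}$. This is precisely the situation of \autoref{appexb}: with $x=2\sup|g|$, $y=\widetilde C C_2 E$, $a=c<1$, $b>1$, the optimal exponent is $\alpha_{\text{opt}}=\frac{-\ln c}{\ln(b/c)}=\frac{-\ln\theta}{\ln(\kappa^{\zeta_2}\mu^{\gamma_2})}$, yielding $\leq C'(2\sup|g|)^{1-\alpha_{\text{opt}}}(C_2 E)^{\alpha_{\text{opt}}}$. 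The hypothesis $(\kappa^{\zeta_2}\mu^{\gamma_2})^\alpha\theta<1$ says $\alpha\leq \alpha_{\text{opt}}$, so under the restriction $E\leq\hat r$ we trade $E^{\alpha_{\text{opt}}}\leq \hat r^{\alpha_{\text{opt}}-\alpha}E^\alpha$ and conclude the stated bound $CC_2 E^\alpha$. If $\alpha=1$ or $b\leq 1$, the partial sum is already uniformly bounded and the restriction can be dropped. Case \eqref{g2} is identical except that $|y|$ now appears as a factor in the remainder, producing the extra $|y|^{1-\alpha}$.

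The main obstacle will be executing the last step \emph{uniformly in $m$}: when $\mu,\kappa,\theta$ are genuinely $m$-dependent, the optimal $n$ is a function of $m,x_1,x_2,y$, and I must ensure the constants in the H\"older bound do not depend on $m$. This is exactly what the various cases of the convention $(\kappa^{\zeta_2}\mu^{\gamma_2})^{*\alpha}\theta<1$ in (R3) are designed to deliver: when $\theta\in\mathcal E(w)$ or $v,u$ give bounded iterates, the relevant products can be replaced by their sup Lyapunov numbers $\theta^*,(\kappa^{\zeta_2}\mu^{\gamma_2})^*$, and all the above estimates hold uniformly up to an arbitrarily small loss in the exponent, which the spectral gap absorbs. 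The remaining work is purely bookkeeping to choose these exponent losses compatibly across orbits.
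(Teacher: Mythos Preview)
Your proposal is correct and follows essentially the same approach as the paper: iterate \eqref{appg}, sum the convergent $\gamma_1,\zeta_1$ geometric series, and balance the growing $\gamma_2,\zeta_2$ partial sum against the decaying remainder via the optimization of \autoref{appexb}. The one point worth noting is that the ``bookkeeping'' you anticipate for the $m$-dependent case $\theta\in\mathcal E(w)$ is carried out in the paper by a \emph{two-stage} iteration: first iterate $kN-1$ times (with $N=N(m)$ chosen so that the Lyapunov approximation $(1+\varepsilon(m))^{-k}\overline\theta^k(m)\leq\theta^{(k)}(m)\leq(1+\varepsilon(m))^k\overline\theta^k(m)$ holds), then regard $g$ as a bundle map over $w^{kN}$ and iterate \emph{that} estimate $n-1$ further times; one then takes $k$ large enough (uniformly) to control the accumulated constants before optimizing over $n$ as in \autoref{appexb}.
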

\begin{proof}
	We only give the proof of (a); it is similar for (b).
	First, assume $ \alpha < 1 $. Set
	\[
	\nu_1 = \sup_m \kappa^{\zeta_1}(m)\mu^{\gamma_1}(m) \theta(m),~
	\nu_2^{(k)}(m) = (\kappa^{(k)}(m))^{\zeta_2 } (\mu^{(k)}(m))^{\gamma_2}\theta^{(k)}(m).
	\]
	Take $ \theta_0 $ such that $ \sup_m \theta(m) < \theta_0 < 1 $. Note that $ (\kappa^{(k)}(m))^{\zeta_1} (\mu^{(k)}(m))^{\gamma_1} \theta^{(k)}(m) \leq \nu^k_1 $.

	First, we consider the case where $ (\kappa^{\zeta_2} \mu^{\gamma_2})^{*\alpha} \theta < 1 $ means $ \sup_m  ((\kappa^{\zeta_2 } \mu^{\gamma_2})^\alpha \theta)^*(m) < 1 $ under $ \theta \in \mathcal{E}(w) $. Choose $ \hat{\alpha} $ such that
	\[
	\max \left\{ 1, \zeta_2 \gamma_2 \sup_m \limsup_{k \to \infty} \frac{\ln \kappa^{(k)}(m) \ln \mu^{(k)}(m)}{-\ln \theta^{(k)}(m)} \right\} < \hat{\alpha}^{-1} < \alpha^{-1}.
	\]
	Since $ \theta \in \mathcal{E}(w) $, we have $ \overline{\theta} : \overline{M} \to \mathbb{R}_+ $ such that
	\[
	\mathrm{(a)}' ~(1+\varepsilon(m))^{-k}\overline{\theta}^k(m) \leq \theta^{(k)}(m),~~ \mathrm{(b)}'~  \theta^{(k)}(m) \leq (1+\varepsilon(m))^k \overline{\theta}^k(m),
	\]
	for $ k \geq N \geq N(m) $. As $ \sup_m  ((\kappa^{\zeta_2 } \mu^{\gamma_2})^\alpha \theta)^*(m) < 1 $, we can further get
	\[
	\mathrm{(c)}'~ (   \kappa^{(k)} (m) )^{\zeta_2 \alpha} (\mu^{(k)} (m))^{\gamma_2\alpha}  \theta^{(k)}(m) < 1,
	\]
	for $ k \geq N \geq N(m) $.
	The function $ \varepsilon: M \to \mathbb{R}_+ $ we choose satisfies $ (1+\varepsilon(m))\overline{\theta}^\beta(m) = 1 $, where $ \beta = \frac{\hat{\alpha} - \alpha}{\alpha+\hat{\alpha} - 2\alpha\hat{\alpha}} > 0 $. Set $ \overline{\theta}_1(m) = \left( \frac{\overline{\theta}(m)}{1+\varepsilon(m)}  \right)^{1-1/\hat{\alpha}} > 1 $ and $ \overline{\theta}_2(m) = (1+\varepsilon(m)) \overline{\theta}(m) $. Note that $ \overline{\theta}_2(m) < 1 $ and $ \overline{\theta}_1(m) = \overline{\theta}^{1 - 1/\alpha}_2(m) $.
	Also, we can assume that $ \sup_m\overline{\theta}_2(m) < \theta_0 $ and that $ v^{(k)}_2(m) \leq (\theta^{(k)} (m))^{1-1/\hat{\alpha}} $ for $ k \geq N \geq N(m) $.
	Assume $ C_2 > 0 $.
	Note that $ v^{(k)}_2(m) \leq (\theta^{(k)} (m))^{1-1/\hat{\alpha}} \leq \overline{\theta}^k_1(m) $ if $ k \geq N \geq N(m) $.

	By iterating \eqref{appg} $ kN - 1 $ times, $ k \geq 1 $, we have
	\begin{align}\label{appb}
	& |g_m(x_1,y) - g_m(x_2,y)| \notag\\
	\leq & C_1 \left\lbrace 1 + \nu_1 + \cdots + \nu^{kN-1}_1 \right\rbrace |x_1 - x_2|^{\gamma_1}|y|^{\zeta_1} \notag\\
	& + C_2 \left\lbrace 1 +  \nu_2^{(1)}(m) + \cdots + \nu_2^{(kN-1)}(m) \right\rbrace |x_1 - x_2|^{\gamma_2}|y|^{\zeta_2} \notag\\
	& + \theta^{(kN)}(m)  | g_{w^{kN}(m)}(u^{(kN)}_{m}(x_1), v^{(kN)}_{m}(x_1,y)) - g_{w^{kN}(m)}(u^{(kN)}_{m}(x_2), v^{(kN)}_{m}(x_1,y)) | \notag\\
	\leq & \widehat{C}_1 |x_1 - x_2|^{\gamma_1}|y|^{\zeta_1} + \widehat{C}_2(m,k,N) |x_1 - x_2|^{\gamma_2}|y|^{\zeta_2}  \notag\\
	& + \theta^{(kN)}(m)  | g_{w^{kN}(m)}(u^{(kN)}_{m}(x_1), v^{(kN)}_{m}(x_1,y)) - g_{w^{kN+1}(m)}(u^{(kN)}_{m}(x_2), v^{(kN)}_{m}(x_1,y)) |,
	\end{align}
	where
	\[
	\widehat{C}_1 = \frac{C_1}{1-v_1}, ~~\widehat{C}_2(m,k,N) =
	C_2 V(m) + C_2 \frac{\overline{\theta}_1^{kN}(m) - \overline{\theta}_1^{N}(m)}{\overline{\theta}_1(m) - 1 },
	\]
	and $ V(m) = 1 + \nu^{(1)}_2(m) + \cdots + \nu^{(N-1)}_2(m) $.

	Consider $ g $ as a bundle map over $ w^{kN} $. By iterating \eqref{appb} $ n - 1 $ times, we get
	\begin{align}\label{appb00}
	& |g_m(x_1,y) - g_m(x_2,y)| \notag\\
	\leq & \widehat{C}_1 \left\lbrace 1 + \nu^{kN}_1 + \cdots + \nu^{(n-1)kN}_1 \right\rbrace |x_1 - x_2|^{\gamma_1}|y|^{\zeta_1} \notag\\
	& + \widehat{C}_2(m,k,N) \left\lbrace 1 +  \nu_2^{(kN)}(m) + \cdots + \nu_2^{(n-1)kN}(m) \right\rbrace |x_1 - x_2|^{\gamma_2}|y|^{\zeta_2} + C_0 (\overline{\theta}_2^{kN}(m))^n \notag\\
	\leq & \widehat{C}'_1 |x_1 - x_2|^{\gamma_1}|y|^{\zeta_1} \notag\\
	& + \widehat{C}_2(m,k,N) \left\lbrace 1 + \overline{\theta}_1^{kN}(m) + \cdots + \overline{\theta}_1^{(n-1)kN}(m) \right\rbrace |x_1 - x_2|^{\gamma_2}|y|^{\zeta_2} + C_0 (\overline{\theta}_2^{kN}(m))^n \notag\\
	\leq & \widehat{C}'_1 |x_1 - x_2|^{\gamma_1}|y|^{\zeta_1} + C''_2(m,k,N)|x_1 - x_2|^{\gamma_2}|y|^{\zeta_2} (\overline{\theta}_1^{kN}(m))^{n} + C_0 (\overline{\theta}_2^{kN}(m))^n,
	\end{align}
	where $ \widehat{C}'_1 = \widehat{C}_1 /(1 - \nu_1^{kN}) $, and
	\[
	C_0 \geq 2\sup_m \sup_{(x,y)\in X_m \times Y_m} |g_m(x,y)|,~
	C''_2(m,k,N) = \widehat{C}_2(m,k,N) / ( \overline{\theta}_1^{kN}(m) - 1 ).
	\]
	Note that $ \overline{\theta}_1(m) > 1 $. We can choose $ k $ so large that $ C''_2(m,k,N) \leq \widehat{C}_2 \triangleq 2C_2 / (\theta^{1-1/\hat{\alpha}}_0 - 1) $ and also $ \widehat{C}'_1 \leq 2\widehat{C}_1 $.
	Let
	\[
	n = \left[ \frac{-\alpha}{\ln \overline{\theta}_2(m)} \ln \left( \frac{\alpha}{1-\alpha}\frac{C_0/\widehat{C}_2}{|x_1 - x_2|^{\gamma_2}|y|^{\zeta_2}} \right) \right] + 1,
	\]
	and $ |x_1 - x_2|^{\gamma_2}|y|^{\zeta_2} \leq \hat{r} $, where $ \hat{r}  $ is taken such that $ n \geq 1 $. Then
	\[
	|g_m(x_1,y) - g_m(x_2,y)| \leq 2 \widehat{C}_1 |x_1 - x_2|^{\gamma_1}|y|^{\zeta_1} + CC_2(|x_1 - x_2|^{\gamma_2}|y|^{\zeta_2})^\alpha.
	\]
	(Here we take $ a = \overline{\theta}_2(m) < 1 $, $ b = \overline{\theta}_1(m) > 1 $, $ f =  C_0 b^n + \widehat{C}_2 |x_1 - x_2|^{\gamma_2}|y|^{\zeta_2}a^n $; see \autoref{appexb}.) The above inequality also holds for $ \alpha = 1 $, but then the restriction $ |x_1 - x_2|^{\gamma_2}|y|^{\zeta_2} \leq \hat{r} $ is not needed anymore.

	If $ C_2 = 0 $, by iterating \eqref{appg} $ n $ times, the result follows. Also in this case we only need $ \sup_m  ((\kappa^{\zeta_1 } \mu^{\gamma_1})^\alpha \theta)^* (m) < 1 $, so case (iv) in (R3) follows. For case (ii) in (R3), we have
	\begin{equation*}
	|u^{(n)}_m(x_1) - u^{(n)}_m(x_2)| \leq C' (\mu^{(n)}(m))^\alpha|x_1 - x_2|^\alpha, ~
	|\nu^{(n)}_m(x, y)| \leq C' (\kappa^{(n)}(m))^\alpha |y|^\alpha,
	\end{equation*}
	where $ C' = C'(\alpha) $ is a constant independent of $ m $. (Here note that in the `unbounded' case Lipschitz functions are usually not H\"older.) Now we can reduce this case to $ C_2 = 0 $.
	The case (v) in (R3) is easier than case (iii) by using $ \theta^* $ instead of $ \theta $.
\end{proof}

  Next, consider another two cases when $ \epsilon_1 < \infty $ and $ \sup_m \mu(m) > 1 $.

\begin{enumerate}[({R}3$ ' $)]
	\item The notation $ ( \mu^{\gamma_2} )^{*\alpha} \theta < 1 $ will mean the following in different settings (similar for $ (  \mu^{\gamma_2} )^{*\alpha} \theta \kappa < 1 $):
	\begin{enumerate}[(i)]
		\item $ \sup_m  ( \mu^{\gamma_2}(m) )^\alpha \theta(m) < 1 $ or $ \sup_m  (( \mu^{\gamma_2})^\alpha \theta)^*(m) < 1 $ if (ii) in (R3) holds.

		\item $ \sup_m  ( \mu^{\gamma_2}(m) )^\alpha \theta(m) < 1 $ or $ \sup_m  (( \mu^{\gamma_2})^\alpha \theta)^*(m) < 1 $ if $ \theta \in \mathcal{E}(w) $.
		\item $ \sup_m  ( \mu^{\gamma_2}(m))^\alpha \theta(m) < 1 $ or $ \sup_m  ((\mu^{\gamma_2})^\alpha \theta)^*(m) < 1 $ if $ \alpha = 1 $.
		\item $ \sup_m ( \mu^{\gamma_2})^{*\alpha}(m) \theta^*(m) < 1 $ otherwise.
	\end{enumerate}
\end{enumerate}

\begin{lem}\label{lem:rlocal}
	Let \textnormal{(R1), (R2), (R3), (R3$ ' $)} hold. Assume $ \epsilon_1 < \infty $ and $ \sup_m \mu(m) > 1 $; $ \mu, \kappa, \theta $ are bounded functions; \eqref{ulip} and \eqref{appg} hold for all $ x_1 \in \overline{X}(\epsilon_1) $, $ x_2 = \jmath(m) $; and $ C_2 > 0 $.
\begin{enumerate}[(a)]
	\item \label{r1} Suppose
	\begin{enumerate}[(i)]
		\item $ g $ is bounded in the sense that $ \sup_m \sup_{(x,y)\in \overline{X}_m \times \overline{Y}_m} |g_m(x,y)| < \infty $,
		\item $ \sup_m\theta(m) < 1 $, $ \sup_m \kappa^{\zeta_1}(m)\mu^{\gamma_1}(m) \theta(m) < 1 $,
		$ (\kappa^{\zeta_2 } \mu^{\gamma_2})^{*\alpha} \theta < 1 $, $   (\mu^{\gamma_2})^{*\alpha} \theta< 1 $.
	\end{enumerate}
	If $ 0 < \alpha \leq 1 $, then 
		\[
		|g_m(x_1,y) - g_m(\jmath(m),y)| \leq CC_1|x_1 - \jmath(m)|^{\gamma_1}|y|^{\zeta_1} + CC_2(|x_1 - \jmath(m)|^{\gamma_2}|y|^{\zeta_2})^\alpha
		\]
		whenever $ |x_1 - \jmath(m)|^{\gamma_2}|y|^{\zeta_2} \leq \hat{r}\min\{ |y|^{\zeta_2c}, 1 \}$ for $ y \in \overline{Y}_m $, $ m \in \overline{M} $, and some constants $ \hat{r} > 0 $, $ c > 1 $, where $ C $ is a constant depending on $ \hat{r} > 0 $, but independent of $ m $.

	\item \label{r2} Suppose
	\begin{enumerate}[(i)]
		\item $ |g_m(x,y)| \leq M_0|y| $ for all $ x \in \overline{X}_m $, $ y \in \overline{Y}_m $, where $ M_0 $ is a constant independent of $ m, x $,
		\item $ \sup_m \theta(m) \kappa(m) < 1 $, $ \sup_m \kappa^{\zeta_1}(m)\mu^{\gamma_1}(m) \theta(m) < 1 $, $   (\kappa^{\zeta_2 } \mu^{\gamma_2} / {\kappa} )^{*\alpha} \theta \kappa < 1 $, 
		
		$  (\mu^{\gamma_2})^{*\alpha} \theta \kappa < 1 $.
	\end{enumerate}
	If $ 0 < \alpha \leq 1 $, then 
		\[
		|g_m(x_1,y) - g_m(\jmath(m),y)| \leq CC_1|x_1 - \jmath(m)|^{\gamma_1}|y|^{\zeta_1} + CC_2(|x_1 - \jmath(m)|^{\gamma_2}|y|^{\zeta_2})^\alpha |y|^{1-\alpha},
		\]
		whenever $ |x_1 - \jmath(m)|^{\gamma_2}|y|^{\zeta_2} \leq \hat{r}\min\{ |y|^{\zeta_2 c - (c-1)}, |y|^{} \}$ for $ y \in \overline{Y}_m $, $ m \in \overline{M} $, and some constants $ \hat{r} > 0 $, $ c > 1 $, where $ C $ is a constant depending on $ \hat{r} > 0 $ but not on $ m \in \overline{M} $.
\end{enumerate}
\end{lem}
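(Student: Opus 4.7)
The plan is to reuse verbatim the telescoping-iteration scheme of the proof of Lemma~A.2, but now to track a hard upper bound on the number of admissible iterations coming from the local constraint $u^{(n)}_m(x_1)\in\overline{X}_{w^n(m)}(\epsilon_1)$, and then to balance the three competing terms (the $C_1$ term, the $C_2$ growth term, and the boundary error from the final step of the iteration) to recover the H\"older exponent $\alpha$ under the restriction stated in the conclusion.

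First I would fix $m\in\overline{M}$, write $a=|x_1-\jmath(m)|$ and $b=|y|$, and iterate \eqref{appg} exactly as in \eqref{appb}--\eqref{appb00}, obtaining after $nkN$ applications an inequality of the form
\[
|g_m(x_1,y)-g_m(\jmath(m),y)|\leq \widehat{C}'_1\, a^{\gamma_1}b^{\zeta_1}+\widehat{C}_2\, a^{\gamma_2}b^{\zeta_2}\,\overline{\theta}_1^{\,nkN}(m)+\mathcal{R}_n(m,a,b),
\]
where the residue $\mathcal{R}_n$ is what distinguishes the two cases. In case (a) the boundedness of $g$ gives $\mathcal{R}_n\leq C_0\,\overline{\theta}_2^{\,nkN}(m)$, precisely as in Lemma~A.2; in case (b) the hypothesis $|g_m(x,y)|\leq M_0|y|$ together with $|v^{(nkN)}_m(x_1,y)|\leq\kappa^{(nkN)}(m)\,b$ gives $\mathcal{R}_n\leq 2M_0\,(\overline{\theta\kappa})^{\,nkN}(m)\,b$. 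Writing $\Theta(m)$ for $\overline{\theta}_2(m)$ in case (a) and for $\overline{\theta\kappa}(m)$ in case (b), both are strictly less than $1$ by the first halves of hypotheses (a)(ii) and (b)(ii) respectively.

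The genuinely new ingredient absent from Lemma~A.2 is that the iteration is legitimate only so long as $u^{(nkN)}_m(x_1)$ still lies in $\overline{X}_{w^{nkN}(m)}(\epsilon_1)$. Since $u_m$ fixes $\jmath$ and $\lip u^{(nkN)}_m\leq\mu^{(nkN)}(m)$, this forces $\mu^{(nkN)}(m)\,a\leq\epsilon_1$, i.e.\ $n\leq n_{\max}(m,a)$ with $n_{\max}\sim\ln(\epsilon_1/a)/(kN\ln\mu_0)$ and $\mu_0\geq\sup_m\mu(m)>1$. One then takes $n$ to be the integer balancing the $C_2$-term against $\mathcal{R}_n$, exactly as in Lemma~A.2: in case (a) the unconstrained optimum is $n^{\ast}\sim\alpha\ln\bigl(1/(a^{\gamma_2}b^{\zeta_2})\bigr)/(kN|\ln\Theta(m)|)$, whose substitution produces the claimed bound $C C_2(a^{\gamma_2}b^{\zeta_2})^{\alpha}$; in case (b), after absorbing the extra factor $b$ present in $\mathcal{R}_n$, the optimum becomes $n^{\ast}\sim\alpha\ln\bigl(1/(a^{\gamma_2}b^{\zeta_2-1})\bigr)/(kN|\ln\Theta(m)|)$, which produces $C C_2(a^{\gamma_2}b^{\zeta_2})^{\alpha}b^{1-\alpha}$.

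The main obstacle, and the only genuinely new piece of analysis relative to Lemma~A.2, is verifying that this optimum $n^{\ast}$ actually lies in the admissible window $[1,n_{\max}]$ under the restrictions stated in the conclusion. The requirement $n^{\ast}\geq 1$ is precisely the unrestrictive half of the stated restriction, namely $a^{\gamma_2}b^{\zeta_2}\leq\hat r$ in case (a) and $a^{\gamma_2}b^{\zeta_2-1}\leq\hat r$ (equivalently $a^{\gamma_2}b^{\zeta_2}\leq\hat r\,b$) in case (b). The requirement $n^{\ast}\leq n_{\max}$, after taking logarithms and rearranging, reduces to an inequality of the schematic form
\[
\bigl(|\ln\Theta(m)|-\alpha\gamma_2\ln\mu_0\bigr)\ln(1/a)\;\geq\;\alpha\ln\mu_0\cdot(\text{exponent of }b)\cdot\ln(1/b)+O(1),
\]
with exponent $\zeta_2$ in case (a) and $\zeta_2-1$ in case (b). The positivity of the coefficient on the left is exactly the content of the additional spectral gap hypothesis $(\mu^{\gamma_2})^{*\alpha}\theta<1$ in case (a), respectively $(\mu^{\gamma_2})^{*\alpha}\theta\kappa<1$ in case (b), as interpreted through (R3$'$); once that coefficient is positive, the displayed inequality rewrites as $a^{\gamma_2}\leq\hat r\, b^{\zeta_2(c-1)}$ in case (a) and as $a^{\gamma_2}\leq\hat r\, b^{(\zeta_2-1)(c-1)}$ in case (b), for a constant $c>1$ determined by the size of the gap, which is the other half of the stated restriction $a^{\gamma_2}b^{\zeta_2}\leq\hat r\min\{b^{\zeta_2c},1\}$, respectively $a^{\gamma_2}b^{\zeta_2}\leq\hat r\min\{b^{\zeta_2c-(c-1)},b\}$. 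Handling the subcases (i)--(v) of (R3)/(R3$'$) is then pure bookkeeping: one interprets $\Theta$ and $\mu^{\gamma_2}$ through the appropriate sup-Lyapunov numbers exactly as in the discussion preceding Lemma~A.2, and uniformity over $m\in\overline{M}$ follows from strict positivity of the gap together with the uniform boundedness of $\mu,\kappa,\theta$.
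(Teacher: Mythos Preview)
Your overall strategy matches the paper's: iterate \eqref{appg} as long as the orbit stays in the $\epsilon_1$-ball, balance the growing $C_2$-term against the decaying residue, and translate the constraint $n^*\le n_{\max}$ into the stated restriction on $|x_1-\jmath(m)|^{\gamma_2}|y|^{\zeta_2}$. Two points in your sketch need sharpening before it goes through.

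\textbf{(1) The bound on $n_{\max}$.} You write $n_{\max}\sim\ln(\epsilon_1/a)/(kN\ln\mu_0)$ with $\mu_0\ge\sup_m\mu(m)$, and then claim that the positivity of $|\ln\Theta|-\alpha\gamma_2\ln\mu_0$ is ``exactly the content of $(\mu^{\gamma_2})^{*\alpha}\theta<1$''. This is not so: the hypothesis involves the sup-Lyapunov number $(\mu^{\gamma_2})^*$, which can be strictly smaller than $\mu_0^{\gamma_2}$, and one cannot bound $\mu^{(n)}(m)$ from above by $(\mu^*)^n$ either. The paper avoids this by a different mechanism: choose $\alpha'$ with $\alpha<\alpha'<1$ so that (for $k\ge N$) one has $(\mu^{(k)})^{\gamma_2\alpha'}\overline{\theta}_2^{\,k}<1$, i.e.\ $\mu^{(k)}\le \overline{\theta}_2^{-k/(\gamma_2\alpha')}$. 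This bounds $\mu^{(k)}$ \emph{in terms of $\overline{\theta}_2$ itself}, giving $n_{\max}\sim \gamma_2\alpha'\ln(\epsilon_1/a)/(kN|\ln\overline{\theta}_2|)$. The comparison $n^*\le n_{\max}$ then reduces to $\gamma_2(\alpha'-\alpha)\ln(1/a)\ge \alpha\zeta_2\ln(1/b)+O(1)$, whose coefficient is positive simply because $\alpha'>\alpha$; this produces the restriction with $c=\alpha'/(\alpha'-\alpha)>1$. So the extra spectral gap is used to control $\mu^{(k)}$ via $\theta^{(k)}$, not to force positivity of a coefficient involving $\sup\mu$.

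\textbf{(2) The case $\alpha=1$.} Here the unconstrained optimum is $n^*=\infty$, so your ``balance and check $n^*\le n_{\max}$'' scheme breaks down as stated. The paper picks $\alpha''>1$ with $\sup_m(\mu^{\gamma_2})^{\alpha''}\theta<1$ and then needs an \emph{integer} $n$ with simultaneously $a\le(\theta^{(n)}(m))^{1/(\gamma_2\alpha'')}\epsilon_1$ and $\theta^{(n)}(m)\le C^{-1}_0\widehat{C}_3\,a^{\gamma_2}b^{\zeta_2}$. Since $n\mapsto\theta^{(n)}(m)$ is discrete with jump ratios in $[\tilde\theta,\theta_0]$, this requires a short covering argument: taking $\tau>1$ with $\theta_0<\tilde\theta^{1/\tau}$, the intervals $[(\theta^{(n)})^{1/(\gamma_2\alpha'')},(\theta^{(n)})^{1/(\tau\gamma_2\alpha'')}]$ cover $(0,\delta]$, so every small $a$ lies in one of them and the corresponding $n$ works. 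Your sketch omits this.
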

\begin{rmk}
	If $ C_2 = 0 $, then we can reduce the case to $ C_1 = 0 $ and $ \alpha = 1 $. The corresponding `spectral gap condition' is $ \sup_m\theta(m) < 1 $, $ \sup_m \kappa^{\zeta_1}(m)\mu^{\gamma_1}(m) \theta(m) < 1 $, $  \sup_m \mu^{\gamma_1}(m) \theta(m)< 1 $ for case (a), and $ \sup_m \theta(m) \kappa(m) < 1 $, $ \sup_m \kappa^{\zeta_1}(m)\mu^{\gamma_1}(m) \theta(m) < 1 $, $ \sup_m \mu^{\gamma_1}(m) \theta(m) \kappa(m) < 1 $ for case (b), respectively.
\end{rmk}
\begin{proof}[Proof of \autoref{lem:rlocal}]
	We only give the proof of (a); it is similar for (b).

	\textbf{Case} $ \alpha < 1 $. We first consider the case where $ (\kappa^{\zeta_2} \mu^{\gamma_2})^{*\alpha} \theta < 1 $, $  (\mu^{\gamma_2})^{*\alpha} \theta< 1 $ mean $ \sup_m  ((\kappa^{\zeta_2 } \mu^{\gamma_2})^\alpha \theta)^*(m) < 1 $, $ \sup_m ((\mu^{\gamma_2})^\alpha \theta)^* (m) < 1 $, respectively, and $ \theta \in \mathcal{E}(w) $. Let $ v_1, v^{(k)}_2(m), \theta_0 $ be as in the proof of \autoref{lem:rglobal}.

	Since $ \theta \in \mathcal{E}(w) $, we have $ \overline{\theta} : \overline{M} \to \mathbb{R}_+ $ such that
	\[
	\mathrm{(a)}' ~(1+\varepsilon(m))^{-n}\overline{\theta}^n(m) \leq \theta^{(n)}(m),~~ \mathrm{(b)}'~  \theta^{(k)}(m) \leq (1+\varepsilon(m))^k \overline{\theta}^k(m),
	\]
	for $ k \geq N \geq N(m) $. As $ \sup_m  ((\kappa^{\zeta_2 } \mu^{\gamma_2})^\alpha \theta)^*(m) < 1 $ and $ \sup_m ((\mu^{\gamma_2})^\alpha \theta)^* (m) < 1 $, we can further get
	\[
	\mathrm{(c)}'~ (   \kappa^{(k)} (m) )^{\zeta_2 \alpha} (\mu^{(k)} (m))^{\gamma_2\alpha'}  \theta^{(k)}(m) < 1, ~~ \mathrm{(d)}'~ (\mu^{(k)}(m))^{\gamma_2\alpha'} (1+\varepsilon(m))^k \overline{\theta}^k(m) < 1,
	\]
	for $ k \geq N \geq N(m) $, where $ \alpha' $ is chosen so that
	\begin{multline*}
	\max\left\{1, \zeta_2 \gamma_2 \sup_m \limsup_{k \to \infty} \frac{\ln \kappa^{(k)}(m) \ln \mu^{(k)}(m)}{-\ln \theta^{(k)}(m)}, \gamma_2 \sup_m \limsup_{k \to \infty} \frac{\ln \mu^{(k)}(m)}{-\ln \theta^{(k)}(m)}\right\} \\
	< {\alpha'}^{-1} < \alpha^{-1}.
	\end{multline*}
	The function $ \varepsilon: M \to \mathbb{R}_+ $ is chosen so that it satisfies $ (1+\varepsilon(m))\overline{\theta}^\beta(m) = 1 $, where $ \beta = \frac{{\alpha'} - \alpha}{\alpha+{\alpha'} - 2\alpha{\alpha'}} > 0 $. Set $ \overline{\theta}_1(m) = \left( \frac{\overline{\theta}(m)}{1+\varepsilon(m)}  \right)^{1-1/{\alpha'}} > 1 $ and $ \overline{\theta}_2(m) = (1+\varepsilon(m)) \overline{\theta}(m) $. Note that $ \overline{\theta}_2(m) < 1 $ and $ \overline{\theta}_1(m) = \overline{\theta}^{1 - 1/\alpha}_2(m) $.

	When $ |x_1 - \jmath(m)| \leq (\mu^{(kN-1)}(m))^{-1} \epsilon_1 $,
	by iterating \eqref{appg} $ kN - 1 $ times, we see \eqref{appb} holds for $ x_2 = \jmath(m) $.
	Consider $ g $ as a bundle map over $ w^{kN} $.
	Whenever $ |x_1 - \jmath(m)| \leq (\mu^{((n-1)kN)}(m))^{-1} \epsilon_1 $, by iterating \eqref{appb} $ n-1 $ times, we also see that \eqref{appb00} holds for $ x_2 = \jmath(m) $.

	Let
	\[
	n(m) \triangleq \frac{-\alpha}{\ln \overline{\theta}_2(m)} \ln \left( \frac{\alpha}{1-\alpha}\frac{C_0/\widehat{C}_2}{|x_1 - x_2|^{\gamma_2}|y|^{\zeta_2}} \right),
	\]
	and $ n = [n(m)]+1 $.
	Now $ |x_1 - \jmath(m)| \leq (\mu^{((n-1)kN)}(m))^{-1} \epsilon_1 $ can be satisfied if
	\[
	|x_1 - \jmath(m)| \leq (\overline{\theta}_2(m))^{n(m)kN/(\gamma_2 \alpha')}\epsilon_1,
	\]
	or equivalently
	\[
	|x_1 - \jmath(m)|^{\gamma_2}|y|^{\zeta_2} \leq \left( \frac{\alpha}{1-\alpha} \right)^{-\tilde{\alpha}/(1-\tilde{\alpha})}|y|^{\zeta_2/(1-\tilde{\alpha})}  \widehat{M}_0,
	\]
	where $ \tilde{\alpha} = \alpha / \alpha' < 1 $, $ \widehat{M}_0 = (\widehat{C}_2 / C_0)^{1/(1/\tilde{\alpha} - 1)} \epsilon_1^{\gamma_2/ (1-\tilde{\alpha})} $, and then
	\[
	|g_m(x_1,y) - g_m(\jmath(m),y)| \leq 2\widehat{C}_1 |x_1 - \jmath(m)|^{\gamma_1}|y|^{\zeta_1} + CC_2(|x_1 - \jmath(m)|^{\gamma_2}|y|^{\zeta_2})^{\alpha},
	\]
	for some constant $ C $ independent of $ m $ (and $ \alpha $).

	\textbf{Case} $ \alpha = 1 $. In this case, the condition is given by $ \sup_m \kappa^{\zeta_2}(m)\mu^{\gamma_2}(m) \theta(m) < 1 $, $ \sup_m \mu^{\gamma_2}(m) \theta(m) < 1 $. (Similarly $ \sup_m (\kappa^{\zeta_2}\mu^{\gamma_2} \theta)^* (m) < 1 $ and $ \sup_m (\mu^{\gamma_2} \theta)^* (m) < 1 $.) Let $ \nu_2 = \sup_m \kappa^{\zeta_2}(m)\mu^{\gamma_2}(m) \theta(m)~(<1) $. Now we have $ \nu^{(k)}_2(m) \leq \nu^k_2 $. Without loss of generality, assume $ C_1 = 0 $. Since $ \sup_m \mu^{\gamma_2}(m) \theta(m) < 1 $, we can choose $ \alpha'' > 1 $ such that $ \sup_m (\mu^{\gamma_2}(m))^{\alpha''} \theta(m) < 1 $. When $ |x_1 - \jmath(m)| \leq (\mu^{(n-1)}(m))^{-1} \epsilon_1 $, by iterating \eqref{appg} $ n-1 $ times, we get
	\begin{align*}
	|g_m(x_1,y) - g_m(\jmath(m),y)|
	\leq \widehat{C}_3  |x_1 - \jmath(m)|^{\gamma_2}|y|^{\zeta_2} + C_0 \theta^{(n)} (m),
	\end{align*}
	where $ \widehat{C}_3 = C_2 / (1-\nu_2) $. The above inequality can be satisfied if 
	\[
	|x_1 - \jmath(m)| \leq (\theta^{(n)}(m))^{1/(\gamma_2\alpha'')}\epsilon_1.
	\]
	If $ |x_1 - \jmath(m)|^{\gamma_2}|y|^{\zeta_2} \neq 0 $, choose $ n $ so that
	\[
	C_0 \theta^{(n)} (m) \leq \widehat{C}_3  |x_1 - \jmath(m)|^{\gamma_2}|y|^{\zeta_2}.
	\]

	In order to do this, we need
	\[
	|x_1 - \jmath(m)| \leq (\theta^{(n)}(m))^{1/(\gamma_2\alpha'')}\epsilon_1 \leq (C_0^{-1} \widehat{C}_3  |x_1 - \jmath(m)|^{\gamma_2}|y|^{\zeta_2})^{1/(\gamma_2\alpha'')} \epsilon_1,
	\]
	i.e.,
	\[
	|x_1 - \jmath(m)|^{\gamma_2} |y|^{\zeta_2} \leq \widehat{M}_1 |y|^{\zeta_2 \alpha'' / (\alpha'' -1 ) },
	\]
	where $ \widehat{M}_1 = (C_0^{-1} \widehat{C}_3)^{ \gamma_2 / (\alpha'' - 1) } \epsilon^{\gamma_2 / (1-1/\alpha'')}_1 $. In more detail, when
	\[
	|x_1 - \jmath(m)|^{\gamma_2} |y|^{\zeta_2} \leq \min\{\widehat{M}_1 |y|^{\zeta_2 \alpha'' / (\alpha'' -1 ) }, \hat{r}\}, \tag{$ \blacklozenge $}
	\]
	where $ \hat{r} > 0 $ is small, one can always choose $ n $ such that
	\[
	|x_1 - \jmath(m)| \leq (\mu^{(n-1)}(m))^{-1} \epsilon_1,~C_0 \theta^{(n)} (m) \leq \widehat{C}_3  |x_1 - \jmath(m)|^{\gamma_2}|y|^{\zeta_2}. \tag{$ \blacklozenge \blacklozenge $}
	\]
	Since we have assumed $ \mu $ is a bounded function, without loss of generality let $ \inf_m\theta(m) \geq \tilde{\theta} > 0 $ (otherwise take $ \theta(m) + \zeta $ instead of $ \theta(m) $ where $ \zeta > 0 $ is sufficiently small). Reselect $ \alpha'' $ such that $ \sup_m (\mu^{\gamma_2}(m))^{\rho} \theta(m) < 1 $ for all $ \rho \in [1,\tau\alpha''] $, where $ \tau > 1 $ satisfies $ \theta_0 < \tilde{\theta}^{1/\tau} $. Then there exists a $ \delta > 0 $ such that
	\[
	(0,\delta] \subset \bigcup_{n \geq 0} [(\theta^{(n)}(m))^{1/(\gamma_2\alpha'')}, (\theta^{(n)}(m))^{1/(\tau\gamma_2\alpha'')}].
	\]
	Indeed, since $ \theta_0^n < \tilde{\theta}^{(n+1)/ \tau} $ for large $ n $, we have $ a_n \leq  b_{n+1} $ where $ a_n = (\theta^{(n)}(m))^{1/(\gamma_2\alpha'')} $ and $ b_n = (\theta^{(n)}(m))^{1/(\tau\gamma_2\alpha'')} $.
	If $ \hat{r} $ is small and ($ \blacklozenge $) holds, then there is an $ n $ such that
	\[
	|x_1 - \jmath(m)| \leq (\theta^{(n)}(m))^{1/(\gamma_2\rho(n))}\epsilon_1 \leq (C_0^{-1} \widehat{C}_3  |x_1 - \jmath(m)|^{\gamma_2}|y|^{\zeta_2})^{1/(\gamma_2\alpha'')} \epsilon_1,
	\]
	where $ \rho(n) \in [\alpha'', \tau\alpha''] $, which yields ($ \blacklozenge \blacklozenge $).

	By using $ \theta^* $ instead of $ \theta $, one can get case (v) in (R3) or (iv) in (R3$ ' $). The final case is (ii) in (R3) and (i) in (R3$ ' $), which can be reduced to the case $ \alpha = 1 $. Thus, the proof is complete.
\end{proof}

\begin{rmk}\label{moregeneral}
	The readers can consider the more general case when \eqref{appg} is replaced by
	\begin{multline*}
	|g_m(x_1,y) - g_m(x_2,y)| \leq  C_1 |x_1 - x_2|^{\gamma_1}(|y|^{\zeta_1} + c_1) + C_2 |x_1- x_2|^{\gamma_2}(|y|^{\zeta_2} + c_2)  \\
	 + \theta(m)|g_{w(m)}(u_m(x_1), v_m(x_1,y)) - g_{w(m)}(u_m(x_2), v_m(x_1,y))|,
	\end{multline*}
	where $ c_i = 0 ~\text{or}~ 1 $, and give the corresponding results. The proof only needs a minor change, so we omit it.
\end{rmk}

\begin{rmk}\label{argumentapp}
	In the following, we show how the results obtained in this appendix can be applied to prove the regularity results in Lemmas \ref{lem:leaf1a}, \ref{lem:sheaf}, \ref{lem:base0}, \ref{lem:baseleaf}, \ref{lem:holversheaf}, \ref{lem:final}.
	\begin{asparaenum}[(a)]
		\item\label{lem1} ($ x \mapsto K^1_m(x) $) In the proof of \autoref{lem:leaf1a}, we need to consider \eqref{k11}.
		Let $ \overline{M} = M $, $ \overline{X}_m = X_m $, $ \overline{Y}_m = \{m\} $, $ \widehat{Y}_m = L(X_m, Y_m) $, $ g_m(x,y) = K^1_m(x) $, $ w(m) = u(m) $, $ u_m(x) = x_m(x) $, $ \jmath = \id $, $ \imath_0 = 0 $. Then apply \autoref{lem:rglobal} \eqref{g1}.

		\item\label{lem2} ($ m \mapsto f_m(x) $) In the proof of \autoref{lem:sheaf}, we have \eqref{basef}.
		Let $ \overline{M} = M_1 $, $ \overline{X}_{m_0} = U_{m_0}(\hat{\mu}^{-2}\varepsilon_1) $, $ \overline{Y}_{m_0} = X_{m_0} $, $ \widehat{Y}_m = Y_m $, $ g_{m_0}(m,x) = \widehat{f}_{m_0}(m,x) $, $ w(m_0) = u(m_0) $, $ u_{m_0}(m) = u(m) $, $ v_{m_0}(m,x) =  \widehat{x}_{m_0} (m,x) $, $ \jmath = \id $, $ \imath = i_X $, $ \imath_0 = i_Y $. Then apply \autoref{lem:rlocal} \eqref{r2}.

		Let us consider the proof of \autoref{h3*}. In this case, \eqref{basef} becomes
			\begin{multline}\label{basef1}
			|\widehat{f}_{m_0}(m_1,x) - \widehat{f}_{m_0}(m_0,x)| \leq \widetilde{C} |m_1 - m_0|^\gamma \\
			 + \theta_1 (m_0) | \widehat{f}_{u(m_0)} (u(m_1), \widehat{x}_{m_0} (m_1,x) ) - \widehat{f}_{u(m_0)} (u(m_0), \widehat{x}_{m_0} (m_1,x) ) |,
			\end{multline}
			where $ \theta_1 (m_0) = \frac{\lambda''_u(m_0)}{1-\alpha''(m_0)\beta''(u(m_0))} $.
			Although $ \widehat{f}_{m_0}(m_1,x) $ now does not satisfy the condition (i) in \autoref{lem:rlocal} \eqref{r2}, the same argument can apply. Note that
			\begin{align}
			& | \widehat{f}_{m_0}(m_1,x) - \widehat{f}_{m_0}(m_0,x) | \notag\\
			\leq & |\widehat{f}_{m_0}(m_1,x) - \widehat{f}_{m_0}(m_1, \widehat{i}^{m_0}_X(m_1))| \notag\\
			& + |\widehat{f}_{m_0}(m_0,x) - \widehat{f}_{m_0}(m_0, \widehat{i}^{m_0}_X(m_0))| + |\widehat{f}_{m_0}(m_1, \widehat{i}^{m_0}_X(m_1)) - \widehat{f}_{m_0}(m_0, \widehat{i}^{m_0}_X(m_0))| \notag\\
			\leq & \beta''(m_0) (|x - \widehat{i}^{m_0}_X(m_1)| + |x - \widehat{i}^{m_0}_X(m_0)|) + |\widehat{i}^{m_0}_X(m_1) - \widehat{i}^{m_0}_X(m_0)| \notag\\
			\leq & \beta''(m_0) (|x - \widehat{i}^{m_0}_X(m_1)| + |x - \widehat{i}^{m_0}_X(m_0)|) + c_0|m_1 - m_0| \label{appfff}.
			\end{align}

			Iterating \eqref{basef1} $ n-1 $ times, we have
			\begin{align*}
			&|\widehat{f}_{m_0}(m_1,x) - \widehat{f}_{m_0}(m_0,x)| \\
			\leq & \widetilde{C} \left\lbrace 1 + (\mu(m_0))^{\gamma}\theta_1(m_0) + \cdots + (\mu^{(n-1)}(m_0))^{\gamma}\theta_1^{(n-1)}(m_0)  \right\rbrace |m_1 - m_0|^\gamma \\
			& + \theta^{(n)}_1 (m_0) |\widehat{f}_{u^n(m_0)} (u^n(m_1), \widehat{x}^{(n)}_{m_0} (m_1,x) ) - \widehat{f}_{u^n(m_0)} (u^n(m_0), \widehat{x}^{(n)}_{m_0} (m_1,x) ) |,
			\end{align*}
			where $ \widehat{x}^{(n)} $ is the $ n $th composition of $ \widehat{x} $. From \eqref{appfff}, we see that
			\begin{align*}
			&|\widehat{f}_{m_0}(m_1,x) - \widehat{f}_{m_0}(m_0,x)| \\
			\leq & \widetilde{C} \left\lbrace 1 + (\mu(m_0))^{\gamma}\theta_1(m_0) + \cdots + (\mu^{(n-1)}(m_0))^{\gamma}\theta_1^{(n-1)}(m_0)  \right\rbrace |m_1 - m_0|^\gamma \\
			& + \beta''(u^n(m_0)) \theta^{(n)}_1(m_0) (\lambda''_s)^{(n)}(m_0) \left\{ |x - \widehat{i}^{m_0}_X(m_1)|+ |x - \widehat{i}^{m_0}_X(m_0)| \right\} \\
			& + c_0 \mu^{(n)}(m_0)\theta^{(n)}_1(m_0)|m_1 - m_0|.
			\end{align*}
			Now we can use the same argument as in the proof of \autoref{lem:rlocal} \eqref{r1} to show
			\[
			|\widehat{f}_{m_0}(m_1,x) - \widehat{f}_{m_0}(m_0,x)| \leq \widetilde{C}|m_1 - m_0|^{\gamma\alpha},
			\]
			if $ |x - \widehat{i}^{m_0}_X(m_1)|+ |x - \widehat{i}^{m_0}_X(m_0)| \leq r_1 $, and $ |m_1 - m_0| \leq \hat{r}(r_1) $, for some suitable constant $ \hat{r}(r_1) $ depending on $ r_1 $, where $ \widetilde{C} $ is a constant depending on $ r_1 $, but not on $ m_0 \in M_1 $, and $ r_1 $ is chosen arbitrarily.

			\item \label{lem3} ($ m \mapsto K^1_m(x) $) To obtain \autoref{lem:base0}, consider \eqref{K110}.
			Let $ \overline{M} = M_1 $, $ \overline{X}_{m_0} = U_{m_0}(\hat{\mu}^{-2}\varepsilon_1) $, $ \overline{Y}_{m_0} = \{i_X(m_0)\} $, $ \widehat{Y}_{m_0} = L(X_{m_0}, Y_{m_0}) $, $ g_{m_0}(m,y) = \widehat{K}^1_{m_0} (m, i_{X}(m_0)) $, $ w(m) = u(m) $, $ u_{m_0}(m) = u(m) $, $ \jmath = \id $, $ \imath_0 = 0 $. Then apply \autoref{lem:rlocal} \eqref{r1}.

			For the final step of the proof of \autoref{lem:baseleaf}, we need to consider \eqref{K111}.
			Let $ \overline{M} = M_1 $, $ \overline{X}_{m_0} = U_{m_0}(\hat{\mu}^{-2}\varepsilon_1) $, $ \overline{Y}_{m_0} = X_{m_0} $, $ \widehat{Y}_{m_0} = L(X_{m_0}, Y_{m_0}) $, $ g_{m_0}(m,x) = \widehat{K}^1_{m_0} (m, x) $, $ w(m) = u(m) $, $ u_{m_0}(m) = u(m) $, $ v_{m_0}(m,x) =  \widehat{x}_{m_0} (m,x) $, $ \jmath = \id $, $ \imath = i_X $, $ \imath_0 = 0 $. Apply \autoref{lem:rlocal} \eqref{r1}.

			\item \label{lem4} ($ x \mapsto K_m(x) $) In the proof of \autoref{lem:holversheaf}, we have \eqref{Kxx}. Let $ \overline{M} = M $, $ \overline{X}_m = \{m\} $, $ \overline{Y}_m = X_m $, $ \widehat{Y}_{m} = L(T_mM, Y_m) $, $ g_m(m,x) = K_m(x) $, $ w(m) = u(m) $, $ u_m(m) = m $, $ v_m(m, x) = x_m(x) $, $ \imath = i_X $, $ \imath_0 = 0 $. Now apply \autoref{lem:rglobal} \eqref{g2}.

			\item \label{lem5} ($ m \mapsto K_m(x) $) To prove \autoref{lem:final}, we need to consider \eqref{k011}. Let $ \overline{M} = M_1 $, $ \overline{X}_{m_0} = U_{m_0}(\epsilon_*)  $, $ \overline{Y}_{m_0} = X_{m_0} $, $ \widehat{Y}_{m_0} = L(T_{m_0}M, Y_{m_0}) $, $ g_{m_0}(m,x) = \widehat{K}_{m_0} (m,x) $, $ w(m) = u(m) $, $ u_{m_0}(m) = u(m) $, $ v_{m_0}(m,x) = \widehat{x}_{m_0} (m,x) $, $ \jmath = \id $, $ \imath = i_X $, $ \imath_0 = 0 $. Apply \autoref{lem:rlocal} \eqref{r2} (and \autoref{moregeneral}).
	\end{asparaenum}
\end{rmk}

\begin{rmk}\label{rmk:simple}
	Assume there is an $ \varepsilon_1 > 0 $ (including $ \varepsilon_1 = \infty $) such that for any $ m_0 \in M_1 $, $ (U_{m_0}(\varepsilon_1), \varphi^{m_0}) \in \mathcal{A} $, $ (U_{m_0}(\varepsilon_1), \phi^{m_0}) \in \mathcal{B} $ (in (H2)) and $ u(U_{m_0}(\varepsilon_1)) \subset U_{u(m_0)} (\varepsilon_1) $ (in (H4a)).
	\begin{enumerate}[(a)]
		\item Under the same conditions as in \autoref{lem:sheaf} except that the spectral gap condition (c) is replaced by $ \lambda_s \lambda_u < 1 $, $ (\lambda^{\zeta - 1}_s \mu^{\gamma})^{*\alpha} \lambda_s \lambda_u < 1 $, where $ 0 < \alpha \leq 1 $, then \eqref{cc3} also holds for $ m_1 \in U_{m_0}(\varepsilon_1) $ and if $ \alpha < 1 $ under $ |m_1 - m_0|^\gamma |x|^{\zeta - 1} \leq \hat{r} $. (By \autoref{lem:rglobal} \eqref{g2}.)
		\item The conditions and conclusions in \autoref{lem:base0} and \autoref{lem:baseleaf} are the same but the conclusions hold for $ m_1 \in U_{m_0}(\varepsilon^*_1) $, where if $ \alpha < 1 $ then $ \varepsilon^*_1 $ is small, and otherwise $ \varepsilon^*_1 = \varepsilon_1 $. (By \autoref{lem:rglobal} \eqref{g1}.)
		\item In addition, if for all $ m_0 \in M_1 $, $ (U_{m_0}(\varepsilon_1), (\id \times D\chi_{m_0}^{-1}(\chi_{m_0}(\cdot))) ) \in \mathcal{M} $ (in (H1c)), then under the same conditions as in \autoref{lem:sheaf} except that in the spectral gap condition (c), $ \max\{\mu/\lambda_s, \mu \}^{*\alpha} \lambda_s \lambda_u \mu < 1 $ is replaced by $ (\mu/\lambda_s)^{*\alpha} \lambda_s \lambda_u \mu < 1 $, \autoref{lem:sheaf} (1) holds for $ m_1 \in U_{m_0}(\varepsilon^*_1) $, where if $ \alpha < 1 $ then $ \varepsilon^*_1 $ is small, and otherwise $ \varepsilon^*_1 = \varepsilon_1 $. (By \autoref{lem:rglobal} \eqref{g2}.)
	\end{enumerate}
\end{rmk}

\chapter{Appendix. Bundles and bundle maps with uniform properties: part II} \label{bundleII}

This appendix continues \autoref{bundle}. We only give the definitions to express our main ideas.

\subsection{Base-regularity of bundle maps, $ C^{0} $-uniform bundles: $ C^0 $ case}\label{uniformC}

We give a description of $ C^0 $ continuity respecting the base points for a bundle map $ f $ in the uniform sense: $ m \mapsto f_m(\cdot) $ is \emph{continuous} or \emph{uniformly continuous} in \emph{$ C^0 $-topology on bounded sets or in the whole space}.
Consider the following four different limits:
\begin{equation*}
\left\{
\begin{gathered}
\mathcal{L}^{base}_1: \sup_{m_0 \in M_1} \limsup_{m' \to m_0} \sup_{x \in A_{m_0}}, ~\mathcal{L}^{base}_{1,u}: \limsup_{\epsilon \to 0} \sup_{m_0 \in M_1} \sup_{m' \in U_{m_0}(\epsilon)} \sup_{x \in A_{m_0}},\\
\mathcal{L}^{base}_2: \sup_{m_0 \in M_1} \limsup_{m' \to m_0} \sup_{x \in X_{m_0}}, ~\mathcal{L}^{base}_{2,u}: \limsup_{\epsilon \to 0} \sup_{m_0 \in M_1} \sup_{m' \in U_{m_0}(\epsilon)} \sup_{x \in X_{m_0}},
\end{gathered}
\right.
\end{equation*}
where for $ \mathcal{L}^{base}_1 $, $ A_{m_0} $ is any bounded subset of $ X_{m_0} $, and for $ \mathcal{L}^{base}_{1,u} $, $ \sup_{m_0 \in M_1}\diam A_{m_0} < \infty $.

\begin{defi}\label{def:ucontinuity}
	Assume that $ X, Y $ are $ C^0 $ topology bundles over $ M, N $ with $ C^0 $ (\emph{open regular}) bundle atlases $ \mathcal{A}, \mathcal{B} $, respectively, and that $ u: M \to N $ is $ C^0 $ and $ M_1 \subset M $. Let $ f: X \to Y $ be a bundle map over $ u $.
	\begin{enumerate}[(a)]
		\item If every (regular) local representation $ \widehat{f}_{m_0} $ of $ f $ at $ m_0 \in M $ with respect to $ \mathcal{A}, \mathcal{B} $ satisfies $ \mathcal{L} |\widehat{f}_{m_0}(m',x) - \widehat{f}_{m_0}(m_0,x)| = 0 $, where $ \mathcal{L} = \mathcal{L}^{base}_1 $ (resp. $ \mathcal{L} = \mathcal{L}^{base}_2 $), then we say $ m \mapsto f_m $ is \emph{continuous around $ M_1 $} \emph{in $ C^0 $-topology on bounded sets} (resp. \emph{in the whole space}) with respect to $ \mathcal{A}, \mathcal{B} $.

		\item \label{uc:bb} In addition, let $ M, N $ be two locally metrizable spaces associated with open covers $ \{ U_{m}: m \in M \} $ and $ \{ V_{n}: n \in N \} $ respectively, and $ X, Y $ have uniform size trivializations on $ M_1 $ with respect to $ \mathcal{A}, \mathcal{B} $. Moreover, $ u $ is uniformly continuous around $ M_1 $. If $ \mathcal{L} |\widehat{f}_{m_0}(m',x) - \widehat{f}_{m_0}(m_0,x)| = 0 $, where $ \mathcal{L} = \mathcal{L}^{base}_{1, u} $ (resp. $ \mathcal{L} = \mathcal{L}^{base}_{2,u} $), then we say $ m \mapsto f_m $ is \emph{uniformly continuous around $ M_1 $ in $ C^0 $-topology on bounded sets} (resp. \emph{in the whole space}) with respect to $ \mathcal{A}, \mathcal{B} $.

		\item \label{uc:cc} (vector case) Under \eqref{uc:bb}, let $ X, Y, \mathcal{A}, \mathcal{B}, f $ be $ C^0 $ vector. We also say $ m \mapsto f_m : M \to L_{u}(X, Y) $ is \emph{uniformly continuous} around $ M_1 $ (with respect to $ \mathcal{A}, \mathcal{B} $) for short if $ m \mapsto f_m $ is uniformly continuous around $ M_1 $ in $ C^0 $-topology on bounded sets (with respect to $ \mathcal{A}, \mathcal{B} $).
	\end{enumerate}
	As usual, if $ M_1 = M $, the words `around $ M_1 $' will be omitted. Also, the words `with respect to $ \mathcal{A}, \mathcal{B} $' will be omitted if $ \mathcal{A}, \mathcal{B} $ are predetermined.
\end{defi}

In analogy with \autoref{def:C01Uniform} and \autoref{lem:uniformC01}, we have the following:

\begin{defi}[$ C^{0} $-uniform bundle]\label{def:C0Uniform}
	Recall \autoref{def:C01Uniform}. $ \mathcal{A} $ is said to be \emph{$ C^{0} $-uniform} around $ M_1 $ if the transition maps $ \varphi^{m_0, m_1} $, $ m_0, m_1 \in M_1 $, are equicontinuous on bounded-fiber sets, i.e.,
	\[
	\varphi^{m_0, m_1}_{m'}(x) \rightrightarrows \varphi^{m_0, m_1}_{m_0}(x) ~\text{as}~ m' \to m_0,
	\]
	uniformly for $ x \in A_{m_0} $, $ m_0, m_1 \in M_1 $, where $ \sup_{m_0 \in M_1} \dim A_{m_0} < \infty $. If $ \mathcal{A} $ is \emph{$ C^{0} $-uniform} around $ M_1 $ and $ X $ has \emph{$ \varepsilon $-almost uniform $ C^{0,1} $-fiber trivializations} on $ M_1 $ with respect to $ \mathcal{A} $, then we say $ X $ has $ \varepsilon $-almost $ C^{0} $-uniform trivializations on $ M_1 $ with respect to $ \mathcal{A} $; in addition, if $ M_1 = M $ and $ \varepsilon = 0 $, we also call $ X $ a \emph{$ C^{0} $-uniform bundle} (with respect to $ \mathcal{A} $).
\end{defi}

\begin{lem}\label{lem:uniformC0}
	Assume $ (X, M, \pi_1), (Y, N, \pi_2) $ have $ \varepsilon $-almost $ C^{0} $-uniform trivializations on $ M^{\epsilon_1}_1 $, $ u(M^{\epsilon_1}_1) $ with respect to preferred $ C^{0} $-uniform bundle atlases $ \mathcal{A}, \mathcal{B} $ respectively (see \autoref{def:C0Uniform}), where $ M_1 \subset M $ and $ M^{\epsilon_1}_1 $ is the $ \epsilon_1 $-neighborhood of $ M_1 $. Suppose $ f: X \to Y $ is uniformly continuous-fiber (resp. equicontinuous-fiber) (see \autoref{fiberR}) over a map $ u $ which is uniformly continuous around $ M^{\epsilon_1}_1 $ (see \autoref{hcontinuous}). If (i) $ m \mapsto f_m $ is continuous (resp. uniformly continuous) around $ M^{\epsilon_1}_1 $ in $ C^0 $-topology on bounded sets (see \autoref{def:ucontinuity}), and (ii) for each $ m \in M^{\epsilon_1}_1 $, $ f_m $ maps bounded subsets of $ X_{m} $ into bounded subsets of $ X_{u(m)} $ (resp. $ f $ maps bounded-fiber sets on $ M^{\epsilon_1}_1 $ into bounded-fiber sets on $ u(M^{\epsilon_1}_1) $),
	then the local representation $ \widehat{f}_{m_0, m'_0} $ of $ f $ at $ m_0, m'_0 $ (see \autoref{lem:uniformC01}) satisfies $ \mathcal{L} |\widehat{f}_{m_0, m'_0}(m',x) - \widehat{f}_{m_0, m'_0}(m_0,x)| = 0 $, where $ \mathcal{L} = \mathcal{L}^{base}_1 $ (resp. $ \mathcal{L} = \mathcal{L}^{base}_{1,u} $). Condition (ii) can be removed if the fibers of $ X, Y $ are length spaces (see \autoref{length} and \autoref{lem:BTB}).

	If $ m \mapsto f_m $ is continuous (resp. uniformly continuous) around $ M^{\epsilon_1}_1 $ in $ C^0 $-topology in the whole space (see \autoref{def:ucontinuity}), then $ \mathcal{L} |\widehat{f}_{m_0, m'_0}(m',x) - \widehat{f}_{m_0, m'_0}(m_0,x)| = 0 $, where $ \mathcal{L} = \mathcal{L}^{base}_2 $ (resp. $ \mathcal{L} = \mathcal{L}^{base}_{2,u} $).
\end{lem}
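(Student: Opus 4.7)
The strategy is to mimic the scheme of \autoref{lem:uniformC01}, replacing the quantitative Lipschitz estimate on the transition maps of $\mathcal{B}$ by the qualitative equicontinuity-on-bounded-fiber-sets provided by $C^{0}$-uniformity. Writing the regular local representation as
\[
\widehat{f}_{m_0}(m,x) = \widehat{f}_{m_0,u(m_0)}(m,x) = (\phi^{u(m_0)}_{u(m)})^{-1}\circ f_m\circ \varphi^{m_0}_m(x),
\]
any other local representation is obtained from it through a transition map of $\mathcal{B}$,
\[
\widehat{f}_{m_0,m'_0}(m,x) = \phi^{u(m_0),m'_0}_{u(m)}\bigl(\widehat{f}_{m_0}(m,x)\bigr),\quad \phi^{u(m_0),m'_0}\triangleq (\phi^{m'_0})^{-1}\circ \phi^{u(m_0)}.
\]
First I would fix $\delta>0$ small, take $m'\in U_{m_0}(\delta)$, and split the difference into
\[
\mathrm{I} \triangleq \phi^{u(m_0),m'_0}_{u(m')}\!\bigl(\widehat{f}_{m_0}(m',x)\bigr) - \phi^{u(m_0),m'_0}_{u(m')}\!\bigl(\widehat{f}_{m_0}(m_0,x)\bigr),
\]
\[
\mathrm{II} \triangleq \phi^{u(m_0),m'_0}_{u(m')}\!\bigl(\widehat{f}_{m_0}(m_0,x)\bigr) - \phi^{u(m_0),m'_0}_{u(m_0)}\!\bigl(\widehat{f}_{m_0}(m_0,x)\bigr).
\]

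For $\mathrm{I}$, the $\varepsilon$-almost $C^{0,1}$-fiber trivialization of $Y$ with respect to $\mathcal{B}$ gives a bound $\lip \phi^{u(m_0),m'_0}_{u(m')}(\cdot)\leq (1+O(\varepsilon))^{2}$ which is uniform over $m_0,m'_0\in u(M_1^{\epsilon_1})$ and $u(m')\in U_{u(m_0)}(\delta')$ (achievable from $m'\in U_{m_0}(\delta)$ by the uniform continuity of $u$). Hence $|\mathrm{I}|\leq C\,|\widehat{f}_{m_0}(m',x) - \widehat{f}_{m_0}(m_0,x)|$, and the hypothesis on $m\mapsto f_m$ gives the vanishing of the right-hand side in exactly the claimed mode $\mathcal{L}^{base}_{1}$ or $\mathcal{L}^{base}_{1,u}$ (resp.\ $\mathcal{L}^{base}_{2}$ or $\mathcal{L}^{base}_{2,u}$). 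For $\mathrm{II}$, I would invoke $C^{0}$-uniformity of $\mathcal{B}$ at $u(M_1^{\epsilon_1})$: for any bounded-fiber set $B$ at $u(M_1^{\epsilon_1})$ and any $\eta>0$ there exists $\delta''>0$ such that $|\phi^{u(m_0),m'_0}_{u(m')}(y) - \phi^{u(m_0),m'_0}_{u(m_0)}(y)|<\eta$ whenever $u(m')\in U_{u(m_0)}(\delta'')$, $y\in B\cap Y_{u(m_0)}$, uniformly in $m_0,m'_0\in u(M_1^{\epsilon_1})$. Applied with $y=\widehat{f}_{m_0}(m_0,x)=f_{m_0}(x)$, this forces $|\mathrm{II}|\to 0$ in the required mode, provided the values $f_{m_0}(x)$ indeed lie in a bounded-fiber set at $u(M_1^{\epsilon_1})$. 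In the pointwise/bounded-sets case this is precisely condition (ii); in the uniform/equicontinuous-fiber case it is precisely the uniform version of (ii).

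For the final remark (on dropping condition (ii) when the fibers are length spaces), I would use the standard fact (see \autoref{length}, \autoref{lem:BTB}) that a uniformly continuous map out of a length space sends bounded sets to bounded sets; applied fiberwise (resp.\ uniformly in $m$) this yields (ii) (resp.\ its uniform version) automatically. The main obstacle will not be algebraic but bookkeeping: several quantifiers (over $m_0$, $m'_0$, $m'$, $x$, the parameter $\varepsilon$ of the almost-$C^{0,1}$-fiber trivialization, and the equicontinuity modulus of the transition maps) must be interleaved so that the final limit is taken in exactly the asserted mode, and in particular so that the $C^{0}$-uniformity modulus of $\mathcal{B}$---guaranteed uniform over $m_0,m'_0\in u(M_1^{\epsilon_1})$---propagates through term $\mathrm{I}$ without being silently degraded by a hidden dependence of the Lipschitz constants on $m_0$. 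Once this bookkeeping is done, no further estimate beyond the decomposition above is needed.
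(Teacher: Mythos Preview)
The paper gives no explicit proof of this lemma, deferring (via the analogous \autoref{lem:uniformC01}) to ``a simple computation as in \autoref{lipbase}''. Your decomposition through the transition map $\phi^{u(m_0),m'_0}$ with the I/II splitting is precisely that computation adapted to the $C^0$ setting, and your identification of the $\varepsilon$-almost $C^{0,1}$-fiber trivialization as controlling term I, the $C^0$-uniformity (equicontinuity on bounded-fiber sets) as controlling term II, and condition (ii) as supplying the bounded-fiber hypothesis needed for II matches the paper's intended argument.
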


\begin{rmk}[Uniform properties of vector bundle maps and subbundles]\label{upVector}
	In \autoref{vsII} \eqref{vs:cc} and \autoref{def:ucontinuity} \eqref{uc:cc}, we in fact give a description of H\"older continuity and uniform $ C^0 $ continuity of a vector bundle map respecting base points. Let $ X, Y $ be vector bundles with vector bundle atlases $ \mathcal{A}, \mathcal{B} $. Here any vector bundle map $ L \in L(X, Y) $ over $ u $ can be considered as a section $ m \mapsto L_{m}: M \to L_{u}(X, Y) $. A particular situation is $ L = Du \in L_{u}(TM, TN) $ if $ u: M \to N $ is $ C^1 $ and $ M,N $ are $ C^1 $ Finsler manifolds.
	Note that $ m \mapsto L_m $ is $ C^0 $ in $ C^0 $-topology on bounded sets, which means that $ L $ is a $ C^0 $ vector bundle map if $ X, Y, \mathcal{A}, \mathcal{B} $ are $ C^0 $.

	Consider a special case. Suppose each $ \Pi_m \in L(X_m, X_m) $ is a projection (i.e. $ \Pi^2_{m} = \Pi_{m} $). Let $ X^c_m = R(\Pi_{m}) $, $ X^h_m = R(\id - \Pi_{m}) $. We say $ m \mapsto X^{\kappa}_m $, $ \kappa = c, h $, are \emph{uniformly $ C^0 $} (resp. \emph{uniformly (locally) $ C^{0,\theta} $}) around $ M_1 $ if $ m \mapsto \Pi_{m} $ is uniformly $ C^0 $ (resp. uniformly (locally) $ C^{0,\theta} $) around $ M_1 $. This is equivalent to saying that $ m \mapsto X^{\kappa}_m: M \to \mathbb{G}(X) $, $ \kappa = c, h $, are uniformly $ C^0 $ (resp. uniformly (locally) $ C^{0,\theta} $) around $ M_1 $, where $ \mathbb{G}(X) = \bigsqcup_{m} \mathbb{G}(X_m) $ is the \emph{Grassmann manifold} of $ X $ (see e.g. \cite{AMR88} for the definition of the Grassmann manifold of a Banach space). These concepts are frequently used in \autoref{foliations}.
	We also write $ X^{\kappa} = \bigsqcup_{m} X^{\kappa}_m $, $ \kappa = c, h $, which are subbundles of $ X $, and $ X^c \oplus X^h = X $.
\end{rmk}

\subsection{Base-regularity of $ C^1 $-fiber bundle maps}\label{tensorV}

For a $ C^1 $-fiber bundle map $ f: X \to Y $ over $ u $, $ D^{v} f \in L(\Upsilon_X^V,  \Upsilon_Y^V) $ (see \eqref{HVspace}) over $ f $; see \autoref{fiberR}. So a description of the H\"older continuity of $ m \mapsto K^1_m(x) $, where $ K^1 \in L_f(\Upsilon_X^V,  \Upsilon_Y^V) $ (see \autoref{vBundle}) and $ K^1_m(x) = K^1_{(m,x)} $, will give the H\"older continuity of $ m \mapsto Df_m(x) $. \emph{Assume the fibers of $ X, Y $ are Banach spaces (or open subsets of Banach spaces)} for simplicity (see also \autoref{rmk:general}).

Take a bundle atlas $ \mathcal{A} $ of the bundle $ X $. If each bundle chart belonging to $ \mathcal{A} $ is $ C^1 $-fiber, then there is a \emph{canonical bundle atlas} $ \mathcal{A}_1 $ (with respect to $ \mathcal{A} $) of $ \Upsilon_X^V $ (see \eqref{HVspace}), i.e.,
\[
U_{m_0} \times X_{m_0} \times X_{m_0} \to \Upsilon_X^V,~ ((m,x), y) \mapsto ((m,\varphi^{m_0}_m(x)), D\varphi^{m_0}_m(x)y),
\]
where $ (U_{m_0}, \varphi^{m_0}) \in \mathcal{A} $ at $ m_0 $, or $ ( \varphi^{m_0} (U_{m_0} \times X_{m_0}), (D^{v}\varphi^{m_0}) \circ (\varphi^{m_0})^{-1} ) \in \mathcal{A}_1 $. In particular, if $ X $ is a $ C^1 $ topology bundle (see \autoref{def:C1fiber}), then $ \Upsilon_X^V $ is a $ C^0 $ vector bundle over $ X $.
Thus, for a vector bundle map $ K^1 \in L_f(\Upsilon_X^V,  \Upsilon_Y^V) $, as the \emph{canonical local representation} of $ K^1 $ with respect to bundle atlases $ \mathcal{A}, \mathcal{B} $ one can take
\begin{multline*}
\widehat{K}^1_{m_0} (m, x) \triangleq ( D \phi^{u(m_0)}_{u(m)} )^{-1} ( f_m( \varphi^{m_0}_{m}(x) ) ) K^1_m( \varphi^{m_0}_{m}(x) ) D\varphi^{m_0}_{m}(x): \\
U_{m_0} \times X_{m_0} \to L (X_{m_0}, Y_{u(m_0)}),
\end{multline*}
where $ (U_{m_0}, \varphi^{m_0}) \in \mathcal{A} $ at $ m_0 $ and $ (V_{u(m_0)}, \phi^{u(m_0)}) \in \mathcal{B} $ at $ u(m_0) $.
So if $ \widehat{f}_{m_0} $ is a (regular) local representation of $ f $ at $ m_0 $ with respect to $ \mathcal{A}, \mathcal{B} $, as the canonical (regular) local representation $ \widehat{D}_x f_{m_0} $ of $ D^{v}f $ one can take
\begin{multline*}
\widehat{D}_x f_{m_0} (m,x)  \triangleq ( D \phi^{u(m_0)}_{u(m)} )^{-1} ( f_m( \varphi^{m_0}_{m}(x) ) ) Df_m( \varphi^{m_0}_{m}(x) ) D\varphi^{m_0}_{m}(x): \\
U_{m_0} \times X_{m_0} \to L (X_{m_0}, Y_{u(m_0)}),
\end{multline*}
i.e. $ \widehat{D}_x f_{m_0} (m,x) = D_x\widehat{f}_{m_0} (m,x) $. Now we have the following definition similar to \autoref{def:ucontinuity} and \autoref{vsII}.

\begin{defi}\label{def:uD}
	Let $ X, Y $ be $ C^1 $ topology bundles over $ M, N $ with (\emph{open regular}) $ C^1 $-fiber bundle atlases $ \mathcal{A}, \mathcal{B} $ respectively (see \autoref{def:C1fiber}). Let $ u: M \to N $ be $ C^0 $ and $ M_1 \subset M $. Let $ f: X \to Y $ be a $ C^1 $-fiber and $ C^0 $ bundle map over $ u $. \emph{Assume the fibers of $ X, Y $ are Banach spaces (or open subsets of Banach spaces)} for simplicity. Take a $ K^1 \in L_f(\Upsilon_X^V,  \Upsilon_Y^V) $. Let $ \widehat{f}_{m_0} $ be a (regular) local representation of $ f $, and $ \widehat{K}^1_{m_0} $ a canonical local representation $ K^1 $, at $ m_0 \in M $ with respect to $ \mathcal{A}, \mathcal{B} $.

	(a) If $ \mathcal{L} |\widehat{K}^1_{m_0}(m',x) - \widehat{K}^1_{m_0}(m_0,x)| = 0 $, where $ \mathcal{L} = \mathcal{L}^{base}_{1} $ (resp. $ \mathcal{L} = \mathcal{L}^{base}_{2} $), then we say $ m \mapsto K^1_m(\cdot) $ is \emph{continuous around $ M_1 $ in $ C^0 $-topology on bounded sets} (resp. \emph{in the whole space}) with respect to $ \mathcal{A}, \mathcal{B} $. In addition, if $ Df_{m}(x) = K^1_{m}(x) $ and $ \mathcal{L} |\widehat{f}_{m_0}(m',x) - \widehat{f}_{m_0}(m_0,x)| = 0 $, then we say $ m \mapsto f_m $ is \emph{continuous around $ M_1 $ in $ C^1 $-topology on bounded sets} (resp. \emph{in the whole space}) with respect to $ \mathcal{A}, \mathcal{B} $.

	Suppose that (i) $ M, N $ are locally metrizable spaces associated with open covers $ \{ U_{m}: m \in M \} $ and $ \{ V_{n}: n \in N \} $ respectively, (ii) $ \mathcal{A}, \mathcal{B} $ have uniform size domains (see \autoref{def:uSize}), and (iii) $ u $ is uniformly continuous around $ M_1 $ (see \autoref{hcontinuous}).

	(b) Under (i)--(iii), if $ \mathcal{L} |\widehat{K}^1_{m_0}(m',x) - \widehat{K}^1_{m_0}(m_0,x)| = 0 $, where $ \mathcal{L} = \mathcal{L}^{base}_{1, u} $ (resp. $ \mathcal{L} = \mathcal{L}^{base}_{2,u} $), then we say $ m \mapsto K^1_m(\cdot) $ is \emph{uniformly continuous around $ M_1 $ in $ C^0 $-topology on bounded sets} (resp. \emph{in the whole space}) with respect to $ \mathcal{A}, \mathcal{B} $. In addition, if $ Df_{m}(x) = K^1_{m}(x) $ and $ \mathcal{L} |\widehat{f}_{m_0}(m',x) - \widehat{f}_{m_0}(m_0,x)| = 0 $, then we say $ m \mapsto f_m $ is \emph{uniformly continuous around $ M_1 $ in $ C^1 $-topology on bounded sets} (resp. \emph{in the whole space}) with respect to $ \mathcal{A}, \mathcal{B} $.

	(c) Under (i), suppose $ \widehat{K}^1_{m_0} $ satisfies
	\[
	|\widehat{K}^1_{m_0} (m,x) - \widehat{K}^1_{m_0} (m_0,x)| \leq c^3_{m_0}(x) d^1_{m_0}(m,m_0)^{\theta}, ~ m \in U_{m_0}(\varepsilon_{m_0}),
	\]
	where $  c^3_{m_0}: X_{m_0} \to \mathbb{R}_+ $, $ m_0 \in M_1 $. Then we say \emph{$ K^1 $ depends in a (locally) $ C^{0,\theta} $ fashion on the base points around $ M_1 $}, or $ m \mapsto K^1_m(\cdot) $ is \emph{(locally) $ C^{0,\theta} $} around $ M_1 $, with respect to $ \mathcal{A} $, $ \mathcal{B} $.

	(d) Under (i)--(iii) (in particular, one can choose $ \varepsilon_{m_0} $ such that $ \inf_{m_0 \in M_1} \varepsilon_{m_0} > 0 $), if $ c^3_{m_0} $, $ m_0 \in M_1 $, are bounded on $ M_1 $ on bounded-fiber sets (see \autoref{def:boundedf}), then we say \emph{$ K^1 $ depends in a uniformly (locally) $ C^{0,\theta} $ fashion on the base points around $ M_1 $ {`uniformly for bounded-fiber sets'}}, or $ m \mapsto K^1_m(\cdot) $ is \emph{uniformly (locally) $ \theta $-H\"older around $ M_1 $ {`uniformly for bounded-fiber sets'}}, with respect to $ \mathcal{A}, \mathcal{B} $. In addition, if $ Df_m(x) = K^1_{m}(x) $ and $ f $ depends in a uniformly (locally) $ C^{0,\theta} $ fashion on the base points around $ M_1 $ uniformly for bounded-fiber sets, then we say $ m \mapsto f_{m} $ is \emph{uniformly (locally) H\"older around $ M_1 $ in $ C^{1} $-topology on bounded sets}.
	Usually, the words in `...' are omitted, especially when $ c^3_{m_0} $ is the class of functions in case (a) or (b) in \autoref{def:boundedf}.
	As usual, if $ M_1 = M $, the words `around $ M_1 $' will be omitted.
\end{defi}

\begin{defi}[$ C^{1} $-fiber-uniform bundle, $ C^{1,1} $-fiber-uniform]\label{def:C11uf}
	Recall \autoref{def:C01Uniform}. $ \mathcal{A} $ is said to be \emph{$ C^{1} $-fiber-uniform} around $ M_1 $ if the transition maps $ \varphi^{m_0, m_1} $, $ m_0, m_1 \in M_1 $, are $ C^1 $-fiber equicontinuous on bounded-fiber sets, i.e.,
	\[
	\varphi^{m_0, m_1}_{m'}(x) \rightrightarrows \varphi^{m_0, m_1}_{m_0}(x), ~D\varphi^{m_0, m_1}_{m'}(x) \rightrightarrows D\varphi^{m_0, m_1}_{m_0}(x), ~\text{as}~ m' \to m_0,
	\]
	uniformly for $ x \in A_{m_0} $, $ m_0, m_1 \in M_1 $, where $ \sup_{m_0 \in M_1} \dim A_{m_0} < \infty $.
	\emph{Assume the fibers of $ X, Y $ are Banach spaces (or open subsets of Banach spaces)} for simplicity (see also \autoref{rmk:general}).
	$ \mathcal{A} $ is said to be \emph{$ C^{1,1} $-fiber-uniform} around $ M_1 $ if it is $ C^{0,1} $-uniform around $ M_1 $ and the transition map $ \varphi^{m_0, m_1} $ satisfies further
	\[
	\sup_{x \in X_{m_0}}\lip D_x\varphi^{m_0,m_1}_{(\cdot)} (x)  \leq C,
	\]
	where $ C > 0 $ is a constant independent of $ m_0, m_1 \in M_1 $.
	If $ \mathcal{A} $ is {$ C^{1} $-fiber-uniform} (resp. {$ C^{1,1} $-fiber-uniform}) around $ M_1 $ and $ X $ has $ \varepsilon $-almost uniform $ C^{1,1} $-fiber trivializations (see \autoref{def:C11bundle}) on $ M_1 $ with respect to $ \mathcal{A} $, then we say $ X $ has \emph{$ \varepsilon $-almost $ C^{1} $-fiber-uniform trivializations} (resp. \emph{$ \varepsilon $-almost $ C^{1,1} $-fiber-uniform trivializations}) on $ M_1 $ with respect to $ \mathcal{A} $; in addition, if $ M_1 = M $ and $ X $ is uniform $ C^{1,1} $-fiber (see \autoref{def:C11bundle}), then we also call $ X $ a \emph{$ C^{1} $-fiber-uniform bundle} (resp. \emph{$ C^{1,1} $-fiber-uniform bundle}) with respect to $ \mathcal{A} $.
\end{defi}

Using this definition, one can obtain similar results on the regularity of $ (m,x) \mapsto Df_{m}(x) $ (and $ (m,x) \mapsto K^1_m(x) $) to \autoref{lem:uniformC0} and \autoref{lem:uniformC01}.

\subsection{Base-regularity of $ C^1 $ bundle maps, $ C^{1,1} $-uniform bundles, $ C^{1} $-uniform bundles}\label{tensorH}

As usual, for a $ C^1 $ manifold $ M $ and a $ C^1 $ atlas $ \mathcal{A}_0 $ of $ M $, $ \mathcal{A}_0 $ will induce a natural $ C^0 $ bundle atlas $ \mathcal{M} $ for $ TM $, called a \emph{canonical} bundle atlas of $ TM $. That is, for every local chart $ \chi_{m}: U_{m} \to T_{m}M $ with $ \chi_{m}(m) = 0 $ and $ D\chi_{m}(m) = \id $, let $ \psi^{m}_{m'}(x) = D\chi^{-1}_{m}(\chi_{m}(m'))x $; then $ (U_{m}, \psi^{m}) $ is a (canonical) bundle chart of $ TM $.

Let $ M, N $ be $ C^1 $ Finsler manifolds with $ C^1 $ (regular) atlases $ \mathcal{A}_0, \mathcal{B}_0 $.
Let $ (X, M, \pi_1) $ and $ (Y, N, \pi_2) $ be $ C^1 $ bundles (see \autoref{fiberbundle}) and $ f: X \to Y $ a $ C^1 $ bundle map over $ u $. Consider the H\"older continuity of $ Df $. As before, we will consider it in local representations. Take $ C^1 $ (regular) bundle atlases $ \mathcal{A}, \mathcal{B} $ of $ X, Y $, respectively.
Here we assume the charts belonging to $ \mathcal{A}_0 $ share the same domains as those belonging to $ \mathcal{A} $; similarly for $ \mathcal{B}_0, \mathcal{B} $.

For a precise presentation of $ Df $ respecting the base points, one needs the \emph{connection} structures in $ X, Y $ and the \emph{covariant derivative} of $ f $; see \autoref{connections} for a quick review of these notions. If $ f: X \to Y $ is $ C^1 $, now we have $ \nabla f \in L_{f}(\Upsilon^H_X, \Upsilon^V_Y) $ (see \eqref{HVspace} and \autoref{coderivative}). So a description of the H\"older continuity of $ m \mapsto K_m(x) $, where $ K \in L_f(\Upsilon_X^H,  \Upsilon_Y^V) $ (see \autoref{vBundle}) and $ K_m(x) = K_{(m,x)} $, will give the same for $ m \mapsto \nabla_{m}f_m(x) $.
\emph{Assume the fibers of $ X, Y $ are Banach spaces (or open subsets of Banach spaces)} for simplicity (see also \autoref{rmk:general}).

Take a $ C^0 $ (regular) bundle atlas $ \mathcal{M} $ for $ TM $ and a $ C^1 $ (regular) bundle atlas $ \mathcal{A} $ for a $ C^1 $ bundle $ X $ (over $ M $); in many cases, $ \mathcal{M} $ is the canonical bundle atlas induced from $ \mathcal{A}_0 $, the $ C^1 $ (regular) atlas of $ M $. Then there is a \emph{canonical bundle atlas} $ \mathcal{A}_2 $ (with respect to $ \mathcal{M} $ (and $ \mathcal{A} $)) for $ \Upsilon_X^H $ (see \eqref{HVspace}), i.e.,
\[
U_{m_0} \times X_{m_0} \times T_{m_0}M \to \Upsilon_X^H,~ ((m,x), v) \mapsto ((m,\varphi^{m_0}_m(x)), \psi^{m_0}_m v ),
\]
where $ (U_{m_0}, \varphi^{m_0}) \in \mathcal{A} $ at $ m_0 $ and $ (U_{m_0}, \psi^{m_0}) \in \mathcal{M} $ at $ m_0 $, or $ ( \varphi^{m_0} (U_{m_0} \times X_{m_0}), \id \times \psi^{m_0} ) \in \mathcal{A}_2 $. In particular, $ \Upsilon_X^H $ is a $ C^0 $ vector bundle over $ X $.
Thus, for a vector bundle map $ K \in L_f(\Upsilon_X^H,  \Upsilon_Y^V) $, as \emph{canonical local representation} of $ K $ with respect to bundle atlases $ \mathcal{A}, \mathcal{B}, \mathcal{M} $ one can take
\begin{multline*}
\widehat{K}_{m_0}(m,x)\triangleq(D\phi^{u(m_0)}_{u(m)})^{-1}(f_m(\varphi^{m_0}_{m}(x)))K_m(\varphi^{m_0}_{m}(x))\psi^{m_0}_m :\\
U_{m_0}\times X_{m_0}\to L(T_{m_0}M,Y_{u(m_0)}),
\end{multline*}
where $ (U_{m_0}, \varphi^{m_0}) \in \mathcal{A} $ at $ m_0 $ and $ (V_{u(m_0)}, \phi^{u(m_0)}) \in \mathcal{B} $ at $ u(m_0) $.
So as the canonical (regular) local representation $ \widehat{D}_m f_{m_0} $ of $ \nabla f $ (at $ m_0 $) one can take
\begin{multline*}
\widehat{D}_m f_{m_0} (m,x)  \triangleq ( D \phi^{u(m_0)}_{u(m)} )^{-1} ( f_m( \varphi^{m_0}_{m}(x) ) ) \nabla_m f_m( \varphi^{m_0}_{m}(x) ) \psi^{m_0}_{m}: \\
U_{m_0} \times X_{m_0} \to L (T_{m_0}M, Y_{u(m_0)}).
\end{multline*}

In analogy with \autoref{def:uD}, one can give precise definitions of the H\"older continuity of $ m \mapsto K_{m}(\cdot) $.

\begin{defi}\label{def:cd}
	Take a vector bundle map $ K \in L(\Upsilon_X^H,  \Upsilon_Y^V) $ over $ f $.
	We say $ K $ depends in a (locally) $ C^{0,\theta} $ fashion on the base points around $ M_1 $ with respect to $ \mathcal{A}, \mathcal{B}, \mathcal{M} $ if the canonical local representation $ \widehat{K}_{m_0} $ of $ K $ satisfies
	\[
	|\widehat{K}_{m_0}(m_1,x) - \widehat{K}_{m_0}(m_0,x)| \leq c^4_{m_0}(x) d(m_1, m_0), ~m_1 \in U_{m_0}(\varepsilon_{m_0}),
	\]
	where $ c^4_{m_0}: X_{m_0} \to \mathbb{R}_+ $, $ m_0 \in M_1 $, and $ d $ is the Finsler metric in $ M $.

	Assume $ u $ is uniformly continuous around $ M_1 $ (see \autoref{hcontinuous}).
	Suppose $ X, Y $ have \emph{uniform size trivializations} on $ M_1 $ with respect to $ \mathcal{A}, \mathcal{B} $ respectively (see \autoref{def:uSize}) (and so $ TM$ with respect to $ \mathcal{M} $), i.e., one can choose $ \varepsilon_{m_0} $ such that $ \inf_{m_0 \in M_1} \varepsilon_{m_0} > 0 $.
	If $ c^4_{m_0} $, $ m_0 \in M_1 $, are bounded on $ M_1 $ on bounded-fiber sets (see \autoref{def:boundedf}), then we say \emph{$ K $ depends in a uniformly (locally) $ C^{0,\theta} $ fashion on the base points around $ M_1 $ {`uniformly for bounded-fiber sets'}}, or $ m \mapsto K_m(\cdot) $ is \emph{uniformly (locally) $ \theta $-H\"older around $ M_1 $ {`uniformly for bounded-fiber sets'}}, with respect to $ \mathcal{A}, \mathcal{B}, \mathcal{M} $. Usually, the words in `...' are omitted, especially when $ c^4_{m_0} $ is the class of functions in case (a) or (b) in \autoref{def:boundedf}.
\end{defi}

Under the assumption that $ X, Y $ have $ \varepsilon $-almost $ C^{1} $-fiber-uniform (resp. $ C^{1,1} $-fiber-uniform) trivializations (see \autoref{def:C11uf}) and $ TM $ has $ \varepsilon $-almost $ C^{0} $-uniform (resp. $ C^{0,1} $-uniform) trivializations on $ M_1 $ (see \autoref{def:C0Uniform} and \autoref{def:C01Uniform}), one can obtain similar results on the regularity of $ (m,x) \mapsto \nabla_{m} f_{m}(x) $ (and $ (m,x) \mapsto K_{m}(x) $) to \autoref{lem:uniformC0} (resp. \autoref{lem:uniformC01}). Next let us focus on the $ C^{1,\gamma} $ continuity of $ f $. See \autoref{connections} for a quick review of some notions related to connections.

\begin{defi}[Uniform properties of connections] \label{lipcon}
	Let $ \mathcal{C}^X $ be a $ C^0 $ connection of a $ C^1 $ bundle $ X $ over a $ C^1 $ Finsler manifold $ M $. Let $ \mathcal{A} $ and $ \mathcal{M} $ be $ C^1 $ bundle atlases of $ X $ and $ TM $, respectively. Let $ M_1 \subset M $.
	Take $ (U_{m_0}(\epsilon'_{m_0}), \varphi^{m_0}) \in \mathcal{A} $ and $ (U_{m_0}(\epsilon'_{m_0}), \psi^{m_0}) \in \mathcal{M} $ at $ m_0 $. Let
	\begin{align*}
	\widehat{D}_m \varphi^{m_0} (m,x) & \triangleq (D\varphi^{m_0}_{m})^{-1} (\varphi^{m_0}_m(x)) \nabla_m \varphi^{m_0}_m(x) \psi^{m_0}_{m} \\
	& = \widehat{\varGamma}^{m_0}_{(m,x)} \psi^{m_0}_{m} :~ U_{m_0}(\epsilon'_{m_0}) \times X_{m_0} \to L(T_{m_0}M, X_{m_0}),
	\end{align*}
	where $ \widehat{\varGamma}^{m_0}_{(m,x)} $ is the Christoffel map in the bundle chart $ \varphi^{m_0} $ (see \autoref{def:connection}). We call $ \widehat{D}_m \varphi^{m_0} $ a local representation of $ \mathcal{C}^X $ at $ m_0 $ with respect to $ \mathcal{A}, \mathcal{M} $.
	We say the connection $ \mathcal{C}^X $ is \emph{locally Lipschitz} (with respect to $ \mathcal{A}, \mathcal{M} $) if
	\begin{gather*}
	\sup_{m \in U_{m_0}(\epsilon'_{m_0})}\lip_{x} \widehat{\varGamma}^{m_0}_{(m,\cdot)} < C_{m_0}, \\
	|\widehat{D}_{m} \varphi^{m_0} (m_1,x) - \widehat{D}_{m} \varphi^{m_0} (m_0,x) | \leq c^0_{m_0}(x) |m_1 - m_0|,~ m_1 \in U_{m_0}(\epsilon'_{m_0}),
	\end{gather*}
	where $ c^0_{m_0}: X_{m_0} \to \mathbb{R}_+ > 0 $ and $ C_{m_0} > 0 $. The connection $ \mathcal{C}^X $ is said to be \textbf{uniformly (locally) Lipschitz} around $ M_1 $ (with respect to $ \mathcal{A}, \mathcal{M} $) if $ \inf_{m_0 \in M_1} \epsilon'_{m_0} > 0 $, $ \sup_{m_0 \in M_1}C_{m_0} < \infty $ and $ c^0_{m_0} $ are bounded on $ M_1 $ on bounded-fiber sets (see \autoref{def:boundedf}). We say $ \mathcal{C}^X $ is \emph{uniformly $ C^0 $} around $ M_1 $ (with respect to $ \mathcal{A}, \mathcal{M} $) if $ \inf_{m_0 \in M_1} \epsilon'_{m_0} > 0 $, and
	\[
	\widehat{D}_{m} \varphi^{m_0} (m',x') \rightrightarrows \widehat{D}_{m} \varphi^{m_0} (m_0,x) ~\text{as}~ (m',x') \to (m_0,x),
	\]
	uniformly for $ x \in A_{m_0} $, $ m_0 \in M_1 $, where $ \sup_{m_0 \in M_1} \dim A_{m_0} < \infty $.
\end{defi}

\begin{defi}[$ C^1 $-uniform bundle, $ C^{1,1} $-uniform bundle]\label{def:C11Uniform}
	Let $ M $ be a $ C^1 $ Finsler manifold with a $ C^1 $ (regular) atlas $ \mathcal{A}_0 $. Let $ \mathcal{M} $ be the \emph{canonical} bundle atlas of $ TM $ induced by $ \mathcal{A}_0 $. Let $ X $ be a $ C^1 $ bundle over $ M $ with $ C^1 $ (regular) bundle atlas $ \mathcal{A} $ and $ C^0 $ connection $ \mathcal{C}^{X} $. Let $ M_1 \subset M $. The charts belonging to $ \mathcal{A} $ and $ \mathcal{A}_0 $ share the same domains and have uniform size domains on $ M_1 $. Suppose that $ TM $ has \emph{$ \varepsilon $-almost $ C^{0} $-uniform trivializations} on $ M_1 $ with respect to $ \mathcal{M} $ (see \autoref{def:C0Uniform}).
	Then $ \mathcal{A} $ (with $ \mathcal{M} $) is said to be \emph{$ C^{1} $-uniform} around $ M_1 $, if the transition maps $ \varphi^{m_0, m_1} $ (with respect to $ \mathcal{A} $), $ m_0, m_1 \in M_1 $, are $ C^1 $ equicontinuous on bounded-fiber sets, i.e.
	\[
	D_{m}\varphi^{m_0, m_1}_{m'}(x') \rightrightarrows D_{m}\varphi^{m_0, m_1}_{m_0}(x), ~D_x\varphi^{m_0, m_1}_{m'}(x) \rightrightarrows D_x\varphi^{m_0, m_1}_{m_0}(x), ~\text{as}~ (m',x') \to (m_0,x),
	\]
	uniformly for $ x \in A_{m_0} $, $ m_0, m_1 \in M_1 $, where $ \sup_{m_0 \in M_1} \dim A_{m_0} < \infty $. If $ \mathcal{A} $ (with $ \mathcal{M} $) is \emph{$ C^{1} $-uniform} around $ M_1 $, $ X $ has \emph{$ \varepsilon $-almost uniform $ C^{1,1} $-fiber trivializations} on $ M_1 $ with respect to $ \mathcal{A} $ and $ \mathcal{C}^{X} $ is uniformly $ C^0 $ around $ M_1 $ (\autoref{lipcon}), then we say $ X $ has \textbf{$ \varepsilon $-almost $ C^{1} $-uniform trivializations} on $ M_1 $ with respect to $ \mathcal{A}, \mathcal{M} $; in addition, if $ M_1 = M $ and $ \varepsilon = 0 $, we also call $ X $ a \emph{$ C^{1} $-uniform bundle} (with respect to $ \mathcal{A}, \mathcal{M} $).

	Assume that (i) $ TM $ has \emph{$ \varepsilon $-almost $ C^{0,1} $-uniform trivializations} (see \autoref{def:C01Uniform}) on $ M_1 $ with respect to $ \mathcal{M} $ (where in \autoref{def:C01Uniform} we need $ c^1_{m_0, m_1} $ thereof satisfies $ c^1_{m_0, m_1}(x) \leq C|x| $ for some constant $ C > 0 $ independent of $ m_0, m_1 $), and (ii) \emph{the fibers of $ X, Y $ are Banach spaces (or open subsets of Banach spaces)} for simplicity (see also \autoref{rmk:general}). We say $ \mathcal{A} $ (with $ \mathcal{M} $) is $ C^{1,1} $-uniform around $ M_1 $ if $ X $ has $ \varepsilon $-almost $ C^{1,1} $-fiber-uniform trivializations (see \autoref{def:C11uf}) on $ M_1 $ with respect to $ \mathcal{A} $, and for the transition map $ \varphi^{m_0,m_1} $ with respect to $ \mathcal{A} $ (see \autoref{def:C01Uniform}), $ m_0, m_1 \in M_1 $, and the local chart $ \chi_{m_0} \in \mathcal{A}_0 $ at $ m_0 $, they satisfy
	\[
	\lip_x (D_m\varphi^{m_0,m_1}_{m} (x)) \leq C, ~\lip_{m} (D_m\varphi^{m_0,m_1}_{m} (x)) D\chi^{-1}_{m_0}(\chi_{m_0}(m))\leq c^5_{m_0, m_1}(x),
	\]
	where $ c^5_{m_0, m_1}(x) $, $ m_1 \in M_1 $, are bounded on $ M_1 $ on bounded-fiber sets (see \autoref{def:boundedf}) and $ C > 0 $ is a constant independent of $ m_0, m_1 $.
	We say $ X $ has \textbf{$ \varepsilon $-almost $ C^{1,1} $-uniform trivializations} on $ M_1 $ with respect to $ \mathcal{A}, \mathcal{M} $ if $ \mathcal{A} $ with $ \mathcal{M} $ is $ C^{1,1} $-uniform and $ \mathcal{C}^{X} $ is uniformly (locally) Lipschitz around $ M_1 $ (see \autoref{lipcon}); in addition, if $ M_1 = M $ and $ \varepsilon = 0 $, then we call $ X $ a \emph{$ C^{1,1} $-uniform bundle} (with respect to $ \mathcal{A}, \mathcal{M} $).
\end{defi}

\begin{rmk}[Vector bundle]\label{rmk:vector}
	For the vector bundle case, we require the bundle atlas $ \mathcal{A} $ is vector, and $ c^1_{m_0, m_1} $ (in \autoref{def:C01Uniform}) and $ c^5_{m_0, m_1} $ (in \autoref{def:C11Uniform}) satisfy $ c^1_{m_0, m_1}(x) \leq C|x| $ and $ c^5_{m_0, m_1}(x) \leq C|x| $ where $ C > 0 $ independent of $ m_0, m_1 $. Now we have different classes of vector bundles with local uniform property (of the base space), named \emph{$ C^0 $-uniform} (\autoref{def:C0Uniform}), \emph{$ C^{0,1} $-uniform} (\autoref{def:C01Uniform}), \emph{$ C^{1} $-uniform} (\autoref{def:C11Uniform}), and \emph{$ C^{1,1} $-uniform} (\autoref{def:C11Uniform}) vector bundle.
	\[
	\begin{CD}
	\text{$ C^{1,1} $-uniform vector bundle} @>>> \text{$ C^{1} $-uniform vector bundle}\\
	@VVV  @VVV \\
	\text{$ C^{0,1} $-uniform vector bundle} @>>> \text{$ C^{0} $-uniform vector bundle}
	\end{CD}
	\]
	Moreover, for the local uniform property of the fiber space, there is a class of vector bundles called uniform $ C^{0,1} $-fiber bundles (\autoref{def:lipbundle}). Note that in the vector bundle setting, uniform $ C^{0,1} $-fiber = uniform $ C^{1,1} $-fiber, $ C^1 $-fiber-uniform (\autoref{def:C11uf}) = $ C^{0} $-uniform, and $ C^{1,1} $-fiber-uniform (\autoref{def:C11uf}) = $ C^{0,1} $-uniform. The notion of $ C^{1} $-uniform vector bundle is the same as in \cite[Chapter 6]{HPS77}. The above notions are related to vector bundles having bounded geometry (see \autoref{vectorB}).

	More generally, if the bundle $ X $ has a $ 0 $-section $ i $ with respect to the bundle atlas $ \mathcal{A} $ (see \autoref{0-section}), we also require $ c^1_{m_0, m_1} $ (in \autoref{def:C01Uniform}) and $ c^5_{m_0, m_1} $ (in \autoref{def:C11Uniform}) satisfy $ c^1_{m_0, m_1}(x) \leq C|x| $ and $ c^5_{m_0, m_1}(x) \leq C|x| $ where $ C > 0 $ independent of $ m_0, m_1 $. Here $ |x| = d(x, i(m)) $ if $ x \in X_m $.
\end{rmk}

Here note that for the (regular) local representation $ \widehat{f}_{m_0} $ of $ f $ at $ m_0 $ with respect to $ \mathcal{A}, \mathcal{B} $, one has
\begin{multline*}
D_{m} \widehat{f}_{m_0} (m,x)  = \nabla_{u(m)} (\phi^{u(m_0)}_{u(m)})^{-1}(x'') Du(m) \\
+ D(\phi^{u(m_0)}_{u(m)})^{-1}(x'') \left\lbrace  \nabla_{m} f_{m}(x') + Df_{m}(x') \nabla_{m}\varphi^{m_0}_{m} (x) \right\rbrace,
\end{multline*}
where $ x' = \varphi^{m_0}_m(x) $, $ x'' = f_m(x') $.
Once we know the (uniform) Lipschitz continuity of the connections in $ X, Y $ (see \autoref{lipcon}), $ m \mapsto Du(m) $ (see \autoref{def:manifoldMap}), $ m \mapsto f_m(\cdot) $ and $ x \mapsto \nabla_{m} f_m(x), Df_m(x) $, the H\"older continuity of $ m \mapsto D_{m} \widehat{f}_{m_0} (m,x) $ is equivalent to the H\"older continuity of $ m \mapsto \nabla_{m} f_{m}(x) $.
We think that a natural setting for discussing the uniform $ C^{1,\gamma} $ continuity of a bundle map $ f $ in the \emph{classical} sense is that the bundles are $ C^{1,1} $-uniform.
\emph{Through the study of $ (m,x) \mapsto Df_m(x) $ and $ (m,x) \mapsto \nabla_{m} f_{m}(x) $ to understand the properties of $ Df $}, this is an important idea we used in \autoref{stateRegularity}.

\subsection{Summary and extension}\label{summary}

Let $ X, Y $ be bundles with base spaces $ M, N $, respectively. Assume the fibers of $ X, Y $ are Banach spaces (or open subsets of Banach spaces) for simplicity (see also \autoref{rmk:general}). Take (regular) bundle atlases $ \mathcal{A}, \mathcal{B} $ of $ X, Y $, respectively.

Let us consider a special type of vector bundle over $ X $. Suppose $ \Theta $ is a vector bundle over $ X $ with a (regular) vector bundle atlas $ \mathcal{A}_1 $. The fibers of $ \Theta $ are $ \Theta_{(m_0, x_0)} = \Theta_{m_0} $, $ (m_0, x_0) \in X $.
Every bundle chart at $ (m_0, x_0) $ belonging to the bundle atlas $ \mathcal{A}_1 $ of $ \Theta $ satisfies
\[
U_{m_0} \times X_{m_0} \times \Theta_{m_0} \to \Theta, ~((m,x), v) \mapsto ((m, \varphi^{m_0}_mx), \psi^{m_0}_{m}(x) v),
\]
and $ \psi^{m_0}_{m_0}(x) = \id $ for all $ x \in X_{m_0} $, where $ (U_{m_0}, \varphi^{m_0}) \in \mathcal{A} $ at $ m_0 \in M $. That is, $ ( \varphi^{m_0}(U_{m_0} \times X_{m_0}), (\id \times \psi^{m_0}) \circ ( (\varphi^{m_0})^{-1}, \id) ) \in \mathcal{A}_1 $, where $ \id \times \psi^{m_0} (m,x, v) = ((m,x), \psi^{m_0}_m(x) v) $. We call $ \Theta $ a \textbf{pre-tensor bundle} over $ X $ (with respect to $ \mathcal{A}_1 $). For example $ \Upsilon^H_X $ and $ \Upsilon^V_X $ are pre-tensor bundles (see \autoref{tensorH} and \autoref{tensorV}). If $ \Theta_1, \Theta_2 $ are pre-tensor bundles over $ X $, then so is $ \Theta_1 \times \Theta_2 $.

Take a bundle map $ f: X \to Y $ over $ u $.
Let $ \Theta, \Omega $ be pre-tensor bundles over $ X, Y $ with respect to bundle atlases $ \mathcal{A}_1, \mathcal{B}_1 $, respectively.
Take a vector bundle map $ E \in L(\Theta, \Omega) $ over $ f $; we write $ E_{m}(x) = E_{(m,x)} \in L(\Theta_{m}, \Omega_{u(m)}) $ and consider it as
\[
(m,x) \mapsto E_{m}(x): X \to L_{f}(\Theta, \Omega).
\]
\begin{enumerate}[$ \bullet $]
	\item (fiber-regularity) Similar to the bundle map case, the same terms as in \autoref{fiberR}, such as $ C^{0} $-fiber, uniformly continuous-fiber, equicontinuous-fiber, $ C^{k,\theta} $-fiber, uniform $ C^{k,\theta} $-fiber, uniformly locally $ C^{k,\theta} $-fiber, fiber derivative (i.e. $ D^v E $), etc., can be used for the $ C^{k,\theta} $-fiber continuity of $ x \mapsto E_{m}(x): X_{m} \to L(\Theta_{m}, \Omega_{u(m)}) $.
\end{enumerate}

As the (regular) canonical local representation of $ E $ at $ m_0 $ with respect to $ \mathcal{A}_1, \mathcal{B}_1 $ one can take
\[
\widehat{E}_{m_0} (m, x) v = (\phi^{u(m_0)}_{u(m)})^{-1}(f_{m}(\varphi^{m_0}_{m}(x))) E_{m}( \varphi^{m_0}_{m}(x) ) \psi^{m_0}_{m} (x) v,
\]
where $ \phi^{u(m_0)} \in \mathcal{B}_1 $, $ \varphi^{m_0} \in \mathcal{A} $ and $ \psi^{m_0} \in \mathcal{A}_1 $.

\begin{enumerate}[$ \bullet $]
	\item (base-regularity: H\"older case) Similar to \autoref{vsII}, \autoref{def:uD} and \autoref{def:cd}, one can define the H\"older continuity of $ m \mapsto E_{m}(\cdot) $ by using the canonical local representation $ \widehat{E}_{m_0} $.
	We will use the following expressions: $ E $ depends in a (locally) $ C^{0,\theta} $ fashion on the base points around $ M_1 $, or $ m \mapsto E_m(\cdot) $ is (locally) $ C^{0,\theta} $ around $ M_1 $ with respect to $ \mathcal{A}_1, \mathcal{B}_2 $, and $ E $ depends in a uniformly (locally) $ C^{0,\theta} $ fashion on the base points around $ M_1 $ `uniformly for bounded-fiber sets', or $ m \mapsto E_m(\cdot) $ is uniformly (locally) $ \theta $-H\"older around $ M_1 $ {`uniformly for bounded-fiber sets'} with respect to $ \mathcal{A}_1, \mathcal{B}_1 $.

	\item (base-regularity: $ C^0 $ case) Also, one can discuss the $ C^0 $ continuity of $ m \mapsto E_{m}(\cdot) $ as in \autoref{uniformC} and \autoref{tensorV}; the following expressions will be used: $ m \mapsto E_m(\cdot) $ is continuous (resp. uniformly continuous) around $ M_1 $ in $ C^0 $-topology (resp. $ C^1 $-topology) on bounded sets (resp. in the whole space) with respect to $ \mathcal{A}_1, \mathcal{B}_1 $.
\end{enumerate}

Until now, we have given a way of describing the (uniform) H\"older continuity and $ C^{0} $ continuity of $ f $ and $ Df $ in appropriate bundles $ X, Y $. For the base-regularity, this is done by using regular local representations of $ f $ with respect to preferred bundle atlases. Although the regularity of regular local representations can yield the classical description of regularity in some sense under the stronger regularity of the bundles (see \autoref{lem:uniformC0} and \autoref{lem:uniformC01}), in fact it does not depend on the choice of bundle atlases with the same properties (i.e. \emph{equivalent} bundle atlases); this is simple but we do not give the details and refer the reader to \cite{Ama15} for a discussion in the manifold setting.

Using pre-tensor bundles, one can further study higher order derivatives of $ f $. For example, assume $ X, Y $ are paracompact $ C^{k} $ bundles with $ C^{k-1} $ connections $ \mathcal{C}^X, \mathcal{C}^Y $, respectively, and $ f \in C^{k}(X, Y) $. Now $ TX \cong \Upsilon_X = \Upsilon^H_X \times \Upsilon^V_X $,
so we can write $ Df = (\nabla f, D^v f) $ and
\[
(m,x) \mapsto Df(m,x) = (\nabla_{m} f_{m}(x), Df_m(x)): X \to L_f(TX, \Upsilon^V_Y) \cong L_f(\Upsilon_X, \Upsilon^V_Y),
\]
where we ignore the base map of $ f $.
Here $ L_f(\Upsilon_X, \Upsilon^V_Y) $ is a $ C^{k-1} $ pre-tensor bundle over $ X $ (and so we can give it a connection). Taking the derivative of $ Df $, we have
\[
D^2 f =
\left(  \begin{matrix}
\nabla \nabla f & D^v \nabla f  \\
\nabla D^v f & D^v D^v f
\end{matrix} \right) \in L_{f}(\Upsilon_X \times \Upsilon_X, \Upsilon^V_Y).
\]
Similarly, $ D^i f \in L^{i}_{f}(\Upsilon_X, \Upsilon^{V}_Y) $, $ i = 1,2,\ldots,k $. Here $ L^k_{u}(Z_1, Z_2) \triangleq L_{u}(\underbrace{Z_1 \times \cdots \times Z_1}_{k}, Z_2) $ if $ Z_i $ is a vector bundle over $ M_i $, $ i = 1,2 $, and $ u: M_1 \to M_2 $.
Although our regularity results only concern $ C^{k,\alpha} $ continuity of $ f $, $ k = 0, 1 $, $ 0 \leq \alpha \leq 1 $, by using almost the same strategy in \autoref{stateRegularity} with induction and the above idea, one can obtain higher order smoothness of $ f $ (but the statements are more complicated due to the non-triviality of $ X, Y $); the details are omitted in this paper (see also \cite{HPS77}).

\begin{rmk}\label{rmk:general}
	In order to discuss the $ C^{k,\theta} $-fiber continuity ($ k \geq 1 $) of a bundle map, in general, we assume the fibers of $X, Y$ are Banach spaces (or open subsets of Banach spaces), i.e., each fiber can be represented by a single chart. In most cases, this is sufficient for us to apply our regularity results. However, under the above preliminaries, one can further extend it to the fibers being general (connected) Finsler manifolds with local uniform properties, as we do for the extension of the base space in more general settings. A model for these Finsler manifolds is e.g. Riemannian manifolds having bounded geometry (see e.g. \autoref{defi:bounded}), or more general, Banach-manifold-like manifolds in \autoref{def:C11um}. Note that in this case, if $ k \geq 2 $, the $ k $th order (covariant) derivative of $ f_m(\cdot) $ is considered as an element of the pre-tensor bundle over $ X_m $, $ m \in M $. (A minor modification of the definition of pre-tensor bundle is needed, left to the readers.)
\end{rmk}

\chapter{Appendix. Examples of manifolds and bundles} \label{examples}

In this appendix, we give some examples of manifolds and bundles satisfying our (uniformity) hypotheses: immersed manifolds in Banach spaces studied in \cite{BLZ99, BLZ08} and Riemannian manifolds having bounded geometry introduced in e.g. \cite{Ama15, Eic91}.

\subsection{Immersed manifolds}\label{immeresedM}

\begin{exa}[Trivial example]
	Let $ M $ be any open subset of a Banach space and $ M_1 \subset \{ m \in M: d(m, \partial M) > \epsilon \} $ ($ \epsilon > 0 $). Then $ M $ is trivially $ C^{1,1} $-uniform around $ M_1 $ (see \autoref{def:C11um}).
\end{exa}

\begin{exa}[Immersed manifolds in Banach spaces I] \label{immersionI}
	Let $ M \subset X $ be an immersed manifold where $ X $ is a Banach space for simplicity. That is, there are a $ C^1 $ manifold $ \widehat{M} $ and an immersion $ \phi : \widehat{M} \to X $ with $ \phi(\widehat{M}) = M $; the latter means that $ \phi $ is $ C^1 $ with $ T_m \phi $ injective and $ R (T_m \phi) $ closed splitting in $ X $ (i.e. $ R (T_m \phi) \oplus X_m = X $ with $ R (T_m \phi) $ and $ X_m $ closed) for all $ m \in \widehat{M} $; see e.g. \cite{AMR88}. (We do not assume $ \phi $ is injective here.) First assume $ \widehat{M} $ is boundaryless. Let $ \Pi^c_m $ be the projection associated to $ R (T_m \phi) \oplus X_m = X $ with $ R(\Pi^c_m) = R (T_m \phi) $. Let $ \Lambda_{m} = \phi^{-1}(m) $ and $ X^c_m = R (T_m \phi) $, $ m \in M $. Let us represent $ \widehat{M} $ in $ X $. Take $ U_{m_c}(\epsilon_{m}) $ to be the component of the set $ \phi^{-1} (B_m(\epsilon_m)) $ containing $ m_c $, where $ m_c \in \Lambda_m $ and $ B_m(\epsilon_{m}) = \{ m': |m' - m| < \epsilon_{m} \} $. Let
	\begin{equation}\label{injectivity}
	\sup_{ \substack{ m_1 , m_2 \in U_{m_c}(\epsilon_{m}) \\ m_1 \neq m_2 } }\frac{|\phi(m_1) - \phi(m_2) - \Pi^c_m(\phi(m_1) - \phi(m_2))|}{|\phi(m_1) - \phi(m_2)|} \leq r_{m}.
	\end{equation}
	Set $ U^c_{m_c}(\epsilon_{m}) = \phi(U_{m_c}(\epsilon_{m})) $ with the metric $ d(m_1, m_2) = d_{m_c}(m_1, m_2) = |m_1 - m_2| $. Since $ r_{m} \to 0 $ as $ \epsilon_{m} \to 0 $, we know
	\begin{equation}\label{chart}
	\chi_{m_c}: U^c_{m_c}(\epsilon_{m}) \to X^c_{m}, m' \mapsto \Pi^c_m (m' - m),
	\end{equation}
	is a $ C^1 $ diffeomorphism and $ X^c_{m}(\epsilon'_{m}) \subset \chi_{m_c} (U^c_{m_c}(\epsilon_{m})) $ for some small $ \epsilon'_{m} > 0 $. Now $ \{ U^c_{m_c}(\epsilon_{m}): m_c \in \Lambda_{m}, m \in M \} $ is an open cover of $ M $ (endowed with immersed topology), and so $ M $ is a \emph{uniformly locally metrizable space} (see \autoref{def:ulms}). Note that if $ m_i \in U_{m^i_c}(\epsilon_{m}) $, $ \phi(m^i_c) = m $, $ i = 1,2 $, in general there is no metric between $ m_1, m_2 $ though in $ X $, $ |m_1 - m_2| < \epsilon_{m} $. Then $ \{ (U^c_{m_c}(\epsilon_{m}), \chi_{m_c}) \} $ gives a $ C^1 $ atlas for $ M $. Let $ \mathcal{M} $ be the canonical bundle atlas of $ TM $ induced by $ M $.

	Also, a set $ M $ in $ X $ might have different immersed representations; see e.g. Figure 1, where (a) (b) are non-injectively immersed representations with locally uniform size neighborhoods on themselves (see \autoref{uniformSize}), and (c) is an injectively immersed representation but in this case it does not have locally uniform size neighborhoods on itself.
	If $ \widehat{M} $ has boundary $ \partial \widehat{M} $, one can consider $ \widehat{M} / \partial \widehat{M} $ and $ \partial \widehat{M} $ separately. But unlike the boundaryless case, $ M $ would look very non-smooth (e.g. the closure of a homoclinic orbit).
	Also, note that for a map $ u: M \to M $, even if $ u $ is Lipschitz considered as a map in $ X $, it might not be Lipschitz when $ M $ is endowed with the immersed topology.
\end{exa}

\begin{figure}
	\centering
	\subfigure[original set $ M \subset \mathbb{R}^2 $]{
		\label{fig:f1}
		\includegraphics[height = 4cm]{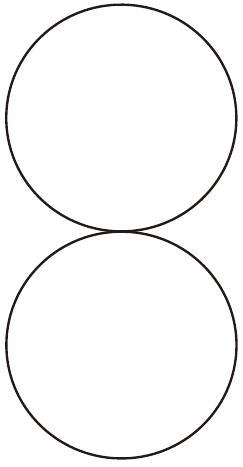}
	}
	\hspace{0.5cm}
	\subfigure[immersed representation $ \widehat{M}_1 $ ($ \cong \mathbb{S}^1 \cup \mathbb{S}^1 $) $ \subset \mathbb{R}^2 $]{
		\includegraphics[height=4cm]{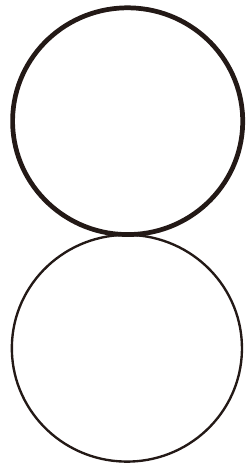}
	}
	\hspace{0.5cm}
	\subfigure[immersed representation $ \widehat{M}_2 $ ($ \cong \mathbb{S}^1  $) $ \subset \mathbb{R}^3 $]{
		\includegraphics[height=4cm]{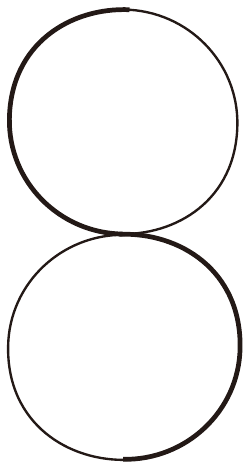}
	}
	\hspace{0.5cm}
	\subfigure[immersed representation $ \widehat{M}_3 $ ($ \cong \mathbb{R}  $) $ \subset \mathbb{R}^2 $]{
		\includegraphics[height=4cm]{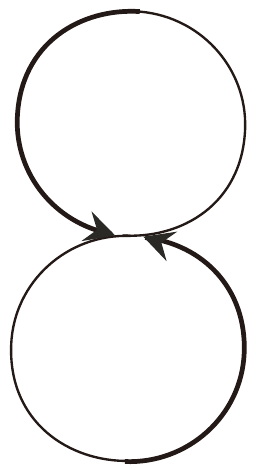}
	}
	\label{fig:f}
	\caption{Different immersed representations of $ M $}
\end{figure}

\begin{exa}[Immersed manifolds in Banach spaces II] \label{immersionII}
	We continue \autoref{immersionI}. We will identify $ M = \widehat{M} $ if we endow $ M $ with the immersed smooth structure. Note that the norms $ |\cdot|_{m_c} = |\cdot| $ in $ X^c_m = T_{m_c}M $, ($ m_c \in \Lambda_m $), $ m \in M $, give the natural Finsler structure for $ TM $. Furthermore, if $ m' \mapsto \Pi^c_{m'} $, $ m' \in U^c_{m_c}(\epsilon_{m}) $, is continuous for each $ m_c \in \Lambda_m $, $ m \in M $, then $ M $ is a Finsler manifold in the sense of Palais (see \autoref{finsler} \eqref{finD}); in particular now $ M $ satisfies the assumption (H1b) on \autopageref{baseSpaceAA}.

	Let $ M_1 \subset M $. Consider the following assumptions.
	\begin{enumerate}[($ \bullet $1)]
		\item Let $ \inf_{m_0 \in M_1} \epsilon'_{m_0} > 0  $ so that \eqref{chart} holds. This is characterized in \cite{BLZ08} as follows: There is an $ r_0 > 0 $ such that (a) $ \phi(\overline{U_{m_c}(r_0)}) $ is closed in $ X $ for all $ m_c \in \Lambda_{m_0} $, $ m_0 \in M_1 $, and (b) $ \sup_{m_0 \in M_1} r_{m_0} < 1/2 $ (see \eqref{injectivity}) if $ \sup_{m_0 \in M_1}\epsilon_{m_0} < r_0 $. That is, $ M $ has locally uniform size neighborhoods around $ M_1 $ (see \autoref{uniformSize}). See also \autoref{lem:usd}.

		\item (almost uniform continuity case) Let ($ \bullet $1) hold. Assume that there are an $ \varepsilon > 0 $ and a $ \delta > 0 $ such that for each $ m_c \in \Lambda_{m_0} $, $ m_0 \in M_1 $, $ |\Pi^c_{m_1} - \Pi^c_{m_2}| < \varepsilon $ provided $ m_1, m_2 \in U^c_{m_c}(\delta) $. Now $ TM $ has $ \varepsilon $-almost uniform $ C^{0,1} $-fiber trivializations on $ M_1 $ with respect to $ \mathcal{M} $ (see \autoref{def:lipbundle}), and $ M $ is $ C^{0,1} $-uniform around $ M_1 $ (see \autoref{def:C11um}).

		\item (Lipschitz continuity case) Let ($ \bullet $1) hold. Assume $ m \mapsto \Pi^c_m $ is uniformly Lipschitz in the immersed topology in the following sense: There is a constant $ C_0 > 0 $ such that for each $ m_c \in \Lambda_{m_0} $, $ m_0 \in M_1 $, $ |\Pi^c_{m_1} - \Pi^c_{m_2}| \leq C_0|m_1 - m_2| $, $ m_1, m_2 \in U^c_{m_c}(\delta) $ (i.e. the assumption (H2) in \cite{BLZ08}). Then $ TM $ has $ 0 $-almost $ C^{0,1} $-uniform trivializations on $ M_1 $ (see \autoref{def:C01Uniform}), and $ M $ is $ C^{1,1} $-uniform around $ M_1 $ (see \autoref{def:C11um}).
	\end{enumerate}

	For instance, the immersed representations (b) (c) in Figure 1 give $ M $ $ C^{1,1} $-uniformity but the immersed representation (d) does not.
\end{exa}

\begin{exa}[$ C^2 $ compact submanifold]
	We assume that in \autoref{immersionI} $ \phi $ is a $ C^2 $ injective map and $ M $ is a compact subset of $ X $, i.e. $ M $ is a $ C^2 $ (closed) compact submanifold of $ X $. Then assumption ($ \bullet $3) in \autoref{immersionII} is satisfied, which is shown as follows.

	Here note that since $ M $ is $ C^{2} $ and compact, $ m \mapsto T_mM \in \mathbb{G}(X) $ is $ C^1 $; through $ C^1 $ partitions of unity, one can further construct the $ C^1 $ normal bundle over $ M $, i.e. $ m \mapsto X^h_{m} \in \mathbb{G}(X) $ is $ C^1 $ with $ X^h_{m} \oplus T_{m}M = X $. Now we have projections $ \Pi^c_m $, $ m \in M $, associated with $ X^h_{m} \oplus T_{m}M = X $ such that $ R(\Pi^c_m) = T_{m}M $, and moreover $ m \mapsto \Pi^c_m \in L(X,X) $ is $ C^1 $. Then it follows from \cite{BLZ98} that assumption ($ \bullet $3) in \autoref{immersionII} holds.

	$ \phi \in C^2 $ can be relaxed to $ \phi \in C^1 $ by smooth approximation (see e.g. \cite[Theorem 6.9]{BLZ08} for details), but now $ M $ is strongly $ C^{0,1} $-uniform in the sense of \autoref{def:uManifold}.
\end{exa}

\begin{exa}[Vector bundles over immersed manifolds]\label{vectorBB}
	We continue \autoref{immersionII} with $ (\bullet 3) $. Let $ \Pi^h_m $, $ m \in M $, be projections of $ X $ (i.e. $ \Pi^h_m \circ \Pi^h_m = \Pi^h_m $) and $ X^h_m = R(\Pi^h_m) $. Let us consider the following bundle over $ M $:
	\[
	X^h = \{ (m, x): x \in X^h_m, m \in M \}.
	\]
	The natural Finsler structure in $ X^h $ is given by $ |x|_m = |x| $, $ x \in X^h_m $. The natural bundle atlas $ \mathcal{A} $ for $ X^h $ is given (formally) by $ \varphi^{m_c}(m',x) = (m', \Pi^h_{m'} x) $, $ m' \in U^c_{m_c}(\epsilon_{m}) $, $ x \in X^h_{m} $, $ m_c \in \Lambda_{m} $; note that under (a) below, it is indeed a bundle atlas.
	\begin{enumerate}[(a)]
		\item If $ m \mapsto \Pi^h_m $, $ m \in M $, is continuous in the immersed topology of $ M $, i.e., for each $ m \in M $, $ m_c \in \Lambda_{m} $, $ m' \mapsto \Pi^h_{m'} $, $ m' \in U^c_{m_c}(\epsilon_{m}) $, is continuous, then $ X^h $ is a $ C^0 $ vector bundle with strongly $ C^0 $ Finsler structure (see \autoref{finsler} \eqref{finA}). A particular case where the assumption holds is that $ m \mapsto \Pi^h_m $ is continuous in the topology induced from $ X $.

		\item If $ m \mapsto \Pi^h_m $ is uniformly continuous (resp. $ \varepsilon $-almost uniformly continuous) around $ M_1 $ (see \autoref{hcontinuous}) in the immersed topology of $ M $, then $ X^h $ is $ C^0 $-uniform around $ M_1 $ (see \autoref{def:C0Uniform}) (resp. $ X^h $ has $ \varepsilon $-almost uniform $ C^{1,1} $-fiber trivializations on $ M_1 $ (see \autoref{def:lipbundle})) with respect to $ \mathcal{A} $. Similar to $ (\bullet 3) $ in \autoref{immersionII}, if $ m \mapsto \Pi^h_m $ is uniformly Lipschitz in the sense of $ (\bullet 3) $, then $ X^h $ has $ 0 $-almost $ C^{0,1} $-uniform trivializations on $ M_1 $ (see \autoref{def:C01Uniform}) with respect to $ \mathcal{A} $.

		\item Suppose for every $ m \in M $, $ m_c \in \Lambda_{m} $, $ \Pi^h_{(\cdot)}|_{U^c_{m_c}(\epsilon_{m})} $ is differentiable at $ m $, and let $ \mathcal{C}_{m_c} = D_m \Pi^h_{m}|_{U^c_{m_c}(\epsilon_{m})} \in L(X^c_m \times X^h_m; X^h_m) $. (Note that since $ X^c_m, X^h_m $ are closed splitting, we can consider $ \mathcal{C}_{m_c} \in L(X\times X; X) $.) Assume $ m' \mapsto \mathcal{C}_{m'} $, $ m' \in U^c_{m_c}(\epsilon_{m}) $, is continuous. Then $ X^h $ is a $ C^1 $ bundle. The readers should notice that $ m \mapsto \Pi^h_m $ might not be differentiable in $ X $ and $ \mathcal{C}_{m_c} \neq \mathcal{C}_{m'_c} $ when $ m_c, m'_c \in \Lambda_{m} $, for $ M $ is only immersed. Moreover, $ \{ \mathcal{C}_{m_c}\} $ gives a natural $ C^0 $ linear connection $ \mathcal{C} $ of $ X^h $ such that $ \varphi^{m_c} $ is a normal bundle chart at $ m_c $ with respect to $ \mathcal{C} $, i.e., $ \mathcal{C} D \varphi^{m_c}(m_c, x) = \id $.

		In addition, if $ m' \mapsto \Pi^c_{m'} $, $ m' \mapsto \Pi^h_{m'} $ and $ m' \mapsto \mathcal{C}_{m'} $ are uniformly continuous (see \autoref{hcontinuous}) in the immersed topology, then $ X^h $ is a $ C^1 $-uniform vector bundle (see \autoref{def:C11Uniform}); what is more, if $ m' \mapsto \mathcal{C}_{m'} $ is uniformly (locally) Lipschitz in the immersed topology, then $ X^h $ is a $ C^{1,1} $-uniform vector bundle (see \autoref{def:C11Uniform}).
	\end{enumerate}
\end{exa}

\subsection{Bounded geometry}\label{bgemetry}

Let $ ( M, g ) $ be a connected Riemannian manifold modeled on a Hilbert space $ \mathbb{H} $ and equipped with the \emph{Levi-Civita connection} where $ g $ is the \emph{Riemannian metric}. Let $ R $ be the \emph{curvature tensor} induced by the Levi-Civita connection and $ r(m) $ the \emph{injectivity radius} at $ m \in M $ (i.e. the supremum radius for which the exponential map at $ m $ is a diffeomorphism). See e.g. \cite{Kli95} for these basic concepts.

\begin{defi}[Bounded geometry]\label{defi:bounded}
	A $ C^{k+2} $ complete Riemannian manifold $ ( M, g ) $ has $ k $th order \textbf{bounded geometry} if the following hold:
	\begin{enumerate}[(i)]
		\item the injectivity radius of $ M $ is positive, i.e., $ r(M) = \inf_{m \in M} r(m) > 0 $;
		\item the covariant derivatives of $ R $ up to $ k $th order and $ R $ are uniformly bounded, i.e.,
		\[
		\sup_{m \in M} \|\nabla^i_{m} R(m)\| < \infty,~0 \leq i \leq k.
		\]
	\end{enumerate}
\end{defi}
See also \cite{Ama15, DSS16, Eld13, Eic91} where the readers can find many \emph{concrete} examples of Riemannian manifolds having bounded geometry. Note that \emph{every $ C^{k+2} $ compact Riemannian manifold has $ k $th order bounded geometry}.
The global definition of bounded geometry does not give a very clear relation to our assumptions. In \cite{Ama15}, the author introduced uniformly regular Riemannian manifolds, and proved that a Riemannian manifold admitting bounded geometry is uniformly regular.

Following \cite{Ama15} we introduce uniformly regular Riemannian manifolds.
Take a $ C^{k} $ atlas $ \mathcal{M} = \{ (U_{\gamma}, \varphi_\gamma): \gamma \in \Lambda \} $ of a $ C^{k} $ manifold $ M $ where $ \Lambda $ is an index set. Take $ S \subset M $ and $ \Lambda_S = \{ \gamma \in \Lambda: U_{\gamma} \cap S \neq \emptyset \} $. We say that (i) $ \mathcal{M} $ is \emph{normalized} on $ S $ if $ \varphi_{\gamma}(U_\gamma) = \mathbb{B}_1 $, $ \gamma \in \Lambda_S $ where $ \mathbb{B}_1 $ is the unit (open) ball of $ \mathbb{H} $, (ii) $ \mathcal{M} $ has \emph{finite multiplicity} on $ S $ if there is an $ N \in \mathbb{N} $ such that any intersection of  $ N+1 $ elements of $ \{U_\gamma: \gamma \in \Lambda_S\} $ is empty, and (iii) $ \mathcal{M} $ is \emph{uniformly shrinkable} on $ S $ if there is an $ r \in (0,1) $ such that $ \{ \varphi^{-1}_\gamma(r\mathbb{B}_1) : \gamma \in \Lambda_S \} $ is also an open cover of $ S $.

\begin{defi}[Uniformly regular Riemannian manifolds]\label{UR}
	We say a $ C^{k+1} $ Riemannian manifold $ (M,g) $ is a $ k $th order \textbf{uniformly regular Riemannian manifold} on $ S $ if there is a $ C^{k+1} $ atlas $ \mathcal{M} = \{ (U_{\gamma}, \varphi_\gamma): \gamma \in \Lambda \} $ such that the following hold:
	\begin{enumerate}[(UR1)]
		\item $ \mathcal{M} $ is normalized and uniformly shrinkable and has finite multiplicity on $ S $;
		\item $ |\varphi_{\gamma_1} \circ \varphi^{-1}_\gamma|_{k+1} \leq C(k+1) $, $ \gamma_1, \gamma \in \Lambda_S $ where $ C(k+1) $ is a constant;
		\item $ (\varphi^{-1}_\gamma)^{*} g \sim d_{\mathbb{H}} $, i.e., there is $ C \geq 1 $ such that for all $ \gamma \in \Lambda_S $, $ m \in \mathbb{B}_1 $, $ x \in \mathbb{H} $,
		$ C^{-1}|x|^2 \leq ((\varphi^{-1}_\gamma)^{*} g) (m)(x, x) \leq C|x|^2 $,
		where $ (\varphi^{-1}_\gamma)^{*} g $ denotes the pull-back metric of $ g $ by $ \varphi^{-1}_\gamma $.
		\item $ |(\varphi^{-1}_\gamma)^{*} g|_{k} \leq C_1(k) $ for all $ \gamma \in \Lambda_S $ where $ C_1(k) $ is a constant.
	\end{enumerate}
	If $ S = M $, we also say $ M $ is a $ k $th order \emph{uniformly regular Riemannian manifold}.
	Here $ |u|_{k} = \sup_{0\leq i \leq k} \sup_{x} |\partial^{i}_xu(x)|  $.
\end{defi}

\begin{exa}[$ 0 $th and $ 1 $st order uniformly regular Riemannian manifolds]
	If $ M $ is $ 0 $th order uniformly regular, then $ M $ is a $ C^{0,1} $-uniform manifold. Indeed, the atlas $ \mathcal{M} $ in \autoref{UR} provides us the desired atlas needed in assumption $ (\blacksquare) $ (\autopageref{C1MM}). By the uniformly shrinkable condition of $ \mathcal{M} $, one has an $ r \in (0,1) $ such that $ \{ \varphi^{-1}_\gamma(r\mathbb{B}_1) : \gamma \in \Lambda \} $ is also an open cover of $ M $, and invoking (UR3) one finds that there is a $ c \geq 1 $ such that
	\[
	U_{m}(\delta/c) \subset \varphi^{-1}_\gamma (\mathbb{B}(x,\delta)) \subset U_{m}(c\delta),
	\]
	where $ x = \varphi_{\gamma}(m) \in r\mathbb{B}_1 $, $ \delta < 1-r $, $ \mathbb{B}(x,\delta) = x + \delta\mathbb{B}_1 $; see e.g. \cite{DSS16} and \autoref{lem:usd}. Now for any $ m \in M $, choose a map $ \varphi_{\gamma} $ such that $ \varphi_{\gamma}(m) = x \in r\mathbb{B}_1 $, and let $ \chi_{m}(m') = (D\varphi_{\gamma}(m))^{-1}(\varphi_{\gamma}(m') - x) $. (Note that $ \sup_\gamma \sup_{x \in \mathbb{B}_1}|D\varphi^{-1}_{\gamma}(x)| < \infty $ by (UR3).) Upon choosing smaller $ \delta > 0 $, the atlas $ \mathcal{M}_1 = \{ (U_{m}(\delta/c), \chi_{m}): m \in M \} $ satisfies $ (\blacksquare) $ (with $ M_1 = M $).
	Similarly, if $ M $ is a $ 0 $th order uniformly regular Riemannian manifold on $ M_1 $, then $ M $ is $ C^{0,1} $-uniform around $ M_1 $ (see $ (\blacksquare) $), and if $ M $ is a $ 1 $st order uniformly regular Riemannian manifold on $ M_1 $, then $ M $ is $ C^{1,1} $-uniform around $ M_1 $ (see \autoref{def:C11um}); for the latter case, the Levi-Civita connection is uniformly (locally) Lipschitz in the sense of \autoref{lipcon}.
\end{exa}

\begin{exa}[$ 2 $nd order bounded geometry]
	When $ M $ has $ 2 $nd order bounded geometry, $ M $ is a $ C^{0,1} $-uniform manifold (see \autoref{def:C11um}).

	The most important thing following from $ M $ having $ k $th order bounded geometry is that there is an $ r_0 > 0 $ such that the Riemannian metric up to $ k $th order derivatives and the Christoffel maps in the normal charts (i.e. the normal coordinates) up to $ (k-1) $th order derivatives are all uniformly bounded in normal charts of radius $ r_0 $ around each $ m \in M $ with bounds independent of $ m \in M $. See e.g. \cite{Eic91} and note that the proof given there also makes sense in the infinite-dimensional setting. Let
	\[
	\chi_{m} = \mathrm{ exp }^{-1}_{m}: U_{m}(r_0) \to \mathbb{B}_{r_0} = T_{m}M(r_0) \subset \mathbb{H}
	\]
	be the normal chart at $ m \in M $, where $ U_{m}(r_0) = \{ m' \in M: d(m,m') < r_0 \} $. Now the atlas $ \mathcal{M}' $ given by the normal charts satisfies (UR3), (UR4) (for $ k = 2 $) and (UR2) (for $ \gamma_1 = m', \gamma = m \in \Lambda = M $ with $ d(m',m) < r_0 $ and for $ k = 0 $). (See also \cite[Chapter 2]{Eld13}.)
	So assumption $ (\blacksquare) $ (\autopageref{C1MM}) holds.
\end{exa}

It is proved in \cite{Ama15, DSS16} that a $ C^\infty $ Riemannian manifold has any order bounded geometry if and only if it is any order uniformly regular when $ \mathbb{H} = \mathbb{R}^n $.

\begin{exa}[Vector bundles having bounded geometry]\label{vectorB}
	Consider a special class of vector bundles, \emph{vector bundles having $ k $th order bounded geometry}, introduced in e.g. \cite[p. 45]{Eld13} (for finite $ k $) and \cite[p. 65]{Shu92} (for all $ k \in \mathbb{N} $). Let $ M $ be a Riemannian manifold having $ k $th order bounded geometry and $ X $ a vector bundle over $ M $. We say $ X $ has $ k $th order bounded geometry if there exist preferred bundle charts $ \varphi^{m_0}: U_{m_0}(\delta) \times X_{m_0} \to X $ $ (\varphi^{m_0}_{m_0} = \id $), $ m_0 \in M $, such that all the transition maps $ {\varphi}^{m_0, m_1} (m) = (\varphi^{m_1}_{m})^{-1} \circ \varphi^{m_0}_{m} \in L(X_{m_0}, X_{m_1}) $, $ U_{m_0}(\delta) \cap U_{m_1}(\delta) \neq \emptyset $, satisfy $ |{\varphi}^{m_0, m_1} |_{k} \leq C_0(k) $, where $ C_0(k) > 0 $ is a constant independent of $ m_0, m_1 \in M $.
	In particular, \emph{$ TM $ has $ (k-2) $th order bounded geometry if $ M $ has $ k $th order bounded geometry}.
	Now a vector bundle having $ 1 $st (resp. $ 2 $nd) order bounded geometry is $ C^{0,1} $-uniform (resp. $ C^{1,1} $-uniform) by definition; see \autoref{def:C01Uniform} and \autoref{def:C11Uniform}.
\end{exa}

We refer the readers to similar assumptions on manifolds and bundles in \cite[Section 2.2]{Cha04}, \cite[Section 8]{PSW12} and \cite[Chapter 6]{HPS77}. It seems that J. Eldering first introduced bounded geometry to investigate normal hyperbolicity theory for flows (see \cite{Eld13}); also, he provided a detailed exposition of bounded geometry. \autoref{immersionII} and \autoref{vectorBB} are essentially due to \cite{BLZ99, BLZ08}, where the authors studied normal hyperbolicity theory in the setting of \autoref{immersionII} and \autoref{vectorBB} for semiflows.

\chapter{Appendix. Miscellanea}\label{misc}

\section{A parameter-dependent fixed point theorem}\label{fixedp}

The following well known results are frequently used in this paper. We state them (together with a very simple proof) for the convenience of the readers. Some \emph{redundant} conditions are removed in order for us to be able to apply them.

Let $ (X,d) $ be a metric space with metric $ d $, $ Y $ a topological space, and $ G: X \times Y \to X $. We also write $ d(x,y) = |x - y| $.

1. Existence of $ x(\cdot) $

\begin{lem}
	Suppose $ \lip G(\cdot, y) < 1 $ for all $ y \in Y $, and $ X $ is complete. Then by the Banach Fixed Point Theorem, there is a unique $ x(y) \in X $ such that
	\begin{equation}\label{app0}
	G(x(y), y) = x(y).
	\end{equation}
\end{lem}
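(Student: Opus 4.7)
The plan is essentially trivial: this is a pointwise application of the Banach Fixed Point Theorem with the parameter $y$ treated as an inert index, since the statement makes no regularity claim about $y \mapsto x(y)$. First, I would fix an arbitrary $y \in Y$ and consider the single-variable map $G_y \colon X \to X$ defined by $G_y(x) = G(x, y)$. By hypothesis, $\lip G_y = \lip G(\cdot, y) < 1$, so $G_y$ is a strict contraction on the complete metric space $X$.

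The classical Banach Fixed Point Theorem then yields a unique point $x_y \in X$ with $G_y(x_y) = x_y$, i.e.\ $G(x_y, y) = x_y$. Setting $x(y) \triangleq x_y$ defines a function $x \colon Y \to X$ that satisfies \eqref{app0}, and the uniqueness of $x(y)$ for each fixed $y$ is inherited from the uniqueness clause of the Banach Fixed Point Theorem.

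There is no real obstacle here. The only small caveat worth flagging is the distinction between $\lip G(\cdot, y) < 1$ holding \emph{pointwise in $y$} (which is what is assumed) versus $\sup_{y \in Y} \lip G(\cdot, y) < 1$; the former is enough for existence and uniqueness of $x(y)$ for each individual $y$, which is all the statement asserts. Any uniformity, and in particular any continuity or Lipschitz dependence of $y \mapsto x(y)$ on $y$, is not claimed in this statement and would be addressed by subsequent lemmas in the appendix once further hypotheses on $G$ in the $y$-variable are imposed.
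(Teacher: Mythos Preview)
Your proposal is correct and matches the paper's approach exactly: the paper states this lemma without proof, invoking the Banach Fixed Point Theorem directly, which is precisely your pointwise-in-$y$ application of the contraction mapping principle. Your remark about pointwise versus uniform contraction is also well-placed, as the paper addresses continuity and regularity of $y \mapsto x(y)$ only in the subsequent lemmas under additional hypotheses.
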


2. Continuity of $ x(\cdot) $

\begin{lem}\label{lem:fconx}
	Let $ \lip G(\cdot, y) < 1 $ and $ x(\cdot) $ satisfy \eqref{app0} (without assuming that $ X $ is complete). Then the following assertions hold.
	\begin{enumerate}[(1)]
		\item If $ G(x_2, y_2) = x_2 $, $ y \mapsto G(x_2, y) $ is continuous at $ y_2 $, and there is a neighborhood $ U_{y_2} $ of $ y_2 $ in $ Y $ such that $ \sup_{y \in U_{y_2}}\lip G(\cdot,y) < 1 $, then $ x(\cdot) $ is continuous at $ x_2 $.

		In particular, if $ \sup_{y} \lip G(\cdot,y) < 1 $ (or $ y \mapsto \lip G(\cdot,y) $ is continuous) and $ G $ is continuous, then $ x(\cdot) $ is continuous.

		\item $ G(x, \cdot) $ is continuous uniformly for $ x $ in the following sense: $ \forall y > 0 $, $ \forall \epsilon > 0 $, $ \exists U_y $ a neighborhood of $ y $ in $ Y $ such that $ |G(x,y') - G(x,y)| < \epsilon $ for all $ x \in X $ provided $ y' \in U_{y} $. Then $ x(\cdot) $ is continuous. Note that in this case, also $ y \mapsto \lip G(\cdot,y) $ and $ G $ are continuous.

		Similarly, if $ Y $ is a uniform space and $ G(x, \cdot) $, $ x \in X $, are equicontinuous, then $ x(\cdot) $ is uniformly continuous.

		\item Let $ Y $ be a metric space. If $ \lip G(x, \cdot) \leq \alpha(x) < \infty $, then $ x(\cdot) $ is locally Lipschitz. Furthermore, suppose $ \lip G(\cdot, y) \leq \beta < 1 $ and $ \sup_x\alpha(x) < \alpha $. Then $ |x(y_1) - x(y_2)| \leq \frac{\alpha}{1- \beta} |y_1 - y_2| $, which yields $ x(\cdot) $ is globally Lipschitz.

		\item Let $ Y $ be a metric space. If $ \sup_{y} \lip G(\cdot, y)  < 1 $ and $ G(x, \cdot) $ is $ \alpha $-H\"older uniformly for $ x $ (i.e. $ \sup_{x} \hol_{\alpha} G(x,\cdot) < \infty $), then $ x(\cdot) $ is $ \alpha $-H\"older.

		Similarly, when $ G(x, \cdot) $ is $ \alpha $-H\"older (or locally $ \alpha $-H\"older) uniformly for $ x \in X $ (or for $ x $ belonging to any bounded subset of $ X $), then $ x(\cdot) $ is either $ \alpha $-H\"older, locally $ \alpha $-H\"older, or $ \alpha $-H\"older on any bounded subset of $ Y $.
	\end{enumerate}
\end{lem}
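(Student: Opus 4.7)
\medskip

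\noindent\textbf{Proof plan.} The unifying device is the identity
\[
x(y') - x_2 = G(x(y'), y') - G(x_2, y_2),
\]
together with the triangle inequality inserting the intermediate term $G(x_2, y')$. This produces the master estimate
\[
|x(y') - x_2| \le |G(x(y'), y') - G(x_2, y')| + |G(x_2, y') - G(x_2, y_2)|
\le \kappa(y') |x(y') - x_2| + \omega(y'),
\]
where $\kappa(y') = \lip G(\cdot, y')$ and $\omega(y') = |G(x_2, y') - G(x_2, y_2)|$. Whenever $\kappa(y') < 1$ in a neighborhood of $y_2$, this rearranges to
\[
|x(y') - x_2| \le (1 - \kappa(y'))^{-1} \omega(y'),
\]
and every conclusion of the lemma follows by reading off the correct bound on $\omega$ and the correct uniformity of $(1 - \kappa(y'))^{-1}$ from the hypotheses.

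For (1), take $y' \to y_2$ in the master estimate; hypothesis gives $\omega(y') \to 0$ and $(1 - \kappa(y'))^{-1}$ bounded on $U_{y_2}$, giving continuity of $x(\cdot)$ at $y_2$. For the special case, one simply observes that for any $y_2 \in Y$, the point $x_2 \triangleq x(y_2)$ is itself a fixed point of $G(\cdot, y_2)$, so the first assertion applies at every $y_2$. For (2), the key observation is that because $G(x, \cdot)$ is continuous uniformly in $x$, one can estimate $\omega(y')$ by a quantity depending only on $y'$ and $y_2$ (not on $x_2$), and moreover $\kappa$ inherits continuity from this uniformity (indeed, the map $y \mapsto G(\cdot,y)$ is then continuous from $Y$ into the space of Lipschitz maps under the sup-metric, giving continuity of $\kappa$); combining these yields continuity, and in the equicontinuous/uniform-space setting the same estimate is uniform in $y_2$.

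For (3) and (4), $\omega(y') \le \alpha(x_2) |y' - y_2|$ (resp.\ $\omega(y') \le \hol_\alpha G(x_2, \cdot) |y' - y_2|^\alpha$), and under the respective global assumptions $(1 - \kappa(y'))^{-1} \le (1 - \beta)^{-1}$ is a uniform constant, so the master estimate directly yields the global Lipschitz (resp.\ Hölder) bound $|x(y_1) - x(y_2)| \le \frac{\alpha}{1-\beta}|y_1 - y_2|$ (resp.\ the corresponding $\alpha$-Hölder bound). The localized variants (locally Lipschitz, locally Hölder, or Hölder on bounded sets) are obtained identically by restricting $x_2$ to the appropriate bounded set and using the corresponding local control of $\omega$.

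\medskip

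\noindent\textbf{Main obstacle.} None of the steps is substantial; the only subtle point is that one should not assume the completeness of $X$, which is already explicitly dropped in the statement. The existence of $x(y)$ is not reproven here; instead, the proof works entirely with the given solutions of the fixed-point equation, so the contraction hypothesis is used only through the inequality $\kappa(y') < 1$ rearranging the master estimate, not through the Banach fixed point theorem. A minor bookkeeping matter is to verify that the hypothesis in (2) indeed forces $\kappa$ to be continuous (so that the neighborhood on which $\kappa < 1$ from (1) can be chosen), which is immediate from $|\kappa(y') - \kappa(y)| \le 2 \sup_x |G(x, y') - G(x, y)| / \delta$ after a standard mollification-style argument, or more directly from the fact that the identity $\lip G(\cdot, y) = \sup_{x_1 \ne x_2} |G(x_1, y) - G(x_2, y)|/|x_1 - x_2|$ is continuous in $y$ under the uniform-in-$x$ continuity of $G(x, \cdot)$.
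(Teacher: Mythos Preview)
Your proof is correct and follows essentially the same approach as the paper: the paper derives the single inequality
\[
|x(y_1) - x(y_2)| \leq \frac{|G(x(y_2), y_1) - G(x(y_2), y_2)|}{1-\lip G(\cdot, y_1)}
\]
via exactly the same triangle-inequality splitting you call the ``master estimate,'' and then simply remarks that all four parts follow from it. Your write-up is more explicit about how each part (1)--(4) is read off from this inequality, but the underlying argument is identical.
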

\begin{proof}
	Consider
	\begin{align*}
	|x(y_1) - x(y_2)| & \leq |G(x(y_1), y_1) - G(x(y_2), y_1)| + |G(x(y_2), y_1) - G(x(y_2), y_2)| \\
	& \leq \lip G(\cdot, y_1)|x(y_1) - x(y_2)| + |G(x(y_2), y_1) - G(x(y_2), y_2)|,
	\end{align*}
	which yields
	\begin{equation*}
	|x(y_1) - x(y_2)| \leq \frac{|G(x(y_2), y_1) - G(x(y_2), y_2)|}{1-\lip G(\cdot, y_1)}.
	\end{equation*}
	All the assertions follow from the above inequality.
\end{proof}

3. Differentiability of $ x(\cdot) $

\begin{lem}\label{lem:fdiifx}
	In order to make sense of the differentiability of $ x(\cdot) $, first assume $ X, Y $ are open subsets of normed spaces. If $ x(\cdot) $ satisfying \eqref{app0} in a neighborhood of $ y $ is continuous at $ y $, $ G(\cdot,\cdot) $ is differentiable at $ (x(y),y) $, and $ |D_1G(x(y),y)| < 1 $, then $ x(\cdot) $ is differentiable at $ y $ with $ Dx(y) = A(y) $, where
	\[
	A(y) = (I - D_1G(x(y),y))^{-1}D_2G(x(y),y).
	\]
\end{lem}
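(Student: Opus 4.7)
The plan is to exploit the fixed-point identity $G(x(y),y)=x(y)$ together with the differentiability of $G$ at $(x(y),y)$ to extract a linearization of $x(y+h)-x(y)$, and then to bootstrap from the mere continuity of $x$ at $y$ to a local Lipschitz bound on $x$, which forces the error term to be $o(|h|)$.

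First I would observe that because $|D_1 G(x(y),y)|<1$, the operator $I-D_1G(x(y),y)$ is invertible (Neumann series), so $A(y)$ is well defined. Subtracting the fixed-point identity at $y+h$ from the one at $y$ gives
\[
x(y+h)-x(y)=G(x(y+h),y+h)-G(x(y),y),
\]
and the differentiability of $G$ at $(x(y),y)$ yields, writing $D_i G=D_iG(x(y),y)$,
\[
G(x(y+h),y+h)-G(x(y),y)=D_1G\,(x(y+h)-x(y))+D_2G\,h+r(h),
\]
with $|r(h)|\leq \varepsilon(h)\bigl(|x(y+h)-x(y)|+|h|\bigr)$ and $\varepsilon(h)\to 0$ as $h\to 0$; the continuity of $x(\cdot)$ at $y$ is exactly what is needed to conclude $\varepsilon(h)\to 0$, since the error of the differential of $G$ is controlled only when $(x(y+h)-x(y),h)\to 0$. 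Rearranging gives
\[
(I-D_1G)(x(y+h)-x(y))=D_2G\,h+r(h).
\]

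The main (and only nontrivial) step will be the bootstrap: a priori I do not yet know that $x$ is locally Lipschitz, so the term $\varepsilon(h)|x(y+h)-x(y)|$ inside $r(h)$ could be comparable to $|x(y+h)-x(y)|$ itself. Applying $(I-D_1G)^{-1}$ and taking norms gives
\[
|x(y+h)-x(y)|\leq \|A(y)\|\,|h|+\|(I-D_1G)^{-1}\|\,\varepsilon(h)\bigl(|x(y+h)-x(y)|+|h|\bigr),
\]
and choosing $h$ small enough that $\|(I-D_1G)^{-1}\|\varepsilon(h)\leq 1/2$ absorbs the $|x(y+h)-x(y)|$ on the right into the left side, producing $|x(y+h)-x(y)|\leq C|h|$ for some constant $C$ and all sufficiently small $h$. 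Feeding this Lipschitz bound back into the remainder estimate gives $\|(I-D_1G)^{-1}r(h)\|\leq \|(I-D_1G)^{-1}\|\varepsilon(h)(C+1)|h|=o(|h|)$, hence
\[
x(y+h)-x(y)=A(y)\,h+o(|h|),
\]
which is exactly the differentiability of $x$ at $y$ with $Dx(y)=A(y)$.
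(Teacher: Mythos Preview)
Your proof is correct and follows essentially the same approach as the paper: both exploit the differentiability of $G$ at $(x(y),y)$ to write the increment as a linear part plus an error $\varepsilon(|x(y+h)-x(y)|+|h|)$, use continuity of $x$ at $y$ to make $\varepsilon$ small, bootstrap to a local Lipschitz bound $|x(y+h)-x(y)|\le C|h|$, and then conclude $o(|h|)$. The only cosmetic difference is that the paper absorbs via the inequality $|D_1G|<1$ applied to $|x(y+a)-x(y)-A(y)a|$ directly, whereas you first apply $(I-D_1G)^{-1}$ and then absorb; the logic is the same.
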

\begin{proof}
	The proof is standard.
	Since $ |D_1G(x(y),y)| < 1 $, $ A(y) $ is well defined.
	By the differentiability of $ G $ at $ (x(y),y) $, for every $ \epsilon > 0 $, there is a $ \delta' > 0 $ such that
	\[
	|G(x',y') - G(x(y),y) - DG(x(y),y)(x'-x(y),y'-y)| \leq \epsilon(|x' - x(y)|+ |y' - y|)
	\]
	provided $ |x' -x(y)| + |y' - y| < \delta' $.
	Moreover, by the continuity of $ x(\cdot) $ at $ y $, there is a $ \delta > 0$ such that $ |x(y+a) - x(y)| \leq \delta' $, if $ |a| \leq \delta $.
	Thus, if $ \epsilon < \frac{1}{2} (1-|D_1G(x(y),y)|) $ and $ |a| \leq \delta $, we have
	\begin{align*}
	& |x(y+a) - x(y) - A(y)a| = |G(x(y+a),y+a) - G(x(y),y) - DG(x(y),y)(A(y)a,a)| \\
	\leq & |DG(x(y),y)(x(y+a)-x(y), a) - DG(x(y),y)(A(y)a,a)| + \epsilon (|x(y+a)-x(y)| + |a|) \\
	\leq & |D_1G(x(y),y) ||x(y+a)- x(y) -A(y)a| + \epsilon (|x(y+a)-x(y)| + |a|),
	\end{align*}
	which yields
	\[
	|x(y+a) - x(y) - A(y)a| \leq \frac{\epsilon}{1- |D_1G(x(y),y)|} (|x(y+a)-x(y)| + |a|)
	\]
	and
	\[
	|x(y+a) - x(y)| \leq 2(|A(y)|+1)|a|.
	\]
	Therefore,
	\[
	\frac{|x(y+a) - x(y) - A(y)a|}{|a|} \leq \frac{\epsilon}{1- |D_1G(x(y),y)|} (2|A(y)|+3),
	\]
	which shows $ Dx(y) = A(y) $.
\end{proof}

Unlike the classical situation, $ Y $ need not be complete (which is important in \autoref{stateRegularity}), and the condition $ G \in C^1 $ is reduced to $ G $ only differentiable at $ (x(y), y) $. We usually apply \autoref{lem:fconx} and \autoref{lem:fdiifx} in \autoref{stateRegularity} in the following way: $ x(\cdot) $ is known by the global construction and the regularity of $ x(\cdot) $ is obtained by applying the above results to the representation of $ x(\cdot) $ in local coordinates.

Note that if $ \lip G(\cdot, y) < 1 $ and $ X $ is an open subset of a normed space, then 
\[
|D_1G(x(y),y)| < 1.
\]

\begin{cor}\label{1111}
	If $ \lip G(\cdot, y) < 1 $ for $ y \in Y $ where $ X, Y $ are normed spaces, and if $ G \in C^{k} $ ($ k = 1,2,\ldots,\infty,\omega $) and $ x(\cdot) $ satisfying \eqref{app0} is continuous, then $ x(\cdot) \in  C^{k} $.
\end{cor}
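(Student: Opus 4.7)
The plan is to proceed by induction on $k$, using Lemma \ref{lem:fdiifx} to bootstrap regularity from continuity all the way to $C^k$. For the base case $k=1$, I first note that $\lip G(\cdot,y)<1$ combined with $X$ being an open subset of a normed space forces $\|D_1G(x(y),y)\|<1$, so Lemma \ref{lem:fdiifx} applies at every $y\in Y$ and yields
\begin{equation*}
Dx(y) = (I - D_1G(x(y),y))^{-1} D_2G(x(y),y).
\end{equation*}
Since $G\in C^1$, the maps $D_1G$ and $D_2G$ are continuous; since $x(\cdot)$ is continuous by hypothesis, the composition $y\mapsto (D_1G(x(y),y),D_2G(x(y),y))$ is continuous; and since inversion $A\mapsto (I-A)^{-1}$ is continuous on the open set $\{\|A\|<1\}$ of bounded operators, the displayed formula shows $Dx$ is continuous, i.e.\ $x\in C^1$.

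For the inductive step, suppose $x\in C^j$ for some $1\le j<k$. The right-hand side of the formula for $Dx$ is built from: (i) $D_1G$ and $D_2G$, which are $C^{k-1}$; (ii) the map $y\mapsto (x(y),y)$, which is $C^j$; and (iii) operator inversion, which is analytic on its domain. Hence $Dx$ is a composition of a $C^{k-1}$ map and a $C^j$ map, so $Dx\in C^{\min\{j,k-1\}}=C^j$ (as $j\le k-1$), which gives $x\in C^{j+1}$. Iterating until $j+1=k$ finishes the finite-$k$ case. The case $k=\infty$ follows because membership in $C^\infty$ is simply membership in every $C^k$.

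The main obstacle is the analytic case $k=\omega$, which cannot be reached by the $C^k$-induction above. Here the cleanest route is to invoke the analytic implicit function theorem applied to $\Phi(x,y)=x-G(x,y)$: indeed $\Phi$ is analytic, $D_1\Phi(x(y),y)=I-D_1G(x(y),y)$ is invertible (with analytic inverse as a function of its argument, via the Neumann series), and $x(\cdot)$ is the already-known continuous solution of $\Phi(x(y),y)=0$. The analytic implicit function theorem then produces, locally near each $y_0$, an analytic solution $\widetilde{x}(\cdot)$ with $\widetilde{x}(y_0)=x(y_0)$; by the contraction uniqueness in \eqref{app0} we must have $\widetilde{x}\equiv x$ on a neighborhood of $y_0$, so $x$ is real analytic near every point. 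The only subtlety worth pausing on is verifying that the analytic implicit function theorem is available in the normed-space setting being used (and complexifiable if need be), but once that is granted the corollary is immediate.
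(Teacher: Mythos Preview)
Your proof is correct and follows exactly the approach the paper implicitly intends: the paper states this corollary without proof immediately after Lemma~\ref{lem:fdiifx} and the remark that $\lip G(\cdot,y)<1$ forces $|D_1G(x(y),y)|<1$, so the expected argument is precisely your bootstrap via the explicit formula $Dx(y)=(I-D_1G(x(y),y))^{-1}D_2G(x(y),y)$. Your handling of the $C^\omega$ case via the analytic implicit function theorem (with uniqueness from the contraction) is the standard way to close that gap, and your caveat about the normed-space setting is appropriate.
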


Consider the general case.

\begin{cor}
	Assume that $ X $ is a $ C^k $ Finsler manifold (possibly incomplete) and $ Y $ is a $ C^k $ Banach manifold. Also suppose $ G \in C^k(X \times Y, X) $ and 
	\[
	|D_1G(x(y),y)| < 1.
	\]
	If $ x(\cdot) $ satisfying \eqref{app0} is continuous, then it is also $ C^k $.
\end{cor}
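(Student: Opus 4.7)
My strategy is to reduce the statement to the Banach space setting by working in local charts around an arbitrary base point, and then to appeal not to \autoref{1111} directly but to the classical implicit function theorem, because the pointwise hypothesis $|D_1 G(x(y), y)| < 1$ (an operator norm with respect to the Finsler norms on $T_xX$) need not translate into a Lipschitz contraction in a chart norm. Fix $y_0 \in Y$ and set $x_0 = x(y_0)$. I choose $C^k$ charts $\phi : U \to \tilde U \subset E$ at $x_0$ with $\phi(x_0)=0$ (where $E$ is the model Banach space for $X$ near $x_0$) and $\psi : V \to \tilde V \subset F$ at $y_0$ with $\psi(y_0)=0$. By continuity of $x(\cdot)$ at $y_0$, I shrink $V$ so that $x(V) \subset U$, and transport the data by setting
\[
\tilde G(u,v) = \phi\bigl(G(\phi^{-1}(u), \psi^{-1}(v))\bigr),\qquad \tilde x(v) = \phi\bigl(x(\psi^{-1}(v))\bigr).
\]
Then $\tilde G \in C^k(\tilde U \times \tilde V, E)$, $\tilde x$ is continuous, and $\tilde G(\tilde x(v), v) = \tilde x(v)$ on $\tilde V$.

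Next I apply the classical implicit function theorem to $F(u,v) \triangleq u - \tilde G(u,v)$ at $(0,0)$. Since $D_1 \tilde G(0,0) = D\phi(x_0)\circ D_1G(x_0,y_0)\circ D\phi(x_0)^{-1}$ is similar to $D_1G(x_0,y_0)$, and since the Finsler operator norm bound $|D_1 G(x_0, y_0)| < 1$ forces the spectral radius of $D_1 G(x_0, y_0)$ (hence of $D_1 \tilde G(0,0)$, as spectral radius is a similarity invariant) to be strictly less than $1$, the operator $I_E - D_1 \tilde G(0,0) = D_1 F(0,0)$ is invertible via the Neumann-type geometric series $\sum_{n \ge 0} (D_1 \tilde G(0,0))^n$, which converges in $L(E,E)$ because $\limsup_n \|(D_1\tilde G(0,0))^n\|^{1/n} < 1$. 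The implicit function theorem then yields open neighborhoods $\tilde U' \subset \tilde U$ and $\tilde V' \subset \tilde V$ of $0$ and a $C^k$ map $\tilde x^\star: \tilde V' \to \tilde U'$ with $\tilde x^\star(0)=0$ such that $\{(u,v) \in \tilde U' \times \tilde V' : F(u,v)=0\} = \graph \tilde x^\star$.

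To finish I identify $\tilde x^\star$ with $\tilde x$ on a neighborhood of $0$. By continuity of $\tilde x$ I shrink $\tilde V'$ further so that $\tilde x(\tilde V') \subset \tilde U'$; then $\tilde x(v)$ also solves $F(u,v)=0$ inside $\tilde U' \times \tilde V'$, and the uniqueness clause of the implicit function theorem forces $\tilde x \equiv \tilde x^\star$ on $\tilde V'$. Consequently $\tilde x$ is $C^k$ near $0$, and through the $C^k$ diffeomorphisms $\phi, \psi$ this means $x$ is $C^k$ near $y_0$. As $y_0 \in Y$ was arbitrary, $x \in C^k(Y, X)$.

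The main obstacle is precisely avoiding the direct appeal to \autoref{1111}: after a generic chart change, the Finsler operator-norm condition $|D_1 G|<1$ typically does \emph{not} pass to a Lipschitz contraction of $\tilde G(\cdot, v)$ in the Banach norm on $E$, so \autoref{1111} cannot be quoted off the shelf. The way around it is that what actually matters for the implicit function theorem is the invertibility of $I - D_1 \tilde G(0,0)$, controlled by the spectral radius rather than the operator norm, and the spectral radius is stable under the chart-induced similarity.
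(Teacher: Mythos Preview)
Your argument is correct. Both your route and the paper's reduce to local charts and then appeal to Banach-space machinery, but you handle the chart-dependence of the operator norm differently.

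The paper sidesteps the issue you worried about (that $|D_1\tilde G(0,0)|$ need not be $<1$ after a generic chart change) by a particular choice of chart: it takes $\varphi:U'\to T_{x(y)}X$ with $D\varphi(x(y))=\id$, so that the local representative $G'$ satisfies $D_1G'(0,0)=D_1G(x(y),y)$ literally as an operator on $T_{x(y)}X$, and the Finsler operator-norm bound $|D_1G'(0,0)|<1$ is inherited verbatim. From there the paper can invoke its earlier Banach-space results (Lemma~\ref{lem:fdiifx} / Corollary~\ref{1111}) directly, after shrinking to a convex neighborhood where $|D_1G'|<1$.

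Your route is more chart-agnostic: you allow an arbitrary chart, accept that the operator norm may blow past $1$, but observe that the spectral radius is a similarity invariant, so $r(D_1\tilde G(0,0))=r(D_1G(x_0,y_0))\le |D_1G(x_0,y_0)|<1$, whence $I-D_1\tilde G(0,0)$ is invertible and the classical implicit function theorem applies. This is a genuinely different reduction; it trades the paper's ``choose a good chart'' trick for a ``spectral radius is enough for IFT'' argument. The paper's version is shorter and stays within the fixed-point framework already set up; yours is more robust in that it does not rely on any compatibility between the chart and the Finsler structure.
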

\begin{proof}
	Choose $ C^k $ local charts $ \varphi: U' \subset U_{x(y)} \to T_{x(y)}X $ and $ \phi: V_{y} \to T_{y}Y $, such that $ \varphi(x(y)) = 0 $, $ \phi(y) = 0 $, $ G(U', V_{y}) \subset U_{x(y)} $, and $ D\varphi(x(y)) = \id $. Set
	\[
	G'(x',y') = \varphi \circ G (\varphi^{-1}(x'), \phi^{-1}(y')) : \varphi(U') \times \phi(V_y) \to \varphi (U_{x(y)}).
	\]
	Note that $ D_1 G'(0,0) = D_1G(x(y),y): T_{x(y)}X \to T_{x(y)}X $. So $ |D_1 G'(0,0)| < 1 $.
	Let $ x'(y') = \varphi\circ x\circ \phi^{-1}(y') $. Then
	\[
	G'(x'(y'), y') = x'(y').
	\]
	Therefore, $ x'(\cdot) $ is $ C^k $.
\end{proof}

\begin{cor}
	Assume that $ X $ is a $ C^k $ Finsler manifold with a compatible metric $ d $ such that for every $ f \in C^1(X, X) $, $ \|Df\| \leq K \lip f $ where $ K $ is a constant, and $ Y $ is a $ C^k $ Banach manifold. Suppose $ G \in C^k(X \times Y, X) $ and $ \lip G(\cdot, y) < 1 $ for all $ y \in Y $. If $ x(\cdot) $ satisfying \eqref{app0} is continuous, then it is also $ C^k $.
\end{cor}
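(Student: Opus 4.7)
The plan is to reduce this corollary to the previous one by upgrading the Lipschitz hypothesis $\lip G(\cdot,y) < 1$ into a pointwise bound $\|D_1 G(x(y),y)\| < 1$, and then to run the same local-chart argument. Concretely, for each fixed $y \in Y$, the map $G(\cdot,y): X \to X$ is $C^1$, so the hypothesis $\|Df\| \leq K \lip f$ applied to $f = G(\cdot,y)$ yields $\|D_1 G(x,y)\| \leq K \lip G(\cdot,y)$ uniformly in $x$. In the natural setting where the metric $d$ is the length metric induced by the Finsler structure one has $K = 1$ (this is the content of the length-space Lipschitz characterization \autoref{liplength}), and hence $\|D_1 G(x(y),y)\| \leq \lip G(\cdot,y) < 1$.

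Having secured the pointwise contraction, I would copy the proof of the preceding corollary. Choose a $C^k$ Finsler chart $\varphi: U' \subset U_{x(y)} \to T_{x(y)}X$ with $\varphi(x(y))=0$ and $D\varphi(x(y)) = \id$, and a $C^k$ Banach chart $\phi: V_y \to T_y Y$ with $\phi(y)=0$, chosen so that $G(U' \times V_y) \subset U_{x(y)}$. Setting
\[
G'(x', y') = \varphi \circ G(\varphi^{-1}(x'), \phi^{-1}(y')), \qquad x'(y') = \varphi \circ x \circ \phi^{-1}(y'),
\]
gives a $C^k$ local model on open subsets of the two Banach spaces with $G'(x'(y'), y') = x'(y')$ and $D_1G'(0,0) = D_1 G(x(y),y)$, so $\|D_1G'(0,0)\| < 1$. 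Since $x(\cdot)$ is continuous, so is $x'(\cdot)$, and \autoref{lem:fdiifx} then shows $x'$ is differentiable at $0$, whence $x$ is differentiable at $y$. As this holds at every $y \in Y$ and $G' \in C^k$, the standard $C^k$ bootstrap (\autoref{1111} applied in the local model, using the identity $Dx' = (I - D_1 G')^{-1} D_2 G'$) upgrades $x'$ to $C^k$ on a neighborhood of $0$, and therefore $x \in C^k$ on a neighborhood of $y$; since $y$ was arbitrary, $x \in C^k$ globally.

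The only genuine obstacle is the role of the compatibility constant $K$. If $K > 1$, the bound $\|D_1 G(x(y),y)\| \leq K \lip G(\cdot,y) < K$ is not by itself smaller than $1$, and the reduction above needs the sharper hypothesis $K \lip G(\cdot, y) < 1$. The stated formulation is really meant for the case $K = 1$, i.e.\ the metric $d$ is (equivalent to) the Finsler length metric, where $\lip f = \sup_x \|Df(x)\|$ holds by the length-space characterization; the rest of the argument is routine once this pointwise contraction is available.
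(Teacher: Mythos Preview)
Your approach matches the paper's intent: the corollary is stated without proof, the implicit argument being precisely your reduction---apply the compatibility hypothesis to $f = G(\cdot,y)$ to obtain $|D_1G(x(y),y)| \leq K\lip G(\cdot,y)$, then invoke the preceding corollary via the local-chart argument and \autoref{1111}. The paper only adds a pointer to \autoref{finsler} for examples where the hypothesis $\|Df\|\leq K\lip f$ holds.

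You have also correctly flagged the genuine issue with the constant $K$. The reduction requires $|D_1G(x(y),y)|<1$, and from $|D_1G(x(y),y)|\leq K\lip G(\cdot,y)$ this follows only when $K\lip G(\cdot,y)<1$, which is automatic for $K=1$ but not for $K>1$ under the stated hypothesis $\lip G(\cdot,y)<1$. The $K=1$ case is exactly the Palais--Finsler situation with the length metric (where $\sup_m|Df(m)|=\lip f$; see \autoref{finsler} item~\eqref{finD}), and this is evidently the case the paper has in mind. For the Neeb--Upmeier weak-uniform settings where $K=\Xi^2>1$, the corollary as written needs the stronger assumption $K\lip G(\cdot,y)<1$; your diagnosis here is accurate.
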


See \autoref{finsler} for the situation: $ f \in C^1(X, X) \Rightarrow \|Df\| \leq K \lip f $.

\section{Finsler structure in the sense of Neeb--Upmeier}\label{finsler}
We give some basic definitions of Finsler structure for the convenience of the readers. In the infinite-dimensional setting, the following definitions are not equivalent.
\begin{asparaenum}[(a)]
	\item \label{finA} We say a $ C^0 $ vector bundle $ (X, M, \pi) $ has a $ C^0 $ \textbf{Finsler structure} if there is a $ C^0 $ map $ |(\cdot, \cdot)|: X \to \mathbb{R}_+ $ such that $ x \mapsto |x| \triangleq |x|_m \triangleq |(m,x)| $ is a norm on $ X_m $. Moreover, for every $ m \in M $, there is a $ C^0 $ (vector) bundle chart $ (U,\psi) $ at $ m $ such that $ \sup_{m' \in U}| \psi^{\pm1}_{m'} | < \infty $, where the norm of $ | \psi_{m'} | $ (and similarly for $ |\psi^{-1}_{m'}| $) is defined by
	\[
	| \psi_{m'} | = \sup \{ |\psi_{m'} x |_{m'}: x \in X_{m}, |x|_{m} \leq 1 \}.
	\]

	Here, $ |(\cdot, \cdot)|$ is $ C^0 $ in the following sense: For every $ m \in M $ and every $ C^0 $ (vector) bundle chart $ (U,\psi) $ at $ m $, $ (m',x) \mapsto | \psi_{m'}x |_{m'} $ is $ C^0 $. We say the Finsler structure is uniformly $ C^0 $ (or $ X $ has a \textbf{uniformly $ C^0 $ Finsler structure}) if for every $ m \in M $ and every $ C^0 $ (vector) bundle chart $ (U,\psi) $ at $ m $, $ m' \mapsto | \psi^{\pm1}_{m'} | $ is continuous at $ m $.

	\item \label{finB} We say a $ C^0 $ vector bundle $ (X, M, \pi) $ has a $ C^0 $ \textbf{uniform Finsler structure (with constant $ \Xi $)} if there is a $ C^0 $ map $ |(\cdot, \cdot)|: X \to \mathbb{R}_+ $ such that (i) $ x \mapsto |x| \triangleq |x|_m \triangleq |(m,x)| $ is a norm on $ X_m $, and (ii) there exists a constant $ \Xi > 0 $ such that for every $ m \in M $, there is a $ C^0 $ (vector) bundle chart $ (U,\psi) $ at $ m $ such that $ \sup_{m' \in U}|\psi_{m'}| \leq \Xi, \sup_{m' \in U}|\psi^{-1}_{m'}| \leq \Xi $.

	\item \label{finC} Let $ X $ be a $ C^0 $ vector bundle $ (X, M, \pi) $ with fibers modeled on $ \mathbb{F} $, i.e., every $ X_m $ is isomorphic to $ \mathbb{F} $ with an isomorphism $ \mathbb{I}_m: X_m \to \mathbb{F} $. We say $ X $ has a $ C^0 $ \textbf{strong uniform Finsler structure (with constant $ \varXi $)} if there is a $ C^0 $ map $ |(\cdot, \cdot)|: X \to \mathbb{R}_+ $ such that (i) $ x \mapsto |x| \triangleq |x|_m \triangleq |(m,x)| $ is a norm on $ X_m $, and (ii) there exists a constant $ \varXi > 0 $ such that for every $ m \in M $, there is a $ C^0 $ (vector) bundle chart $ (U,\psi) $ at $ m $ such that $ \sup_{m' \in U}| \mathbb{I}_{m'} \psi_{m'} | \leq \varXi, \sup_{m' \in U}|\psi^{-1}_{m'}\mathbb{I}^{-1}_{m'}| \leq \varXi $.

	\item \label{finD} A $ C^k $ Banach manifold $ M $ is called a $ C^k $ \textbf{Finsler(--Banach) manifold} (in the sense of Neeb--Upmeier) if there is a $ C^0 $ Finsler structure in $ TM $.
	Moreover it is called a \textbf{$ \Xi $-weak uniform $ C^k $ Finsler manifold in the sense of Neeb--Upmeier} if there is a $ C^0 $ uniform Finsler structure (with constant $ \Xi $) in $ TM $.

	Finally, $ M $ is called a $ C^k $ \textbf{Finsler manifold in the sense of Palais} (see \cite{Pal66}) if the $ C^0 $ Finsler structure is uniformly $ C^0 $; or equivalently, for any constant $ \Xi > 1 $ and every $ m \in M $, there is a $ C^0 $ (vector) bundle chart $ (U,\psi) $ at $ m $ such that $ \sup_{m' \in U}|\psi_{m'}| \leq \Xi |\psi_{m}| $, $ \sup_{m' \in U}|\psi^{-1}_{m'}| \leq \Xi |\psi^{-1}_{m}| $. Note that every \emph{Riemannian manifolds} belongs to this case.

	\item Let $ M $ be a $ C^k $ Banach manifold modeled on $ \mathbb{E} $. Note that in this case, $ TM $ is modeled on $ \mathbb{E} \times \mathbb{E} $. $ M $ is called a  \textbf{$ \varXi $-uniform $ C^k $ Finsler manifold in the sense of Neeb--Upmeier} if there is a $ C^0 $ strong uniform Finsler structure (with constant $ \varXi $) in $ TM $.

	\item The \textbf{Finsler metric} in Finsler manifolds can be defined in the usual way; see e.g. \cite{Upm85}. Let $ M $ be a $ C^1 $ Finsler manifold. The associated Finsler metric $ d_M $ in each component of $ M $ is defined by
	\begin{multline*}
	d_M(p,q) \\
	= \inf\left\{ \int_{0}^{t} |c'(t)|_{c(t)} ~ \mathrm{ d } t: c ~\text{is a}~ C^1 ~\text{path}~ [0,1] \to M ~\text{such that}~ c(0) = p ,  c(1) = q \right\}.
	\end{multline*}

	For any $ f \in C^1(M, N) $, the following basic facts hold: If $ M $, $ N $ are Finsler manifolds \emph{in the sense of Palais}, then $ \sup_{m}|Df(m)| = \lip f $ (see e.g. \cite{GJ07});
	if $ M $, $ N $ are Finsler manifolds (in the sense of Neeb--Upmeier), then $ |\lip f| \leq \sup_m|Df(m)| $; if $ M $, $ N $ are respectively $ \Xi_1 $-\emph{weak uniform} and $ \Xi_2 $-\emph{weak uniform} Finsler manifolds in the sense of Neeb--Upmeier, then $ \sup_{m}|Df(m)| \leq \Xi_1 \Xi_2 \lip f $; if $ M $, $ N $ are respectively $ \Xi_1 $-\emph{uniform} and $ \Xi_2 $-\emph{uniform} Finsler manifolds in the sense of Neeb--Upmeier, then $ \sup_{m}|Df(m)| \leq \Xi^2_1 \Xi^2_2 \lip f $. For a proof of the last three results, see \cite{JS11}.
\end{asparaenum}

Note that in the finite-dimensional setting, $ C^0 $ Finsler structure is the same as uniform $ C^0 $ Finsler structure; in particular, the definitions given in (a)--(c) are identical.
The definitions of Finsler manifold in the sense of Neeb--Upmeier are taken from \cite{JS11}; see also \cite{Pal66, Upm85, Nee02}.

\section{Length metric and Lipschitz continuity} \label{length}
In this appendix, we discuss some versions of `local Lipschitz (H\"older) continuity' in the pointwise sense that may imply `global Lipschitz (H\"older) continuity' in bounded sets.

Let $ M $ be a metric space with metric $ d $. A path $ s $ from $ x $ to $ y $ in $ M $ is a continuous map $ s: [a,b] \to M $ such that $ s(a) = x $, $ s(b) = y $, and its length is defined by
\[
l(s) = \sup \left\lbrace  \sum^{k}_{i=1} d( s(t_{i-1}), s(t_{i}) ): a = t_0 < t_1 < \cdots < t_k = b, k=1,2,3,\ldots \right\rbrace .
\]
If $ l(s) < \infty $, then $ s $ is called a rectifiable path (or bounded variation path). If for every $ x, y \in M $,
\[
d(x,y) = \inf \{ l(s): s ~\text{is a path from $ x $ to $ y $} \},
\]
then $ M $ is said to be a \textbf{length space} and $ d $ a \textbf{length metric}. Some examples of length spaces are the following (see \cite{GJ07} for details): (a) normed spaces, and convex subsets of normed spaces; (b) connected Finsler manifolds in the sense of Palais (see \autoref{finsler}) with Finsler metrics, in particular, connected Riemannian manifolds with Riemannian metrics; (c) geodesic spaces.

Let $ M, N $ be metric spaces, and $ f: M \to N $. Define the \textbf{upper scalar derivative} of $ f $ at $ x $ by (see \cite{GJ07})
\begin{equation}\label{scalarD}
D^{+}_x f = \limsup_{y \to x} \frac{d(f(y), f(x))}{d(y,x)}.
\end{equation}
If $ f: M \to N $ is continuous, $ s $ is a rectifiable path in $ M $, and $ t = f \circ s $, then (see \cite[Proposition 3.8]{GJ07})
\[
l(t) \leq \sup_{x \in \mathrm{Im}(s)} D^{+}_x f \cdot l(s).
\]
In particular, we have the following property.
\begin{lem}\label{liplength}
	If $ M, N $ are length spaces and $ \sup_{x} D^{+}_x f < \infty $, then $ f $ is Lipschitz with $ \lip f = \sup_{x} D^{+}_x f $.
\end{lem}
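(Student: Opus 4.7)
The plan is to prove the two inequalities $\sup_x D^+_x f \leq \lip f$ and $\lip f \leq \sup_x D^+_x f$ separately, using the key inequality
\[
l(f \circ s) \leq \sup_{x \in \mathrm{Im}(s)} D^+_x f \cdot l(s)
\]
from \cite{GJ07} recalled just before the statement. Set $L = \sup_x D^+_x f$, which is finite by hypothesis.

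For the easy direction, I would observe that for any $x \in M$ and any $y$ near $x$, the definition \eqref{scalarD} gives $D^+_x f \leq \lip f$, since $d(f(y),f(x))/d(y,x) \leq \lip f$ pointwise; taking the supremum over $x$ yields $L \leq \lip f$. (In particular, if $\lip f < \infty$ the bound is immediate, and if the conclusion $\lip f = L < \infty$ is what we are aiming at, the reverse inequality below will supply the finiteness automatically.)

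For the main direction, I first note that $L < \infty$ forces $f$ to be continuous at every point: for each $x$ there is a neighborhood on which $d(f(y),f(x)) \leq (L+1)d(y,x)$, directly from the definition of $D^+_x f$. Now fix $x,y \in M$ and $\varepsilon > 0$. Since $M$ is a length space, there exists a rectifiable path $s \colon [a,b] \to M$ from $x$ to $y$ with $l(s) \leq d(x,y) + \varepsilon$. Applying the key inequality to $t = f \circ s$,
\[
l(t) \leq \sup_{z \in \mathrm{Im}(s)} D^+_z f \cdot l(s) \leq L \cdot (d(x,y) + \varepsilon).
\]
On the other hand, for any path $t$ in the metric space $N$ from $f(x)$ to $f(y)$ the partition $\{a,b\}$ gives $d(f(x),f(y)) \leq l(t)$. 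Combining the two and letting $\varepsilon \to 0^+$ yields $d(f(x),f(y)) \leq L \cdot d(x,y)$, so $\lip f \leq L$.

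There is essentially no serious obstacle here, since the excerpt has packaged the real analytic content into the quoted proposition from \cite{GJ07}; the argument is just a matter of combining the length-space approximation of the metric with that path-length inequality. The only mild subtlety is noting that we do not need $N$ to be a length space for the direction $\lip f \leq L$ (the inequality $d(f(x),f(y)) \leq l(t)$ holds in any metric space), although the hypothesis on $N$ is stated in the lemma; the length structure on $M$ is what is essential, as it lets us realize $d(x,y)$ as an infimum of path lengths on which the pointwise control by $D^+_zf$ can be exercised.
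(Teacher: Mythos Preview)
Your proof is correct and follows exactly the approach the paper intends: the paper does not spell out a proof but simply presents the lemma as a direct consequence of the quoted path-length inequality from \cite{GJ07}, and your argument is precisely that deduction. Your observation that only the length structure on $M$ (not $N$) is actually used, and your care in first noting that $\sup_x D^+_x f < \infty$ forces continuity so that the cited proposition applies, are both accurate refinements.
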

See \cite{GJ07} for more properties of upper scalar derivative.
Let us consider the H\"older case. Define the \textbf{upper scalar uniform $ \alpha $-H\"older constant} of $ f $ by
\[
D^{+}_{\alpha} f = \lim_{r \to 0 } \sup_{x} \sup_{d(z,x) < r} \frac{d(f(z), f(x))}{d(z,x)^\alpha},
\]
where $ 0 < \alpha \leq 1 $.
\begin{lem}\label{lem:H}
	If $ M, N $ are length spaces and $ D^{+}_{\alpha} f < \infty $, then for any given bounded subset $ A $ of $ X $,
	\[
	d(f(x),f(y)) \leq \Omega(\diam(A))(D^{+}_{\alpha} f + 1) d(x,y)^{\alpha},~ \forall x,y \in A,
	\]
	for some $ \Omega : \mathbb{R}_+ \to \mathbb{R}_+ $.
\end{lem}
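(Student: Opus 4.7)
The plan is to exploit the length space structure of $M$ in order to upgrade the local Hölder control encoded by $D^{+}_{\alpha}f$ into a global one on bounded sets. The intuition is that, unlike the Lipschitz case handled in \autoref{liplength}, a finite Hölder exponent $\alpha<1$ does not interact well with triangle inequality chaining: if we break $d(x,y)$ into many pieces and apply the local estimate term by term, we pick up a factor of $N^{1-\alpha}$ that must somehow be absorbed. The length space hypothesis provides exactly the right mechanism: we can choose a nearly length-minimizing path and subdivide it uniformly into pieces whose lengths are simultaneously below the ``local Hölder radius'' of $f$, after which Jensen's inequality (concavity of $t\mapsto t^\alpha$) controls the surplus factor in terms of $\diam(A)$.

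First, using the definition of $D^{+}_{\alpha}f$ as a limsup, I will fix an $r_{0}>0$, depending only on $f$ and $\alpha$, such that $d(f(z),f(x))\leq (D^{+}_{\alpha}f+1)d(z,x)^{\alpha}$ whenever $d(z,x)<r_{0}$. Next, for any $x,y\in A$ and any $\varepsilon>0$, I will invoke the length space property to produce a rectifiable path $s:[0,1]\to M$ from $x$ to $y$ with $l(s)\leq d(x,y)+\varepsilon$, then subdivide it along its arc-length parametrization into $N$ subarcs of equal length $l(s)/N$, where $N\triangleq \lceil l(s)/r_{0}\rceil+1$. Writing $x=x_{0},x_{1},\ldots,x_{N}=y$ for the resulting endpoints, each consecutive pair satisfies $d(x_{i-1},x_{i})\leq l(s)/N<r_{0}$, so the local Hölder bound applies. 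Summing and using concavity of $t\mapsto t^{\alpha}$ gives
\[
d(f(x),f(y))\leq (D^{+}_{\alpha}f+1)\sum_{i=1}^{N}d(x_{i-1},x_{i})^{\alpha}\leq (D^{+}_{\alpha}f+1)N^{1-\alpha}l(s)^{\alpha}.
\]
Since $N\leq (\diam(A)+\varepsilon)/r_{0}+2$ and $l(s)\leq d(x,y)+\varepsilon$, letting $\varepsilon\to 0$ yields the claim with $\Omega(t)=(t/r_{0}+2)^{1-\alpha}$, a function of $t=\diam(A)$ alone (the dependence on $f$ and $\alpha$ being hidden in the fixed $r_{0}$).

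There is no serious obstacle here: the proof is essentially forced once the length space hypothesis is used. The only genuine subtlety, worth emphasizing in the write-up, is the necessity of the length space assumption in the argument; on a metric space that is not length, one could have two nearby points whose joining paths are all very long, and then the local $\alpha$-Hölder control gives no global information of the required form. In that sense this lemma is a Hölder analogue of the standard fact that $C^{1}$ maps between Riemannian manifolds are locally Lipschitz with constant controlled by $\sup\|Df\|$, where the same length-metric mechanism is invisible behind the scenes.
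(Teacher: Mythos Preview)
Your proof is correct and follows essentially the same approach as the paper: fix a radius on which the local H\"older estimate holds, subdivide a nearly length-minimizing path into at most $O(\diam(A))$ pieces each shorter than that radius, apply the local bound on each piece, and use concavity of $t\mapsto t^{\alpha}$ (Jensen/H\"older) to sum. The only cosmetic differences are that the paper subdivides so that consecutive points are at distance exactly $\delta$ (via intermediate value) and fixes the count $n=[\diam(A)/\delta]+1$ from the outset, whereas you subdivide by arc length into equal pieces and bound $N$ afterward; both yield $\Omega(t)\sim (t/\delta)^{1-\alpha}$.
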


In contrast to $ D^{+}_x f $, $ D^{+}_{\alpha} f $ is defined uniformly, so the proof is simpler than in the Lipschitz case.

\begin{proof}
	Since $ D^{+}_{\alpha} f \triangleq M_0 < \infty $, there is a $ \delta > 0 $ such that $ d(f(z), f(x)) \leq (M_0 + 1) d(z,x)^{\alpha} $ provided $ d(z,x) \leq \delta $. Let $ x, y \in A $ and $ n = [\diam(A)/\delta]+1 $. For any given $ \epsilon > 0 $, there is a path $ s: [a,b] \to X $ from $ x $ to $ y $ such that $ l(s) < d(x,y) + \epsilon $ and $ d(x,y) + \epsilon \leq n \delta $. Now one can choose $ a = t_0 < t_1 < \cdots < t_m =b $, $ m \leq n $, such that $ d( s(t_{i-1}), s(t_{i}) ) = \delta $, $ i = 1,2,\ldots, m-1 $, and $ d( s(t_{m-1}), s(t_{m}) ) \leq \delta $. Let $ r = f \circ s $. Then $ d( r(t_{i-1}), r(t_{i}) ) \leq (M_0 + 1) d(s(t_{i-1}), s(t_{i}))^\alpha $. Hence,
	\begin{multline*}
	d(f(x),f(y)) \leq \sum^{m}_{i=1} d( r(t_{i-1}), r(t_{i}) ) \leq (M_0 + 1) \sum^{m}_{i=1}d(s(t_{i-1}), s(t_{i}))^\alpha \\
	\leq (M_0 + 1) n^{1-\alpha} (d(x,y) + \epsilon )^{\alpha},
	\end{multline*}
	where the H\"older inequality is used. Since $ \epsilon $ is arbitrary, we have $ d(f(x),f(y)) \leq (M_0 + 1) n^{1-\alpha} d(x,y)^{\alpha} $.
\end{proof}

One can define the pointwise version of the upper scalar $ \alpha $-H\"older constant of $ f $ at $ x $ by
\[
D^{+}_{\alpha, x} f = \limsup_{z \to x} \frac{d(f(z), f(x))}{d(z,x)^\alpha}.
\]
However, $ \sup_{x} D^{+}_{\alpha, x} f < \infty $ does not imply the $ \alpha $-H\"olderness of $ f $ even in compact metric spaces, as the following example shows.
Let $ f(x)  =  x \sin x^{-1} $ if $ x \in (0,1] $, and $ f(0) = 0 $. One can easily verify that for any $ 0 < \alpha < 1 $, $ D^{+}_{\alpha, x} f = 0 $ for all $ x \in [0,1] $. However, $ \hol_{\alpha}f|_{[0,1] }= \infty $ if $ \alpha > \frac{1}{2} $. To see this, consider $ x_n = \frac{1}{2n\pi} $, $ y_n = \frac{1}{2n\pi+ \frac{\pi}{2}}  $.

\begin{lem}\label{lem:BTB}
	If $ M, N $ are length spaces and $ f: M \to N $ is uniformly continuous, then for any given bounded set $ A $ of $ X $, we have $ \diam(f(A)) \leq \Omega(\diam(A)) $, where $ \Omega: \mathbb{R}_+ \to \mathbb{R}_+ $.
\end{lem}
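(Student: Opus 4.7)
The plan is to mimic the chain-of-short-hops argument used in the proof of \autoref{lem:H}, but now exploiting only uniform continuity of $f$ rather than a scalar H\"older estimate. Fix a bounded set $A\subset M$ with $\diam(A)=D<\infty$. Since $f$ is uniformly continuous, there exists $\delta=\delta(1)>0$ such that $d(f(u),f(v))<1$ whenever $d(u,v)<\delta$. The goal is to show that for any $x,y\in A$ one has $d(f(x),f(y))\leq \Omega(D)$ where $\Omega$ depends only on $D$ (and on $f$, through $\delta$).

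First I would use the length-space hypothesis on $M$: for arbitrary $\epsilon>0$ there is a rectifiable path $s:[a,b]\to M$ from $x$ to $y$ with $l(s)<d(x,y)+\epsilon\leq D+\epsilon$. Next, choose an integer $n$ with $n\delta>D+\epsilon$; explicitly $n=\lfloor (D+\epsilon)/\delta\rfloor+1$ works. By continuity of the length function $t\mapsto l(s|_{[a,t]})$ along the path, one can pick a partition $a=t_0<t_1<\cdots<t_m=b$ with $m\leq n$ such that each subarc $s|_{[t_{i-1},t_i]}$ has length less than $\delta$, and in particular $d(s(t_{i-1}),s(t_i))<\delta$. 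Applying uniform continuity on each consecutive pair and summing via the triangle inequality gives
\begin{equation*}
d(f(x),f(y))\leq \sum_{i=1}^{m}d(f(s(t_{i-1})),f(s(t_i)))< m\leq n.
\end{equation*}
Letting $\epsilon\to 0$ yields $d(f(x),f(y))\leq \lfloor D/\delta\rfloor+1$, so one can set $\Omega(D)=\lfloor D/\delta\rfloor+1$ (or $D/\delta+1$ to get a nondecreasing real-valued function on $\mathbb{R}_+$). Taking the supremum over $x,y\in A$ gives $\diam(f(A))\leq \Omega(D)$, as required. Note we use the length-space hypothesis on $M$ in a crucial way but not on $N$, so that hypothesis on $N$ is actually unnecessary; I will keep the statement as is.

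The only mild subtlety is the selection of the partition with prescribed subarc lengths $<\delta$: this uses that $t\mapsto l(s|_{[a,t]})$ is continuous and nondecreasing with total increment $l(s)<n\delta$, so the preimage construction yields $m\leq n$. Everything else is bookkeeping, and no obstacle of substance is expected.
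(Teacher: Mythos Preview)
Your proof is correct and essentially identical to the paper's: it invokes uniform continuity to get a $\delta$ with $d(f(u),f(v))\leq 1$ whenever $d(u,v)\leq\delta$, sets $n=[\diam(A)/\delta]+1$, and then refers to the same chain-of-short-hops partition along an almost-length-minimizing path that you spell out. Your observation that the length-space hypothesis on $N$ is not actually used is also correct.
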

\begin{proof}
	Since $ f: M \to N $ is uniformly continuous, there is a $ \delta > 0 $ such that 
	\[
	d(f(z), f(x)) \leq 1
	\]
	provided $ d(z,x) \leq \delta $. Let $ x, y \in A $, $ n = [\frac{\diam(A)}{\delta}]+1 $. The same argument in \autoref{lem:H} shows that $ d(f(x),f(y)) \leq n $.
\end{proof}

\section{Construction of normal bundle charts from given bundle charts}\label{app:normal}

Consider the construction of a normal bundle chart from a given bundle chart $ (V, \phi) $ at $ m_0 $. Locally, this is quite easy as the following shows formally:
\[
\psi_{m}(x) = \phi_m(x) - \nabla_{m_0} \phi_{m_0}(x)(m - m_0).
\]
We need to give an explicit meaning to the above expression. It is essentially the same as the construction of \emph{parallel translation} for vector bundles, which we recall briefly.

Let $ M $ be a $ C^1 $ manifold, and $ (Y, N, \pi_2) $ a $ C^1 $ bundle with $ C^0 $ connection $ \mathcal{C} $.
Let $ f: M \to Y $ be a $ C^1 $ bundle map over $ u $, i.e., $ f(m) \in Y_{u(m)} $. Take a $ C^1 $ bundle chart $ \phi : U \times Y_{n_0} \to Y $ and $ \varphi = \phi^{-1} $. We say $ f $ is \emph{parallel} (with respect to $ \mathcal{C} $) if $ \nabla_{m}f(m) = 0 $ for all $ m \in M $. Consider its meaning in the local bundle chart. Let $ \varGamma_{(m',x')} $ be the Christoffel map in the local bundle chart $ \phi $.
Set $ g(m) = \varphi_{u(m)} (f(m)) : M \to Y_{n_0} $. Now by the chain rule (see \autoref{lem:chain}), we have
\[
Dg(m) = \nabla_{u(m)} \varphi_{u(m)} (f(m)) Du(m) = -D\varphi_{u(m)}(g(m))\varGamma_{(u(m), g(m))} Du(m).
\]
Set $ \widehat{\varGamma}_{(m,x)} = D\varphi_{m}(x)\varGamma_{(m, x)} $.

Let us construct the normal bundle chart from $ \phi $. The construction \emph{loses} smoothness.
Consider a special case. Using the local chart of $ N $ at $ n_0 $, we can identify $ U = T_{n_0}N $. Assume $ M = [-2,2] $, $ u(t) = (1-t)n_0 + t n $, $ n \in U $. For any $ x \in Y_{n_0} $, $ n \in U $, we will show that there is a unique $ f(t) \in Y_{u(t)} $ such that $ f(0) = x $, $ f'(t) \triangleq \nabla_{u(t)} f(t) \dot{u}(t) = 0 $, $ t \in M $, once we assume $ x \mapsto \widehat{\varGamma}_{(m,x)} $ is locally Lipschitz. (Note that $ U $ might be smaller around $ n_0 $ depending on $ x $ due to $ \widehat{\varGamma}_{(m,\cdot)} $ being locally Lipschitz.) In this case $ g(t) = \varphi_{u(t)}(f(t)) $. Now $ f'(t) = 0 $ is equivalent to
\begin{equation}\label{parallel}
g'(t) = -\widehat{\varGamma}_{(u(t),g(t))}(n-n_0).
\end{equation}
The above differential equation in $ Y_{n_0} $ has a unique solution $ g $ with $ g(0) = x $. Take
\[
\psi_n (x) = f(1): U \times Y_{n_0} \to Y_{n}.
\]
Next we show $ \psi_n $ is the desired normal bundle chart. Note that $ f(t) = \psi_{u(t)}(x) $. Since $ f'(t) = 0 $, in particular, one gets $ \nabla_{u(t)}\psi_{u(t)}(x) (n-n_0) = 0 $ for all $ n \in U $. For $ t = 0 $, it follows that $ \nabla_{n_0}\psi_{n_0}(x) (n-n_0) = 0 $, `yielding' $ \nabla_{n_0} \psi_{n_0}(x) = 0 $.
Since $ \mathcal{C} $ is only locally Lipschitz, $ x \mapsto \psi_{n}(x) $ is also only locally Lipschitz. In general, if $ \phi \in C^{k+1} $ and $ \mathcal{C} \in C^k $, then $ \psi \in C^k $.
By the construction, if $ Y $ is a vector bundle, $ \phi $ is a vector bundle chart and $ \mathcal{C} $ is a linear connection, then $ \psi $ is also a vector bundle chart. Altogether we obtain the following:

\begin{lem}[Existence of a normal bundle chart]
	Let $ M $ be a $ C^{2} $ manifold and $ M_1 \subset M $. Let $ (X,M,\pi) $ be a $ C^2 $ bundle with a $ C^2 $ bundle atlas $ \mathcal{A} $ and a $ C^1 $ connection $ \mathcal{C} $. (We can further assume each fiber of $ X $ is a Banach space or an open subset of a Banach space for simplicity.)
	\begin{enumerate}[(1)]
		\item If $ X $ is a vector bundle, $ \mathcal{C} $ is a linear connection, and $ (U, \phi) \in \mathcal{A} $ at $ m_0 $ is a vector bundle chart, then there is an open $ V \subset U $ and $ \psi: V \times X_{m_0} \to X $ such that $ (V, \psi) $ is a $ C^1 $ vector normal bundle chart at $ m_0 $.
		\item Take $ (U_{m_0}, \phi^{m_0}) \in \mathcal{A} $ at $ m_0 \in M_1 $. Suppose $ \chi_{m_0}: U_{m_0} \to T_{m_0} M $ is a $ C^2 $ chart with $ \chi(m_0) = 0 $ and $ V_{m_0} = \chi^{-1}_{m_0}(T_{m_0} M(\epsilon_{m_0})) \subset U_{m_0} $,  where $ \epsilon_{m_0} > 0 $. Assume the connection $ \mathcal{C} $ is uniformly Lipschitz around $ M_1 $ in the following sense:
		\[
		\lip \widehat{\varGamma}^{m_0}_{(m,\cdot)} \leq C_{m_0} < \infty, ~m \in V_{m_0},
		\]
		where $ C_{m_0} > 0 $ is a constant independent of $ m \in V_{m_0} $. Then we have $ \widehat{\varGamma}^{m_0}_{(m,x)} = (D\phi^{m_0}_{m}(x))^{-1} \varGamma^{m_0}_{(m, x)} $, where $ \varGamma^{m_0}_{(m, x)} $ is the Christoffel map of $ \mathcal{C} $ in $ \phi^{m_0} $. Now there are $ C^1 $ maps $ \psi^{m_0}: V_{m_0} \times X_{m_0} \to X $, $ m_0 \in M_1 $, that form a family of $ C^1 $ normal bundle charts on $ M_1 $, denoted by $ \mathcal{A}_0 $. Moreover, if $ X $ has $ \varepsilon $-almost uniform $ C^{0,1} $-fiber trivializations on $ M_1 $ with respect to $ \mathcal{A} $ (see \autoref{uniform lip bundle}), and $ \sup_{m_0 \in M_1} C_{m_0} < \infty $, $ \inf_{m_0 \in M_1}\epsilon_{m_0} > 0 $, then $ X $ also has $ C \varepsilon $-almost uniform $ C^{0,1} $-fiber trivializations on $ M_1 $ with respect to $ \mathcal{A}_0 $ where $ C > 0 $.
	\end{enumerate}
\end{lem}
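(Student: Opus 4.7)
My plan is to construct $\psi^{m_0}_n$ by parallel-translating $x$ along the radial path from $m_0$ to $n$, following the sketch given after \eqref{parallel} in the appendix. Concretely, for $n$ in a sufficiently small neighborhood of $m_0$ set $u_n(t) = \chi^{-1}_{m_0}(t\chi_{m_0}(n))$, $t \in [0,1]$, and for $x \in X_{m_0}$ solve the Banach-space ODE
\begin{equation*}
g'(t) = -\widehat{\varGamma}^{m_0}_{(u_n(t),\,g(t))}\bigl(\dot u_n(t)\bigr), \qquad g(0) = x,
\end{equation*}
where $\dot u_n(t) = D\chi^{-1}_{m_0}(t\chi_{m_0}(n))\chi_{m_0}(n) \in T_{u_n(t)}M$. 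Since $\mathcal{C}$ is $C^1$ and (under the standing hypotheses) Lipschitz in the fibre variable, standard ODE theory yields a unique solution $g_{n,x}$ on $[0,1]$ with $C^1$-dependence on parameters, defined for $(n,x)$ in an open neighborhood of $\{m_0\}\times X_{m_0}$ that is uniform on bounded fibre sets. Setting $\psi^{m_0}_n(x) \triangleq \phi^{m_0}_n\bigl(g_{n,x}(1)\bigr)$ produces a $C^1$ map, and because $\dot u_{m_0} \equiv 0$ forces $g_{m_0,x} \equiv x$, one has $\psi^{m_0}_{m_0} = \id$, making $(V,\psi^{m_0})$ a local $C^1$ bundle chart.

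The key step is verifying normality, i.e.\ $\nabla_{m_0}\psi^{m_0}_{m_0} = 0$. Differentiating $\psi^{m_0}_{u_n(t)}(x) = \phi^{m_0}_{u_n(t)}(g_{n,x}(t))$ at $t=0$ and using the local form of $\mathcal{C}$ gives
\begin{equation*}
\nabla_{m_0}\psi^{m_0}_{m_0}(x)\,\dot u_n(0) \;=\; D\phi^{m_0}_{m_0}(x)\Bigl[\widehat{\varGamma}^{m_0}_{(m_0,x)}\dot u_n(0) + g'(0)\Bigr] \;=\; 0
\end{equation*}
by the ODE. Since $\dot u_n(0) = \chi_{m_0}(n)$ ranges over an open set of $T_{m_0}M$ as $n$ varies, the full covariant derivative vanishes. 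For part (1), if $\phi^{m_0}$ is a vector bundle chart and $\mathcal{C}$ is a linear connection, then $\widehat{\varGamma}^{m_0}_{(m,\cdot)}(v,x)$ is bilinear in $(v,x)$, so the ODE is linear in $g$ and the flow $x \mapsto g_{n,x}(1)$ is a bounded linear isomorphism; composition with the linear $\phi^{m_0}_n$ yields a linear (hence vector) normal bundle chart.

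For part (2), Gronwall applied to the difference of two solutions of the ODE (and to the time-reversed flow controlling the inverse) yields
\begin{equation*}
\lip\bigl(g_{n,\cdot}(1)\bigr)^{\pm 1} \;\leq\; e^{\kappa\, C_{m_0}\,|\chi_{m_0}(n)|},
\end{equation*}
where $\kappa$ depends only on the uniform bound $\sup_{m_0 \in M_1}\sup_{m \in V_{m_0}}|D\chi^{-1}_{m_0}(\chi_{m_0}(m))|$, which is finite under the standing assumption that $\chi_{m_0}$ has uniform size $\sup_{m_0}\epsilon_{m_0} > 0$ and bounded derivatives. Combined with the $\varepsilon$-almost uniform $C^{0,1}$-fibre trivialization of $\mathcal{A}$ (giving $\lip(\phi^{m_0}_n)^{\pm 1} \leq 1+\varepsilon$), one obtains, on a $m_0$-independent neighborhood of $m_0$,
\begin{equation*}
\lip(\psi^{m_0}_n)^{\pm 1} \;\leq\; (1+\varepsilon)\,e^{\kappa\,C_{m_0}\,|\chi_{m_0}(n)|} \;\leq\; 1 + C\varepsilon,
\end{equation*}
with $C$ depending only on $\sup_{m_0}C_{m_0}$, $\sup_{m_0}\epsilon_{m_0}$ and $\kappa$. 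The main obstacle, and the place where each uniformity hypothesis is essential, is securing that the ODE has a solution on all of $[0,1]$ over a \emph{single} $m_0$-independent neighborhood of $m_0$ (rather than shrinking with $m_0$): this uses the uniform Lipschitz bound $\sup_{m_0}C_{m_0}<\infty$ together with $\sup_{m_0}\epsilon_{m_0} > 0$ to obtain uniform global existence, after which the uniform Lipschitz estimates on $\psi^{m_0}$ follow mechanically.
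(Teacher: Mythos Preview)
Your proposal is correct and follows essentially the same approach as the paper: construct $\psi^{m_0}$ by parallel-translating along the radial paths $u_n(t)=\chi_{m_0}^{-1}(t\chi_{m_0}(n))$ via the ODE \eqref{parallel}, then verify normality by differentiating at $t=0$. The paper's proof is just a one-line reference back to that discussion; your Gronwall argument for the Lipschitz bounds in part (2) simply spells out what the paper leaves implicit.
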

\begin{proof}
	All we need is to consider \eqref{parallel}. Also, since $ \mathcal{C} $ is $ C^1 $ and $ \phi $ is $ C^2 $, one indeed sees that $ \psi $ is $ C^1 $, so the covariant derivative of $ \psi $ exists.
\end{proof}

\section{Bump functions and blid maps}\label{bump}

A \emph{bump function} on a Banach space $ X $ is a function with nonempty bounded support. The existence of a special bump function will indicate a special geometrical property of $ X $. As usual, one uses bump functions to truncate a map. A Lipschitz bump function always exists in every Banach space. In order to apply our regularity results in \autoref{stateRegularity} to the local case, a $ C^{0,1} \cap C^{1} $ or $ C^{1,1} $ bump function is needed (see e.g. \autoref{thm:fake1} or \autoref{thm:fake2}). In general Banach spaces, such bump functions may not exist. We list some classes of Banach spaces which have such bump functions; see \cite{DGZ93} for more details.

$ \bullet $ If a Banach space has a $ C^1 $ (resp. $ C^{1,1} $) norm away from the origin, then it admits a $ C^{0,1} \cap C^1 $ (resp. $ C^{1,1} $) bump function. For example (i) every Banach space $ X $ with separable dual has a $ C^1 $ norm away from the origin; (ii) the Hilbertian norm in each Hilbert space is a $ C^\infty \cap C^{k,1} $ norm away from the origin; (iii) the canonical norms on $ L^{p}(\mathbb{R}^n) $ (or $ l^p(\mathbb{R}^n) $), $ 1 \leq p < \infty $, are $ C^\infty \cap C^{k,1} $ away from $ 0 $ if $ p $ is even, $ C^{p-1,1} $ if $ p $ is odd, and $ C^{[p], p - [p]} $ if $ p $ is not an integer.

Smooth bump functions do not exist in the continuous space $ C[0,1] $, but this space is sometimes important in the study of differential equations such as delay equations and reaction-diffusion equations. In order to avoid using bump functions, following \cite{BR17} we introduce blid maps.

$ \bullet $ A $ C^{k,\alpha} $ \emph{blid map} for a Banach space $ X $ is a global \textbf{b}ounded \textbf{l}ocal \textbf{id}entity-at-zero $ C^{k,\alpha} $ map $ b_{\varepsilon}: X \to X $ where $ b_{\varepsilon}(x) = x $ if $ x \in X(\varepsilon) $.

Obviously, \textbf{(a)} any Banach space admitting $ C^{k,\alpha} $ bump functions also has a $ C^{k,\alpha} $ blid map. \textbf{(b)} $ C(\Omega) $ admits $ C^{k,1} $ blid maps ($ k = 0,1,\ldots,\infty $), where $ \Omega $ is a compact Hausdorff space; they are constructed in \cite{BR17} as
\[
b_{\varepsilon} (x) (t) = h(x(t))x(t), ~t \in \Omega,
\]
where $ h: \mathbb{R} \to [0,1] $ is a $ C^{\infty} $ bump function in $ \mathbb{R} $ with $ h(t) = t $ if $ |t| \leq \varepsilon $. \textbf{(c)} If $ X $ is complemented in $ C(\Omega) $ or is an ideal of $ C(\Omega) $, then $ X $ has a $ C^{k,1} $ blid map.

$ C^{k,\alpha} $ blid maps are used to truncate maps in the same way as bump functions do; for more details, see \cite{BR17}. So in this paper, one can use $ C^{k,\alpha} $ blid maps instead of $ C^{k,\alpha} $ bump functions where needed (in particular, in \autoref{thm:fake1} and \autoref{thm:fake2}), which makes our results applicable to the spaces listed in \textbf{(a)}--\textbf{(c)}.

\end{appendices}

\begin{bibdiv}
\begin{biblist}

\bib{Aki93}{book}{
      author={Akin, Ethan},
       title={The general topology of dynamical systems},
      series={Graduate Studies in Mathematics},
   publisher={American Mathematical Society, Providence, RI},
        date={1993},
      volume={1},
        ISBN={0-8218-3800-8},
      review={\MR{1219737}},
}

\bib{Ama15}{incollection}{
      author={Amann, Herbert},
       title={Uniformly regular and singular {R}iemannian manifolds},
        date={2015},
   booktitle={Elliptic and parabolic equations},
      series={Springer Proc. Math. Stat.},
      volume={119},
   publisher={Springer, Cham},
       pages={1\ndash 43},
         url={https://doi.org/10.1007/978-3-319-12547-3_1},
      review={\MR{3375165}},
}

\bib{AMR88}{book}{
      author={Abraham, R.},
      author={Marsden, J.~E.},
      author={Ratiu, T.},
       title={Manifolds, tensor analysis, and applications},
     edition={Second},
      series={Applied Mathematical Sciences},
   publisher={Springer-Verlag, New York},
        date={1988},
      volume={75},
        ISBN={0-387-96790-7},
         url={https://doi.org/10.1007/978-1-4612-1029-0},
      review={\MR{960687}},
}

\bib{ASV13}{article}{
      author={Avila, Artur},
      author={Santamaria, Jimmy},
      author={Viana, Marcelo},
       title={Holonomy invariance: rough regularity and applications to
  {L}yapunov exponents},
        date={2013},
        ISSN={0303-1179},
     journal={Ast\'erisque},
      number={358},
       pages={13\ndash 74},
      review={\MR{3203216}},
}

\bib{AV10}{article}{
      author={Avila, Artur},
      author={Viana, Marcelo},
       title={Extremal {L}yapunov exponents: an invariance principle and
  applications},
        date={2010},
        ISSN={0020-9910},
     journal={Invent. Math.},
      volume={181},
      number={1},
       pages={115\ndash 189},
         url={https://doi.org/10.1007/s00222-010-0243-1},
      review={\MR{2651382}},
}

\bib{BC16}{article}{
      author={Bonatti, Christian},
      author={Crovisier, Sylvain},
       title={Center manifolds for partially hyperbolic sets without strong
  unstable connections},
        date={2016},
        ISSN={1474-7480},
     journal={J. Inst. Math. Jussieu},
      volume={15},
      number={4},
       pages={785\ndash 828},
         url={https://doi.org/10.1017/S1474748015000055},
      review={\MR{3569077}},
}

\bib{BJ89}{incollection}{
      author={Bates, Peter~W.},
      author={Jones, Christopher K. R.~T.},
       title={Invariant manifolds for semilinear partial differential
  equations},
        date={1989},
   booktitle={Dynamics reported, {V}ol.\ 2},
      series={Dynam. Report. Ser. Dynam. Systems Appl.},
      volume={2},
   publisher={Wiley, Chichester},
       pages={1\ndash 38},
      review={\MR{1000974}},
}

\bib{BLZ00}{article}{
      author={Bates, Peter~W.},
      author={Lu, Kening},
      author={Zeng, Chongchun},
       title={Invariant foliations near normally hyperbolic invariant manifolds
  for semiflows},
        date={2000},
        ISSN={0002-9947},
     journal={Trans. Amer. Math. Soc.},
      volume={352},
      number={10},
       pages={4641\ndash 4676},
         url={http://dx.doi.org/10.1090/S0002-9947-00-02503-4},
      review={\MR{1675237}},
}

\bib{BLZ08}{article}{
      author={Bates, Peter~W.},
      author={Lu, Kening},
      author={Zeng, Chongchun},
       title={Approximately invariant manifolds and global dynamics of spike
  states},
        date={2008},
        ISSN={0020-9910},
     journal={Invent. Math.},
      volume={174},
      number={2},
       pages={355\ndash 433},
         url={http://dx.doi.org/10.1007/s00222-008-0141-y},
      review={\MR{2439610}},
}

\bib{BLZ98}{article}{
      author={Bates, Peter~W.},
      author={Lu, Kening},
      author={Zeng, Chongchun},
       title={Existence and persistence of invariant manifolds for semiflows in
  {B}anach space},
        date={1998},
        ISSN={0065-9266},
     journal={Mem. Amer. Math. Soc.},
      volume={135},
      number={645},
       pages={viii+129},
         url={http://dx.doi.org/10.1090/memo/0645},
      review={\MR{1445489}},
}

\bib{BLZ99}{article}{
      author={Bates, Peter~W.},
      author={Lu, Kening},
      author={Zeng, Chongchun},
       title={Persistence of overflowing manifolds for semiflow},
        date={1999},
        ISSN={0010-3640},
     journal={Comm. Pure Appl. Math.},
      volume={52},
      number={8},
       pages={983\ndash 1046},
  url={http://dx.doi.org/10.1002/(SICI)1097-0312(199908)52:8<983::AID-CPA4>3.3.CO;2-F},
      review={\MR{1686965}},
}

\bib{BR17}{incollection}{
      author={Belitskii, Genrich},
      author={Rayskin, Victoria},
       title={A new method of extension of local maps of {B}anach spaces.
  {A}pplications and examples},
        date={2019},
   booktitle={Functional analysis and geometry: {S}elim {G}rigorievich {K}rein
  centennial},
      series={Contemp. Math.},
      volume={733},
   publisher={Amer. Math. Soc., Providence, RI},
       pages={61\ndash 72},
         url={https://doi.org/10.1090/conm/733/14733},
      review={\MR{3985267}},
}

\bib{BW10}{article}{
      author={Burns, Keith},
      author={Wilkinson, Amie},
       title={On the ergodicity of partially hyperbolic systems},
        date={2010},
        ISSN={0003-486X},
     journal={Ann. of Math. (2)},
      volume={171},
      number={1},
       pages={451\ndash 489},
         url={https://doi.org/10.4007/annals.2010.171.451},
      review={\MR{2630044}},
}

\bib{BY17a}{article}{
      author={Blumenthal, Alex},
      author={Young, Lai-Sang},
       title={Absolute continuity of stable foliations for mappings of {B}anach
  spaces},
        date={2017},
        ISSN={0010-3616},
     journal={Comm. Math. Phys.},
      volume={354},
      number={2},
       pages={591\ndash 619},
         url={https://doi.org/10.1007/s00220-017-2912-z},
      review={\MR{3663618}},
}

\bib{BY17}{article}{
      author={Blumenthal, Alex},
      author={Young, Lai-Sang},
       title={Entropy, volume growth and {SRB} measures for {B}anach space
  mappings},
        date={2017},
        ISSN={0020-9910},
     journal={Invent. Math.},
      volume={207},
      number={2},
       pages={833\ndash 893},
         url={https://doi.org/10.1007/s00222-016-0678-0},
      review={\MR{3595937}},
}

\bib{Cha02}{article}{
      author={Chaperon, M.},
       title={Invariant manifolds revisited},
        date={2002},
        ISSN={0371-9685},
     journal={Tr. Mat. Inst. Steklova},
      volume={236},
      number={Differ. Uravn. i Din. Sist.},
       pages={428\ndash 446},
      review={\MR{1931043}},
}

\bib{Cha04}{article}{
      author={Chaperon, Marc},
       title={Stable manifolds and the {P}erron-{I}rwin method},
        date={2004},
        ISSN={0143-3857},
     journal={Ergodic Theory Dynam. Systems},
      volume={24},
      number={5},
       pages={1359\ndash 1394},
         url={http://dx.doi.org/10.1017/S0143385703000701},
      review={\MR{2104589}},
}

\bib{Cha08}{article}{
      author={Chaperon, Marc},
       title={The {L}ipschitzian core of some invariant manifold theorems},
        date={2008},
        ISSN={0143-3857},
     journal={Ergodic Theory Dynam. Systems},
      volume={28},
      number={5},
       pages={1419\ndash 1441},
         url={http://dx.doi.org/10.1017/S0143385707000910},
      review={\MR{2449535}},
}

\bib{Che18d}{manual}{
      author={Chen, Deliang},
       title={{E}xistence and regularity of invariant graphs for cocycles in
  bundles: non-uniformly partial hyperbolicity case},
        date={2018},
        note={in preparation},
}

\bib{Che18b}{manual}{
      author={Chen, Deliang},
       title={{I}nvariant manifolds of approximately normally hyperbolic
  manifolds in {B}anach spaces},
        date={2019},
        note={submitted},
}

\bib{Che18}{article}{
      author={{Chen}, Deliang},
       title={{Invariant manifolds of partially normally hyperbolic invariant
  manifolds in Banach spaces}},
        date={2019May},
     journal={arXiv e-prints},
       pages={arXiv:1905.09764},
      eprint={1905.09764},
}

\bib{Che18c}{article}{
      author={Chen, Deliang},
       title={{T}he exponential dichotomy and invariant manifolds for some
  classes of differential equations},
        date={2019-03},
     journal={arXiv e-prints},
      eprint={1903.08040},
}

\bib{CHT97}{article}{
      author={Chen, Xu-Yan},
      author={Hale, Jack~K.},
      author={Tan, Bin},
       title={Invariant foliations for {$C^1$} semigroups in {B}anach spaces},
        date={1997},
        ISSN={0022-0396},
     journal={J. Differential Equations},
      volume={139},
      number={2},
       pages={283\ndash 318},
         url={http://dx.doi.org/10.1006/jdeq.1997.3255},
      review={\MR{1472350}},
}

\bib{CL88}{article}{
      author={Chow, Shui-Nee},
      author={Lu, Kening},
       title={Invariant manifolds for flows in {B}anach spaces},
        date={1988},
        ISSN={0022-0396},
     journal={J. Differential Equations},
      volume={74},
      number={2},
       pages={285\ndash 317},
         url={http://dx.doi.org/10.1016/0022-0396(88)90007-1},
      review={\MR{952900}},
}

\bib{CL97}{article}{
      author={Chicone, C.},
      author={Latushkin, Y.},
       title={Center manifolds for infinite-dimensional nonautonomous
  differential equations},
        date={1997},
        ISSN={0022-0396},
     journal={J. Differential Equations},
      volume={141},
      number={2},
       pages={356\ndash 399},
         url={http://dx.doi.org/10.1006/jdeq.1997.3343},
      review={\MR{1488358}},
}

\bib{CL99}{book}{
      author={Chicone, Carmen},
      author={Latushkin, Yuri},
       title={Evolution semigroups in dynamical systems and differential
  equations},
      series={Mathematical Surveys and Monographs},
   publisher={American Mathematical Society, Providence, RI},
        date={1999},
      volume={70},
        ISBN={0-8218-1185-1},
         url={http://dx.doi.org/10.1090/surv/070},
      review={\MR{1707332}},
}

\bib{CLL91}{article}{
      author={Chow, Shui-Nee},
      author={Lin, Xiao-Biao},
      author={Lu, Kening},
       title={Smooth invariant foliations in infinite-dimensional spaces},
        date={1991},
        ISSN={0022-0396},
     journal={J. Differential Equations},
      volume={94},
      number={2},
       pages={266\ndash 291},
      review={\MR{1137616}},
}

\bib{CLY00a}{article}{
      author={Chow, Shui-Nee},
      author={Liu, Weishi},
      author={Yi, Yingfei},
       title={Center manifolds for invariant sets},
        date={2000},
        ISSN={0022-0396},
     journal={J. Differential Equations},
      volume={168},
      number={2},
       pages={355\ndash 385},
         url={http://dx.doi.org/10.1006/jdeq.2000.3890},
        note={Special issue in celebration of Jack K. Hale's 70th birthday,
  Part 2 (Atlanta, GA/Lisbon, 1998)},
      review={\MR{1808454}},
}

\bib{CLY00}{article}{
      author={Chow, Shui-Nee},
      author={Liu, Weishi},
      author={Yi, Yingfei},
       title={Center manifolds for smooth invariant manifolds},
        date={2000},
        ISSN={0002-9947},
     journal={Trans. Amer. Math. Soc.},
      volume={352},
      number={11},
       pages={5179\ndash 5211},
         url={http://dx.doi.org/10.1090/S0002-9947-00-02443-0},
      review={\MR{1650077}},
}

\bib{Cot11}{article}{
      author={Cotton, \'Emile},
       title={Sur les solutions asymptotiques des \'equations
  diff\'erentielles},
        date={1911},
        ISSN={0012-9593},
     journal={Ann. Sci. \'Ecole Norm. Sup. (3)},
      volume={28},
       pages={473\ndash 521},
         url={http://www.numdam.org/item?id=ASENS_1911_3_28__473_0},
      review={\MR{1509144}},
}

\bib{CY94}{article}{
      author={Chow, Shui-Nee},
      author={Yi, Yingfei},
       title={Center manifold and stability for skew-product flows},
        date={1994},
        ISSN={1040-7294},
     journal={J. Dynam. Differential Equations},
      volume={6},
      number={4},
       pages={543\ndash 582},
         url={https://doi.org/10.1007/BF02218847},
      review={\MR{1303274}},
}

\bib{DGZ93}{book}{
      author={Deville, Robert},
      author={Godefroy, Gilles},
      author={Zizler, V\'aclav},
       title={Smoothness and renormings in {B}anach spaces},
      series={Pitman Monographs and Surveys in Pure and Applied Mathematics},
   publisher={Longman Scientific \& Technical, Harlow; copublished in the
  United States with John Wiley \& Sons, Inc., New York},
        date={1993},
      volume={64},
        ISBN={0-582-07250-6},
      review={\MR{1211634}},
}

\bib{dlLla09}{article}{
      author={de~la Llave, Rafael},
       title={A smooth center manifold theorem which applies to some ill-posed
  partial differential equations with unbounded nonlinearities},
        date={2009},
        ISSN={1040-7294},
     journal={J. Dynam. Differential Equations},
      volume={21},
      number={3},
       pages={371\ndash 415},
         url={http://dx.doi.org/10.1007/s10884-009-9140-y},
      review={\MR{2538946}},
}

\bib{dlLla97}{article}{
      author={de~la Llave, Rafael},
       title={Invariant manifolds associated to nonresonant spectral
  subspaces},
        date={1997},
        ISSN={0022-4715},
     journal={J. Statist. Phys.},
      volume={87},
      number={1-2},
       pages={211\ndash 249},
         url={http://dx.doi.org/10.1007/BF02181486},
      review={\MR{1453740}},
}

\bib{dlLW95}{article}{
      author={de~la Llave, Rafael},
      author={Wayne, C.~Eugene},
       title={On {I}rwin's proof of the pseudostable manifold theorem},
        date={1995},
        ISSN={0025-5874},
     journal={Math. Z.},
      volume={219},
      number={2},
       pages={301\ndash 321},
         url={http://dx.doi.org/10.1007/BF02572367},
      review={\MR{1337223}},
}

\bib{dMel73}{article}{
      author={de~Melo, W.},
       title={Structural stability of diffeomorphisms on two-manifolds},
        date={1973},
        ISSN={0020-9910},
     journal={Invent. Math.},
      volume={21},
       pages={233\ndash 246},
         url={https://doi.org/10.1007/BF01390199},
      review={\MR{0339277}},
}

\bib{DPL88}{article}{
      author={Da~Prato, G.},
      author={Lunardi, A.},
       title={Stability, instability and center manifold theorem for fully
  nonlinear autonomous parabolic equations in {B}anach space},
        date={1988},
        ISSN={0003-9527},
     journal={Arch. Rational Mech. Anal.},
      volume={101},
      number={2},
       pages={115\ndash 141},
         url={https://doi.org/10.1007/BF00251457},
      review={\MR{921935}},
}

\bib{DSS16}{article}{
      author={Disconzi, Marcelo},
      author={Shao, Yuanzhen},
      author={Simonett, Gieri},
       title={Some remarks on uniformly regular {R}iemannian manifolds},
        date={2016},
        ISSN={0025-584X},
     journal={Math. Nachr.},
      volume={289},
      number={2-3},
       pages={232\ndash 242},
         url={https://doi.org/10.1002/mana.201400354},
      review={\MR{3458304}},
}

\bib{Eic91}{article}{
      author={Eichhorn, J\"urgen},
       title={The boundedness of connection coefficients and their
  derivatives},
        date={1991},
        ISSN={0025-584X},
     journal={Math. Nachr.},
      volume={152},
       pages={145\ndash 158},
         url={https://doi.org/10.1002/mana.19911520113},
      review={\MR{1121230}},
}

\bib{ElB12}{article}{
      author={ElBialy, Mohamed~Sami},
       title={Stable and unstable manifolds for hyperbolic bi-semigroups},
        date={2012},
        ISSN={0022-1236},
     journal={J. Funct. Anal.},
      volume={262},
      number={5},
       pages={2516\ndash 2560},
         url={http://dx.doi.org/10.1016/j.jfa.2011.11.031},
      review={\MR{2876413}},
}

\bib{Eld13}{book}{
      author={Eldering, Jaap},
       title={Normally hyperbolic invariant manifolds},
      series={Atlantis Studies in Dynamical Systems},
   publisher={Atlantis Press, Paris},
        date={2013},
      volume={2},
        ISBN={978-94-6239-002-7; 978-94-6239-003-4},
         url={https://doi.org/10.2991/978-94-6239-003-4},
        note={The noncompact case},
      review={\MR{3098498}},
}

\bib{EW91}{article}{
      author={Eckmann, J.-P.},
      author={Wayne, C.~E.},
       title={Propagating fronts and the center manifold theorem},
        date={1991},
        ISSN={0010-3616},
     journal={Comm. Math. Phys.},
      volume={136},
      number={2},
       pages={285\ndash 307},
         url={http://projecteuclid.org/euclid.cmp/1104202352},
      review={\MR{1096117}},
}

\bib{FdlLM06}{article}{
      author={Fontich, Ernest},
      author={de~la Llave, Rafael},
      author={Mart{\'{\i}}n, Pau},
       title={Invariant pre-foliations for non-resonant non-uniformly
  hyperbolic systems},
        date={2006},
        ISSN={0002-9947},
     journal={Trans. Amer. Math. Soc.},
      volume={358},
      number={3},
       pages={1317\ndash 1345 (electronic)},
         url={http://dx.doi.org/10.1090/S0002-9947-05-03840-7},
      review={\MR{2187655}},
}

\bib{Fen72}{article}{
      author={Fenichel, Neil},
       title={Persistence and smoothness of invariant manifolds for flows},
        date={1971/1972},
        ISSN={0022-2518},
     journal={Indiana Univ. Math. J.},
      volume={21},
       pages={193\ndash 226},
      review={\MR{0287106}},
}

\bib{Fen74}{article}{
      author={Fenichel, Neil},
       title={Asymptotic stability with rate conditions},
        date={1973/74},
        ISSN={0022-2518},
     journal={Indiana Univ. Math. J.},
      volume={23},
       pages={1109\ndash 1137},
      review={\MR{0339276}},
}

\bib{Fen77}{article}{
      author={Fenichel, Neil},
       title={Asymptotic stability with rate conditions. {II}},
        date={1977},
        ISSN={0022-2518},
     journal={Indiana Univ. Math. J.},
      volume={26},
      number={1},
       pages={81\ndash 93},
      review={\MR{0426056}},
}

\bib{Gal93}{article}{
      author={Gallay, Th.},
       title={A center-stable manifold theorem for differential equations in
  {B}anach spaces},
        date={1993},
        ISSN={0010-3616},
     journal={Comm. Math. Phys.},
      volume={152},
      number={2},
       pages={249\ndash 268},
         url={http://projecteuclid.org/euclid.cmp/1104252409},
      review={\MR{1210168}},
}

\bib{GJ07}{article}{
      author={Gut\'u, Olivia},
      author={Jaramillo, Jes\'us~A.},
       title={Global homeomorphisms and covering projections on metric spaces},
        date={2007},
        ISSN={0025-5831},
     journal={Math. Ann.},
      volume={338},
      number={1},
       pages={75\ndash 95},
         url={https://doi.org/10.1007/s00208-006-0068-9},
      review={\MR{2295505}},
}

\bib{Had01}{article}{
      author={Hadamard, Jacques},
       title={Sur l'iteration et les solutions asymptotiques des equations
  differentielles},
        date={1901},
     journal={Bull. Soc. Math. France},
      volume={29},
       pages={224–228},
}

\bib{Hal61}{article}{
      author={Hale, Jack~K.},
       title={Integral manifolds of perturbed differential systems},
        date={1961},
        ISSN={0003-486X},
     journal={Ann. of Math. (2)},
      volume={73},
       pages={496\ndash 531},
         url={https://doi.org/10.2307/1970314},
      review={\MR{0123786}},
}

\bib{Has97}{article}{
      author={Hasselblatt, Boris},
       title={Regularity of the {A}nosov splitting. {II}},
        date={1997},
        ISSN={0143-3857},
     journal={Ergodic Theory Dynam. Systems},
      volume={17},
      number={1},
       pages={169\ndash 172},
         url={https://doi.org/10.1017/S0143385797069757},
      review={\MR{1440773}},
}

\bib{Hen81}{book}{
      author={Henry, Daniel},
       title={Geometric theory of semilinear parabolic equations},
      series={Lecture Notes in Mathematics},
   publisher={Springer-Verlag, Berlin-New York},
        date={1981},
      volume={840},
        ISBN={3-540-10557-3},
      review={\MR{610244}},
}

\bib{HPS77}{book}{
      author={Hirsch, M.~W.},
      author={Pugh, C.~C.},
      author={Shub, M.},
       title={Invariant manifolds},
      series={Lecture Notes in Mathematics, Vol. 583},
   publisher={Springer-Verlag, Berlin-New York},
        date={1977},
      review={\MR{0501173}},
}

\bib{HW99}{article}{
      author={Hasselblatt, Boris},
      author={Wilkinson, Amie},
       title={Prevalence of non-{L}ipschitz {A}nosov foliations},
        date={1999},
        ISSN={0143-3857},
     journal={Ergodic Theory Dynam. Systems},
      volume={19},
      number={3},
       pages={643\ndash 656},
         url={https://doi.org/10.1017/S0143385799133868},
      review={\MR{1695913}},
}

\bib{Irw70}{article}{
      author={Irwin, M.~C.},
       title={A classification of elementary cycles},
        date={1970},
        ISSN={0040-9383},
     journal={Topology},
      volume={9},
       pages={35\ndash 47},
         url={https://doi.org/10.1016/0040-9383(70)90047-9},
      review={\MR{0252789}},
}

\bib{Irw80}{article}{
      author={Irwin, M.~C.},
       title={A new proof of the pseudostable manifold theorem},
        date={1980},
        ISSN={0024-6107},
     journal={J. London Math. Soc. (2)},
      volume={21},
      number={3},
       pages={557\ndash 566},
         url={https://doi.org/10.1112/jlms/s2-21.3.557},
      review={\MR{577730}},
}

\bib{JS11}{article}{
      author={Jim\'enez-Sevilla, M.},
      author={S\'anchez-Gonz\'alez, L.},
       title={On some problems on smooth approximation and smooth extension of
  {L}ipschitz functions on {B}anach-{F}insler manifolds},
        date={2011},
        ISSN={0362-546X},
     journal={Nonlinear Anal.},
      volume={74},
      number={11},
       pages={3487\ndash 3500},
         url={https://doi.org/10.1016/j.na.2011.03.004},
      review={\MR{2803076}},
}

\bib{Kel67}{article}{
      author={Kelley, Al},
       title={The stable, center-stable, center, center-unstable, unstable
  manifolds},
        date={1967},
        ISSN={0022-0396},
     journal={J. Differential Equations},
      volume={3},
       pages={546\ndash 570},
      review={\MR{0221044}},
}

\bib{KH95}{book}{
      author={Katok, Anatole},
      author={Hasselblatt, Boris},
       title={Introduction to the modern theory of dynamical systems},
      series={Encyclopedia of Mathematics and its Applications},
   publisher={Cambridge University Press, Cambridge},
        date={1995},
      volume={54},
        ISBN={0-521-34187-6},
         url={http://dx.doi.org/10.1017/CBO9780511809187},
        note={With a supplementary chapter by Katok and Leonardo Mendoza},
      review={\MR{1326374}},
}

\bib{Kin68}{article}{
      author={Kingman, J. F.~C.},
       title={The ergodic theory of subadditive stochastic processes},
        date={1968},
        ISSN={0035-9246},
     journal={J. Roy. Statist. Soc. Ser. B},
      volume={30},
       pages={499\ndash 510},
  url={http://links.jstor.org/sici?sici=0035-9246(1968)30:3<499:TETOSS>2.0.CO;2-E&origin=MSN},
      review={\MR{0254907}},
}

\bib{Kli95}{book}{
      author={Klingenberg, Wilhelm P.~A.},
       title={Riemannian geometry},
     edition={Second},
      series={De Gruyter Studies in Mathematics},
   publisher={Walter de Gruyter \& Co., Berlin},
        date={1995},
      volume={1},
        ISBN={3-11-014593-6},
         url={https://doi.org/10.1515/9783110905120},
      review={\MR{1330918}},
}

\bib{KP90}{book}{
      author={Kirchgraber, U.},
      author={Palmer, K.~J.},
       title={Geometry in the neighborhood of invariant manifolds of maps and
  flows and linearization},
      series={Pitman Research Notes in Mathematics Series},
   publisher={Longman Scientific \& Technical, Harlow; copublished in the
  United States with John Wiley \& Sons, Inc., New York},
        date={1990},
      volume={233},
        ISBN={0-582-06268-3},
      review={\MR{1068954}},
}

\bib{Lia47}{book}{
      author={Liapounoff, A.},
       title={Probl\`eme {G}\'en\'eral de la {S}tabilit\'e du {M}ouvement},
      series={Annals of Mathematics Studies, no. 17},
   publisher={Princeton University Press, Princeton, N. J.; Oxford University
  Press, London},
        date={1947},
      review={\MR{0021186}},
}

\bib{LL10}{article}{
      author={Lian, Zeng},
      author={Lu, Kening},
       title={Lyapunov exponents and invariant manifolds for random dynamical
  systems in a {B}anach space},
        date={2010},
        ISSN={0065-9266},
     journal={Mem. Amer. Math. Soc.},
      volume={206},
      number={967},
       pages={vi+106},
         url={https://doi.org/10.1090/S0065-9266-10-00574-0},
      review={\MR{2674952}},
}

\bib{LLSY16}{article}{
      author={Li, Michael~Y.},
      author={Liu, Weishi},
      author={Shan, Chunhua},
      author={Yi, Yingfei},
       title={Turning points and relaxation oscillation cycles in simple
  epidemic models},
        date={2016},
        ISSN={0036-1399},
     journal={SIAM J. Appl. Math.},
      volume={76},
      number={2},
       pages={663\ndash 687},
         url={https://doi.org/10.1137/15M1038785},
      review={\MR{3477765}},
}

\bib{LP08}{article}{
      author={Latushkin, Yuri},
      author={Pogan, Alin},
       title={The dichotomy theorem for evolution bi-families},
        date={2008},
        ISSN={0022-0396},
     journal={J. Differential Equations},
      volume={245},
      number={8},
       pages={2267\ndash 2306},
         url={http://dx.doi.org/10.1016/j.jde.2008.01.023},
      review={\MR{2446192}},
}

\bib{Lu91}{article}{
      author={Lu, Kening},
       title={A {H}artman-{G}robman theorem for scalar reaction-diffusion
  equations},
        date={1991},
        ISSN={0022-0396},
     journal={J. Differential Equations},
      volume={93},
      number={2},
       pages={364\ndash 394},
         url={http://dx.doi.org/10.1016/0022-0396(91)90017-4},
      review={\MR{1125224}},
}

\bib{LYZ13}{article}{
      author={Lian, Zeng},
      author={Young, Lai-Sang},
      author={Zeng, Chongchun},
       title={Absolute continuity of stable foliations for systems on {B}anach
  spaces},
        date={2013},
        ISSN={0022-0396},
     journal={J. Differential Equations},
      volume={254},
      number={1},
       pages={283\ndash 308},
         url={https://doi.org/10.1016/j.jde.2012.08.021},
      review={\MR{2983052}},
}

\bib{LZ17}{article}{
      author={{Lin}, Z.},
      author={{Zeng}, C.},
       title={{Instability, index theorem, and exponential trichotomy for
  Linear Hamiltonian PDEs}},
        date={2017-03},
     journal={ArXiv e-prints},
      eprint={1703.04016},
        note={To appear in Memoirs AMS},
}

\bib{Man83}{incollection}{
      author={Ma\~n\'e, Ricardo},
       title={Lyapounov exponents and stable manifolds for compact
  transformations},
        date={1983},
   booktitle={Geometric dynamics ({R}io de {J}aneiro, 1981)},
      series={Lecture Notes in Math.},
      volume={1007},
   publisher={Springer, Berlin},
       pages={522\ndash 577},
         url={https://doi.org/10.1007/BFb0061433},
      review={\MR{730286}},
}

\bib{MS88}{article}{
      author={Mallet-Paret, John},
      author={Sell, George~R.},
       title={Inertial manifolds for reaction diffusion equations in higher
  space dimensions},
        date={1988},
        ISSN={0894-0347},
     journal={J. Amer. Math. Soc.},
      volume={1},
      number={4},
       pages={805\ndash 866},
         url={http://dx.doi.org/10.2307/1990993},
      review={\MR{943276}},
}

\bib{MR09a}{article}{
      author={Magal, Pierre},
      author={Ruan, Shigui},
       title={Center manifolds for semilinear equations with non-dense domain
  and applications to {H}opf bifurcation in age structured models},
        date={2009},
        ISSN={0065-9266},
     journal={Mem. Amer. Math. Soc.},
      volume={202},
      number={951},
       pages={vi+71},
         url={https://doi.org/10.1090/S0065-9266-09-00568-7},
      review={\MR{2559965}},
}

\bib{Nee02}{inproceedings}{
      author={Neeb, Karl-Hermann},
       title={A {C}artan-{H}adamard theorem for {B}anach-{F}insler manifolds},
        date={2002},
   booktitle={Proceedings of the {C}onference on {G}eometric and
  {C}ombinatorial {G}roup {T}heory, {P}art {II} ({H}aifa, 2000)},
      volume={95},
       pages={115\ndash 156},
         url={https://doi.org/10.1023/A:1021221029301},
      review={\MR{1950888}},
}

\bib{NP00}{article}{
      author={Nagel, Rainer},
      author={Poland, Jan},
       title={The critical spectrum of a strongly continuous semigroup},
        date={2000},
        ISSN={0001-8708},
     journal={Adv. Math.},
      volume={152},
      number={1},
       pages={120\ndash 133},
         url={https://doi.org/10.1006/aima.1998.1893},
      review={\MR{1762122}},
}

\bib{Pal66}{article}{
      author={Palais, Richard~S.},
       title={Lusternik-{S}chnirelman theory on {B}anach manifolds},
        date={1966},
        ISSN={0040-9383},
     journal={Topology},
      volume={5},
       pages={115\ndash 132},
         url={https://doi.org/10.1016/0040-9383(66)90013-9},
      review={\MR{0259955}},
}

\bib{Per29}{article}{
      author={Perron, Oskar},
       title={\"uber {S}tabilit\"at und asymptotisches {V}erhalten der
  {I}ntegrale von {D}ifferentialgleichungssystemen},
        date={1929},
        ISSN={0025-5874},
     journal={Math. Z.},
      volume={29},
      number={1},
       pages={129\ndash 160},
         url={https://doi.org/10.1007/BF01180524},
      review={\MR{1544998}},
}

\bib{Pes77}{article}{
      author={Pesin, Ja.~B.},
       title={Characteristic {L}japunov exponents, and smooth ergodic theory},
        date={1977},
        ISSN={0042-1316},
     journal={Uspehi Mat. Nauk},
      volume={32},
      number={4 (196)},
       pages={55\ndash 112, 287},
      review={\MR{0466791}},
}

\bib{Pli64}{article}{
      author={Pliss, V.~A.},
       title={A reduction principle in the theory of stability of motion},
        date={1964},
        ISSN={0373-2436},
     journal={Izv. Akad. Nauk SSSR Ser. Mat.},
      volume={28},
       pages={1297\ndash 1324},
      review={\MR{0190449}},
}

\bib{PS70}{article}{
      author={Pugh, Charles},
      author={Shub, Michael},
       title={Linearization of normally hyperbolic diffeomorphisms and flows},
        date={1970},
        ISSN={0020-9910},
     journal={Invent. Math.},
      volume={10},
       pages={187\ndash 198},
         url={https://doi.org/10.1007/BF01403247},
      review={\MR{0283825}},
}

\bib{PS97}{article}{
      author={Pugh, Charles},
      author={Shub, Michael},
       title={Stably ergodic dynamical systems and partial hyperbolicity},
        date={1997},
        ISSN={0885-064X},
     journal={J. Complexity},
      volume={13},
      number={1},
       pages={125\ndash 179},
         url={https://doi.org/10.1006/jcom.1997.0437},
      review={\MR{1449765}},
}

\bib{PSW00}{article}{
      author={Pugh, Charles},
      author={Shub, Michael},
      author={Wilkinson, Amie},
       title={Correction to: ``{H}\"older foliations'' [{D}uke {M}ath. {J}.{\bf
  86} (1997), no. 3, 517--546; {MR}1432307 (97m:58155)]},
        date={2000},
        ISSN={0012-7094},
     journal={Duke Math. J.},
      volume={105},
      number={1},
       pages={105\ndash 106},
         url={http://dx.doi.org/10.1215/S0012-7094-00-10515-7},
      review={\MR{1788044}},
}

\bib{PSW12}{article}{
      author={Pugh, Charles},
      author={Shub, Michael},
      author={Wilkinson, Amie},
       title={H\"older foliations, revisited},
        date={2012},
        ISSN={1930-5311},
     journal={J. Mod. Dyn.},
      volume={6},
      number={1},
       pages={79\ndash 120},
         url={http://dx.doi.org/10.3934/jmd.2012.6.79},
      review={\MR{2929131}},
}

\bib{PSW97}{article}{
      author={Pugh, Charles},
      author={Shub, Michael},
      author={Wilkinson, Amie},
       title={H\"older foliations},
        date={1997},
        ISSN={0012-7094},
     journal={Duke Math. J.},
      volume={86},
      number={3},
       pages={517\ndash 546},
         url={http://dx.doi.org/10.1215/S0012-7094-97-08616-6},
      review={\MR{1432307}},
}

\bib{RHRHTU12}{article}{
      author={Rodriguez~Hertz, F.},
      author={Rodriguez~Hertz, M.~A.},
      author={Tahzibi, A.},
      author={Ures, R.},
       title={Maximizing measures for partially hyperbolic systems with compact
  center leaves},
        date={2012},
        ISSN={0143-3857},
     journal={Ergodic Theory Dynam. Systems},
      volume={32},
      number={2},
       pages={825\ndash 839},
         url={https://doi.org/10.1017/S0143385711000757},
      review={\MR{2901373}},
}

\bib{Rue82}{article}{
      author={Ruelle, David},
       title={Characteristic exponents and invariant manifolds in {H}ilbert
  space},
        date={1982},
        ISSN={0003-486X},
     journal={Ann. of Math. (2)},
      volume={115},
      number={2},
       pages={243\ndash 290},
         url={https://doi.org/10.2307/1971392},
      review={\MR{647807}},
}

\bib{Shu92}{article}{
      author={Shubin, M.~A.},
       title={Spectral theory of elliptic operators on noncompact manifolds},
        date={1992},
        ISSN={0303-1179},
     journal={Ast\'erisque},
      number={207},
       pages={5, 35\ndash 108},
        note={M\'ethodes semi-classiques, Vol. 1 (Nantes, 1991)},
      review={\MR{1205177}},
}

\bib{SS01}{article}{
      author={Sandstede, Bj{\"o}rn},
      author={Scheel, Arnd},
       title={On the structure of spectra of modulated travelling waves},
        date={2001},
        ISSN={0025-584X},
     journal={Math. Nachr.},
      volume={232},
       pages={39\ndash 93},
  url={http://dx.doi.org/10.1002/1522-2616(200112)232:1<39::AID-MANA39>3.3.CO;2-X},
      review={\MR{1871473}},
}

\bib{SS94}{article}{
      author={Sacker, Robert~J.},
      author={Sell, George~R.},
       title={Dichotomies for linear evolutionary equations in {B}anach
  spaces},
        date={1994},
        ISSN={0022-0396},
     journal={J. Differential Equations},
      volume={113},
      number={1},
       pages={17\ndash 67},
         url={https://doi.org/10.1006/jdeq.1994.1113},
      review={\MR{1296160}},
}

\bib{SS99}{article}{
      author={Sandstede, B.},
      author={Scheel, A.},
       title={Essential instability of pulses and bifurcations to modulated
  travelling waves},
        date={1999},
        ISSN={0308-2105},
     journal={Proc. Roy. Soc. Edinburgh Sect. A},
      volume={129},
      number={6},
       pages={1263\ndash 1290},
         url={https://doi.org/10.1017/S0308210500019387},
      review={\MR{1728529}},
}

\bib{Sta99}{article}{
      author={Stark, Jaroslav},
       title={Regularity of invariant graphs for forced systems},
        date={1999},
        ISSN={0143-3857},
     journal={Ergodic Theory Dynam. Systems},
      volume={19},
      number={1},
       pages={155\ndash 199},
         url={https://doi.org/10.1017/S0143385799126555},
      review={\MR{1677161}},
}

\bib{Tak71}{article}{
      author={Takens, Floris},
       title={Partially hyperbolic fixed points},
        date={1971},
        ISSN={0040-9383},
     journal={Topology},
      volume={10},
       pages={133\ndash 147},
         url={https://doi.org/10.1016/0040-9383(71)90035-8},
      review={\MR{0307279}},
}

\bib{Upm85}{book}{
      author={Upmeier, Harald},
       title={Symmetric {B}anach manifolds and {J}ordan {$C^\ast$}-algebras},
      series={North-Holland Mathematics Studies},
   publisher={North-Holland Publishing Co., Amsterdam},
        date={1985},
      volume={104},
        ISBN={0-444-87651-0},
        note={Notas de Matem\'atica [Mathematical Notes], 96},
      review={\MR{776786}},
}

\bib{Via08}{article}{
      author={Viana, Marcelo},
       title={Almost all cocycles over any hyperbolic system have nonvanishing
  {L}yapunov exponents},
        date={2008},
        ISSN={0003-486X},
     journal={Ann. of Math. (2)},
      volume={167},
      number={2},
       pages={643\ndash 680},
         url={https://doi.org/10.4007/annals.2008.167.643},
      review={\MR{2415384}},
}

\bib{VvG87}{article}{
      author={Vanderbauwhede, A.},
      author={van Gils, S.~A.},
       title={Center manifolds and contractions on a scale of {B}anach spaces},
        date={1987},
        ISSN={0022-1236},
     journal={J. Funct. Anal.},
      volume={72},
      number={2},
       pages={209\ndash 224},
         url={https://doi.org/10.1016/0022-1236(87)90086-3},
      review={\MR{886811}},
}

\bib{Wil13}{article}{
      author={Wilkinson, Amie},
       title={The cohomological equation for partially hyperbolic
  diffeomorphisms},
        date={2013},
        ISSN={0303-1179},
     journal={Ast\'erisque},
      number={358},
       pages={75\ndash 165},
      review={\MR{3203217}},
}

\bib{Yi93}{article}{
      author={Yi, Yingfei},
       title={A generalized integral manifold theorem},
        date={1993},
        ISSN={0022-0396},
     journal={J. Differential Equations},
      volume={102},
      number={1},
       pages={153\ndash 187},
         url={http://dx.doi.org/10.1006/jdeq.1993.1026},
      review={\MR{1209981}},
}

\bib{Zel14}{article}{
      author={Zelik, Sergey},
       title={Inertial manifolds and finite-dimensional reduction for
  dissipative {PDE}s},
        date={2014},
        ISSN={0308-2105},
     journal={Proc. Roy. Soc. Edinburgh Sect. A},
      volume={144},
      number={6},
       pages={1245\ndash 1327},
         url={http://dx.doi.org/10.1017/S0308210513000073},
      review={\MR{3283067}},
}

\bib{ZZJ14}{article}{
      author={Zhang, Wenmeng},
      author={Zhang, Weinian},
      author={Jarczyk, Witold},
       title={Sharp regularity of linearization for {$C^{1,1}$} hyperbolic
  diffeomorphisms},
        date={2014},
        ISSN={0025-5831},
     journal={Math. Ann.},
      volume={358},
      number={1-2},
       pages={69\ndash 113},
         url={http://dx.doi.org/10.1007/s00208-013-0954-x},
      review={\MR{3157992}},
}

\end{biblist}
\end{bibdiv}

\end{document}